\documentclass[12pt,a4paper,twoside]{article}
\usepackage[hmarginratio=1:1,bmargin=1.3in]{geometry}
\usepackage[frenchb]{babel}
\usepackage[utf8]{inputenc}
\usepackage[OT2,T1]{fontenc}
\usepackage{amsmath, amsthm}
\usepackage{amsfonts}
\usepackage{amssymb}
\usepackage[all]{xy}
\usepackage{graphicx}
\usepackage[nottoc, notlof, notlot]{tocbibind}
\usepackage{array}
\usepackage{url}
\usepackage{rotating}
\usepackage{fancyhdr}
\usepackage{fancyhdr}
\usepackage{titlesec}
\pagestyle{fancy}

\fancyhead{}
\fancyhead[LE,RO]{\thepage} 
\fancyhead[LO]{\leftmark}
\fancyhead[RE]{Diego IZQUIERDO}
\fancyfoot{}

\titleformat{\section}[hang]{\center\Large\bf}{\thesection.}{0.5cm}{}

\DeclareSymbolFont{cyrletters}{OT2}{wncyr}{m}{n}
\DeclareMathSymbol{\Sha}{\mathalpha}{cyrletters}{"58}
\DeclareMathSymbol{\Brusse}{\mathalpha}{cyrletters}{"42}
\setlength{\parindent}{0pt}

\addto\captionsfrenchb{}

\theoremstyle{plain}
\newtheorem{theorem}{Th\'eor\`eme}[section]
\newtheorem{lemma}[theorem]{Lemme}
\newtheorem{proposition}[theorem]{Proposition}
\newtheorem{corollary}[theorem]{Corollaire}
\newtheorem{definition}[theorem]{D\'efinition}

\theoremstyle{definition}
\newtheorem{remarque}[theorem]{Remarque}

\newtheorem{example}[theorem]{Exemple}
\newtheorem{notation}[theorem]{Notation}

\newtheoremstyle{hypo}  
  {\topsep}   
  {\topsep}   
  {\itshape}  
  {1.5ex}       
  {\bfseries} 
  {)}         
  {8pt plus 1pt minus 1pt}  
  {}          
\theoremstyle{hypo}
\newtheorem{hypo}[theorem]{(H}

\newtheoremstyle{hypoth}  
  {\topsep}   
  {\topsep}   
  {\itshape}  
  {1.5ex}       
  {\bfseries} 
  {)}         
  {8pt plus 1pt minus 1pt}  
  {}          
\theoremstyle{hypoth}
\newtheorem{hypoth}{(H}[theorem]

\begin{document}

\title{\textbf{\scshape Théorèmes de dualité pour les corps de fonctions sur des corps locaux supérieurs et applications arithmétiques}}
\author{Diego Izquierdo\\
\small
Département de mathématiques, Université Paris-Sud\\
\small
Bâtiment 430 - 91409 Orsay - France\\
\small
\texttt{diego.izquierdo@math.u-psud.fr}\\
\small
École Normale Supérieure\\
\small
45, Rue d'Ulm - 75005 Paris - France\\
\small
\texttt{diego.izquierdo@ens.fr}
}
\date{}
\normalsize
\maketitle
\setcounter{section}{-1}

\footnotesize

\textbf{Résumé.} Soit $K$ le corps des fonctions d'une courbe projective lisse $X$ sur un corps local supérieur $k$. On définit les groupes de Tate-Shafarevich d'un schéma en groupes commutatif en considérant les classes de cohomologie qui deviennent triviales sur chaque complété de $K$ provenant d'un point fermé de $X$. On établit des théorèmes de dualité arithmétique entre des groupes de Tate-Shafarevich pour les modules finis, pour les tores, pour les groupes de type multiplicatif, et même pour les complexes à deux termes de tores. On applique ces résultats à l'approximation faible pour les tores sur $K$ et à l'étude du principe local-global pour les $K$-torseurs sous un groupe linéaire connexe. On exhibe aussi des exemples et des contre-exemples au principe local-global pour les algèbres simples centrales sur $K$.\\
\vspace{20pt}

\textbf{Abstract.} Let $K$ be the function field of a smooth projective curve $X$ over a higher-dimensional local field $k$. We define Tate-Shafarevich groups of a commutative group scheme via cohomology classes locally trivial at each completion of $K$ coming from a closed point of $X$. We establish duality theorems between Tate-Shafarevich groups for finite groups schemes, for tori, for groups of multiplicative type, and even for 2-term complexes of tori. We apply these results to the weak approximation for tori over $K$ and to the study of the obstruction to the local-global principle for $K$-torsors under a connected linear algebraic group. We also give examples and counter-examples to the local-global principle for central simple algebras over $K$.

\normalsize

\newpage

\section{\scshape Introduction}

\subsection{Contexte et motivation}

\hspace{4ex} Les premiers théorèmes de dualité arithmétique portant sur la cohomologie galoisienne de certains corps ont été annoncés dans les années 1960 par John Tate. Ces résultats, qui se sont avérés depuis particulièrement utiles pour étudier de profonds problèmes arithmétiques comme le principe local-global ou l'approximation faible, concernaient la cohomologie de corps de petite dimension cohomologique ayant de fortes propriétés arithmétiques: les corps $p$-adiques et les corps de séries de Laurent à coefficients dans les corps finis dans le cadre local, les corps de nombres et les corps de fonctions de courbes projectives lisses sur des corps finis dans le cadre global. Ces théorèmes ont été assez facilement généralisés dans le cas local à certains corps de dimension cohomologique quelconque (finie), les corps locaux supérieurs. Par contre, une telle généralisation s'avère nettement plus difficile dans le cas des corps globaux et ce n'est que très récemment, à partir de l'article \cite{SVH} de Scheiderer et Van Hamel, que nous avons été témoins d'un regain d'intérêt pour des études dans cette direction.\\

\hspace{4ex} Deux grandes méthodes se sont développées ces dernières années afin de comprendre le principe local-global et l'approximation faible sur les corps globaux de dimension cohomologique quelconque. D'une part, dans les articles \cite{HS1} et \cite{HS2}, Harari, Scheiderer et Szamuely étudient certaines obstructions cohomologiques au principe de Hasse (théorèmes 5.1 et 6.1 de \cite{HS1}) et à l'approximation faible (théorèmes 3.3 et 4.2 de \cite{HS2}) pour le corps des fonctions d'une courbe projective lisse sur un corps $p$-adique, c'est-à-dire un corps de dimension cohomologique 3, en établissant préalablement des théorèmes de dualité arithmétique type Poitou-Tate pour les tores (théorèmes 4.1 de \cite{HS1} et 2.9 de \cite{HS2}). Cette méthode a ensuite été utilisée par Colliot-Thélène et Harari dans \cite{CTH} pour étudier les corps de fonctions sur $\mathbb{C}((t))$. D'autre part, dans une série d'articles parmi lesquels nous pouvons notamment citer \cite{HHK}, Harbater, Hartmann et Krashen ont développé une technique, dite du patching, qui leur permet d'étudier le principe de Hasse pour le corps des fonctions d'une courbe sur un corps à valuation discrète complet possédant un modèle projectif, intègre et normal. Cette technique a par la suite été utilisée par Colliot-Thélène, Parimala et Suresh pour établir le principe local-global pour l'isotropie des formes quadratiques (théorème 3.1 de \cite{CPS}) et pour les espaces homogènes sous certains groupes réductifs (théorème 4.3 de \cite{CPS}). Une différence essentielle distingue les résultats obtenus par les deux méthodes précédentes: s'il est vrai que dans les deux cas on étudie le corps des fonctions d'une courbe projective lisse sur un corps local, la première méthode tient uniquement compte des places provenant d'un point de codimension 1 de la courbe, alors que la deuxième tient compte de toutes les places provenant d'un point de codimension 1 d'un modèle entier de la courbe.\\

\hspace{4ex} Dans cet article, nous allons utiliser la méthode développée par Harari, Scheiderer et Szamuely pour généraliser leurs résultats à des corps de fonctions de courbes projectives lisses sur un corps local supérieur, ce qui fournit un cadre unifié permettant de traiter simultanément les corps de fonctions sur un corps fini, un corps $p$-adique ou $\mathbb{C}((t))$. L'étude de tels corps pose un certain nombre de difficultés supplémentaires et permet de mettre en évidence des phénomènes nouveaux. D'une part, elle demande à gérer la dimension cohomologique quelconque, alors que les travaux de Harari, Scheiderer et Szamuely ainsi que les travaux postérieurs de Colliot-Thélène et Harari concernent uniquement des corps de dimension cohomologique au plus 3. Rappelons que, d'un point de vue cohomologique, l'objet qui joue le rôle du dual d'un tore est son module des caractères en dimension cohomologique 2 et son tore dual en dimension cohomologique 3. Nous verrons que la situation est nettement plus compliquée à partir de la dimension cohomologique 4, puisque le dual d'un tore n'est plus un faisceau mais un complexe de faisceaux défini à partir des complexes de Bloch. Nous devrons donc faire appel à certains théorèmes difficiles et récents sur ces complexes, en particulier la conjecture de Beilinson-Lichtenbaum qui découle de la conjecture de Bloch-Kato. D'autre part, contrairement au cas étudié par Harari, Scheiderer et Szamuely, pour les corps de fonctions sur des corps locaux supérieurs, il n'y a pas, en général, d'annulation du deuxième groupe de Tate-Shafarevich de $\mathbb{G}_m$, et nous avons donc affaire à des groupes de Tate-Shafarevich infinis. \\

\hspace{4ex} Pour terminer ces généralités, mentionnons qu'à la fin de l'article, nous étendrons aussi certains théorèmes de dualité arithmétique de \cite{HS1} aux groupes de type multiplicatif et même aux complexes de deux tores, et insistons sur le fait que, comme dans les articles de Harari, Scheiderer et Szamuely et contrairement à ceux de Harbater, Hartmann et Krashen, tous les résultats que nous obtenons tiennent uniquement compte des places provenant d'un point de codimension 1 de la courbe projective lisse considérée. 

\subsection{Organisation de l'article et énoncés des théorèmes principaux}

\hspace{4ex} L'article est constitué de 7 parties. Les deux premières sections présentent des résultats préliminaires: la première permet d'établir notamment le lemme \ref{préliminaire} qui est utile dans toute la suite afin de construire des accouplements à valeurs dans $\mathbb{Q}/\mathbb{Z}$, et la deuxième est consacrée à des théorèmes de dualité arithmétique de type Poitou-Tate pour les modules finis (théorèmes \ref{PT fini} et \ref{PT fini suite}).\\

\hspace{4ex} Les théorèmes principaux sont établis dans les cinq parties suivantes. Dans la section 3, nous établissons des théorèmes de dualité arithmétique pour les tores en nous ramenant aux théorèmes analogues pour les modules finis:

\begin{theorem} (théorèmes \ref{PT tore}, \ref{PT tore exacte} et \ref{PT dual tore exacte})\\ \label{0}
Soit $d \geq 0$. Soit $k$ un corps $d$-local, c'est-à-dire un corps complet pour une valuation discrète dont le corps résiduel est $(d-1)$-local, les corps 0-locaux étant par définition les corps finis et $\mathbb{C}((t))$. On suppose que le corps $1$-local correspondant est de caractéristique 0. Soit $X$ une courbe projective lisse sur $k$. Soient $X^{(1)}$ l'ensemble de ses points fermés, $K$ son corps des fonctions et $T$ un $K$-tore. On note $\hat{T}$ le module des caractères de $T$, $\check{T}$ le module des cocaractères de $T$ et $\tilde{T} = \hat{T} \otimes \mathbb{Z}(d)$, où $\mathbb{Z}(d)$ est le $d$-ième complexe motivique.
\begin{itemize}
\item[(i)] On a alors des accouplements parfaits de groupes finis:
$$\Sha^1(T) \times \overline{\Sha^{d+2}( \tilde{T})} \rightarrow \mathbb{Q}/\mathbb{Z},$$
$$\Sha^{d+1}(\tilde{T}) \times \overline{\Sha^2( T)} \rightarrow \mathbb{Q}/\mathbb{Z},$$
où $\overline{A}$ désigne le quotient de $A$ par son sous-groupe divisible maximal pour chaque groupe abélien $A$ et $\Sha^r(T)$ (resp. $\Sha^r(\tilde{T})$) est le sous-groupe de $H^r(K,T)$ (resp. $H^r(K,\tilde{T})$) constitué des éléments dont la restriction à $H^r(K_v,T)$ (resp. $H^r(K_v,\tilde{T})$) est nulle pour chaque $v \in X^{(1)}$.
\item[(ii)] Soit $L$ une extension finie déployant $T$. Supposons que $\Sha^2(L,\mathbb{G}_m)$ est nul. Pour chaque groupe topologique abélien $A$, notons $A_{\wedge}$ la limite projective des $A/nA$ et $A^D$ le groupe des morphismes continus $A \rightarrow \mathbb{Q}/\mathbb{Z}$. On dispose alors d'une suite exacte à 7 termes:\\
\centerline{\xymatrix{
\Sha^{d+3}(\tilde{T})^D \ar[d] & 0  \ar[l] & \\
H^0(K,T)_{\wedge} \ar[r]& \mathbb{P}^{0}(T)_{\wedge} \ar[r] & H^{d+2}(K,\tilde{T})^D \ar[d]\\
H^{d+1}(K,\tilde{T})^D & \mathbb{P}^1(T) \ar[l] & H^1(K,T)\ar[l]
}}
où $\mathbb{P}^r(T)$ désigne un produit restreint des $H^r(K_v,T)$ pour $v \in X^{(1)}$ (voir section \ref{tore}), et une suite exacte à 8 termes: \\
\centerline{\xymatrix{
& \mathbb{P}^{d+1}(\tilde{T}) \ar[r] & H^1(K,T)^D \ar[d] &\\
(H^0(K,T)_{\wedge})^D\ar[d] & \mathbb{P}^{d+2}(\tilde{T})_{tors} \ar[l] & H^{d+2}(K,\tilde{T})\ar[l] &\\
H^{d+3}(K,\tilde{T}) \ar[r] & \mathbb{P}^{d+3}(\tilde{T}) \ar[r] &  (\varprojlim_n {_n}T(K^s))^D \ar[r] & 0,
}}
où $\mathbb{P}^r(\tilde{T})$ désigne un produit restreint des $H^r(K_v,\tilde{T})$ pour $v \in X^{(1)}$.
\end{itemize}
\end{theorem}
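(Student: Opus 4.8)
The plan is to deduce everything by \emph{dévissage} from the Poitou--Tate duality for finite modules (theorems \ref{PT fini} and \ref{PT fini suite}), exactly as in the cohomological-dimension-$3$ case but now keeping track of the motivic complex $\mathbb{Z}(d)$. Fix a positive integer $n$. The Kummer sequence $0\to{}_nT\to T\xrightarrow{n}T\to 0$ identifies the $n$-torsion with ${}_nT=\check T\otimes\mu_n$, while multiplication by $n$ on $\tilde T=\hat T\otimes\mathbb{Z}(d)$ gives a distinguished triangle whose cone is $\hat T\otimes\mathbb{Z}/n(d)$. The essential input is the identification of the Tate dual of ${}_nT$: since each $K_v$ is $(d+1)$-local and hence of cohomological dimension $d+2$, the dualizing twist is $\mathbb{Q}/\mathbb{Z}(d+1)$, and a direct computation gives
\[
({}_nT)^{D}=\mathrm{Hom}(\check T\otimes\mu_n,\mu_n^{\otimes(d+1)})=\hat T\otimes\mu_n^{\otimes d}\cong\hat T\otimes\mathbb{Z}/n(d),
\]
the last isomorphism being the Beilinson--Lichtenbaum comparison in the relevant degrees. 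Thus the finite module dual to ${}_nT$ is precisely the mod-$n$ reduction of $\tilde T$, and the finite-level duality applies verbatim.

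Granting this, the finite pairings $\Sha^i({}_nT)\times\overline{\Sha^{d+3-i}(\hat T\otimes\mathbb{Z}/n(d))}\to\mathbb{Q}/\mathbb{Z}$ of theorem \ref{PT fini} specialise, for $i=1$ and $i=2$, to pairings of the finite pieces. To prove (i) I would pass to the limit over $n$. Writing the long exact sequences of the Kummer sequence and of the above triangle, and intersecting with the local conditions defining $\Sha$, one gets a surjection $\varinjlim_n\Sha^1({}_nT)\twoheadrightarrow\Sha^1(T)$ (the group $H^1(K,T)$ being torsion) and, dually, an identification of $\varprojlim_n\Sha^{d+2}(\hat T\otimes\mathbb{Z}/n(d))$ with the part of $\Sha^{d+2}(\tilde T)$ surviving the transition maps. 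The maximal divisible subgroups are exactly what is annihilated under these limits---this is why $\overline{(-)}$ appears---and a careful diagram chase (using lemma \ref{préliminaire} to produce the $\mathbb{Q}/\mathbb{Z}$-valued pairing and to check its compatibility with the transition maps) shows that the finite-level perfect pairings assemble into the first perfect pairing of finite groups; the same argument with $i=2$ gives the second. Finiteness of $\Sha^1(T)$ and of $\overline{\Sha^{d+2}(\tilde T)}$ follows because they are bounded-torsion subquotients of the finite groups $\Sha^\bullet({}_nT)$, a bound coming from a splitting field of $T$.

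For (ii) the scheme is the same, but now I would start from the long Poitou--Tate exact sequences of theorem \ref{PT fini suite} at each level $n$ and pass to the limit. The completions $(-)_\wedge=\varprojlim_n(-)/n$ and the restricted products $\mathbb{P}^r(T),\mathbb{P}^r(\tilde T)$ of section \ref{tore} arise naturally: the completion of $H^0(K,T)=T(K)$ is what is $\mathbb{Q}/\mathbb{Z}$-dual to the torsion appearing on the dual side, and the $\mathbb{P}^r$ are the limits of the local terms. The hypothesis $\Sha^2(L,\mathbb{G}_m)=0$ for a splitting field $L$ is used precisely here: through the exact sequences relating $T$ to products of $\mathbb{G}_m$ over $L$, it forces the vanishing of the error terms obstructing exactness and the absence of $\varprojlim^1$-contributions, so that the $7$- and $8$-term sequences are exact.

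The main obstacle is the limiting process itself rather than the finite duality, which is already available. For $d\geq 2$ the object $\tilde T$ is a genuine complex of sheaves built from $\mathbb{Z}(d)$, so one must control its cohomology in each degree and verify both the Beilinson--Lichtenbaum identification and the compatibility of the cup-product pairings with multiplication by $n$. Moreover, because there is in general no vanishing of $\Sha^2(\mathbb{G}_m)$, the groups involved are infinite; the delicate point is to track the maximal divisible subgroups through the direct and inverse limits, to establish the required finiteness of $\overline{\Sha}$, and to show that a limit of perfect pairings of finite groups remains perfect---this is where the hypothesis on $\Sha^2(L,\mathbb{G}_m)$ and the formal properties of $(-)_\wedge$ and $(-)^D$ do the real work.
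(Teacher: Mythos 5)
Your dévissage input is correct as far as it goes: the identification $({}_nT)'=\hat T\otimes\mu_n^{\otimes d}={}_n\tilde T_t$ is exactly the computation the paper makes, and your plan for the last terms of the $8$-term sequence (direct limit over $n$ of the finite Poitou--Tate sequences, legitimate in top degrees because $H^{d+1}(K,\hat T\otimes\mathbb{Q}(d))=H^{d+2}(K,\hat T\otimes\mathbb{Q}(d))=0$) is essentially the paper's proof of the proposition \ref{bouts tore}(i). But the pivot of your part (i) --- the surjection $\varinjlim_n\Sha^1({}_nT)\twoheadrightarrow\Sha^1(T)$ and the dual identification of $\varprojlim_n\Sha^{d+2}(\hat T\otimes\mathbb{Z}/n(d))$ --- is an unjustified interchange of $\Sha$ with limits over $n$, and it is precisely the step that fails here. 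Given $x\in{}_n\Sha^1(T)$ and a Kummer lift $y\in H^1(K,{}_nT)$, local triviality of $x$ only forces each $y_v$ into the image of $T(K_v)/n$; whether $y$ can be corrected by a global class to land in $\Sha^1({}_nT)$ is governed by the class of $(y_v)_v$ in $\bigl(\prod_v T(K_v)/n\bigr)/\mathrm{im}\,T(K)$, a weak-approximation defect which over these fields is genuinely nontrivial (section 6 of the paper measures it by $\Sha^{d+2}_{\omega}(\tilde T)$, which can even be uncountable), and the transition maps of your direct system multiply these defects by $m$ without killing them. Symmetrically on the dual side, a class locally trivial at the infinitely many $v\in X^{(1)}$ admits no uniform torsion level at which this triviality can be tested, so $\Sha$ of a (co)limit is not the (co)limit of the $\Sha$'s; the assertion that ``the maximal divisible subgroups are exactly what is annihilated'' is exactly what would have to be proved, and nothing in your sketch produces it.

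The paper avoids this by never taking limits of $\Sha$-groups over the field $K$ in part (i): the dévissage from finite modules is performed on an integral model $U\subseteq X$, yielding the Artin--Verdier duality between $(H^r(U,\mathcal T)\{l\})^{(l)}$ and $H^{d+1-r}_c(U,\tilde{\mathcal T}_t)^{(l)}\{l\}$ (théorème \ref{AV tore}); the image groups $\mathcal D^r(U,\cdot)=\mathrm{Im}(H^r_c(U,\cdot)\to H^r(K,\cdot))$ are of cofinite type and stabilize, prime by prime, to $\Sha\{l\}$ for $U$ small (propositions \ref{nature tore} and \ref{Sha tore}); the limit is then over shrinking $U$, not over $n$, and the divisible subgroups are excised by the completions $\{l\}^{(l)}$ already at finite level, which is how $\overline{\Sha^{d+2}(\tilde T)}$ appears. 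For part (ii), your appeal to ``formal properties of $(-)_{\wedge}$ and $(-)^D$'' also misses the genuinely topological content: to dualize the relevant exact sequences one must show that $H^1(K,T)\to\mathbb P^1(T)$ and $H^{d+2}(K,\tilde T)\to\mathbb P^{d+2}(\tilde T)_{tors}$ are strict (discrete image), which the paper obtains from the Harder-type lemme \ref{x}, from the ad hoc inductive-limit topology placed on $\mathbb P^{d+2}(\tilde T)_{tors}$, and from Hewitt--Ross for surjections of locally compact groups; and the hypothesis $\Sha^2(L,\mathbb G_m)=0$ is used for something sharper than a vanishing of ``error terms'': via restriction--corestriction and the lemme \ref{nul dual} it makes $\Sha^2(T)$ and $\Sha^{d+2}(\tilde T)$ finite, which is what turns the abstract dualities of part (i) into the displayed seven- and eight-term sequences. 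Without a substitute for this model-level stabilization, the limit over $n$ you propose cannot be repaired.
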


\hspace{4ex} Dans la section 4, nous généralisons le théorème précédent aux groupes de type multiplicatif et même aux complexes de deux tores:

\begin{theorem} (théorèmes \ref{PT gm} et \ref{cor})\\
On garde les notations de \ref{0}. Soit $G=[T_1 \rightarrow T_2]$ un complexe de deux $K$-tores placés en degrés $-1$ et $0$. Notons $\tilde{G}$ le cône de $\tilde{T_2} \rightarrow \tilde{T_1}$. 
\begin{itemize} 
\item[(i)] On a alors un accouplement parfait:
$$ \overline{\Sha^{0}(G)_{tors}} \times \overline{\Sha^{d+2}(\tilde{G})} \rightarrow \mathbb{Q}/\mathbb{Z}.$$
\item[(ii)] On suppose que le morphisme $\check{T_1} \rightarrow \check{T_2}$ ou le morphisme $\hat{T_2} \rightarrow \hat{T_1}$ est injectif. On a alors un accouplement parfait:
$$ \overline{\Sha^{1}(G)} \times \overline{\Sha^{d+1}(\tilde{G})} \rightarrow \mathbb{Q}/\mathbb{Z}.$$
\item[(iii)] On suppose que le morphisme $\check{T_1} \rightarrow \check{T_2}$ est injectif. On a alors un accouplement parfait:
$$ \overline{\Sha^{2}(G)} \times \overline{\Sha^{d}(\tilde{G})_{tors}} \rightarrow \mathbb{Q}/\mathbb{Z}.$$
\end{itemize}
\end{theorem}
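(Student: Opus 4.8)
The plan is to deduce the three pairings from the torus case, Theorem \ref{0}, by a dévissage along the two-term complex. Since $G = [T_1 \xrightarrow{f} T_2]$ is the cone of $f \colon T_1 \to T_2$ (with $T_1$ in degree $-1$), there is a distinguished triangle
$$T_1 \xrightarrow{f} T_2 \to G \to T_1[1],$$
and, $\tilde{G}$ being the cone of $\tilde{f} \colon \tilde{T_2} \to \tilde{T_1}$, a dual triangle
$$\tilde{T_2} \xrightarrow{\tilde{f}} \tilde{T_1} \to \tilde{G} \to \tilde{T_2}[1].$$
Each yields a long exact sequence in hypercohomology over $K$ and over every completion $K_v$, whence — by comparing kernels of the localisation maps — long exact sequences relating $\Sha^\bullet(G)$ to $\Sha^\bullet(T_1),\Sha^\bullet(T_2)$ and $\Sha^\bullet(\tilde{G})$ to $\Sha^\bullet(\tilde{T_1}),\Sha^\bullet(\tilde{T_2})$; the exactness of these $\Sha$-sequences is not formal and is obtained by a diagram chase between the global and local long exact sequences. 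The guiding principle is that under the dévissage a subobject on the $G$-side pairs with a quotient on the $\tilde{G}$-side and vice versa, with matching total degree $d+3$: in (i), say, the part of $\Sha^0(G)$ coming from $\mathrm{coker}(\Sha^0(T_1)\to\Sha^0(T_2))$ pairs with the $\Sha^{d+3}(\tilde{T_2})$-part of $\Sha^{d+2}(\tilde{G})$, while the $\Sha^1(T_1)$-part of $\Sha^0(G)$ pairs with the $\Sha^{d+2}(\tilde{T_1})$-part of $\Sha^{d+2}(\tilde{G})$, each via the pairings and exact sequences of Theorem \ref{0}.

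First I would construct the pairing itself. The map $f$ and the induced map $\hat{f}\colon \hat{T_2}\to\hat{T_1}$ are compatible with the cup-product pairings $T_i \otimes^{\mathbb{L}} \tilde{T_i} \to \mathbb{Z}(d+1)[1]$ underlying Theorem \ref{0}, so they assemble into a pairing $G \otimes^{\mathbb{L}} \tilde{G} \to \mathbb{Z}(d+1)[1]$ of complexes (the compatibility of $f$ and $\hat{f}$ is exactly what makes the two evaluation maps glue across the cone). Passing to hypercohomology over $K$ and composing with the invariant map through le lemme \ref{préliminaire} produces the $\mathbb{Q}/\mathbb{Z}$-valued pairings on $\Sha^\bullet(G)\times\Sha^\bullet(\tilde{G})$, which one checks are compatible, through the connecting maps of the two triangles, with the torus pairings of Theorem \ref{0}.

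Next I would run the dévissage proper. Splicing the long exact sequences into the perfect pairings and Poitou–Tate sequences of \ref{0}, one places $\Sha^r(G)$ in a short exact sequence
$$0 \to \mathrm{coker}\big(\Sha^{r}(T_1) \to \Sha^{r}(T_2)\big) \to \Sha^r(G) \to \ker\big(\Sha^{r+1}(T_1) \to \Sha^{r+1}(T_2)\big) \to 0,$$
and dually for $\Sha^\bullet(\tilde{G})$; a five-lemma for finite groups, applied to the morphism from this sequence to the $\mathbb{Q}/\mathbb{Z}$-dual of the corresponding $\tilde{G}$-sequence, then gives perfectness. The injectivity hypotheses in (ii) and (iii) on $\check{T_1}\to\check{T_2}$ or $\hat{T_2}\to\hat{T_1}$ enter precisely here: being dual to one another, they force the connecting maps at the boundary degrees to vanish or become finite, so that the flanking terms of the short exact sequence are exactly the $\Sha$-groups controlled by Theorem \ref{0} and no spurious contribution survives.

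The hard part will be the interaction of the dévissage with the operations $\overline{(\cdot)}$ and $(\cdot)_{tors}$, which are not exact and so do not pass termwise through the short exact sequences above. I would handle this by first establishing the pairings at the level of the full groups (or of suitable $n$-torsion and completions, in the $\mathbb{P}^\bullet$-formulation of \ref{0}(ii)), keeping careful track of the maximal divisible subgroups and of the finite torsion subgroups issuing from $\Sha^1(T_i)$ and $\Sha^2(T_i)$, and only at the end descending the perfect pairing to $\overline{\Sha^\bullet(G)}$ and to the torsion subgroups. This bookkeeping of divisibility and torsion through the cone, rather than any single cohomological input, is where the real work lies.
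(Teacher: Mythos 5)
Your dévissage breaks at its central step: the short exact sequence
$$0 \to \mathrm{coker}\bigl(\Sha^{r}(T_1) \to \Sha^{r}(T_2)\bigr) \to \Sha^r(G) \to \ker\bigl(\Sha^{r+1}(T_1) \to \Sha^{r+1}(T_2)\bigr) \to 0$$
does not exist in general, and no diagram chase between the global and local long exact sequences will produce it. The functor $\Sha^\bullet$ is a kernel of localization, and the sequence of kernels attached to a morphism of long exact sequences is only a complex; its exactness is obstructed by the cokernels of the localization maps, which here are large (by the proposition \ref{bouts tore}, the cokernel of $H^1(K,T) \to \mathbb{P}^1(T)$ is controlled by $H^{d+1}(K,\tilde{T})^D$, not by any $\Sha$-group). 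Concretely, a class in $\ker(\Sha^{r+1}(T_1) \to \Sha^{r+1}(T_2))$ lifts to $H^r(K,G)$, but nothing makes the lift locally trivial, and correcting it would require surjectivity of localization maps for $T_2$ which fails. The paper establishes no such $\Sha$-sequence anywhere; instead it performs the dévissage where the five lemma is legitimate, namely on Artin--Verdier duality over models (propositions \ref{AV gm fini} and \ref{AV gm}: the long exact sequences of $H^r(U,-)$ and $H^r_c(U,-)$ with finite coefficients pair perfectly termwise by la proposition \ref{AV fini}) and on local duality (propositions \ref{local gm} and \ref{local gm 2}), and then re-runs the entire Poitou--Tate limit machinery for $G$ itself: the groups $\mathcal{D}^\bullet(U,\cdot)$, their stabilization to $\Sha$ (proposition \ref{Sha gm}), and the exact sequences $\bigoplus_v H^{\bullet}(K_v,-) \to H^{\bullet}_c(U,-) \to \mathcal{D}^{\bullet}(U,-) \to 0$ (lemmes \ref{exacte gm fini} et \ref{exacte gmt}, proposition \ref{exacte gm}).

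Two further concrete problems. First, even granting your $\Sha$-sequences, Theorem \ref{0} does not supply the boundary inputs your five lemma needs: in (i) the piece $\mathrm{coker}(\Sha^0(T_1) \to \Sha^0(T_2))$ must pair with a $\Sha^{d+3}(\tilde{T_2})$-part, so you would need a duality between $\overline{\Sha^{0}(T)_{tors}}$ and $\overline{\Sha^{d+3}(\tilde{T})}$ for tori; this is not in Theorem \ref{0} --- in the paper it is an \emph{output} of the two-term machinery (remarque \ref{rq gm}(v), taking $T_1=0$), requires the Milnor $K$-theory hypothesis (H \ref{2}), and $\Sha^{d+3}(\tilde{T})$ is nontrivial in general (section \ref{1.2}). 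Second, you misidentify the role of the injectivity hypotheses in (ii)--(iii): they do not make connecting maps vanish; in the paper, injectivity of $\hat{T_2} \to \hat{T_1}$ forces $\mathrm{coker}(\check{T_1} \to \check{T_2})$ to be finite, hence the local groups $H^{a-1}(K_v,G)$ to be of finite exponent (proposition \ref{local gm}), which is exactly what makes $H^{a-1}(K_v,G)\{l\} \to H^{a-1}(K_v,G)^{\wedge}$ injective so that local duality (proposition \ref{local gm 2}) can be fed into the global comparison diagram; the hypothesis on $\check{T_1} \to \check{T_2}$ is then handled by the swap $H = [\tilde{T_2} \to \tilde{T_1}][1]$, $\tilde{H} = [T_1 \to T_2][-1]$ of the corollaire \ref{cor}, not by a separate argument. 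Finally, your plan to ``pair the full groups first and descend to $\overline{(\cdot)}$ at the end'' has no inputs to start from, since the torus pairings of Theorem \ref{0} are perfect only after the bar operation (and your pairing should land in $\mathbb{Z}(d+1)[2]$, not $\mathbb{Z}(d+1)[1]$, cf.\ le lemme \ref{acc gm}).
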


\hspace{4ex} On remarque que, dans la première partie du théorème \ref{0}, nous avons dû quotienter $\Sha^{d+2}( \tilde{T})$ et $\Sha^2( T)$ par leurs sous-groupes divisibles maximaux et que, dans les deux autres parties, nous avons dû supposer la nullité de $\Sha^2(\mathbb{G}_m)$. Il est donc intéressant de montrer que, sous de bonnes hypothèses, les groupes $\Sha^{d+2}( \tilde{T})$ et $\Sha^2( T)$ sont finis et que le groupe $\Sha^2(\mathbb{G}_m)$ est nul. Cela fait l'objet de la section 5:

\begin{theorem} \textit{(théorèmes \ref{nul 1} et \ref{constante} et exemples \ref{ex0} et \ref{ex})} \label{0courbes} \\
On reprend les notations du théorème \ref{0}. Pour $ i \in \{0,1,...,d\}$, on note $k_i$ le corps $i$-local associé à $k$ et $\mathcal{O}_{k_i}$ son anneau des entiers.
\begin{itemize}
\item[(i)] Si $X$ est la droite projective ou une conique dans $\mathbb{P}^2_k$, alors $\Sha^2(\mathbb{G}_m)$ est nul.
\item[(ii)] Supposons que $k_1$ est un corps $p$-adique et que $X$ est une courbe sur $k$ de la forme $\text{Proj}(k[x,y,z]/(P(x,y,z))) $ où $P \in \mathcal{O}_{k_1}[x,y,z]$ est un polynôme homogène. Supposons aussi que $\text{Proj}(k_0[x,y,z]/(\overline{P}(x,y,z))) $ est une courbe lisse et géométriquement intègre. Alors $\Sha^2(\mathbb{G}_m)=0$.
\item[(iii)] Supposons que $k_0 = \mathbb{C}((t))$ et que $X$ est la courbe elliptique sur $k$ d'équation $y^2=x^3+Ax+B$ avec $A, B \in k_0$. Supposons de plus que la courbe elliptique sur $k_0$ d'équation $y^2=x^3+Ax+B$ admet une réduction modulo $t$ de type additif.  Alors $\Sha^2(\mathbb{G}_m)=0$.
\item[(iv)] Soit $p$ un nombre premier impair. Si $k= \mathbb{Q}_p((t_1))...((t_{d-1}))$ avec $d>1$ et $X$ est la courbe elliptique d'équation $y^2=x(1-x)(x-p)$, alors $\Sha^2(\mathbb{G}_m)\neq 0$.
\item[(v)] Si $k=\mathbb{C}((t_1))...((t_{d+1}))$ avec $d\geq 0$ et $X$ est la courbe elliptique d'équation $y^2=x(1-x)(x-t_1)$, alors $\Sha^2(\mathbb{G}_m)\neq 0$.
\end{itemize}
\end{theorem}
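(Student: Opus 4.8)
Le plan est de séparer les énoncés d'annulation (i)--(iii) des contre-exemples (iv)--(v), tous partant toutefois d'une même réduction. La suite des résidus pour la courbe régulière $X$,
$$0 \rightarrow \mathrm{Br}(X) \rightarrow H^2(K,\mathbb{G}_m) \xrightarrow{(\partial_v)_v} \bigoplus_{v \in X^{(1)}} H^1(\kappa(v),\mathbb{Q}/\mathbb{Z}),$$
montre qu'une classe localement triviale a tous ses résidus nuls, d'où $\Sha^2(\mathbb{G}_m) \subseteq \mathrm{Br}(X)$. Comme $K_v$ est complet de corps résiduel $\kappa(v)$, une classe non ramifiée s'annule sur $K_v$ si et seulement si son évaluation dans $\mathrm{Br}(\kappa(v))$ est nulle, d'où l'identification
$$\Sha^2(\mathbb{G}_m) = \ker\Bigl(\mathrm{Br}(X) \xrightarrow{(\mathrm{ev}_v)_v} \prod_{v \in X^{(1)}} \mathrm{Br}(\kappa(v))\Bigr).$$
Tout se ramène donc au calcul de $\mathrm{Br}(X)$ et des évaluations aux points fermés.

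Pour (i), la suite de Hochschild--Serre donne $\mathrm{Br}(\mathbb{P}^1_k)=\mathrm{Br}(k)$, et l'évaluation en un point rationnel est l'identité de $\mathrm{Br}(k)$, d'où $\Sha^2(\mathbb{G}_m)=0$. Pour une conique $C$ sans point rationnel, le même calcul fournit $\mathrm{Br}(C)=\mathrm{Br}(k)/\langle [C]\rangle$; je ramènerais la nullité au cas de $\mathbb{P}^1$ via l'extension quadratique $L=\kappa(v_0)$ attachée à un point fermé de degré $2$ (qui déploie $C$), en contrôlant le noyau de $\mathrm{res}_{L/k}$ grâce à la corestriction. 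Pour (ii) et (iii), l'idée est d'exploiter un modèle régulier propre de $X$ au-dessus de $\mathcal{O}_{k_1}$ et une suite de localisation afin de relier $\mathrm{Br}(X)$ au groupe de Brauer de la fibre spéciale. Sous l'hypothèse de bonne réduction de (ii), celle-ci est une courbe projective lisse géométriquement intègre sur le corps fini $k_0$, de groupe de Brauer nul (par le théorème de Lang appliqué à la jacobienne), ce qui force $\Sha^2(\mathbb{G}_m)=0$; dans le cas (iii), l'hypothèse de réduction additive sur $\mathbb{C}((t))$ joue un rôle analogue en tuant la contribution pertinente de $\mathrm{Br}(X)$.

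Pour les contre-exemples (iv) et (v), je procéderais par construction \emph{directe} d'une classe non nulle. Comme $X=E$ porte le point rationnel $O$, la suite de Hochschild--Serre relie $\mathrm{Br}(E)$ à $\mathrm{Br}(k)$ et à $H^1(k,\hat{E})$ (où $\hat E=\mathrm{Pic}^0_{\bar E}$), l'évaluation en un point fermé $v$ correspondant à un point $P\in E(\kappa(v))$ se lisant, sur la composante $H^1(k,\hat E)$, via l'accouplement $H^1(k,\hat E)\times E(\kappa(v))\to \mathrm{Br}(\kappa(v))$. Les courbes choisies sont sous forme de Legendre $y^2=x(1-x)(x-\lambda)$, de sorte que la $2$-torsion est entièrement rationnelle, $E[2]\cong(\mathbb{Z}/2)^2$, et que $H^1(k,E[2])=(k^\times/k^{\times 2})^2$ fournit de nombreuses classes candidates. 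Le choix du paramètre ($\lambda=p$ en (iv), $\lambda=t_1$ en (v)) force une réduction multiplicative sur le corps $0$-local sous-jacent, donnant une structure de courbe de Tate: c'est elle qui produit une classe globalement non triviale mais partout localement triviale, par contraste avec le cas (iii) où la réduction additive entraîne au contraire l'annulation.

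L'obstacle principal se situe à deux endroits. Pour la nullité, c'est l'étape de comparaison entre $\mathrm{Br}(X)$ et le groupe de Brauer de la fibre spéciale: en dimension cohomologique quelconque, il faut manier avec soin les suites de localisation et de pureté pour $\mathbb{G}_m$ et contrôler la ramification le long de la fibre spéciale. Pour la non-nullité, la difficulté est de garantir la trivialité locale de la classe construite en \emph{tous} les points fermés, y compris ceux de grand degré, tout en préservant sa non-nullité globale; notons en particulier que le groupe $\Sha^2(\mathbb{G}_m)$ est alors infini et divisible, de sorte que l'accouplement du théorème \ref{0} (qui ne voit que $\overline{\Sha^2}$) ne le détecte pas et qu'une construction directe est indispensable.
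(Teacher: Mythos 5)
Your common reduction --- $\Sha^2(\mathbb{G}_m)=\text{Ker}\bigl(\text{Br}(X)\to\prod_{v\in X^{(1)}}\text{Br}(k(v))\bigr)$ --- is exactly the paper's starting remark, and your part (i) is essentially sound. Note however that the paper treats the line and the conic \emph{uniformly}: since $H^1(k,\text{Pic}(\overline{X}))=0$, every locally trivial class comes from $\text{Br}(k)$, and one combines the divisibility of $\Sha^2(\mathbb{G}_m)$ (which follows from théorème \ref{PT tore}) with restriction--corestriction through an \emph{arbitrary} closed point $v_0$ (writing $x=n_0x_{n_0}$ and killing $\tilde{x}_{n_0}$), so no rational point is needed. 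Your conic variant via the quadratic splitting extension only yields that $\Sha^2(\mathbb{G}_m)$ is of exponent $2$; you must then invoke divisibility to conclude nullity, a step your write-up leaves implicit.

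For (ii)--(iii) there is a genuine gap. Relating $\text{Br}(X)$ to the Brauer group of the special fibre by a localization sequence cannot suffice, because $\Sha^2(\mathbb{G}_m)$ only imposes triviality at the places of $X^{(1)}$: the divisorial valuation $\eta_0$ given by the special fibre of a model is \emph{not} among them, and a class can be locally trivial at every $v\in X^{(1)}$ while having a nonzero residue at $\eta_0$ --- this is precisely the mechanism of the counterexamples (iv)--(v), so any proof of (ii)--(iii) must show that local triviality on $X^{(1)}$ propagates to $\eta_0$, and that is where all the work lies. The paper does this by induction on $d$ (théorème \ref{nul 2}): the heredity step needs (a) a specialization device --- for each closed point $w$ of the special fibre, a closed point $v_w\in X^{(1)}$ whose closure through $w$ is regular with bounded ramification $e$ --- to transfer triviality at the $K_{v_w}$ into local triviality for the function field $\kappa(\eta_0)$ of the special fibre, together with Gabber's absolute purity to descend classes to the model, and crucially (b) the patching theorem 3.3.6 of \cite{HHK} to kill a class of $\text{Br}(K)$ trivial at all $v\in X^{(1)}$ \emph{and} at $K_{\eta_0}$. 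The base cases are then: proposition 3.4 of \cite{HS1} together with Kato's Hasse principle (proposition 5.2 of \cite{Kat}, reducing $\Sha^3(K_1,\mathbb{Q}/\mathbb{Z}(2))$ to Brauer--Hasse--Noether) for the $p$-adic floor in (ii), and \cite{Dou}, where additive (``très mauvaise'') reduction is exactly what forces nullity over $\mathbb{C}((t))$, for (iii). Note also that for $d>1$ the special fibre of the top model is a curve over $k_{d-1}$, not over the finite field $k_0$; your appeal to Lang's theorem only becomes available at the bottom of the tower, after the same local-global difficulty has been resolved at every intermediate floor.

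For (iv)--(v) you identify the right sort of class (the paper's class is the explicit $z=(1-x)\cup t\in H^2(K,\mathbb{Z}/2\mathbb{Z})$, which matches your $2$-descent picture in Legendre form), but the proof is missing at exactly the point you flag yourself: the paper \emph{verifies} that $1-x$ is a square in every $K_v$, $v\in X^{(1)}$, by a two-level case analysis on valuations (following the appendix of \cite{CPS}), and detects $z\neq 0$ by its nonzero residue at $\eta_0$, where $1-x$ is not a square; for the bottom case of (v) one instead uses Douai's computation identifying $\Sha^2(\mathbb{G}_m)$ with the cokernel of $\mathbb{Z}\to H^1(k,J)$, the latter being $(\mathbb{Q}/\mathbb{Z})^2$ or $\mathbb{Q}/\mathbb{Z}$ according to the reduction type. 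A Tate-curve heuristic does not substitute for these verifications, so as it stands (iv)--(v) remain a programme rather than a proof.
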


\hspace{4ex} Les preuves de (ii) et (iii) reposent notamment sur un théorème établi par Harbater, Hartmann et Krashen par la technique de patching (le théorème 3.3.6 de \cite{HHK}), et permettent donc de comprendre les liens entre les deux grandes méthodes présentées au début de l'introduction.\\

\hspace{4ex} Dans la section 6, nous appliquons les théorèmes de dualité arithmétique établis dans la section 3 à l'étude de l'approximation faible pour les tores (on se reportera à \ref{def} pour les définitions):

\begin{theorem} (théorème \ref{AF tore} et corollaire \ref{AFF tore})\\
On garde les notations de \ref{0}. Soit $S \subseteq X^{(1)}$ une partie finie.
\begin{itemize}
\item[(i)] On a une suite exacte:
$$0 \rightarrow \overline{T(K)}_S \rightarrow \prod_{v \in S} T(K_v) \rightarrow \overline{\Sha^{d+2}_S(\tilde{T})}^D \rightarrow \Sha^{1}(T) \rightarrow 0,$$
où $\overline{T(K)}_S$ désigne l'adhérence de $T(K)$ dans $\prod_{v \in S} T(K_v)$ et $\Sha^{d+2}_S(\tilde{T})$ est le sous-groupe de $H^{d+2}(K,\tilde{T})$ constitué des éléments dont la restriction à $H^{d+2}(K_v,\tilde{T})$ est nulle pour chaque $v \in X^{(1)} \setminus S$.
\item[(ii)] On a la suite exacte:
$$0 \rightarrow \overline{T(K)} \rightarrow \prod_{v \in X^{(1)}} T(K_v) \rightarrow (\Sha^{d+2}_{\omega}(\tilde{T}))^D \rightarrow (\Sha^{d+2}(\tilde{T}))^D \rightarrow 0,$$
où $ \overline{T(K)}$ désigne l'adhérence de $T(K)$ dans $\prod_{v \in X^{(1)}} T(K_v)$ et $\Sha^{d+2}_{\omega}(\tilde{T})$ est le sous-groupe de $H^{d+2}(K,\tilde{T})$ constitué des éléments dont la restriction à $H^{d+2}(K_v,\tilde{T})$ est nulle pour presque tout $v \in X^{(1)}$.
\item[(iii)] Le tore $T$ vérifie l'approximation faible si, et seulement si, $\Sha^{d+2}(\tilde{T}) = \Sha^{d+2}_{\omega}(\tilde{T})$.
\item[(iv)] Le tore $T$ vérifie l'approximation faible faible si, et seulement si, le groupe de torsion $\Sha^{d+2}_{\omega}(\tilde{T})$ est de type cofini. En particulier, lorsque $\Sha^{d+2}(L,\mathbb{Z}(d)) = 0$ pour une extension finie $L$ de $K$ déployant $T$, le tore $T$ vérifie l'approximation faible faible si, et seulement si, $\Sha^{d+2}_{\omega}(\tilde{T})$ est fini.
\item[(v)] Le tore $T$ vérifie l'approximation faible faible dénombrable si, et seulement si, le groupe de torsion $\Sha^{d+2}_{\omega}(\tilde{T})$ est dénombrable.
\end{itemize}
\end{theorem}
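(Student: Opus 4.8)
The plan is to derive all five assertions from the Poitou--Tate duality of Theorem~\ref{0}, the only input being the $H^0$--$H^{d+2}$ part of the duality between $T$ and $\tilde T$, together with local Tate duality for the $(d+1)$-local fields $K_v$, which furnishes continuous perfect pairings $T(K_v) \times H^{d+2}(K_v,\tilde T) \to \mathbb{Q}/\mathbb{Z}$ between a profinite completion and a discrete torsion group.

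Everything rests on (i). Summing the local pairings over $v \in S$ and restricting a class of $\Sha^{d+2}_S(\tilde T)$ to its components at the places of $S$ produces a pairing $\prod_{v\in S} T(K_v) \times \Sha^{d+2}_S(\tilde T) \to \mathbb{Q}/\mathbb{Z}$, hence a continuous homomorphism $\theta_S \colon \prod_{v\in S}T(K_v) \to \overline{\Sha^{d+2}_S(\tilde T)}^D$ into a profinite group (the factorization through the maximal-divisible quotient reflecting that the local groups $H^{d+2}(K_v,\tilde T)$ are reduced, so that globally divisible classes restrict to $0$ at each $v\in S$). Because a class of $\Sha^{d+2}_S$ is trivial outside $S$, the reciprocity law underlying the global pairing of Theorem~\ref{0} --- the vanishing of the total sum of local invariants --- shows $T(K) \subseteq \ker\theta_S$, and continuity into a Hausdorff target forces $\overline{T(K)}_S \subseteq \ker\theta_S$. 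For the right-hand end of the sequence, the inclusion $\Sha^{d+2}(\tilde T) \hookrightarrow \Sha^{d+2}_S(\tilde T)$ dualizes to a surjection $\overline{\Sha^{d+2}_S(\tilde T)}^D \twoheadrightarrow \overline{\Sha^{d+2}(\tilde T)}^D \cong \Sha^1(T)$, the last isomorphism being Theorem~\ref{0}(i); one identifies this surjection with $\mathrm{coker}\,\theta_S$.

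The delicate point is exactness in the middle, i.e.\ the reverse inclusion $\ker\theta_S \subseteq \overline{T(K)}_S$: an $S$-tuple orthogonal to $\Sha^{d+2}_S(\tilde T)$ must be approximable by global points. Here I would feed the tuple into the seven-term sequence of Theorem~\ref{0}(ii): orthogonality places its image in the kernel of $\mathbb{P}^0(T)_\wedge \to H^{d+2}(K,\tilde T)^D$, which by exactness is the image of $H^0(K,T)_\wedge$; combined with a diagonal approximation argument away from $S$ (enlarging $S$ and using the freedom at the new places), this exhibits the tuple as a limit of elements of $T(K)$. I expect the main obstacle to lie exactly here, in reconciling the genuine $v$-adic topology governing $\overline{T(K)}_S$ with the profinite completions $(-)_\wedge$ in which Theorem~\ref{0} is phrased; keeping track of the maximal divisible subgroups --- the source of the bar $\overline{(-)}$ throughout --- is the technical crux.

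Assertion (ii) then follows by passing to the projective limit of (i) over finite $S$: the products assemble into $\prod_{v\in X^{(1)}}T(K_v)=\varprojlim_S \prod_{v\in S}T(K_v)$ and the closures into $\overline{T(K)}=\varprojlim_S \overline{T(K)}_S$, while $\varinjlim_S \Sha^{d+2}_S(\tilde T)=\Sha^{d+2}_\omega(\tilde T)$ dualizes to $(\Sha^{d+2}_\omega(\tilde T))^D$; since the transition maps are surjective the relevant $\varprojlim^1$ terms vanish and exactness is preserved, the cokernel $(\Sha^{d+2}(\tilde T))^D$ arising from $\Sha^{d+2}\hookrightarrow\Sha^{d+2}_\omega$. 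Finally (iii)--(v) are formal consequences of the resulting isomorphism $\prod_{v}T(K_v)/\overline{T(K)} \cong (\Sha^{d+2}_\omega(\tilde T)/\Sha^{d+2}(\tilde T))^D$ through Pontryagin duality: weak approximation holds iff this defect is zero, i.e.\ $\Sha^{d+2}(\tilde T)=\Sha^{d+2}_\omega(\tilde T)$; weak weak approximation holds iff the defect is the dual of a group of cofinite type, i.e.\ $\Sha^{d+2}_\omega(\tilde T)$ is of cofinite type (a torsion group being of cofinite type precisely when its dual is topologically finitely generated); and the countable variant corresponds to countability of $\Sha^{d+2}_\omega(\tilde T)$ (a profinite group being metrizable iff its discrete dual is countable). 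The last clause of (iv) is immediate, since $\Sha^{d+2}(L,\mathbb{Z}(d))=0$ makes $\Sha^{d+2}(\tilde T)$ finite, whereupon cofinite type of $\Sha^{d+2}_\omega(\tilde T)$ reduces to its finiteness.
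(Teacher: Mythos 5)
Your plan founders at the point you yourself identify as the crux. To prove $\ker\theta_S \subseteq \overline{T(K)}_S$ you invoke the seven-term sequence of Theorem \ref{0}(ii), but that sequence (Theorem \ref{PT tore exacte}) is only established under the hypothesis $\Sha^2(L,\mathbb{G}_m)=0$ for a splitting field $L$, whereas the statement you are proving is unconditional — and Section \ref{1.1} exhibits curves where $\Sha^2(\mathbb{G}_m)\neq 0$, so the hypothesis genuinely fails in cases the theorem must cover. Even granting that sequence, exactness at $\mathbb{P}^0(T)_{\wedge}$ only tells you that your tuple lifts to $H^0(K,T)_{\wedge}$, a profinite completion; converting this into approximability in the genuine $v$-adic topology on $\prod_{v\in S}T(K_v)$ is exactly the difficulty, and your ``diagonal approximation argument away from $S$'' does not address it. The paper avoids both problems by a dévissage: since the statement is stable under replacing $T$ by $T^m\times_K T_0$ with $T_0$ quasi-trivial, one may assume (lemme 1.10 de \cite{San}) an exact sequence $0\rightarrow F\rightarrow R\rightarrow T\rightarrow 0$ with $F$ finite and $R$ quasi-trivial. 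For $R$, weak approximation holds for elementary geometric reasons (rationality plus Artin--Whaples), and Shapiro's lemma computes the $\Sha$-groups; the general case then follows by a diagram chase in the three-row diagram built from $R(K)\rightarrow T(K)\rightarrow H^1(K,F)\rightarrow 0$, its local analogue over $S$, and the dualized sequence $0\rightarrow \Sha^{d+2}_S(\tilde{R})^D\rightarrow \Sha^{d+2}_S(\tilde{T})^D\rightarrow \Sha^{d+1}_S(F')^D\rightarrow 0$, the exact column for the finite module $F$ being supplied by the Poitou--Tate sequence for finite modules (proposition \ref{bouts suite fini}). Middle exactness is thus transported from the finite-module case, where completions never intervene.

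The remaining parts have secondary gaps of the same nature. In (ii), your $\varprojlim^1$ claim is unjustified: the transition maps $\overline{T(K)}_{S'}\rightarrow\overline{T(K)}_S$ have dense image but need not be surjective (the groups $T(K_v)$ are not compact here, the residue fields being infinite for $d\geq 1$), so passage to the limit only yields left-exactness; the paper handles the cokernel by identifying $\bigl(\prod_{v}T(K_v)\bigr)/\overline{T(K)}$ with $\bigl(\prod_{v}H^1(K_v,F)\bigr)/\overline{H^1(K,F)}$, a compact group (each $H^1(K_v,F)$ is finite), forcing the image in $\Sha^{d+2}_{\omega}(\tilde{T})^D$ to be closed and to agree with that of $\prod_v T(K_v)^{\wedge}$. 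Likewise (iv) and (v) are not formal Pontryagin-duality consequences of the global sequence: weak weak approximation demands producing a single finite exceptional set $S_0$, and the paper's converse in (iv) first proves $\Sha^{d+2}(\tilde{T})_{\mathrm{div}}=\Sha^{d+2}_{\omega}(\tilde{T})_{\mathrm{div}}$ (exploiting that the defect has finite exponent $n$), then chooses for each prime $l\mid n$ a finite $S_l$ with $\Sha^{d+2}_{\omega}(\tilde{T})\{l\}=\Sha^{d+2}_{S_l}(\tilde{T})\{l\}$ and takes $S_0=\bigcup_{l\mid n}S_l$; the countable case (v) similarly requires writing $\Sha^{d+2}_{\omega}(\tilde{T})$ as a countable union $\bigcup_i \Sha^{d+2}_{S_i}(\tilde{T})$ to extract the countable set $S_0$. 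None of these steps is recoverable from the duality statement alone.
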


\hspace{4ex} Pour terminer, dans la section 7, nous utilisons les résultats des sections 3 et 4 pour étudier le principe local-global pour les $K$-espaces principaux homogènes sous un groupe linéaire connexe. Dans le cas où $k = \mathbb{C}((t))$, nous appliquons les méthodes de Borovoi (\cite{Bor}) et Sansuc (\cite{San}), ce qui permet notamment de montrer que la seule obstruction est l'obstruction de Brauer-Manin, alors que, dans les autres cas, nous suivons la méthode de Harari et Szamuely (\cite{HS1}).

\subsection{Remerciements}

Je tiens à remercier en premier lieu David Harari pour son soutien et ses conseils, ainsi que pour sa lecture soigneuse de ce texte: sans lui, ce travail n'aurait pas pu voir le jour. Je suis aussi très reconnaissant à Jean-Louis Colliot-Thélène, Tamás Szamuely et Bruno Kahn pour leurs commentaires et leurs remarques, et à Giancarlo Lucchini pour de nombreuses discussions. Je voudrais finalement remercier l'École Normale Supérieure et l'Université Paris-Sud pour leurs excellentes conditions de travail.

\subsection{Notations}

\textbf{Groupes abéliens.} Pour $A$ un groupe topologique abélien (éventuellement muni de la topologie discrète), $n>0$ un entier et $l$ un nombre premier, on notera:
\begin{itemize}
\item[$\bullet$] $A_{tors}$ la partie de torsion de $A$.
\item[$\bullet$] ${_n}A$ la partie de $n$-torsion de $A$.
\item[$\bullet$] $A\{l\}$ la partie de torsion $l$-primaire de $A$.
\item[$\bullet$] $A_{\text{non}-l}= \bigoplus_{p \neq l} A\{p\}$ où $p$ décrit tous les nombres premiers différents de $l$.
\item[$\bullet$] $A^{(l)}$ le complété pour la topologie $l$-adique de $A$.
\item[$\bullet$] $A^{\wedge}$ le complété profini de $A$.
\item[$\bullet$] $A_{\wedge}$ la limite projective des $A/nA$.
\item[$\bullet$] $A_{div}$ le sous-groupe divisible maximal de $A$. En général, il ne coïncide pas avec le sous-groupe constitué des éléments divisibles de $A$.
\item[$\bullet$] $\overline{A}$ le quotient de $A$ par $A_{div}$.
\item[$\bullet$] $A^D$ le groupe des morphismes continus $A \rightarrow \mathbb{Q}/\mathbb{Z}$.
\end{itemize}
\vspace{5 mm}

\textbf{Complexes.} Soit $\mathcal{A}$ une catégorie abélienne. Lorsque $A_0$, $A_1$, $A_2$ ..., $A_n$ sont des objets de $\mathcal{A}$ munis de morphismes $A_{i+1} \rightarrow A_{i}$ pour $0 \leq i \leq n-1$, on notera $[A_n \rightarrow A_{n-1} \rightarrow ... \rightarrow A_0]$ le complexe où tous les termes en degrés strictement inférieurs à $-n$ ou strictement positifs sont nuls et où $A_i$ est placé en degré $-i$ pour $0 \leq i \leq n$. Lorsque $f^{\bullet}: A^{\bullet} \rightarrow B^{\bullet}$ est un morphisme de complexes, on notera $[A^{\bullet} \rightarrow B^{\bullet}]$ le cône de $f^{\bullet}$.\\

\textbf{Faisceaux.} Sauf indication du contraire, tous les faisceaux sont considérés pour le petit site étale. Dans la suite, on fera souvent appel à des catégories dérivées de faisceaux: on entendra toujours par là la catégorie dérivée des faisceaux étales sur le schéma considéré. Pour $F$ un faisceau sur un schéma $X$, on note $\text{Hom}_X(F,-)$ (ou $\text{Hom}(F,-)$ s'il n'y a pas d'ambigüité) le foncteur qui à un faisceau $G$ sur $X$ assosie le groupe des morphismes de faisceaux de $F$ vers $G$ et $\underline{\text{Hom}}_X(F,-)$ (ou $\underline{\text{Hom}}(F,-)$ s'il n'y a pas d'ambigüité) le foncteur qui à un faisceau $G$ sur $X$ associe le faisceau étale $U \mapsto \text{Hom}_U(F|_U,G|_U)$. De même, pour $n>0$ et pour $F$ un faisceau de $\mathbb{Z}/n\mathbb{Z}$-modules sur un schéma $X$, on note $\text{Hom}_{X,\mathbb{Z}/n\mathbb{Z}}(F,-)$ (ou $\text{Hom}_{\mathbb{Z}/n\mathbb{Z}}(F,-)$ s'il n'y a pas d'ambigüité) le foncteur qui à un faisceau de $\mathbb{Z}/n\mathbb{Z}$-modules $G$ sur $X$ associe le groupe des morphismes de faisceaux de $F$ vers $G$ et $\underline{\text{Hom}}_{X,\mathbb{Z}/n\mathbb{Z}}(F,-)$ (ou $\underline{\text{Hom}}_{\mathbb{Z}/n\mathbb{Z}}(F,-)$ s'il n'y a pas d'ambigüité) le foncteur qui à un faisceau de $\mathbb{Z}/n\mathbb{Z}$-modules $G$ sur $X$ associe le faisceau étale $U \mapsto \text{Hom}_{U,\mathbb{Z}/n\mathbb{Z}}(F|_U,G|_U)$.\\

\textbf{Corps locaux supérieurs.} Les corps 0-locaux sont par définition les corps finis et le corps $\mathbb{C}((t))$. Pour $d \geq 1$, un corps $d$-local est un corps complet pour une valuation discrète dont le corps résiduel est $(d-1)$-local. \underline{On remarquera que cette} \underline{définition est plus générale que la définition standard.} Lorsque $k$ est un corps $d$-local, on notera $k_0$, $k_1$, ..., $k_d$ les corps tels que $k_0$ est fini ou $\mathbb{C}((t))$, $k_d=k$, et pour chaque $i$ le corps $k_i$ est le corps résiduel de $k_{i+1}$. On rappelle le théorème de dualité sur un corps $d$-local $k$: pour tout $\text{Gal}(k^s/k)$-module fini $M$ d'ordre $n$ premier à $\text{Car}(k_1)$, on a un accouplement parfait de groupes finis $H^r(k,M) \times H^{d+1-r}(k,\text{Hom}(M,\mu_n^{\otimes d})) \rightarrow H^{d+1}(k,\mu_n^{\otimes d}) \cong \mathbb{Z}/n\mathbb{Z}$. Ce théorème est énoncé et démontré dans \cite{MilADT} (théorème 2.17) lorsque $k_0 \neq \mathbb{C}((t))$. Il se prouve exactement de la même manière dans ce dernier cas: en effet, il suffit de procéder par récurrence à l'aide du lemme 2.18 de \cite{MilADT}, l'initialisation étant réduite à la dualité évidente $H^r(k_{-1},M) \times H^{-r}(k_{-1},\text{Hom}(M,\mathbb{Z}/n\mathbb{Z})) \rightarrow H^0(k_{-1},\mathbb{Z}/n\mathbb{Z})\cong\mathbb{Z}/n\mathbb{Z}$ pour le corps ``$-1$-local'' $k_{-1} = \mathbb{C}$.\\

\textbf{Groupes de Tate-Shafarevich.} Lorsque $L$ est le corps des fonctions d'une variété projective lisse géométriquement intègre $Y$ sur un corps $l$ et $M$ est un objet de la catégorie dérivée des $\text{Gal}(L^s/L)$-modules discrets, le $r$-ième groupe de Tate-Shafarevich de $M$ est, par définition, le groupe $\Sha^r(L,M) = \text{Ker}(H^r(L,M) \rightarrow \prod_{v \in Y^{(1)}} H^r(L_v,M))$. Dans la suite, il sera aussi utile d'introduire, pour chaque partie $S$ de $Y^{(1)}$, le groupe $\Sha^r_S(L,M) = \text{Ker}(H^r(L,M) \rightarrow \prod_{v \in Y^{(1)} \setminus S} H^r(L_v,M))$, ainsi que le groupe $\Sha^r_{\omega}(L,M)=\bigcup_{S \subseteq X^{(1)}, \text{ }S\text{ finie}} \Sha^r_S(L,M)$.\\

\textbf{Groupe de Brauer.} Lorsque $Z$ est un schéma, on note $\text{Br}(Z)$ le groupe de Brauer cohomologique $H^2(Z,\mathbb{G}_m)$. Si $Z$ est une $L$-variété pour un certain corps $L$, on note $\text{Br}_{\text{al}}(Z)$ le groupe de Brauer algébrique $\text{Ker}(\text{Br}(Z) \rightarrow \text{Br}(Z \times_k k^s))/\text{Im}(\text{Br}(L) \rightarrow \text{Br}(Z))$. Finalement, si $L$ est le corps des fonctions d'une variété projective lisse géométriquement intègre $Y$ sur un corps $l$, on notera $\Brusse(Z)$ le groupe $\text{Ker}(\text{Br}_{\text{al}}(Z) \rightarrow \prod_{v\in Y^{(1)}}\text{Br}_{\text{al}}(Z \times_L {L_v}))$. \\

\textbf{Cadre.} Dans toute la suite, $d$ désignera un entier naturel fixé (éventuellement nul), $k$ un corps $d$-local et $X$ une variété projective lisse géométriquement intègre sur $k$ de dimension $x$, avec $x>0$. On notera $X^{(1)}$ l'ensemble de ses points de codimension 1 et $K$ son corps des fonctions. Pour alléger, \underline{lorsque $k_0$ est fini, on supposera que le} \underline{corps $k_1$ est de caractéristique 0}: autrement dit, ou bien $k_0 = \mathbb{C}((t))$, ou bien $d\geq 1$ et $k_1$ est un corps $p$-adique. Le cas $\text{Car}(k_1) \neq 0$ sera brièvement traité à la fin de certaines parties en remarque, les preuves étant tout à fait analogues. Lorsque $M$ est un objet de la catégorie dérivée des $\text{Gal}(K^s/K)$-modules discrets, on notera $\Sha^r(M)$ (resp. $\Sha^r_S(M)$, resp. $\Sha^r_{\omega}(M)$) au lieu de $\Sha^r(K,M)$ (resp. $\Sha^r_S(K,M)$, resp. $\Sha^r_{\omega}(K,M)$).\\

\textbf{Cohomologie à support compact.} Pour $j: U \hookrightarrow X$ une immersion ouverte et $\mathcal{F}$ un faisceau sur $U$, le $r$-ième groupe de cohomologie à support compact est, par définition, le groupe $H^r_c(U,\mathcal{F}) = H^r(X,j_!\mathcal{F})$. On notera aussi $\mathbb{R}\Gamma_c(U,\mathcal{F})$ le complexe $\mathbb{R}\Gamma(X,j_!\mathcal{F})$, dont le $r$-ième groupe de cohomologie est le $r$-ième groupe de cohomologie à support compact de $\mathcal{F}$. De même, lorsque $\mathcal{F}^{\bullet}$ est un complexe de faisceaux sur $U$, on notera $H^r_c(U,\mathcal{F}^{\bullet})$ le groupe d'hypercohomologie $H^r(k, \mathbb{R}f_* j_! \mathcal{F}^{\bullet}) = H^r(X, j_! \mathcal{F}^{\bullet})$, où $f$ désigne le morphisme propre $X \rightarrow \text{Spec} \; k$. On remarquera que, contrairement à la définition classique de la cohomologie à support compact, la définition ci-dessus ne dépend pas du choix d'une compactification lisse de $U$ (plus précisément, nous avons choisi $X$ comme compactification lisse de $U$).

\subsection{Quelques rappels sur les complexes de Bloch et la cohomologie motivique}

Dans l'article \cite{Blo}, Bloch associe à chaque schéma $Y$ séparé de type fini sur un corps $E$ et à chaque entier naturel $i$ un complexe noté $z^i(Y,\cdot)$. Lorsque $Y$ est lisse, on note $\mathbb{Z}(i)$ (resp. $\mathbb{Z}(i)_{\text{Zar}}$) le complexe de faisceaux $z^i(-,\cdot)[-2i]$ sur le petit site étale (resp. sur le petit site de Zariski), et pour chaque groupe abélien $A$, on note $A(i)$ (resp. $A(i)_{\text{Zar}}$) le complexe $A \otimes \mathbb{Z}(i)$ (resp. $A \otimes \mathbb{Z}(i)_{\text{Zar}}$), qui coïncide avec le complexe $A \otimes^{\mathbf{L}} \mathbb{Z}(i)$ (resp. $A \otimes^{\mathbf{L}} \mathbb{Z}(i)_{\text{Zar}}$) puisque chaque terme de $\mathbb{Z}(i)$ (resp. $\mathbb{Z}(i)_{\text{Zar}}$) est un faisceau plat. Ce dernier fait sera utilisé de manière récurrente dans la suite.\\
On rappelle sans preuve les propriétés fondamentales du complexe $\mathbb{Z}(i)$ dont nous aurons besoin dans la suite:

\begin{theorem} \textbf{(Propriétés du complexe $\mathbb{Z}(i)$)}\\
Soit $Y$ un schéma séparé lisse de type fini sur un corps $E$. Soit $i \geq 0$.
\begin{itemize}
\item[(i)] Il existe un quasi-isomorphisme de complexes de faisceaux étales:
$$\mathbb{Z}(1)[1] \cong \mathbb{G}_m.$$
\item[(ii)] Le complexe $\mathbb{Z}(i)_{\text{Zar}}$ est concentré en degrés $\leq i$.
\item[(iii)] Soit $\alpha$ la projection du site étale de $Y$ sur le site de Zariski de $Y$. On a alors un isomorphisme:
$$\mathbb{Q}(i)_{\text{Zar}} \cong R\alpha_*\mathbb{Q}(i).$$
\item[(iv)] \textit{(Geisser-Levine)} Pour $m$ inversible dans $E$, on a un quasi-isomorphisme de complexes de faisceaux étales:
$$\mathbb{Z}/m\mathbb{Z}(i) \cong \mu_m^{\otimes i}.$$
\item[(v)] \textit{(Nesterenko-Suslin, Totaro)} On a un isomorphisme:
$$K_i^M(E) \cong H^i_{\text{Zar}}(E,\mathbb{Z}(i)_{\text{Zar}}),$$
où $K_*^M(E)$ désigne la $K$-théorie de Milnor de $E$.
\item[(vi)] \textit{(Conjecture de Bloch-Kato, prouvée par Rost-Voevodsky)} Pour $m$ inversible dans $E$, on a un isomorphisme:
$$K_i^M(E)/mK^M_i(E) \cong H^i(E,\mathbb{Z}/m\mathbb{Z}(i)).$$
\item[(vii)] \textit{(Conjecture de Beilinson-Lichtenbaum)} Si $j \leq i+1$, on a un isomorphisme:
$$H^j_{\text{Zar}}(Y,\mathbb{Z}(i)_{\text{Zar}}) \cong H^j(Y,\mathbb{Z}(i)).$$
\end{itemize}
\end{theorem}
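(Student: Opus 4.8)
The statement collects standard structural facts about Bloch's cycle complex, each established elsewhere, and the author indeed recalls them without proof; accordingly my plan is not to reprove them from scratch but to isolate the single deep external input, indicate the source of each assertion, and make explicit the logical dependencies, since several of the seven items are really formal consequences of one another. I would organise the points into three groups: the purely cycle-theoretic properties (i), (ii), (v); the finite-coefficient identifications (iv), (vi); and the étale-versus-Zariski comparisons (iii), (vii).

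For the first group, (i) is Bloch's own computation that $z^1(-,\cdot)$ has cohomology concentrated in a single degree with value $\mathbb{G}_m$, giving $\mathbb{Z}(1)[1] \cong \mathbb{G}_m$, which I would cite directly from \cite{Blo}. The degree bound (ii) I would deduce from the Gersten (niveau) spectral sequence for higher Chow groups together with the vanishing of $CH^i(-,n)$ outside the expected range, which forces the Zariski cohomology sheaves of $\mathbb{Z}(i)_{\text{Zar}}$ to vanish above degree $i$. The identification (v) of the top Zariski cohomology with Milnor K-theory is the theorem of Nesterenko--Suslin and Totaro; its proof compares the Gersten complex in top degree with the defining presentation of $K_i^M$, and I would cite it as stated.

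The remaining points all hinge on the \emph{norm residue isomorphism}. The plan is to take (vi), the Bloch--Kato conjecture proved by Rost and Voevodsky, as the single hard external input: it identifies $K_i^M(E)/m K_i^M(E)$ with $H^i(E,\mu_m^{\otimes i})$, which via (iv) reads as $H^i(E,\mathbb{Z}/m\mathbb{Z}(i))$. The finite-coefficient comparison (iv) itself, namely $\mathbb{Z}/m\mathbb{Z}(i)\cong\mu_m^{\otimes i}$ in the étale topology for $m$ invertible, I would obtain from the Geisser--Levine and Suslin--Voevodsky rigidity results. The Beilinson--Lichtenbaum isomorphism (vii) then follows by the standard Geisser--Levine argument deducing Beilinson--Lichtenbaum from Bloch--Kato: one compares the étale and Zariski hypercohomology of $\mathbb{Z}(i)$ through their finite-coefficient and rational pieces, using (iv) to control the former and (iii) to control the latter. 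Finally (iii), the rational comparison $\mathbb{Q}(i)_{\text{Zar}}\cong R\alpha_*\mathbb{Q}(i)$, reflects the triviality of étale descent with rational coefficients for these sheaves, and I would cite it from the same sources.

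The main obstacle is conceptual rather than technical: essentially all the arithmetic content is concentrated in the norm residue theorem (vi), whose proof is entirely beyond the scope of a recollection. The honest account therefore consists in singling out Bloch--Kato as the deep input and presenting (iv) and (vii) as its formal consequences through the Geisser--Levine reduction, with (i), (ii), (iii) and (v) being comparatively elementary facts about the cycle complex and its Gersten resolution.
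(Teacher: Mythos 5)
Your proposal matches the paper's treatment, which likewise proves nothing and simply cites the literature item by item: (i) from \cite{Blo}, (ii) and (iii) from \cite{Kah}, (iv) from \cite{GL}, (v) from \cite{NS} or \cite{Tot}, (vi) from Rost--Voevodsky (\cite{SJ}, \cite{Voe}), and (vii) as a consequence of Bloch--Kato via Suslin--Voevodsky \cite{SV} with the resolution-of-singularities hypothesis removed in \cite{GL} (and \cite{GL2} in positive characteristic). Your dependency analysis --- Bloch--Kato as the single deep input, with (vii) deduced from it by the Suslin--Voevodsky/Geisser--Levine reduction --- is exactly the logical structure the paper records in its citation for (vii), so the two accounts coincide.
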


\begin{proof}
\begin{itemize}
\item[(i)] Corollaire 6.4 de \cite{Blo}. 
\item[(ii)] Lemme 2.5 de \cite{Kah}.
\item[(iii)] Théorème 2.6 c) de \cite{Kah}.
\item[(iv)] Théorème 1.5 de \cite{GL}.
\item[(v)] \cite{NS} ou \cite{Tot}.
\item[(vi)] Le théorème a été prouvé par Rost et Voevodsky dans les articles \cite{SJ} et \cite{Voe}. On pourra aussi aller voir une esquisse de preuve dans l'exposé \cite{Rio}.
\item[(vii)] Suslin et Voevodsky ont démontré que la conjecture de Bloch-Kato implique la conjecture de Beilinson-Lichtenbaum en supposant la résolution de singularités dans \cite{SV}. L'hypothèse de résolution des singularités est supprimée dans \cite{GL}. Le cas de la caractéristique positive est traité dans \cite{GL2}.
\end{itemize}
\end{proof}

\begin{remarque}
En fait, le complexe $\mathbb{Z}(i)$ peut aussi être construit lorsque $Y$ est un schéma lisse sur le spectre d'un anneau de Dedekind, et dans ce cas, les propriétés (i), (ii), (iii), (iv) et (vii) restent vraies (on pourra aller voir \cite{Gei} et \cite{GeiDed}). Cela nous sera utile pour pouvoir appliquer ces constructions lorsque $Y$ est le complété de l'anneau local de $X$ en un point de codimension 1.
\end{remarque}

Pour terminer, on rappelle aussi que l'on dispose d'un accouplement $\mathbb{Z}(i) \otimes^{\mathbf{L}} \mathbb{Z}(j) \rightarrow \mathbb{Z}(i+j)$ pour chaque couple d'entiers naturels $(i,j)$ (voir par exemple \cite{Tot}), et on prouve la propriété suivante, bien connue des experts, pour laquelle on n'a pas trouvé de référence adéquate:

\begin{proposition}
Soit $n>0$. Soit $l$ un corps de caractéristique 0. Les propriétés suivantes sont équivalentes:
\begin{itemize}
\item[(i)] $\text{cd}(l) \leq n$.
\item[(ii)] Pour toute extension algébrique $L$ de $l$, on a $H^{n+2}(L,\mathbb{Z}(n)) = 0$.
\item[(ii')] Pour toute extension finie $L$ de $l$, on a $H^{n+2}(L,\mathbb{Z}(n)) = 0$.
\end{itemize}
\end{proposition}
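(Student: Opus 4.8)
The plan is to funnel everything through one computation valid for \emph{every} field $L$ of characteristic $0$, namely $H^{n+1}(L,\mathbb{Z}(n)) = 0$, and then to read off $H^{n+2}(L,\mathbb{Z}(n))$ in terms of Galois cohomology with finite and with divisible coefficients. First I would establish the vanishing: since $\operatorname{Spec} L$ is a point, $H^{n+1}_{\text{Zar}}(L,\mathbb{Z}(n)_{\text{Zar}})$ is just the $(n+1)$-th cohomology of the complex of global sections, which is zero because $\mathbb{Z}(n)_{\text{Zar}}$ is concentrated in degrees $\leq n$ by (ii); the Beilinson--Lichtenbaum isomorphism (vii), applicable because $n+1 \leq n+1$, transports this to the étale group $H^{n+1}(L,\mathbb{Z}(n)) = 0$. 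Applying the same concentration argument to $\mathbb{Q}(n)_{\text{Zar}}$ (still concentrated in degrees $\leq n$ since $-\otimes\mathbb{Q}$ is exact) together with (iii) gives $H^{n+1}(L,\mathbb{Q}(n)) = H^{n+2}(L,\mathbb{Q}(n)) = 0$ for every such $L$.

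Feeding these vanishings into the long exact sequences of the distinguished triangles $\mathbb{Z}(n) \xrightarrow{m} \mathbb{Z}(n) \to \mathbb{Z}/m\mathbb{Z}(n)$ and $\mathbb{Z}(n) \to \mathbb{Q}(n) \to \mathbb{Q}/\mathbb{Z}(n)$ produces, for every field $L$ of characteristic $0$, two isomorphisms that are the engine of the proof: a finite-coefficient one ${}_mH^{n+2}(L,\mathbb{Z}(n)) \cong H^{n+1}(L,\mu_m^{\otimes n})$, where I use Geisser--Levine (iv) to identify $\mathbb{Z}/m\mathbb{Z}(n) \cong \mu_m^{\otimes n}$, and a divisible one $H^{n+2}(L,\mathbb{Z}(n)) \cong H^{n+1}(L,\mathbb{Q}/\mathbb{Z}(n))$. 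Both rest only on $H^{n+1}(L,\mathbb{Z}(n))=0$ and the vanishing of the two $\mathbb{Q}$-groups.

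With these in hand the cycle $(i)\Rightarrow(ii)\Rightarrow(ii')\Rightarrow(i)$ is short. For $(i)\Rightarrow(ii)$, an algebraic extension satisfies $\operatorname{cd}(L) \leq \operatorname{cd}(l) \leq n$; since $\mathbb{Q}/\mathbb{Z}(n)$ is by (iv) a torsion sheaf, $H^{n+1}(L,\mathbb{Q}/\mathbb{Z}(n))$ is ordinary Galois cohomology and vanishes as $n+1 > \operatorname{cd}(L)$, so the divisible isomorphism forces $H^{n+2}(L,\mathbb{Z}(n)) = 0$. The step $(ii)\Rightarrow(ii')$ is trivial. For $(ii')\Rightarrow(i)$ I fix a prime $p$ and apply the finite-coefficient isomorphism with $m = p$: hypothesis $(ii')$ gives $H^{n+1}(L,\mu_p^{\otimes n}) = {}_pH^{n+2}(L,\mathbb{Z}(n)) = 0$ for every finite $L/l$. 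Then I finish by a Sylow-and-twist argument: choosing a pro-$p$-Sylow $G_p$ of $G = \operatorname{Gal}(l^s/l)$ contained in $\operatorname{Gal}(l^s/l(\mu_p))$ (possible because $[l(\mu_p):l]$ is prime to $p$), its fixed field $l_p$ contains $l(\mu_p)$, so over the cofinal family of finite $L$ with $l(\mu_p)\subseteq L\subseteq l_p$ one has $\mu_p^{\otimes n} \cong \mathbb{Z}/p\mathbb{Z}$ and hence $H^{n+1}(L,\mathbb{Z}/p\mathbb{Z}) = 0$. Passing to the colimit yields $H^{n+1}(G_p,\mathbb{Z}/p\mathbb{Z}) = 0$, whence $\operatorname{cd}_p(G_p)\leq n$ as $G_p$ is pro-$p$, and $\operatorname{cd}_p(l) = \operatorname{cd}_p(G_p)\leq n$; letting $p$ vary gives $\operatorname{cd}(l)\leq n$.

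The deep inputs (Beilinson--Lichtenbaum, i.e. Bloch--Kato) come for free from the quoted theorem, so the real obstacle is the last step: the motivic machinery only ever delivers the \emph{twisted} coefficients $\mu_p^{\otimes n}$, whereas the pro-$p$ criterion for cohomological dimension needs the untwisted $\mathbb{Z}/p\mathbb{Z}$. Bridging this gap is precisely what the Sylow/roots-of-unity reduction does, and keeping track of the twist (and of the case $p=2$, where $l(\mu_2)=l$) is where care is required.
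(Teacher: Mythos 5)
Your proof is correct and takes essentially the same route as the paper: the Beilinson--Lichtenbaum vanishing $H^{n+1}(L,\mathbb{Z}(n))=0$ together with the vanishing of the $\mathbb{Q}(n)$-cohomology to translate between $\mathbb{Z}(n)$, $\mu_m^{\otimes n}$ and $\mathbb{Q}/\mathbb{Z}(n)$ coefficients, then the Sylow/roots-of-unity untwisting and the pro-$p$ criterion for cohomological dimension. The only (immaterial) organizational difference is that you close the cycle via (ii')$\Rightarrow$(i) directly, taking the colimit at the level of $H^{n+1}(-,\mathbb{Z}/p\mathbb{Z})$ over finite subextensions of the Sylow fixed field, whereas the paper proves (ii')$\Rightarrow$(ii) by that same colimit argument and then applies (ii)$\Rightarrow$(i) to the fixed field itself.
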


\begin{proof} 
Supposons (i). Soit $L$ une extension algébrique de $l$. Comme $H^{n+2}(L,\mathbb{Q}(n)) = 0$, on dispose d'une surjection $H^{n+1}(L,\mathbb{Q}/\mathbb{Z}(n)) \rightarrow H^{n+2}(L,\mathbb{Z}(n))$. Or $H^{n+1}(L,\mathbb{Q}/\mathbb{Z}(n)) \cong \varinjlim_m H^{n+1}(L,\mu_m^{\otimes n})$. Puisque l'hypothèse (i) impose que $\text{cd}(L) \leq n$, on déduit que $H^{n+1}(L,\mathbb{Q}/\mathbb{Z}(n)) = 0$, d'où (ii).\\
Supposons (ii). Soit $p$ un nombre premier. Soit $L_p$ le sous-corps de $l^s$ fixé par un $p$-Sylow de $\text{Gal}(l^s/l)$. Le corps $L_p$ contient toutes les racines $p$-ièmes de l'unité de $l^s$, donc:
$$H^{n+1}(L_p,\mathbb{Z}/p\mathbb{Z}) = H^{n+1}(L_p,\mu_p^{\otimes n}).$$
Or, d'après la conjecture de Beilinson-Lichtenbaum, on a $H^{n+1}(L_p,\mathbb{Z}(n)) = 0$, ce qui entraîne que $H^{n+1}(L_p,\mu_p^{\otimes n}) = H^{n+2}(L_p,\mathbb{Z}(n))[p] = 0$. On en déduit que $H^{n+1}(L_p,\mathbb{Z}/p\mathbb{Z}) = 0$, ce qui implique (i).\\
Pour terminer, montrons que (ii') implique (ii). Soit $L$ une extension algébrique de $l$. Pour chaque extension finie $L'$ de $l$ contenue dans $L$, on a par hypothèse $H^{n+2}(L',\mathbb{Z}(n)) = 0$. En passant à la limite inductive sur $L'$, on obtient $H^{n+2}(L,\mathbb{Z}(n)) = 0$, ce qui achève la preuve.

\end{proof}

\begin{remarque}
Si $l$ est un corps de caractéristique $p > 0$, les propriétés suivantes sont équivalentes:
\begin{itemize}
\item[(i)] $\text{cd}(l) \leq n$.
\item[(ii)] Pour toute extension algébrique $L$ de $l$, on a $H^{n+2}(L,\mathbb{Z}(n))_{\text{non}-p} = 0$.
\item[(ii')] Pour toute extension finie $L$ de $l$, on a $H^{n+2}(L,\mathbb{Z}(n))_{\text{non}-p} = 0$.
\end{itemize}
La preuve est tout à fait analogue.
\end{remarque}

\section{\scshape Quelques résultats préliminaires}

Dans cette partie, nous allons présenter deux résultats préliminaires nécessaires pour la suite. D'une part, il sera utile de disposer de reformulations dans le langage des catégories dérivées du théorème de dualité sur un corps local supérieur et de la dualité de Poincaré. N'ayant pas trouvé de référence convenable, nous donnons une preuve du théorème suivant:

\begin{theorem} \textbf{(Dualité sur un corps local supérieur et dualité de Poincaré)}\\ \label{rappels}
Rappelons que nous avons supposé $\text{Car}(k_1)=0$. Soit $m>0$.
\begin{itemize}
\item[(i)] Notons $G = \text{Gal}(k^s/k)$. Soit $M^{\bullet}$ un complexe de $G$-modules discrets de $m$-torsion borné inférieurement. On a alors un isomorphisme dans la catégorie dérivée:
$$\mathbb{R}\text{Hom}_{G,\mathbb{Z}/m\mathbb{Z}} (M^{\bullet}, \mathbb{Z}/m\mathbb{Z}(d))[d+1] \cong \mathbb{R}\text{Hom}_{\mathbb{Z}/m\mathbb{Z}} (\mathbb{R}\Gamma_G M^{\bullet}, \mathbb{Z}/m\mathbb{Z}).$$
\item[(ii)] Soit $\overline{X}$ une $k^s$-variété projective lisse de dimension $x$. Soit $\overline{\mathcal{F}}$ un faisceau constructible de $m$-torsion sur un ouvert non vide $\overline{U}$ de $\overline{X}$. On a alors un isomorphisme dans la catégorie dérivée des groupes abéliens:
$$\mathbb{R} \text{Hom}_{\overline{U},\mathbb{Z}/m\mathbb{Z}}(\overline{\mathcal{F}},\mu_m^{\otimes x}) \cong \mathbb{R} \text{Hom}_{\mathbb{Z}/m\mathbb{Z}} (\mathbb{R}\Gamma_c(\overline{U},\overline{\mathcal{F}}), \mathbb{Z}/m\mathbb{Z})[-2x].$$
\end{itemize}
\end{theorem}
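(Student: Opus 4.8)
The plan is to recognise both assertions as derived-category repackagings of classical duality theorems and to reduce each to something one can check on cohomology. For (i) the arithmetic input is the perfect pairing $H^r(k,M) \times H^{d+1-r}(k,\underline{\text{Hom}}(M,\mu_m^{\otimes d})) \to \mathbb{Q}/\mathbb{Z}$ recalled above for finite $\text{Gal}(k^s/k)$-modules $M$, and for (ii) it is Poincaré--Verdier duality in étale cohomology. In each case the real work is to produce a genuine morphism in the derived category and to propagate an isomorphism known on generators.

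For (i), I would first build the natural morphism realising the claimed isomorphism. Using $\mathbb{Z}/m\mathbb{Z}(d) \cong \mu_m^{\otimes d}$ (Geisser--Levine), the evaluation pairing $\underline{\text{Hom}}_{\mathbb{Z}/m\mathbb{Z}}(M^{\bullet},\mu_m^{\otimes d}) \otimes^{\mathbf{L}} M^{\bullet} \to \mu_m^{\otimes d}$ together with the cup product gives a pairing
$$\mathbb{R}\text{Hom}_{G,\mathbb{Z}/m\mathbb{Z}}(M^{\bullet},\mu_m^{\otimes d}) \otimes^{\mathbf{L}} \mathbb{R}\Gamma_G M^{\bullet} \longrightarrow \mathbb{R}\Gamma_G(\mu_m^{\otimes d}).$$
Since $\text{cd}(k)=d+1$, the truncation map $\mathbb{R}\Gamma_G(\mu_m^{\otimes d}) \to H^{d+1}(k,\mu_m^{\otimes d})[-(d+1)] \cong \mathbb{Z}/m\mathbb{Z}[-(d+1)]$ exists, and composing and adjoining yields
$$\Phi \colon \mathbb{R}\text{Hom}_{G,\mathbb{Z}/m\mathbb{Z}}(M^{\bullet},\mu_m^{\otimes d})[d+1] \longrightarrow \mathbb{R}\text{Hom}_{\mathbb{Z}/m\mathbb{Z}}(\mathbb{R}\Gamma_G M^{\bullet},\mathbb{Z}/m\mathbb{Z}).$$
Here I use that $\mathbb{Z}/m\mathbb{Z}$ is self-injective, so $\mu_m^{\otimes d}$ is an injective $\mathbb{Z}/m\mathbb{Z}$-module and $\mathbb{R}\text{Hom}_{\mathbb{Z}/m\mathbb{Z}}(-,\mathbb{Z}/m\mathbb{Z})$ is exact. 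To prove $\Phi$ is an isomorphism I would argue by dévissage: as $\text{cd}(k)$ is finite and the coefficients are torsion, both functors have bounded amplitude, so a bounded-below $M^{\bullet}$ can be truncated, and $\Phi$ being a morphism of exact ($\delta$-)functors reduces the claim to a single finite $m$-torsion module $M$ placed in degree $0$. For such $M$ the self-injectivity makes $\mathbb{R}\underline{\text{Hom}}_{\mathbb{Z}/m\mathbb{Z}}(M,\mu_m^{\otimes d})$ concentrated in degree $0$, and $\Phi$ induces on cohomology exactly the recalled pairing $H^r(k,M) \times H^{d+1-r}(k,\underline{\text{Hom}}(M,\mu_m^{\otimes d})) \to \mathbb{Z}/m\mathbb{Z}$, which is perfect.

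For (ii), the statement is relative Poincaré--Verdier duality for the structure morphism $f\colon \overline{X} \to \text{Spec}\,k^s$ applied to $j_!\overline{\mathcal{F}}$, where $j\colon \overline{U} \hookrightarrow \overline{X}$. I would invoke the adjunction $\mathbb{R}\text{Hom}_{\overline{X}}(-, f^!\mathbb{Z}/m\mathbb{Z}) \cong \mathbb{R}\text{Hom}_{\mathbb{Z}/m\mathbb{Z}}(\mathbb{R}f_!(-),\mathbb{Z}/m\mathbb{Z})$. Since $k^s$ has characteristic $0$ (because $\text{Car}(k_1)=0$), $m$ is invertible on $\overline{U}$, and smoothness of $\overline{U}$ of dimension $x$ gives, via the smooth purity theorem, $j^* f^!\mathbb{Z}/m\mathbb{Z} \cong \mu_m^{\otimes x}[2x]$. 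As $\overline{X}$ is proper, $\mathbb{R}f_!(j_!\overline{\mathcal{F}}) = \mathbb{R}\Gamma(\overline{X}, j_!\overline{\mathcal{F}}) = \mathbb{R}\Gamma_c(\overline{U},\overline{\mathcal{F}})$, exactly the paper's definition of compactly supported cohomology; and the adjunction between $j_!$ and $j^*$ gives $\mathbb{R}\text{Hom}_{\overline{X}}(j_!\overline{\mathcal{F}}, f^!\mathbb{Z}/m\mathbb{Z}) \cong \mathbb{R}\text{Hom}_{\overline{U}}(\overline{\mathcal{F}}, \mu_m^{\otimes x})[2x]$. Rearranging the shift $[2x]$ yields the claim, constructibility of $\overline{\mathcal{F}}$ guaranteeing that the duality theorem applies.

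The main obstacle I anticipate lies entirely on the side of (i): not the arithmetic content, which is the already-recalled local duality over a $d$-local field, but the careful construction of $\Phi$ as a genuine derived-category morphism (rather than a mere family of isomorphisms of cohomology groups) and the verification that it is compatible with the distinguished triangles used in the truncation, so that checking it on single modules in degree $0$ genuinely suffices. The self-injectivity of $\mathbb{Z}/m\mathbb{Z}$ and the finiteness of $\text{cd}(k)$ are precisely what make this reduction clean; part (ii) should then follow formally once the correct version of Verdier duality and the smooth purity computation of $f^!\mathbb{Z}/m\mathbb{Z}$ are cited.
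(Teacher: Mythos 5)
For part (i) your proposal is, in substance, the paper's own argument: the morphism $\Phi$ you construct --- the evaluation/cup-product pairing followed by the truncation $\mathbb{R}\Gamma_G(\mu_m^{\otimes d}) \rightarrow \mathbb{Z}/m\mathbb{Z}[-d-1]$ permitted by $\text{cd}(k)=d+1$ and the trace isomorphism $H^{d+1}(k,\mu_m^{\otimes d})\cong \mathbb{Z}/m\mathbb{Z}$ --- is exactly the composite the paper builds (phrased there via $\mathbb{R}\Gamma_G M^{\bullet} = \mathbb{R}\text{Hom}_{G,\mathbb{Z}/m\mathbb{Z}}(\mathbb{Z}/m\mathbb{Z},M^{\bullet})$ and composition of $\mathbb{R}\text{Hom}$'s). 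You differ only in how you check $\Phi$ is an isomorphism: you dévisse by truncation down to a module in degree $0$, while the paper compares the hyperext spectral sequence with the dual of the hypercohomology spectral sequence, the exactness of $\text{Hom}_{\mathbb{Z}/m\mathbb{Z}}(-,\mathbb{Z}/m\mathbb{Z})$ playing the same role in both; this is a cosmetic difference. For part (ii), however, your route is genuinely different: the paper quotes the Ext-form of Poincaré duality (Fu 8.5.3, Milne) and then repeats the morphism-construction trick of (i) using $\text{cd}(\overline{X})=2x$ and the trace, whereas you invoke the Verdier adjunction $\mathbb{R}f_! \dashv f^!$ together with the purity computation $f^!\mathbb{Z}/m\mathbb{Z} \cong \mu_m^{\otimes x}[2x]$ and the identification $\mathbb{R}f_*j_!\overline{\mathcal{F}} = \mathbb{R}\Gamma_c(\overline{U},\overline{\mathcal{F}})$, which matches the paper's definition of compact supports. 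Your version delivers the derived-category isomorphism in one stroke, with no need to construct a map and propagate isomorphisms of cohomology groups; its cost is taking the six-functor formalism as input, where the paper gets by with the classical group-level duality plus the truncation device.

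There is one real gap, in the base case of your dévissage for (i). The truncation of a bounded-below complex of discrete $m$-torsion $G$-modules produces in degree $0$ an arbitrary discrete $m$-torsion module, not a finite one, whereas the duality you invoke as base case is the one recalled in the paper only for finite modules. The paper's proof uses the Ext-form duality $\text{Ext}^{r}_{G,\mathbb{Z}/m\mathbb{Z}}(M,\mu_m^{\otimes d}) \cong \text{Hom}_{\mathbb{Z}/m\mathbb{Z}}(H^{d+1-r}(k,M),\mathbb{Z}/m\mathbb{Z})$ for \emph{every} discrete $m$-torsion $M$, and this is the supplement you need: write $M = \varinjlim_i M_i$ as a filtered colimit of its finite submodules; Galois cohomology commutes with filtered colimits, so the right-hand side becomes $\varprojlim_i H^{d+1-r}(k,M_i)^D$, and on the left-hand side the relevant derived limits ${\varprojlim}^s$ vanish because the groups $H^{j}(k,M_i)$ are finite for finite $M_i$ over a $d$-local field with $\text{Car}(k_1)=0$ (so the dual systems are systems of finite groups). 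With that passage from finite to discrete modules added, your reduction covers all the modules arising from the truncations and the argument is complete.
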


\begin{proof}
\begin{itemize}
\item[(i)] D'après le théorème de dualité sur un corps local supérieur (voir le théorème 2.17 de \cite{MilADT}), pour chaque $G$-module discret $M$ de $m$-torsion, on a un isomorphisme:
$$\text{Ext}^{r}_{G,\mathbb{Z}/m\mathbb{Z}}(M,\mu_m^{\otimes d}) \cong \text{Hom}_{\mathbb{Z}/m\mathbb{Z}}( H^{d+1-r}(k,M), \mathbb{Z}/m\mathbb{Z}).$$
Or, comme $\mathbb{Z}/m\mathbb{Z}$ est un $\mathbb{Z}/m\mathbb{Z}$-module injectif, on dispose des suites spectrales:
$$\text{Ext}^{s}_{G,\mathbb{Z}/m\mathbb{Z}}(H^{-t}(M^{\bullet}),\mu_m^{\otimes d}) \Rightarrow \text{Ext}^{s+t}_{G,\mathbb{Z}/m\mathbb{Z}}(M^{\bullet},\mu_m^{\otimes d}),$$
$$\text{Hom}_{\mathbb{Z}/m\mathbb{Z}}( H^{d+1-s}(k,H^{-t}(M^{\bullet})), \mathbb{Z}/m\mathbb{Z}) \Rightarrow \text{Hom}_{\mathbb{Z}/m\mathbb{Z}}( H^{d+1-s-t}(k,M^{\bullet}), \mathbb{Z}/m\mathbb{Z}).$$
On en déduit que:
$$\text{Ext}^{r}_{G,\mathbb{Z}/m\mathbb{Z}}(M^{\bullet},\mu_m^{\otimes d}) \cong \text{Hom}_{\mathbb{Z}/m\mathbb{Z}}( H^{d+1-r}(k,M^{\bullet}), \mathbb{Z}/m\mathbb{Z}).$$
Toujours parce que $\mathbb{Z}/m\mathbb{Z}$ est un $\mathbb{Z}/m\mathbb{Z}$-module injectif, on remarque que:
\begin{align*}
\text{Hom}_{\mathbb{Z}/m\mathbb{Z}}( H^{d+1-r}(k,M^{\bullet}), \mathbb{Z}/m\mathbb{Z}) &\cong H^{r-d-1}\mathbb{R}\text{Hom}_{\mathbb{Z}/m\mathbb{Z}}(\mathbb{R}\Gamma_G M^{\bullet},\mathbb{Z}/m\mathbb{Z}) \\
&\cong H^{r}\mathbb{R}\text{Hom}_{\mathbb{Z}/m\mathbb{Z}}(\mathbb{R}\Gamma_G M^{\bullet},\mathbb{Z}/m\mathbb{Z})[-d-1].
\end{align*}
On a donc des isomorphismes:
$$H^r \mathbb{R}\text{Hom}_{G,\mathbb{Z}/m\mathbb{Z}}(M^{\bullet},\mu_m^{\otimes d}) \cong H^{r}\mathbb{R}\text{Hom}_{\mathbb{Z}/m\mathbb{Z}}(\mathbb{R}\Gamma_G M^{\bullet},\mathbb{Z}/m\mathbb{Z})[-d-1].$$
Reste alors à contruire un morphisme dans la catégorie dérivée:
$$\mathbb{R}\text{Hom}_{G,\mathbb{Z}/m\mathbb{Z}}(M^{\bullet},\mu_m^{\otimes d}) \rightarrow \mathbb{R}\text{Hom}_{\mathbb{Z}/m\mathbb{Z}}(\mathbb{R}\Gamma_G M^{\bullet},\mathbb{Z}/m\mathbb{Z})[-d-1]$$
induisant les isomorphismes précédents. Pour ce faire, on remarque que $\mathbb{R}\Gamma_G M^{\bullet} = \mathbb{R}\text{Hom}_{G,\mathbb{Z}/m\mathbb{Z}}(\mathbb{Z}/m\mathbb{Z},M^{\bullet})$, ce qui fournit un morphisme:
$$\mathbb{R}\text{Hom}_{G,\mathbb{Z}/m\mathbb{Z}}(M^{\bullet},\mu_m^{\otimes d}) \rightarrow \mathbb{R}\text{Hom}_{\mathbb{Z}/m\mathbb{Z}}(\mathbb{R}\Gamma_G M^{\bullet},\mathbb{R}\text{Hom}_{G,\mathbb{Z}/m\mathbb{Z}}(\mathbb{Z}/m\mathbb{Z},\mu_m^{\otimes d})).$$
Comme $k$ est de dimension cohomologique $d+1$, le complexe $\mathbb{R}\text{Hom}_{G,\mathbb{Z}/m\mathbb{Z}}(\mathbb{Z}/m\mathbb{Z},\mu_m^{\otimes d})$ est quasi-isomorphe au complexe:
$$... \rightarrow \text{Hom}^{d-1}_{G,\mathbb{Z}/m\mathbb{Z}}(\mathbb{Z}/m\mathbb{Z},\mu_m^{\otimes d}) \rightarrow \text{Hom}^{d}_{G,\mathbb{Z}/m\mathbb{Z}}(\mathbb{Z}/m\mathbb{Z},\mu_m^{\otimes d}) \rightarrow H^{d+1}(k,\mu_m^{\otimes d}) \rightarrow 0 \rightarrow ...,$$
où $H^{d+1}(k,\mu_m^{\otimes d})$ est placé en degré $d+1$, ce qui, en tenant compte de l'isomorphisme $H^{d+1}(k,\mu_m^{\otimes d}) \cong \mathbb{Z}/m\mathbb{Z}$, permet de construire un morphisme:
$$\mathbb{R}\text{Hom}_{G,\mathbb{Z}/m\mathbb{Z}}(\mathbb{Z}/m\mathbb{Z},\mu_m^{\otimes d}) \rightarrow \mathbb{Z}/m\mathbb{Z}[-d-1].$$
Par composition, on obtient donc un morphisme dans la catégorie dérivée:
$$\mathbb{R}\text{Hom}_{G,\mathbb{Z}/m\mathbb{Z}}(M^{\bullet},\mu_m^{\otimes d}) \rightarrow \mathbb{R}\text{Hom}_{\mathbb{Z}/m\mathbb{Z}}(\mathbb{R}\Gamma_G M^{\bullet},\mathbb{Z}/m\mathbb{Z})[-d-1]$$
induisant les isomorphismes:
$$H^r \mathbb{R}\text{Hom}_{G,\mathbb{Z}/m\mathbb{Z}}(M^{\bullet},\mu_m^{\otimes d}) \cong H^{r}\mathbb{R}\text{Hom}_{\mathbb{Z}/m\mathbb{Z}}(\mathbb{R}\Gamma_G M^{\bullet},\mathbb{Z}/m\mathbb{Z})[-d-1].$$
C'est donc un isomorphisme dans la catégorie dérivée.
\item[(ii)] Le théorème de dualité de Poincaré (voir le théorème 8.5.3 de \cite{Fu} ou le théorème 11.1 de la partie VI de \cite{MilEC}) fournit un isomorphisme:
$$\text{Ext}^r_{\overline{U},\mathbb{Z}/m\mathbb{Z}}(\overline{\mathcal{F}},\mu_m^{\otimes x}) \cong \text{Hom}_{\mathbb{Z}/m\mathbb{Z}}(H^{2x-r}_c(\overline{U},\overline{\mathcal{F}}),\mathbb{Z}/m\mathbb{Z}).$$
Comme $\mathbb{Z}/m\mathbb{Z}$ est un $\mathbb{Z}/m\mathbb{Z}$-module injectif, on remarque que:
\begin{align*}
\text{Hom}_{\mathbb{Z}/m\mathbb{Z}}(H^{2x-r}_c(\overline{U},\overline{\mathcal{F}}),\mathbb{Z}/m\mathbb{Z}) &\cong H^{r-2x}\mathbb{R}\text{Hom}_{\mathbb{Z}/m\mathbb{Z}}(\mathbb{R}\Gamma_c\overline{\mathcal{F}},\mathbb{Z}/m\mathbb{Z}) \\
&\cong H^{r}\mathbb{R}\text{Hom}_{\mathbb{Z}/m\mathbb{Z}}(\mathbb{R}\Gamma_c\overline{\mathcal{F}},\mathbb{Z}/m\mathbb{Z})[-2x].
\end{align*}
On a donc des isomorphismes:
$$H^r \mathbb{R}\text{Hom}_{\overline{U},\mathbb{Z}/m\mathbb{Z}}(\overline{\mathcal{F}},\mu_m^{\otimes x}) \cong H^{r}\mathbb{R}\text{Hom}_{\mathbb{Z}/m\mathbb{Z}}(\mathbb{R}\Gamma_c\overline{\mathcal{F}},\mathbb{Z}/m\mathbb{Z})[-2x].$$
Comme $\overline{X}$ est de dimension cohomologique $2x$ et comme $H^{2x}(k,\mu_m^{\otimes x}) \cong \mathbb{Z}/m\mathbb{Z}$,  des méthodes tout à fait analogues à celles utlisées en (i) montrent que les isomorphismes précédents proviennent d'un morphisme:
$$\mathbb{R}\text{Hom}_{\overline{U},\mathbb{Z}/m\mathbb{Z}}(\overline{\mathcal{F}},\mu_m^{\otimes x}) \rightarrow \mathbb{R}\text{Hom}_{\mathbb{Z}/m\mathbb{Z}}(\mathbb{R}\Gamma_c\overline{\mathcal{F}},\mathbb{Z}/m\mathbb{Z})[-2x],$$
qui est donc un isomorphisme dans la catégorie dérivée.
\end{itemize}
\end{proof}

\begin{remarque}
L'assertion (i) reste vraie lorsque $\text{Car}(k_1)\neq 0$ à condition de supposer que $m$ est premier avec $\text{Car}(k_1)$. L'assertion (ii) reste vraie en remplaçant $k^s$ par n'importe quel corps séparablement clos de caractéristique première à $m$.
\end{remarque}

D'autre part, nous aurons besoin du calcul d'un groupe de cohomologie, analogue au lemme 1.1 de \cite{HS1}:

\begin{lemma}\label{préliminaire}
Soit $U$ un ouvert non vide de $X$. Alors $H^{d+2x+2}_c(U,\mathbb{Z}(d+x))\cong \mathbb{Q}/\mathbb{Z}$.
\end{lemma}

\begin{proof} 
\begin{itemize}
\item[$\bullet$] Calculons d'abord $H^{d+2x+2}(X,\mathbb{Z}(d+x))$. La suite exacte courte $0 \rightarrow \mathbb{Z}(d+x) \rightarrow \mathbb{Q}(d+x) \rightarrow \mathbb{Q}/\mathbb{Z}(d+x) \rightarrow 0$ fournit une suite exacte de cohomologie:
\begin{align*}
H^{d+2x+1}(X,\mathbb{Q}(d+x)) & \rightarrow H^{d+2x+1}(X,\mathbb{Q}/\mathbb{Z}(d+x)) \\ & \rightarrow H^{d+2x+2}(X,\mathbb{Z}(d+x)) \rightarrow H^{d+2x+2}(X,\mathbb{Q}(d+x)).
\end{align*}
Rappelons que, si $\alpha$ désigne la projection du petit site étale de $X$ sur le petit site de Zariski de $X$, alors le morphisme $\mathbb{Q}(d+x)_{\text{Zar}} \rightarrow R\alpha_*\mathbb{Q}(d+x)$ est un isomorphisme. Par conséquent, pour $r>0$, $H^r(X,\mathbb{Q}(d+x))\cong H^r_{\text{Zar}}(X,\mathbb{Q}(d+x)_{\text{Zar}})$, qui est nul pour $r > 2x+d$ puisque $\mathbb{Q}(d+x)$ est concentré en degrés $\leq d+x$ et $X$ est de dimension $x$. On en déduit un isomorphisme:
$$ H^{d+2x+2}(X,\mathbb{Z}(d+x))\cong H^{d+2x+1}(X,\mathbb{Q}/\mathbb{Z}(d+x)) \cong \varinjlim_m H^{d+2x+1}(X,\mu_m^{\otimes x+d}).$$
Dans la suite spectrale de Hochschild-Serre $H^r(k,H^s(\overline{X},\mu_m^{\otimes x+d})) \Rightarrow H^{r+s}(X,\mu_m^{\otimes x+d})$, les termes de gauche sont nuls dès que $r>d+1$ ou $s>2x$. Cela induit un isomorphisme:
$$H^{d+2x+1}(X,\mu_m^{\otimes x+d}) \cong H^{d+1}(k,H^{2x}(\overline{X},\mu_m^{\otimes x+d})) \cong H^{d+1}(k,H^{2x}(\overline{X},\mu_m^{\otimes x}) \otimes \mu_m^d).$$
Or l'application trace du théorème de dualité de Poincaré fournit un isomorphisme $H^{2x}(\overline{X},\mu_m^{\otimes x}) \cong \mathbb{Z}/m\mathbb{Z}$. Comme $k$ est $d$-local, cela impose des isomorphismes: $$H^{d+2x+1}(X,\mu_m^{\otimes x+d})\cong H^{d+1}(k, \mu_m^d) \cong \mathbb{Z}/m\mathbb{Z},$$
ce qui permet de conclure que $H^{d+2x+2}(X,\mathbb{Z}(d+x)) \cong \mathbb{Q}/\mathbb{Z}$.
\item[$\bullet$] Notons $Z = X \setminus U$ et écrivons maintenant la suite exacte de localisation:
\begin{align*}
H^{d+2x+1}&(Z,i^*\mathbb{Z}(x+d))  \rightarrow H^{d+2x+2}_c(U,\mathbb{Z}(x+d))\\ & \rightarrow H^{d+2x+2}(X,\mathbb{Z}(x+d)) \rightarrow H^{d+2x+2}(Z,i^*\mathbb{Z}(x+d)),
\end{align*}
où $i: Z \hookrightarrow X$ désigne l'immersion fermée. Comme avant, si l'on note $z = \dim Z$, on a un isomorphisme $H^{r+1}(Z,i^*\mathbb{Z}(x+d)) \cong H^r(Z, i^*\mathbb{Q}/\mathbb{Z}(x+d)) = H^r(Z, \mathbb{Q}/\mathbb{Z}(x+d))$ dès que $r > z + x + d$ et ces groupes sont nuls dès que $r > d+2z+1$. Comme $z < x$, ces inégalités sont vérifiées pour $r= d+2x$ et $r=d+2x+1$, d'où un isomorphisme: $ H^{d+2x+2}_c(U,\mathbb{Z}(x+d))\cong H^{d+2x+2}(X,\mathbb{Z}(x+d))$. En tenant compte de la première étape, on obtient un isomorphisme $H^{d+2x+2}_c(U,\mathbb{Z}(d+x))\cong \mathbb{Q}/\mathbb{Z}$.
\end{itemize}
\end{proof}

\begin{remarque}
Si $p=\text{Car}(k_1)>0$, on obtient un isomorphisme $H^{d+2x+2}_c(U,\mathbb{Z}(d+x))_{\text{non}-p}\cong (\mathbb{Q}/\mathbb{Z})_{\text{non}-p}$.
\end{remarque}

\section{\scshape Modules finis}

Dans cette section, nous allons traiter le cas des $ \text{Gal}(K^s/K)$-modules finis. C'est le cas le plus simple, et il sera essentiel dans la suite puisque, afin de traiter les tores ou les groupes de type multiplicatif, nous nous ramènerons toujours à des modules finis. Dans cette section, tout reste analogue au cas où $k$ est un corps $p$-adique traité dans les articles \cite{HS1} et \cite{HS2}. Notre premier but consiste à établir un théorème de dualité de type Poitou-Tate sur le corps $K$. 

\begin{proposition} \textbf{(Dualité globale d'Artin-Verdier pour les modules finis)} \label{AV fini}
Soit $U$ un ouvert non vide de $X$. Soit $\mathcal{F}$ un schéma en groupes fini étale abélien sur $U$ et notons $\mathcal{F}' = \underline{Hom} (\mathcal{F},\mathbb{Q}/\mathbb{Z}(d+x))$. On a alors un accouplement parfait de groupes finis pour chaque $r \in \mathbb{Z}$:
$$H^r(U,\mathcal{F}') \times H^{d+2x+1-r}_c(U,\mathcal{F}) \rightarrow \mathbb{Q}/\mathbb{Z}.$$
\end{proposition}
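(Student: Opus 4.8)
The plan is to deduce the perfect pairing by combining the two derived-category reformulations of Theorem \ref{rappels} with the Hochschild–Serre mechanism, the target $\mathbb{Q}/\mathbb{Z}$ being furnished by Lemma \ref{préliminaire}. First I would reduce to the case where $\mathcal{F}$ is killed by a fixed integer $m$ (its exponent); since $\mathrm{Car}(k_1)=0$, $m$ is invertible on $X$, so by Geisser–Levine $\mathbb{Z}/m\mathbb{Z}(d+x)\cong\mu_m^{\otimes(d+x)}$, and because $\mathbb{Q}/\mathbb{Z}(d+x)=\varinjlim_m\mu_m^{\otimes(d+x)}$ is a divisible (hence injective among torsion sheaves) sheaf concentrated in degree $0$, one has $\mathcal{F}'=\underline{\mathrm{Hom}}(\mathcal{F},\mu_m^{\otimes(d+x)})$ with $\underline{\mathbb{R}\mathrm{Hom}}=\underline{\mathrm{Hom}}$. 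The pairing itself is the cup product
$$H^r(U,\mathcal{F}')\times H^{d+2x+1-r}_c(U,\mathcal{F})\to H^{d+2x+1}_c(U,\mathbb{Q}/\mathbb{Z}(d+x))$$
induced by the evaluation $\mathcal{F}'\otimes^{\mathbf{L}}\mathcal{F}\to\mathbb{Q}/\mathbb{Z}(d+x)$, composed with the isomorphism $H^{d+2x+1}_c(U,\mathbb{Q}/\mathbb{Z}(d+x))\cong H^{d+2x+2}_c(U,\mathbb{Z}(d+x))\cong\mathbb{Q}/\mathbb{Z}$ coming from Lemma \ref{préliminaire} and the vanishing of the rational cohomology in these degrees.

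To prove perfectness I would assemble an isomorphism in the derived category of $\mathbb{Z}/m\mathbb{Z}$-modules. Write $G=\mathrm{Gal}(k^s/k)$, $\overline{U}=U\times_k k^s$, and set $M^\bullet=\mathbb{R}\Gamma_c(\overline{U},\overline{\mathcal{F}})$, a bounded complex of finite $m$-torsion $G$-modules, so that (by proper base change and the definition of $\mathbb{R}\Gamma_c$) $\mathbb{R}\Gamma_c(U,\mathcal{F})=\mathbb{R}\Gamma_G M^\bullet$ and likewise $\mathbb{R}\Gamma(U,\mathcal{F}')=\mathbb{R}\Gamma_G\,\mathbb{R}\Gamma(\overline{U},\overline{\mathcal{F}'})$. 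Using $\mu_m^{\otimes(d+x)}\cong\mu_m^{\otimes x}\otimes_{\mathbb{Z}/m}\mu_m^{\otimes d}$ and the invertibility of $\mu_m^{\otimes d}$, one gets $\overline{\mathcal{F}'}\cong\underline{\mathrm{Hom}}(\overline{\mathcal{F}},\mu_m^{\otimes x})\otimes\mu_m^{\otimes d}$; the geometric Poincaré duality of Theorem \ref{rappels}(ii) then gives
$$\mathbb{R}\Gamma(\overline{U},\overline{\mathcal{F}'})\cong\mathbb{R}\mathrm{Hom}_{\mathbb{Z}/m}(M^\bullet,\mu_m^{\otimes d})[-2x].$$
Applying $\mathbb{R}\Gamma_G$ and the adjunction $\mathbb{R}\Gamma_G\,\underline{\mathbb{R}\mathrm{Hom}}_{\mathbb{Z}/m}(M^\bullet,-)=\mathbb{R}\mathrm{Hom}_{G,\mathbb{Z}/m}(M^\bullet,-)$ yields $\mathbb{R}\Gamma(U,\mathcal{F}')\cong\mathbb{R}\mathrm{Hom}_{G,\mathbb{Z}/m}(M^\bullet,\mu_m^{\otimes d})[-2x]$. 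Finally, the local duality of Theorem \ref{rappels}(i), applied to $M^\bullet$, identifies $\mathbb{R}\mathrm{Hom}_{G,\mathbb{Z}/m}(M^\bullet,\mu_m^{\otimes d})$ with $\mathbb{R}\mathrm{Hom}_{\mathbb{Z}/m}(\mathbb{R}\Gamma_c(U,\mathcal{F}),\mathbb{Z}/m)[-d-1]$, whence
$$\mathbb{R}\Gamma(U,\mathcal{F}')\cong\mathbb{R}\mathrm{Hom}_{\mathbb{Z}/m}(\mathbb{R}\Gamma_c(U,\mathcal{F}),\mathbb{Z}/m)[-d-2x-1].$$
Taking $H^r$ and using that $\mathbb{Z}/m$ is self-injective (so $H^{-j}\mathbb{R}\mathrm{Hom}_{\mathbb{Z}/m}(C,\mathbb{Z}/m)=\mathrm{Hom}(H^jC,\mathbb{Z}/m)$, with no higher Ext) gives $H^r(U,\mathcal{F}')\cong\mathrm{Hom}(H^{d+2x+1-r}_c(U,\mathcal{F}),\mathbb{Z}/m)$, which on $m$-torsion is the dual into $\mathbb{Q}/\mathbb{Z}$. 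Finiteness of both groups follows from finiteness of the cohomology of $\overline{U}$ with constructible coefficients, together with the finiteness and bounded range ($\mathrm{cd}(k)=d+1$) of the Galois cohomology of the $d$-local field $k$, via the Hochschild–Serre spectral sequence.

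The main obstacle I expect is twofold. First, bookkeeping the Tate twist $\mu_m^{\otimes d}$ correctly through the separably-closed-base Poincaré duality and the subsequent $\mathbb{R}\Gamma_G$, so that exactly the coefficient $\mu_m^{\otimes d}=\mathbb{Z}/m(d)$ demanded by Theorem \ref{rappels}(i) emerges. Second, and more delicate, verifying that the derived-category isomorphism so assembled is genuinely the one induced by the explicit cup-product pairing into $H^{d+2x+1}_c(U,\mathbb{Q}/\mathbb{Z}(d+x))\cong\mathbb{Q}/\mathbb{Z}$: this amounts to checking the compatibility of the fundamental class of Lemma \ref{préliminaire} with the two trace maps $H^{2x}(\overline{X},\mu_m^{\otimes x})\cong\mathbb{Z}/m$ and $H^{d+1}(k,\mu_m^{\otimes d})\cong\mathbb{Z}/m$, so that nondegeneracy of the abstract isomorphism indeed translates into perfectness of the stated pairing.
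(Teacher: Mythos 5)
Your proof follows essentially the same route as the paper's: reduce to $m$-torsion, identify $\mathcal{F}'$ with $\underline{\text{Hom}}(\mathcal{F},\mu_m^{\otimes (d+x)})$, and compose geometric Poincaré duality over $k^s$ (théorème \ref{rappels}(ii)) with the duality for the $d$-local field $k$ (théorème \ref{rappels}(i)) via the Hochschild--Serre/Grothendieck mechanism, which reproduces exactly the paper's chain of isomorphisms (1)--(4) and the identification $H^r(U,\mathcal{F}')\cong \text{Hom}(H^{d+2x+1-r}_c(U,\mathcal{F}),\mathbb{Z}/m\mathbb{Z})$. Your extra discussion of the cup-product description of the pairing via le lemme \ref{préliminaire} and its compatibility with the two trace maps is a point the paper leaves implicit, but it does not alter the argument.
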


\begin{proof}
Soit $m \geq 0$ tel que $\mathcal{F}$ est de $m$-torsion. Notons $G = \text{Gal}(k^s/k)$. En notant $\overline{U} = U \times_k k^s$ et $\overline{\mathcal{F}}$ la restriction de $\mathcal{F}$ à $\overline{U}$, on a des isomorphismes:
\begin{align}
\mathbb{R} \text{Hom}_{U,\mathbb{Z}/m\mathbb{Z}} (\mathcal{F},\mathbb{Z}/m\mathbb{Z}(d+x))
 & \cong \mathbb{R}\Gamma_G \mathbb{R} \text{Hom}_{\overline{U},\mathbb{Z}/m\mathbb{Z}}(\overline{\mathcal{F}},\mathbb{Z}/m\mathbb{Z}(d+x))\\
& \cong \mathbb{R}\Gamma_G \mathbb{R} \text{Hom}_{\mathbb{Z}/m\mathbb{Z}} (\mathbb{R}\Gamma_c(\overline{U},\overline{\mathcal{F}}), \mathbb{Z}/m\mathbb{Z}(d))[-2x]\\
& \cong  \mathbb{R}\text{Hom}_{G,\mathbb{Z}/m\mathbb{Z}} (\mathbb{R}\Gamma_c(\overline{U},\overline{\mathcal{F}}), \mathbb{Z}/m\mathbb{Z}(d))[-2x]\\
& \cong  \mathbb{R}\text{Hom}_{\mathbb{Z}/m\mathbb{Z}} (\mathbb{R}\Gamma_c(U,\mathcal{F}), \mathbb{Z}/m\mathbb{Z})[-2x-d-1]
\end{align}
où la ligne (1) découle d'une suite spectrale de Grothendieck (voir le corollaire 10.8.3 de \cite{Wei}), la ligne (2) découle de la dualité de Poincaré (voir \ref{rappels}), la ligne (3) découle d'une autre suite spectrale de Grothendieck, et la ligne (4) découle du théorème de dualité locale sur le corps $d$-local $k$ (voir \ref{rappels}). Cela fournit un isomorphisme $$\text{Ext}^r_{U,\mathbb{Z}/m\mathbb{Z}} (\mathcal{F},\mathbb{Z}/m\mathbb{Z}(d+x))) \cong \text{Ext}^{r-2x-d-1}_{\mathbb{Z}/m\mathbb{Z}} (\mathbb{R}\Gamma_c(U,\mathcal{F}),\mathbb{Z}/m\mathbb{Z}).$$
Or le terme de gauche est isomorphe à $H^r(U,\underline{Hom}(\mathcal{F},\mathbb{Z}/m\mathbb{Z}(d+x)))$ et celui de droite à $\text{Hom}_{\mathbb{Z}/m\mathbb{Z}} (H^{2x+d+1-r}_c(U,\mathcal{F}),\mathbb{Z}/m\mathbb{Z})$. D'où le lemme.
\end{proof}

Dans tout le reste de cette section, on suppose que:
\begin{hypo}\label{40}
\begin{minipage}[t]{12.72cm}
$x=1$, c'est-à-dire $X$ est une courbe, 
\end{minipage}
\end{hypo}
et on se donne un $\text{Gal}(K^s/K)$-module discret fini $F$, ainsi qu'un schéma en groupes fini étale $\mathcal{F}$ sur un ouvert non vide $U_0$ de $X$ tel que $F = \mathcal{F} \times_{U_0} \text{Spec} K$. On note $\mathcal{F}' = \underline{\text{Hom}}(\mathcal{F},\mathbb{Q}/\mathbb{Z}(d+1))$ et ${F}' = \underline{\text{Hom}}({F},\mathbb{Q}/\mathbb{Z}(d+1))$. Comme $\mathcal{F}$ est fini étale et comme $k$ est de caractéristique 0, on remarquera que le faisceau $\mathcal{F}'$ est localement constant à tiges finies sur $U_0$ et donc qu'il est représenté par un schéma en groupes fini étale (voir la proposition V.1.1 de \cite{MilEC}). Pour $U$ ouvert de $U_0$, on définit:
$$D^r_{sh}(U,\mathcal{F}) = \text{Ker} \left( H^r(U,\mathcal{F}) \rightarrow \prod_{v \in X^{(1)}} H^r(K_v,F) \right) $$
$$\mathcal{D}^r(U,\mathcal{F}') = \text{Im}(H^r_c(U,\mathcal{F}') \rightarrow H^r(K,F')).$$

\begin{proposition} \label{Sha fini}
On rappelle que l'on a supposé (H \ref{40}). Soit $r \in \mathbb{N}$. Il existe un ouvert non vide $U_1$ de $U_0$ tel que, pour tout ouvert non vide $U$ de $U_1$, on a $\mathcal{D}^r(U,\mathcal{F}) = \Sha^r(F)$.
\end{proposition}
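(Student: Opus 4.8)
The plan is to compare $H^r_c(U,\mathcal{F})$ with $H^r(U,\mathcal{F})$ through the localization long exact sequence, read off both inclusions $\mathcal{D}^r(U,\mathcal{F}) \subseteq \Sha^r(F)$ and $\Sha^r(F) \subseteq \mathcal{D}^r(U,\mathcal{F})$ from it, and use the finiteness furnished by Proposition \ref{AV fini} to promote a statement about the filtered intersection of the $\mathcal{D}^r(U,\mathcal{F})$ into an honest stabilization.

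First I would fix an open $U \subseteq U_0$, set $Z = X \setminus U$ and let $i : Z \hookrightarrow X$, $j : U \hookrightarrow X$ be the inclusions; since $x=1$ by (H \ref{40}), $Z$ is a finite set of closed points, i.e. of elements of $X^{(1)}$, and every $v \in X^{(1)}$ lies in $Z$ once $U$ is small enough. From the exact triangle $j_!\mathcal{F} \to Rj_*\mathcal{F} \to i_*i^*Rj_*\mathcal{F}$ and the identification of the stalks of $i^*Rj_*\mathcal{F}$ with the cohomology of the henselian (equivalently, since $F$ is finite, the completed) local fields, I obtain a long exact sequence
$$\cdots \to H^r_c(U,\mathcal{F}) \to H^r(U,\mathcal{F}) \xrightarrow{\ \mathrm{loc}\ } \bigoplus_{v \in Z} H^r(K_v,F) \to H^{r+1}_c(U,\mathcal{F}) \to \cdots$$
whose restriction maps coincide with those defining $\Sha^r$. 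Two consequences suffice: (a) any class in the image of $H^r_c(U,\mathcal{F}) \to H^r(K,F)$ restricts to $0$ in $H^r(K_v,F)$ for every $v \in Z$; and (b) a class of $H^r(U,\mathcal{F})$ lifts to $H^r_c(U,\mathcal{F})$ as soon as its restrictions to $H^r(K_v,F)$ vanish for all $v \in Z$.

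Next I would observe that the $\mathcal{D}^r(U,\mathcal{F})$ form a filtered family of subgroups of $H^r(K,F)$ that decreases as $U$ shrinks, through the functorial maps $H^r_c(U',\mathcal{F}) \to H^r_c(U,\mathcal{F})$ for $U' \subseteq U$. Here Proposition \ref{AV fini} is essential: it makes each $H^r_c(U,\mathcal{F})$ finite, hence each $\mathcal{D}^r(U,\mathcal{F})$ finite, so the non-increasing family of orders $|\mathcal{D}^r(U,\mathcal{F})|$ is eventually constant. Thus there exist a non-empty open $U_1 \subseteq U_0$ and a subgroup $D$ such that $\mathcal{D}^r(U,\mathcal{F}) = D = \bigcap_{U'} \mathcal{D}^r(U',\mathcal{F})$ for every non-empty $U \subseteq U_1$.

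Finally I would check that $D = \Sha^r(F)$. For $D \subseteq \Sha^r(F)$: given $\alpha \in D$ and $v_0 \in X^{(1)}$, I choose $U \subseteq U_1$ with $v_0 \notin U$; then $\alpha \in \mathcal{D}^r(U,\mathcal{F})$ and (a) forces $\alpha|_{K_{v_0}} = 0$, so $\alpha \in \Sha^r(F)$. For $\Sha^r(F) \subseteq D$: given $\alpha \in \Sha^r(F)$, continuity of étale cohomology ($H^r(K,F) = \varinjlim_U H^r(U,\mathcal{F})$) represents $\alpha$ by some $\tilde\alpha \in H^r(U,\mathcal{F})$ with $U \subseteq U_1$; since $\alpha$ dies in every $H^r(K_v,F)$, so does $\tilde\alpha$ at the $v \in Z$, and (b) lifts $\tilde\alpha$ to $H^r_c(U,\mathcal{F})$, whence $\alpha \in \mathcal{D}^r(U,\mathcal{F}) = D$. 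The hard part is not this diagram chase but securing the finiteness of $H^r_c(U,\mathcal{F})$ via Proposition \ref{AV fini}: without it one controls only the intersection $\bigcap_U \mathcal{D}^r(U,\mathcal{F})$ and cannot promote it to equality with a single $\mathcal{D}^r(U,\mathcal{F})$. A secondary technical point is the careful identification of the boundary terms of the localization sequence with the groups $H^r(K_v,F)$ entering the definition of $\Sha^r$.
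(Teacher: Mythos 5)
Your proposal is correct and follows essentially the same route as the paper: the localization sequence $H^r_c(U,\mathcal{F}) \to H^r(U,\mathcal{F}) \to \bigoplus_{v \in X\setminus U} H^r(K_v,F)$, the finiteness of $H^r_c(U,\mathcal{F})$ from Proposition \ref{AV fini} to stabilize the decreasing family $\mathcal{D}^r(U,\mathcal{F})$ on some $U_1$, and then the two inclusions by shrinking $U$ and by spreading out a class of $\Sha^r(F)$ to some $H^r(U,\mathcal{F})$. The only point the paper handles by citation rather than argument is the compatibility of the maps in the localization sequence with the restrictions $H^r(K,F) \to H^r(K_v,F)$ defining $\Sha^r$ (proposition 4.3 de \cite{CTH}), which you correctly flag as the secondary technical point.
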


\begin{proof} Les commutativités des diagrammes que nous utilisons implicitement dans cette preuve sont contenues dans la proposition 4.3 de \cite{CTH}. \\
Remarquons d'abord que $\mathcal{D}^r(U_0,\mathcal{F})$ est fini et que, pour $V \subseteq U$ deux ouverts de $U_0$, on a $\mathcal{D}^r(V,\mathcal{F}) \subseteq \mathcal{D}^r(U,\mathcal{F})$. Par conséquent, il existe $U_1$ ouvert non vide de $U_0$ tel que, pour tout ouvert $U$ de $U_1$, on a $\mathcal{D}^r(U,\mathcal{F}) = \mathcal{D}^r(U_1,\mathcal{F})$. \\
Soit maintenant $x \in \mathcal{D}^r(U_1,\mathcal{F})$. Alors, pour chaque ouvert non vide $U$ de $U_1$, on a $x \in \mathcal{D}^r(U,\mathcal{F})$ et donc, comme la suite $H^r_c(U,\mathcal{F}) \rightarrow H^r(U,\mathcal{F}) \rightarrow \bigoplus_{v \in X \setminus U} H^r(K_v,F)$ est exacte, on a $x \in \text{Ker}\left( H^r(K,F) \rightarrow \bigoplus_{v \in X\setminus U} H^r(K_v,F) \right) $. Cela étant vrai pour chaque ouvert $U$ de $U_1$, on déduit que $x \in \Sha^r(F)$, c'est-à-dire que $\mathcal{D}^r(U_1,\mathcal{F}) \subseteq \Sha^r(F)$.\\
Considérons à présent $x \in \Sha^r(F)$. Alors il existe $U$ un ouvert non vide de $U_1$ et $\tilde{x}\in H^r(U,\mathcal{F})$ tels que $\tilde{x}$ s'envoie sur $x$ par $H^r(U,\mathcal{F}) \rightarrow H^r(K,F)$. Comme $x \in \Sha^r(F)$, on a $\tilde{x} \in \text{Ker}\left( H^r(U,\mathcal{F}) \rightarrow \bigoplus_{v \in X \setminus U} H^r(K_v,F) \right)  = \text{Im}(H^r_c(U,\mathcal{F}) \rightarrow H^r(U,\mathcal{F}))$. Par conséquent, $x \in \mathcal{D}^r(U,\mathcal{F}) = \mathcal{D}^r(U_1,\mathcal{F})$ et $\Sha^r(F) \subseteq \mathcal{D}^r(U_1,\mathcal{F})$.\\
Finalement, on a $\mathcal{D}^r(U,\mathcal{F}) = \Sha^r(F)$ pour chaque ouvert non vide $U$ de $U_1$.
\end{proof}

\begin{proposition} \label{suite fini}
On rappelle que l'on a supposé (H \ref{40}). Soit $r \in \mathbb{N}$. Pour chaque ouvert non vide $U$ de $U_0$, on dispose d'une suite exacte:
$$\bigoplus_{v \in X^{(1)}} H^{r-1}(K_v,F') \rightarrow H^r_c(U,\mathcal{F}') \rightarrow \mathcal{D}^r(U,\mathcal{F}') \rightarrow 0.$$
\end{proposition}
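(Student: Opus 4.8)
The surjectivity of the second arrow is immediate from the definitions, since $\mathcal{D}^r(U,\mathcal{F}')$ is \emph{by definition} the image of the composite $H^r_c(U,\mathcal{F}')\xrightarrow{\alpha_U}H^r(U,\mathcal{F}')\to H^r(K,F')$, the last map being restriction to the generic point. The whole content of the proposition is therefore the exactness in the middle, i.e. the identification of $\ker\big(H^r_c(U,\mathcal{F}')\to H^r(K,F')\big)$ with the image of a suitable map out of $\bigoplus_{v\in X^{(1)}}H^{r-1}(K_v,F')$. The plan is to produce that map as a limit and then to prove the two inclusions, all the underlying diagram commutativities being those recorded in the proposition 4.3 of \cite{CTH} used in the previous proofs.

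First I would construct the first arrow. For each non-empty open $V\subseteq U$, the localization sequence attached to $V\hookrightarrow X$ provides a boundary $\partial_V\colon\bigoplus_{v\in X\setminus V}H^{r-1}(K_v,F')\to H^r_c(V,\mathcal{F}')$, while the inclusion $V\subseteq U$ gives the covariant extension-by-zero map $\iota_{V,U}\colon H^r_c(V,\mathcal{F}')\to H^r_c(U,\mathcal{F}')$. The compatibility of the boundary maps under shrinking shows that the composites $\iota_{V,U}\circ\partial_V$ are coherent as $V$ decreases, so that, using $\varinjlim_V\bigoplus_{v\in X\setminus V}H^{r-1}(K_v,F')=\bigoplus_{v\in X^{(1)}}H^{r-1}(K_v,F')$, they assemble into a single map $\Lambda\colon\bigoplus_{v\in X^{(1)}}H^{r-1}(K_v,F')\to H^r_c(U,\mathcal{F}')$, which I take as the first arrow of the sequence.

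The inclusion $\mathrm{Im}\,\Lambda\subseteq\ker\big(H^r_c(U,\mathcal{F}')\to H^r(K,F')\big)$ is the easy half. Writing $\rho_{U,V}\colon H^r(U,\mathcal{F}')\to H^r(V,\mathcal{F}')$ for restriction, one has the naturality relation $\rho_{U,V}\circ\alpha_U\circ\iota_{V,U}=\alpha_V$, together with $\alpha_V\circ\partial_V=0$ (two consecutive maps of the localization sequence for $V$). Since $H^r(U,\mathcal{F}')\to H^r(K,F')$ factors through each $\rho_{U,V}$, the composite of $\Lambda$ with $H^r_c(U,\mathcal{F}')\to H^r(K,F')$ vanishes.

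For the reverse inclusion I would take $\tilde y$ in the kernel and, using $H^r(K,F')=\varinjlim_V H^r(V,\mathcal{F}')$, choose an open $V\subseteq U$ with $\rho_{U,V}(\alpha_U(\tilde y))=0$; put $W=U\setminus V$. The localization sequence for $V\subseteq U$ reads $H^r_c(V,\mathcal{F}')\xrightarrow{\iota_{V,U}}H^r_c(U,\mathcal{F}')\xrightarrow{\beta_W}\bigoplus_{w\in W}H^r(\kappa(w),F'_w)$, where $\beta_W$ is $\mathrm{res}_W\circ\alpha_U$, restriction of classes of $H^r(U,\mathcal{F}')$ to the closed points of $W$. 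The main obstacle is to check that $\beta_W(\tilde y)=0$, so that $\tilde y$ lifts to $\eta\in H^r_c(V,\mathcal{F}')$. Here I would use that for $w\in W\subseteq U$ the sheaf $\mathcal{F}'$ is finite étale, so that unramified cohomology injects, $H^r(\mathcal{O}_{X,w}^h,\mathcal{F}')\hookrightarrow H^r(K_w,F')$; as $\alpha_U(\tilde y)$ dies in $H^r(K,F')$ it dies in $H^r(K_w,F')$, whence its image in $H^r(\mathcal{O}_{X,w}^h,\mathcal{F}')\cong H^r(\kappa(w),F'_w)$ is zero, i.e. $\mathrm{res}_w(\alpha_U(\tilde y))=0$. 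Thus $\beta_W(\tilde y)=0$ and $\tilde y=\iota_{V,U}(\eta)$; moreover $\alpha_V(\eta)=\rho_{U,V}(\alpha_U(\tilde y))=0$ by the naturality relation and the choice of $V$, so $\eta\in\ker\alpha_V=\mathrm{Im}\,\partial_V$, say $\eta=\partial_V(a)$, and finally $\tilde y=\iota_{V,U}(\partial_V(a))=\Lambda(a)\in\mathrm{Im}\,\Lambda$. The delicate point, as indicated, is precisely the vanishing of $\beta_W(\tilde y)$: it is exactly what forces the sum in the statement to run over all of $X^{(1)}$ rather than only over $X\setminus U$, and it relies on the injectivity of unramified cohomology at points of $U$.
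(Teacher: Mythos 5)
Your proof is correct and is essentially the paper's own argument: you build the first arrow by assembling the localization boundaries $\partial_V$ along shrinking $V$, the vanishing of the composite is the easy half, and the middle exactness is proved exactly as in the paper by choosing $V$ so that the class dies in $H^r(V,\mathcal{F}')$ and lifting along the exact row $H^r_c(V,\mathcal{F}')\rightarrow H^r_c(U,\mathcal{F}')\rightarrow\bigoplus_{w\in U\setminus V}H^r(k(w),F')$, the crux being that the class maps to zero in each $H^r(k(w),F')$. The only step you assert rather than justify is the key injectivity $H^r(\mathcal{O}_w^h,\mathcal{F}')\hookrightarrow H^r(K_w,F')$ — this does not follow from $\mathcal{F}'$ being finite étale alone; the paper proves it by identifying this map with the inflation $H^r(G_w/I_w,F'^{I_w})\rightarrow H^r(G_w,F')$, using that $F'$ is unramified at $w\in U_0$ (so $F'^{I_w}=F'$) and that the projection $G_w\rightarrow G_w/I_w$ admits a section.
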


\begin{proof} 
Sauf indication du contraire, les commutativités des diagrammes que nous utilisons implicitement dans cette preuve sont contenues dans la proposition 4.3 de \cite{CTH}. \\
Soit $U$ un ouvert non vide de $U_0$. Pour chaque ouvert non vide $V$ de $U$, on dispose d'un diagramme commutatif:\\
\centerline{\xymatrix{
\bigoplus_{v \in X \setminus U} H^{r-1}(K_v,F') \ar[r]\ar[d] & H^r_c(U,\mathcal{F}')\\
\bigoplus_{v \in X \setminus V} H^{r-1}(K_v,F') \ar[r] & H^r_c(V,\mathcal{F}')\ar[u]
}}
Cela permet de définir un morphisme $\bigoplus_{v \in X^{(1)}} H^{r-1}(K_v,F') \rightarrow H^r_c(U,\mathcal{F}') $. On vérifie aisément que $\bigoplus_{v \in X^{(1)}} H^{r-1}(K_v,F') \rightarrow H^r_c(U,\mathcal{F}') \rightarrow \mathcal{D}^r(U,\mathcal{F}') \rightarrow 0$ est un complexe. Reste donc à prouver l'inclusion $\text{Ker} (H^r_c(U,\mathcal{F}') \rightarrow \mathcal{D}^r(U,\mathcal{F}')) \subseteq \text{Im}(\bigoplus_{v \in X^{(1)}} H^{r-1}(K_v,F') \rightarrow H^r_c(U,\mathcal{F}'))$.\\
Pour ce faire, on se donne $\alpha \in \text{Ker} (H^r_c(U,\mathcal{F}') \rightarrow \mathcal{D}^r(U,\mathcal{F}'))$ et, comme dans la proposition 4.2 de \cite{HS2}, on remarque que, pour $V \subseteq U$, on a un diagramme commutatif dont la première ligne est exacte:\\
\centerline{\xymatrix{
H^r_c(V,\mathcal{F}') \ar[r]&H^r_c(U,\mathcal{F}') \ar[r]\ar[d] & \bigoplus_{v \in U \setminus V} H^r(k(v),F')\ar[d]\\
& H^r(K,F') \ar[r] & \bigoplus_{v \in U \setminus V} H^r(K_v,F')
}}
où le morphisme vertical $H^r(k(v),F') \rightarrow H^r(K_v,F')$ est obtenu par composition:
$$H^r(k(v),F') \cong H^r(\mathcal{O}_v^h,\mathcal{F}') \rightarrow H^r(K_v^h,F') \cong H^r(K_v,F').$$
La flèche $H^r(\mathcal{O}_v^h,\mathcal{F}') \rightarrow H^r(K_v^h,F')$ s'identifie en cohomologie galoisienne à la flèche d'inflation $H^r(G_v/I_v,F'^{I_v}) \rightarrow H^r(G_v,F')$, où $G_v = \text{Gal}(K_v^{h,s}/K_v^h)$ et $I_v$ est le sous-groupe d'inertie. Or le module galoisien $F'$ est non ramifié en $v$ (puisque $v \in U_0$) et donc $F'^{I_v} = F'$. Comme la projection $G_v \rightarrow G_v/I_v$ admet une section, on en déduit que $H^r(\mathcal{O}_v^h,\mathcal{F}') \rightarrow H^r(K_v^h,F')$ et $H^r(k(v),F') \rightarrow H^r(K_v,F')$ sont injectifs.\\
Prenons pour $V$ un ouvert de $U$ tel que l'image de $\alpha$ dans $H^r(V,\mathcal{F}')$ est nulle. En exploitant les diagrammes précédents, on voit alors que $\alpha$ provient d'un élément $\tilde{\alpha}$ de $H^r_c(V,\mathcal{F}')$ dont l'image dans $\bigoplus_{v \in X \setminus V} H^r(K_v,F')$ est nulle. Par conséquent, $\alpha$ provient de $\bigoplus_{v \in X \setminus V} H^{r-1}(K_v,F')$, ce qui prouve l'inclusion $\text{Ker} (H^r_c(U,\mathcal{F}') \rightarrow \mathcal{D}^r(U,\mathcal{F}')) \subseteq \text{Im}(\bigoplus_{v \in X^{(1)}} H^{r-1}(K_v,F') \rightarrow H^r_c(U,\mathcal{F}'))$.
\end{proof}

\begin{theorem} \textbf{(Dualité de Poitou-Tate pour les modules finis)} \label{PT fini}\\
On rappelle que l'on a supposé (H \ref{40}), c'est-à-dire que $X$ est une courbe, et que ${F}'$ désigne $\underline{\text{Hom}}({F},\mathbb{Q}/\mathbb{Z}(d+1))$. Soit $r \in \mathbb{Z}$. On dispose d'une dualité parfaite de groupes finis:
$$\Sha^r(F) \times \Sha^{d+3-r}(F') \rightarrow \mathbb{Q}/\mathbb{Z}.$$
\end{theorem}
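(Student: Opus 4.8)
The theorem is a Poitou–Tate duality for finite modules, and the strategy is the classical one: combine the local/global Artin–Verdier duality from Proposition \ref{AV fini} with the finiteness/stabilization results of Propositions \ref{Sha fini} and \ref{suite fini} to identify the two Tate–Shafarevich groups as mutual duals. Concretely, I would first fix $r$ and choose, using Proposition \ref{Sha fini} applied simultaneously to $\mathcal{F}$ and to $\mathcal{F}'$ (and for the relevant cohomological degrees), a single open $U \subseteq U_0$ small enough that $\mathcal{D}^r(U,\mathcal{F}') = \Sha^r(F')$ and $\mathcal{D}^{d+3-r}(U,\mathcal{F}) = \Sha^{d+3-r}(F)$, and also small enough that the groups $\Sha^r(F)$ and $\Sha^{d+3-r}(F')$ are already captured as images/kernels at the level of $U$. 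The key numerical input is that here $x=1$, so the Artin–Verdier pairing of Proposition \ref{AV fini} reads $H^r(U,\mathcal{F}') \times H^{d+3-r}_c(U,\mathcal{F}) \to \mathbb{Q}/\mathbb{Z}$, a perfect pairing of finite groups, with total degree $d+2x+1 = d+3$.

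The heart of the argument is then a diagram chase. I would write the two exact sequences: on the one hand the defining sequence for $\mathcal{D}^r(U,\mathcal{F}')$, namely
$$\bigoplus_{v} H^{r-1}(K_v,F') \to H^r_c(U,\mathcal{F}') \to \mathcal{D}^r(U,\mathcal{F}') \to 0$$
from Proposition \ref{suite fini}, together with the dual localization sequence $H^{d+3-r}_c(U,\mathcal{F}) \to H^{d+3-r}(U,\mathcal{F}) \to \bigoplus_v H^{d+3-r}(K_v,F)$ that computes $D^{d+3-r}_{sh}(U,\mathcal{F})$ and $\mathcal{D}^{d+3-r}(U,\mathcal{F})$. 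The Artin–Verdier duality identifies $H^r_c(U,\mathcal{F})$ with the dual of $H^{d+3-r}(U,\mathcal{F}')$ and $H^r(U,\mathcal{F})$ with the dual of $H^{d+3-r}_c(U,\mathcal{F}')$, compatibly with the restriction/corestriction maps and with the local Tate duality $H^{r-1}(K_v,F') \cong H^{d+3-r}(K_v,F)^D$ (this last being the $d$-local duality over $K_v$, which is itself $d+1$-local). Dualizing the sequence of Proposition \ref{suite fini} and matching it against the localization sequence for $\mathcal{F}$ should yield that $\mathcal{D}^r(U,\mathcal{F}')^D$ is precisely $D^{d+3-r}_{sh}(U,\mathcal{F})$; passing to $U$ small enough and invoking Proposition \ref{Sha fini} converts both sides into $\Sha^r(F')$ and $\Sha^{d+3-r}(F)$ respectively, giving the desired perfect pairing.

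The finiteness of both groups is automatic once they are realized as subquotients of the finite groups $H^r(U,\mathcal{F})$, $H^r_c(U,\mathcal{F}')$, so no separate argument is needed there; the nondegeneracy is inherited from the perfectness in Proposition \ref{AV fini} via the duality between the two exact sequences. The main obstacle I anticipate is bookkeeping: ensuring that the local duality $H^{r-1}(K_v,F') \cong H^{d+3-r}(K_v,F)^D$ used over the complete fields $K_v$ is compatible with the global Artin–Verdier pairing restricted to the places $v \in X \setminus U$, and that the connecting maps in the two exact sequences are genuinely dual to one another (the sign and degree-shift conventions must line up so that $\mathrm{Im}$ on one side corresponds to $\mathrm{Ker}$ on the other). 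A secondary technical point is verifying that a single $U$ works for all the stabilizations at once; this is handled by intersecting the finitely many opens produced by Proposition \ref{Sha fini}. Once the duality of the two four-term exact sequences is set up carefully, the conclusion follows by a standard diagram chase identifying the cokernel of $\bigoplus_v H^{r-1}(K_v,F') \to H^r_c(U,\mathcal{F}')$ with the dual of the kernel defining $\Sha^{d+3-r}(F)$.
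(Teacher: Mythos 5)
Your proposal follows essentially the same route as the paper's proof: dualize the exact sequence of Proposition \ref{suite fini} using the Artin--Verdier duality of Proposition \ref{AV fini} together with local Tate duality over the $(d+1)$-local fields $K_v$, identify the resulting dual of $\mathcal{D}(U,\cdot)$ with the kernel group $D_{sh}(U,\cdot)$ (the paper does exactly this, with the roles of $\mathcal{F}$ and $\mathcal{F}'$ interchanged relative to your indexing, and handles the compatibilities you flag by citing proposition 4.3 de \cite{CTH}), then stabilize and pass to the limit over $U$ via Proposition \ref{Sha fini}. Apart from the paper dispatching the trivial range $r<0$ or $r>d+3$ by cohomological dimension at the outset, your argument and the paper's coincide.
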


\begin{proof} 
Si $r<0$ ou si $r>d+3$, le résultat découle immédiatement par dimension cohomologique. On suppose donc $0 \leq r \leq d+3$.\\
À l'aide de la proposition \ref{suite fini}, pour $U$ ouvert de $U_1$, on dispose des deux suites exactes:
$$\bigoplus_{v \in X^{(1)}} H^{d+2-r}(K_v,F') \rightarrow H^{d+3-r}_c(U,\mathcal{F}') \rightarrow \mathcal{D}^{d+3-r}(U,\mathcal{F}') \rightarrow 0,$$
$$0 \rightarrow D^r_{sh}(U,\mathcal{F}) \rightarrow H^r(U,\mathcal{F}) \rightarrow \prod_{v \in X^{(1)}} H^r(K_v,F).$$
Or $(\bigoplus_{v \in X^{(1)}} H^{d+2-r}(K_v,F'))^D \cong \prod_{v \in X^{(1)}} H^r(K_v,F)$ d'après la théorème de dualité locale sur un corps $d+1$-local et $H^{d+3-r}_c(U,\mathcal{F}')^D \cong  H^r(U,\mathcal{F})$ d'après le théorème \ref{AV fini}. En utilisant la proposition 4.3 de \cite{CTH}, on en déduit que $\mathcal{D}^{d+3-r}(U,\mathcal{F}')^D \cong D^r_{sh}(U,\mathcal{F})$. En passant à la limite inductive sur $U$ et en utilisant la proposition \ref{Sha fini}, on obtient $\Sha^{d+3-r}(F')^D \cong \Sha^r(F)$, d'où le résultat.
 \end{proof}
 
 Dans le reste de cette section, nous cherchons à établir une suite exacte de type Poitou-Tate sur le corps $K$.
 
\begin{proposition} \label{nr fini}
On rappelle que l'on a supposé (H \ref{40}). Soient $r \in \{ 1,2,...,d+1\}$ et $v \in X^{(1)}$. Alors:
\begin{itemize}
\item[(i)] $H^r( \mathcal{O}_v,\mathcal{F})$ est un sous-groupe de $H^r(K_v,F)$,
\item[(ii)] $H^{d+2-r}( \mathcal{O}_v,\mathcal{F}')$ est un sous-groupe de $H^{d+2-r}(K_v,F')$,
\item[(iii)] $H^r( \mathcal{O}_v,\mathcal{F})$ et $H^{d+2-r}( \mathcal{O}_v,\mathcal{F}')$ sont les annulateurs l'un de l'autre dans l'accouplement parfait:
$$H^r(K_v,F) \times H^{d+2-r}(K_v,F')  \rightarrow \mathbb{Q}/\mathbb{Z}.$$
\end{itemize}
\end{proposition}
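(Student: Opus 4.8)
Le point clé est que, $k$ étant de caractéristique $0$, le corps $K_v$ est d'égale caractéristique: par le théorème de structure de Cohen, $\mathcal{O}_v \cong k(v)[[t]]$ et $K_v \cong k(v)((t))$. Son corps résiduel $k(v)$ est une extension finie de $k$, donc un corps $d$-local, de sorte que $\text{cd}(k(v)) = d+1$ et que l'on dispose de la dualité locale sur le corps $d$-local $k(v)$. Fixons $m$ tel que $mF = 0$; comme $\text{Car}(k(v)) = 0$, l'entier $m$ est inversible et $\mathbb{Z}/m\mathbb{Z}(j) \cong \mu_m^{\otimes j}$. Puisque $\mathcal{F}$ et $\mathcal{F}'$ sont finis étales sur $U_0 \ni v$, les modules $F$ et $F'$ sont non ramifiés en $v$, et comme $\mathcal{O}_v$ est hensélien on a $H^r(\mathcal{O}_v, \mathcal{F}) \cong H^r(k(v), F)$ (et de même pour $\mathcal{F}'$), le morphisme vers $H^r(K_v, F)$ étant l'inflation le long de $G_v \twoheadrightarrow G_v/I_v \cong \text{Gal}(k(v)^s/k(v))$, de noyau $I_v \cong \hat{\mathbb{Z}}(1)$ (l'inertie, modérée car $\text{Car}(k(v)) = 0$). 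L'accouplement de (iii) est la dualité locale sur le corps $(d+1)$-local $K_v$, à savoir le cup-produit suivi de $H^{d+2}(K_v, \mu_m^{\otimes d+1}) \cong \mathbb{Z}/m\mathbb{Z}$.

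Pour (i) et (ii), je suivrais la méthode déjà employée dans la preuve de la proposition \ref{suite fini}: la projection $G_v \to G_v/I_v$ admet une section, donc dans la suite spectrale de Hochschild-Serre associée à $1 \to I_v \to G_v \to G_v/I_v \to 1$ la différentielle $d_2$, qui est le cup-produit avec la classe de l'extension, s'annule. Comme $H^q(I_v, F) = 0$ pour $q \geq 2$ et $H^1(I_v, F) \cong F(-1)$, on obtient une suite exacte courte scindée
$$0 \rightarrow H^r(k(v), F) \rightarrow H^r(K_v, F) \rightarrow H^{r-1}(k(v), F(-1)) \rightarrow 0,$$
et de même pour $F'$ (l'hypothèse $r \in \{1,...,d+1\}$ garantit que $d+2-r$ appartient aussi à $\{1,...,d+1\}$, de sorte que les deux suites se placent dans la bonne plage de degrés). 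L'injectivité du morphisme de gauche fournit aussitôt (i) et (ii).

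L'orthogonalité, qui constitue la moitié de (iii), se vérifie directement: le cup-produit de deux classes non ramifiées provient, par fonctorialité, de $H^{d+2}(\mathcal{O}_v, \mu_m^{\otimes d+1}) \cong H^{d+2}(k(v), \mu_m^{\otimes d+1})$, lequel est nul puisque $\text{cd}(k(v)) = d+1$. Ainsi $H^r(\mathcal{O}_v, \mathcal{F})$ et $H^{d+2-r}(\mathcal{O}_v, \mathcal{F}')$ s'annihilent mutuellement dans l'accouplement parfait de (iii).

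Il reste à promouvoir cette orthogonalité en annihilation exacte, ce que je ferais par un comptage. La suite exacte scindée donne $|H^r(K_v, F)| = |H^r(\mathcal{O}_v, \mathcal{F})| \cdot |H^{r-1}(k(v), F(-1))|$. Par ailleurs la dualité locale sur le corps $d$-local $k(v)$ apparie $H^{d+2-r}(k(v), F')$ avec $H^{r-1}(k(v), \underline{\text{Hom}}(F', \mu_m^{\otimes d}))$; or un calcul de torsion donne $\underline{\text{Hom}}(F', \mu_m^{\otimes d}) \cong F(-1)$, d'où $|H^{d+2-r}(\mathcal{O}_v, \mathcal{F}')| = |H^{r-1}(k(v), F(-1))|$. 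On en déduit $|H^r(\mathcal{O}_v, \mathcal{F})| \cdot |H^{d+2-r}(\mathcal{O}_v, \mathcal{F}')| = |H^r(K_v, F)|$. Comme l'accouplement de (iii) est parfait entre groupes finis et que les deux sous-groupes s'annihilent avec produit des ordres égal à l'ordre total, ils sont les annihilateurs l'un de l'autre, ce qui achève (iii). Le point délicat sera de justifier proprement l'existence de la section (c'est-à-dire $d_2 = 0$), qui repose sur l'égale caractéristique de $K_v$, ainsi que la compatibilité des applications traces sur $K_v$ et $k(v)$ sous-jacente à la cohérence des deux dualités locales employées dans le comptage.
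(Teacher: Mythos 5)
Votre démonstration est correcte et suit essentiellement la même voie que celle de l'article: suites exactes courtes issues de la suite spectrale de Hochschild-Serre pour l'extension scindée $1 \rightarrow I_v \rightarrow G_v \rightarrow g_v \rightarrow 1$ (d'où (i) et (ii)), orthogonalité par nullité de $H^{d+2}$ de l'anneau local (dimension cohomologique $d+1$ de $k(v)$), puis comptage de cardinaux via la dualité locale sur le corps $d$-local $k(v)$ appliquée à $H^{r-1}(g_v,F(-1)) \cong H^{d+2-r}(g_v,F')^D$. Notez seulement que votre scrupule final sur la compatibilité des traces est superflu: l'égalité $|H^r(\mathcal{O}_v,\mathcal{F})|\cdot|H^{d+2-r}(\mathcal{O}_v,\mathcal{F}')| = |H^r(K_v,F)|$ jointe à l'orthogonalité et à la perfection de l'accouplement suffit formellement à identifier les annulateurs, sans aucune compatibilité entre les deux dualités.
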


\begin{remarque}
La propriété (ii) découle bien sûr de (i) par symétrie.
\end{remarque}

\begin{proof} 
Notons $G_v = \text{Gal}(K_v^s/K_v)$, $I_v = \text{Gal}(K_v^s/K_v^{nr})$ et $g_v = \text{Gal}(K_v^{nr}/K_v)=\text{Gal}(k(v)^s/k(v))$. La suite exacte $1 \rightarrow I_v \rightarrow G_v \rightarrow g_v \rightarrow 1$ est scindée. En exploitant la suite spectrale de Hochschild-Serre, on obtient donc des suites exactes courtes:
$$0 \rightarrow H^r(g_v,F) \rightarrow H^r(G_v,F) \rightarrow H^{r-1}(g_v,H^1(I_v,F)) \rightarrow 0,$$
$$0 \rightarrow H^{d+2-r}(g_v,F') \rightarrow H^{d+2-r}(G_v,F') \rightarrow H^{d+1-r}(g_v,H^1(I_v,F')) \rightarrow 0.$$
Comme $H^r( \mathcal{O}_v,\mathcal{F}) = H^r(g_v,F)$ et $H^{d+2-r}( \mathcal{O}_v,\mathcal{F}') = H^{d+2-r}(g_v,F')$, on obtient (i) et (ii). De plus, en appliquant le théorème de dualité locale sur $k(v)$, on obtient:
$$H^{r-1}(g_v, H^1(I_v,F)) \cong H^{r-1}(g_v,F(-1)) \cong H^{d+2-r}(g_v,F')^D.$$
On en déduit que $\frac{|H^r(K_v,F)|}{|H^r(\mathcal{O}_v,\mathcal{F})|} = |H^{d+2-r}(\mathcal{O}_v,\mathcal{F}')|$.\\
De plus, comme $H^{d+2}(\mathcal{O}_v,\mathbb{Q}/\mathbb{Z}(d+1)) = H^{d+2}(g_v,\mathbb{Q}/\mathbb{Z}(d+1))=0$ par dimension cohomologique, on sait que l'accouplement $H^r(K_v,F) \times H^{d+2-r}(K_v,F')  \rightarrow \mathbb{Q}/\mathbb{Z}$ est nul sur $H^r( \mathcal{O}_v,\mathcal{F})\times H^{d+2-r}( \mathcal{O}_v,\mathcal{F}')$. La propriété (iii) en découle immédiatement.
\end{proof}

Dans la suite, on notera $\mathbb{P}^r(F)$ le produit restreint des groupes $H^r(K_v,F)$ pour $v \in X^{(1)}$ par rapport aux groupes $H^r(\mathcal{O}_v,\mathcal{F})$: c'est un groupe abélien que l'on munit de la topologie produit restreint. Ainsi, $\mathbb{P}^r(F)$ est muni d'une structure de groupe abélien localement compact. On remarque aisément que $\mathbb{P}^0(F) = \prod_{v \in X^{(1)}} H^0(K_v,F)$. Comme, pour chaque $v \in U_0^{(1)}$, le corps $k(v)$ est de dimension cohomologique $d+1$, on a $H^{d+2} (\mathcal{O}_v,\mathcal{F})=H^{d+2}(k(v),\mathcal{F}_{\overline{v}}) = 0$, ce qui entraîne que $\mathbb{P}^{d+2}(F) = \bigoplus_{v \in X^{(1)}} H^{d+2}(K_v,F)$.

\begin{proposition}\label{bouts suite fini}
On rappelle que l'on a supposé (H \ref{40}). Soit $r \in \{ 1,2,..., d+2\}$. On a une suite exacte:\\
\centerline{\xymatrix{H^r(K,F) \ar[r]& \mathbb{P}^r(F) \ar[r] & H^{d+2-r}(K,F')^D}}
où le morphisme $\mathbb{P}^r(F) \rightarrow H^{d+2-r}(K,F')^D$ est défini par:
$$(f_v) \mapsto (f' \mapsto \sum_{v \in X^{(1)}} (f_v,f'_v)_v),$$
pour $(f_v) \in \mathbb{P}^r(F)$ et $f' \in H^{d+2-r}(K,F')$.
\end{proposition}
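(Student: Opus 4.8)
The plan is to deduce the exactness at $\mathbb{P}^r(F)$ from the fact that the localization exact sequence for $\mathcal{F}$ is, up to the duality isomorphisms, the Pontryagin dual of the localization sequence for $\mathcal{F}'$. First I fix a nonempty open $U \subseteq U_0$ and compare the sequence
$$H^r_c(U,\mathcal{F}) \to H^r(U,\mathcal{F}) \xrightarrow{\beta} \bigoplus_{v\in X\setminus U} H^r(K_v,F) \xrightarrow{\gamma} H^{r+1}_c(U,\mathcal{F})$$
with the analogous sequence for $\mathcal{F}'$ in degree $d+2-r$, whose relevant part is $H^{d+2-r}(U,\mathcal{F}') \xrightarrow{\beta'} \bigoplus_{v\in X\setminus U} H^{d+2-r}(K_v,F') \xrightarrow{\gamma'} H^{d+3-r}_c(U,\mathcal{F}')$. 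Applying $(-)^D$ to the latter and using the Artin-Verdier duality of \ref{AV fini} (with $x=1$) to identify $H^{d+3-r}_c(U,\mathcal{F}')^D \cong H^r(U,\mathcal{F})$ and $H^{d+2-r}(U,\mathcal{F}')^D \cong H^{r+1}_c(U,\mathcal{F})$, together with the local duality of \ref{nr fini} to identify $\big(\bigoplus_{v\in X\setminus U} H^{d+2-r}(K_v,F')\big)^D \cong \prod_{v\in X\setminus U} H^r(K_v,F)$, I obtain precisely the dual of the $\mathcal{F}'$-sequence matching the $\mathcal{F}$-sequence; in particular $\gamma$ is identified with $(\beta')^D$. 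The commutativity of all the squares involved is the content of Proposition 4.3 of \cite{CTH}, and the $\mathbb{Q}/\mathbb{Z}$-valued pairings rest on \ref{préliminaire}.

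That the composite $H^r(K,F) \to \mathbb{P}^r(F) \to H^{d+2-r}(K,F')^D$ vanishes is then the reciprocity law. Given $f \in H^r(K,F)$ and $f' \in H^{d+2-r}(K,F')$, I choose $U$ small enough that both extend to classes $\tilde f \in H^r(U,\mathcal{F})$ and $\tilde{f'} \in H^{d+2-r}(U,\mathcal{F}')$. For $v\in U$ both $f_v$ and $f'_v$ are unramified, so by \ref{nr fini} the local pairing $(f_v,f'_v)_v$ vanishes and the sum $\sum_{v\in X^{(1)}}(f_v,f'_v)_v$ reduces to the finite sum over $X\setminus U$. This finite sum equals $\langle \beta(\tilde f),\beta'(\tilde{f'})\rangle = \langle \gamma(\beta(\tilde f)), \tilde{f'}\rangle$ under the identification $\gamma = (\beta')^D$, and it vanishes because $\gamma\circ\beta = 0$ by exactness.

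For the reverse inclusion, I let $(f_v) \in \mathbb{P}^r(F)$ lie in the kernel of the pairing map and choose $U$ with $f_v \in H^r(\mathcal{O}_v,\mathcal{F})$ for every $v\in U$. For any $y \in H^{d+2-r}(U,\mathcal{F}')$ with image $f'$ in $H^{d+2-r}(K,F')$, the same unramified vanishing gives $\sum_{v\in X\setminus U}(f_v,(\beta' y)_v)_v = \sum_{v\in X^{(1)}}(f_v,f'_v)_v = 0$; since this holds for all such $y$, the identification of $\gamma$ with $(\beta')^D$ forces $\gamma\big((f_v)_{v\in X\setminus U}\big) = 0$. By exactness of the localization sequence there is $\tilde f \in H^r(U,\mathcal{F})$ with $\beta(\tilde f) = (f_v)_{v\in X\setminus U}$, whose image $f = \tilde f|_K \in H^r(K,F)$ already has the correct localizations outside $U$.

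The main obstacle is the final matching step: $f$ need not reproduce $(f_v)$ at the points $v\in U$, where both components merely lie in the integral subgroup $H^r(\mathcal{O}_v,\mathcal{F})$. I expect to treat the difference $(f_v) - \mathrm{diag}(f)$, which is supported on $U$ and everywhere unramified, by shrinking $U$ and passing to the colimit $\varinjlim_U H^r(U,\mathcal{F}) = H^r(K,F)$, exactly as in Proposition 4.2 of \cite{HS2} and the analogous arguments of \cite{MilADT}; this, together with the verification of the commutativity of the comparison diagram, is where the real work lies. The boundary value $r = d+2$ is slightly special, but there $\mathbb{P}^{d+2}(F) = \bigoplus_v H^{d+2}(K_v,F)$ and the integral groups $H^{d+2}(\mathcal{O}_v,\mathcal{F})$ vanish by cohomological dimension, so the interior matching issue does not arise.
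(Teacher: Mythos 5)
Your dual-sequence framework is the right one, and the first two steps are sound: the reciprocity computation showing the composite vanishes, and the identification of $\gamma$ with $(\beta')^D$ (legitimate since all groups involved are finite), which produces $\tilde f \in H^r(U,\mathcal{F})$ with $\beta(\tilde f) = (f_v)_{v \in X\setminus U}$. The genuine gap is exactly the step you defer at the end: matching at the places $v \in U$ is not a technical verification but the heart of the proof, and the repair you sketch does not close it. If you shrink $U$ to $U'$ and rerun the argument, you obtain a new class $\tilde f' \in H^r(U',\mathcal{F})$ matching $(f_v)$ outside $U'$, with no compatibility between $\tilde f$ and $\tilde f'$ as $U$ shrinks; and your difference class $(f_v)-\mathrm{diag}(f)$ is again an everywhere-unramified element of the kernel, so the procedure merely loops. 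The colimit $\varinjlim_U H^r(U,\mathcal{F}) = H^r(K,F)$ by itself cannot assemble infinitely many local conditions into a single global class; some uniform finiteness input is required, and your proposal contains none.

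The paper supplies precisely this, by two devices absent from your write-up. First, using Lemma 2.2 of \cite{HS2} (this, not Proposition 4.2, is the relevant statement: a class in $H^r(V,\mathcal{F})$ extends to $U \supseteq V$ if and only if its local images at $v \in U\setminus V$ are integral), it refines the localization sequence into an exact sequence, for every $V \subseteq U$,
$$H^r(U,\mathcal{F}) \rightarrow \bigoplus_{v \in X \setminus U} H^r(K_v,F) \oplus \bigoplus_{v \in U \setminus V} H^r(\mathcal{O}_v,\mathcal{F}) \rightarrow H^{r+1}_c(V,\mathcal{F}),$$
which records the integral conditions at $v \in U\setminus V$ and not merely the conditions outside $U$. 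Second, after identifying $H^{r+1}_c(V,\mathcal{F}) \cong H^{d+2-r}(V,\mathcal{F}')^D$ and using $\varinjlim_V H^{d+2-r}(V,\mathcal{F}') = H^{d+2-r}(K,F')$, the hypothesis on $(f_v)$ shows that the obstruction in $H^{r+1}_c(V,\mathcal{F})$ vanishes for \emph{every} $V$, so the set $E_V \subseteq H^r(U,\mathcal{F})$ of preimages of $(f_v)_{v \in X\setminus V}$ is nonempty for each $V$; these sets are nested, and $H^r(U,\mathcal{F})$ is \emph{finite}, whence $\bigcap_V E_V \neq \emptyset$. Any element of this intersection restricts to a single global $f$ whose localization equals $f_v$ at every place — at $v \in U$ because matching already holds in $H^r(\mathcal{O}_v,\mathcal{F})$, which injects into $H^r(K_v,F)$ for $1 \leq r \leq d+1$ by \ref{nr fini} (your observation that $H^{d+2}(\mathcal{O}_v,\mathcal{F})=0$ correctly disposes of $r=d+2$). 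This compactness argument in a finite group is the missing idea, and without it the exactness at $\mathbb{P}^r(F)$ is not established.
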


\begin{proof} 
Pour chaque ouvert $V \subseteq U_0$, on dispose d'une suite exacte de groupes finis:
$$H^r_c(V,\mathcal{F}) \rightarrow H^r(V,\mathcal{F}) \rightarrow \bigoplus_{v \in X \setminus V} H^r(K_v,F) \rightarrow H^{r+1}_c(V,\mathcal{F}).$$
Soient $U$ et $V$ deux ouverts de $U_0$ tels que $V \subseteq U$. Comme d'après le lemme 2.2 de \cite{HS2} un élément de $H^r(V,\mathcal{F})$ provient de $H^r(U,\mathcal{F})$ si, et seulement si, son image dans $H^r(K_v,F)$ provient de $H^r(\mathcal{O}_v,\mathcal{F})$ pour chaque $v \in U \setminus V$, on obtient une suite exacte:
$$H^r(U,\mathcal{F}) \rightarrow \bigoplus_{v \in X \setminus U} H^r(K_v,F) \oplus \bigoplus_{v \in U \setminus V} H^r(\mathcal{O}_v,\mathcal{F}) \rightarrow H^{r+1}_c(V,\mathcal{F}).$$
Or $H^{r+1}_c(V,\mathcal{F}) \cong H^{d+2-r}(V,\mathcal{F}')^D$ et $\varinjlim_V  H^{d+2-r}(V,\mathcal{F}') = H^{d+2-r}(K,F')$. Par conséquent, en passant à la limite projective sur $V$, on obtient un complexe:
$$H^r(U,\mathcal{F}) \rightarrow \prod_{v \in X \setminus U} H^r(K_v,F) \times \prod_{v \in U} H^r(\mathcal{O}_v,\mathcal{F}) \rightarrow H^{d+2-r}(K,F')^D.$$
Montrons que ce complexe est en fait une suite exacte. Pour ce faire, choisissons un élément $(f_v)_{v \in X^{(1)}} \in \prod_{v \in X \setminus U} H^r(K_v,F) \times \prod_{v \in U} H^r(\mathcal{O}_v,\mathcal{F}) $ dont l'image dans $H^{d+2-r}(K,F')^D$ est nulle. On remarque alors aisément que l'image de $(f_v)_{v \in X \setminus V}$ est nulle dans $ H^{r+1}_c(V,\mathcal{F})$ pour chaque ouvert $V$ de $U$. Cela impose que l'image réciproque $E_V$ de $(f_v)_{v \in X \setminus V}$ dans $H^r(U,\mathcal{F})$ est non vide. Comme pour $W \subseteq V$ on a $E_W\subseteq E_V$ et comme le groupe $H^r(U,\mathcal{F})$ est fini, l'intersection $\bigcap_V E_V$ est non vide. On en déduit que $(f_v)_{v \in X^{(1)}}$ est dans l'image de $H^r(U,\mathcal{F}) \rightarrow \prod_{v \in X \setminus U} H^r(K_v,F) \times \prod_{v \in U} H^r(\mathcal{O}_v,\mathcal{F})$ et donc que la suite
$$H^r(U,\mathcal{F}) \rightarrow \prod_{v \in X \setminus U} H^r(K_v,F) \times \prod_{v \in U} H^r(\mathcal{O}_v,\mathcal{F}) \rightarrow H^{d+2-r}(K,F')^D$$
est exacte. Il suffit alors de prendre la limite inductive sur $U$.
\end{proof}

\begin{theorem} \textbf{(Suite exacte de Poitou-Tate pour les modules finis)} \label{PT fini suite}\\
On rappelle que l'on a supposé (H \ref{40}), c'est-à-dire que $X$ est une courbe, et que ${F}'$ désigne $\underline{\text{Hom}}({F},\mathbb{Q}/\mathbb{Z}(d+1))$. On a une suite exacte à $3(d+3)$ termes:\\
\centerline{\xymatrix{
0 \ar[r] & H^0(K,F) \ar[r] & \mathbb{P}^0(F) \ar[r] & H^{d+2}(K,F')^D \ar[d]\\
& H^{d+1}(K,F')^D \ar[d] & \mathbb{P}^1(F) \ar[l] & H^1(K,F)\ar[l]\\
& H^2(K,F) \ar[r] & \mathbb{P}^2(F) \ar[r] & H^d(K,F')^D \ar[d]\\
&\ar[d] &...\ar@{..}[l]&\ar@{..}[l]\\
& H^{d+1}(K,F) \ar[r] & \mathbb{P}^{d+1}(F) \ar[r] & H^{1}(K,F')^D \ar[d]\\
0 & H^{0}(K,F')^D \ar[l] & \mathbb{P}^{d+2}(F) \ar[l] & H^{d+2}(K,F)\ar[l]\\
}}
\end{theorem}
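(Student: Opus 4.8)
The plan is d'obtenir la suite en recollant les suites exactes à trois termes
$$H^r(K,F) \to \mathbb{P}^r(F) \xrightarrow{\psi_r} H^{d+2-r}(K,F')^D$$
fournies par la proposition \ref{bouts suite fini} (pour $r \in \{1,\dots,d+2\}$), les raccords étant assurés par des morphismes de liaison construits à partir de la dualité sur les groupes de Tate-Shafarevich du théorème \ref{PT fini}. Précisément, au passage du morceau d'indice $r$ au morceau d'indice $r+1$, je définirais
$$\delta_r : H^{d+2-r}(K,F')^D \twoheadrightarrow \Sha^{d+2-r}(F')^D \xrightarrow{\sim} \Sha^{r+1}(F) \hookrightarrow H^{r+1}(K,F),$$
où la première flèche est la restriction des fonctionnelles à $\Sha^{d+2-r}(F')$ (surjective car $\mathbb{Q}/\mathbb{Z}$ est injectif et $\Sha^{d+2-r}(F')$ est fini d'après \ref{PT fini}), l'isomorphisme du milieu provient de \ref{PT fini}, et la dernière flèche est l'inclusion.

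Une fois ces morphismes en place, il y a trois types de termes où vérifier l'exactitude. L'exactitude en chaque $\mathbb{P}^r(F)$ pour $r \in \{1,\dots,d+2\}$ est exactement le contenu de la proposition \ref{bouts suite fini}; le seul cas extrême $r=0$, non couvert par cet énoncé, s'obtient en appliquant la même proposition à $F'$ en degré $d+2$ puis en dualisant, grâce à la bidualité canonique $\underline{\text{Hom}}(F',\mathbb{Q}/\mathbb{Z}(d+1)) \cong F$ qui échange les rôles de $F$ et de $F'$. L'exactitude aux termes globaux $H^{r+1}(K,F)$ résulte de ce que le noyau de $H^{r+1}(K,F) \to \mathbb{P}^{r+1}(F)$ est $\Sha^{r+1}(F)$ par définition, tandis que $\text{Im}(\delta_r) = \Sha^{r+1}(F)$ par construction: les deux coïncident. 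L'injectivité de $H^0(K,F) \to \mathbb{P}^0(F)$ à l'extrémité gauche est claire, puisque $F^{G_K} \hookrightarrow F^{G_{K_v}}$ pour n'importe quel $v$.

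La vérification restante, et la plus délicate, est l'exactitude aux termes duaux $H^{d+2-r}(K,F')^D$, c'est-à-dire $\text{Im}(\psi_r) = \text{Ker}(\delta_r) = \text{Ann}(\Sha^{d+2-r}(F'))$. L'inclusion $\text{Im}(\psi_r) \subseteq \text{Ann}(\Sha^{d+2-r}(F'))$ est immédiate sur la formule définissant $\psi_r$, puisqu'une classe de $\Sha^{d+2-r}(F')$ a toutes ses composantes locales nulles. Pour l'inclusion réciproque, je remarquerais, grâce à la proposition \ref{nr fini}, que la dualité locale identifie $\mathbb{P}^r(F)$ au dual de Pontryagin du produit restreint $\mathbb{P}^{d+2-r}(F')$ et que, sous cette identification, $\psi_r$ est la transposée du morphisme de localisation $\text{loc} : H^{d+2-r}(K,F') \to \mathbb{P}^{d+2-r}(F')$, de noyau $\Sha^{d+2-r}(F')$; l'exactitude cherchée équivaut alors à ce que l'image de $\text{loc}$ soit fermée, ce que j'attends être la principale difficulté. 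Pour l'établir, je travaillerais au niveau fini: pour $U$ ouvert, la suite exacte de localisation déjà exploitée dans la preuve de \ref{bouts suite fini}, combinée à l'isomorphisme d'Artin-Verdier $H^{r+1}_c(U,\mathcal{F}) \cong H^{d+2-r}(U,\mathcal{F}')^D$ de la proposition \ref{AV fini}, fournit l'exactitude voulue, puis je passerais à la limite sur $U$ comme dans \ref{Sha fini}, la finitude des groupes $H^r(U,\mathcal{F})$ et des groupes de Tate-Shafarevich (théorème \ref{PT fini}) garantissant, via un argument de compacité de type Mittag-Leffler, que ce passage à la limite préserve l'exactitude. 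Enfin, la surjectivité de $\mathbb{P}^{d+2}(F) \to H^0(K,F')^D$ à l'extrémité droite n'est autre que le cas $r=d+2$ de ce calcul: comme $\Sha^0(F') = 0$, l'annulateur est le groupe tout entier.
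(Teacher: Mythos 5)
Votre démonstration suit pour l'essentiel la même route que celle du texte: exactitude des lignes donnée par la proposition \ref{bouts suite fini}, la ligne extrême en degré $0$ s'obtenant en dualisant la dernière grâce à la bidualité $\underline{\text{Hom}}(F',\mathbb{Q}/\mathbb{Z}(d+1)) \cong F$ et à l'identification $\mathbb{P}^{d+2}(F')^D \cong \mathbb{P}^0(F)$ issue de la proposition \ref{nr fini}; flèches verticales définies par la composée $H^{d+3-r}(K,F')^D \twoheadrightarrow \Sha^{d+3-r}(F')^D \cong \Sha^{r}(F) \hookrightarrow H^{r}(K,F)$, qui est exactement celle du texte; exactitude automatique aux termes $H^r(K,F)$ puisque l'image de la flèche verticale et le noyau de la localisation valent tous deux $\Sha^r(F)$. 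La seule divergence réelle porte sur le point crucial, l'exactitude aux termes duaux, et c'est là que se loge une imprécision: cette exactitude n'équivaut pas à la fermeture de l'image $I$ de la localisation $H^{d+3-r}(K,F') \rightarrow \mathbb{P}^{d+3-r}(F')$, mais au caractère strict de ce morphisme, c'est-à-dire, la source étant discrète, à la discrétude de $I$. Un sous-groupe fermé non discret (par exemple un sous-groupe compact infini d'exposant fini) porte des homomorphismes abstraits vers $\mathbb{Q}/\mathbb{Z}$ qui ne sont pas continus pour la topologie induite et ne se prolongent donc pas à $\mathbb{P}^{d+3-r}(F')$; la fermeture seule ne suffit pas.

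Le texte établit précisément cette discrétude: tout élément de $I$ dont les composantes sont non ramifiées en dehors d'un ouvert $U$ provient de $H^{d+3-r}(U,\mathcal{F}')$ (c'est le lemme 2.2 de \cite{HS2}, déjà exploité dans la preuve de \ref{bouts suite fini}), groupe qui est fini; l'intersection de $I$ avec le sous-groupe ouvert $\prod_{v \in X \setminus U} H^{d+3-r}(K_v,F') \times \prod_{v \in U} H^{d+3-r}(\mathcal{O}_v,\mathcal{F}')$ est donc finie, et $I$ est discret, ce qui permet de dualiser la suite $0 \rightarrow \Sha^{d+3-r}(F') \rightarrow H^{d+3-r}(K,F') \rightarrow \mathbb{P}^{d+3-r}(F')$. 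Votre variante — exactitude au niveau fini via \ref{AV fini} et la suite de localisation, puis passage à la limite sur $U$ par un argument de compacité du type de celui de \ref{bouts suite fini} — est viable (c'est la démonstration classique à la Poitou--Tate) et repose au fond sur la même finitude de $H^{d+3-r}(U,\mathcal{F}')$; mais telle quelle elle reste une esquisse, et la réduction que vous annoncez (exactitude équivalente à l'image fermée) doit être corrigée en: morphisme strict, soit image discrète.
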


\begin{proof}  
Par dualité, on remarque aisément que $\mathbb{P}^{d+2}(F) \rightarrow H^{0}(K,F')^D$ est surjective. Comme de plus la première ligne est duale de la dernière, la proposition précédente prouve que toutes les lignes sont exactes.\\
Les flèches verticales $H^r(K,F')^D \rightarrow H^{d+3-r}(K,F)$ sont définies par la composée:
$$H^{d+3-r}(K,F')^D \twoheadrightarrow \Sha^{d+3-r}(F')^D \cong \Sha^{r}(F) \hookrightarrow H^{r}(K,F).$$
Pour vérifier l'exactitude au niveau de ces flèches, il suffit de voir que la suite duale de:
$$0 \rightarrow \Sha^{d+3-r}(F') \rightarrow H^{d+3-r}(K,F') \rightarrow \mathbb{P}^{d+3-r}(F')$$
est encore exacte. Pour ce faire, il suffit de vérifier que l'image $I$ de $H^{d+3-r}(K,F')$ dans $\mathbb{P}^{d+3-r}(F')$ est discrète. Fixons $U$ un ouvert de $U_0$. Tout élément de $ I \cap (\prod_{v \in X \setminus U} H^{d+3-r}(K_v,F') \times \prod_{v \in U} H^{d+3-r}(\mathcal{O}_v,\mathcal{F}'))$ provient de $H^{d+3-r}(U,\mathcal{F}')$, qui est un groupe fini. On en déduit que le groupe $ I \cap (\prod_{v \in X \setminus U} H^{d+3-r}(K_v,F') \times \prod_{v \in U} H^{d+3-r}(\mathcal{O}_v,\mathcal{F}'))$ est fini, et donc $I$ est discret dans $\mathbb{P}^{d+3-r}(F')$.
\end{proof}

\begin{remarque}
Si on ne suppose pas que $k_1$ est de caractéristique 0, toutes les propriétés de cette section restent valables à condition de supposer que $F$ est d'ordre premier à $\text{Car}(k_1)$.
\end{remarque}

\section{ \scshape Tores}\label{tore}

Fixons $a$ un élément de $\{0,1,2,...,d+x\}$ et $\hat{T}$ un $\text{Gal}(K^s/K)$-module qui, comme groupe abélien, est libre de type fini. Notons $\check{T} = \text{Hom}(\hat{T},\mathbb{Z})$. Soit $\hat{\mathcal{T}}$ un faisceau défini sur un ouvert $U_0$ de $X$, localement isomorphe à un faisceau constant libre de type fini et étendant $\hat{T}$. On pose $\check{\mathcal{T}} = \text{\underline{Hom}}(\hat{T},\mathbb{Z})$. Soient $T=\check{T} \otimes^{\mathbf{L}} \mathbb{Z}(a)[1] $ et $\mathcal{T} = \check{\mathcal{T}} \otimes^{\mathbf{L}} \mathbb{Z}(a)[1]$. On dispose alors d'un accouplement:
$$\mathcal{T} \otimes^{\mathbf{L}} (\hat{\mathcal{T}} \otimes^{\mathbf{L}} \mathbb{Z}(d+x-a)[1]) \rightarrow \mathbb{Z}(d+x)[2],$$
d'où un accouplement en hypercohomologie:
$$H^r(U,\mathcal{T}) \times H^{d+2x+1-r}_c(U,\hat{\mathcal{T}} \otimes^{\mathbf{L}} \mathbb{Z}(d+x-a)) \rightarrow H^{d+2x+2}_c(U,\mathbb{Z}(d+x)) \cong \mathbb{Q}/\mathbb{Z}$$
pour chaque ouvert non vide $U$ de $U_0$. De plus, comme on a un morphisme $\mathbb{Q}/\mathbb{Z}(d+x-a) \rightarrow \mathbb{Z}(d+x-a)[1]$, l'accouplement précédent induit aussi un accouplement:
$$H^r(U,\mathcal{T}) \times H^{d+2x-r}_c(U,\hat{\mathcal{T}} \otimes^{\mathbf{L}} \mathbb{Q}/\mathbb{Z}(d+x-a)) \rightarrow \mathbb{Q}/\mathbb{Z}.$$
On notera dans la suite $\tilde{T}_t = \hat{T} \otimes^{\mathbf{L}} \mathbb{Q}/\mathbb{Z}(d+x-a)$, $\tilde{T} = \hat{T} \otimes^{\mathbf{L}} \mathbb{Z}(d+x-a)$, $\tilde{\mathcal{T}}_t = \hat{\mathcal{T}} \otimes^{\mathbf{L}} \mathbb{Q}/\mathbb{Z}(d+x-a)$ et $\tilde{\mathcal{T}} = \hat{\mathcal{T}} \otimes^{\mathbf{L}} \mathbb{Z}(d+x-a)$. C'est $\tilde{T}$ qui nous intéresse, mais parfois il sera plus facile de travailler avec $\tilde{T}_t$.

\begin{remarque}
Lorsque $a=1$, les complexes $T$ et $\mathcal{T}$ sont quasi-isomorphes à des tores.
\end{remarque}

Nous cherchons à établir un théorème de dualité de type Poitou-Tate pour le complexe $T$. Pour ce faire, l'idée consiste à se ramener au cadre de la section précédente en travaillant avec $\tilde{T}_t$, qui est une limite inductive de faisceaux localement constants à tiges finies, plutôt qu'avec le complexe $\tilde{T}$.

\begin{theorem} \textbf{(Artin-Verdier pour les tores)} \label{AV tore}\\
Soit $r \in \mathbb{Z}$. Soit $U$ un ouvert non vide de $U_0$. Pour chaque nombre premier $l$, on dispose d'un accouplement parfait de groupes finis:
$$(H^r(U,\mathcal{T})\{l\})^{(l)} \times H^{d+2x-r}_c(U,\tilde{\mathcal{T}}_t)^{(l)}\{l\} \rightarrow \mathbb{Q}_l/\mathbb{Z}_l.$$
\end{theorem}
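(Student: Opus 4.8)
The plan is to reduce the duality statement for the complexes $\mathcal{T}$ and $\tilde{\mathcal{T}}_t$, which involve the motivic complexes $\mathbb{Z}(a)$, to the finite-coefficient Artin-Verdier duality already established in Proposition~\ref{AV fini}. The key observation is that, after passing to $l$-primary torsion and $l$-adic completion, the motivic complexes become finite coefficients via the Geisser-Levine isomorphism $\mathbb{Z}/m\mathbb{Z}(i) \cong \mu_m^{\otimes i}$, so the whole apparatus of Section~2 applies. Concretely, I would first fix a prime $l$ and, for each $n$, work with the $\mathbb{Z}/l^n\mathbb{Z}$-coefficient versions of $\mathcal{T}$ and $\tilde{\mathcal{T}}_t$.

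\textbf{First step: pass to finite coefficients.} For $\mathcal{T} = \check{\mathcal{T}} \otimes^{\mathbf{L}} \mathbb{Z}(a)[1]$, I would consider the derived tensor product with $\mathbb{Z}/l^n\mathbb{Z}$, obtaining $\mathcal{T} \otimes^{\mathbf{L}} \mathbb{Z}/l^n\mathbb{Z} \cong \check{\mathcal{T}} \otimes^{\mathbf{L}} \mathbb{Z}/l^n\mathbb{Z}(a)[1]$, which by Geisser-Levine (property (iv) of the complex $\mathbb{Z}(i)$) is quasi-isomorphic to the locally constant finite étale sheaf $\check{\mathcal{T}} \otimes \mu_{l^n}^{\otimes a}[1]$. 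The analogous computation for $\tilde{\mathcal{T}}_t = \hat{\mathcal{T}} \otimes^{\mathbf{L}} \mathbb{Q}/\mathbb{Z}(d+x-a)$ gives, on its $l^n$-torsion, the dual sheaf $\hat{\mathcal{T}} \otimes \mu_{l^n}^{\otimes (d+x-a)}$. Applying Proposition~\ref{AV fini} to the finite étale group scheme $\mathcal{F} = \check{\mathcal{T}} \otimes \mu_{l^n}^{\otimes a}$ (whose dual $\mathcal{F}' = \underline{\text{Hom}}(\mathcal{F}, \mathbb{Q}/\mathbb{Z}(d+x))$ is precisely $\hat{\mathcal{T}} \otimes \mu_{l^n}^{\otimes(d+x-a)}$ after the degree shift) yields a perfect pairing of finite groups at each finite level $n$, compatible with the pairing constructed before the statement.

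\textbf{Second step: pass to the limit in $n$.} Here I would take the inverse limit over $n$ on the $\mathcal{T}$-side (producing the $l$-adic completion $(\cdot)^{(l)}$ of the $l$-primary part) and the direct limit over $n$ on the $\tilde{\mathcal{T}}_t$-side (recovering the full $\mathbb{Q}/\mathbb{Z}(d+x-a)$ coefficients, hence the $l$-primary torsion subgroup). The standard fact that the Pontryagin dual interchanges $\varprojlim$ of the $A/l^n A$ with $\varinjlim$ of the $l^n$-torsion of the dual, together with finiteness at each level, converts the finite-level perfect pairings into the asserted perfect pairing
$$(H^r(U,\mathcal{T})\{l\})^{(l)} \times H^{d+2x-r}_c(U,\tilde{\mathcal{T}}_t)^{(l)}\{l\} \rightarrow \mathbb{Q}_l/\mathbb{Z}_l.$$

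\textbf{The hard part} will be controlling the interchange of the derived functors $\mathbb{R}\Gamma$, $\mathbb{R}\Gamma_c$ with the limits over $n$, and checking that no $\varprojlim^1$ terms obstruct the passage to the limit. The finiteness of the cohomology groups at each level (which follows since $\mathcal{F}$ is finite étale, $U$ is a curve over a $d$-local field, and the relevant cohomological dimensions are bounded) should make the Mittag-Leffler condition automatic and kill the lim$^1$ contributions, but this requires care in tracking that the groups appearing are genuinely finite before completing. A secondary technical point is verifying that the finite-level pairings assemble compatibly into the limit pairing; this compatibility is built into the cup-product structure $\mathbb{Z}(i) \otimes^{\mathbf{L}} \mathbb{Z}(j) \to \mathbb{Z}(i+j)$ recalled in the preliminaries, so it should follow from the functoriality of the constructions rather than requiring a separate argument.
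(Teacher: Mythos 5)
Your first step coincides with the paper's: the paper likewise identifies $\check{\mathcal{T}}\otimes\mathbb{Z}/l^n\mathbb{Z}(a)$ with the finite locally constant sheaf $\check{\mathcal{T}}\otimes\mu_{l^n}^{\otimes a}$, computes $(\check{\mathcal{T}}\otimes\mu_{l^n}^{\otimes a})'=\hat{\mathcal{T}}\otimes\mu_{l^n}^{\otimes (d+x-a)}={}_{l^n}\tilde{\mathcal{T}}_t$, and applies la proposition \ref{AV fini} at each level $n$. Your resolution of the $\varprojlim^1$ worry is also the paper's: every group at finite level is finite, so the projective limits are exact; and the compatibility of the pairings is indeed built into the product structure on the motivic complexes.

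The gap is in your second step: the directions of your limits are backwards, and the parenthetical identifications attached to them fail. The Bockstein triangles shift degrees, so a single inverse limit over $n$ on the $\mathcal{T}$-side yields an extension
$$0 \rightarrow H^{r-1}(U,\mathcal{T})^{(l)} \rightarrow \varprojlim_n H^r(U,\check{\mathcal{T}}\otimes\mathbb{Z}/l^n\mathbb{Z}(a)) \rightarrow \varprojlim_n {}_{l^n}H^r(U,\mathcal{T}) \rightarrow 0,$$
that is, the completion of the \emph{full} group in degree $r-1$ together with a Tate module --- not $(H^r(U,\mathcal{T})\{l\})^{(l)}$; dually, a single direct limit on the compact-support side yields an extension of $H^{d+2x+1-r}_c(U,\tilde{\mathcal{T}}_t)\{l\}$ by the divisible group $H^{d+2x-r}_c(U,\tilde{\mathcal{T}}_t)\otimes\mathbb{Q}_l/\mathbb{Z}_l$ --- not $H^{d+2x-r}_c(U,\tilde{\mathcal{T}}_t)^{(l)}\{l\}$, and in the wrong degree. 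The asymmetric shape of the statement (completion of the torsion on one side, torsion of the completion on the other) is precisely the trace of the correct bookkeeping, which is the real content of the paper's proof: on the $\mathcal{T}$-side one takes $\varinjlim_n$, obtaining an extension of $H^r(U,\mathcal{T})\{l\}$ by the divisible group $H^{r-1}(U,\mathcal{T})\otimes\mathbb{Q}_l/\mathbb{Z}_l$, and then applies $(\cdot)^{(l)}$, which kills the divisible kernel; on the dual side one takes $\varprojlim_n$, obtaining an extension of the torsion-free Tate module $\varprojlim_n {}_{l^n}H^{d+2x+1-r}_c(U,\tilde{\mathcal{T}}_t)$ by $H^{d+2x-r}_c(U,\tilde{\mathcal{T}}_t)^{(l)}$, and then passes to torsion, which kills the Tate module. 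These two composite operations are Pontryagin dual to each other level by level, so the finite-level dualities of la proposition \ref{AV fini} assemble into the asserted pairing. With your directions as written you would at best obtain a pairing between $H^{r-1}(U,\mathcal{T})^{(l)}\{l\}$ and $\overline{H^{d+2x+1-r}_c(U,\tilde{\mathcal{T}}_t)\{l\}}$, and identifying torsion-of-completion with completion-of-torsion on the $\mathcal{T}$-side is not automatic, since $H^{r-1}(U,\mathcal{T})$ need not be a torsion group.
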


\begin{proof} 
Soit $l$ un nombre premier. Pour chaque entier naturel $n$, on dispose du triangle distingué:
$$\mathcal{T} \rightarrow \mathcal{T} \rightarrow \check{\mathcal{T}} \otimes \mathbb{Z}/l^n\mathbb{Z}(a)[1] \rightarrow \mathcal{T}[1],$$
d'où des suites exactes:
$$0 \rightarrow H^{r-1}(U,\mathcal{T})/l^n \rightarrow H^r(U,\check{\mathcal{T}} \otimes \mathbb{Z}/l^n\mathbb{Z}(a)) \rightarrow {_{l^n}}H^r(U,\mathcal{T}) \rightarrow 0.$$
En passant à la limite inductive sur $n$, on obtient une suite exacte:
$$0 \rightarrow H^{r-1}(U,\mathcal{T})\otimes \mathbb{Q}_l/\mathbb{Z}_l \rightarrow \varinjlim_n H^r(U,\check{\mathcal{T}} \otimes \mathbb{Z}/l^n\mathbb{Z}(a)) \rightarrow H^r(U,\mathcal{T})\{ l\} \rightarrow 0.$$
Par conséquent, pour chaque entier naturel $m$, on a un isomorphisme $(\varinjlim_n H^r(U,\check{\mathcal{T}} \otimes \mathbb{Z}/l^n\mathbb{Z}(a)))/l^m \rightarrow H^r(U,\mathcal{T})\{ l\}/l^m$. En passant à la limite projective sur $m$, on obtient:
$$(\varinjlim_n H^r(U,\check{\mathcal{T}} \otimes \mathbb{Z}/l^n\mathbb{Z}(a)))^{(l)} \cong H^r(U,\mathcal{T})\{ l\}^{(l)}.$$
D'autre part, on remarque que $\check{\mathcal{T}} \otimes \mathbb{Z}/l^n\mathbb{Z}(a)$ s'identifie à un faisceau localement constant à tiges finies, qui est bien sûr représentable par un schéma en groupes fini étale, et que:
$$(\check{\mathcal{T}} \otimes \mathbb{Z}/l^n\mathbb{Z}(a))' = \text{\underline{Hom}}(\check{\mathcal{T}} \otimes \mu_{l^n}^{\otimes a}, \mu_{l^n}^{\otimes d + x}) = \hat{\mathcal{T}} \otimes \mu_{l^n}^{\otimes d+x-a} = {_{l^n}}\tilde{\mathcal{T}}_t.$$ 
À l'aide de la suite exacte:
$$0 \rightarrow {_{l^n}}\tilde{\mathcal{T}}_t \rightarrow \tilde{\mathcal{T}}_t \rightarrow \tilde{\mathcal{T}}_t \rightarrow 0,$$
on obtient une suite exacte:
$$0 \rightarrow H^{d+2x-r}_c(U,\tilde{\mathcal{T}}_t)/l^n \rightarrow H^{d+2x+1-r}_c(U,{_{l^n}}\tilde{\mathcal{T}}_t) \rightarrow {_{l^n}}H^{d+2x+1-r}_c(U,\tilde{\mathcal{T}}_t) \rightarrow 0.$$
En passant à la limite projective sur $n$, on obtient une suite exacte:
$$0 \rightarrow H^{d+2x-r}_c(U,\tilde{\mathcal{T}}_t)^{(l)} \rightarrow \varprojlim_n H^{d+2x+1-r}_c(U,{_{l^n}}\tilde{\mathcal{T}}_t) \rightarrow \varprojlim_n {_{l^n}}H^{d+2x+1-r}_c(U,\tilde{\mathcal{T}}_t) \rightarrow 0.$$
Le groupe $\varprojlim_n {_{l^n}}H^{d+2x+1-r}_c(U,\tilde{\mathcal{T}}_t)$ étant sans torsion, on obtient un isomorphisme: 
$$H^{d+2x-r}_c(U,\tilde{\mathcal{T}}_t)^{(l)}\{l\} \rightarrow \left( \varprojlim_n H^{d+2x+1-r}_c(U,{_{l^n}}\tilde{\mathcal{T}}_t)\right) \{l\}.$$
Comme $H^r(U,\check{\mathcal{T}} \otimes \mathbb{Z}/l^n\mathbb{Z}(a))$ et $H^{d+2x+1-r}_c(U,{_{l^n}}\tilde{\mathcal{T}}_t)$ sont duaux pour chaque entier naturel $n$ d'après le théorème \ref{AV fini}, on obtient un accouplement parfait de groupes finis:
$$(H^r(U,\mathcal{T})\{l\})^{(l)} \times H^{d+2x-r}_c(U,\tilde{\mathcal{T}}_t)^{(l)}\{l\} \rightarrow \mathbb{Q}_l/\mathbb{Z}_l.$$
\end{proof}

\begin{remarque}\label{AV tore 2}
Par un raisonnement tout à fait analogue, on obtient un accouplement parfait de groupes finis:
$$(H^r(U,\mathcal{T})\{l\})^{(l)} \times H^{d+2x+1-r}_c(U,\tilde{\mathcal{T}})^{(l)}\{l\} \rightarrow \mathbb{Q}_l/\mathbb{Z}_l.$$
\end{remarque}

\underline{Dans toute la suite de cette section, on supposera (H \ref{40}), c'est-à-dire que $X$ est une} \underline{courbe.}
Cela impose en particulier que $a \in \{0,1,...,d+1\}$ et que $\tilde{T} = \hat{T} \otimes^{\mathbf{L}} \mathbb{Z}(d+1-a)$. 

\begin{proposition} \textbf{(Dualité locale pour les tores)} \label{local tore}\\
On rappelle que l'on a supposé (H \ref{40}). Soit $v \in X^{(1)}$.
\begin{itemize}
\item[(i)] On a un accouplement parfait de groupes finis:
$$H^a(K_v,T) \times H^{d+2-a}(K_v,\tilde{T}) \rightarrow \mathbb{Q}/\mathbb{Z}.$$
\item[(ii)] Pour chaque entier naturel $n$, on a des isomorphismes de groupes finis:
$$_nH^a(K_v,T) \cong (H^{d+2-a}(K_v,\tilde{T})/n)^D \cong (H^{d+1-a}(K_v,\tilde{T}_t)/n)^D.$$
\item[(iii)] On a un accouplement parfait entre un groupe profini et un groupe de torsion:
$$H^{a-1}(K_v,T)^{\wedge} \times H^{d+3-a}(K_v,\tilde{T}) \rightarrow \mathbb{Q}/\mathbb{Z}.$$
\end{itemize}

\end{proposition}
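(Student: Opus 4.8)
Le corps $K_v$ étant $(d+1)$-local, de dimension cohomologique $d+2$, le plan est de reproduire sur $\text{Spec}\, K_v$ la méthode de la preuve du théorème \ref{AV tore}: se ramener à des coefficients finis, puis appliquer le théorème de dualité locale pour les modules finis sur le corps $(d+1)$-local $K_v$ (voir le rappel sur les corps locaux supérieurs). D'abord, exactement comme dans \ref{AV tore}, on dispose de l'identification $\underline{\text{Hom}}(\check T\otimes\mu_n^{\otimes a},\mu_n^{\otimes(d+1)})\cong {}_n\tilde T_t$, d'où, par dualité locale sur $K_v$, un accouplement parfait de groupes finis
$$H^a(K_v,\check T\otimes\mu_n^{\otimes a})\times H^{d+2-a}(K_v,{}_n\tilde T_t)\to\mathbb Q/\mathbb Z.$$
On exploitera les deux suites exactes issues des triangles distingués $T\xrightarrow{n}T\to\check T\otimes\mu_n^{\otimes a}[1]$ et ${}_n\tilde T_t\to\tilde T_t\xrightarrow{n}\tilde T_t$:
$$0\to H^{a-1}(K_v,T)/n\to H^a(K_v,\check T\otimes\mu_n^{\otimes a})\to {}_nH^a(K_v,T)\to 0,$$
$$0\to H^{d+1-a}(K_v,\tilde T_t)/n\to H^{d+2-a}(K_v,{}_n\tilde T_t)\to {}_nH^{d+2-a}(K_v,\tilde T_t)\to 0.$$
Enfin, puisque $\mathbb Q(i)_{\text{Zar}}\cong R\alpha_*\mathbb Q(i)$ est concentré en degrés $\le i$, on a $H^j(K_v,\hat T\otimes\mathbb Q(d+1-a))=0$ pour $j>d+1-a$; la suite $\mathbb Z(d+1-a)\to\mathbb Q(d+1-a)\to\mathbb Q/\mathbb Z(d+1-a)$ donne alors $H^{d+1-a}(K_v,\tilde T_t)/n\cong H^{d+2-a}(K_v,\tilde T)/n$ (l'image de la cohomologie rationnelle étant divisible) et $H^{d+2-a}(K_v,\tilde T_t)\cong H^{d+3-a}(K_v,\tilde T)$.

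\textbf{Niveau fini: (ii) et l'ingrédient de (iii).} Le cœur technique est la compatibilité de l'accouplement parfait ci-dessus avec les deux suites exactes. Les flèches y étant induites par la multiplication par $n$ et le Bockstein, qui sont adjoints pour l'accouplement, la dualité envoie le sous-objet $H^{a-1}(K_v,T)/n$ de la première suite sur le quotient ${}_nH^{d+2-a}(K_v,\tilde T_t)$ de la seconde, et le quotient ${}_nH^a(K_v,T)$ sur le sous-objet $H^{d+1-a}(K_v,\tilde T_t)/n$ (les commutativités nécessaires se traitant comme à la proposition 4.3 de \cite{CTH}). On en tire d'un seul coup les isomorphismes ${}_nH^a(K_v,T)\cong(H^{d+1-a}(K_v,\tilde T_t)/n)^D\cong(H^{d+2-a}(K_v,\tilde T)/n)^D$, ce qui est exactement (ii), ainsi que $(H^{a-1}(K_v,T)/n)^D\cong{}_nH^{d+3-a}(K_v,\tilde T)$. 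Pour (iii), on dualise ce dernier isomorphisme et on passe à la limite projective sur $n$: comme $H^{d+3-a}(K_v,\tilde T)\cong H^{d+2-a}(K_v,\tilde T_t)$ est de torsion, on a $\varinjlim_n{}_nH^{d+3-a}(K_v,\tilde T)=H^{d+3-a}(K_v,\tilde T)$, et $\varprojlim_n$ fournit $H^{a-1}(K_v,T)^{\wedge}\cong H^{d+3-a}(K_v,\tilde T)^D$, c'est-à-dire l'accouplement parfait annoncé entre le groupe profini $H^{a-1}(K_v,T)^{\wedge}$ et le groupe de torsion $H^{d+3-a}(K_v,\tilde T)$.

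\textbf{Passage à la limite et finitude: (i).} Une fois établie la finitude de $H^a(K_v,T)$ et de $H^{d+2-a}(K_v,\tilde T)$, il suffit de choisir $n$ assez divisible pour que ${}_nH^a(K_v,T)=H^a(K_v,T)$ et $H^{d+2-a}(K_v,\tilde T)/n=H^{d+2-a}(K_v,\tilde T)$: l'isomorphisme (ii) donne alors l'accouplement parfait de groupes finis. La principale difficulté réside précisément dans cette finitude. En utilisant $H^{a+1}(K_v,\check T\otimes\mathbb Q(a))=0$, on voit que $H^a(K_v,T)$ est le conoyau de $H^a(K_v,\check T\otimes\mathbb Q(a))\to H^a(K_v,\check T\otimes\mathbb Q/\mathbb Z(a))$, donc un groupe de torsion obtenu comme quotient d'un groupe de torsion par l'image d'un $\mathbb Q$-espace vectoriel. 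Tout revient alors à montrer, d'une part, que $H^a(K_v,\check T\otimes\mathbb Q/\mathbb Z(a))$ est de type cofini — ce qui résulte, via la dualité finie, de ce que son dual $\varprojlim_n H^{d+2-a}(K_v,{}_n\tilde T_t)$ est un $\hat{\mathbb Z}$-module de type fini — et, d'autre part, que l'image de la cohomologie rationnelle coïncide avec le sous-groupe divisible maximal. C'est ce contrôle du sous-groupe divisible qui constitue l'obstacle principal; la finitude de $H^{d+2-a}(K_v,\tilde T)$ s'obtient de manière symétrique.
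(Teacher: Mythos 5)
Votre squelette de preuve — réduction aux coefficients finis via l'identification $(\check T\otimes\mu_n^{\otimes a})'\cong{_n}\tilde T_t$, dualité locale sur le corps $(d+1)$-local $K_v$, triangles de Kummer, comparaison entre $\tilde T_t$ et $\tilde T$ par annulation de la cohomologie à coefficients $\mathbb{Q}$ — est bien celui de l'article. Mais le cœur de votre «niveau fini» comporte une lacune réelle: l'adjonction de la multiplication par $n$ et du Bockstein ne fournit que l'\emph{orthogonalité} des sous-objets $H^{a-1}(K_v,T)/n$ et $H^{d+1-a}(K_v,\tilde T_t)/n$ dans l'accouplement parfait $H^a(K_v,\check T\otimes\mu_n^{\otimes a})\times H^{d+2-a}(K_v,{_n}\tilde T_t)\to\mathbb{Q}/\mathbb{Z}$. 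Le lemme du serpent n'en tire alors que l'injectivité de $H^{a-1}(K_v,T)/n\to({_n}H^{d+2-a}(K_v,\tilde T_t))^D$ et la surjectivité de ${_n}H^a(K_v,T)\to(H^{d+1-a}(K_v,\tilde T_t)/n)^D$, le noyau de la seconde étant isomorphe au conoyau de la première: rien ne force la bijectivité, car l'égalité $|H^{a-1}(K_v,T)/n|\cdot|H^{d+1-a}(K_v,\tilde T_t)/n|=|H^a(K_v,\check T\otimes\mu_n^{\otimes a})|$ (c'est-à-dire le fait que les deux sous-groupes soient annulateurs exacts) est précisément ce qu'il s'agit de démontrer. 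L'article contourne ce point en faisant jouer \emph{deux} diagrammes distincts — l'un appariant les suites en degrés $(a,\,d+2-a)$ (les suites (5) et (7)), l'autre en degrés $(a+1,\,d+1-a)$ (les suites (6) et (8)) — pour obtenir des surjections dans les deux sens, puis en comparant les cardinaux au niveau bien choisi $n=n_0$.

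Le second point, que vous signalez vous-même comme «l'obstacle principal» sans le lever, est exactement là où se trouve l'idée clef de l'article, et elle est beaucoup plus élémentaire que votre tentative par le type cofini et le contrôle du sous-groupe divisible (votre affirmation selon laquelle $\varprojlim_n H^{d+2-a}(K_v,{_n}\tilde T_t)$ serait un $\hat{\mathbb{Z}}$-module de type fini n'est d'ailleurs pas justifiée). Si $L/K_v$ est une extension finie de degré $n_0$ déployant $\hat T$, la conjecture de Beilinson--Lichtenbaum donne $H^{a+1}(L,\mathbb{Z}(a))=H^{d+2-a}(L,\mathbb{Z}(d+1-a))=0$, et un argument de restriction--corestriction montre que $H^a(K_v,T)$ et $H^{d+2-a}(K_v,\tilde T)$ sont d'\emph{exposant} $n_0$. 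Dès lors $H^a(K_v,T)={_{n_0}}H^a(K_v,T)$ est un quotient du groupe fini $H^a(K_v,\check T\otimes\mathbb{Z}/n_0\mathbb{Z}(a))$, donc fini, et la surjection ${_{n_0}}H^a(K_v,T)\to(H^{d+2-a}(K_v,\tilde T)/n_0)^D=(H^{d+2-a}(K_v,\tilde T))^D$ entraîne la finitude de l'autre groupe: aucun contrôle de la partie divisible n'est nécessaire. Notez enfin que l'ordre logique doit être inversé par rapport à votre proposition: l'article établit d'abord (i) par ce double argument (surjections croisées plus cardinaux), puis (ii) s'en déduit — l'isomorphisme $H^{d+1-a}(K_v,\tilde T_t)/n\cong H^{d+2-a}(K_v,\tilde T)/n$ utilisant au passage la nullité de $H^{d+2-a}(K_v,\hat T\otimes\mathbb{Q}(d+1-a))$, groupe divisible et d'exposant fini, que votre simple parenthèse «l'image de la cohomologie rationnelle étant divisible» ne suffit pas à donner — et (iii) suit du lemme des cinq appliqué au premier diagramme une fois (i) acquis. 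Chez vous, (i) dépend de (ii), dont la preuve est précisément l'étape manquante.
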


\begin{proof}
\begin{itemize}
\item[(i)]
Pour chaque $n>0$, on dispose des triangles distingués:
$$ \check{T} \otimes \mathbb{Z}/n\mathbb{Z}(a) \rightarrow T \rightarrow T \rightarrow (\check{T} \otimes \mathbb{Z}/n\mathbb{Z}(a))[1],$$
$$\tilde{T} \rightarrow \tilde{T} \rightarrow \hat{T} \otimes \mathbb{Z}/n\mathbb{Z}(d+1-a) \rightarrow \tilde{T}[1].$$
Cela fournit des suites exactes de cohomologie:
\begin{gather}
0 \rightarrow H^{a-1}(K_v,T)/n \rightarrow H^a(K_v,\check{T} \otimes \mathbb{Z}/n\mathbb{Z}(a)) \rightarrow {_n}H^a(K_v,T) \rightarrow 0,\\
\begin{split}
0 \rightarrow ({_n}H^{a+1}(K_v,T))^D \rightarrow H^{a+1}(K_v,\check{T} \otimes \mathbb{Z}/n\mathbb{Z}(a))^D \rightarrow \\\rightarrow (H^a(K_v,T)/n)^D \rightarrow 0,
\end{split}\\
\begin{split}
0 \rightarrow ({_n}H^{d+3-a}(K_v, \tilde{T}))^D  \rightarrow (H^{d+2-a}(K_v,\hat{T} \otimes \mathbb{Z}/n\mathbb{Z}(d+1-a) ))^D \rightarrow\\ \rightarrow (H^{d+2-a}(K_v, \tilde{T})/n)^D \rightarrow 0,
\end{split}\\
\begin{split}
0 \rightarrow H^{d+1-a}(K_v, \tilde{T})/n \rightarrow H^{d+1-a}(K_v,\hat{T} \otimes \mathbb{Z}/n\mathbb{Z}(d+1-a) ) \rightarrow \\ \rightarrow {_n}H^{d+2-a}(K_v, \tilde{T}) \rightarrow 0.
\end{split}
\end{gather}
Les suites (5) et (7) s'insèrent dans un diagramme commutatif:
\begin{multline*}
\xymatrix{
0 \ar[r] & H^{a-1}(K_v,T)/n \ar[r] \ar[d] & H^a(K_v,\check{T} \otimes \mathbb{Z}/n\mathbb{Z}(a)) \ar[r] \ar[d]^{\cong} & ...\\
0 \ar[r] & (_nH^{d+3-a}(K_v,\tilde{T}))^D \ar[r] & H^{d+2-a}(K_v,\hat{T} \otimes \mathbb{Z}/n\mathbb{Z}(d+1-a) )^D \ar[r] & ...
}\\
\xymatrix{
... \ar[r] & _nH^a(K_v,T) \ar[r] \ar[d] & 0\\
... \ar[r] & (H^{d+2-a}(K_v,\tilde{T})/n)^D \ar[r] & 0.
}
\end{multline*}
La flèche verticale de droite est donc surjective.\\
De plus, si $L$ est une extension finie de $K_v$ de degré $n_0$ déployant $\hat{T}$, comme $H^{a+1}(L,\mathbb{Z}(a)) = H^{d+2-a}(L,\mathbb{Z}(d+1-a)) = 0$ d'après la conjecture de Beilinson-Lichtenbaum, un argument de restriction-corestriction montre immédiatement que $H^a(K_v,T)$ et $H^{d+2-a}(K_v,\tilde{T})$ sont de $n_0$-torsion. Ainsi, le groupe $H^a(K_v,T) = {_{n_0}}H^a(K_v,T)$ est fini et on a $H^{d+2-a}(K_v,\tilde{T}) \cong H^{d+2-a}(K_v,\tilde{T})/n_0$. Comme $ _{n_0}H^a(K_v,T) \rightarrow (H^{d+2-a}(K_v,\tilde{T})/n_0)^D$ est surjectif, on déduit que $H^{d+2-a}(K_v,\tilde{T})$ est fini. Nous avons donc prouvé pour l'instant que $H^a(K_v,T)$ et $H^{d+2-a}(K_v,\tilde{T})$ sont finis et que le morphisme $H^a(K_v,T) \rightarrow (H^{d+2-a}(K_v,\tilde{T}))^D$ est surjectif. \\
Montrons maintenant que le cardinal de $H^{d+2-a}(K_v,\tilde{T})$ est supérieur ou égal à celui de $H^a(K_v,T)$.
Pour ce faire, remarquons que les suites (6) et (8) s'insèrent dans un diagramme commutatif:
\begin{multline*}
\xymatrix{
0 \ar[r] & H^{d+1-a}(K_v,\tilde{T})/n \ar[r] \ar[d] & H^{d+1-a}(K_v,\hat{T}\otimes \mathbb{Z}/n\mathbb{Z}(d+1-a)) \ar[r] \ar[d]^{\cong} & ... \\
0 \ar[r] & (_nH^{a+1}(K_v,T))^D \ar[r] & H^{a+1}(K_v,\check{T} \otimes \mathbb{Z}/n\mathbb{Z}(a))^D \ar[r] & ...
}\\
\xymatrix{
... \ar[r]  & _nH^{d+2-a}(K_v,\tilde{T}) \ar[r] \ar[d] & 0\\
... \ar[r] & (H^a(K_v,T)/n)^D \ar[r] & 0.
}
\end{multline*}
Par conséquent, le morphisme vertical de droite est surjectif. En choisissant $n=n_0$, on déduit que l'ordre de $H^{d+2-a}(K_v,\tilde{T})$ est supérieur à l'ordre de $H^a(K_v,T)$. On a donc bien un accouplement parfait de groupes finis:
$$H^a(K_v,T) \times H^{d+2-a}(K_v,\tilde{T}) \rightarrow \mathbb{Q}/\mathbb{Z}.$$
\item[(ii)] La propriété (i) fournit immédiatement des isomorphismes $_nH^a(K_v,T) \cong (H^{d+2-a}(K_v,\tilde{T})/n)^D $. Montrons que $H^{d+2-a}(K_v,\tilde{T})/n \cong H^{d+1-a}(K_v,\tilde{T}_t)/n$.\\
De façon tout à fait analogue à (i), on dispose d'un diagramme commutatif à lignes exactes:
\begin{multline*}
\xymatrix{
0 \ar[r] & H^{a-1}(K_v,T)/n \ar[r] \ar[d] & H^a(K_v,\check{T} \otimes \mathbb{Z}/n\mathbb{Z}(a)) \ar[r] \ar[d]^{\cong} & ... \\
0 \ar[r] & (_nH^{d+2-a}(K_v,\tilde{T}_t))^D \ar[r] & H^{d+2-a}(K_v,\hat{T} \otimes \mathbb{Z}/n\mathbb{Z}(d+1-a) )^D \ar[r] & ... 
}\\
\xymatrix{
... \ar[r]  & _nH^a(K_v,T) \ar[r] \ar[d] & 0\\
... \ar[r] & (H^{d+1-a}(K_v,\tilde{T}_t)/n)^D \ar[r] & 0.
}
\end{multline*}
La flèche $_nH^a(K_v,T) \rightarrow (H^{d+1-a}(K_v,\tilde{T}_t)/n)^D$ est donc surjective. L'exploitation du diagramme commutatif:\\
\centerline{\xymatrix{
_nH^a(K_v,T) \ar[r] \ar[rd]& (H^{d+2-a}(K_v,\tilde{T})/n)^D \ar[d]\\
& (H^{d+1-a}(K_v,\tilde{T}_t)/n)^D
}}
permet alors de conclure que la flèche $H^{d+1-a}(K_v,\tilde{T}_t)/n \rightarrow  H^{d+2-a}(K_v,\tilde{T})/n $ est injective.\\
Par ailleurs, la suite exacte courte de complexes $0 \rightarrow \tilde{T} \rightarrow  \hat{T} \otimes \mathbb{Q}(d+1-a) \rightarrow  \tilde{T}_t \rightarrow 0$ fournit une suite exacte de cohomologie $H^{d+1-a}(K_v,\tilde{T}_t) \rightarrow H^{d+2-a}(K_v,\tilde{T}) \rightarrow H^{d+2-a}(K_v,\hat{T} \otimes \mathbb{Q}(d+1-a))$. Or:
\begin{itemize}
\item[$\bullet$] le groupe $H^{d+2-a}(K_v,\hat{T} \otimes \mathbb{Q}(d+1-a))$ est divisible,
\item[$\bullet$] si $L$ est une extension finie de $K_v$ déployant $\hat{T}$, on a $H^{d+2-a}(L, \mathbb{Q}(d+1-a)) \cong H^{d+2-a}_{\text{Zar}}(L, \mathbb{Q}(d+1-a)_{\text{Zar}})=0$ car $\mathbb{Q}(d+1-a)_{\text{Zar}}$ est concentré en degrés $\leq d+1-a$, et donc, par un argument de restriction-corestriction, le groupe $H^{d+2-a}(K_v,\hat{T} \otimes \mathbb{Q}(d+1-a))$ est d'exposant fini.
\end{itemize}
Cela entraîne que $H^{d+2-a}(K_v,\hat{T} \otimes \mathbb{Q}(d+1-a)) = 0$ et donc que l'application $H^{d+1-a}(K_v,\tilde{T}_t) \rightarrow H^{d+2-a}(K_v,\tilde{T})$ est surjective. Par conséquent, la flèche $H^{d+1-a}(K_v,\tilde{T}_t)/n \rightarrow  H^{d+2-a}(K_v,\tilde{T})/n $ est injective et surjective, donc un isomorphisme. 
\item[(iii)] Le premier diagramme de la preuve de (i) montre que les applications $H^{a-1}(K_v,T)/n \rightarrow (_nH^{d+3-a}(K_v,\tilde{T}))^D$ sont des isomorphismes. En passant à la limite projective sur $n$:
$$H^{a-1}(K_v,T)^{\wedge} \cong H^{d+3-a}(K_v,\tilde{T})^D.$$ 
\end{itemize}
\end{proof}

\begin{proposition} \label{nature tore}
On rappelle que l'on a supposé (H \ref{40}). Soit $U$ un ouvert non vide de $U_0$.
\begin{itemize}
\item[(i)] Le groupe $\Sha^a(T)$ est d'exposant fini.
\item[(ii)] Pour $r \geq 0$, les groupes $H^r(U,\tilde{\mathcal{T}}_t)$ sont de torsion de type cofini.
\item[(iii)] Pour $r \geq 0$, les groupes $H^r_c(U,\tilde{\mathcal{T}}_t)$ sont de torsion de type cofini.
\item[(iv)] Les groupes $H^{d+3-a}(U,\tilde{\mathcal{T}})$ et $H^{d+4-a}(U,\tilde{\mathcal{T}})$ sont de torsion de type cofini.
\end{itemize}
\end{proposition}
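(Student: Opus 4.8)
On traite les quatre assertions séparément, la quatrième reposant sur la deuxième.

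Pour (i), l'idée est de déployer $\check{T}$. Je choisirais une extension finie $L/K$ de degré $n_0$ déployant $\hat{T}$, donc aussi $\check{T}$: sur $L$, le complexe $T$ devient isomorphe à une somme de copies de $\mathbb{Z}(a)[1]$, de sorte que $H^a(L,T)$ est une somme de copies de $H^{a+1}(L,\mathbb{Z}(a))$. Or, d'après la conjecture de Beilinson-Lichtenbaum, $H^{a+1}(L,\mathbb{Z}(a)) \cong H^{a+1}_{\text{Zar}}(L,\mathbb{Z}(a)_{\text{Zar}})$, qui est nul puisque $\mathbb{Z}(a)_{\text{Zar}}$ est concentré en degrés $\leq a$ et $\text{Spec}\,L$ est de dimension de Zariski $0$. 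Ainsi $H^a(L,T)=0$, et un argument de restriction-corestriction montre que $H^a(K,T)$ est tué par $n_0$. En particulier $\Sha^a(T) \subseteq H^a(K,T)$ est d'exposant fini.

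Pour (ii), j'écrirais d'abord $\tilde{\mathcal{T}}_t = \varinjlim_n {_n}\tilde{\mathcal{T}}_t$, chaque ${_n}\tilde{\mathcal{T}}_t = \hat{\mathcal{T}} \otimes \mu_n^{\otimes(d+1-a)}$ étant un faisceau fini localement constant. Comme la cohomologie étale d'un schéma quasi-compact commute aux limites inductives filtrantes, on a $H^r(U,\tilde{\mathcal{T}}_t) = \varinjlim_n H^r(U,{_n}\tilde{\mathcal{T}}_t)$; chacun des $H^r(U,{_n}\tilde{\mathcal{T}}_t)$ est fini, via la suite spectrale de Hochschild-Serre $H^p(k,H^q(\overline{U},{_n}\tilde{\mathcal{T}}_t)) \Rightarrow H^{p+q}(U,{_n}\tilde{\mathcal{T}}_t)$, la cohomologie géométrique d'une courbe à coefficients finis étant finie et $k$ étant $d$-local. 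On en déduit que $H^r(U,\tilde{\mathcal{T}}_t)$ est de torsion. Pour le type cofini, j'exploiterais la suite exacte courte $0 \to {_n}\tilde{\mathcal{T}}_t \to \tilde{\mathcal{T}}_t \xrightarrow{n} \tilde{\mathcal{T}}_t \to 0$ (la multiplication par $n$ étant surjective car $\mathbb{Q}/\mathbb{Z}$ est divisible): la suite exacte longue de cohomologie montre que ${_n}H^r(U,\tilde{\mathcal{T}}_t)$ est un quotient du groupe fini $H^r(U,{_n}\tilde{\mathcal{T}}_t)$, donc fini, d'où le type cofini. La preuve de (iii) est identique en remplaçant $U$ par $X$ et $\tilde{\mathcal{T}}_t$ par $j_!\tilde{\mathcal{T}}_t$, le foncteur $j_!$ étant exact et commutant aux limites inductives.

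Pour (iv), qui constitue la principale difficulté, l'idée est de comparer $\tilde{\mathcal{T}}$ à sa version rationnelle et à $\tilde{\mathcal{T}}_t$ grâce à la suite exacte courte $0 \to \tilde{\mathcal{T}} \to \hat{\mathcal{T}} \otimes \mathbb{Q}(d+1-a) \to \tilde{\mathcal{T}}_t \to 0$. Le point clé est de montrer que $H^r(U,\hat{\mathcal{T}} \otimes \mathbb{Q}(d+1-a)) = 0$ pour $r \geq d+3-a$. Pour cela, par restriction-corestriction le long d'un revêtement fini étale $U' \to U$ déployant $\hat{\mathcal{T}}$, ce qui est licite puisqu'on travaille à coefficients $\mathbb{Q}$-vectoriels, je me ramènerais au cas où $\hat{\mathcal{T}}$ est constant, c'est-à-dire à $H^r(U',\mathbb{Q}(d+1-a))$; l'isomorphisme $\mathbb{Q}(d+1-a)_{\text{Zar}} \cong R\alpha_* \mathbb{Q}(d+1-a)$ fournit alors $H^r(U',\mathbb{Q}(d+1-a)) \cong H^r_{\text{Zar}}(U',\mathbb{Q}(d+1-a)_{\text{Zar}})$, groupe nul pour $r \geq d+3-a$ puisque $\mathbb{Q}(d+1-a)_{\text{Zar}}$ est concentré en degrés $\leq d+1-a$ et $U'$ est de dimension $1$. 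La suite exacte longue associée à la suite courte ci-dessus montre alors que $H^{d+3-a}(U,\tilde{\mathcal{T}})$ (resp. $H^{d+4-a}(U,\tilde{\mathcal{T}})$) est un quotient de $H^{d+2-a}(U,\tilde{\mathcal{T}}_t)$ (resp. de $H^{d+3-a}(U,\tilde{\mathcal{T}}_t)$), qui est de torsion de type cofini d'après (ii). Comme un quotient d'un groupe de torsion de type cofini est encore de torsion de type cofini, ce qui se voit par dualité de Pontryagin, les sous-groupes d'un module profini de type fini étant de type fini, on conclut.
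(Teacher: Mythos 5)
Votre preuve est correcte et suit essentiellement la même démarche que celle de l'article: pour (i), restriction-corestriction combinée à l'annulation de $H^{a+1}(L,\mathbb{Z}(a))$ donnée par Beilinson-Lichtenbaum; pour (ii) et (iii), le fait que $\tilde{\mathcal{T}}_t$ est de torsion et la surjection de Kummer $H^r(U,{_n}\tilde{\mathcal{T}}_t) \twoheadrightarrow {_n}H^r(U,\tilde{\mathcal{T}}_t)$ avec source finie; pour (iv), l'annulation de $H^{r}(U,\hat{\mathcal{T}} \otimes \mathbb{Q}(d+1-a))$ en degrés $r \geq d+3-a$ par restriction-corestriction (le complexe rationnel étant concentré en degrés $\leq d+1-a$ sur une courbe), d'où des surjections depuis $H^{d+2-a}(U,\tilde{\mathcal{T}}_t)$ et $H^{d+3-a}(U,\tilde{\mathcal{T}}_t)$. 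Vous explicitez seulement quelques points que l'article laisse implicites (finitude des groupes à coefficients finis via Hochschild-Serre, stabilité du type cofini par quotient via dualité de Pontryagin), ce qui est bienvenu mais ne change pas l'argument.
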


\begin{proof}
\begin{itemize}
\item[(i)] Soit $L$ une extension finie de $K$ déployant $\hat{T}$. Comme la conjecture de Beilinson-Lichtenbaum impose que $H^{a+1}(L,\mathbb{Z}(a)) = 0$, un argument de restriction-corestriction montre que $H^{a}(K,T)$ est d'exposant fini.
\item[(ii)] Soit $r \geq 0$. Remarquons d'abord que $\tilde{\mathcal{T}}_t$ est un faisceau de torsion, et donc $H^r(U,\tilde{\mathcal{T}}_t)$ est de torsion. De plus, la suite exacte de Kummer fournit une surjection $H^r(U,{_n}\tilde{\mathcal{T}}_t) \rightarrow {_n}H^r(U,\tilde{\mathcal{T}}_t)$ pour chaque entier naturel $n$. Comme $H^r(U,{_n}\tilde{\mathcal{T}}_t)$ est fini pour chaque $n$, on déduit que $H^r(U,\tilde{\mathcal{T}}_t)$ est de torsion de type cofini.
\item[(iii)] La preuve est tout à fait analogue à celle de la propriété (ii).
\item[(iv)] Soit $U' \rightarrow U$ un morphisme fini étale de degré $n$ déployant $\mathcal{T}$. Comme $H^{d+3-a}(U',\mathbb{Q}(d+1-a))$ et $H^{d+4-a}(U',\mathbb{Q}(d+1-a))$ sont nuls, un argument de restriction-corestriction prouve que $H^{d+3-a}(U,\hat{\mathcal{T}} \otimes \mathbb{Q}(d+1-a))$ et $H^{d+4-a}(U,\hat{\mathcal{T}} \otimes \mathbb{Q}(d+1-a))$ sont d'exposant $n$. De plus, ce sont des groupes divisibles. Ils sont donc nuls. Par conséquent, les suites exactes:
$$H^{d+2-a}(U,\tilde{\mathcal{T}}_t) \rightarrow H^{d+3-a}(U,\tilde{\mathcal{T}}) \rightarrow H^{d+3-a}(U,\hat{\mathcal{T}} \otimes \mathbb{Q}(d+1-a)),$$
$$H^{d+3-a}(U,\tilde{\mathcal{T}}_t) \rightarrow H^{d+4-a}(U,\tilde{\mathcal{T}}) \rightarrow H^{d+4-a}(U,\hat{\mathcal{T}} \otimes \mathbb{Q}(d+1-a))$$
fournissent des surjections $H^{d+2-a}(U,\tilde{\mathcal{T}}_t) \rightarrow H^{d+3-a}(U,\tilde{\mathcal{T}})$ et $H^{d+3-a}(U,\tilde{\mathcal{T}}_t) \rightarrow H^{d+4-a}(U,\tilde{\mathcal{T}})$. On en déduit que $H^{d+3-a}(U,\tilde{\mathcal{T}})$ et $H^{d+4-a}(U,\tilde{\mathcal{T}})$ sont de torsion de type cofini. 
\end{itemize}
\end{proof}

Pour $U$ ouvert non vide de $U_0$, posons $\mathcal{D}^{d+2-a}(U,\tilde{\mathcal{T}}_t) = \text{Im} (H^{d+2-a}_c(U,\tilde{\mathcal{T}}_t) \rightarrow H^{d+2-a}(K,\tilde{T}_t))$ et $\mathcal{D}^{d+3-a}(U,\tilde{\mathcal{T}}) = \text{Im} (H^{d+3-a}_c(U,\tilde{\mathcal{T}}) \rightarrow H^{d+3-a}(K,\tilde{T}))$.

\begin{proposition} \label{Sha tore}
On rappelle que l'on a supposé (H \ref{40}). Pour chaque nombre premier $l$, il existe un ouvert $U_1$ de $U_0$ tel que, pour tout ouvert $U \subseteq U_1$, on a:
$$\mathcal{D}^{d+2-a}(U,\tilde{\mathcal{T}}_t)\{ l \} = \mathcal{D}^{d+2-a}(U_1,\tilde{\mathcal{T}}_t)\{ l \} = \Sha^{d+2-a}(\tilde{T}_t)\{ l \},$$
$$\mathcal{D}^{d+3-a}(U,\tilde{\mathcal{T}})\{ l \} = \mathcal{D}^{d+3-a}(U_1,\tilde{\mathcal{T}})\{ l \} = \Sha^{d+3-a}(\tilde{T})\{ l \}.$$
\end{proposition}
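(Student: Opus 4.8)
The plan is to imitate the proof of Proposition \ref{Sha fini}, but working one prime $l$ at a time and with the torsion coefficients $\tilde{\mathcal{T}}_t$, then to transfer the result to $\tilde{\mathcal{T}}$ via the identifications already established. First I would fix a prime $l$ and consider the groups $\mathcal{D}^{d+2-a}(U,\tilde{\mathcal{T}}_t)\{l\}$. The key structural input is that, by Proposition \ref{nature tore}(ii)--(iii), the groups $H^{d+2-a}(U,\tilde{\mathcal{T}}_t)$ and $H^{d+2-a}_c(U,\tilde{\mathcal{T}}_t)$ are of cofinite type, so their $l$-primary parts $\mathcal{D}^{d+2-a}(U,\tilde{\mathcal{T}}_t)\{l\}$ are cofinite-type $l$-groups. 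The essential monotonicity is that for $V \subseteq U$ one has $\mathcal{D}^{d+2-a}(V,\tilde{\mathcal{T}}_t) \subseteq \mathcal{D}^{d+2-a}(U,\tilde{\mathcal{T}}_t)$, so that the $l$-primary parts form a decreasing family of cofinite-type groups as $U$ shrinks.

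The main obstacle, which distinguishes this from the finite-module case, is precisely that these groups are no longer finite but only of cofinite type, so one cannot immediately invoke a descending-chain stabilization argument on finite groups. I would handle this by passing to the Pontryagin duals: by Theorem \ref{AV tore} and Proposition \ref{local tore}, the relevant duality pairs $\mathcal{D}^{d+2-a}(U,\tilde{\mathcal{T}}_t)\{l\}$ with a subquotient of $H^a(U,\mathcal{T})\{l\}$, and after applying $(-)^{(l)}$ or dualizing one obtains finitely generated $\mathbb{Z}_l$-modules. A decreasing family of quotients (or subobjects) of a fixed finitely generated $\mathbb{Z}_l$-module, or equivalently an increasing family of finitely generated sub-$\mathbb{Z}_l$-modules of a fixed cofinite-type group, must stabilize by the noetherian property. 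This yields an open $U_1 \subseteq U_0$ such that $\mathcal{D}^{d+2-a}(U,\tilde{\mathcal{T}}_t)\{l\} = \mathcal{D}^{d+2-a}(U_1,\tilde{\mathcal{T}}_t)\{l\}$ for all $U \subseteq U_1$, which is the stabilization half of the claim.

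Once stabilization is in hand, the identification with $\Sha^{d+2-a}(\tilde{T}_t)\{l\}$ is carried out exactly as in Proposition \ref{Sha fini}: for the inclusion $\mathcal{D}^{d+2-a}(U_1,\tilde{\mathcal{T}}_t) \subseteq \Sha^{d+2-a}(\tilde{T}_t)$ I would use the exactness of $H^{d+2-a}_c(U,\tilde{\mathcal{T}}_t) \rightarrow H^{d+2-a}(U,\tilde{\mathcal{T}}_t) \rightarrow \bigoplus_{v \in X \setminus U} H^{d+2-a}(K_v,\tilde{T}_t)$, valid for each $U \subseteq U_1$, to see that a class in $\mathcal{D}^{d+2-a}$ vanishes locally away from $X \setminus U$ for every such $U$, hence lies in $\Sha$. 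Conversely, a class in $\Sha^{d+2-a}(\tilde{T}_t)$ lifts to some $H^{d+2-a}(U,\tilde{\mathcal{T}}_t)$ on a small enough $U$, and being locally trivial it lands in the image of $H^{d+2-a}_c$, giving the reverse inclusion. Restricting to $l$-primary parts throughout gives the first displayed chain of equalities.

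Finally, to obtain the second displayed chain for $\tilde{\mathcal{T}}$ rather than $\tilde{\mathcal{T}}_t$, I would transport everything along the degree shift coming from the exact sequence $0 \rightarrow \tilde{T} \rightarrow \hat{T} \otimes \mathbb{Q}(d+1-a) \rightarrow \tilde{T}_t \rightarrow 0$. In Proposition \ref{local tore}(ii) it was shown that $H^{d+1-a}(K_v,\tilde{T}_t)/n \cong H^{d+2-a}(K_v,\tilde{T})/n$, and the analogous statement holds globally and over $U$ because the rational-coefficient groups $H^{*}(U,\hat{\mathcal{T}} \otimes \mathbb{Q}(d+1-a))$ vanish (Proposition \ref{nature tore}(iv)), so the connecting maps $H^{d+2-a}(-,\tilde{\mathcal{T}}_t) \rightarrow H^{d+3-a}(-,\tilde{\mathcal{T}})$ are surjective with divisible, hence $l$-divisibly negligible, kernels. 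The compatibility of these surjections with the localization maps to the $K_v$ identifies $\mathcal{D}^{d+2-a}(U,\tilde{\mathcal{T}}_t)\{l\}$ with $\mathcal{D}^{d+3-a}(U,\tilde{\mathcal{T}})\{l\}$ and $\Sha^{d+2-a}(\tilde{T}_t)\{l\}$ with $\Sha^{d+3-a}(\tilde{T})\{l\}$, so the same $U_1$ works and the second chain follows from the first. The delicate point throughout is keeping track of divisible subgroups when passing between $\tilde{T}_t$ and $\tilde{T}$, but since we only ever look at $l$-primary torsion the divisible parts contribute nothing to the relevant equalities.
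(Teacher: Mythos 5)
Your proposal is correct, and for the $\tilde{\mathcal{T}}_t$-half it is essentially the paper's proof: monotonicity $\mathcal{D}(V) \subseteq \mathcal{D}(V')$ for $V \subseteq V'$, stabilization of the $l$-primary parts from the cofinite-type property (the paper simply asserts this; your dualization to finitely generated, hence noetherian, $\mathbb{Z}_l$-modules is exactly the justification, though your detour through Theorem \ref{AV tore} and Proposition \ref{local tore} is unnecessary — Pontryagin duality applied to the fixed cofinite-type group $\mathcal{D}^{d+2-a}(U_0,\tilde{\mathcal{T}}_t)\{l\}$ suffices), then the diagram chase as in Proposition \ref{Sha fini}. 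Where you diverge is the $\tilde{\mathcal{T}}$-half: the paper treats $\tilde{\mathcal{T}}_t$ and $\tilde{\mathcal{T}}$ in parallel, and its opening move is precisely the point your write-up glosses over, namely that the compact-support localization sequences naturally produce the henselian fields $K_v^h$ while $\Sha$ is defined via the completions $K_v$; the paper proves $H^{d+3-a}(K_v^h,\tilde{T}) \xrightarrow{\sim} H^{d+3-a}(K_v,\tilde{T})$ by sandwiching it between the Bockstein isomorphisms $H^{d+2-a}(\cdot,\tilde{T}_t) \cong H^{d+3-a}(\cdot,\tilde{T})$ (vanishing of rational motivic cohomology) and the finite-level comparison $H^{d+2-a}(K_v^h,\tilde{T}\otimes\mathbb{Z}/n\mathbb{Z}) \cong H^{d+2-a}(K_v,\tilde{T}\otimes\mathbb{Z}/n\mathbb{Z})$. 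Your alternative — prove everything for $\tilde{\mathcal{T}}_t$ and transfer along the Bockstein — uses the same ingredients and in fact implicitly reconstitutes the paper's henselization comparison; it is also how the paper itself organizes the analogous statement for two-term complexes (Lemma \ref{iso} feeding into Proposition \ref{Sha gm}). Two small points to tighten: your localization sequence should be stated with $K_v^h$ (harmless for $\tilde{\mathcal{T}}_t$, since the ind-finite comparison lets you replace $K_v^h$ by $K_v$, but this should be said); and the surjectivity of $H^{d+2-a}_c(U,\tilde{\mathcal{T}}_t) \to H^{d+3-a}_c(U,\tilde{\mathcal{T}})$ needs $H^{d+3-a}_c(U,\hat{\mathcal{T}}\otimes\mathbb{Q}(d+1-a))=0$, which requires the local vanishing $H^{d+2-a}(K_v^h,\hat{T}\otimes\mathbb{Q}(d+1-a))=0$ in addition to the vanishing over $U$ you cite from Proposition \ref{nature tore}(iv); also note that with these vanishings the field-level Bockstein is an isomorphism, so the identification of the $\mathcal{D}$-groups is exact and no bookkeeping of divisible subgroups is actually needed.
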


\begin{proof}
On commence par remarquer que, pour $v \in X^{(1)}$, on a un diagramme commutatif:\\
\centerline{\xymatrix{
H^{d+3-a}(K_v^h,\tilde{T}) \ar[r] \ar[d] & H^{d+3-a}(K_v,\tilde{T})\ar[d]\\
H^{d+2-a}(K_v^h,\tilde{T}_t) \ar[r] & H^{d+2-a}(K_v,\tilde{T}_t)
}}
Comme $H^{d+2-a}(K_v^h,\tilde{T} \otimes \mathbb{Q})$, $H^{d+3-a}(K_v^h,\tilde{T} \otimes \mathbb{Q})$, $H^{d+2-a}(K_v,\tilde{T} \otimes \mathbb{Q})$ et $H^{d+3-a}(K_v,\tilde{T} \otimes \mathbb{Q})$ sont nuls, on déduit que les flèches verticales sont des isomorphismes. De plus, la flèche $H^{d+2-a}(K_v^h,\tilde{T}_t) \rightarrow H^{d+2-a}(K_v,\tilde{T}_t)$ est aussi un isomorphisme puisque pour chaque $n$ le morphisme $H^{d+2-a}(K_v^h,\tilde{T} \otimes \mathbb{Z}/n\mathbb{Z}) \rightarrow H^{d+2-a}(K_v,\tilde{T}  \otimes \mathbb{Z}/n\mathbb{Z})$ est un isomorphisme. On en déduit que la restriction $H^{d+3-a}(K_v^h,\tilde{T}) \rightarrow H^{d+3-a}(K_v,\tilde{T})$ est un isomorphisme.\\
Dans la suite de cette preuve, les commutativités des diagrammes que nous utilisons implicitement sont contenues dans la proposition 4.3 de \cite{CTH}. \\
Fixons maintenant un nombre premier $l$. Pour des ouverts $V \subseteq V'$ de $U$, on a $\mathcal{D}^{d+2-a}(V,\tilde{\mathcal{T}}_t) \subseteq \mathcal{D}^{d+2-a}(V',\tilde{\mathcal{T}}_t)$ et $\mathcal{D}^{d+3-a}(V,\tilde{\mathcal{T}}) \subseteq \mathcal{D}^{d+3-a}(V',\tilde{\mathcal{T}})$. Or, grâce à la proposition précédente, on sait que $\mathcal{D}^{d+2-a}(U,\tilde{\mathcal{T}}_t)$ et $\mathcal{D}^{d+3-a}(U,\tilde{\mathcal{T}})$ sont de torsion de type cofini. Par conséquent, il existe un ouvert $U_1$ de $U$ tel que, pour tout ouvert $V \subseteq U_1$, on a $\mathcal{D}^{d+2-a}(V,\tilde{\mathcal{T}}_t)\{l\} = \mathcal{D}^{d+2-a}(U_1,\tilde{\mathcal{T}}_t)\{l\}$ et $\mathcal{D}^{d+3-a}(V,\tilde{\mathcal{T}})\{l\} = \mathcal{D}^{d+2-a}(U_1,\tilde{\mathcal{T}})\{l\}$. Une chasse au diagramme permet alors d'établir que $\mathcal{D}^{d+2-a}(U_1,\tilde{\mathcal{T}}_t)\{l\} = \Sha^{d+2-a}(\tilde{T}_t)\{ l \}$ et $\mathcal{D}^{d+3-a}(U_1,\tilde{\mathcal{T}})\{l\} = \Sha^{d+3-a}(\tilde{T})\{ l \}$.
\end{proof}

\begin{remarque}
La proposition précédente impose en particulier que $\Sha^{d+2-a}(\tilde{T}_t)$ est de torsion de type cofini. De plus, comme $\Sha^{d+2-a}(\tilde{T}_t) = \Sha^{d+3-a}(\tilde{T})$, cela prouve aussi que $\Sha^{d+3-a}(\tilde{T})$ est de torsion de type cofini.
\end{remarque}

\begin{proposition}\label{suite tore}
On rappelle que l'on a supposé (H \ref{40}). Soit $U$ un ouvert non vide de $U_0$. La suite $$ \bigoplus_{v \in X^{(1)}} H^{d+1-a}(K_v,\tilde{T}_t) \rightarrow H^{d+2-a}_c(U,\tilde{\mathcal{T}}_t) \rightarrow \mathcal{D}^{d+2-a}(U,\tilde{\mathcal{T}}_t) \rightarrow 0$$
est exacte.
\end{proposition}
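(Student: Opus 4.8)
The plan is to transpose, essentially verbatim, the argument of Proposition \ref{suite fini}, with the finite sheaf $\mathcal{F}'$ replaced by the torsion sheaf $\tilde{\mathcal{T}}_t = \hat{\mathcal{T}} \otimes^{\mathbf{L}} \mathbb{Q}/\mathbb{Z}(d+1-a)$ and the cohomological degree fixed at $r = d+2-a$. Since $k$ is of characteristic $0$, all integers are invertible on $U_0$, so $\tilde{\mathcal{T}}_t$ is the filtered colimit of the finite locally constant sheaves ${_n}\tilde{\mathcal{T}}_t = \hat{\mathcal{T}} \otimes \mu_n^{\otimes(d+1-a)}$, each of which is unramified on $U_0$. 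As étale cohomology of the quasi-compact schemes $U$ and $V$, as well as the Galois cohomology of the fields $K_v$ and $k(v)$ with torsion coefficients, all commute with filtered colimits, every ingredient used for finite coefficients passes to $\tilde{\mathcal{T}}_t$.

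First I would construct the map $\bigoplus_{v \in X^{(1)}} H^{d+1-a}(K_v,\tilde{T}_t) \to H^{d+2-a}_c(U,\tilde{\mathcal{T}}_t)$ as in Proposition \ref{suite fini}: for each open $V \subseteq U$ the localization triangle yields a boundary map $\bigoplus_{v\in X\setminus V} H^{d+1-a}(K_v,\tilde{T}_t) \to H^{d+2-a}_c(V,\tilde{\mathcal{T}}_t)$, and these are compatible with the corestriction arrows $H^{d+2-a}_c(V,\tilde{\mathcal{T}}_t) \to H^{d+2-a}_c(U,\tilde{\mathcal{T}}_t)$, which produces the desired morphism. That the three terms form a complex is a routine check, and surjectivity onto $\mathcal{D}^{d+2-a}(U,\tilde{\mathcal{T}}_t)$ holds by the very definition of that group as an image.

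The real content is exactness in the middle, i.e. the inclusion $\mathrm{Ker}\bigl(H^{d+2-a}_c(U,\tilde{\mathcal{T}}_t)\to H^{d+2-a}(K,\tilde{T}_t)\bigr) \subseteq \mathrm{Im}\bigl(\bigoplus_v H^{d+1-a}(K_v,\tilde{T}_t)\bigr)$. Given $\alpha$ in this kernel, I would use $H^{d+2-a}(K,\tilde{T}_t) = \varinjlim_V H^{d+2-a}(V,\tilde{\mathcal{T}}_t)$ to find an open $V \subseteq U$ killing the image of $\alpha$ in $H^{d+2-a}(V,\tilde{\mathcal{T}}_t)$, and then exploit the commutative diagram with exact top row (the localization sequence for the pair $V \subseteq U$):
$$\xymatrix{
H^{d+2-a}_c(V,\tilde{\mathcal{T}}_t) \ar[r] & H^{d+2-a}_c(U,\tilde{\mathcal{T}}_t) \ar[r]\ar[d] & \bigoplus_{v \in U \setminus V} H^{d+2-a}(k(v),\tilde{\mathcal{T}}_t)\ar[d]\\
& H^{d+2-a}(K,\tilde{T}_t) \ar[r] & \bigoplus_{v \in U \setminus V} H^{d+2-a}(K_v,\tilde{T}_t)
}$$
The crucial ingredient is the injectivity of the right-hand vertical arrow, which holds because $\tilde{T}_t$ is unramified at every $v \in U_0$: exactly as in the proof of Proposition \ref{nr fini}, the split exact sequence $1 \to I_v \to G_v \to g_v \to 1$ makes the inflation map $H^{d+2-a}(g_v,\tilde{T}_t) \to H^{d+2-a}(G_v,\tilde{T}_t)$ injective. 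Since $\alpha$ dies in $H^{d+2-a}(K,\tilde{T}_t)$, a diagram chase then shows that $\alpha$ lifts to some $\tilde\alpha \in H^{d+2-a}_c(V,\tilde{\mathcal{T}}_t)$ whose image in $H^{d+2-a}(V,\tilde{\mathcal{T}}_t)$ vanishes; the localization sequence for $V$ produces the desired preimage in $\bigoplus_{v\in X\setminus V} H^{d+1-a}(K_v,\tilde{T}_t)$.

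I expect the only genuine obstacle to be the colimit bookkeeping: verifying that the comparison and injectivity statements, together with the localization triangles established for finite coefficients, survive passage to the colimit defining $\tilde{\mathcal{T}}_t$ and remain compatible under the transition maps. Once this is secured, the remainder is a transcription of Proposition \ref{suite fini}.
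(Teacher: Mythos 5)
Your proof is correct, and it rests on the same underlying reduction as the paper's --- the presentation $\tilde{\mathcal{T}}_t = \varinjlim_m {_m}\tilde{\mathcal{T}}_t$ with ${_m}\tilde{\mathcal{T}}_t = \hat{\mathcal{T}} \otimes \mu_m^{\otimes (d+1-a)}$ finite locally constant --- but you organize it differently, and less economically. The paper does not transcribe the argument of Proposition \ref{suite fini} at the level of the colimit sheaf: it observes that ${_m}\tilde{\mathcal{T}}_t$ is representable by a finite étale group scheme and arises as a dual $(\check{\mathcal{T}} \otimes \mathbb{Z}/m\mathbb{Z}(a))'$ (the computation is in the proof of Theorem \ref{AV tore}), applies Proposition \ref{suite fini} verbatim to obtain, for each $m$, the exact sequence with coefficients $\hat{\mathcal{T}} \otimes \mathbb{Z}/m\mathbb{Z}(d+1-a)$, and then simply takes the inductive limit over $m$ of these exact sequences. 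Since filtered colimits of abelian groups are exact, the only verification needed is that the three terms commute with $\varinjlim_m$, i.e.\ that $H^{d+1-a}(K_v,-)$, $H^{d+2-a}_c(U,-)$ and $H^{d+2-a}(K,-)$ (the last entering through the definition of $\mathcal{D}^{d+2-a}$ as an image) convert $\varinjlim_m {_m}\tilde{\mathcal{T}}_t$ into a colimit --- standard for torsion coefficients on quasi-compact schemes and for Galois cohomology of discrete torsion modules. Your route instead re-runs the entire diagram chase of Proposition \ref{suite fini} (localization sequences, the split-inflation injectivity of $H^{d+2-a}(k(v),\cdot) \rightarrow H^{d+2-a}(K_v,\cdot)$ at places of $U_0$, continuity to kill $\alpha$ on a small $V$) directly for $\tilde{\mathcal{T}}_t$; each ingredient does survive the passage to the colimit, so nothing in your argument fails, but the ``colimit bookkeeping'' you flag at the end as the main residual obstacle is precisely what the paper's formulation dispenses with: taking the colimit once, of the finished exact sequences, replaces checking it step by step through the chase. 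Both proofs are valid; the paper's buys brevity, yours makes explicit that the mechanism of \ref{suite fini} (in particular the unramifiedness of $\tilde{T}_t$ over $U_0$, available here because $\mathrm{Car}(k)=0$) applies uniformly to the infinite torsion sheaf.
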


\begin{proof}
Soit $m>0$ un entier. On sait que $\hat{\mathcal{T}}$ représente un faisceau localement constant libre de type fini sur $U$, et donc, comme $k$ est de caractéristique 0, le faisceau $\hat{\mathcal{T}} \otimes \mathbb{Z}/m\mathbb{Z}(d+1-a)$ est localement constant à tiges finies sur $U$. Il est donc représentable par un schéma en groupes fini étale d'après la proposition V.1.1 de \cite{MilEC}, et grâce à la proposition \ref{suite fini}, on en déduit une suite exacte:
\begin{align*} \bigoplus_{v \in X^{(1)}} H^{d+1-a}(K_v,\hat{T} \otimes \mathbb{Z}/m\mathbb{Z}(d+1-a)) &\rightarrow H^{d+2-a}_c(U,\hat{\mathcal{T}} \otimes \mathbb{Z}/m\mathbb{Z}(d+1-a)) \\& \rightarrow \mathcal{D}^{d+2-a}(U,\hat{\mathcal{T}} \otimes \mathbb{Z}/m\mathbb{Z}(d+1-a)) \rightarrow 0.
\end{align*}
Il suffit alors de passer à la limite inductive sur $m$.
\end{proof}

\begin{theorem} \textbf{(Dualité de Poitou-Tate pour les tores)} \label{PT tore}\\
On rappelle que l'on a supposé (H \ref{40}), c'est-à-dire que $X$ est une courbe. Sous de telles hypothèses, $a \in \{0,1,...,d+1\}$, $T = \check{T} \otimes^{\mathbf{L}} \mathbb{Z}(a)[1]$ et $\tilde{T} = \hat{T} \otimes^{\mathbf{L}} \mathbb{Z}(d+1-a)$. On a un accouplement parfait de groupes finis:
$$\Sha^a(T) \times \overline{\Sha^{d+3-a}( \tilde{T})} \rightarrow \mathbb{Q}/\mathbb{Z}.$$
\end{theorem}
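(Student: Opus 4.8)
The plan is to reduce the statement to the finite-coefficient duality of Theorem~\ref{PT fini} by approximating $T$ and $\tilde T$ by the finite modules $F_n := \check{\mathcal T}\otimes\mathbb Z/n\mathbb Z(a)$ (with $n$ invertible on $k$), whose dual in the sense of Theorem~\ref{PT fini} is $F_n' = {_n}\tilde{\mathcal T}_t$, and then to pass to the limit one prime at a time. Two finiteness facts make this possible: by Proposition~\ref{nature tore}(i) the group $\Sha^a(T)$ has finite exponent, hence is torsion and splits as the direct sum of its $l$-primary parts over the finitely many primes dividing that exponent; and by the remark following Proposition~\ref{Sha tore} the group $\Sha^{d+3-a}(\tilde T)=\Sha^{d+2-a}(\tilde T_t)$ is a torsion group of cofinite type, so its non-divisible quotient $\overline{\Sha^{d+3-a}(\tilde T)}$ is finite and likewise splits into $l$-primary parts. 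It therefore suffices to produce, for each prime $l$, a perfect pairing of finite groups $\Sha^a(T)\{l\}\times\overline{\Sha^{d+3-a}(\tilde T)}\{l\}\to\mathbb Q_l/\mathbb Z_l$ and to sum over $l$.

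First I would fix the pairing. The cup-product pairing $H^a(U,\mathcal T)\times H^{d+3-a}_c(U,\tilde{\mathcal T})\to\mathbb Q/\mathbb Z$ constructed at the beginning of this section (via Lemma~\ref{préliminaire}) restricts to $\Sha$-classes: every element of $\Sha^a(T)$ lifts to $H^a(U,\mathcal T)$ for some $U$ and then lies in $D^a_{sh}(U,\mathcal T):=\mathrm{Ker}(H^a(U,\mathcal T)\to\prod_v H^a(K_v,T))$, while by Proposition~\ref{Sha tore} every $l$-primary element of $\Sha^{d+3-a}(\tilde T)$ comes, after shrinking $U$, from $H^{d+3-a}_c(U,\tilde{\mathcal T})$ through $\mathcal D^{d+3-a}(U,\tilde{\mathcal T})$. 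This yields a pairing $\Sha^a(T)\times\Sha^{d+3-a}(\tilde T)\to\mathbb Q/\mathbb Z$ independent of $U$. Since $\Sha^a(T)$ has finite exponent $N$, the pairing takes values in $\tfrac1N\mathbb Z/\mathbb Z$, hence annihilates every $N$-divisible — in particular every divisible — element on the right, and so factors through $\overline{\Sha^{d+3-a}(\tilde T)}$.

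For the $l$-primary perfectness I would work with the auxiliary sheaf $\tilde{\mathcal T}_t$ in degree $d+2-a$, which matches all available tools, and translate back at the end via $\Sha^{d+2-a}(\tilde T_t)=\Sha^{d+3-a}(\tilde T)$. The argument parallels the proof of Theorem~\ref{PT fini}: for an open $U\subseteq U_0$ one has the localization sequence $0\to D^a_{sh}(U,\mathcal T)\to H^a(U,\mathcal T)\to\prod_v H^a(K_v,T)$ and, by Proposition~\ref{suite tore}, the exact sequence $\bigoplus_v H^{d+1-a}(K_v,\tilde T_t)\to H^{d+2-a}_c(U,\tilde{\mathcal T}_t)\to\mathcal D^{d+2-a}(U,\tilde{\mathcal T}_t)\to 0$. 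The local duality of Proposition~\ref{local tore}(ii) identifies, at each finite level and after taking $l$-primary parts, the annihilator of the image of $H^a(U,\mathcal T)$ in $\prod_v H^a(K_v,T)$ with the image of $\bigoplus_v H^{d+1-a}(K_v,\tilde T_t)$, while Theorem~\ref{AV tore} supplies the perfect pairing $\bigl(H^a(U,\mathcal T)\{l\}\bigr)^{(l)}\times H^{d+2-a}_c(U,\tilde{\mathcal T}_t)^{(l)}\{l\}\to\mathbb Q_l/\mathbb Z_l$. Taking orthogonals and chasing the resulting diagram $l$-primarily identifies $\mathcal D^{d+2-a}(U,\tilde{\mathcal T}_t)^{(l)}\{l\}$ with the dual of $D^a_{sh}(U,\mathcal T)\{l\}^{(l)}$; passing to the inductive limit over shrinking $U$, invoking Proposition~\ref{Sha tore} (to replace $\mathcal D^{d+2-a}(U,\tilde{\mathcal T}_t)\{l\}$ by $\Sha^{d+2-a}(\tilde T_t)\{l\}$) and the analogue of Proposition~\ref{Sha fini} for $D^a_{sh}$ (to recover $\Sha^a(T)\{l\}$) then yields the desired perfect pairing of finite groups.

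The hard part will be the bookkeeping of the two operations $(-)\{l\}$ and $(-)^{(l)}$: unlike in the finite case, the $U$-level duality of Theorem~\ref{AV tore} is only perfect after taking $l$-primary torsion and $l$-adic completion, so one must control how these commute with the formation of $\Sha$ (a kernel of a map into an infinite product), with the inductive limit over $U$, and with the exact sequences above. It is precisely this completion that forces the maximal divisible subgroup of $\Sha^{d+3-a}(\tilde T)$ to drop out, so that $\overline{\Sha^{d+3-a}(\tilde T)}$ — and not $\Sha^{d+3-a}(\tilde T)$ itself — appears on the right; making this divisibility accounting precise, using the finite exponent of $\Sha^a(T)$ and the cofinite-type bound on $\Sha^{d+3-a}(\tilde T)$ together with the uniform choice of $U_1$ in Proposition~\ref{Sha tore}, is the crux. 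Finally, summing the finite $l$-primary perfect pairings over the finitely many relevant primes $l$ produces the perfect pairing of finite groups $\Sha^a(T)\times\overline{\Sha^{d+3-a}(\tilde T)}\to\mathbb Q/\mathbb Z$.
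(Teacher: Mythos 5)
Your plan is essentially the paper's own proof: for each prime $l$ the paper compares the exact sequence $0 \to D^a_{sh}(U,\mathcal{T})\{l\} \to H^a(U,\mathcal{T})\{l\} \to \prod_{v} H^a(K_v,T)\{l\}$ with the barred version of the sequence of Proposition \ref{suite tore}, using Theorem \ref{AV tore} for the middle vertical arrow, Proposition \ref{local tore}(ii) for the right-hand one, and Proposition \ref{Sha tore} together with the finite exponent of $\Sha^a(T)$ (Proposition \ref{nature tore}(i)) to pass to the limit over $U$ and obtain $\Sha^a(T)\{l\} \cong (\overline{\Sha^{d+2-a}(\tilde{T}_t)}\{l\})^D \cong (\overline{\Sha^{d+3-a}(\tilde{T})}\{l\})^D$, exactly the steps you list. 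The completion bookkeeping you flag as the crux is dispatched in the paper by the single observation that for torsion groups of cofinite type one has $\overline{H^{d+2-a}_c(U,\tilde{\mathcal{T}}_t)}\{l\} = H^{d+2-a}_c(U,\tilde{\mathcal{T}}_t)^{(l)}\{l\}$, so the $U$-level duality of Theorem \ref{AV tore} applies directly to the barred diagram.
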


\begin{proof}
Fixons $l$ un nombre premier. Pour $U$ ouvert de $U_1$, on pose $D^a_{sh}(U,\mathcal{T}) = \text{Ker} (H^a(U,\mathcal{T}) \rightarrow \prod_{v \in X^{(1)}} H^a(K_v,T))$. D'une part, on a la suite exacte:
$$ 0 \rightarrow D^a_{sh}(U,\mathcal{T})\{l\} \rightarrow H^a(U,\mathcal{T})\{l\} \rightarrow \prod_{v \in X^{(1)}} H^a(K_v,T)\{l\}.$$
Comme $\prod_{v \in X^{(1)}} H^a(K_v,T)\{l\}$ est d'exposant fini d'après la conjecture de Beilinson-Lichtenbaum, sa partie divisible est triviale, et donc la partie divisible de $H^a(U,\mathcal{T})\{l\}$ est contenue dans celle de $D^a_{sh}(U,\mathcal{T})\{l\}$. On en déduit une suite exacte:
$$ 0 \rightarrow \overline{D^a_{sh}(U,\mathcal{T})\{l\}} \rightarrow \overline{H^a(U,\mathcal{T})\{l\}} \rightarrow \prod_{v \in X^{(1)}} H^a(K_v,T)\{l\}.$$
D'autre part, d'après \ref{suite tore}, on a la suite exacte:
$$ \bigoplus_{v \in X^{(1)}} H^{d+1-a}(K_v,\tilde{T}_t) \rightarrow H^{d+2-a}_c(U,\tilde{\mathcal{T}}_t) \rightarrow \mathcal{D}^{d+2-a}(U,\tilde{\mathcal{T}}_t) \rightarrow 0.$$
Comme $H^{d+2-a}_c(U,\tilde{\mathcal{T}}_t)$ est un groupe de torsion de type cofini:
$$ \bigoplus_{v \in X^{(1)}} \overline{H^{d+1-a}(K_v,\tilde{T}_t)} \rightarrow \overline{H^{d+2-a}_c(U,\tilde{\mathcal{T}}_t)} \rightarrow \overline{\mathcal{D}^{d+2-a}(U,\tilde{\mathcal{T}}_t)} \rightarrow 0.$$
Avec la proposition 4.3 de \cite{CTH}, on obtient donc un diagramme commutatif à lignes exactes:\\
\footnotesize
\centerline{\xymatrix{
0 \ar[r]& \overline{D^a_{sh}(U,\mathcal{T})\{l\}} \ar[r]\ar[d] & \overline{H^a(U,\mathcal{T})\{l\}} \ar[r]\ar[d]& \prod_{v \in X^{(1)}} H^a(K_v,T)\{l\}\ar[d] \\
0 \ar[r]& (\overline{\mathcal{D}^{d+2-a}(U,\tilde{\mathcal{T}}_t)}\{l\})^D \ar[r] & (\overline{H^{d+2-a}_c(U,\tilde{\mathcal{T}}_t)}\{l\})^D \ar[r]& (\bigoplus_{v \in X^{(1)}} \overline{H^{d+1-a}(K_v,\tilde{T}_t)}\{l\})^D
}}
\normalsize
D'après la proposition \ref{local tore}, la flèche verticale de droite est un isomorphisme. De plus, on a $\overline{H^{d+2-a}_c(U,\tilde{\mathcal{T}}_t)}\{l\} = H^{d+2-a}_c(U,\tilde{\mathcal{T}}_t)^{(l)}\{l\}$, et donc le morphisme vertical du milieu est un isomorphisme d'après \ref{AV tore}. On en déduit que $\overline{D^a_{sh}(U,\mathcal{T})\{l\}} \cong (\overline{\mathcal{D}^{d+2-a}(U,\tilde{\mathcal{T}}_t)}\{l\})^D$. Comme $\Sha^a(T)$ n'admet pas de sous-groupe divisible, en passant à la limite sur $U$ et en utilisant la propriété \ref{Sha tore}, on obtient que:
$$\Sha^a(T)\{l\} \cong (\overline{\Sha^{d+2-a}(\tilde{T}_t)}\{l\})^D \cong (\overline{\Sha^{d+3-a}(\tilde{T})}\{l\})^D.$$
\end{proof}

Nous voulons maintenant établir une suite exacte de type Poitou-Tate pour les tores.

\begin{proposition}\label{nr tore}
On rappelle que l'on a supposé (H \ref{40}). Soit $v \in U_0^{(1)}$.
\begin{itemize}
\item[(i)] L'image de $H^a(\mathcal{O}_v,\mathcal{T})$ dans $H^a(K_v,T)$ et l'image de $H^{d+2-a}(\mathcal{O}_v,\tilde{\mathcal{T}})$ dans $H^{d+2-a}(K_v,\tilde{T})$ sont annulateurs l'un de l'autre dans l'accouplement parfait:
$$H^a(K_v,T) \times  H^{d+2-a}(K_v,\tilde{T}) \rightarrow \mathbb{Q}/\mathbb{Z}.$$
\item[(ii)]  Supposons que $a=1$. Alors l'annulateur de $H^{d+2}(\mathcal{O}_v,\tilde{\mathcal{T}}) \subseteq H^{d+2}(K_v,\tilde{T})$ dans l'accouplement parfait:
$$H^{0}(K_v,T)^{\wedge} \times H^{d+2}(K_v,\tilde{T}) \rightarrow \mathbb{Q}/\mathbb{Z}$$
est $H^{0}(\mathcal{O}_v,\mathcal{T})^{\wedge} \subseteq H^{0}(K_v,T)^{\wedge}$.
\end{itemize}
\end{proposition}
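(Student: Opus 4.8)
L'idée est de calquer la preuve de la proposition \ref{nr fini}, en ramenant l'énoncé sur les complexes $T$ et $\tilde{T}$ à la dualité non ramifiée déjà établie pour les modules finis. Commençons par (i), et d'abord par l'orthogonalité. L'accouplement de \ref{local tore} s'identifie au cup-produit $\mathcal{T} \otimes^{\mathbf{L}} \tilde{\mathcal{T}} \rightarrow \mathbb{Z}(d+1)[1]$, défini dès le niveau de $\mathcal{O}_v$, suivi de l'isomorphisme $H^{d+3}(K_v,\mathbb{Z}(d+1)) \cong \mathbb{Q}/\mathbb{Z}$ ; la restriction de l'accouplement aux images de $H^a(\mathcal{O}_v,\mathcal{T})$ et de $H^{d+2-a}(\mathcal{O}_v,\tilde{\mathcal{T}})$ se factorise donc par la flèche $H^{d+3}(\mathcal{O}_v,\mathbb{Z}(d+1)) \rightarrow H^{d+3}(K_v,\mathbb{Z}(d+1))$. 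Il suffit alors de voir que $H^{d+3}(\mathcal{O}_v,\mathbb{Z}(d+1)) = 0$ : la suite exacte issue de $0 \rightarrow \mathbb{Z}(d+1) \rightarrow \mathbb{Q}(d+1) \rightarrow \mathbb{Q}/\mathbb{Z}(d+1) \rightarrow 0$ ramène cela à l'annulation de $H^{d+2}(\mathcal{O}_v,\mathbb{Q}/\mathbb{Z}(d+1)) = \varinjlim_n H^{d+2}(k(v),\mu_n^{\otimes d+1})$, nulle car $\mathrm{cd}(k(v)) = d+1$ puisque $k(v)$ est $d$-local, et de $H^{d+3}(\mathcal{O}_v,\mathbb{Q}(d+1))$, nulle car $\mathbb{Q}(d+1)_{\mathrm{Zar}}$ est concentré en degrés $\leq d+1$ et $\mathcal{O}_v$ est de dimension $1$. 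Les deux images sont donc orthogonales.

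Pour l'égalité des annulateurs, l'accouplement de \ref{local tore}(i) étant parfait entre groupes finis de même ordre, il suffit de comparer les ordres. Je fixerais $n_0$ tel qu'une extension de degré $n_0$ déploie $\hat{T}$ ; par restriction-corestriction et annulation de Beilinson-Lichtenbaum, les groupes $H^a(K_v,T)$, $H^{d+2-a}(K_v,\tilde{T})$ et leurs analogues non ramifiés sont finis de $n_0$-torsion. En posant $F_n = \check{\mathcal{T}} \otimes \mathbb{Z}/n\mathbb{Z}(a)$, de dual $F_n' = \hat{\mathcal{T}} \otimes \mu_n^{\otimes d+1-a}$, les suites de Kummer de \ref{local tore} relient, pour $n=n_0$, la cohomologie de $\mathcal{T}$ et $\tilde{\mathcal{T}}$ à celle de $F_n$ et $F_n'$ ; selon la valeur de $a$, j'emploierais la version de torsion $H^a(K_v,F_n) \twoheadrightarrow H^a(K_v,T)$ ou la version modulo $n$ $H^a(K_v,T) \hookrightarrow H^{a+1}(K_v,F_n)$ (et de même du côté dual pour $\tilde{T}$), de façon à faire intervenir \ref{nr fini} en des degrés appartenant à $\{1,\dots,d+1\}$, ce qui est toujours possible puisque l'un au moins des entiers $a$ et $a+1$ convient. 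La proposition \ref{nr fini} affirme alors qu'au niveau fini les sous-groupes non ramifiés sont annulateurs exacts, ce qui donne le décompte des ordres correspondant ; il reste à le transporter au niveau des tores, en contrôlant les termes correctifs des suites de Kummer (notamment des groupes tels que $H^{a-1}(K_v,T)/n$ et ${}_nH^{d+3-a}(\mathcal{O}_v,\tilde{\mathcal{T}})$), que l'on relie à la cohomologie de $k(v)$ via la suite de localisation de $\mathcal{T}$ sur $\mathcal{O}_v$.

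Pour (ii), où $a=1$ et où les degrés $0$ et $d+2$ sortent de l'intervalle autorisé dans \ref{nr fini}, je travaillerais au niveau fini en degré $1$. Pour chaque $n$, la suite de Kummer du tore $T = \check{T} \otimes \mathbb{G}_m$ fournit une injection $H^0(K_v,T)/n \hookrightarrow H^1(K_v,F_n)$, compatible avec le sous-groupe non ramifié $H^0(\mathcal{O}_v,\mathcal{T})/n \hookrightarrow H^1(\mathcal{O}_v,F_n)$, où $F_n = \check{\mathcal{T}} \otimes \mu_n$ ; dualement, la surjection $H^{d+1}(K_v,F_n') \twoheadrightarrow {}_nH^{d+2}(K_v,\tilde{T})$ envoie $H^{d+1}(\mathcal{O}_v,F_n')$ sur la partie non ramifiée de ${}_nH^{d+2}(K_v,\tilde{T})$. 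Comme $r=1 \in \{1,\dots,d+1\}$, la proposition \ref{nr fini}(iii) affirme que $H^1(\mathcal{O}_v,F_n)$ et $H^{d+1}(\mathcal{O}_v,F_n')$ sont annulateurs l'un de l'autre ; par compatibilité des accouplements, l'annulateur de l'image de $H^{d+2}(\mathcal{O}_v,\tilde{\mathcal{T}})$ dans $H^0(K_v,T)/n$ est alors $H^0(\mathcal{O}_v,\mathcal{T})/n$. L'accouplement de \ref{local tore}(iii) étant obtenu en passant à la limite projective sur $n$ les accouplements finis $H^0(K_v,T)/n \times {}_nH^{d+2}(K_v,\tilde{T}) \rightarrow \mathbb{Q}/\mathbb{Z}$ (le groupe de torsion $H^{d+2}(K_v,\tilde{T})$ étant la réunion des ${}_nH^{d+2}(K_v,\tilde{T})$), l'annulateur cherché s'obtient comme limite des annulateurs finis, à savoir $\varprojlim_n H^0(\mathcal{O}_v,\mathcal{T})/n = H^0(\mathcal{O}_v,\mathcal{T})^{\wedge}$.

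Le principal obstacle est précisément ce transport : il faut établir que les diagrammes de suites de Kummer sur $\mathcal{O}_v$ et sur $K_v$ sont commutatifs et compatibles avec les cup-produits, de sorte que la notion de classe non ramifiée au niveau fini corresponde bien à celle de classe provenant de $H^{\bullet}(\mathcal{O}_v,-)$ au niveau des tores, puis contrôler dans le décompte des ordres les noyaux et conoyaux des flèches de Kummer, qui ne sont pas des isomorphismes. L'orthogonalité et le principe de la réduction sont routiniers une fois \ref{AV fini}, \ref{nr fini} et \ref{local tore} acquis ; seule la compatibilité de tous ces morphismes de comparaison — et, pour (ii), la légitimité du passage à la limite projective sur $n$ — demande réellement de la vigilance.
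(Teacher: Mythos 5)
Votre traitement de l'orthogonalité dans (i) (factorisation de l'accouplement non ramifié par $H^{d+3}(\mathcal{O}_v,\mathbb{Z}(d+1))$, dont la nullité se déduit de la suite $0 \rightarrow \mathbb{Z}(d+1) \rightarrow \mathbb{Q}(d+1) \rightarrow \mathbb{Q}/\mathbb{Z}(d+1) \rightarrow 0$) est exactement celui du texte, et votre partie (ii) suit aussi la même route: réduction modulo $n$, application de la proposition \ref{nr fini} en degré $1$, puis passage à la limite projective sur $n$. Notez toutefois que votre \emph{compatibilité des accouplements} cache une étape réelle: savoir que l'image de $t$ dans $H^0(K_v,T\otimes \mathbb{Z}/n\mathbb{Z})$ appartient à $H^0(\mathcal{O}_v,\mathcal{T}\otimes \mathbb{Z}/n\mathbb{Z})$ ne donne $t \in H^0(\mathcal{O}_v,\mathcal{T})/n$, ni l'injectivité de $H^0(\mathcal{O}_v,\mathcal{T})/n \rightarrow H^0(K_v,T)/n$ (nécessaire pour que la limite projective identifie bien l'annulateur à $H^0(\mathcal{O}_v,\mathcal{T})^{\wedge}$), qu'après une chasse au diagramme utilisant l'injectivité de $H^1(\mathcal{O}_v,\mathcal{T}) \rightarrow H^1(K_v,T)$, qui s'identifie à une inflation en cohomologie galoisienne; c'est court, mais cela doit être écrit.

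La vraie lacune est dans la seconde moitié de (i). Vous remplacez la détermination des annulateurs par un décompte d'ordres que vous ne menez pas à terme, et vous identifiez vous-même le transport du niveau fini au niveau des tores comme l'obstacle principal, sans le résoudre: pour que le décompte ferme, il faudrait calculer les ordres des images non ramifiées, donc contrôler les noyaux de $H^a(\mathcal{O}_v,\mathcal{T}) \rightarrow H^a(K_v,T)$ et tous les termes correctifs des suites de Kummer des deux côtés, ce qui n'est fait nulle part. Le texte évite complètement ce décompte par un argument élément par élément: si $t \in H^a(K_v,T)$ est orthogonal à l'image de $H^{d+2-a}(\mathcal{O}_v,\tilde{\mathcal{T}})$, alors pour chaque $n$ son image dans $H^a(K_v,T\otimes\mathbb{Z}/n\mathbb{Z})$ est non ramifiée d'après \ref{nr fini}; l'injectivité de $H^{a+1}(\mathcal{O}_v,\mathcal{T}) \rightarrow H^{a+1}(K_v,T)$ --- établie via le diagramme à coefficients $\mathbb{Q}$ et $\mathbb{Q}/\mathbb{Z}$, la nullité des groupes à coefficients $\mathbb{Q}$ et l'injectivité de \ref{nr fini}(i) --- permet alors de relever, c'est-à-dire de produire $t_n$ dans l'image de $H^a(\mathcal{O}_v,\mathcal{T})$ avec $t - t_n \in nH^a(K_v,T)$; enfin la classe de $t$ dans le conoyau fini $\text{Coker}(H^a(\mathcal{O}_v,\mathcal{T}) \rightarrow H^a(K_v,T))$ est divisible, donc nulle (l'autre inclusion d'annulateurs s'obtenant par symétrie du cadre, en échangeant $T$ et $\tilde{T}[1]$ et $a$ et $d+1-a$). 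C'est ce trio --- réduction modulo $n$, injectivité en degré $a+1$, divisibilité dans un conoyau fini --- qui manque à votre proposition: en l'état, votre partie (i) est un programme plausible, pas une démonstration.
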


\begin{proof}
\begin{itemize}
\item[(i)] Comme $\mathcal{O}_v$ est de dimension 1 et $\mathbb{Q}(d+1)_{\text{Zar}}$ est concentré en degrés $\leq d+1$, on a $H^{d+3}(\mathcal{O}_v,\mathbb{Q}(d+1)) \cong H^{d+3}_{\text{Zar}}(\mathcal{O}_v,\mathbb{Q}(d+1)_{\text{Zar}}) =0$, d'où une surjection $H^{d+2}(\mathcal{O}_v,\mathbb{Q}/\mathbb{Z}(d+1)) \rightarrow H^{d+3}(\mathcal{O}_v,\mathbb{Z}(d+1))$. Or $H^{d+2}(\mathcal{O}_v,\mathbb{Q}/\mathbb{Z}(d+1)) \cong H^{d+2}(k(v),\mathbb{Q}/\mathbb{Z}(d+1)) = 0$ par dimension cohomologique. On en déduit que $H^{d+3}(\mathcal{O}_v,\mathbb{Z}(d+1)) = 0$, ce qui prouve la nullité de l'accouplement:
$H^a(\mathcal{O}_v,\mathcal{T}) \times H^{d+2-a}(\mathcal{O}_v,\tilde{\mathcal{T}}) \rightarrow \mathbb{Q}/\mathbb{Z}$.\\
Reste à montrer que, si $t \in H^a(K_v,T)$ est orthogonal à $\text{Im}(H^{d+2-a}(\mathcal{O}_v,\tilde{\mathcal{T}}) \rightarrow H^{d+2-a}(K_v,\tilde{T}))$, alors $t \in \text{Im}(H^a(\mathcal{O}_v,\mathcal{T}) \rightarrow H^a(K_v,T))$. Considérons donc $t \in H^a(K_v,T)$ orthogonal à $\text{Im}(H^{d+2-a}(\mathcal{O}_v,\tilde{\mathcal{T}}) \rightarrow H^{d+2-a}(K_v,\tilde{T}))$. Fixons $n>0$. L'image de $t$ dans $H^a(K_v,T \otimes \mathbb{Z}/n\mathbb{Z})$ est alors orthogonale à $H^{d+1-a}(\mathcal{O}_v,\tilde{\mathcal{T}} \otimes \mathbb{Z}/n\mathbb{Z}) \subseteq H^{d+1-a}(K_v,\tilde{T} \otimes \mathbb{Z}/n\mathbb{Z})$. Grâce à la proposition \ref{nr fini}, on déduit qu'elle est dans $ H^a(\mathcal{O}_v,T \otimes \mathbb{Z}/n\mathbb{Z})$.\\
Considérons maintenant le diagramme commutatif à lignes exactes:\\
\centerline{\xymatrix{
& H^a(\mathcal{O}_v,\mathcal{T}) \ar[r]\ar[d] &H^a(\mathcal{O}_v,\mathcal{T} \otimes \mathbb{Z}/n\mathbb{Z}) \ar[r]\ar[d] &H^{a+1}(\mathcal{O}_v,\mathcal{T}) \ar[d]\\
H^a(K_v,T) \ar[r]^{\cdot n} & H^a(K_v,T) \ar[r] & H^a(K_v, T \otimes \mathbb{Z}/n\mathbb{Z}) \ar[r] &H^{a+1}(K_v,T)
}}
Montrons que la flèche verticale de droite est injective. Pour ce faire, remarquons que le diagramme suivant est commutatif à lignes exactes:\\
\centerline{\xymatrix{
H^{a}(\mathcal{O}_v,\mathcal{T} \otimes \mathbb{Q}) \ar[d]\ar[r] &H^{a}(\mathcal{O}_v,\mathcal{T} \otimes \mathbb{Q}/\mathbb{Z}) \ar[d]\ar[r] &H^{a+1}(\mathcal{O}_v,\mathcal{T}) \ar[d]\ar[r] &H^{a+1}(\mathcal{O}_v,\mathcal{T} \otimes \mathbb{Q}) \ar[d]\\
H^{a}(K_v,T \otimes \mathbb{Q})\ar[r] &H^{a}(K_v,T \otimes \mathbb{Q}/\mathbb{Z})\ar[r] &H^{a+1}(K_v,T )\ar[r] &H^{a+1}(K_v,T \otimes \mathbb{Q})
}}
Or $H^{a+1}(\mathcal{O}_v,\mathcal{T} \otimes \mathbb{Q}) = H^{a}(K_v,\mathcal{T} \otimes \mathbb{Q}) = 0$ et le morphisme 
$H^{a}(\mathcal{O}_v,\mathcal{T} \otimes \mathbb{Q}/\mathbb{Z}) \rightarrow H^{a}(K_v,T \otimes \mathbb{Q}/\mathbb{Z})$ est injectif d'après la proposition \ref{nr fini}. Une chasse au diagramme prouve alors que $H^{a+1}(\mathcal{O}_v,\mathcal{T}) \rightarrow H^{a+1}(K_v,T)$ est injectif.\\
Cela prouve qu'il existe $t_n \in \text{Im}(H^a(\mathcal{O}_v,\mathcal{T}) \rightarrow H^a(K_v,T))$ tel que $t-t_n \in nH^a(K_v,T)$. On en déduit que l'image de $t$ dans le groupe fini $\text{Coker}(H^a(\mathcal{O}_v,\mathcal{T}) \rightarrow H^a(K_v,T))$ est divisible, donc nulle.
\item[(ii)] Comme dans (i), on montre que l'accouplement $H^{0}(\mathcal{O}_v,\mathcal{T})^{\wedge} \times H^{d+2}(\mathcal{O}_v,\tilde{\mathcal{T}}) \rightarrow \mathbb{Q}/\mathbb{Z}$ est nul. Fixons maintenant $n>0$ et considérons $t \in H^0(K_v,T)/n$ orthogonal à ${_n}H^{d+2}(\mathcal{O}_v,\tilde{\mathcal{T}}) $. L'image de $t$ dans $H^0(K_v,T\otimes\mathbb{Z}/n\mathbb{Z})$ est alors orthogonale à $H^{d+1}(\mathcal{O}_v,\tilde{\mathcal{T}} \otimes \mathbb{Z}/n\mathbb{Z}) \subseteq H^{d+1}(K_v,\tilde{T}\otimes \mathbb{Z}/n\mathbb{Z}))$ et appartient donc à $H^0(\mathcal{O}_v,\mathcal{T}\otimes\mathbb{Z}/n\mathbb{Z})$. Écrivons le diagramme commutatif à lignes exactes:\\
\centerline{\xymatrix{
0 \ar[r]& H^0(\mathcal{O}_v, \mathcal{T})/n \ar[r]\ar[d] &H^0(\mathcal{O}_v, \mathcal{T} \otimes \mathbb{Z}/n\mathbb{Z}) \ar[r]\ar[d] &H^1(\mathcal{O}_v, \mathcal{T}) \ar[d]\\
0 \ar[r] & H^0(K_v,T)/n \ar[r] & H^0(K_v,T \otimes \mathbb{Z}/n\mathbb{Z}) \ar[r] \ar[r]& H^1(K_v,T)
}}
Le morphisme $H^1(\mathcal{O}_v,\mathcal{T}) \rightarrow H^1(K_v,T)$ s'identifie à un morphisme d'inflation en cohomologie galoisienne: il est donc injectif. De plus, $H^0(\mathcal{O}_v, \mathcal{T} \otimes \mathbb{Z}/n\mathbb{Z}) \rightarrow H^0(K_v,T \otimes \mathbb{Z}/n\mathbb{Z})$ est injectif. Une chasse au diagramme permet alors d'établir que $H^{0}(\mathcal{O}_v,\mathcal{T})/n \rightarrow H^{0}(K_v,T)/n$ est injectif et que $t \in H^0(\mathcal{O}_v, \mathcal{T})/n$, ce qui achève la preuve.
\end{itemize}
\end{proof}

Dans la suite de cette section, on supposera que
\begin{hypo}\label{40'}
\begin{minipage}[t]{12.72cm}
$a=1$, c'est-à-dire $T = \check{T} \otimes^{\mathbf{L}} \mathbb{Z}(1)[1]$ est quasi-isomorphe à un tore et $\tilde{T} = \hat{T} \otimes^{\mathbf{L}} \mathbb{Z}(d)$.
\end{minipage}
\end{hypo}
Pour chaque $r \geq 0$, on notera $\mathbb{P}^{r}(T)$ le produit restreint des $H^r(K_v,T)$ pour $v \in X^{(1)}$ par rapport aux $H^r_{\text{nr}}(K_v,T) = \text{Im}(H^r(\mathcal{O}_v,\mathcal{T}) \rightarrow H^r(K_v,T))$, muni de sa topologie naturelle. De même, on notera $\mathbb{P}^{r}(\tilde{T})$ le produit restreint des $H^r(K_v,\tilde{T})$ pour $v \in X^{(1)}$ par rapport aux $H^r_{\text{nr}}(K_v,\tilde{T}) = \text{Im}(H^r(\mathcal{O}_v,\tilde{\mathcal{T}}) \rightarrow H^r(K_v,\tilde{T}))$, muni de sa topologie naturelle. Il convient de munir $\mathbb{P}^{d+2}(\tilde{T})_{tors}$ d'une topologie qui n'est pas induite par la topologie produit restreint sur $\mathbb{P}^{d+2}(\tilde{T})$. Pour ce faire, remarquons que, pour chaque $n>0$, le groupe ${_n}\mathbb{P}^{d+2}(\tilde{T})$ est le produit restreint des ${_n}H^{d+2}(K_v,\tilde{T})$ par rapport aux ${_n}H^{d+2}(\mathcal{O}_v,\tilde{\mathcal{T}}) \subseteq {_n}H^{d+2}(K_v,\tilde{T})$. Nous pouvons donc le munir de sa topologie produit restreint. La topologie sur $\mathbb{P}^{d+2}(\tilde{T})_{tors} = \varinjlim {_n}\mathbb{P}^{d+2}(\tilde{T})$ sera alors tout simplement la topologie limite inductive.

\begin{corollary}\label{dual P tore}
On rappelle que l'on a supposé (H \ref{40}) et (H \ref{40'}). La dualité locale induit un accouplement parfait $\mathbb{P}^1(T) \times \mathbb{P}^{d+1}(\tilde{T}) \rightarrow \mathbb{Q}/\mathbb{Z}$ et un isomorphisme $\mathbb{P}^{0}(T)_{\wedge}\rightarrow (\mathbb{P}^{d+2}(\tilde{T})_{tors})^D $.
\end{corollary}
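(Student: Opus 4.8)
The plan is to deduce both assertions from the local dualities of Proposition \ref{local tore} combined with the annihilator statements of Proposition \ref{nr tore}, via the standard duality for restricted products: if $(A_v,A_v^0)_v$ and $(B_v,B_v^0)_v$ are families of locally compact abelian groups equipped with perfect pairings $A_v \times B_v \rightarrow \mathbb{Q}/\mathbb{Z}$ under which the compact open subgroups $A_v^0$ and $B_v^0$ are exact annihilators of one another, then the restricted products $\prod'_v(A_v:A_v^0)$ and $\prod'_v(B_v:B_v^0)$ are in perfect duality. All the local groups occurring here are finite thanks to Proposition \ref{local tore}, so Pontryagin duality is unproblematic place by place.

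For the perfect pairing $\mathbb{P}^1(T) \times \mathbb{P}^{d+1}(\tilde{T}) \rightarrow \mathbb{Q}/\mathbb{Z}$, I would simply observe that Proposition \ref{local tore}(i) (with $a=1$) furnishes, for each $v \in X^{(1)}$, a perfect pairing of finite groups $H^1(K_v,T) \times H^{d+1}(K_v,\tilde{T}) \rightarrow \mathbb{Q}/\mathbb{Z}$, and that Proposition \ref{nr tore}(i) asserts that the unramified subgroups $H^1_{\text{nr}}(K_v,T)$ and $H^{d+1}_{\text{nr}}(K_v,\tilde{T})$ are exact annihilators of each other. Since $\mathbb{P}^1(T)$ and $\mathbb{P}^{d+1}(\tilde{T})$ are by definition the restricted products taken with respect to precisely these subgroups, the restricted-product duality yields the desired perfect pairing directly.

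For the isomorphism $\mathbb{P}^0(T)_{\wedge} \rightarrow (\mathbb{P}^{d+2}(\tilde{T})_{tors})^D$, I would exploit the inductive-limit topology placed on $\mathbb{P}^{d+2}(\tilde{T})_{tors}$ just before the statement. Writing $\mathbb{P}^{d+2}(\tilde{T})_{tors} = \varinjlim_n {_n}\mathbb{P}^{d+2}(\tilde{T})$ with that topology, continuity of characters gives $(\mathbb{P}^{d+2}(\tilde{T})_{tors})^D \cong \varprojlim_n ({_n}\mathbb{P}^{d+2}(\tilde{T}))^D$. For fixed $n$, the group ${_n}\mathbb{P}^{d+2}(\tilde{T})$ is the restricted product of the finite groups ${_n}H^{d+2}(K_v,\tilde{T})$ relative to the ${_n}H^{d+2}(\mathcal{O}_v,\tilde{\mathcal{T}})$. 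From the perfect pairing of Proposition \ref{local tore}(iii) (with $a=1$) between the profinite group $H^0(K_v,T)^{\wedge}$ and the torsion group $H^{d+2}(K_v,\tilde{T})$, I read off a perfect pairing ${_n}H^{d+2}(K_v,\tilde{T}) \times H^0(K_v,T)^{\wedge}/n \rightarrow \tfrac{1}{n}\mathbb{Z}/\mathbb{Z}$, whence $({_n}H^{d+2}(K_v,\tilde{T}))^D \cong H^0(K_v,T)^{\wedge}/n \cong H^0(K_v,T)/n$; moreover Proposition \ref{nr tore}(ii) identifies the annihilator of the unramified part ${_n}H^{d+2}(\mathcal{O}_v,\tilde{\mathcal{T}})$ with $H^0(\mathcal{O}_v,\mathcal{T})^{\wedge}/n = \operatorname{Im}(H^0(\mathcal{O}_v,\mathcal{T})/n \rightarrow H^0(K_v,T)/n)$. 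Restricted-product duality then gives $({_n}\mathbb{P}^{d+2}(\tilde{T}))^D \cong \mathbb{P}^0(T)/n$, and passing to the projective limit over $n$ yields $(\mathbb{P}^{d+2}(\tilde{T})_{tors})^D \cong \varprojlim_n \mathbb{P}^0(T)/n = \mathbb{P}^0(T)_{\wedge}$.

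The main obstacle will be the careful bookkeeping of topologies and the commutation of dualization with the two limits: checking that the dual of the inductive limit is the projective limit of the duals (this uses exactly the inductive-limit topology imposed on $\mathbb{P}^{d+2}(\tilde{T})_{tors}$), verifying that the restricted-product duality applies uniformly in $n$ (the unramified subgroups are compact open and coincide with the full local groups for almost all $v$, so nondegeneracy holds on the nose), and confirming the identifications $H^0(K_v,T)^{\wedge}/n \cong H^0(K_v,T)/n$ and $\varprojlim_n \mathbb{P}^0(T)/n \cong \mathbb{P}^0(T)_{\wedge}$. The purely algebraic input, namely the perfectness and annihilator properties at each place, is already supplied by Propositions \ref{local tore} and \ref{nr tore}.
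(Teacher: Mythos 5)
Your proof is correct and follows essentially the same route as the paper: the first pairing is obtained there exactly as you do, as an immediate consequence of the annihilator statement \ref{nr tore}(i) applied to the perfect local duality of \ref{local tore}(i), and for the second the paper likewise identifies $\mathbb{P}^0(T)/n$ with the restricted product of the $H^0(K_v,T)/n$ relative to the $H^0(\mathcal{O}_v,\mathcal{T})/n$, deduces $\mathbb{P}^0(T)/n \cong ({_n}\mathbb{P}^{d+2}(\tilde{T}))^D$ from \ref{nr tore}(ii), and passes to the projective limit over $n$. The only cosmetic difference is your detour through $H^0(K_v,T)^{\wedge}/n$: it is cleaner to invoke directly the mod-$n$ isomorphism $H^0(K_v,T)/n \cong ({_n}H^{d+2}(K_v,\tilde{T}))^D$ established in the proof of Proposition \ref{local tore}(iii), since the identification $A^{\wedge}/n \cong A/n$ is not completely formal for an arbitrary abelian group $A$ and is not needed here.
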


\begin{proof}
La première affirmation découle immédiatement de l'assertion (i) de \ref{nr tore}. Quant à la seconde, on remarque que, pour chaque $n >0$, $\mathbb{P}^0(T)/n$ s'identifie au produit restreint des $H^0(K_v,T)/n$ par rapport aux $H^0(\mathcal{O}_v,\mathcal{T})/n$ puisque $H^0(\mathcal{O}_v,\mathcal{T})/n \subseteq H^0(K_v,T)/n$. On déduit de l'assertion (ii) de \ref{nr tore} que $\mathbb{P}^0(T)/n \cong ({_n}\mathbb{P}^{d+2}(\tilde{T}))^D$. En passant à la limite projective sur $n$, on obtient l'isomorphisme $\mathbb{P}^{0}(T)_{\wedge} \rightarrow (\mathbb{P}^{d+2}(\tilde{T})_{tors})^D$.
\end{proof}

\begin{proposition} \label{bouts tore}
On rappelle que l'on a supposé (H \ref{40}) et (H \ref{40'}).
\begin{itemize}
\item[(i)] On a une suite exacte:
\begin{align*}
H^{d+2}(K,\tilde{T}) \rightarrow \mathbb{P}^{d+2}(\tilde{T})_{tors}& \rightarrow (H^{0}(K,T)_{\wedge})^D \rightarrow H^{d+3}(K,\tilde{T}) \rightarrow \\ &\rightarrow \mathbb{P}^{d+3}(\tilde{T})_{tors} \rightarrow  (\varprojlim_n {_n}T(K^s))^D \rightarrow 0.
\end{align*}
\item[(ii)] Supposons que $\Sha^2(T)$ est fini. Alors on a une suite exacte:
$$H^1(K,T) \rightarrow \mathbb{P}^1(T) \rightarrow H^{d+1}(K,\tilde{T})^D.$$
\end{itemize}
\end{proposition}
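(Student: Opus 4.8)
The plan is to reduce everything to the Poitou-Tate sequence for finite modules (Theorem \ref{PT fini suite}) together with the local dualities already in hand, and then to pass to a limit. The two devices linking $T$ and $\tilde T$ to finite coefficients are the Kummer triangles $T \xrightarrow{n} T \to \check T \otimes \mathbb{Z}/n\mathbb{Z}(1)[1]$ and $\tilde T \xrightarrow{n} \tilde T \to {_n}\tilde T_t$, which present $\check T \otimes \mathbb{Z}/n\mathbb{Z}(1) = {_n}T$ as the Tate dual of $\hat T \otimes \mathbb{Z}/n\mathbb{Z}(d) = {_n}\tilde T_t$. I would also record at the outset the identification $H^{r}(K,\tilde T_t) \cong H^{r+1}(K,\tilde T)$ for $r \geq d+1$: it follows from $0 \to \tilde T \to \hat T \otimes \mathbb{Q}(d) \to \tilde T_t \to 0$ because $H^{r}(K,\hat T \otimes \mathbb{Q}(d)) = 0$ for $r > d$ (split $\hat T$, use that $\mathbb{Q}(d)_{\text{Zar}}$ is concentrated in degrees $\leq d$ plus restriction-corestriction), exactly as in the proof of Proposition \ref{nature tore}. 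These turn the degrees $d+2,d+3$ of (i) into the degrees $d+1,d+2$ of $\tilde T_t$.

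For (ii) I would first build the middle arrow. The perfect pairing $\mathbb{P}^1(T) \times \mathbb{P}^{d+1}(\tilde T) \to \mathbb{Q}/\mathbb{Z}$ of Corollary \ref{dual P tore}, composed with the localization $H^{d+1}(K,\tilde T) \to \mathbb{P}^{d+1}(\tilde T)$, yields $\mathbb{P}^1(T) \to H^{d+1}(K,\tilde T)^D$. That the composite with $H^1(K,T) \to \mathbb{P}^1(T)$ vanishes is a reciprocity statement: a global class of $H^1(K,T)$ and a global class of $H^{d+1}(K,\tilde T)$ both extend to some open $U$, where their cup product lands in $H^{d+4}_c(U,\mathbb{Z}(d+1)) \cong \mathbb{Q}/\mathbb{Z}$ (Lemma \ref{préliminaire}) and the sum of local invariants is the image of a global cohomology class, hence $0$. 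For exactness at $\mathbb{P}^1(T)$ I would take $(\xi_v)$ in the kernel, reduce modulo $n$, and use the finite-level exactness (Theorem \ref{PT fini suite}) of $H^1(K,{_n}T) \to \mathbb{P}^1({_n}T) \to H^{d+1}(K,{_n}\tilde T_t)^D$ to lift each reduction to a class $\alpha_n \in H^1(K,{_n}T)$. The obstruction to assembling the $\alpha_n$ into a genuine class of $H^1(K,T)$ mapping to $(\xi_v)$ is a $\varprojlim^1$ term whose vanishing amounts precisely to a Mittag-Leffler condition supplied by the hypothesis that $\Sha^2(T)$ is finite.

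For (i) I would run, for the torus, the analogue of the construction behind Theorem \ref{PT fini suite}. Over an open $U \subseteq U_0$, fixing a prime $l$ and working $l$-primarily, I combine the localization sequence for $\tilde{\mathcal T}$ with the Artin-Verdier duality for tori (Remark \ref{AV tore 2}, which for $x=1$ dualizes $H^r(U,\mathcal T)$ with $H^{d+3-r}_c(U,\tilde{\mathcal T})$) and with the local dualities of Proposition \ref{local tore} and Corollary \ref{dual P tore}. The outer arrows are then the localizations $H^{d+2}(K,\tilde T) \to \mathbb{P}^{d+2}(\tilde T)_{tors}$ and $H^{d+3}(K,\tilde T) \to \mathbb{P}^{d+3}(\tilde T)_{tors}$, the dualized-localization maps $\mathbb{P}^{d+2}(\tilde T)_{tors} \to (H^0(K,T)_{\wedge})^D$ and $\mathbb{P}^{d+3}(\tilde T)_{tors} \to (\varprojlim_n {_n}T(K^s))^D$ furnished by the isomorphism $\mathbb{P}^0(T)_{\wedge} \cong (\mathbb{P}^{d+2}(\tilde T)_{tors})^D$ of Corollary \ref{dual P tore} and its top-degree counterpart, and the two connecting maps $(H^0(K,T)_{\wedge})^D \to H^{d+3}(K,\tilde T)$ coming from the boundary of the localization sequence together with the global $\Sha$-duality of Theorem \ref{PT tore}. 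I would check exactness at each spot over $U$, then take $\varinjlim_U$ and reassemble the $\{l\}$-components; the cofiniteness of $H^r_c(U,\tilde{\mathcal T}_t)$ and of $H^{d+3-a}(U,\tilde{\mathcal T})$ from Proposition \ref{nature tore} keeps the limits exact and, in particular, yields the surjectivity of the last arrow onto $(\varprojlim_n {_n}T(K^s))^D$.

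The step I expect to be the main obstacle, in both parts, is preserving exactness through the inverse limits: since $\varprojlim$ is only left exact, one must verify a Mittag-Leffler condition at each junction, and this is exactly where the finiteness inputs are indispensable (the finiteness of $\Sha^2(T)$ in (ii), the cofiniteness results of Proposition \ref{nature tore} in (i)). A secondary but genuine difficulty is the bookkeeping of the non-standard topology on $\mathbb{P}^{d+2}(\tilde T)_{tors}$ introduced before Corollary \ref{dual P tore} (the inductive-limit-of-restricted-products topology, not the subspace topology), which must be respected so that the duality isomorphism of that corollary applies at the correct spots.
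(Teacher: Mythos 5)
Your overall strategy for (i) — take the finite Poitou--Tate sequences for the Kummer quotients and pass to a limit — is indeed the paper's, but your diagnosis of where the difficulty lies is off, and in (ii) a key step fails. For (ii), the lifting argument has a genuine gap: if $(\xi_v)\in\mathbb{P}^1(T)$ is orthogonal to $H^{d+1}(K,\tilde{T})$, a Kummer lift $(\eta_v)\in\mathbb{P}^1({_n}T)$ is orthogonal only to the \emph{image} of $H^{d+1}(K,\tilde{T})$ in $H^{d+1}(K,{_n}\tilde{T}_t)$. The triangle $\tilde{T}\xrightarrow{n}\tilde{T}\rightarrow \hat{T}\otimes\mathbb{Z}/n\mathbb{Z}(d)$ identifies the cokernel of $H^{d+1}(K,\tilde{T})\rightarrow H^{d+1}(K,{_n}\tilde{T}_t)$ with ${_n}H^{d+2}(K,\tilde{T})$, which is in general huge: it contains ${_n}\Sha^{d+2}(\tilde{T})$, and the paper stresses (lemme \ref{nul dual}, exemples \ref{ex0} et \ref{ex}) that $\Sha^{d+2}(\mathbb{Z}(d))$ need not vanish and, being divisible, is then infinite. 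So you cannot invoke the exactness of $H^1(K,{_n}T)\rightarrow\mathbb{P}^1({_n}T)\rightarrow H^{d+1}(K,{_n}\tilde{T}_t)^D$ for your lift; the obstruction character in $({_n}H^{d+2}(K,\tilde{T}))^D$ is the actual crux, and your sketch never addresses it. Note also that no inverse system over $n$ really arises: all groups in sight are $m$-torsion for $m$ the degree of a splitting extension, so one would take $n=m$ once and for all — there is no $\varprojlim^1$ over $n$ for the finiteness of $\Sha^2(T)$ to control, and that is not how the hypothesis enters.

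The paper's proof of (ii) is scheme-level, not Galois-cohomological over $K$: for opens $V\subseteq U\subseteq U_0$ it splices $H^1_c(V,\mathcal{T})\rightarrow H^1(V,\mathcal{T})\rightarrow\bigoplus_{v\in X\setminus V}H^1(K_v,T)\rightarrow H^2_c(V,\mathcal{T})$ with Harder's lemma (corollaire A.8 de \cite{GP}) to recognize which local classes extend, producing sequences of \emph{finite} groups $H^1(U,\mathcal{T})/m\rightarrow\bigoplus H^1(K_v,T)\oplus\bigoplus H^1(\mathcal{O}_v,\mathcal{T})\rightarrow {_m}H^2_c(V,\mathcal{T})$. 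The hypothesis that $\Sha^2(T)$ is finite is used precisely to show $H^2_c(V,\mathcal{T})\{l\}$ is finite for $V$ small (via $\mathcal{D}^2(V,\mathcal{T})\{l\}\cong\Sha^2(T)\{l\}$ from \ref{Sha tore}, plus a restriction--corestriction bound of exponent $m^2$ on $\mathrm{Ker}(H^2_c(V,\mathcal{T})\rightarrow H^2(K,T))$); this finiteness is what legitimizes applying Artin--Verdier duality — a statement about $l$-adic completions (remarque \ref{AV tore 2}) — integrally, giving ${_m}H^2_c(V,\mathcal{T})\hookrightarrow\prod_{l\mid m}H^{d+1}(V,\tilde{\mathcal{T}})\{l\}^D$. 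In other words, the hypothesis kills a potential divisible ghost in $H^2_c$, which your Mittag--Leffler framing does not capture; the only projective limit, over $V$, is of finite groups and is handled by the same compactness trick as in \ref{bouts suite fini}. Finally, for (i) your anticipated ``main obstacle'' is also misplaced: the paper takes the \emph{direct} limit over $n$ of the Poitou--Tate sequences for the finite modules $\tilde{T}\otimes\mathbb{Z}/n\mathbb{Z}$ — dualization converts the relevant inverse limits into direct ones, e.g.\ $\varinjlim_n H^0(K,T\otimes\mathbb{Z}/n\mathbb{Z})^D\cong(H^0(K,T)_{\wedge})^D$ using that $H^1(K,T)$ has finite exponent — so exactness is free and no Mittag--Leffler condition appears; the non-formal points are the identifications of the limit terms (notably the surjectivity of $\varinjlim_n\mathbb{P}^{d+1}(\tilde{T}\otimes\mathbb{Z}/n\mathbb{Z})\rightarrow\mathbb{P}^{d+2}(\tilde{T})_{tors}$ and the isomorphism $\varinjlim_n\mathbb{P}^{d+2}(\tilde{T}\otimes\mathbb{Z}/n\mathbb{Z})\cong\mathbb{P}^{d+3}(\tilde{T})_{tors}$), followed by a diagram chase to define the two connecting maps — which your plan leaves unspecified.
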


\begin{proof}
\begin{itemize}
\item[(i)] Soit $n>0$. D'après \ref{PT fini}, on dispose d'une suite exacte:
\begin{align*}
&H^{d+1}(K, \tilde{T} \otimes \mathbb{Z}/n\mathbb{Z})  \rightarrow \mathbb{P}^{d+1}(\tilde{T} \otimes \mathbb{Z}/n\mathbb{Z}) \rightarrow H^{0}(K,T \otimes \mathbb{Z}/n\mathbb{Z})^D \rightarrow \\
& \rightarrow  H^{d+2}(K, \tilde{T} \otimes \mathbb{Z}/n\mathbb{Z}) \rightarrow \mathbb{P}^{d+2}(\tilde{T} \otimes \mathbb{Z}/n\mathbb{Z}) \rightarrow H^{-1}(K,T \otimes \mathbb{Z}/n\mathbb{Z})^D \rightarrow 0
\end{align*}
On en déduit un diagramme commutatif dont la première ligne est exacte:\\
\scriptsize
\begin{multline*}
\xymatrix{
\varinjlim_n H^{d+1}(K, \tilde{T} \otimes \mathbb{Z}/n\mathbb{Z}) \ar[d] \ar[r] & \varinjlim_n \mathbb{P}^{d+1}(\tilde{T} \otimes \mathbb{Z}/n\mathbb{Z}) \ar[d] \ar[r] & \varinjlim_n H^{0}(K,T \otimes \mathbb{Z}/n\mathbb{Z})^D\ar[d]\ar[r]& ... \\
H^{d+2}(K,\tilde{T}) \ar[r] & \mathbb{P}^{d+2}(\tilde{T})_{tors} \ar[r] & (H^{0}(K,T)_{\wedge})^D\ar[r] & ...
}\\
\xymatrix{
... \ar[r]& \varinjlim_n H^{d+2}(K, \tilde{T} \otimes \mathbb{Z}/n\mathbb{Z}) \ar[d] \ar[r] & \varinjlim_n \mathbb{P}^{d+2}(\tilde{T} \otimes \mathbb{Z}/n\mathbb{Z}) \ar[d]\ar[r] &  \varinjlim_n H^{-1}(K,T \otimes \mathbb{Z}/n\mathbb{Z})^D\ar@{=}[d]\ar[r]& 0\\
... \ar[r] & H^{d+3}(K,\tilde{T}) \ar[r] & \mathbb{P}^{d+3}(\tilde{T})_{tors} \ar[r] &  \varinjlim_n H^{-1}(K,T \otimes \mathbb{Z}/n\mathbb{Z})^D\ar[r]& 0
}
\end{multline*}
\normalsize
où:
\begin{itemize}
\item[$\bullet$] le morphisme $\mathbb{P}^{d+2}(\tilde{T})_{tors} \rightarrow (H^{0}(K,T)_{\wedge})^D$ est le dual de $H^{0}(K,T)_{\wedge} \rightarrow \mathbb{P}^{0}(T)_{\wedge}$  (voir le corollaire \ref{dual P tore}).
\item[$\bullet$] le morphisme $\varinjlim_n H^{d+1}(K, \tilde{T} \otimes \mathbb{Z}/n\mathbb{Z}) \rightarrow H^{d+2}(K,\tilde{T})$ est un isomorphisme puisqu'il s'insère dans une suite exacte:
$$H^{d+1}(K, \tilde{T} \otimes \mathbb{Q}) \rightarrow \varinjlim_n H^{d+1}(K, \tilde{T} \otimes \mathbb{Z}/n\mathbb{Z}) \rightarrow H^{d+2}(K,\tilde{T}) \rightarrow H^{d+2}(K, \tilde{T} \otimes \mathbb{Q}).$$
et $H^{d+1}(K, \tilde{T} \otimes \mathbb{Q})$ et $H^{d+2}(K, \tilde{T} \otimes \mathbb{Q})$ sont nuls. De même, le morphisme  $\varinjlim_n H^{d+2}(K, \tilde{T} \otimes \mathbb{Z}/n\mathbb{Z}) \rightarrow H^{d+3}(K,\tilde{T})$ est un isomorphisme.
\item[$\bullet$] le morphisme $\varinjlim_n \mathbb{P}^{d+1}(\tilde{T} \otimes \mathbb{Z}/n\mathbb{Z}) \rightarrow \mathbb{P}^{d+2}(\tilde{T})_{tors}$ est surjectif puisque $H^{d+1}(K_v, \tilde{T} \otimes \mathbb{Z}/n\mathbb{Z}) \rightarrow {_n}H^{d+2}(K_v,\tilde{T})$ (pour $v \in X^{(1)}$ et $n>0$) et $H^{d+1}(\mathcal{O}_v, \tilde{\mathcal{T}} \otimes \mathbb{Z}/n\mathbb{Z}) \rightarrow {_n}H^{d+2}(\mathcal{O}_v,\tilde{\mathcal{T}})$ (pour $v \in U_0^{(1)}$ et $n>0$) sont surjectifs.
\item[$\bullet$] le morphisme $\varinjlim_n H^{0}(K,T \otimes \mathbb{Z}/n\mathbb{Z})^D \rightarrow (H^{0}(K,T)_{\wedge})^D$ est obtenu à partir des morphismes $H^{0}(K,T)/n \rightarrow H^{0}(K,T \otimes \mathbb{Z}/n\mathbb{Z})$ par dualisation et passage à la limite sur $n$. Or ce dernier morphisme s'insère dans la suite exacte:
$$0 \rightarrow  H^{0}(K,T)/n \rightarrow H^{0}(K,T \otimes \mathbb{Z}/n\mathbb{Z}) \rightarrow {_n}H^{1}(K,T) \rightarrow 0.$$
Comme $H^{1}(K,T)$ est d'exposant fini, cela impose que $\varinjlim_n (H^{0}(K,T \otimes \mathbb{Z}/n\mathbb{Z})^D) \rightarrow (H^{0}(K,T)_{\wedge})^D$ est un isomorphisme.
\item[$\bullet$] le morphisme $\varinjlim_n \mathbb{P}^{d+2}(\tilde{T} \otimes \mathbb{Z}/n\mathbb{Z}) \rightarrow \mathbb{P}^{d+3}(\tilde{T})_{tors}$ est un isomorphisme puisqu'il s'identifie au morphisme $\bigoplus_{v \in X^{(1)}} H^{d+2}(K_v,\tilde{T} \otimes \mathbb{Q}/\mathbb{Z}) \rightarrow \bigoplus_{v \in X^{(1)}} H^{d+3}(K_v,\tilde{T})$.
\item[$\bullet$] les morphismes $ (H^{0}(K,T)_{\wedge})^D \rightarrow H^{d+3}(K,\tilde{T})$ et $\mathbb{P}^{d+3}(\tilde{T})_{tors} \rightarrow \varinjlim_n H^{-1}(K,T \otimes \mathbb{Z}/n\mathbb{Z})^D$ sont les seuls qui font commuter le diagramme.
\end{itemize}
Une chasse au diagramme prouve alors que la suite 
\begin{align*}
H^{d+2}(K,\tilde{T}) \rightarrow \mathbb{P}^{d+2}(\tilde{T})_{tors}& \rightarrow (H^{0}(K,T)_{\wedge})^D \rightarrow H^{d+3}(K,\tilde{T}) \rightarrow \\ &\rightarrow \mathbb{P}^{d+3}(\tilde{T})_{tors} \rightarrow  (\varprojlim_n {_n}T(K^s))^D \rightarrow 0
\end{align*}
 est exacte.
\item[(ii)] Nous sommes dans le cas $a=1$, ce qui impose que $T$ et $\mathcal{T}$ sont quasi-isomorphes à des tores. Pour chaque ouvert non vide $V \subseteq U_0$, on dispose d'une suite exacte:
$$H^1_c(V,\mathcal{T}) \rightarrow H^1(V,\mathcal{T}) \rightarrow \bigoplus_{v \in X \setminus V} H^1(K_v^h,T) \rightarrow H^{2}_c(V,\mathcal{T}).$$
Pour $v \in U_0^{(1)}$, on a un isomorphisme $H^1(K_v,T) \cong H^1(K_v^h,T)$ (on pourra aller voir le corollaire 3.2 de \cite{HS1} ou le lemme 2.7 de \cite{HS4}). Par conséquent, on obtient, pour chaque ouvert non vide $V$ de $U_0$, une suite exacte:
$$H^1_c(V,\mathcal{T}) \rightarrow H^1(V,\mathcal{T}) \rightarrow \bigoplus_{v \in X \setminus V} H^1(K_v,T) \rightarrow H^{2}_c(V,\mathcal{T}).$$
Soient maintenant $U$ et $V$ deux ouverts de $U_0$ tels que $V \subseteq U$. D'après le lemme de Harder (corollaire A.8 de \cite{GP}), un élément de $H^1(V,\mathcal{T})$ provient de $H^1(U,\mathcal{T})$ si, et seulement si, son image dans $H^1(K_v,T)$ provient de $H^1(\mathcal{O}_v,\mathcal{T})$ pour chaque $v \in U \setminus V$. On obtient donc une suite exacte:
$$H^1(U,\mathcal{T}) \rightarrow \bigoplus_{v \in X \setminus U} H^1(K_v,T) \oplus \bigoplus_{v \in U \setminus V} H^1(\mathcal{O}_v,\mathcal{T}) \rightarrow H^{2}_c(V,\mathcal{T}).$$
Soit $m>0$ le degré d'une extension déployant $T$. Le groupe $H^1(K_v,T)$ étant de $m$-torsion pour $v \in X^{(1)}$, on obtient une suite exacte de groupes finis:
$$H^1(U,\mathcal{T})/m \rightarrow \bigoplus_{v \in X \setminus U} H^1(K_v,T) \oplus \bigoplus_{v \in U \setminus V} H^1(\mathcal{O}_v,\mathcal{T}) \rightarrow {_m}H^{2}_c(V,\mathcal{T}).$$
Grâce à la proposition \ref{Sha tore}, choisissons $V$ assez petit pour que $\mathcal{D}^2(V,\mathcal{T})\{l\} \cong \Sha^2(T)\{l\}$ pour chaque nombre premier $l$ divisant $m$, et montrons que pour un tel nombre premier le groupe $H^2_c(V,\mathcal{T})\{l\}$ est fini. \\
On sait que le groupe $H^2_c(V,\mathcal{T})\{l\}$ est de type cofini et qu'il s'insère dans une suite exacte:
$$0 \rightarrow \text{Ker}(H^2_c(V,\mathcal{T}) \rightarrow H^2(K,T)) \rightarrow H^2_c(V,\mathcal{T}) \rightarrow \mathcal{D}^2(V,\mathcal{T}) \rightarrow 0.$$
Le groupe $\mathcal{D}^2(V,\mathcal{T})\{l\}$ étant fini, il suffit donc de montrer que $\text{Ker}(H^2_c(V,\mathcal{T}) \rightarrow H^2(K,T))$ est d'exposant fini. Considérons alors un morphisme fini étale $\tilde{V} \rightarrow V$ de degré $m$ avec $\tilde{V}$ intègre déployant $\mathcal{T}$. Notons $\tilde{K}$ le corps des fonctions de $\tilde{V}$. Comme $\tilde{V}$ est régulier, le morphisme $H^2(\tilde{V}, \mathcal{T}) \rightarrow H^2(\tilde{K},T) $ est injectif (voir la proposition II.4.5.3 de \cite{Tam}), et un argument de restriction-corectriction montre que $\text{Ker}(H^2(V,\mathcal{T}) \rightarrow H^2(K,T))$ est d'exposant $m$. De plus, le théorème de Hilbert 90 impose que, pour chaque $v \in X \setminus V$, le groupe $H^1(K_v,T)$ est d'exposant $m$. Par conséquent, la suite exacte $\bigoplus_{v \in X \setminus V} H^1(K_v,T) \rightarrow H^2_c(V,\mathcal{T}) \rightarrow H^2(V,\mathcal{T})$ permet de conclure que $\text{Ker}(H^2_c(V,\mathcal{T}) \rightarrow H^2(K,T))$ est d'exposant $m^2$, ce qui établit la finitude de $H^2_c(V,\mathcal{T})\{l\}$ pour chaque premier $l$ divisant $m$.\\
En utilisant la remarque \ref{AV tore 2} et en observant que $H^{d+1}(V,\tilde{\mathcal{T}})\{l\}$ est de type cofini, on obtient une injection:
$${_m}H^{2}_c(V,\mathcal{T})\hookrightarrow \prod_{l \mid m} H^{d+1}(V,\tilde{\mathcal{T}})\{l\}^D,$$
d'où une suite exacte:
$$H^1(U,\mathcal{T})/m \rightarrow \bigoplus_{v \in X \setminus U} H^1(K_v,T) \oplus \bigoplus_{v \in U \setminus V} H^1(\mathcal{O}_v,\mathcal{T}) \rightarrow \prod_{l \mid m} H^{d+1}(V,\tilde{\mathcal{T}})\{l\}^D.$$
Comme $H^{d+1}(K,\tilde{T})$ est de $m$-torsion d'après la conjecture de Beilinson-Lichtenbaum, en passant à la limite projective sur $V$ on déduit la suite exacte:
$$H^1(U,\mathcal{T}) \rightarrow \prod_{v \in X \setminus U} H^1(K_v,T) \times \prod_{v \in U} H^1(\mathcal{O}_v,\mathcal{T}) \rightarrow H^{d+1}(K,\tilde{T})^D.$$
Il suffit alors de passer à la limite inductive sur $U$ pour conclure.
\end{itemize}
\end{proof}

\begin{lemma} \label{x}
On rappelle que l'on a supposé (H \ref{40}) et (H \ref{40'}). Soient $V \subseteq U$ deux ouverts non vides de $U_0$. Soit $\alpha \in H^{d+2}(V,\tilde{\mathcal{T}})$. Si, pour chaque $v \in U \setminus V$, l'image de $\alpha$ dans $H^{d+2}(K_v,\tilde{T})$ provient de $H^{d+2}(\mathcal{O}_v,\tilde{\mathcal{T}})$, alors $\alpha$ provient de $H^{d+2}(U,\tilde{\mathcal{T}})$.
\end{lemma}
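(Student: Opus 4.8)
The plan is to reduce the statement to a purely local computation at each point of $U \setminus V$ by means of the localization (Gysin) exact sequence on the curve $U$. First I would argue, by induction on the finite number of points of $U \setminus V$, that it suffices to treat the case where $U \setminus V = \{v\}$ is a single closed point: the hypothesis really bears on the image $\alpha_K$ of $\alpha$ in $H^{d+2}(K,\tilde{T})$, so whenever $\alpha$ extends from $V$ to an intermediate open $W$ with $W \setminus V = \{v\}$, the extension has the same generic class and therefore remains unramified at the points of $U \setminus W$, which lets the induction proceed. Once $U \setminus V = \{v\}$, the localization triangle for the closed immersion $\{v\} \hookrightarrow U$ with open complement $V$ gives the exact sequence $H^{d+2}(U,\tilde{\mathcal{T}}) \to H^{d+2}(V,\tilde{\mathcal{T}}) \xrightarrow{\partial} H^{d+3}_v(U,\tilde{\mathcal{T}})$, so that $\alpha$ comes from $H^{d+2}(U,\tilde{\mathcal{T}})$ if and only if $\partial \alpha = 0$.

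Next I would identify the obstruction group. By excision, $H^{d+3}_v(U,\tilde{\mathcal{T}}) \cong H^{d+3}_v(\mathcal{O}_v^h,\tilde{\mathcal{T}})$, where $\mathcal{O}_v^h$ denotes the henselization; the excision isomorphism is standard for the torsion sheaf $\tilde{\mathcal{T}}_t$, a filtered colimit of finite étale sheaves, and the rational part contributes nothing in the relevant degrees, so it carries over to $\tilde{\mathcal{T}}$. The local cohomology sequence of the henselian trait shows that $\partial$ factors through restriction to the generic point $\mathrm{Spec}\, K_v^h$, and that the kernel of $H^{d+2}(K_v^h,\tilde{T}) \to H^{d+3}_v(\mathcal{O}_v^h,\tilde{\mathcal{T}})$ is exactly the image of $H^{d+2}(\mathcal{O}_v^h,\tilde{\mathcal{T}})$. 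Hence $\partial \alpha = 0$ if and only if $\mathrm{res}_{K_v^h}(\alpha)$ lies in the unramified subgroup $\mathrm{Im}(H^{d+2}(\mathcal{O}_v^h,\tilde{\mathcal{T}}) \to H^{d+2}(K_v^h,\tilde{T}))$.

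The crux is then to match this henselian condition with the hypothesis, which is phrased over the completion $K_v$. I would use the isomorphism $H^{d+2}(K_v^h,\tilde{T}) \cong H^{d+2}(K_v,\tilde{T})$ already established in the proof of Proposition \ref{Sha tore}, and show that it carries the henselian unramified subgroup onto the complete one $\mathrm{Im}(H^{d+2}(\mathcal{O}_v,\tilde{\mathcal{T}}) \to H^{d+2}(K_v,\tilde{T}))$. For this it is enough to check that the restriction $H^{d+2}(\mathcal{O}_v^h,\tilde{\mathcal{T}}) \to H^{d+2}(\mathcal{O}_v,\tilde{\mathcal{T}})$ is surjective. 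This follows from a short chase in the ladder deduced from the exact triangle $\tilde{\mathcal{T}} \to \hat{\mathcal{T}} \otimes \mathbb{Q}(d) \to \tilde{\mathcal{T}}_t$: on both traits one has $H^{d+2}(-, \hat{\mathcal{T}} \otimes \mathbb{Q}(d)) = 0$ (the splitting argument of Proposition \ref{nature tore} reduces to $\mathbb{Q}(d)_{\mathrm{Zar}}$, concentrated in degrees $\leq d$, on a one-dimensional base), so that $H^{d+1}(-,\tilde{\mathcal{T}}_t) \twoheadrightarrow H^{d+2}(-,\tilde{\mathcal{T}})$; moreover the vertical map $H^{d+1}(\mathcal{O}_v^h,\tilde{\mathcal{T}}_t) \to H^{d+1}(\mathcal{O}_v,\tilde{\mathcal{T}}_t)$ is an isomorphism, both sides computing $H^{d+1}(k(v),\tilde{T}_t)$ by henselian invariance for the torsion sheaf $\tilde{\mathcal{T}}_t$. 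Surjectivity of the remaining vertical map follows at once.

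Combining the three steps, the hypothesis $\mathrm{res}_{K_v}(\alpha) \in \mathrm{Im}(H^{d+2}(\mathcal{O}_v,\tilde{\mathcal{T}}))$ transports to $\mathrm{res}_{K_v^h}(\alpha) \in \mathrm{Im}(H^{d+2}(\mathcal{O}_v^h,\tilde{\mathcal{T}}))$, whence $\partial \alpha = 0$ and $\alpha$ lifts to $H^{d+2}(U,\tilde{\mathcal{T}})$. I expect the main obstacle to lie in the second and third steps, namely in justifying excision and the henselian/complete comparison for the non-torsion complex $\tilde{\mathcal{T}}$; the systematic device for both is to reduce to the torsion sheaf $\tilde{\mathcal{T}}_t$, where the finite-coefficient results (Propositions \ref{nr fini} and \ref{suite fini}) and henselian invariance apply, and to discard the rational summand by the degree-versus-dimension vanishing used repeatedly above.
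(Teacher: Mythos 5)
Your proof is correct, but it takes a genuinely different route from the paper's. The paper stays at finite level throughout: it first observes that $H^{d+2}(V,\tilde{\mathcal{T}})$ is torsion (vanishing of the $\mathbb{Q}$-part in degree $d+2$), lifts $\alpha$ to a class $\alpha_n \in H^{d+1}(V,\tilde{\mathcal{T}}\otimes \mathbb{Z}/n\mathbb{Z})$, and then uses the commutative square comparing the surjection $\varinjlim_m H^{d+1}(\mathcal{O}_v,\tilde{\mathcal{T}}\otimes \mathbb{Z}/m\mathbb{Z}) \twoheadrightarrow H^{d+2}(\mathcal{O}_v,\tilde{\mathcal{T}})$ with the isomorphism $\varinjlim_m H^{d+1}(K_v,\tilde{T}\otimes \mathbb{Z}/m\mathbb{Z}) \cong H^{d+2}(K_v,\tilde{T})$ to produce a single multiple $m$ of $n$, uniform over the finitely many $v \in U \setminus V$, such that $\alpha_n$ becomes unramified modulo $m$ at each such $v$; it then simply invokes the lemme 2.2 de \cite{HS2} for the finite locally constant sheaf $\hat{\mathcal{T}}\otimes \mu_m^{\otimes d}$ to extend $\alpha_n$ modulo $m$, hence $\alpha$, to $U$ — no induction on points, no localization sequence for $\tilde{\mathcal{T}}$ itself, and no explicit henselization, all of that being packaged inside the cited lemma. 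What you do is essentially inline the proof of that lemma directly at the level of the complex $\tilde{\mathcal{T}}$: localization triangle, excision to $\mathcal{O}_v^h$, identification of the kernel of the boundary with the unramified classes, and an explicit henselian-versus-complete transfer. Your dévissage checks out: the rational local cohomology groups you must kill ($H^{d+3}_v(U,\cdot)$, and $H^{d+2}_v$, $H^{d+3}_v$ over $\mathcal{O}_v^h$, with coefficients $\hat{\mathcal{T}}\otimes \mathbb{Q}(d)$) all vanish — though note that on the henselian side in degree $d+2$ this rests on the field-level vanishing $H^{d+1}(K_v^h,\hat{T}\otimes \mathbb{Q}(d))=0$, not merely Grothendieck vanishing on the trait — and your surjectivity of $H^{d+2}(\mathcal{O}_v^h,\tilde{\mathcal{T}}) \rightarrow H^{d+2}(\mathcal{O}_v,\tilde{\mathcal{T}})$ via henselian invariance of torsion cohomology (both rings henselian with residue field $k(v)$, and $\mathbb{Z}/m\mathbb{Z}(d)\cong \mu_m^{\otimes d}$ over these Dedekind rings by Geisser, as the paper's remark on $\mathbb{Z}(i)$ over Dedekind bases licenses) is exactly the device used elsewhere in the paper, e.g.\ in the proofs of \ref{nr tore} and \ref{Sha tore}, whose isomorphism $H^{d+2}(K_v^h,\tilde{T})\cong H^{d+2}(K_v,\tilde{T})$ you cite correctly. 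What the paper's route buys is brevity, since the geometric extension criterion and the henselian/complete comparison are outsourced to \cite{HS2} and only completed rings ever appear, at the cost of the slightly fiddly choice of a uniform $m$; yours is self-contained and makes visible exactly where the henselization intervenes, at the cost of justifying excision and local cohomology for the non-torsion motivic complex, which you correctly reduce to the torsion sheaf $\tilde{\mathcal{T}}_t$ plus rational vanishing.
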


\begin{proof}
Comme $H^{d+2}(V,\tilde{\mathcal{T}} \otimes \mathbb{Q})$ est nul, on a une surjection $H^{d+1}(V,\tilde{\mathcal{T}} \otimes \mathbb{Q}/\mathbb{Z}) \rightarrow H^{d+2}(V,\tilde{\mathcal{T}})$. On en déduit que $H^{d+2}(V,\tilde{\mathcal{T}})$ est de torsion. Soit donc $n$ tel que $\alpha$ est de $n$-torsion et choisissons $\alpha_n \in H^{d+1}(V,\tilde{\mathcal{T}} \otimes \mathbb{Z}/n\mathbb{Z})$ relevant $\alpha$. Pour $v \in U \setminus V$, soit $\alpha_{n,v}$ l'image de $\alpha_n$ dans $H^{d+1}(K_v,\tilde{T} \otimes \mathbb{Z}/n\mathbb{Z})$. Par hypothèse, pour chaque $v \in U \setminus V$, l'image de $\alpha_{n,v}$ dans $H^{d+2}(K_v,\tilde{T})$ provient de $H^{d+2}(\mathcal{O}_v,\tilde{\mathcal{T})}$. Comme $H^{d+2}(\mathcal{O}_v,\tilde{\mathcal{T}} \otimes \mathbb{Q})=H^{d+2}(K_v,\tilde{T} \otimes \mathbb{Q}) = H^{d+1}(K_v,\tilde{T} \otimes \mathbb{Q})=0$, on dispose d'un diagramme commutatif:\\
\centerline{\xymatrix{
\varinjlim_m H^{d+1}(\mathcal{O}_v, \tilde{\mathcal{T}} \otimes \mathbb{Z}/m\mathbb{Z}) \ar@{->>}[r] \ar[d] & H^{d+2}(\mathcal{O}_v,\tilde{\mathcal{T}})\ar[d]\\
\varinjlim_m H^{d+1}(K_v,\tilde{T} \otimes \mathbb{Z}/m\mathbb{Z}) \ar[r]^-{\cong} & H^{d+1}(K_v,\tilde{T})
}}
On en déduit qu'il existe un multiple $m$ de $n$ tel que, pour chaque $v \in U \setminus V$, l'image de $\alpha_{n,v}$ dans $H^{d+1}(K_v,\tilde{T} \otimes \mathbb{Z}/m\mathbb{Z})$ provient de $H^{d+1}(\mathcal{O}_v,\tilde{\mathcal{T}} \otimes \mathbb{Z}/m\mathbb{Z})$. D'après le lemme 2.2 de \cite{HS2}, cela entraîne que l'image de $\alpha_n$ dans $H^{d+1}(V,\tilde{\mathcal{T}} \otimes \mathbb{Z}/m\mathbb{Z})$ provient de $H^{d+1}(U,\tilde{\mathcal{T}} \otimes \mathbb{Z}/m\mathbb{Z})$, d'où le résultat.
\end{proof}

\begin{lemma}\label{nul dual}
On rappelle que l'on a supposé (H \ref{40}). Le groupe $\Sha^{d+2}(\mathbb{Z}(d))$ est nul si, et seulement si, le groupe $\Sha^2(\mathbb{G}_m)$ l'est aussi.
\end{lemma}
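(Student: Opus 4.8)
Le plan est de comparer les deux groupes au niveau des coefficients finis, où la dualité de Poitou-Tate \ref{PT fini} s'applique directement; le point-clé sera une annulation de cohomologie motivique garantissant que cette comparaison est \emph{exacte} à chaque cran de $n$-torsion. Je commencerais donc par établir que $H^{d+1}(L,\mathbb{Z}(d)) = 0$ pour $L = K$ et pour $L = K_v$ ($v \in X^{(1)}$). On part de la suite exacte courte $0 \to \mathbb{Z}(d) \to \mathbb{Q}(d) \to \mathbb{Q}/\mathbb{Z}(d) \to 0$: comme $\mathbb{Q}(d)_{\text{Zar}}$ est concentré en degrés $\leq d$ et calcule la cohomologie étale de $\mathbb{Q}(d)$ sur le corps $L$, on a $H^{d+1}(L,\mathbb{Q}(d)) = 0$, de sorte que $H^{d+1}(L,\mathbb{Z}(d))$ est le conoyau du morphisme $\beta : H^d(L,\mathbb{Q}(d)) \to H^d(L,\mathbb{Q}/\mathbb{Z}(d))$. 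Or, par Nesterenko-Suslin-Totaro et la conjecture de Bloch-Kato, $\beta$ s'identifie à $K^M_d(L) \otimes \mathbb{Q} \to K^M_d(L) \otimes \mathbb{Q}/\mathbb{Z}$, qui est surjectif puisque $\mathbb{Q} \to \mathbb{Q}/\mathbb{Z}$ l'est; d'où $H^{d+1}(L,\mathbb{Z}(d)) = 0$.

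J'exploiterais ensuite cette annulation via les suites de Kummer. Le triangle distingué $\mathbb{Z}(d) \xrightarrow{n} \mathbb{Z}(d) \to \mu_n^{\otimes d}$ (où $\mathbb{Z}/n\mathbb{Z}(d) \cong \mu_n^{\otimes d}$ par Geisser-Levine), joint à $H^{d+1}(L,\mathbb{Z}(d)) = 0$, fournit un isomorphisme naturel $H^{d+1}(L,\mu_n^{\otimes d}) \cong {}_nH^{d+2}(L,\mathbb{Z}(d))$; en prenant les noyaux des applications de localisation, on en déduit $\Sha^{d+1}(\mu_n^{\otimes d}) \cong {}_n\Sha^{d+2}(\mathbb{Z}(d))$. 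De même, le groupe de Picard d'un corps étant nul, la suite de Kummer pour $\mathbb{G}_m$ donne $\Sha^2(\mu_n) \cong {}_n\Sha^2(\mathbb{G}_m)$.

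Pour conclure, j'appliquerais la dualité \ref{PT fini} avec $F = \mu_n$ (d'où $F' = \mu_n^{\otimes d}$) et $r = 2$, qui donne un accouplement parfait de groupes finis $\Sha^2(\mu_n) \times \Sha^{d+1}(\mu_n^{\otimes d}) \to \mathbb{Q}/\mathbb{Z}$. En le combinant avec l'étape précédente, on obtient ${}_n\Sha^2(\mathbb{G}_m) \cong ({}_n\Sha^{d+2}(\mathbb{Z}(d)))^D$ pour tout $n > 0$, si bien que ${}_n\Sha^2(\mathbb{G}_m) = 0$ si et seulement si ${}_n\Sha^{d+2}(\mathbb{Z}(d)) = 0$. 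Enfin, $\Sha^2(\mathbb{G}_m) \subseteq \text{Br}(K)$ et $\Sha^{d+2}(\mathbb{Z}(d)) \subseteq H^{d+2}(K,\mathbb{Z}(d)) \cong H^{d+1}(K,\mathbb{Q}/\mathbb{Z}(d))$ sont tous deux de torsion; comme un groupe de torsion est nul dès que toute sa $n$-torsion l'est, on en tire l'équivalence voulue $\Sha^{d+2}(\mathbb{Z}(d)) = 0 \Leftrightarrow \Sha^2(\mathbb{G}_m) = 0$.

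Je m'attends à ce que l'obstacle principal soit l'annulation $H^{d+1}(L,\mathbb{Z}(d)) = 0$ de la première étape. Sans elle, la suite de Kummer ne relierait $\Sha^{d+1}(\mu_n^{\otimes d})$ à ${}_n\Sha^{d+2}(\mathbb{Z}(d))$ qu'à un terme correctif issu de $H^{d+1}(L,\mathbb{Z}(d))/n$ près, et la comparaison au niveau fini ne porterait en définitive que sur les parties divisibles: on n'obtiendrait alors que $\Sha^{d+2}(\mathbb{Z}(d)) = 0 \Leftrightarrow \Sha^2(\mathbb{G}_m)_{div} = 0$, ce qui est strictement plus faible. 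C'est donc cette annulation, reposant sur les théorèmes de Bloch-Kato et de Beilinson-Lichtenbaum, qui rend la dualité \ref{PT fini} applicable cran par cran et assure l'énoncé.
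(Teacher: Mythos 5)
Votre démonstration est correcte et son ossature coïncide avec celle du papier: suites de Kummer rendues exactes cran par cran grâce à l'annulation $H^{d+1}(L,\mathbb{Z}(d))=0$ pour $L=K$ et $L=K_v$ (que vous identifiez à juste titre comme le point crucial), d'où $\Sha^{d+1}(\mu_n^{\otimes d})\cong {_n}\Sha^{d+2}(\mathbb{Z}(d))$ et $\Sha^2(\mu_n)\cong {_n}\Sha^2(\mathbb{G}_m)$, puis le théorème \ref{PT fini} appliqué à $F=\mu_n$, $F'=\mu_n^{\otimes d}$, $r=2$. Vous divergez en revanche dans la conclusion, et votre variante est plus économique: vous concluez directement des dualités finies ${_n}\Sha^2(\mathbb{G}_m)\cong({_n}\Sha^{d+2}(\mathbb{Z}(d)))^D$, valables pour chaque $n$, en n'utilisant que le caractère de torsion des deux groupes; le papier, lui, passe à la limite pour obtenir $\Sha^{d+2}(\mathbb{Z}(d))\cong(\varprojlim_n {_n}\Sha^2(\mathbb{G}_m))^D$, isomorphisme qui ne compare que les parties divisibles, et doit alors invoquer le théorème \ref{PT tore} (joint aux nullités $\Sha^1(\mathbb{G}_m)=0$ et $\Sha^{d+1}(\mathbb{Z}(d))=0$) pour savoir que $\Sha^2(\mathbb{G}_m)$ et $\Sha^{d+2}(\mathbb{Z}(d))$ sont divisibles et conclure. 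Votre argument évite donc entièrement \ref{PT tore} — c'est exactement le scénario que vous décrivez dans votre dernier paragraphe: sans l'annulation de $H^{d+1}(L,\mathbb{Z}(d))$ on ne comparerait que les parties divisibles, et c'est précisément là que le papier se retrouve après passage à la limite, d'où son recours à la divisibilité; en contrepartie, la formulation limite du papier fournit l'information supplémentaire, notée entre parenthèses dans sa preuve, que les parties divisibles des deux groupes sont (non canoniquement) isomorphes. Un détail enfin: votre annulation $H^{d+1}(L,\mathbb{Z}(d))=0$ s'obtient plus directement par Beilinson-Lichtenbaum, via $H^{d+1}(L,\mathbb{Z}(d))\cong H^{d+1}_{\text{Zar}}(L,\mathbb{Z}(d)_{\text{Zar}})=0$ puisque le site de Zariski d'un corps est ponctuel et $\mathbb{Z}(d)_{\text{Zar}}$ concentré en degrés $\leq d$; votre détour par la surjectivité de $K^M_d(L)\otimes\mathbb{Q}\rightarrow K^M_d(L)\otimes\mathbb{Q}/\mathbb{Z}$ est correct mais suppose implicitement la compatibilité des identifications de Nesterenko-Suslin-Totaro et de Bloch-Kato avec le morphisme de coefficients, compatibilité que la voie directe dispense de vérifier.
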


\begin{proof}
Pour chaque entier naturel $n>0$, la conjecture de Beilinson-Lichtenbaum fournit un diagramme commutatif à lignes exactes:\\
\footnotesize
\centerline{\xymatrix{
0 \ar[r] & H^{d+1}(K, \mu_n^{\otimes d}) \ar[r]\ar[d] & H^{d+2}(K,\mathbb{Z}(d)) \ar[r]\ar[d] & H^{d+2}(K,\mathbb{Z}(d))\ar[d]\\
0 \ar[r] & \prod_{v \in X^{(1)}} H^{d+1}(K_v, \mu_n^{\otimes d}) \ar[r] & \prod_{v \in X^{(1)}}H^{d+2}(K_v,\mathbb{Z}(d)) \ar[r] & \prod_{v \in X^{(1)}}H^{d+2}(K_v,\mathbb{Z}(d))
}}
\normalsize
On en déduit que $\Sha^{d+1}(\mu_n^{\otimes d}) \cong {_n}\Sha^{d+2}(\mathbb{Z}(d))$. Comme $\Sha^{d+2}(\mathbb{Z}(d))$ est de torsion, on obtient avec le théorème \ref{PT fini}:
$$\Sha^{d+2}(\mathbb{Z}(d)) \cong \varinjlim_n \Sha^{d+1}(\mu_n^{\otimes d}) \cong \varinjlim_n \Sha^2(\mu_n)^D.$$
Or, par un argument tout à fait analogue à celui qui précède, on a $\Sha^2(\mu_n) \cong {_n}\Sha^2(\mathbb{G}_m)$. Par conséquent:
$$\Sha^{d+2}(\mathbb{Z}(d)) \cong (\varprojlim_n {_n}\Sha^2(\mathbb{G}_m))^D.$$
On en déduit que la partie divisible de $\Sha^{d+2}(\mathbb{Z}(d))$ est nulle si, et seulement si, la partie divisible de $\Sha^2(\mathbb{G}_m)$ est nulle (en fait, les parties divisibles de $\Sha^{d+2}(\mathbb{Z}(d))$ et $\Sha^2(\mathbb{G}_m)$ sont non canoniquement isomorphes). Comme $\Sha^{d+2}(\mathbb{Z}(d))$ et $\Sha^2(\mathbb{G}_m)$ sont divisibles d'après \ref{PT tore}, le lemme en découle.
\end{proof}

\begin{remarque}
Plus généralement, si $a \neq 1$, on prouve que $\Sha^{d+3-a}(\mathbb{Z}(d+1-a))$ est nul si, et seulement si, $\Sha^{a+2}(\mathbb{Z}(a))$ l'est.
\end{remarque}

\begin{theorem} \textbf{(Suite exacte de Poitou-Tate pour les tores)} \label{PT tore exacte}\\
Rappelons que nous avons supposé (H \ref{40}) et (H \ref{40'}), c'est-à-dire que $X$ est une courbe, $T = \check{T} \otimes^{\mathbf{L}} \mathbb{Z}(1)[1]$ est quasi-isomorphe à un tore et $\tilde{T} = \hat{T} \otimes^{\mathbf{L}} \mathbb{Z}(d)$. Soit $L$ une extension finie déployant $T$. Supposons que $\Sha^2(L,\mathbb{G}_m)$ est nul. On dispose alors d'une suite exacte à 7 termes:\\
\centerline{\xymatrix{
\Sha^{d+3}(\tilde{T})^D \ar[d] & 0  \ar[l] & \\
H^0(K,T)_{\wedge} \ar[r]& \mathbb{P}^{0}(T)_{\wedge} \ar[r] & H^{d+2}(K,\tilde{T})^D \ar[d]\\
H^{d+1}(K,\tilde{T})^D & \mathbb{P}^1(T) \ar[l] & H^1(K,T)\ar[l]
}}
\end{theorem}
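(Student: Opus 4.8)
La stratégie est de construire la suite à partir des deux « bouts » déjà obtenus à la proposition \ref{bouts tore}, de les dualiser à l'aide du corollaire \ref{dual P tore}, puis de les recoller au moyen de la dualité globale \ref{PT tore}. Je commencerais par établir les finitudes qui conditionnent tout le reste. L'hypothèse $\Sha^2(L,\mathbb{G}_m)=0$ entraîne, via l'analogue du lemme \ref{nul dual} sur le corps $L$, que $\Sha^{d+2}(L,\mathbb{Z}(d))=0$; comme $\tilde{T}$ devient isomorphe à une somme de copies de $\mathbb{Z}(d)$ sur $L$, un argument de restriction-corestriction montre que $\Sha^{d+2}(\tilde{T})$ est tué par $[L:K]$. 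Joint au fait qu'il est de torsion de type cofini (remarque suivant \ref{Sha tore}), cela prouve que $\Sha^{d+2}(\tilde{T})$ est \emph{fini}, donc $\Sha^{d+2}(\tilde{T}) = \overline{\Sha^{d+2}(\tilde{T})}$. De même, $\Sha^2(L,T_L)$ est une somme de copies de $\Sha^2(L,\mathbb{G}_m)=0$, d'où par restriction-corestriction $\Sha^2(T)$ tué par $[L:K]$, donc fini: ceci légitimera l'emploi de \ref{bouts tore}(ii).

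Pour les quatre premiers termes, je partirais de la suite exacte de \ref{bouts tore}(i)
$$H^{d+2}(K,\tilde{T}) \xrightarrow{\beta} \mathbb{P}^{d+2}(\tilde{T})_{tors} \xrightarrow{\gamma} (H^0(K,T)_{\wedge})^D \xrightarrow{\delta} H^{d+3}(K,\tilde{T}) \xrightarrow{\rho} \mathbb{P}^{d+3}(\tilde{T})_{tors},$$
que je dualiserais. Les groupes en jeu étant localement compacts et les flèches continues, la dualité de Pontryagin fournit une suite exacte
$$H^{d+3}(K,\tilde{T})^D \xrightarrow{\delta^D} H^0(K,T)_{\wedge} \xrightarrow{\gamma^D} (\mathbb{P}^{d+2}(\tilde{T})_{tors})^D \xrightarrow{\beta^D} H^{d+2}(K,\tilde{T})^D,$$
en utilisant $((H^0(K,T)_{\wedge})^D)^D \cong H^0(K,T)_{\wedge}$. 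Le corollaire \ref{dual P tore} identifie $(\mathbb{P}^{d+2}(\tilde{T})_{tors})^D$ à $\mathbb{P}^0(T)_{\wedge}$ et montre que $\gamma^D$ est bien la localisation $H^0(K,T)_{\wedge}\to\mathbb{P}^0(T)_{\wedge}$. L'exactitude en $H^{d+3}(K,\tilde{T})$ donne $\text{Im}(\delta)=\ker(\rho)=\Sha^{d+3}(\tilde{T})$, de sorte que $\ker(\delta^D)=\text{Ann}(\Sha^{d+3}(\tilde{T}))$ et $\text{Im}(\delta^D)\cong \Sha^{d+3}(\tilde{T})^D$; en factorisant $\delta^D$ par cette image on obtient l'injection $\Sha^{d+3}(\tilde{T})^D\hookrightarrow H^0(K,T)_{\wedge}$ ainsi que l'exactitude en $H^0(K,T)_{\wedge}$ et en $\mathbb{P}^0(T)_{\wedge}$.

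Pour les trois derniers termes, \ref{bouts tore}(ii) (applicable puisque $\Sha^2(T)$ est fini) fournit l'exactitude de $H^1(K,T)\to \mathbb{P}^1(T)\to H^{d+1}(K,\tilde{T})^D$, la dernière flèche provenant de l'isomorphisme $\mathbb{P}^1(T)\cong \mathbb{P}^{d+1}(\tilde{T})^D$ du corollaire \ref{dual P tore}. Je définirais la flèche de liaison $H^{d+2}(K,\tilde{T})^D\to H^1(K,T)$, comme dans la preuve de \ref{PT fini suite}, par la composée
$$H^{d+2}(K,\tilde{T})^D \twoheadrightarrow \Sha^{d+2}(\tilde{T})^D \cong \Sha^1(T) \hookrightarrow H^1(K,T),$$
l'isomorphisme central provenant de \ref{PT tore} et de la finitude de $\Sha^{d+2}(\tilde{T})$ (qui donne $\Sha^{d+2}(\tilde{T})^D=\overline{\Sha^{d+2}(\tilde{T})}^D\cong \Sha^1(T)$). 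La première flèche étant la restriction, de noyau $\text{Ann}(\Sha^{d+2}(\tilde{T}))$, et $\ker(\beta)=\Sha^{d+2}(\tilde{T})$ donnant $\text{Im}(\beta^D)=\text{Ann}(\Sha^{d+2}(\tilde{T}))$, on obtient l'exactitude en $H^{d+2}(K,\tilde{T})^D$; par ailleurs l'image de la liaison est $\Sha^1(T)=\ker(H^1(K,T)\to\mathbb{P}^1(T))$, d'où l'exactitude en $H^1(K,T)$.

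Le point le plus délicat est précisément la finitude de $\Sha^{d+2}(\tilde{T})$: sans l'hypothèse $\Sha^2(L,\mathbb{G}_m)=0$, ce groupe pourrait posséder une partie divisible non triviale, auquel cas $\Sha^{d+2}(\tilde{T})^D$ serait infini et la flèche de liaison ne se factoriserait plus par le groupe fini $\Sha^1(T)$, ce qui détruirait l'exactitude en $H^1(K,T)$. C'est donc cette hypothèse, à travers le lemme \ref{nul dual}, qui constitue le ressort essentiel de la preuve; le reste n'est qu'une vérification soigneuse de la compatibilité des dualisations de Pontryagin et de l'identification des noyaux de localisation aux groupes de Tate-Shafarevich.
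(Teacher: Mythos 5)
Votre plan suit pour l'essentiel la même route que l'article: finitude de $\Sha^{d+2}(\tilde{T})$ et de $\Sha^{2}(T)$ par restriction-corestriction à partir de $\Sha^2(L,\mathbb{G}_m)=0$ et du lemme \ref{nul dual}, dualisation des deux suites de la proposition \ref{bouts tore}, identifications par le corollaire \ref{dual P tore}, et flèche de liaison $H^{d+2}(K,\tilde{T})^D \twoheadrightarrow \Sha^{d+2}(\tilde{T})^D \cong \Sha^1(T) \hookrightarrow H^1(K,T)$ via le théorème \ref{PT tore}. Mais il y a un vrai trou au c\oe ur de l'argument: l'affirmation que \og les groupes en jeu étant localement compacts et les flèches continues, la dualité de Pontryagin fournit une suite exacte \fg{} est fausse en général. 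Le dual d'une suite exacte de groupes topologiques abéliens n'est exact que si les morphismes sont \emph{stricts} (ouverts sur leur image); pour un morphisme continu $f$ non strict on a seulement l'inclusion $\text{Im}(f^D) \subseteq \text{Ann}(\text{Ker}\, f)$, l'image de $f^D$ pouvant être un sous-groupe propre et dense de l'annulateur. Toutes vos identités du type $\text{Im}(\beta^D)=\text{Ann}(\text{Ker}\,\beta)$ — donc l'exactitude en $H^{d+2}(K,\tilde{T})^D$, en $\mathbb{P}^0(T)_{\wedge}$ et en $H^0(K,T)_{\wedge}$ — reposent sur cette strictness que vous ne vérifiez nulle part; vous identifiez d'ailleurs à tort la finitude de $\Sha^{d+2}(\tilde{T})$ comme le point le plus délicat, alors que c'est un argument standard de restriction-corestriction (déjà signalé dans la section \ref{finitude}).

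C'est précisément cette vérification topologique qui constitue la substance de la preuve de l'article. Concrètement, il faut: (a) montrer que l'image $I$ de $H^{d+2}(K,\tilde{T})$ dans $\mathbb{P}^{d+2}(\tilde{T})_{tors}$ est discrète — c'est le rôle du lemme \ref{x}: tout élément de $I$ rencontrant $\prod_{v \in X \setminus U} {_n}H^{d+2}(K_v,\tilde{T}) \times \prod_{v \in U} {_n}H^{d+2}(\mathcal{O}_v,\tilde{\mathcal{T}})$ provient de $H^{d+2}(U,\tilde{\mathcal{T}})$, groupe de torsion de type cofini, de sorte que cette intersection est finie, et $I$ est discret pour la topologie limite inductive dont $\mathbb{P}^{d+2}(\tilde{T})_{tors}$ a été muni exactement dans ce but; (b) montrer que $\mathbb{P}^{d+2}(\tilde{T})_{tors} \rightarrow (H^{0}(K,T)_{\wedge})^D$ est strict, ce que l'article obtient par un argument d'analyse harmonique: $\mathbb{P}^{d+2}(\tilde{T})_{tors}$ est réunion dénombrable de compacts, son image est fermée donc localement compacte, et le théorème de l'application ouverte de \cite{HR} s'applique; enfin (c) noter que $(H^{0}(K,T)_{\wedge})^D \rightarrow \Sha^{d+3}(\tilde{T})$ est automatiquement strict car son but est discret, et que l'on utilise implicitement la bidualité $((H^0(K,T)_{\wedge})^D)^D \cong H^0(K,T)_{\wedge}$. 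Une fois (a) et (b) établis, vos dévissages formels (annulateurs, factorisation de la flèche de liaison par $\Sha^{d+2}(\tilde{T})^D \cong \Sha^1(T)$, emploi de \ref{bouts tore}(ii) légitimé par la finitude de $\Sha^2(T)$) coïncident avec ceux de l'article; sans eux, la \og vérification soigneuse \fg{} que vous annoncez ne peut pas aboutir par la seule fonctorialité de la dualité de Pontryagin.
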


\begin{remarque}\label{scheiderer}
\begin{itemize}
\item[$(i)$] La nullité du groupe $\Sha^2(\mathbb{G}_m)$ est étudiée dans la section \ref{1.1}. Elle est en particulier établie dans les trois cas suivants:
\begin{itemize}
\item[$\bullet$] $X$ est la droite projective ou une quadrique sur $k$ (théorème 1.2);
\item[$\bullet$] $k_1$ est un corps $p$-adique, $X$ est une courbe de la forme $\text{Proj}(k[x,y,z]/(P(x,y,z))) $ où $P \in \mathcal{O}_{k_1}[x,y,z]$ est un polynôme homogène, et $\text{Proj}(k_0[x,y,z]/(\overline{P}(x,y,z))) $ est une courbe lisse et géométriquement intègre (théorème 1.14);
\item[$\bullet$] $k_0 = \mathbb{C}((t))$, $X$ est la courbe elliptique sur $k$ d'équation $y^2=x^3+Ax+B$ avec $A, B \in k_0$, et la courbe elliptique sur $k_0$ d'équation $y^2=x^3+Ax+B$ admet une réduction modulo $t$ de type additif (théorème 1.14).
\end{itemize}
Toujours dans la section \ref{1.1}, on exhibe des exemples où $\Sha^2(\mathbb{G}_m)\neq 0$ dès que $k$ n'est pas un corps $p$-adique (exemples 1.16 et 1.18).
\item[$(ii)$] Le groupe $\Sha^{d+3}(\tilde{T})$ est étudié dans la section \ref{1.2}. On montre en particulier qu'il est nul si $k$ est un corps $p$-adique et fini si $k = \mathbb{C}((t))((t_1))$.
\end{itemize}
\end{remarque}

\begin{proof}
\begin{itemize}
\item[$\bullet$] L'exactitude de la deuxième ligne découle de la proposition \ref{bouts tore} et de la nullité de $\Sha^2(L,\mathbb{G}_m)$. 
\item[$\bullet$] On dispose de la suite exacte $0 \rightarrow \Sha^{d+2}(\tilde{T}) \rightarrow  H^{d+2}(K,\tilde{T}) \rightarrow \mathbb{P}^{d+2}(\tilde{T})_{tors}$. En utilisant la nullité de $\Sha^2(L,\mathbb{G}_m)$, le lemme \ref{nul dual} et le théorème \ref{PT tore}, on remarque que la suite duale s'écrit:
$$\mathbb{P}^0(T)_{\wedge} \rightarrow H^{d+2}(K,\tilde{T})^D \rightarrow \Sha^1(T) \rightarrow 0.$$
Pour montrer l'exactitude de cette suite, il suffit d'établir que le morphisme $H^{d+2}(K,\tilde{T}) \rightarrow \mathbb{P}^{d+2}(\tilde{T})_{tors}$ est strict, c'est-à-dire que son image $I$ est discrète. Pour ce faire, on fixe $U$ un ouvert de $U_0$ et on remarque grâce au lemme \ref{x} que tout élément de $I \cap (\prod_{v \in X \setminus U} {_n}H^{d+2}(K_v,\tilde{T}) \times\prod_{v \in U} {_n}H^{d+2}(\mathcal{O}_v, \tilde{\mathcal{T}})) \subseteq \mathbb{P}^{d+2}(\tilde{T})_{tors}$ provient de $H^{d+2}(U,\tilde{\mathcal{T}})$. Ce dernier groupe est de torsion de type cofini. Par conséquent, $I \cap (\prod_{v \in X \setminus U} {_n}H^{d+2}(K_v,\tilde{T}) \times\prod_{v \in U} {_n}H^{d+2}(\mathcal{O}_v, \tilde{\mathcal{T}}))$ est de $n$-torsion et de type cofini, donc fini. Par définition de la topologie de $\mathbb{P}^{d+2}(\tilde{T})_{tors}$, $I$ est discret, et la suite $\mathbb{P}^0(T)_{\wedge} \rightarrow H^{d+2}(K,\tilde{T})^D \rightarrow \Sha^1(T) \rightarrow 0$ est exacte.
\item[$\bullet$] On dispose de la suite exacte $H^{d+2}(K,\tilde{T}) \rightarrow \mathbb{P}^{d+2}(\tilde{T})_{tors} \rightarrow (H^{0}(K,T)_{\wedge})^D \rightarrow \Sha^{d+3}(\tilde{T}) \rightarrow 0$.
La suite duale s'écrit:
$$0 \rightarrow \Sha^{d+3}(\tilde{T})^D \rightarrow H^{0}(K,T)_{\wedge} \rightarrow \mathbb{P}^0(T)_{\wedge} \rightarrow H^{d+2}(K,\tilde{T})^D.$$
On a déjà vu que le morphisme $H^{d+2}(K,\tilde{T}) \rightarrow \mathbb{P}^{d+2}(\tilde{T})_{tors}$ est strict. Comme $\Sha^{d+3}(\tilde{T})$ est discret, le morphisme $H^{d+2}(K,\tilde{T})^D \rightarrow \Sha^1(T)$ est aussi strict. Montrons qu'il en est de même du morphisme $\mathbb{P}^{d+2}(\tilde{T})_{tors} \rightarrow (H^{0}(K,T)_{\wedge})^D$. Le groupe localement compact $\mathbb{P}^{d+2}(\tilde{T})_{tors}$ est muni d'une topologie qui en fait une réunion dénombrable  d'espaces compacts, et, le groupe $(H^{0}(K,T)_{\wedge})^D$ étant localement compact, l'image $J$ de $\mathbb{P}^{d+2}(\tilde{T})_{tors} \rightarrow (H^{0}(K,T)_{\wedge})^D$ est localement compacte car elle est fermée dans $(H^{0}(K,T)_{\wedge})^D$. Comme le morphisme $\mathbb{P}^{d+2}(\tilde{T})_{tors} \rightarrow J$ est surjectif, on déduit de \cite{HR} que c'est un morphisme strict. Cela impose immédiatement que $\mathbb{P}^{d+2}(\tilde{T})_{tors} \rightarrow (H^{0}(K,T)_{\wedge})^D$ est strict, et la suite $0 \rightarrow H^{0}(K,T)_{\wedge} \rightarrow \mathbb{P}^0(T)_{\wedge} \rightarrow H^{d+2}(K,\tilde{T})^D$ est donc bien exacte.
\end{itemize}
\end{proof}

\begin{theorem} \textbf{(Suite exacte de Poitou-Tate pour le dual d'un tore)} \label{PT dual tore exacte}\\
Rappelons que nous avons supposé (H \ref{40}) et (H \ref{40'}), c'est-à-dire que $X$ est une courbe, $T = \check{T} \otimes^{\mathbf{L}} \mathbb{Z}(1)[1]$ est quasi-isomorphe à un tore et $\tilde{T} = \hat{T} \otimes^{\mathbf{L}} \mathbb{Z}(d)$. Soit $L$ une extension finie déployant $T$. Supposons que $\Sha^2(L,\mathbb{G}_m)$ est nul. On dispose d'une suite exacte à 8 termes:\\
\centerline{\xymatrix{
& \mathbb{P}^{d+1}(\tilde{T}) \ar[r] & H^1(K,T)^D \ar[d] &\\
(H^0(K,T)_{\wedge})^D\ar[d] & \mathbb{P}^{d+2}(\tilde{T})_{tors} \ar[l] & H^{d+2}(K,\tilde{T})\ar[l] &\\
H^{d+3}(K,\tilde{T}) \ar[r] & \mathbb{P}^{d+3}(\tilde{T}) \ar[r] &  (\varprojlim_n {_n}T(K^s))^D \ar[r] & 0.
}}
\end{theorem}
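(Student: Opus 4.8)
The plan is to recognise the last six terms of the sequence as precisely Proposition \ref{bouts tore}(i), and to obtain the two remaining exactness points by dualising the localisation map, so that the whole statement becomes the concatenation of \ref{bouts tore}(i) with the head $\mathbb{P}^{d+1}(\tilde{T}) \to H^1(K,T)^D \to H^{d+2}(K,\tilde{T})$. First I would observe that the tail
$$H^{d+2}(K,\tilde{T}) \to \mathbb{P}^{d+2}(\tilde{T})_{tors} \to (H^0(K,T)_{\wedge})^D \to H^{d+3}(K,\tilde{T}) \to \mathbb{P}^{d+3}(\tilde{T}) \to (\varprojlim_n {_n}T(K^s))^D \to 0$$
is literally the sequence of \ref{bouts tore}(i), the only point to check being that $\mathbb{P}^{d+3}(\tilde{T})_{tors} = \mathbb{P}^{d+3}(\tilde{T})$. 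This holds because $H^{d+3}(K_v,\tilde{T}\otimes\mathbb{Q}) = 0$ (the complex $\mathbb{Q}(d)_{\text{Zar}}$ being concentrated in degrees $\leq d$), so each $H^{d+3}(K_v,\tilde{T})$ is torsion. Note that \ref{bouts tore}(i) needs only (H \ref{40}) and (H \ref{40'}).

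Next I would set up the two maps of the head. Using Theorem \ref{PT tore} with $a=1$ together with the hypothesis $\Sha^2(L,\mathbb{G}_m)=0$: the latter, via Lemma \ref{nul dual} applied over $L$ and a restriction-corestriction argument, forces the maximal divisible subgroup of $\Sha^{d+2}(\tilde{T})$ to vanish, so that $\Sha^{d+2}(\tilde{T}) = \overline{\Sha^{d+2}(\tilde{T})}$ is finite and the pairing of \ref{PT tore} yields a canonical isomorphism $\Sha^{d+2}(\tilde{T}) \cong \Sha^1(T)^D$. The map $H^1(K,T)^D \to H^{d+2}(K,\tilde{T})$ is then defined as the composite $H^1(K,T)^D \twoheadrightarrow \Sha^1(T)^D \xrightarrow{\sim} \Sha^{d+2}(\tilde{T}) \hookrightarrow H^{d+2}(K,\tilde{T})$, while $\mathbb{P}^{d+1}(\tilde{T}) \to H^1(K,T)^D$ is the transpose of the localisation $\mathrm{loc}\colon H^1(K,T) \to \mathbb{P}^1(T)$ under the perfect duality $\mathbb{P}^{d+1}(\tilde{T}) \cong \mathbb{P}^1(T)^D$ of Corollaire \ref{dual P tore}. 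Exactness at $H^{d+2}(K,\tilde{T})$ is then immediate: the image of the first map is exactly $\Sha^{d+2}(\tilde{T})$, which by definition is the kernel of the localisation $H^{d+2}(K,\tilde{T}) \to \mathbb{P}^{d+2}(\tilde{T})_{tors}$ (here $H^{d+2}(K,\tilde{T})$ is torsion, so its image does land in the torsion part). This also gives the splice between head and tail.

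For exactness at $H^1(K,T)^D$ I would argue by Pontryagin duality. Since $\ker(\mathrm{loc}) = \Sha^1(T)$, provided $\mathrm{loc}$ is a strict morphism one has $\mathrm{Im}(\mathrm{loc}^D) = \Sha^1(T)^{\perp}$, the annihilator of $\Sha^1(T)$ in the profinite group $H^1(K,T)^D$; and $\Sha^1(T)^{\perp}$ is precisely $\ker(H^1(K,T)^D \to \Sha^1(T)^D)$, which equals $\ker(H^1(K,T)^D \to H^{d+2}(K,\tilde{T}))$ because the latter map is the injection on $\Sha^{d+2}(\tilde{T})$ following the surjection onto $\Sha^1(T)^D$. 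This yields the desired exactness (and in particular that the head is a complex), and it remains only to glue with \ref{bouts tore}(i).

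The main obstacle is the strictness of $\mathrm{loc}\colon H^1(K,T) \to \mathbb{P}^1(T)$, i.e. the discreteness of its image, the exact analogue of the discreteness argument ending the proof of Théorème \ref{PT fini suite}. I would prove it by fixing an open $U \subseteq U_0$ and checking that the intersection of $\mathrm{Im}(\mathrm{loc})$ with the compact-open subgroup $\prod_{v \notin U} H^1(K_v,T) \times \prod_{v \in U} H^1_{\text{nr}}(K_v,T)$ coincides, by the extension lemma already invoked in \ref{bouts tore}(ii) (le lemme de Harder, corollaire A.8 de \cite{GP}), with the image of $H^1(U,\mathcal{T})$ in $\mathbb{P}^1(T)$. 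Since $H^1(K,T)$ is $m$-torsion for $m$ the degree of a splitting extension, this image is killed by $m$, hence is a quotient of $H^1(U,\mathcal{T})/m$, which is finite by the Kummer sequence and finiteness of cohomology with finite coefficients on $U$. Thus the intersection is finite, the image is discrete, and $\mathrm{loc}$ is strict. As all local groups $H^1(K_v,T)$ are finite by Proposition \ref{local tore}(i), the restricted products are genuine locally compact groups, so the duality arguments apply without further difficulty.
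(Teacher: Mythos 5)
Your proposal reproduces the paper's proof essentially step for step: the six last terms are exactly Proposition \ref{bouts tore}(i), and the head is obtained, as in the paper, by dualising $0 \rightarrow \Sha^1(T) \rightarrow H^1(K,T) \rightarrow \mathbb{P}^1(T)$ using la nullité de $\Sha^2(L,\mathbb{G}_m)$, le lemme \ref{nul dual}, le théorème \ref{PT tore} et le corollaire \ref{dual P tore}, the exactness being reduced to the strictness of la localisation $H^1(K,T) \rightarrow \mathbb{P}^1(T)$, which you establish by the same argument (lemme de Harder, exposant fini de $\mathbb{P}^1(T)$, finitude de $H^1(U,\mathcal{T})/n$). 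One small point to tighten: for $\mathbb{P}^{d+3}(\tilde{T})_{tors} = \mathbb{P}^{d+3}(\tilde{T})$, the torsionness of each factor $H^{d+3}(K_v,\tilde{T})$ does not by itself make a genuine restricted product torsion (the orders could be unbounded over the places); you should add that $H^{d+3}(\mathcal{O}_v,\tilde{\mathcal{T}}) = 0$ (as $H^{d+2}(\mathcal{O}_v,\tilde{\mathcal{T}}_t) \cong H^{d+2}(k(v),\cdot) = 0$ by $\text{cd}(k(v)) = d+1$, and $H^{d+3}(\mathcal{O}_v,\hat{\mathcal{T}} \otimes \mathbb{Q}(d)) = 0$), so that $\mathbb{P}^{d+3}(\tilde{T}) = \bigoplus_{v \in X^{(1)}} H^{d+3}(K_v,\tilde{T})$, a direct sum of torsion groups, which is torsion.
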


\begin{proof}
\begin{itemize}
\item[$\bullet$] L'exactitude des six derniers termes découle de \ref{bouts tore}.
\item[$\bullet$] Remarquons que la suite de groupes d'exposant fini:
$$0 \rightarrow \Sha^{1}(T) \rightarrow H^{1}(K,T) \rightarrow \mathbb{P}^{1}(T).$$
est exacte. En utilisant la nullité de $\Sha^2(L,\mathbb{G}_m)$, le lemme \ref{nul dual} et le théorème \ref{PT tore}, on remarque que sa suite duale s'écrit: 
$$\mathbb{P}^{d+1}(\tilde{T}) \rightarrow H^{1}(K,T)^D \rightarrow \Sha^{d+2}(\tilde{T}) \rightarrow 0.$$ 
Pour vérifier son exactitude, il suffit de montrer que le morphisme $H^{1}(K,T) \rightarrow \mathbb{P}^{1}(T)$ est strict, c'est-à-dire que son image est discrète. Pour ce faire, on fixe $U$ un ouvert de $U_0$ et on remarque grâce au lemme de Harder (corollaire A.8 de \cite{GP}) que tout élément de $I \cap (\prod_{v \in X \setminus U} H^{1}(K_v,T) \times\prod_{v \in U} H^{1}(\mathcal{O}_v, \mathcal{T})) \subseteq \mathbb{P}^{1}(T)$ provient de $H^{1}(U,\mathcal{T})$. Comme $\mathbb{P}^{1}(T)$ est d'exposant fini $n$ et $H^{1}(U,\mathcal{T})/n$ est fini, on déduit que $I \cap (\prod_{v \in X \setminus U} H^{1}(K_v,T) \times\prod_{v \in U} H^{1}(\mathcal{O}_v, \mathcal{T}))$ est fini. Cela entraîne que $I$ est discret et donc que la suite $\mathbb{P}^{d+1}(\tilde{T}) \rightarrow H^{1}(K,T)^D \rightarrow \Sha^{d+2}(\tilde{T}) \rightarrow 0$ 
est exacte. 
\end{itemize}
\end{proof}

\begin{remarque}
En combinant les deux théorèmes précédents, on retrouve, dans le cas où $k$ est $p$-adique, la suite à 9 termes du théorème 2.9 de \cite{HS2} à condition d'utiliser la remarque \ref{scheiderer}.
\end{remarque}

\begin{remarque}
Supposons $k_1$ est de caractéristique $p>0$. Dans ce cas:
\begin{itemize}
\item[$\bullet$] le théorème \ref{AV tore} et la remarque \ref{AV tore 2} restent valables pour $l \neq p$.
\item[$\bullet$] l'assertion (i) de \ref{local tore} donne un accouplement parfait de groupes finis:
$$H^a(K_v,T)_{\text{non}-p} \times H^{d+2-a}(K_v,\tilde{T})_{\text{non}-p} \rightarrow \mathbb{Q}/\mathbb{Z},$$
l'assertion (ii) reste valable pour $n$ non divisible par $p$, et l'assection (iii) donne un accouplement parfait:
$$\varprojlim_{p \nmid n} H^{a-1}(K_v,T)/n \times H^{d+3-a}(K_v,\tilde{T})_{\text{non}-p} \rightarrow \mathbb{Q}/\mathbb{Z}.$$
\item[$\bullet$] concernant la proposition \ref{nature tore}, le groupe $\Sha^a(T)$ est d'exposant fini et les groupes $H^r(U,\tilde{\mathcal{T}}_t)_{\text{non}-p}$, $H^r(U,\tilde{\mathcal{T}}_t)_{\text{non}-p}$, $H^{d+3-a}(U,\tilde{\mathcal{T}})_{\text{non}-p}$ et $H^{d+4-a}(U,\tilde{\mathcal{T}})_{\text{non}-p}$ sont de torsion de type cofini. 
\item[$\bullet$] la proposition \ref{Sha tore} reste vraie pour $l \neq p$.
\item[$\bullet$] concernant \ref{suite tore}, on a une suite exacte:
$$ \bigoplus_{v \in X^{(1)}} H^{d+1-a}(K_v,\tilde{T}_t)_{\text{non}-p} \rightarrow H^{d+2-a}_c(U,\tilde{\mathcal{T}}_t)_{\text{non}-p} \rightarrow \mathcal{D}^{d+2-a}(U,\tilde{\mathcal{T}}_t)_{\text{non}-p} \rightarrow 0.$$
\item[$\bullet$] le théorème \ref{PT tore} fournit un accouplement parfait de groupes finis:
$$\Sha^a(T)_{\text{non}-p} \times \overline{\Sha^{d+3-a}(\tilde{T})_{\text{non}-p}} \rightarrow \mathbb{Q}/\mathbb{Z}.$$
\end{itemize}
\end{remarque}

\section{\scshape Groupes de type multiplicatif}

Dans toute cette partie, on suppose (H \ref{40}), c'est-à-dire que $X$ est une courbe. Soit $a\in \{0,1,..., d+1\}$ fixé. Soient $\hat{T_1}$ et $\hat{T_2}$ deux $\text{Gal}(K^s/K)$-modules qui, comme groupes abéliens, sont libres de type fini. Notons $\check{T_1} = \text{Hom}(\hat{T_1},\mathbb{Z})$ et $\check{T_2} = \text{Hom}(\hat{T_2},\mathbb{Z})$. Posons aussi $T_1=\check{T_1} \otimes^{\mathbf{L}} \mathbb{Z}(a)[1] $ et $T_2=\check{T_2} \otimes^{\mathbf{L}} \mathbb{Z}(a)[1] $. Comme dans les sections précédentes, introduisons $\tilde{T_1} = \hat{T_1} \otimes^{\mathbf{L}} \mathbb{Z}(d+1-a)$, $\tilde{T_2} = \hat{T_2} \otimes^{\mathbf{L}} \mathbb{Z}(d+1-a)$, $\tilde{T_1}_t = \hat{T_1} \otimes^{\mathbf{L}} \mathbb{Q}/\mathbb{Z}(d+1-a)$ et $\tilde{T_2}_t = \hat{T_2} \otimes^{\mathbf{L}} \mathbb{Q}/\mathbb{Z}(d+1-a)$, ainsi que $T_{1t}=\check{T_1} \otimes^{\mathbf{L}} \mathbb{Q}/\mathbb{Z}(a)[1]$ et $T_{2t}=\check{T_2} \otimes^{\mathbf{L}} \mathbb{Q}/\mathbb{Z}(a)[1]$. \\
Considérons maintenant un morphisme $\hat{\rho}: \hat{T_2} \rightarrow \hat{T_1}$. Un tel morphisme induit des morphismes $\rho: T_1 \rightarrow T_2$, $\check{\rho}: \check{T_1} \rightarrow \check{T_2}$, $\tilde{\rho}: \tilde{T_2} \rightarrow \tilde{T_1}$, $\tilde{\rho}_t: \tilde{T_2}_t\rightarrow \tilde{T_1}_t$ et $\rho_t: T_{1t} \rightarrow T_{2t}$. Soit $G = [T_1 \rightarrow T_2]$ le cône de $\rho$, de sorte que l'on a un triangle distingué $T_1 \rightarrow T_2 \rightarrow G \rightarrow T_1[1]$. Soit $\tilde{G} = [\tilde{T_2} \rightarrow \tilde{T_1}]$ le cône du morphisme $\tilde{\rho}$, de sorte que l'on a alors un triangle distingué $\tilde{T_2} \rightarrow \tilde{T_1} \rightarrow \tilde{G} \rightarrow \tilde{T_2}[1]$. On notera aussi $G_t = [T_{1t} \rightarrow T_{2t}]$ le cône de $\rho_t$ et $\tilde{G}_t = [\tilde{T_2}_t \rightarrow \tilde{T_1}_t]$ le cône de $\tilde{\rho}_t$, et on remarquera que l'on a des triangles distingués:
$$G \rightarrow G \otimes \mathbb{Q} \rightarrow G_t \rightarrow G[1],$$
$$\tilde{G} \rightarrow \tilde{G} \otimes \mathbb{Q} \rightarrow \tilde{G}_t \rightarrow \tilde{G}[1].$$
Donnons-nous maintenant $\hat{\mathcal{T}_1}$ et $\hat{\mathcal{T}_2}$ des faisceaux définis sur un ouvert $U_0$ de $X$, localement isomorphes à des faisceaux constants libres de type fini et étendant respectivement $\hat{T_1}$ et $\hat{T_2}$, ainsi qu'un morphisme $\hat{\mathcal{T}_2} \rightarrow \hat{\mathcal{T}_1}$ étendant $\hat{\rho}$. Notons $\check{\mathcal{T}_1} = \text{Hom}(\hat{\mathcal{T}_1},\mathbb{Z})$ et $\check{\mathcal{T}_2} = \text{Hom}(\hat{\mathcal{T}_2},\mathbb{Z})$, ainsi que $\mathcal{T}_1=\check{\mathcal{T}_1} \otimes^{\mathbf{L}} \mathbb{Z}(a)[1] $ et $\mathcal{T}_2=\check{\mathcal{T}_2} \otimes^{\mathbf{L}} \mathbb{Z}(a)[1] $, et introduisons $\tilde{\mathcal{T}_1} = \hat{\mathcal{T}_1} \otimes^{\mathbf{L}} \mathbb{Z}(d+1-a)$, $\tilde{\mathcal{T}_2} = \hat{\mathcal{T}_2} \otimes^{\mathbf{L}} \mathbb{Z}(d+1-a)$, $\tilde{\mathcal{T}_1} = \hat{\mathcal{T}_1} \otimes^{\mathbf{L}} \mathbb{Q}/\mathbb{Z}(d+1-a)$ et $\tilde{\mathcal{T}_2} = \hat{\mathcal{T}_2} \otimes^{\mathbf{L}} \mathbb{Q}/\mathbb{Z}(d+1-a)$. Le morphisme $\hat{\mathcal{T}_2} \rightarrow \hat{\mathcal{T}_1}$ induit bien sûr des morphismes $\mathcal{T}_1 \rightarrow \mathcal{T}_2$, $\check{\mathcal{T}_1} \rightarrow \check{\mathcal{T}_2}$, $\tilde{\mathcal{T}_2} \rightarrow \tilde{\mathcal{T}_1}$ et $\tilde{\mathcal{T}_2}_t\rightarrow \tilde{\mathcal{T}_1}_t$. Soient $\mathcal{G} = [\mathcal{T}_1 \rightarrow \mathcal{T}_2]$ et $\tilde{\mathcal{G}} = [\tilde{\mathcal{T}_2} \rightarrow \tilde{\mathcal{T}_1}]$, de sorte que l'on a des triangles distingués $\mathcal{T}_1 \rightarrow \mathcal{T}_2 \rightarrow \mathcal{G} \rightarrow \mathcal{T}_1[1]$ et $\tilde{\mathcal{T}_2} \rightarrow \tilde{\mathcal{T}_1} \rightarrow \tilde{\mathcal{G}} \rightarrow \tilde{\mathcal{T}_2}[1]$. On notera aussi $\mathcal{G}_t = [\mathcal{T}_{1t} \rightarrow \mathcal{T}_{2t}]$ et $\tilde{\mathcal{G}}_t = [\tilde{\mathcal{T}_2}_t \rightarrow \tilde{\mathcal{T}_1}_t]$, et on remarquera que l'on a un triangle distingué:
$$\mathcal{G} \rightarrow \mathcal{G} \otimes \mathbb{Q} \rightarrow \mathcal{G}_t \rightarrow \mathcal{G}[1],$$
$$\tilde{\mathcal{G}} \rightarrow \tilde{\mathcal{G}} \otimes \mathbb{Q} \rightarrow \tilde{\mathcal{G}}_t \rightarrow \tilde{\mathcal{G}}[1].$$

\begin{remarque}
Soit $G_{tm}$ un groupe de type multiplicatif sur $K$. Il existe deux tores $T_1$ et $T_2$ s'insérant dans une suite exacte:
$$1 \rightarrow G_{tm} \rightarrow T_1 \rightarrow T_2 \rightarrow 0.$$
On en déduit que $G_{tm}$ est quasi-isomorphe au complexe $[T_1 \rightarrow T_2][-1]$. 
\end{remarque}

\begin{remarque}
Lorsque $a=d=1$, on remarque que $\tilde{G}[1]$ est le complexe $ [T_2' \rightarrow T_1']$, où $T_1'$ et $T_2'$ désignent les tores duaux de $T_1$ et $T_2$ respectivement.
\end{remarque}

Dans la suite, nous cherchons à établir un théorème de dualité type Poitou-Tate pour $G$. Comme dans la section \ref{tore}, le dual de $G$ qui apparaîtra dans le théorème sera le complexe $\tilde{G}$. Cependant, pour le prouver, il sera utile de faire appel au complexe $\tilde{G}_t$ qui peut être approché par des modules finis auxquels on peut appliquer les théorèmes de la section qui leur était consacrée.

\begin{lemma}\textbf{(Définition de l'accouplement)}\label{acc gm}\\
On rappelle que l'on a supposé (H \ref{40}). On dispose d'accouplements naturels:
$$G \otimes^{\mathbf{L}} \tilde{G} \rightarrow \mathbb{Z}(d+1)[2],$$
$$\mathcal{G} \otimes^{\mathbf{L}} \tilde{\mathcal{G}} \rightarrow \mathbb{Z}(d+1)[2].$$
\end{lemma}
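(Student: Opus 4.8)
Le plan consiste à factoriser les complexes motiviques pour se ramener à un accouplement purement « discret » entre les cônes des modules de caractères, où la platitude est manifeste et où l'on peut travailler directement au niveau des complexes. Comme le foncteur $- \otimes^{\mathbf{L}} \mathbb{Z}(a)[1]$ (resp. $- \otimes^{\mathbf{L}} \mathbb{Z}(d+1-a)$) est triangulé, il commute aux cônes : en posant $P = [\check{T_1} \rightarrow \check{T_2}]$ le cône de $\check{\rho}$ et $Q = [\hat{T_2} \rightarrow \hat{T_1}]$ le cône de $\hat{\rho}$, on obtient des isomorphismes $G \cong P \otimes^{\mathbf{L}} \mathbb{Z}(a)[1]$ et $\tilde{G} \cong Q \otimes^{\mathbf{L}} \mathbb{Z}(d+1-a)$, d'où
$$G \otimes^{\mathbf{L}} \tilde{G} \cong (P \otimes^{\mathbf{L}} Q) \otimes^{\mathbf{L}} \mathbb{Z}(a) \otimes^{\mathbf{L}} \mathbb{Z}(d+1-a) \, [1].$$
En composant avec l'accouplement $\mathbb{Z}(a) \otimes^{\mathbf{L}} \mathbb{Z}(d+1-a) \rightarrow \mathbb{Z}(d+1)$ rappelé plus haut, il suffira donc de construire un accouplement $P \otimes^{\mathbf{L}} Q \rightarrow \mathbb{Z}[1]$ : en le tensorisant par $\mathbb{Z}(d+1)[1]$, on obtient bien $G \otimes^{\mathbf{L}} \tilde{G} \rightarrow \mathbb{Z}(d+1)[2]$, les deux décalages se cumulant.

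Pour construire $P \otimes^{\mathbf{L}} Q \rightarrow \mathbb{Z}[1]$, je travaillerais directement au niveau des complexes. Les modules $\check{T_1}, \check{T_2}, \hat{T_1}, \hat{T_2}$ étant libres de type fini, donc plats, le complexe $P \otimes Q$ calcule déjà le produit tensoriel dérivé ; il est concentré en degrés $-2, -1, 0$, de terme central $(\check{T_1} \otimes \hat{T_1}) \oplus (\check{T_2} \otimes \hat{T_2})$. On définit alors le morphisme vers $\mathbb{Z}[1]$ (c'est-à-dire $\mathbb{Z}$ placé en degré $-1$) comme nul en degrés $-2$ et $0$, et égal en degré $-1$ à la somme des accouplements d'évaluation $\text{ev}_i : \check{T_i} \otimes \hat{T_i} \rightarrow \mathbb{Z}$. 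La seule vérification à mener est que l'on a bien un morphisme de complexes, c'est-à-dire l'annulation de la composée issue de la différentielle $\check{T_1} \otimes \hat{T_2} \rightarrow (\check{T_1} \otimes \hat{T_1}) \oplus (\check{T_2} \otimes \hat{T_2})$. Compte tenu de la règle de Koszul, cette différentielle envoie $x \otimes y$ sur $\check{\rho}(x) \otimes y - x \otimes \hat{\rho}(y)$, de sorte que la composée avec $(\text{ev}_1, \text{ev}_2)$ vaut $\langle \check{\rho}(x), y \rangle - \langle x, \hat{\rho}(y) \rangle$. Elle est donc nulle grâce à l'identité $\langle \check{\rho}(x), y \rangle = \langle x, \hat{\rho}(y) \rangle$, qui n'est autre que la définition de $\check{\rho} = \hat{\rho}^{\vee}$ comme transposé de $\hat{\rho}$. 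L'accouplement est ainsi bien défini dans la catégorie dérivée.

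La version faisceautique $\mathcal{G} \otimes^{\mathbf{L}} \tilde{\mathcal{G}} \rightarrow \mathbb{Z}(d+1)[2]$ se construit de manière rigoureusement identique : les faisceaux $\hat{\mathcal{T}_i}$ étant localement isomorphes à des faisceaux constants libres de type fini, donc plats, on dispose encore des cônes $[\check{\mathcal{T}_1} \rightarrow \check{\mathcal{T}_2}]$ et $[\hat{\mathcal{T}_2} \rightarrow \hat{\mathcal{T}_1}]$, de l'accouplement d'évaluation $\underline{\text{Hom}}(\hat{\mathcal{T}_i}, \mathbb{Z}) \otimes \hat{\mathcal{T}_i} \rightarrow \mathbb{Z}$, et le même calcul s'applique terme à terme. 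La principale difficulté est d'ordre technique : il faut s'assurer que la construction menée au niveau des complexes fournit effectivement un morphisme dans la catégorie dérivée, ce qui repose de façon cruciale sur la réduction préalable aux complexes discrets $P$ et $Q$ (et à leurs analogues faisceautiques), dont la platitude rend le produit tensoriel automatiquement dérivé, ainsi que sur le contrôle des signes de Koszul afin que la condition de morphisme de complexes coïncide exactement avec l'identité d'adjonction $\langle \check{\rho}(x), y \rangle = \langle x, \hat{\rho}(y) \rangle$.
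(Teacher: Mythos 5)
Votre proposition est correcte et suit essentiellement la même démarche que la preuve de l'article : identification explicite de $[\check{T_1} \rightarrow \check{T_2}] \otimes^{\mathbf{L}} [\hat{T_2} \rightarrow \hat{T_1}]$ au complexe à trois termes, accouplement d'évaluation en degré $-1$ dont la condition de morphisme de complexes se réduit, via le signe de Koszul, à l'identité $\langle \check{\rho}(x), y \rangle = \langle x, \hat{\rho}(y) \rangle$ (c'est exactement la commutativité du carré vérifiée dans l'article), puis composition avec l'accouplement $\mathbb{Z}(a) \otimes^{\mathbf{L}} \mathbb{Z}(d+1-a) \rightarrow \mathbb{Z}(d+1)$ pour obtenir $G \otimes^{\mathbf{L}} \tilde{G} \rightarrow \mathbb{Z}(d+1)[2]$. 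Vos précisions sur la platitude des modules et des faisceaux (qui rend le produit tensoriel automatiquement dérivé) ne font qu'expliciter des points laissés implicites dans l'article, et le cas faisceautique y est, comme chez vous, traité comme strictement analogue.
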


\begin{proof}
Nous allons construire le premier accouplement, le deuxième étant tout à fait analogue.\\
Remarquons que $[\check{T_1} \rightarrow \check{T_2}] \otimes^{\mathbf{L}} [\hat{T_2} \rightarrow \hat{T_1}]$ s'identifie au complexe:\\
\centerline{\xymatrixcolsep{5pc}\xymatrix{[\check{T_1} \otimes \hat{T_2} \ar[r]^-{(-Id \otimes \hat{\rho}) \oplus (\check{\rho} \otimes Id)} & (\check{T_1} \otimes \hat{T_1}) \oplus (\check{T_2} \otimes \hat{T_2}) \ar[r]^-{\check{\rho} \otimes Id + Id \otimes \hat{\rho}} & \check{T_2} \otimes \hat{T_1}].}}
On vérifie aisément que le diagramme suivant commute:\\
\centerline{\xymatrix{
\check{T_1} \otimes \hat{T_2} \ar[r]^{Id \otimes \hat{\rho}} \ar[d]^{\check{\rho} \otimes Id} & \check{T_1} \otimes \hat{T_1}\ar[d]\\
\check{T_2} \otimes \hat{T_2} \ar[r] & \mathbb{Z}
}}
ce qui permet de définir un morphisme $[\check{T_1} \rightarrow \check{T_2}] \otimes^{\mathbf{L}} [\hat{T_2} \rightarrow \hat{T_1}] \rightarrow \mathbb{Z}[1]$. On a donc des morphismes:
\begin{align*}
G \otimes^{\mathbf{L}} \tilde{G}& = ([\check{T_1} \rightarrow \check{T_2}] \otimes^{\mathbf{L}} \mathbb{Z}(a)[1]) \otimes^{\mathbf{L}} ([\hat{T_2} \rightarrow \hat{T_1}] \otimes^{\mathbf{L}} \mathbb{Z}(d+1-a)) \\ & \rightarrow \mathbb{Z}[1]  \otimes^{\mathbf{L}} \mathbb{Z}(a)[1] \otimes^{\mathbf{L}} \mathbb{Z}(d+1-a) \rightarrow \mathbb{Z}(d+1)[2].
\end{align*}
\end{proof}

On en déduit en particulier des accouplements:
$$H^r(U,\mathcal{G}) \times H^{d+2-r}_c(U,\tilde{\mathcal{G}}) \rightarrow \mathbb{Q}/\mathbb{Z}.$$
$$H^r(K_v,G) \times H^{d+1-r}(K_v,\tilde{G}) \rightarrow \mathbb{Q}/\mathbb{Z},$$
pour chaque ouvert non vide $U$ de $U_0$.

\begin{proposition} \textbf{(Artin-Verdier dans le cadre fini)} \label{AV gm fini}\\
On rappelle que l'on a supposé (H \ref{40}). Pour $r \in \mathbb{Z}$, on a un accouplement parfait de groupes finis:
$$H^r(U,\mathcal{G} \otimes^{\mathbf{L}} \mathbb{Z}/n\mathbb{Z}) \times H^{d+1-r}_c(U,\tilde{\mathcal{G}} \otimes^{\mathbf{L}} \mathbb{Z}/n\mathbb{Z}) \rightarrow \mathbb{Q}/\mathbb{Z}.$$ 
\end{proposition}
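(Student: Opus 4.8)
The plan is to reduce the statement to the finite-module duality of Proposition \ref{AV fini}, in the same spirit as the torus case of Theorem \ref{AV tore}. First I would identify the two complexes appearing in the statement. Since $\check{\mathcal{T}_i}$ and $\hat{\mathcal{T}_i}$ are locally constant free $\mathbb{Z}$-sheaves, hence flat, the Geisser--Levine isomorphism $\mathbb{Z}/n\mathbb{Z}(j)\cong\mu_n^{\otimes j}$ gives $\mathcal{T}_i\otimes^{\mathbf{L}}\mathbb{Z}/n\mathbb{Z}\cong(\check{\mathcal{T}_i}\otimes\mu_n^{\otimes a})[1]$ and $\tilde{\mathcal{T}_i}\otimes^{\mathbf{L}}\mathbb{Z}/n\mathbb{Z}\cong\hat{\mathcal{T}_i}\otimes\mu_n^{\otimes(d+1-a)}$. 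Applying $-\otimes^{\mathbf{L}}\mathbb{Z}/n\mathbb{Z}$ to the distinguished triangles defining $\mathcal{G}$ and $\tilde{\mathcal{G}}$, I obtain that $\mathcal{A}:=\mathcal{G}\otimes^{\mathbf{L}}\mathbb{Z}/n\mathbb{Z}$ is the cone of $\check{\mathcal{T}_1}\otimes\mu_n^{\otimes a}\to\check{\mathcal{T}_2}\otimes\mu_n^{\otimes a}$ shifted by $[1]$ (so concentrated in degrees $-2,-1$) and that $\mathcal{B}:=\tilde{\mathcal{G}}\otimes^{\mathbf{L}}\mathbb{Z}/n\mathbb{Z}$ is the cone of $\hat{\mathcal{T}_2}\otimes\mu_n^{\otimes(d+1-a)}\to\hat{\mathcal{T}_1}\otimes\mu_n^{\otimes(d+1-a)}$ (degrees $-1,0$); both are bounded complexes of finite étale group schemes that are locally free over $\mathbb{Z}/n\mathbb{Z}$.

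Next I would check that $\mathcal{A}$ and $\mathcal{B}$ are dual to one another. Applying $\underline{\text{Hom}}(-,\mu_n^{\otimes(d+1)})$ to the triangle computing $\mathcal{A}$ and using $\underline{\text{Hom}}(\check{\mathcal{T}_i}\otimes\mu_n^{\otimes a},\mu_n^{\otimes(d+1)})=\hat{\mathcal{T}_i}\otimes\mu_n^{\otimes(d+1-a)}$ yields a natural isomorphism $\underline{\text{Hom}}(\mathcal{A},\mu_n^{\otimes(d+1)})\cong\mathcal{B}[-2]$, and by biduality $\underline{\text{Hom}}(\mathcal{B},\mu_n^{\otimes(d+1)})\cong\mathcal{A}[-2]$. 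Because all the sheaves involved are locally free over $\mathbb{Z}/n\mathbb{Z}$, there is no higher local Ext and $\underline{\text{Hom}}=\mathbb{R}\underline{\text{Hom}}$ throughout; moreover the reduction modulo $n$ of the pairing of Lemma \ref{acc gm} is exactly the evaluation pairing $\mathcal{A}\otimes^{\mathbf{L}}\mathcal{B}\to\mu_n^{\otimes(d+1)}[2]$ attached to this duality.

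The core step is then to upgrade Proposition \ref{AV fini} from a single finite sheaf to a bounded complex $\mathcal{C}^{\bullet}$ of finite étale $n$-torsion group schemes. For this I would run the chain of four isomorphisms in the proof of \ref{AV fini} verbatim with $\mathcal{F}$ replaced by $\mathcal{C}^{\bullet}$, obtaining in the derived category
$$\mathbb{R}\text{Hom}_{U,\mathbb{Z}/n\mathbb{Z}}(\mathcal{C}^{\bullet},\mathbb{Z}/n\mathbb{Z}(d+1))\cong\mathbb{R}\text{Hom}_{\mathbb{Z}/n\mathbb{Z}}(\mathbb{R}\Gamma_c(U,\mathcal{C}^{\bullet}),\mathbb{Z}/n\mathbb{Z})[-d-3].$$
The two Grothendieck spectral sequences and the local duality \ref{rappels}(i) are already stated for complexes, so the only delicate point is the Poincaré duality \ref{rappels}(ii), which I would extend from a single constructible sheaf to the bounded complex $\overline{\mathcal{C}^{\bullet}}$ by dévissage: the duality isomorphism is natural and compatible with the distinguished triangles coming from the naive truncation, so a finite induction on the length of $\mathcal{C}^{\bullet}$ together with the five lemma delivers the isomorphism for the complex. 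Taking $H^r$ and using $\underline{\text{Hom}}=\mathbb{R}\underline{\text{Hom}}$ then gives a perfect pairing $H^r(U,\underline{\text{Hom}}(\mathcal{C}^{\bullet},\mu_n^{\otimes(d+1)}))\times H^{d+3-r}_c(U,\mathcal{C}^{\bullet})\to\mathbb{Q}/\mathbb{Z}$.

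Finally I would specialize this to $\mathcal{C}^{\bullet}=\mathcal{B}$. By the second paragraph $\underline{\text{Hom}}(\mathcal{B},\mu_n^{\otimes(d+1)})\cong\mathcal{A}[-2]$, so the pairing reads $H^{r-2}(U,\mathcal{A})\times H^{d+3-r}_c(U,\mathcal{B})\to\mathbb{Q}/\mathbb{Z}$; reindexing by $s=r-2$ gives exactly the perfect pairing $H^s(U,\mathcal{A})\times H^{d+1-s}_c(U,\mathcal{B})\to\mathbb{Q}/\mathbb{Z}$ sought, and one checks it is induced by Lemma \ref{acc gm}. Finiteness of all groups follows from the hypercohomology spectral sequence $E_2^{p,q}=H^p(U,\mathcal{H}^q(-))\Rightarrow H^{p+q}(U,-)$ and its compact-support analogue, which have finitely many nonzero terms, each finite by \ref{AV fini} applied to the cohomology sheaves of $\mathcal{A}$ and $\mathcal{B}$. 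I expect the dévissage extension of Poincaré duality to complexes in the third paragraph to be the main obstacle; the rest is bookkeeping of shifts and Tate twists.
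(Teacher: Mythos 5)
Your proposal is correct, but it reaches the duality by a genuinely different organization than the paper. The paper never computes the dual of the complex $\tilde{\mathcal{G}}\otimes^{\mathbf{L}}\mathbb{Z}/n\mathbb{Z}$ and never formulates an Artin--Verdier theorem for complexes of finite sheaves: it constructs the pairing by identifying $\mathbb{Z}/n\mathbb{Z}\otimes^{\mathbf{L}}\mathbb{Z}/n\mathbb{Z}$ with the three-term complex $[\mathbb{Z}\rightarrow\mathbb{Z}\oplus\mathbb{Z}\rightarrow\mathbb{Z}]$ mapping to $\mathbb{Z}[1]$, then tensors the defining triangles of $\mathcal{G}$ and $\tilde{\mathcal{G}}$ by $\mathbb{Z}/n\mathbb{Z}$ and compares the two resulting hypercohomology long exact sequences, applying Proposition \ref{AV fini} to the four outer terms (legitimate because $\mathcal{T}_i\otimes^{\mathbf{L}}\mathbb{Z}/n\mathbb{Z}$ and $\tilde{\mathcal{T}_i}\otimes^{\mathbf{L}}\mathbb{Z}/n\mathbb{Z}$ are, up to shift, the mutually dual finite locally constant sheaves $\check{\mathcal{T}_i}\otimes\mu_n^{\otimes a}$ and $\hat{\mathcal{T}_i}\otimes\mu_n^{\otimes(d+1-a)}$) and concluding by the five lemma. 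Your dévissage of \ref{rappels}(ii) to bounded complexes, followed by the identification $\mathbb{R}\underline{\text{Hom}}(\tilde{\mathcal{G}}\otimes^{\mathbf{L}}\mathbb{Z}/n\mathbb{Z},\mu_n^{\otimes(d+1)})\cong(\mathcal{G}\otimes^{\mathbf{L}}\mathbb{Z}/n\mathbb{Z})[-2]$, is in substance the same five-lemma argument run one level up, at the level of the duality morphism in the derived category rather than on the cohomology groups of the two specific complexes; what it buys is a more general statement (duality for arbitrary bounded complexes of locally constant $n$-torsion sheaves, which would also subsume Remark \ref{AV tore 2}) and a conceptual explanation of why the two complexes are dual, at the cost of having to construct the duality morphism functorially and check its compatibility with the truncation triangles --- a sign/commutativity verification of the same nature as the one the paper leaves implicit in its diagram. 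Your shift bookkeeping is right ($\mathcal{A}$ in degrees $-2,-1$, $\mathcal{B}$ in degrees $-1,0$, reindexing $s=r-2$), and the local-freeness over $\mathbb{Z}/n\mathbb{Z}$ does kill the higher local Ext as you claim. The one point you assert rather than prove is that the evaluation pairing $\mathcal{A}\otimes^{\mathbf{L}}\mathcal{B}\rightarrow\mu_n^{\otimes(d+1)}[2]$ agrees with the paper's pairing: the latter lands in $\mathbb{Z}(d+1)[3]$, and it is the composite of yours with the Bockstein $\mu_n^{\otimes(d+1)}[2]\rightarrow\mathbb{Z}(d+1)[3]$ that recovers it (up to sign); since the proposition's pairing is the one built in Lemma \ref{acc gm} and its mod-$n$ refinement, this identification deserves a line, though it is routine.
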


\begin{proof}
En remarquant que $\mathbb{Z}/n\mathbb{Z} \otimes^{\mathbf{L}} \mathbb{Z}/n\mathbb{Z}$ s'identifie au complexe $[\mathbb{Z} \rightarrow \mathbb{Z} \oplus \mathbb{Z} \rightarrow \mathbb{Z}]$ où la première flèche est donnée par $x \mapsto (nx,-nx)$ et la deuxième par $(x,y) \mapsto n(x+y)$, on peut définir un accouplement $\mathbb{Z}/n\mathbb{Z} \otimes^{\mathbf{L}} \mathbb{Z}/n\mathbb{Z} \rightarrow \mathbb{Z}[1]$ en envoyant $(x,y) \in \mathbb{Z} \oplus \mathbb{Z}$ sur $x+y \in \mathbb{Z}$. Cela permet de définir un accouplement:
$$(\mathcal{G} \otimes^{\mathbf{L}} \mathbb{Z}/n\mathbb{Z}) \otimes^{\mathbf{L}} (\tilde{\mathcal{G}} \otimes^{\mathbf{L}} \mathbb{Z}/n\mathbb{Z}) \rightarrow \mathbb{Z}(d+1)[3] ,$$  d'où un accouplement $$H^r(U,\mathcal{G} \otimes \mathbb{Z}/n\mathbb{Z}) \times H^{d+1-r}_c(U,\tilde{\mathcal{G}} \otimes \mathbb{Z}/n\mathbb{Z}) \rightarrow \mathbb{Q}/\mathbb{Z}.$$
En exploitant les triangles distingués $\mathcal{T}_1 \otimes^{\mathbf{L}} \mathbb{Z}/n\mathbb{Z}  \rightarrow \mathcal{T}_2 \otimes^{\mathbf{L}} \mathbb{Z}/n\mathbb{Z}  \rightarrow \mathcal{G} \otimes^{\mathbf{L}} \mathbb{Z}/n\mathbb{Z} \rightarrow \mathcal{T}_1 \otimes^{\mathbf{L}} \mathbb{Z}/n\mathbb{Z}[1]$ et
$\tilde{\mathcal{T}_2} \otimes^{\mathbf{L}} \mathbb{Z}/n\mathbb{Z} \rightarrow \tilde{\mathcal{T}_1} \otimes^{\mathbf{L}} \mathbb{Z}/n\mathbb{Z} \rightarrow \tilde{\mathcal{G}} \otimes^{\mathbf{L}} \mathbb{Z}/n\mathbb{Z} \rightarrow \tilde{\mathcal{T}_2} \otimes^{\mathbf{L}} \mathbb{Z}/n\mathbb{Z}[1]$ et en faisant appel à la proposition \ref{AV fini}, on dispose d'un diagramme commutatif à lignes exactes de groupes finis:\\
\footnotesize{\xymatrix{
H^{r}(U,\mathcal{T}_1 \otimes^{\mathbf{L}} \mathbb{Z}/n\mathbb{Z}) \ar[r] \ar[d]^{\cong} & H^{r}(U,\mathcal{T}_2 \otimes^{\mathbf{L}} \mathbb{Z}/n\mathbb{Z}) \ar[r] \ar[d]^{\cong} &H^r(U,\mathcal{G}\otimes^{\mathbf{L}} \mathbb{Z}/n\mathbb{Z}) \ar[r] \ar[d] & ...\\
H^{d+2-r}_c(U,\tilde{\mathcal{T}_1}\otimes^{\mathbf{L}} \mathbb{Z}/n\mathbb{Z})^D \ar[r] & H^{d+2-r}_c(U,\tilde{\mathcal{T}_2}\otimes^{\mathbf{L}} \mathbb{Z}/n\mathbb{Z})^D \ar[r] & H^{d+1-r}_c(U,\tilde{\mathcal{G}}\otimes^{\mathbf{L}} \mathbb{Z}/n\mathbb{Z})^D \ar[r] & ...
}\\

\xymatrix{
\hspace{110pt}...\ar[r] &H^{r+1}(U,\mathcal{T}_1 \otimes^{\mathbf{L}} \mathbb{Z}/n\mathbb{Z}) \ar[r] \ar[d]^{\cong} &H^{r+1}(U,\mathcal{T}_1 \otimes^{\mathbf{L}} \mathbb{Z}/n\mathbb{Z}) \ar[d]^{\cong}\\
\hspace{110pt}...\ar[r] & H^{d+1-r}_c(U,\tilde{\mathcal{T}_1}\otimes^{\mathbf{L}} \mathbb{Z}/n\mathbb{Z})^D \ar[r] & H^{d+1-r}_c(U,\tilde{\mathcal{T}_2}\otimes^{\mathbf{L}} \mathbb{Z}/n\mathbb{Z})^D. 
}
}
\normalsize
On en déduit que la flèche verticale centrale est un isomorphisme.
\end{proof}

\begin{theorem} \textbf{(Artin-Verdier pour les groupes de type multiplicatif)} \label{AV gm}\\
On rappelle que l'on a supposé (H \ref{40}). Soit $r \in \mathbb{Z}$. Pour chaque nombre premier $l$, on a un accouplement parfait de groupes finis:
$$H^r(U,\mathcal{G})\{l\}^{(l)} \times H^{d+2-r}_c(U,\tilde{\mathcal{G}})^{(l)}\{l\} \rightarrow \mathbb{Q}/\mathbb{Z}.$$
\end{theorem}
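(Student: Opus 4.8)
The plan is to transcribe, essentially verbatim, the proof of Theorem \ref{AV tore} (more precisely of its integral variant, Remark \ref{AV tore 2}), replacing the finite-module duality of Proposition \ref{AV fini} by its analogue for cones of tori, Proposition \ref{AV gm fini}. The whole point is that the finite-level statement is already available, so the argument reduces to a passage to the limit combined with Pontryagin duality, together with a careful bookkeeping of the torsion and completion operations and of the degree shift built into the cone.

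First I would fix the prime $l$ and feed the multiplication-by-$l^n$ distinguished triangle $\mathcal{G} \rightarrow \mathcal{G} \rightarrow \mathcal{G} \otimes^{\mathbf{L}} \mathbb{Z}/l^n\mathbb{Z} \rightarrow \mathcal{G}[1]$ into hypercohomology, producing the short exact sequences
$$0 \rightarrow H^{r-1}(U,\mathcal{G})/l^n \rightarrow H^{r-1}(U, \mathcal{G} \otimes^{\mathbf{L}} \mathbb{Z}/l^n\mathbb{Z}) \rightarrow {_{l^n}}H^{r}(U,\mathcal{G}) \rightarrow 0.$$
Passing to the inductive limit over $n$ and then applying $(-)^{(l)}$, the divisible term $H^{r-1}(U,\mathcal{G}) \otimes \mathbb{Q}_l/\mathbb{Z}_l$ disappears, yielding an isomorphism $(\varinjlim_n H^{r-1}(U, \mathcal{G} \otimes^{\mathbf{L}} \mathbb{Z}/l^n\mathbb{Z}))^{(l)} \cong H^{r}(U,\mathcal{G})\{l\}^{(l)}$, exactly as in the first half of the proof of Theorem \ref{AV tore}.

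Next I would run the same triangle for $\tilde{\mathcal{G}}$ in cohomology with compact support, giving
$$0 \rightarrow H^{d+2-r}_c(U,\tilde{\mathcal{G}})/l^n \rightarrow H^{d+2-r}_c(U, \tilde{\mathcal{G}} \otimes^{\mathbf{L}} \mathbb{Z}/l^n\mathbb{Z}) \rightarrow {_{l^n}}H^{d+3-r}_c(U,\tilde{\mathcal{G}}) \rightarrow 0.$$
Here the transition maps are multiplication by $l$; the middle groups are finite by Proposition \ref{AV gm fini}, so the relevant $\varprojlim^1$ vanishes and the projective limit stays exact. Since $\varprojlim_n {_{l^n}}H^{d+3-r}_c(U,\tilde{\mathcal{G}})$ is a Tate module, hence torsion-free, applying $(-)\{l\}$ yields $H^{d+2-r}_c(U,\tilde{\mathcal{G}})^{(l)}\{l\} \cong (\varprojlim_n H^{d+2-r}_c(U, \tilde{\mathcal{G}} \otimes^{\mathbf{L}} \mathbb{Z}/l^n\mathbb{Z}))\{l\}$, mirroring the dual half of the proofs of Theorem \ref{AV tore} and Remark \ref{AV tore 2}.

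Finally I would glue the two halves with Proposition \ref{AV gm fini}, which asserts that $H^{r-1}(U, \mathcal{G} \otimes^{\mathbf{L}} \mathbb{Z}/l^n\mathbb{Z})$ and $H^{(d+1)-(r-1)}_c(U, \tilde{\mathcal{G}} \otimes^{\mathbf{L}} \mathbb{Z}/l^n\mathbb{Z}) = H^{d+2-r}_c(U, \tilde{\mathcal{G}} \otimes^{\mathbf{L}} \mathbb{Z}/l^n\mathbb{Z})$ are finite and in perfect duality. Dualizing the inductive system against the projective one, Pontryagin duality turns $\varinjlim$ into $\varprojlim$ and interchanges the operations $(-)^{(l)}$ and $(-)\{l\}$, so the two isomorphisms above combine into the sought perfect pairing $H^r(U,\mathcal{G})\{l\}^{(l)} \times H^{d+2-r}_c(U,\tilde{\mathcal{G}})^{(l)}\{l\} \rightarrow \mathbb{Q}/\mathbb{Z}$. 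The only genuine obstacle is ensuring that the limits behave well: finiteness of the finite-level groups (supplied by \ref{AV gm fini}) together with the cofinite-typeness of the integral groups (an analogue of Proposition \ref{nature tore} for $\mathcal{G}$ and $\tilde{\mathcal{G}}$) guarantee the vanishing of $\varprojlim^1$ and the commutation of $\{l\}$ and $(-)^{(l)}$ with the limits, and the degree shift intrinsic to the cone $\mathcal{G}$ is the one point that needs attention.
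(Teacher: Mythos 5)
Your proposal is correct and follows essentially the same route as the paper's own proof: the paper likewise feeds the multiplication-by-$l^n$ triangles for $\mathcal{G}$ and $\tilde{\mathcal{G}}$ into hypercohomology and compactly supported hypercohomology, passes to $\varinjlim$ on one side and $\varprojlim$ on the other (exactness of the latter being guaranteed by the finiteness supplied by la proposition \ref{AV gm fini}), kills the divisible term $H^{r-1}(U,\mathcal{G}) \otimes \mathbb{Q}_l/\mathbb{Z}_l$ and the torsion-free Tate module $\varprojlim_n {_{l^n}}H^{d+3-r}_c(U,\tilde{\mathcal{G}})$, and concludes by Pontryagin-dualizing the finite-level perfect pairing with exactly your degree shift $H^{r-1} \leftrightarrow H^{d+2-r}_c$. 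Your remarks on Mittag-Leffler and on the interchange of $(-)^{(l)}$ and $(-)\{l\}$ under duality make explicit what the paper leaves implicit, but they do not constitute a different argument.
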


\begin{proof}
Pour chaque entier naturel $n$, on dispose des triangles distingués:
$$\mathcal{G} \rightarrow \mathcal{G} \rightarrow \mathcal{G} \otimes^{\mathbf{L}} \mathbb{Z}/l^n\mathbb{Z} \rightarrow \mathcal{G}[1],$$
$$\tilde{\mathcal{G}} \rightarrow \tilde{\mathcal{G}} \rightarrow \tilde{\mathcal{G}} \otimes^{\mathbf{L}} \mathbb{Z}/l^n\mathbb{Z} \rightarrow \tilde{\mathcal{G}}[1].$$
On en déduit des suites exactes de groupes finis:
$$0 \rightarrow H^{r-1}(U,\mathcal{G})/l^n \rightarrow H^{r-1}(U,\mathcal{G} \otimes^{\mathbf{L}} \mathbb{Z}/l^n\mathbb{Z}) \rightarrow {_{l^n}}H^r(U,\mathcal{G}) \rightarrow 0,$$
$$0 \rightarrow H^{d+2-r}_c(U,\tilde{\mathcal{G}})/l^n \rightarrow H^{d+2-r}_c(U,\tilde{\mathcal{G}} \otimes^{\mathbf{L}} \mathbb{Z}/l^n\mathbb{Z}) \rightarrow {_{l^n}}H^{d+3-r}_c(U,\tilde{\mathcal{G}})\rightarrow 0.$$
En passant à la limite inductive dans la première suite et à la limite projective dans la deuxième, on obtient des suites exactes:
$$0 \rightarrow H^{r-1}(U,\mathcal{G}) \otimes \mathbb{Q}_l/\mathbb{Z}_l \rightarrow \varinjlim_n H^{r-1}(U,\mathcal{G} \otimes^{\mathbf{L}} \mathbb{Z}/l^n\mathbb{Z}) \rightarrow H^r(U,\mathcal{G})\{l\}\rightarrow 0,$$
$$0 \rightarrow H^{d+2-r}_c(U,\tilde{\mathcal{G}})^{(l)} \rightarrow \varprojlim_n H^{d+2-r}_c(U,\tilde{\mathcal{G}} \otimes^{\mathbf{L}} \mathbb{Z}/l^n\mathbb{Z}) \rightarrow \varprojlim_n {_{l^n}}H^{d+3-r}_c(U,\tilde{\mathcal{G}})\rightarrow 0.$$
En remarquant que $H^{r-1}(U,\mathcal{G}) \otimes \mathbb{Q}_l/\mathbb{Z}_l$ est divisible et que $\varprojlim_n {_{l^n}}H^{d+3-r}_c(U,\tilde{\mathcal{G}})$ est sans torsion et en utilisant la proposition \ref{AV gm fini}, on obtient alors des isomorphismes:
\begin{align*}
 H^r(U,\mathcal{G})\{l\}^{(l)} & \cong (\varinjlim_n H^{r-1}(U,\mathcal{G} \otimes^{\mathbf{L}} \mathbb{Z}/l^n\mathbb{Z}))^{(l)}
 \\& \cong (\varinjlim_n H^{d+2-r}_c(U,\tilde{\mathcal{G}} \otimes^{\mathbf{L}} \mathbb{Z}/l^n\mathbb{Z})^D)^{(l)} \\& \cong (\varprojlim_n H^{d+2-r}_c(U,\tilde{\mathcal{G}} \otimes^{\mathbf{L}} \mathbb{Z}/l^n\mathbb{Z}))\{l\}^D \\& \cong H^{d+2-r}_c(U,\tilde{\mathcal{G}})^{(l)}\{l\}^D
\end{align*}
\end{proof}

\begin{proposition}\label{local gm}
Rappelons que nous avons supposé (H \ref{40}).
\begin{itemize}
\item[$(i)$] Supposons que le morphisme $\hat{T_2} \rightarrow \hat{T_1}$ est injectif. Alors les groupes $H^{a-1}(K_v,G)$ et $H^{a-1}(K_v^h,G)$ sont d'exposant fini.
\item[$(ii)$] Supposons que le morphisme $\check{T_1} \rightarrow \check{T_2}$ est injectif et que $K_a^M(L)_{tors}$ est d'exposant fini pour toute extension finie $L$ de $K_v$. Alors le groupe $H^{a-1}(K_v,G)_{tors}$ est d'exposant fini.
\item[$(iii)$] Supposons que $K_a^M(L)_{tors}$ est d'exposant fini pour toute extension finie $L$ de $K_v$. Alors le groupe $H^{a-1}(K_v,G)_{tors}$ est d'exposant fini.
\end{itemize}
\end{proposition}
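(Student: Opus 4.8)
The plan is to reduce everything to the case of split tori by a restriction–corestriction argument, and then to identify the relevant cohomology group with the cokernel of an integer matrix acting on a Milnor $K$-group, after which all three assertions become linear algebra over $\mathbb{Z}$.

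First I would fix a finite extension $L$ of $K_v$, of degree $m$, over which the Galois action on $\hat{T_1}$ and $\hat{T_2}$ becomes trivial (such an $L$ exists since the $\hat{T_i}$ are finitely generated). Since $\text{cores}\circ\text{res}=m$ on $H^{a-1}(K_v,G)$, the kernel of the restriction map $H^{a-1}(K_v,G)\to H^{a-1}(L,G)$ is killed by $m$; hence if $H^{a-1}(L,G)$ (resp. $H^{a-1}(L,G)_{tors}$) has finite exponent $e$, then $H^{a-1}(K_v,G)$ (resp. $H^{a-1}(K_v,G)_{tors}$) has exponent dividing $me$. The same reasoning applies verbatim to $K_v^h$, a field of characteristic zero with the same splitting fields, over which the identifications used below hold as well. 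This is precisely why the hypotheses in (ii) and (iii) bear on $K_a^M(L)_{tors}$ for all finite $L/K_v$: they are exactly what is needed to control the split model.

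Next I would compute $H^{a-1}(L,G)$ in the split case. Over $L$ the modules $\check{T_i}$ are trivial of rank $r_i$, so $T_i\cong\mathbb{Z}(a)[1]^{r_i}$. The distinguished triangle $T_1\to T_2\to G\to T_1[1]$ yields a long exact sequence, and since $H^a(L,T_i)=H^{a+1}(L,\mathbb{Z}(a))^{r_i}=0$ by Beilinson–Lichtenbaum (property (vii) combined with the concentration of $\mathbb{Z}(a)_{\text{Zar}}$ in degrees $\leq a$), the map $H^{a-1}(L,T_2)\to H^{a-1}(L,G)$ is surjective. Moreover $H^{a-1}(L,T_i)=H^a(L,\mathbb{Z}(a))^{r_i}\cong K_a^M(L)^{r_i}$ by properties (v) and (vii). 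Hence $H^{a-1}(L,G)\cong\text{coker}\big(\check\rho:K_a^M(L)^{r_1}\to K_a^M(L)^{r_2}\big)$, where $\check\rho$ is the integer matrix describing $\check{T_1}\to\check{T_2}$. It then remains to analyse this cokernel. For (i), injectivity of $\hat\rho$ means its transpose $\check\rho$ has rank $r_2$, so $\text{coker}(\check\rho:\mathbb{Z}^{r_1}\to\mathbb{Z}^{r_2})$ is finite, say of exponent $e_0$; choosing an integer matrix $\sigma$ with $\check\rho\sigma=e_0\,\mathrm{Id}_{r_2}$ shows $e_0\,K_a^M(L)^{r_2}\subseteq\text{Im}\,\check\rho$, so the whole group $H^{a-1}(L,G)$ is killed by $e_0$, with no hypothesis on $K$-theory. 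For (ii) and (iii) I would put $\check\rho$ in Smith normal form $\check\rho=PDQ$ with $D=\mathrm{diag}(d_1,\dots,d_r,0,\dots,0)$, giving $H^{a-1}(L,G)\cong\bigoplus_i K_a^M(L)/d_iK_a^M(L)\ \oplus\ K_a^M(L)^{\,r_2-r}$. The elementary fact that, for $d\neq0$ and any abelian group $A$ with $A_{tors}$ of finite exponent $E$, the group $A/dA$ is torsion of exponent dividing $dE$ (via $0\to A_{tors}/dA_{tors}\to A/dA\to (A/A_{tors})/d\to 0$) shows that each summand $K_a^M(L)/d_iK_a^M(L)$ is torsion of bounded exponent once $E=\exp K_a^M(L)_{tors}$ is finite, while the free-rank summand contributes $(K_a^M(L)_{tors})^{\,r_2-r}$, of exponent $E$. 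Thus $H^{a-1}(L,G)_{tors}$ has finite exponent; case (ii) is just the subcase where $\check\rho$ is injective (so $r=r_1$), already covered by (iii).

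The main obstacle is really the split-case identification of $H^{a-1}(L,G)$ with a cokernel of an integer matrix on $K_a^M(L)$: this is where Beilinson–Lichtenbaum is used in an essential way, both to kill $H^{a+1}(L,\mathbb{Z}(a))$ and to identify $H^a(L,\mathbb{Z}(a))$ with Milnor $K$-theory. Once that is in place the three cases are pure linear algebra over $\mathbb{Z}$ together with the $A/dA$ estimate, and restriction–corestriction transports the bounds back to $K_v$ and $K_v^h$.
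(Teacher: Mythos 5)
Your proof is correct, and it takes a genuinely different route from the paper's. The paper never passes to the split case for the whole group: it performs a d\'evissage of the coefficient module over $K_v$ itself --- for (i), the triangle $\text{Ker}(T_1\to T_2)[1]\to G\to\text{Coker}(T_1\to T_2)$ together with the finiteness of $H^a(K_v,\text{Ker}(T_1\to T_2))$ (an input from local duality, proposition \ref{local tore}) and a restriction-corestriction bound for the finite cokernel part; for (ii), the quasi-isomorphism $G\cong Q\otimes^{\mathbf{L}}\mathbb{Z}(a)[1]$ with $Q=\text{Coker}(\check{T_1}\to\check{T_2})$ and a Galois-equivariant d\'evissage $0\to F\to Q\to M\to 0$ ($F$ finite, $M$ torsion-free), the $M$-part being handled by restriction-corestriction and Nesterenko-Suslin-Totaro much as in your argument; for (iii), the image factorization $[T_1\to\mathrm{Im}]\to G\to[\mathrm{Im}\to T_2]$ reducing to (i) and (ii). You instead restrict to a splitting field first, where equivariance constraints vanish; that is exactly what makes Smith normal form available --- it does not exist equivariantly over $K_v$, and the paper's image factorization in (iii) is precisely the equivariant substitute for it. Your split-case identification $H^{a-1}(L,G)\cong\text{coker}\bigl(\check\rho:K_a^M(L)^{r_1}\to K_a^M(L)^{r_2}\bigr)$ is justified as you say by the two motivic inputs (Beilinson-Lichtenbaum killing $H^{a+1}(L,\mathbb{Z}(a))$, Nesterenko-Suslin-Totaro identifying $H^a(L,\mathbb{Z}(a))$ with $K_a^M(L)$); the $\sigma$-matrix trick in (i) is sound and covers $K_v^h$ as well, since no $K$-theoretic hypothesis enters there; and your method proves (iii) directly with no injectivity assumption, so that (ii) becomes a formal corollary --- more uniform than the paper, where (iii) is assembled from (i) and (ii). What the paper's route buys is that it stays over $K_v$, so its intermediate exact sequences are reused nearby (compare \ref{nature gm} and \ref{local gm 2}), and (i) is tied to the finiteness statement of local duality; what yours buys is elementarity and uniformity: after the motivic inputs, everything is linear algebra over $\mathbb{Z}$. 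One small simplification: $A/dA$ is killed by $d$ outright, so your exponent bound $dE$ for the summands $K_a^M(L)/d_iK_a^M(L)$ can be replaced by $d_i$, with no need for the exact sequence you invoke.
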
 

\begin{remarque}\label{Milnor}
L'hypothèse sur la $K$-théorie de Milnor est toujours vérifiée pour $a=0$, et aussi pour $a=1$ si $k_1$ est un corps $p$-adique. Je ne sais pas si elle est vérifiée dans d'autres cas.
\end{remarque}

\begin{proof}
\begin{itemize}
\item[(i)] L'hypothèse implique que le conoyau de $\check{T_1} \rightarrow \check{T_2}$ est fini. À l'aide du triangle distingué $\text{Ker}(T_1 \rightarrow T_2)[1] \rightarrow G \rightarrow \text{Coker}(T_1 \rightarrow T_2) \rightarrow \text{Ker}(T_1 \rightarrow T_2)[2]$, on obtient une suite exacte:
$$H^{a}(K_v,\text{Ker}(T_1 \rightarrow T_2)) \rightarrow H^{a-1}(K_v,G) \rightarrow H^{a-1}(K_v,\text{Coker}(T_1 \rightarrow T_2)).$$
Comme nous l'avons déjà vu plusieurs fois, le groupe $H^{a}(K_v,\text{Ker}(T_1 \rightarrow T_2)) \cong H^a(K_v,\text{Ker}(\check{T_1} \rightarrow \check{T_2}) \otimes^{\mathbf{L}} \mathbb{Z}(a)[1])$ est fini. De plus, un argument de restriction-corestriction prouve que le groupe $H^{a-1}(K_v,\text{Coker}(T_1 \rightarrow T_2))$ est d'exposant fini. On en déduit que $H^{a-1}(K_v,G)$ est d'exposant fini. On prouve exactement de la même manière que $H^{a-1}(K_v^h,G)$ est d'exposant fini.
\item[(ii)] Notons $Q$ le conoyau de $\check{T_1} \rightarrow \check{T_2}$, de sorte que l'on a un quasi-isomorphisme $G \cong Q \otimes^{\mathbf{L}} \mathbb{Z}(a)[1]$. On peut alors trouver un $\text{Gal}(K^s/K)$-module fini $F$ et un $\text{Gal}(K^s/K)$-module sans torsion de type fini $M$ s'insérant dans une suite exacte de $\text{Gal}(K^s/K)$-modules:
$$0 \rightarrow F \rightarrow Q \rightarrow M \rightarrow 0.$$
On en déduit une suite exacte:
$$H^a(K_v,F \otimes^{\mathbf{L}} \mathbb{Z}(a)) \rightarrow H^{a-1}(K_v,G) \rightarrow H^{a}(K_v,M \otimes^{\mathbf{L}} \mathbb{Z}(a)).$$
Le groupe $H^a(K_v,F \otimes^{\mathbf{L}} \mathbb{Z}(a))$ est bien sûr d'exposant fini. De plus, si $L$ est une extension finie de $K_v$ de degré $e$ telle que $\text{Gal}(K^s/L)$ agit trivialement sur $M$, on a, pour un certain entier naturel $n$, un diagramme commutatif:\\
\centerline{\xymatrix{
H^{a}(K_v,M \otimes \mathbb{Z}(a)) \ar[r]^{\text{Res}} \ar[rd]^{\cdot e} & H^a(L,\mathbb{Z}(a))^n \ar[d]^{\text{Cor}}\\
& H^{a}(K_v,M \otimes \mathbb{Z}(a))
}}
L'isomorphisme de Suslin-Nesterenko-Totaro s'écrit $H^a(L,\mathbb{Z}(a)) \cong K^M_a(L)$. Par conséquent, l'hypothèse impose que $H^a(L,\mathbb{Z}(a))_{tors}$ est d'exposant fini, et on en déduit immédiatement que $H^{a}(K_v,M \otimes \mathbb{Z}(a))_{tors}$ et $H^{a-1}(K_v,G)_{tors}$ le sont aussi.
\item[(iii)] En notant $\text{Im}(T_1 \rightarrow T_2) = \text{Im}(\check{T_1} \rightarrow \check{T_2}) \otimes \mathbb{Z}(a)$, on dispose d'un triangle distingué:
$$[T_1 \rightarrow \text{Im}(T_1 \rightarrow T_2)] \rightarrow G \rightarrow [\text{Im}(T_1 \rightarrow T_2) \rightarrow T_2] \rightarrow [T_1 \rightarrow \text{Im}(T_1 \rightarrow T_2)][1],$$
d'où une suite exacte:
$$H^{a-1}(K_v,[T_1 \rightarrow \text{Im}(T_1 \rightarrow T_2)]) \rightarrow H^{a-1}(K_v,G) \rightarrow H^{a-1}(K_v,[\text{Im}(T_1 \rightarrow T_2) \rightarrow T_2]).$$
D'après (i), le groupe $H^{a-1}(K_v,[T_1 \rightarrow \text{Im}(T_1 \rightarrow T_2)])$ est d'exposant fini, et d'après (ii), $H^{a-1}(K_v,[\text{Im}(T_1 \rightarrow T_2) \rightarrow T_2])_{tors}$ l'est aussi. On en déduit que $H^{a-1}(K_v,G)_{tors}$ est d'exposant fini.

\end{itemize}
\end{proof}

\begin{proposition}\textbf{(Dualité locale pour les groupes de type multiplicatif)}\label{local gm 2}
Rappelons que nous avons supposé (H \ref{40}).
\begin{itemize}
\item[(i)] Supposons que le morphisme $\hat{T_2} \rightarrow \hat{T_1}$ est injectif.
On a alors un accouplement parfait:
$$H^{a-1}(K_v,G)^{\wedge} \times H^{d+2-a}(K_v,\tilde{G}) \rightarrow \mathbb{Q}/\mathbb{Z}.$$
\item[(ii)] Le morphisme naturel $H^{a-1}(K_v,G)^{\wedge} \rightarrow (H^{d+2-a}(K_v,\tilde{G}))^D$ est injectif.
\end{itemize}
\end{proposition}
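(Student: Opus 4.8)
The pairing in question is the one deduced from $G \otimes^{\mathbf{L}} \tilde{G} \to \mathbb{Z}(d+1)[2]$ (Lemme \ref{acc gm}), which for $r=a-1$ lands in $H^{d+3}(K_v,\mathbb{Z}(d+1))\cong\mathbb{Q}/\mathbb{Z}$. Since $\tilde{G}=[\tilde{T_2}\to\tilde{T_1}]$ is built, via its triangle, from the groups $H^{d+2-a}(K_v,\tilde{T_i})$ and $H^{d+3-a}(K_v,\tilde{T_i})$ which are of torsion by \ref{local tore}, the group $H^{d+2-a}(K_v,\tilde{G})$ is itself of torsion, so the pairing factors through a canonical map $\Phi\colon H^{a-1}(K_v,G)^{\wedge}\to H^{d+2-a}(K_v,\tilde{G})^D$. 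Assertion (ii) is the injectivity of $\Phi$ and assertion (i) its bijectivity, so the whole proposition amounts to controlling $\Phi$.

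My main tool would be to reduce to the tori case already settled in \ref{local tore}. The triangles $T_1\to T_2\to G\to T_1[1]$ and $\tilde{T_2}\to\tilde{T_1}\to\tilde{G}\to\tilde{T_2}[1]$ give two five-term exact sequences; applying profinite completion to the cohomology sequence of the first and $(-)^D$ (exact, as $\mathbb{Q}/\mathbb{Z}$ is injective) to that of the second, one obtains a ladder
\begin{equation*}
\xymatrix@C=1.1pc{
H^{a-1}(K_v,T_1)^{\wedge}\ar[r]\ar[d] & H^{a-1}(K_v,T_2)^{\wedge}\ar[r]\ar[d] & H^{a-1}(K_v,G)^{\wedge}\ar[r]\ar[d]^{\Phi} & H^a(K_v,T_1)\ar[r]\ar[d] & H^a(K_v,T_2)\ar[d]\\
H^{d+3-a}(K_v,\tilde{T_1})^D\ar[r] & H^{d+3-a}(K_v,\tilde{T_2})^D\ar[r] & H^{d+2-a}(K_v,\tilde{G})^D\ar[r] & H^{d+2-a}(K_v,\tilde{T_1})^D\ar[r] & H^{d+2-a}(K_v,\tilde{T_2})^D
}
\end{equation*}
in which the four outer vertical maps are the isomorphisms of \ref{local tore}(iii) (for the $H^{a-1}$ columns) and \ref{local tore}(i) (for the finite $H^a$ columns), the squares commuting because $\rho$ and $\tilde{\rho}$ are mutually dual. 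A diagram chase then relates $\Phi$ to the tori duality.

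For (ii) I would extract only the injectivity half, which needs no hypothesis. Equivalently, and more directly, reduce mod $n$: using the accouplement $\mathbb{Z}/n\mathbb{Z}\otimes^{\mathbf{L}}\mathbb{Z}/n\mathbb{Z}\to\mathbb{Z}[1]$ as in the proof of \ref{AV gm fini} together with the duality for finite modules over the $(d+1)$-local field $K_v$, one gets perfect pairings of finite groups $H^{a-1}(K_v,G\otimes^{\mathbf{L}}\mathbb{Z}/n\mathbb{Z})\times H^{d+1-a}(K_v,\tilde{G}\otimes^{\mathbf{L}}\mathbb{Z}/n\mathbb{Z})\to\mathbb{Q}/\mathbb{Z}$. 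Passing to the limit (Mittag-Leffler is automatic, all groups being finite) and using $\tilde{G}_t=\varinjlim_n(\tilde{G}\otimes^{\mathbf{L}}\mathbb{Z}/n\mathbb{Z})$ yields $\varprojlim_n H^{a-1}(K_v,G\otimes^{\mathbf{L}}\mathbb{Z}/n\mathbb{Z})\cong H^{d+1-a}(K_v,\tilde{G}_t)^D$. The Kummer sequences $0\to H^{a-1}(K_v,G)/n\to H^{a-1}(K_v,G\otimes^{\mathbf{L}}\mathbb{Z}/n\mathbb{Z})\to {}_nH^a(K_v,G)\to 0$ have finite left terms, so $A/nA$ is finite (hence $A^{\wedge}=\varprojlim_n A/nA$) and left-exactness of $\varprojlim$ gives an injection $H^{a-1}(K_v,G)^{\wedge}\hookrightarrow H^{d+1-a}(K_v,\tilde{G}_t)^D$; as this injection factors $\Phi$ through the injection $H^{d+2-a}(K_v,\tilde{G})^D\hookrightarrow H^{d+1-a}(K_v,\tilde{G}_t)^D$ dual to the (surjective) connecting map $H^{d+1-a}(K_v,\tilde{G}_t)\twoheadrightarrow H^{d+2-a}(K_v,\tilde{G})$, the injectivity of $\Phi$ follows with no hypothesis on the morphisms.

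The hard part, and the place where the injectivity of $\hat{T_2}\to\hat{T_1}$ must be used, is the surjectivity needed for (i): it requires the error terms of the two approximations to match. On the dual side, identifying $H^{d+1-a}(K_v,\tilde{G}_t)$ with $H^{d+2-a}(K_v,\tilde{G})$ through the triangle $\tilde{G}\to\tilde{G}\otimes\mathbb{Q}\to\tilde{G}_t\to\tilde{G}[1]$ demands the vanishing of the rational groups $H^{d+1-a}(K_v,\tilde{G}\otimes\mathbb{Q})$ and $H^{d+2-a}(K_v,\tilde{G}\otimes\mathbb{Q})$ (treated as in \ref{local tore}(ii), via concentration of $\mathbb{Q}(d+1-a)_{\text{Zar}}$ in degrees $\le d+1-a$ and restriction-corestriction); on the source side, identifying $\varprojlim_n H^{a-1}(K_v,G\otimes^{\mathbf{L}}\mathbb{Z}/n\mathbb{Z})$ with $H^{a-1}(K_v,G)^{\wedge}$ demands the vanishing of the Tate-module term $\varprojlim_n{}_nH^a(K_v,G)$. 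The hypothesis that $\hat{T_2}\to\hat{T_1}$ is injective forces $\mathrm{coker}(\check{T_1}\to\check{T_2})$ to be finite, whence by \ref{local gm}(i) the group $H^{a-1}(K_v,G)$ is of finite exponent; this finiteness is exactly what is needed to annihilate these error terms. In the five-lemma formulation of the second paragraph the same difficulty reappears as the exactness of profinite completion on the top row, so either route isolates the same crux.
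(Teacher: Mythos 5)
Your proposal is correct and follows essentially the same route as the paper: for (i) the paper runs exactly your five-term ladder, completing the cohomology sequence of $T_1\to T_2\to G$ and dualizing that of $\tilde{T_2}\to\tilde{T_1}\to\tilde{G}$, with the outer isomorphisms from \ref{local tore}(i) and (iii) and with the hypothesis on $\hat{T_2}\to\hat{T_1}$ entering precisely where you say, via \ref{local gm} (finite exponent of $H^{a-1}(K_v,G)$, which together with the finiteness of the $H^a(K_v,T_i)$ makes the last four terms of the completed row exact, the first three forming only a complex — enough for the diagram chase); for (ii) it likewise establishes the perfect mod-$n$ duality $H^{a-1}(K_v,G\otimes^{\mathbf{L}}\mathbb{Z}/n\mathbb{Z})\times H^{d+1-a}(K_v,\tilde{G}\otimes^{\mathbf{L}}\mathbb{Z}/n\mathbb{Z})\to\mathbb{Q}/\mathbb{Z}$ by the five lemma, compares it with the Kummer sequences to get the injections $H^{a-1}(K_v,G)/n\hookrightarrow({_n}H^{d+2-a}(K_v,\tilde{G}))^D$, and passes to the projective limit, which is your argument up to the cosmetic detour through $\tilde{G}_t$. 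One caveat on your closing commentary: the finite exponent of $H^{a-1}(K_v,G)$ does not by itself annihilate the Tate-module term $\varprojlim_n {_n}H^a(K_v,G)$, which lives one degree higher; this imprecision only affects the alternative limit formulation you sketch at the end, not the ladder argument you actually carry out, which is the paper's.
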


\begin{proof}
\begin{itemize}
\item[(i)] Les triangles distingués $T_1 \rightarrow T_2 \rightarrow G \rightarrow T_1[1]$ et $\tilde{T_2} \rightarrow \tilde{T_1} \rightarrow \tilde{G} \rightarrow \tilde{T_2}[1]$ fournissent des suites exactes:
\begin{gather}
\begin{split}
H^{a-1}(K_v,T_1) \rightarrow H^{a-1}(K_v,T_2) \rightarrow H^{a-1}(K_v,G) \rightarrow\\ \rightarrow H^a(K_v,T_1) \rightarrow H^a(K_v,T_2),
\end{split}\\
\begin{split}
H^{d+2-a}(K_v,\tilde{T_2}) \rightarrow H^{d+2-a}(K_v,\tilde{T_1}) \rightarrow H^{d+2-a}(K_v,\tilde{G}) \rightarrow \\ \rightarrow H^{d+3-a}(K_v,\tilde{T_2}) \rightarrow H^{d+3-a}(K_v,\tilde{T_1}).
\end{split}
\end{gather}
Exploitons ces deux suites:
\begin{itemize}
\item[$\bullet$] Comme $H^a(K_v,T_1)$ et $H^a(K_v,T_2)$ sont finis et $H^{a-1}(K_v,G)$ est d'exposant fini d'après \ref{local gm}, la suite (9) impose que la suite $$ H^{a-1}(K_v,T_2)^{\wedge} \rightarrow H^{a-1}(K_v,G)^{\wedge} \rightarrow H^a(K_v,T_1) \rightarrow H^a(K_v,T_2)$$
est exacte. De plus, $H^{a-1}(K_v,T_1)^{\wedge} \rightarrow H^{a-1}(K_v,T_2)^{\wedge} \rightarrow H^{a-1}(K_v,G)^{\wedge}$ est un complexe.
\item[$\bullet$] Les groupes de la suite (10) étant discrets de torsion, on obtient une suite exacte 
\begin{align*}
H^{d+3-a}(K_v,\tilde{T_1})^D \rightarrow H^{d+3-a}(K_v,\tilde{T_2})^D &\rightarrow H^{d+2-a}(K_v,\tilde{G})^D \\
&\rightarrow H^{d+2-a}(K_v,\tilde{T_1})^D  \rightarrow  H^{d+2-a}(K_v,\tilde{T_2})^D.
\end{align*}
\end{itemize}
On obtient alors un diagramme commutatif:
\begin{multline*}
\xymatrix{
H^{a-1}(K_v,T_1)^{\wedge}\ar[r]\ar[d]^{\cong} & H^{a-1}(K_v,T_2)^{\wedge} \ar[r]\ar[d]^{\cong} &  H^{a-1}(K_v,G)^{\wedge} \ar[r]\ar[d] & ...\\
H^{d+3-a}(K_v,\tilde{T_1})^D \ar[r] &  H^{d+3-a}(K_v,\tilde{T_2})^D \ar[r] & H^{d+2-a}(K_v,\tilde{G})^D \ar[r] & ...
}\\
\xymatrix{
... \ar[r] &  H^a(K_v,T_1) \ar[r]\ar[d]^{\cong} &  H^a(K_v,T_2)\ar[d]^{\cong}\\
... \ar[r] & H^{d+2-a}(K_v,\tilde{T_1})^D \ar[r] &  H^{d+2-a}(K_v,\tilde{T_2})^D
}
\end{multline*}
où la première ligne est un complexe dont les quatre derniers termes forment une suite exacte, la deuxième ligne est exacte, et les isomorphismes proviennent de \ref{local tore}. On en déduit que le morphisme vertical central est un isomorphisme. On en déduit que l'accouplement:
$$H^{a-1}(K_v,G)^{\wedge} \times H^{d+2-a}(K_v,\tilde{G}) \rightarrow \mathbb{Q}/\mathbb{Z}$$
est parfait.
\item[(ii)] Soit $n>0$ un entier. On dispose d'un diagramme commutatif à lignes exactes:
\scriptsize
\begin{multline*}
\xymatrix{
H^{a-1}(K_v, T_1 \otimes \mathbb{Z}/n\mathbb{Z}) \ar[r]\ar[d]^{\cong} & H^{a-1}(K_v, T_2 \otimes \mathbb{Z}/n\mathbb{Z}) \ar[r]\ar[d]^{\cong} &H^{a-1}(K_v, G \otimes \mathbb{Z}/n\mathbb{Z}) \ar[r]\ar[d] & ...\\
H^{d+2-a}(K_v,\tilde{T_1} \otimes \mathbb{Z}/n\mathbb{Z})^D \ar[r] &H^{d+2-a}(K_v,\tilde{T_2} \otimes \mathbb{Z}/n\mathbb{Z})^D \ar[r] &H^{d+1-a}(K_v,\tilde{G} \otimes \mathbb{Z}/n\mathbb{Z})^D \ar[r] & ...
}\\
\xymatrix{
... \ar[r] & H^{a}(K_v, T_1 \otimes \mathbb{Z}/n\mathbb{Z}) \ar[r]\ar[d]^{\cong} & H^{a}(K_v, T_2 \otimes \mathbb{Z}/n\mathbb{Z}) \ar[d]^{\cong}\\
... \ar[r] &H^{d+1-a}(K_v,\tilde{T_1} \otimes \mathbb{Z}/n\mathbb{Z})^D \ar[r] &H^{d+1-a}(K_v,\tilde{T_2} \otimes \mathbb{Z}/n\mathbb{Z})^D.
}
\end{multline*}
\normalsize
Le lemme des cinq fournit alors une dualité parfaite de groupes finis:
$$H^{a-1}(K_v, G \otimes \mathbb{Z}/n\mathbb{Z})  \times H^{d+1-a}(K_v,\tilde{G} \otimes \mathbb{Z}/n\mathbb{Z}) \rightarrow \mathbb{Q}/\mathbb{Z}.$$
On a alors un diagramme commutatif à lignes exactes:
\begin{multline*}
\xymatrix{
0 \ar[r] & H^{a-1}(K_v,G)/n \ar[r]\ar[d] &  H^{a-1}(K_v,G \otimes \mathbb{Z}/n\mathbb{Z}) \ar[r]\ar[d]^{\cong} & ...  \\
0 \ar[r] & ({_n}H^{d+2-a}(K_v,\tilde{G}))^D \ar[r] & H^{d+1-a}(K_v,\tilde{G} \otimes \mathbb{Z}/n\mathbb{Z})^D\ar[r] & ...
}\\
\xymatrix{
... \ar[r] &  {_n}H^{a}(K_v,G) \ar[r]\ar[d] & 0\\
... \ar[r] & (H^{d+1-a}(K_v,\tilde{G})/n)^D \ar[r] & 0.
}
\end{multline*}
Par conséquent, la flèche $H^{a-1}(K_v,G)/n \rightarrow ({_n}H^{d+2-a}(K_v,\tilde{G}))^D$ est injective. Il suffit alors de passer à la limite projective sur $n$.
\end{itemize}
\end{proof}

On définit:
\begin{itemize}
\item[$\bullet$] $\mathcal{D}^{d+3-a}(U,\tilde{\mathcal{G}}) = \text{Im} (H^{d+3-a}_c(U,\tilde{\mathcal{G}}) \rightarrow H^{d+3-a}(K,\tilde{G}))$,
\item[$\bullet$] $\mathcal{D}^{d+2-a}(U,\tilde{\mathcal{G}}_t) = \text{Im} (H^{d+2-a}_c(U,\tilde{\mathcal{G}}_t) \rightarrow H^{d+2-a}(K,\tilde{G}_t))$,
\item[$\bullet$] $\mathcal{D}^{d+2-a}(U,\tilde{\mathcal{G}}\otimes \mathbb{Z}/n\mathbb{Z}) = \text{Im} (H^{d+2-a}_c(U,\tilde{\mathcal{G}}\otimes \mathbb{Z}/n\mathbb{Z}) \rightarrow H^{d+2-a}(K,\tilde{G}\otimes \mathbb{Z}/n\mathbb{Z}))$.
\item[$\bullet$] $\mathcal{D}^a(U,\mathcal{G}) = \text{Im} (H^a_c(U,\mathcal{G}) \rightarrow H^a(K,G))$,
\item[$\bullet$] $\mathcal{D}^{a-1}(U,\mathcal{G}_t) = \text{Im} (H^{a-1}_c(U,\mathcal{G}_t) \rightarrow H^{a-1}(K,G_t))$,
\item[$\bullet$] $\mathcal{D}^{a-1}(U,\mathcal{G} \otimes \mathbb{Z}/n\mathbb{Z}) = \text{Im} (H^{a-1}_c(U,\mathcal{G} \otimes \mathbb{Z}/n\mathbb{Z}) \rightarrow H^{a-1}(K,G \otimes \mathbb{Z}/n\mathbb{Z}))$.
\end{itemize}

\begin{lemma}\label{nature gm}
Rappelons que nous avons supposé (H \ref{40}).
\begin{itemize}
\item[(i)] Le groupe $H^{d+3-a}(U,\tilde{\mathcal{G}})$ est de torsion de type cofini.
\item[(ii)] Le groupe $H^{d+2-a}(K_v,\tilde{G})$ est de torsion de type cofini pour $v \in X^{(1)}$.
\item[(iii)] Le groupe $H^{d+2-a}(K_v^h,\tilde{G})$ est de torsion de type cofini pour $v \in X^{(1)}$.
\end{itemize}
Si $\hat{T_2} \rightarrow \hat{T_1}$ est injectif, alors:
\begin{itemize}
\item[(iv)] Le groupe $H^{a}(U,\mathcal{G})$ est de torsion de type cofini.
\item[(v)] Le groupe $H^{a-1}(K_v,G)$ est de torsion de type cofini pour $v \in X^{(1)}$.
\item[(vi)] Le groupe $H^{a-1}(K_v^h,G)$ est de torsion de type cofini pour $v \in X^{(1)}$.
\end{itemize}
\end{lemma}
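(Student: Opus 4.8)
The plan is to treat the three assertions about $\tilde{\mathcal{G}}$, $\tilde{G}$ (parts (i)--(iii)) in one stroke, and the three about $\mathcal{G}$, $G$ (parts (iv)--(vi)) in another, in both cases by copying the mechanism of Proposition \ref{nature tore}: I compare the integral complex with its rationalization and its torsion part via the distinguished triangles $\tilde{\mathcal{G}} \to \tilde{\mathcal{G}} \otimes \mathbb{Q} \to \tilde{\mathcal{G}}_t \to \tilde{\mathcal{G}}[1]$ (and its analogues over $K_v$, $K_v^h$, and for $\mathcal{G}$), show that the relevant \emph{rational} cohomology group vanishes, and thereby exhibit the group in question as a quotient of a cohomology group of the torsion complex $\tilde{\mathcal{G}}_t$ (resp. $\mathcal{G}_t$). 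The latter is of cofinite type: through the triangles $\tilde{\mathcal{T}_2}_t \to \tilde{\mathcal{T}_1}_t \to \tilde{\mathcal{G}}_t$ (resp. $\mathcal{T}_{1t} \to \mathcal{T}_{2t} \to \mathcal{G}_t$) it is built out of the groups $H^\bullet(-, \tilde{\mathcal{T}_i}_t)$ (resp. $H^\bullet(-, \mathcal{T}_{it})$), which are torsion of cofinite type by the Kummer argument of \ref{nature tore}(ii), and the class of torsion groups of cofinite type is stable under subgroups, quotients and extensions.

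For (i)--(iii) no hypothesis on $\hat{\rho}$ is needed. Writing the long exact sequence attached to $\tilde{\mathcal{G}} \to \tilde{\mathcal{G}} \otimes \mathbb{Q} \to \tilde{\mathcal{G}}_t$, it suffices to check $H^{d+3-a}(U, \tilde{\mathcal{G}} \otimes \mathbb{Q}) = 0$ and the local analogues $H^{d+2-a}(K_v, \tilde{G} \otimes \mathbb{Q}) = H^{d+2-a}(K_v^h, \tilde{G} \otimes \mathbb{Q}) = 0$. These follow from the triangle $\tilde{\mathcal{T}_2} \otimes \mathbb{Q} \to \tilde{\mathcal{T}_1} \otimes \mathbb{Q} \to \tilde{\mathcal{G}} \otimes \mathbb{Q}$ together with the vanishing of $H^j(U, \hat{\mathcal{T}_i} \otimes \mathbb{Q}(d+1-a))$ for $j > d+2-a$ established inside the proof of \ref{nature tore}(iv) (restriction--corestriction to a splitting cover, concentration of $\mathbb{Q}(d+1-a)_{\text{Zar}}$ in degrees $\leq d+1-a$, and $\dim U = 1$), and from the analogous vanishing $H^j(K_v, \hat{T_i} \otimes \mathbb{Q}(d+1-a)) = 0$ for $j > d+1-a$ over the field $K_v$ (a point, of Zariski cohomological dimension $0$) already used in \ref{local tore}. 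The resulting surjections $H^{d+2-a}(U, \tilde{\mathcal{G}}_t) \twoheadrightarrow H^{d+3-a}(U, \tilde{\mathcal{G}})$ and $H^{d+1-a}(K_v, \tilde{G}_t) \twoheadrightarrow H^{d+2-a}(K_v, \tilde{G})$ then give the claims.

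For (iv)--(vi) the injectivity of $\hat{\rho} : \hat{T_2} \to \hat{T_1}$ enters precisely at the rational level. As noted in \ref{local gm}, this injectivity is equivalent to the surjectivity of $\check{\rho} \otimes \mathbb{Q} : \check{T_1} \otimes \mathbb{Q} \to \check{T_2} \otimes \mathbb{Q}$. Writing $\check{\mathcal{N}} = \ker(\check{\mathcal{T}_1} \to \check{\mathcal{T}_2})$ and $\check{N} = \ker(\check{T_1} \to \check{T_2})$, the short exact sequence $0 \to \check{\mathcal{N}} \otimes \mathbb{Q}(a)[1] \to \mathcal{T}_1 \otimes \mathbb{Q} \to \mathcal{T}_2 \otimes \mathbb{Q} \to 0$ identifies $\mathcal{G} \otimes \mathbb{Q}$ with $\check{\mathcal{N}} \otimes \mathbb{Q}(a)[2]$. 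Hence $H^a(U, \mathcal{G} \otimes \mathbb{Q}) \cong H^{a+2}(U, \check{\mathcal{N}} \otimes \mathbb{Q}(a)) = 0$ (since $a+2 > a+1$), and likewise $H^{a-1}(K_v, G \otimes \mathbb{Q}) \cong H^{a+1}(K_v, \check{N} \otimes \mathbb{Q}(a)) = 0$ over the field $K_v$ (and the same over $K_v^h$), the requisite vanishings being obtained by the same restriction--corestriction and concentration argument. Feeding this into the triangle $\mathcal{G} \to \mathcal{G} \otimes \mathbb{Q} \to \mathcal{G}_t$ (resp. $G \to G \otimes \mathbb{Q} \to G_t$) yields surjections $H^{a-1}(U, \mathcal{G}_t) \twoheadrightarrow H^a(U, \mathcal{G})$ and $H^{a-2}(K_v, G_t) \twoheadrightarrow H^{a-1}(K_v, G)$, whence these groups are torsion of cofinite type.

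The only genuinely delicate point is the rational vanishing that produces the surjections. For $\tilde{G}$ the target degree already lies beyond the range where $\hat{\mathcal{T}_i} \otimes \mathbb{Q}(d+1-a)$ can have cohomology, so nothing more than the weight--dimension bound is needed; but for $G$ the naive bound is one degree short, and it is exactly the injectivity hypothesis---through the rational surjectivity of $\check{\rho}$, which shifts $\mathcal{G} \otimes \mathbb{Q}$ up by identifying it with the kernel complex $\check{\mathcal{N}} \otimes \mathbb{Q}(a)[2]$---that pushes the relevant group into the vanishing range. I expect this reduction, together with the bookkeeping that the cofinite-type property survives all the triangles, to be the main thing to get right; the remaining finiteness inputs are exactly those already assembled for the tori in \ref{nature tore} and \ref{local tore}.
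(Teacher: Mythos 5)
Your proposal is correct, but it is organized differently from the paper's proof, most visibly in parts (iv)--(vi). For (i)--(iii) the paper never rationalizes the cone $\tilde{\mathcal{G}}$ itself: for (i) it simply quotes \ref{nature tore}(iv) (that $H^{d+3-a}(U,\tilde{\mathcal{T}_1})$ and $H^{d+4-a}(U,\tilde{\mathcal{T}_2})$ are torsion of cofinite type) and sandwiches $H^{d+3-a}(U,\tilde{\mathcal{G}})$ in the long exact sequence of the triangle $\tilde{\mathcal{T}_2} \rightarrow \tilde{\mathcal{T}_1} \rightarrow \tilde{\mathcal{G}}$, and for (ii)--(iii) it first produces the surjections $H^{d+1-a}(K_v,\tilde{T_1}_t) \twoheadrightarrow H^{d+2-a}(K_v,\tilde{T_1})$ and $H^{d+2-a}(K_v,\tilde{T_2}_t) \twoheadrightarrow H^{d+3-a}(K_v,\tilde{T_2})$ torus by torus, then passes to the cone integrally; your version does the rationalization at the level of $\tilde{\mathcal{G}}$ first and the two-torus triangle on the torsion complex afterwards --- the same vanishing inputs in the opposite order, so this is only a reorganization. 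For (iv)--(vi) the divergence is genuine: the paper proves torsionness via the decomposition $\text{Ker}(\mathcal{T}_1 \rightarrow \mathcal{T}_2)[1] \rightarrow \mathcal{G} \rightarrow \text{Coker}(\mathcal{T}_1 \rightarrow \mathcal{T}_2)$ (the cokernel piece has finite exponent because $\check{\mathcal{T}_1} \rightarrow \check{\mathcal{T}_2}$ has finite cokernel, the kernel piece is torsion by restriction--corestriction) and then establishes cofinite type \emph{separately} via the mod-$n$ Kummer sequence $0 \rightarrow H^{a-1}(U,\mathcal{G})/n \rightarrow H^{a-1}(U,\mathcal{G} \otimes \mathbb{Z}/n\mathbb{Z}) \rightarrow {_n}H^a(U,\mathcal{G}) \rightarrow 0$; your identification $\mathcal{G} \otimes \mathbb{Q} \cong \check{\mathcal{N}} \otimes \mathbb{Q}(a)[2]$ (valid, since injectivity of $\hat{\rho}$ makes $\check{\rho} \otimes \mathbb{Q}$ surjective and the terms of $\mathbb{Z}(a)$ are flat) pushes the rational cohomology one degree beyond the naive bound and yields both torsionness and cofinite type in a single stroke, via the surjection from $H^{a-1}(U,\mathcal{G}_t)$ and the $\mathcal{T}_{it}$-triangle. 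What your route buys is uniformity --- all six assertions follow one template --- and it in effect front-loads rational vanishings that the paper only establishes afterwards (the bullets of Proposition \ref{exacte gm} and Lemma \ref{iso}(ii) prove exactly $H^{a}(L,G\otimes\mathbb{Q}) = H^{a-1}(L,G\otimes\mathbb{Q})=0$ under the same hypothesis, by the Ker/Coker route); what the paper's route buys is lighter bookkeeping, since it never needs the cone-level quasi-isomorphism or the flatness argument, only long exact sequences of groups. One small caveat you handled correctly but should keep explicit: \ref{nature tore}(ii) is stated only for $H^r(U,\tilde{\mathcal{T}}_t)$, so over $K_v$ and $K_v^h$ (and for $\mathcal{T}_{it} = \check{\mathcal{T}_i}\otimes\mathbb{Q}/\mathbb{Z}(a)[1]$) you are invoking the Kummer \emph{argument} rather than the statement, which requires finiteness of $H^r$ of finite modules over the $(d+1)$-local field $K_v$ and the invariance $H^r(K_v^h,F) \cong H^r(K_v,F)$ --- both available in the paper's framework.
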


\begin{proof}
\begin{itemize}
\item[(i)] La proposition \ref{nature tore} montre que $H^{d+3-a}(U,\tilde{\mathcal{T}_1})$ et $H^{d+4-a}(U,\tilde{\mathcal{T}_2})$ sont de torsion de type cofini. La suite exacte $H^{d+3-a}(U,\tilde{\mathcal{T}_1}) \rightarrow H^{d+3-a}(U,\tilde{\mathcal{G}}) \rightarrow H^{d+4-a}(U,\tilde{\mathcal{T}_2})$ permet alors de conclure.
\item[(ii)] Les groupes $H^{d+2-a}(K_v,\hat{T_1} \otimes \mathbb{Q}(d+1-a))$ et $H^{d+3-a}(K,\hat{T_2} \otimes \mathbb{Q}(d+1-a))$ sont nuls. Par conséquent, les suites exactes:
$$H^{d+1-a}(K_v,\tilde{T_{1}}_t) \rightarrow H^{d+2-a}(K_v,\tilde{T_1}) \rightarrow H^{d+2-a}(K_v,\hat{T_1} \otimes \mathbb{Q}(d+1-a)),$$
$$H^{d+2-a}(K_v,\tilde{T_{2}}_t) \rightarrow H^{d+3-a}(K_v,\tilde{T_2}) \rightarrow H^{d+3-a}(K_v,\hat{T_2} \otimes \mathbb{Q}(d+1-a))$$
fournissent des surjections $H^{d+1-a}(K_v,\tilde{T_{1}}_t) \rightarrow H^{d+2-a}(K_v,\tilde{T_1})$ et $H^{d+2-a}(K_v,\tilde{T_{2}}_t) \rightarrow H^{d+3-a}(K_v,\tilde{T_2})$.
On en déduit que $H^{d+2-a}(K_v,\tilde{T_1})$ et $H^{d+3-a}(K_v,\tilde{T_2})$ sont de torsion de type cofini. La suite exacte $H^{d+2-a}(K_v,\tilde{T_1}) \rightarrow H^{d+2-a}(K_v,\tilde{G}) \rightarrow H^{d+3-a}(K_v,\tilde{T_2})$ permet alors de conclure.
\item[(iii)] La preuve est identique à celle de (ii).
\item[(iv)] Le triangle distingué $\text{Ker}(\mathcal{T}_1 \rightarrow \mathcal{T}_2)[1] \rightarrow \mathcal{G} \rightarrow \text{Coker}(\mathcal{T}_1 \rightarrow \mathcal{T}_2) \rightarrow \text{Ker}(\mathcal{T}_1 \rightarrow \mathcal{T}_2)[2]$ fournit une suite exacte:
$$H^{a+1}(U,\text{Ker}(\mathcal{T}_1 \rightarrow \mathcal{T}_2)) \rightarrow H^a(U,\mathcal{G}) \rightarrow H^a(U,\text{Coker}(\mathcal{T}_1 \rightarrow \mathcal{T}_2)).$$
Comme $\hat{\mathcal{T}_2} \rightarrow \hat{\mathcal{T}_1}$ est injectif, le conoyau de $\check{\mathcal{T}_1} \rightarrow \check{\mathcal{T}_2}$ est fini, et donc $H^a(U,\text{Coker}(\mathcal{T}_1 \rightarrow \mathcal{T}_2))$ est d'exposant fini. De plus, comme $H^{a+1}(U,\mathbb{Q}/\mathbb{Z}(a))  \rightarrow H^{a+2}(U,\mathbb{Z}(a))$ est surjectif, un argument de restriction-corestriction montre que $H^{a+1}(U,\text{Ker}(\mathcal{T}_1 \rightarrow \mathcal{T}_2))$ est de torsion. Par conséquent, $H^a(U,\mathcal{G})$ est de torsion. Reste à montrer qu'il est de type cofini. Mais cela découle immédiatement de la suite exacte:
$$0 \rightarrow H^{a-1}(U,\mathcal{G})/n \rightarrow H^{a-1}(U,\mathcal{G} \otimes \mathbb{Z}/n\mathbb{Z}) \rightarrow {_n}H^a(U,\mathcal{G}) \rightarrow 0.$$
\item[(v)] Nous avons déjà montré dans \ref{local gm} que $H^{a-1}(K_v,G)$ est de torsion. Le même argument qu'en (iv) montre qu'il est de type cofini.
\item[(vi)] La preuve est identique à celle de (v).
\end{itemize}
\end{proof}

\begin{corollary} \label{tcof gm}
On rappelle que l'on a supposé (H \ref{40}).
\begin{itemize}
\item[(i)] Les groupes $H^{d+3-a}_c(U,\tilde{\mathcal{G}})$ et $\mathcal{D}^{d+3-a}(U,\tilde{\mathcal{G}})$ sont de torsion de type cofini.
\item[(ii)] Si $\hat{T_2} \rightarrow \hat{T_1}$ est injectif, les groupes $H^{a}_c(U,\mathcal{G})$ et $\mathcal{D}^{a}(U,\mathcal{G})$ sont de torsion de type cofini.
\end{itemize}
\end{corollary}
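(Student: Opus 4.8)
Le plan est de déduire les deux assertions du lemme \ref{nature gm} au moyen de la suite exacte de localisation pour la cohomologie à support compact, exactement comme dans les preuves des propositions \ref{bouts suite fini} et \ref{bouts tore}. Rappelons d'abord que, pour tout complexe $\mathcal{F}$ sur $U$, cette suite relie la cohomologie à support compact, la cohomologie de $U$ et les cohomologies locales aux points de $X \setminus U$ sous la forme
$$\cdots \rightarrow \bigoplus_{v \in X \setminus U} H^{r-1}(K_v^h,\mathcal{F}) \rightarrow H^r_c(U,\mathcal{F}) \rightarrow H^r(U,\mathcal{F}) \rightarrow \bigoplus_{v \in X \setminus U} H^r(K_v^h,\mathcal{F}) \rightarrow \cdots,$$
la somme étant finie puisque $X$ est une courbe. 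Le seul point un peu technique, que je vérifierais en préliminaire, est la stabilité de la classe des groupes de torsion de type cofini par sous-groupe, par quotient et par extension: pour un tel groupe $A$, le groupe $A/nA$ est fini (ce qui se voit en dualisant, le dual étant un pro-$p$-groupe abélien topologiquement de type fini sur chaque composante $p$-primaire), et la suite exacte ${_n}A \rightarrow {_n}B \rightarrow {_n}C$ donne alors les trois stabilités voulues.

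Pour (i), j'appliquerais la suite ci-dessus à $\mathcal{F} = \tilde{\mathcal{G}}$ en degré $r = d+3-a$, ce qui fournit la suite exacte
$$\bigoplus_{v \in X \setminus U} H^{d+2-a}(K_v^h,\tilde{G}) \rightarrow H^{d+3-a}_c(U,\tilde{\mathcal{G}}) \rightarrow H^{d+3-a}(U,\tilde{\mathcal{G}}).$$
D'après le lemme \ref{nature gm} (iii), chaque $H^{d+2-a}(K_v^h,\tilde{G})$ est de torsion de type cofini, donc la somme directe finie l'est aussi; d'après \ref{nature gm} (i), $H^{d+3-a}(U,\tilde{\mathcal{G}})$ l'est également. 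Le groupe $H^{d+3-a}_c(U,\tilde{\mathcal{G}})$ étant extension d'un sous-groupe de $H^{d+3-a}(U,\tilde{\mathcal{G}})$ par un quotient de la somme directe, il est de torsion de type cofini. Enfin, $\mathcal{D}^{d+3-a}(U,\tilde{\mathcal{G}})$ est par définition un quotient de $H^{d+3-a}_c(U,\tilde{\mathcal{G}})$, donc de torsion de type cofini.

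Pour (ii), le raisonnement est identique en appliquant la suite de localisation à $\mathcal{F} = \mathcal{G}$ en degré $r = a$, d'où la suite exacte
$$\bigoplus_{v \in X \setminus U} H^{a-1}(K_v^h,G) \rightarrow H^{a}_c(U,\mathcal{G}) \rightarrow H^{a}(U,\mathcal{G}),$$
et en utilisant cette fois les assertions (vi) et (iv) du lemme \ref{nature gm}, valables précisément sous l'hypothèse d'injectivité de $\hat{T_2} \rightarrow \hat{T_1}$. On en déduit que $H^a_c(U,\mathcal{G})$ puis son quotient $\mathcal{D}^a(U,\mathcal{G})$ sont de torsion de type cofini. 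Il n'y a donc pas d'obstacle sérieux: tout le contenu arithmétique est concentré dans le lemme \ref{nature gm}, et le corollaire n'en est qu'une conséquence formelle via la suite exacte à support compact, le soin principal étant d'invoquer correctement les versions henséliennes (iii) et (vi) des énoncés locaux, qui sont précisément celles qui apparaissent comme termes locaux de cette suite.
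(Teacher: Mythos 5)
Votre preuve est correcte et suit exactement la démarche du texte: le corollaire y est déduit de la même suite exacte de localisation $\bigoplus_{v \in X \setminus U} H^{d+2-a}(K_v^h,\tilde{G}) \rightarrow H^{d+3-a}_c(U,\tilde{\mathcal{G}}) \rightarrow H^{d+3-a}(U,\tilde{\mathcal{G}})$ combinée aux assertions (i), (iii) (resp. (iv), (vi)) du lemme \ref{nature gm}, avec les mêmes termes locaux henséliens. La seule différence est que vous explicitez la stabilité de la classe des groupes de torsion de type cofini par sous-groupe, quotient et extension, point que le texte laisse implicite et que votre vérification via le lemme du serpent justifie correctement.
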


\begin{proof}
\begin{itemize}
\item[(i)] La suite exacte $\bigoplus_{v \in X \setminus U} H^{d+2-a}(K_v^h,\tilde{G}) \rightarrow H^{d+3-a}_c(U,\tilde{\mathcal{G}}) \rightarrow H^{d+3-a}(U,\tilde{\mathcal{G}})$ prouve que $H^{d+3-a}_c(U,\tilde{\mathcal{G}})$ est de torsion de type cofini. On en déduit que $\mathcal{D}^{d+3-a}(U,\tilde{\mathcal{G}})$ est aussi de torsion de type cofini.
\item[(ii)] La preuve est identique à celle de (i).
\end{itemize}
\end{proof}

\begin{lemma}\label{iso}
On rappelle que l'on a supposé (H \ref{40}).
Soit $L = K$, $K_v$ ou $K_v^h$ pour un certain $v \in X^{(1)}$.
\begin{itemize}
\item[(i)] Le morphisme $H^{d+2-a}(L,\tilde{G}_t) \rightarrow H^{d+3-a}(L,\tilde{G})$ est un isomorphisme. En particulier, $\Sha^{d+2-a}(\tilde{G}_t) \cong \Sha^{d+3-a}(\tilde{G})$.
\item[(ii)] Supposons que $\hat{T_2} \rightarrow \hat{T_1}$ est injectif. Le morphisme $H^{a-1}(L,G_t) \rightarrow H^{a}(L,G)$ est un isomorphisme. En particulier, $\Sha^{a-1}(G_t) \cong \Sha^{a}(G)$.
\end{itemize}
\end{lemma}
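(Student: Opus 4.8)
The plan is to deduce both isomorphisms from the distinguished triangles $\tilde{G} \rightarrow \tilde{G} \otimes \mathbb{Q} \rightarrow \tilde{G}_t \rightarrow \tilde{G}[1]$ and $G \rightarrow G \otimes \mathbb{Q} \rightarrow G_t \rightarrow G[1]$ introduced just before the statement. Applying $\mathbb{R}\Gamma(L,-)$ to the first triangle produces a long exact sequence whose connecting map is exactly $H^{d+2-a}(L,\tilde{G}_t) \rightarrow H^{d+2-a}(L,\tilde{G}[1]) = H^{d+3-a}(L,\tilde{G})$; hence (i) amounts to proving that $H^{d+2-a}(L,\tilde{G} \otimes \mathbb{Q})$ and $H^{d+3-a}(L,\tilde{G} \otimes \mathbb{Q})$ both vanish. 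Likewise (ii) reduces to the vanishing of $H^{a-1}(L,G \otimes \mathbb{Q})$ and $H^{a}(L,G \otimes \mathbb{Q})$. In both cases the whole point is therefore a vanishing statement for rational motivic cohomology, which I would handle by the restriction–corestriction argument already used in the proofs of \ref{local tore} and \ref{nature tore}: for any $\text{Gal}(K^s/K)$-module $N$ that is a finite-dimensional $\mathbb{Q}$-vector space and any integer $i$, the group $H^{j}(L,N \otimes \mathbb{Q}(i))$ vanishes for all $j > i$. Indeed, choosing a finite extension $L'$ of $L$ trivializing $N$ gives, via property (iii) of the complex $\mathbb{Z}(i)$, an isomorphism $H^{j}(L',\mathbb{Q}(i)) \cong H^{j}_{\text{Zar}}(L',\mathbb{Q}(i)_{\text{Zar}}) = 0$ since $\mathbb{Q}(i)_{\text{Zar}}$ is concentrated in degrees $\leq i$; restriction–corestriction then shows $H^{j}(L,N \otimes \mathbb{Q}(i))$ is killed by $[L':L]$, and being a $\mathbb{Q}$-vector space it is zero (this is valid for $L = K$, $K_v$ and $K_v^h$ alike, exactly as invoked in \ref{Sha tore}).

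For (i) I would feed the triangle $\tilde{T_2} \rightarrow \tilde{T_1} \rightarrow \tilde{G} \rightarrow \tilde{T_2}[1]$ into cohomology after tensoring with $\mathbb{Q}$. Since $\tilde{T_i} \otimes \mathbb{Q} = \hat{T_i} \otimes \mathbb{Q}(d+1-a)$, the long exact sequence sandwiches $H^{j}(L,\tilde{G} \otimes \mathbb{Q})$ between $H^{j}(L,\hat{T_1} \otimes \mathbb{Q}(d+1-a))$ and $H^{j+1}(L,\hat{T_2} \otimes \mathbb{Q}(d+1-a))$. For $j \in \{d+2-a, d+3-a\}$ all the relevant degrees $j$ and $j+1$ are $> d+1-a$, so the vanishing above (with $i = d+1-a$) applies and forces $H^{d+2-a}(L,\tilde{G} \otimes \mathbb{Q}) = H^{d+3-a}(L,\tilde{G} \otimes \mathbb{Q}) = 0$, yielding the isomorphism. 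No injectivity hypothesis is needed here.

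For (ii) the injectivity of $\hat{T_2} \rightarrow \hat{T_1}$ is precisely what makes the argument work. Dualizing and tensoring with $\mathbb{Q}$, the induced map $\check{\rho} \otimes \mathbb{Q} \colon \check{T_1} \otimes \mathbb{Q} \rightarrow \check{T_2} \otimes \mathbb{Q}$ becomes surjective; setting $N = \ker(\check{\rho} \otimes \mathbb{Q})$ gives a short exact sequence of Galois $\mathbb{Q}$-vector spaces, hence, after $\otimes^{\mathbf{L}} \mathbb{Q}(a)$ (all terms flat), a distinguished triangle showing that the cone of $\check{T_1} \otimes \mathbb{Q}(a) \rightarrow \check{T_2} \otimes \mathbb{Q}(a)$ is $N \otimes \mathbb{Q}(a)[1]$. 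Since $T_i \otimes \mathbb{Q} = \check{T_i} \otimes \mathbb{Q}(a)[1]$, this yields a quasi-isomorphism $G \otimes \mathbb{Q} \cong N \otimes \mathbb{Q}(a)[2]$, whence $H^{j}(L,G \otimes \mathbb{Q}) \cong H^{j+2}(L,N \otimes \mathbb{Q}(a))$. For $j \in \{a-1, a\}$ the degrees $j+2 \in \{a+1, a+2\}$ exceed $a$, so the vanishing statement (with $i = a$) gives $H^{a-1}(L,G \otimes \mathbb{Q}) = H^{a}(L,G \otimes \mathbb{Q}) = 0$, proving the isomorphism. Without injectivity the cone of $\check{T_1}\otimes\mathbb{Q}(a) \rightarrow \check{T_2}\otimes\mathbb{Q}(a)$ would sit one degree lower, and the term $H^{a}(L,\check{T_2} \otimes \mathbb{Q}(a))$ — essentially $K^M_a(L) \otimes \mathbb{Q}$ — would obstruct the argument; recognizing that the hypothesis serves exactly to shift $G \otimes \mathbb{Q}$ into the vanishing range is the main conceptual point, and the only genuine obstacle beyond routine diagram bookkeeping.

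Finally, the ``in particular'' clauses follow by naturality: the isomorphisms are those induced by the defining triangles, which are functorial in $L$, so they fit into a commutative square comparing $K$ with the product of all completions $K_v$. Restricting the top isomorphism to the kernels of the two vertical localization maps identifies $\Sha^{d+2-a}(\tilde{G}_t)$ with $\Sha^{d+3-a}(\tilde{G})$, and likewise $\Sha^{a-1}(G_t)$ with $\Sha^{a}(G)$.
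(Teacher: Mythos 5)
Your proof is correct and takes essentially the same route as the paper: both reduce the statement, via the triangles $\tilde{G} \rightarrow \tilde{G}\otimes\mathbb{Q} \rightarrow \tilde{G}_t \rightarrow \tilde{G}[1]$ and $G \rightarrow G\otimes\mathbb{Q} \rightarrow G_t \rightarrow G[1]$, to the vanishing of $H^{d+2-a}(L,\tilde{G}\otimes\mathbb{Q})$, $H^{d+3-a}(L,\tilde{G}\otimes\mathbb{Q})$, $H^{a-1}(L,G\otimes\mathbb{Q})$ and $H^{a}(L,G\otimes\mathbb{Q})$, obtained by restriction--corestriction combined with the fact that $\mathbb{Q}(i)_{\text{Zar}}$ est concentré en degrés $\leq i$ and that these groups are $\mathbb{Q}$-vector spaces. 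The only cosmetic difference is in (ii): where the paper sandwiches $H^{a-1}(L,G\otimes\mathbb{Q})$ between the cohomology of $\text{Ker}(T_1\otimes\mathbb{Q}\rightarrow T_2\otimes\mathbb{Q})$ and of $\text{Coker}(T_1\otimes\mathbb{Q}\rightarrow T_2\otimes\mathbb{Q})$ (the latter being zero by the injectivity hypothesis), you state the same decomposition as the quasi-isomorphism $G\otimes\mathbb{Q}\cong N\otimes\mathbb{Q}(a)[2]$ with $N=\text{Ker}(\check{\rho}\otimes\mathbb{Q})$.
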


\begin{proof}
\begin{itemize}
\item[(i)] Cela découle immédiatement de la nullité de $H^{d+2-a}(L,\tilde{G} \otimes \mathbb{Q})$ et $H^{d+3-a}(L,\tilde{G} \otimes \mathbb{Q})$.
\item[(ii)] Il suffit de vérifier que $H^{a-1}(L,G \otimes \mathbb{Q})$ et $H^a(L,G \otimes \mathbb{Q})$ sont nuls. La nullité de $H^a(L,G \otimes \mathbb{Q})$ est évidente (puisque $H^a(L,T_2 \otimes \mathbb{Q})$ et $H^{a+1}(L,T_1 \otimes \mathbb{Q})$ sont nuls). Pour montrer la nullité de $H^{a-1}(L,G \otimes \mathbb{Q})$, on considère la suite exacte:
$$H^a(L, \text{Ker}(T_1 \otimes \mathbb{Q} \rightarrow T_2 \otimes \mathbb{Q})) \rightarrow H^{a-1}(L,G\otimes \mathbb{Q}) \rightarrow H^{a-1}(L,\text{Coker}(T_1 \otimes \mathbb{Q} \rightarrow T_2 \otimes \mathbb{Q})).$$
Le groupe $H^a(L, \text{Ker}(T_1 \otimes \mathbb{Q} \rightarrow T_2 \otimes \mathbb{Q}))$ est nul car $H^{a+1}(L,\mathbb{Q}(a))$ l'est, et le groupe $H^{a-1}(L,\text{Coker}(T_1 \otimes \mathbb{Q} \rightarrow T_2 \otimes \mathbb{Q}))$ est nul car $\hat{T_2} \rightarrow \hat{T_1}$ est injectif. Cela entraîne la nullité de $H^{a-1}(L,G\otimes \mathbb{Q})$.
\end{itemize}
\end{proof}

\begin{proposition} \label{Sha gm}
On rappelle que l'on a supposé (H \ref{40}).
\begin{itemize}
\item[(i)] Soit $l$ un nombre premier. Il existe un ouvert $U_1$ de $U_0$ tel que, pour tout ouvert non vide $U$ de $U_1$, on a $$\mathcal{D}^{d+3-a}(U,\tilde{\mathcal{G}})\{l\} = \mathcal{D}^{d+3-a}(U_1,\tilde{\mathcal{G}})\{l\} = \Sha^{d+3-a}(K,\tilde{G})\{l\}$$
et $$\mathcal{D}^{d+2-a}(U,\tilde{\mathcal{G}}_t)\{l\} = \mathcal{D}^{d+2-a}(U_1,\tilde{\mathcal{G}}_t)\{l\} = \Sha^{d+2-a}(K,\tilde{G}_t)\{l\}.$$
\item[(ii)] Supposons que $\hat{T_2} \rightarrow \hat{T_1}$ est injectif. Soit $l$ un nombre premier. Il existe un ouvert $U_2$ de $U_0$ tel que, pour tout ouvert non vide $U$ de $U_2$, on a $$\mathcal{D}^{a}(U_2,\mathcal{G})\{l\} = \mathcal{D}^{a}(U_2,\mathcal{G})\{l\} = \Sha^{a}(K,G)\{l\}$$
et $$\mathcal{D}^{a-1}(U,\mathcal{G}_t)\{l\} = \mathcal{D}^{a-1}(U_2,\mathcal{G}_t)\{l\} = \Sha^{a-1}(K,G_t)\{l\}.$$
\end{itemize}
\end{proposition}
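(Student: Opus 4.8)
Le plan est de reproduire, au niveau des complexes $\tilde{\mathcal{G}}$ et $\tilde{\mathcal{G}}_t$ (et, sous l'hypothèse d'injectivité, $\mathcal{G}$ et $\mathcal{G}_t$), l'argument déjà mené pour les tores dans la preuve de \ref{Sha tore}, lui-même calqué sur l'énoncé à coefficients finis \ref{Sha fini}. Les quatre identités ont la même forme: une stabilisation de la famille $U \mapsto \mathcal{D}^r(U,-)\{l\}$, suivie de l'identification de la valeur stable avec le groupe de Tate-Shafarevich correspondant. Je traiterais (i) en détail, (ii) étant identique en remplaçant $\tilde{\mathcal{G}}$, $\tilde{\mathcal{G}}_t$ par $\mathcal{G}$, $\mathcal{G}_t$ et en invoquant l'injectivité de $\hat{T_2} \rightarrow \hat{T_1}$ partout où la cofinitude de type cofini intervient.

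D'abord, j'établirais la comparaison entre hensélisé et complété sur laquelle repose l'argument. Pour les tores, il est montré au début de la preuve de \ref{Sha tore} que $H^{d+3-a}(K_v^h,\tilde{T}) \rightarrow H^{d+3-a}(K_v,\tilde{T})$ et $H^{d+2-a}(K_v^h,\tilde{T}_t) \rightarrow H^{d+2-a}(K_v,\tilde{T}_t)$ sont des isomorphismes. En appliquant ceci à $\tilde{T_1}$ et $\tilde{T_2}$ et en injectant le résultat dans le lemme des cinq le long des suites exactes longues associées aux triangles distingués $\tilde{T_2} \rightarrow \tilde{T_1} \rightarrow \tilde{G} \rightarrow \tilde{T_2}[1]$ et $\tilde{T_2}_t \rightarrow \tilde{T_1}_t \rightarrow \tilde{G}_t \rightarrow \tilde{T_2}_t[1]$, on obtient les isomorphismes voulus $H^{d+3-a}(K_v^h,\tilde{G}) \cong H^{d+3-a}(K_v,\tilde{G})$ et $H^{d+2-a}(K_v^h,\tilde{G}_t) \cong H^{d+2-a}(K_v,\tilde{G}_t)$ pour tout $v \in X^{(1)}$. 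C'est ce qui permet de passer librement des conditions locales en $K_v$ (qui définissent $\Sha$) à celles en $K_v^h$ (qui apparaissent dans la suite de localisation pour la cohomologie à support compact).

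Ensuite, je produirais l'ouvert $U_1$. D'après le corollaire \ref{tcof gm} (i), les groupes $\mathcal{D}^{d+3-a}(U,\tilde{\mathcal{G}})$ sont de torsion de type cofini, et (en raisonnant comme dans \ref{Sha tore}, les commutativités étant contenues dans la proposition 4.3 de \cite{CTH}) on a $\mathcal{D}^{d+3-a}(V,\tilde{\mathcal{G}}) \subseteq \mathcal{D}^{d+3-a}(V',\tilde{\mathcal{G}})$ pour $V \subseteq V'$, et de même pour $\tilde{\mathcal{G}}_t$. Les parties $l$-primaires forment donc une chaîne décroissante de sous-groupes de type cofini; dualement, on obtient une chaîne de surjections entre $\mathbb{Z}_l$-modules de type fini, dont les noyaux, croissants, stationnent par noethérianité de $\mathbb{Z}_l$. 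Cela fournit $U_1$ tel que $\mathcal{D}^{d+3-a}(U,\tilde{\mathcal{G}})\{l\} = \mathcal{D}^{d+3-a}(U_1,\tilde{\mathcal{G}})\{l\}$ et $\mathcal{D}^{d+2-a}(U,\tilde{\mathcal{G}}_t)\{l\} = \mathcal{D}^{d+2-a}(U_1,\tilde{\mathcal{G}}_t)\{l\}$ pour tout ouvert non vide $U \subseteq U_1$.

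Reste à prouver l'égalité avec $\Sha$. Pour l'inclusion $\subseteq$, un élément de $\mathcal{D}^{d+3-a}(U_1,\tilde{\mathcal{G}})$ appartient à chaque $\mathcal{D}^{d+3-a}(U,\tilde{\mathcal{G}})$, et l'exactitude de $H^{d+3-a}_c(U,\tilde{\mathcal{G}}) \rightarrow H^{d+3-a}(U,\tilde{\mathcal{G}}) \rightarrow \bigoplus_{v \in X \setminus U} H^{d+3-a}(K_v^h,\tilde{G})$, jointe à l'isomorphisme hensélisé--complété de la première étape, montre que sa restriction à chaque $H^{d+3-a}(K_v,\tilde{G})$ est nulle, donc qu'il appartient à $\Sha^{d+3-a}(\tilde{G})$. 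Réciproquement, une classe de $\Sha^{d+3-a}(\tilde{G})$ se relève dans un $H^{d+3-a}(U,\tilde{\mathcal{G}})$, et sa trivialité locale force le relèvement dans l'image de $H^{d+3-a}_c(U,\tilde{\mathcal{G}})$ (en utilisant le lemme 2.2 de \cite{HS2}), d'où son appartenance à $\mathcal{D}^{d+3-a}(U_1,\tilde{\mathcal{G}})$. L'énoncé pour $\tilde{\mathcal{G}}_t$ s'obtient de la même façon et se relie à celui pour $\tilde{\mathcal{G}}$ via les isomorphismes $\Sha^{d+2-a}(\tilde{G}_t) \cong \Sha^{d+3-a}(\tilde{G})$ du lemme \ref{iso} (i). Le principal obstacle est la gestion hensélisé-contre-complété: une fois les isomorphismes de la première étape en place, le reste est une chasse au diagramme composante $l$-primaire par composante $l$-primaire, mais ces isomorphismes doivent être vérifiés exactement dans les degrés dictés par l'annulation de la cohomologie rationnelle de $\tilde{G}$ (et de $G$ pour (ii)).
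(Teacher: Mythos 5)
Votre démonstration est correcte et suit essentiellement la même route que l'article: comparaison hensélisé--complété pour $\tilde{G}$ et $\tilde{G}_t$ déduite du cas des tores (preuve de \ref{Sha tore}) par le lemme des cinq et l'annulation de la cohomologie rationnelle, stabilisation des parties $l$-primaires des $\mathcal{D}^r(U,-)$ via la cofinitude du corollaire \ref{tcof gm}, puis chasse au diagramme calquée sur \ref{Sha fini} pour identifier la valeur stable à $\Sha\{l\}$. Seule nuance: l'article établit d'abord l'isomorphisme pour $\tilde{G}_t$ (où les coefficients de torsion rendent la comparaison hensélisé--complété immédiate en tout degré) puis le transfère à $\tilde{G}$ par le lemme \ref{iso}, évitant ainsi la vérification en degré $d+4-a$ pour les $\tilde{T_i}$ qu'exige votre lemme des cinq intégral --- vérification que vous signalez d'ailleurs vous-même et qui passe par le même argument d'annulation rationnelle; votre appel au lemme 2.2 de \cite{HS2} est quant à lui superflu, la suite de localisation suffisant.
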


\begin{proof}
\begin{itemize}
\item[(i)] On commence par remarquer que, pour $v \in X^{(1)}$, on a un diagramme commutatif:\\
\centerline{\xymatrix{
H^{d+3-a}(K_v^h,\tilde{G}) \ar[r]  & H^{d+3-a}(K_v,\tilde{G})\\
H^{d+2-a}(K_v^h,\tilde{G}_t) \ar[r] \ar[u]& H^{d+2-a}(K_v,\tilde{G}_t)\ar[u]
}}
D'après le lemme \ref{iso}, les flèches verticales sont des isomorphismes. De plus, la flèche $H^{d+2-a}(K_v^h,\tilde{G}_t) \rightarrow H^{d+2-a}(K_v,\tilde{G}_t)$ est aussi un isomorphisme puisqu'elle s'insère dans le diagramme commutatif à lignes exactes:
\begin{multline*}
\xymatrix{
H^{d+2-a}(K_v^h,\tilde{T_2}_t) \ar[r]\ar[d]^{\cong} & H^{d+2-a}(K_v^h,\tilde{T_1}_t) \ar[r]\ar[d]^{\cong} & H^{d+2-a}(K_v^h,\tilde{G}_t) \ar[r]\ar[d] & ... \\
H^{d+2-a}(K_v,\tilde{T_2}_t) \ar[r] & H^{d+2-a}(K_v,\tilde{T_1}_t) \ar[r] & H^{d+2-a}(K_v,\tilde{G}_t) \ar[r] & ...
}\\
\xymatrix{
... \ar[r] & H^{d+3-a}(K_v^h,\tilde{T_2}_t) \ar[r]\ar[d]^{\cong} & H^{d+3-a}(K_v^h,\tilde{T_1}_t) \ar[d]^{\cong} \\
... \ar[r] & H^{d+3-a}(K_v,\tilde{T_2}_t) \ar[r] & H^{d+3-a}(K_v,\tilde{T_1}_t) .
}
\end{multline*}
On en déduit que la restriction $H^{d+3-a}(K_v^h,\tilde{G}) \rightarrow H^{d+3-a}(K_v,\tilde{G})$ est un isomorphisme.\\
Dans la suite de cette preuve, les commutativités des diagrammes que nous utilisons implicitement sont contenues dans la proposition 4.3 de \cite{CTH}. \\
Fixons maintenant un nombre premier $l$. Pour des ouverts $V \subseteq V'$ de $U_0$, on a $\mathcal{D}^{d+2-a}(V,\tilde{\mathcal{G}}_t) \subseteq \mathcal{D}^{d+2-a}(V',\tilde{\mathcal{G}}_t)$ et $\mathcal{D}^{d+3-a}(V,\tilde{\mathcal{G}}) \subseteq \mathcal{D}^{d+3-a}(V',\tilde{\mathcal{G}})$. Or, grâce au corollaire \ref{tcof gm}, on sait que $\mathcal{D}^{d+2-a}(U_0,\tilde{\mathcal{G}}_t)$ et $\mathcal{D}^{d+3-a}(U_0,\tilde{\mathcal{G}})$ sont de torsion de type cofini. Par conséquent, il existe un ouvert $U_1$ de $U_0$ tel que, pour tout ouvert $V \subseteq U_1$, on a $\mathcal{D}^{d+2-a}(V,\tilde{\mathcal{G}}_t)\{l\} = \mathcal{D}^{d+2-a}(U_1,\tilde{\mathcal{G}}_t)\{l\}$ et $\mathcal{D}^{d+3-a}(V,\tilde{\mathcal{G}})\{l\} = \mathcal{D}^{d+3-a}(U_1,\tilde{\mathcal{G}})\{l\}$. Une chasse au diagramme permet alors d'établir que $\mathcal{D}^{d+2-a}(U_1,\tilde{\mathcal{G}}_t)\{l\} = \Sha^{d+2-a}(\tilde{G}_t)\{ l \}$ et $\mathcal{D}^{d+3-a}(U_1,\tilde{\mathcal{G}})\{l\} = \Sha^{d+3-a}(\tilde{G})\{ l \}$.
\item[(ii)] La preuve est tout à fait analogue.
\end{itemize}
\end{proof}

\begin{lemma} \label{exacte gm fini}
On rappelle que l'on a supposé (H \ref{40}).
\begin{itemize}
\item[(i)] On a une suite exacte:
$$\bigoplus_{v \in X^{(1)}} H^{d+1-a}(K_v,\tilde{G}\otimes \mathbb{Z}/n\mathbb{Z})  \rightarrow H^{d+2-a}_c(U,\tilde{\mathcal{G}}\otimes \mathbb{Z}/n\mathbb{Z}) \rightarrow \mathcal{D}^{d+2-a}(U,\tilde{\mathcal{G}}\otimes \mathbb{Z}/n\mathbb{Z}) \rightarrow 0.$$
\item[(ii)] On a une suite exacte:
$$\bigoplus_{v \in X^{(1)}} H^{a-2}(K_v,G\otimes \mathbb{Z}/n\mathbb{Z})  \rightarrow H^{a-1}_c(U,\mathcal{G}\otimes \mathbb{Z}/n\mathbb{Z}) \rightarrow \mathcal{D}^{a-1}(U,\mathcal{G}\otimes \mathbb{Z}/n\mathbb{Z}) \rightarrow 0.$$
\end{itemize}
\end{lemma}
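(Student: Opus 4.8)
The plan is to reduce both assertions to the case of a single finite étale group scheme, already treated in Proposition \ref{suite fini}. The starting observation is that $\tilde{\mathcal{G}} \otimes \mathbb{Z}/n\mathbb{Z}$ and $\mathcal{G} \otimes \mathbb{Z}/n\mathbb{Z}$ are bounded complexes whose terms are finite étale group schemes on $U_0$. Indeed, since $k$ is of characteristic $0$, the Geisser--Levine isomorphism $\mathbb{Z}/n\mathbb{Z}(i) \cong \mu_n^{\otimes i}$ gives $\tilde{\mathcal{T}_j} \otimes \mathbb{Z}/n\mathbb{Z} \cong \hat{\mathcal{T}_j} \otimes \mu_n^{\otimes(d+1-a)}$ and $\mathcal{T}_j \otimes \mathbb{Z}/n\mathbb{Z} \cong \check{\mathcal{T}_j} \otimes \mu_n^{\otimes a}[1]$, which are (shifts of) locally constant sheaves with finite stalks, hence representable by finite étale group schemes unramified on $U_0$ (Proposition V.1.1 de \cite{MilEC}). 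The complexes $\tilde{\mathcal{G}} \otimes \mathbb{Z}/n\mathbb{Z}$ and $\mathcal{G} \otimes \mathbb{Z}/n\mathbb{Z}$ are the cones of morphisms between such sheaves, so they sit in the distinguished triangles $\tilde{\mathcal{T}_2} \otimes \mathbb{Z}/n\mathbb{Z} \to \tilde{\mathcal{T}_1} \otimes \mathbb{Z}/n\mathbb{Z} \to \tilde{\mathcal{G}} \otimes \mathbb{Z}/n\mathbb{Z} \to$ and $\mathcal{T}_1 \otimes \mathbb{Z}/n\mathbb{Z} \to \mathcal{T}_2 \otimes \mathbb{Z}/n\mathbb{Z} \to \mathcal{G} \otimes \mathbb{Z}/n\mathbb{Z} \to$.

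Next I would verify that the proof of Proposition \ref{suite fini} applies \emph{verbatim} to a bounded complex $\mathcal{H}^{\bullet}$ of finite étale group schemes, unramified on $U_0$, in place of the single sheaf $\mathcal{F}'$. The argument there uses only three ingredients, each of which survives at the level of hypercohomology: the localization exact sequences $H^r_c(V, \mathcal{H}^{\bullet}) \to H^r_c(U, \mathcal{H}^{\bullet}) \to \bigoplus_{v \in U \setminus V} H^r(k(v), \mathcal{H}^{\bullet})$ and $H^r_c(U, \mathcal{H}^{\bullet}) \to H^r(U, \mathcal{H}^{\bullet}) \to \bigoplus_{v \in X \setminus U} H^r(K_v, \mathcal{H}^{\bullet}_{\eta})$, which are the long exact sequences of hypercohomology attached to distinguished triangles; the identification $H^r(K, \mathcal{H}^{\bullet}_{\eta}) = \varinjlim_V H^r(V, \mathcal{H}^{\bullet})$, valid for hypercohomology since it commutes aux limites inductives filtrantes; and the injectivity of $H^r(k(v), \mathcal{H}^{\bullet}) \cong H^r(\mathcal{O}_v^h, \mathcal{H}^{\bullet}) \to H^r(K_v^h, \mathcal{H}^{\bullet}_{\eta})$ for $v \in U_0^{(1)}$. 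For this last point I would argue comme dans \ref{suite fini} et \ref{nr fini}: the relevant map is an inflation map $H^r(G_v/I_v, -) \to H^r(G_v, -)$ in the hypercohomology of the complex of (unramified, donc $I_v$-triviaux) stalks, and the section of $G_v \to G_v/I_v$ furnishes a retraction, so it is injective.

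Granting this, I would apply the extended Proposition \ref{suite fini} to $\mathcal{H}^{\bullet} = \tilde{\mathcal{G}} \otimes \mathbb{Z}/n\mathbb{Z}$ at $r = d+2-a$ to obtain (i), and to $\mathcal{H}^{\bullet} = \mathcal{G} \otimes \mathbb{Z}/n\mathbb{Z}$ at $r = a-1$ to obtain (ii); the local terms then appear in degrees $d+1-a$ and $a-2$ respectively, as required. As an alternative to re-running the proof, one can keep $\mathcal{H}^{\bullet}$ decomposed by its distinguished triangle and deduce the three-term sequence for the cone from those for $\tilde{\mathcal{T}_1} \otimes \mathbb{Z}/n\mathbb{Z}$ and $\tilde{\mathcal{T}_2} \otimes \mathbb{Z}/n\mathbb{Z}$ (resp. $\mathcal{T}_1, \mathcal{T}_2$) by a diagram chase, carried out at the level of the full localization long exact sequences (not merely their three-term truncations), where a two-out-of-three argument closes cleanly.

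The main obstacle is precisely the injectivity statement of the middle paragraph: one must make sure the inflation-with-section argument of \ref{suite fini} and \ref{nr fini}, originally stated for a single unramified module, still produces a retraction on the hypercohomology of the two-term complex $\mathcal{H}^{\bullet}$, and that the commutativities invoked in \cite{CTH} (proposition 4.3) remain available for these complexes. Once this is secured, the rest is a faithful transcription of the finite-module proof, and the degree bookkeeping is routine.
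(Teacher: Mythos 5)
Your proposal is correct and follows the same overall strategy as the paper: one re-runs the proof of Proposition \ref{suite fini} for the two-term complexes $\tilde{\mathcal{G}} \otimes \mathbb{Z}/n\mathbb{Z}$ and $\mathcal{G} \otimes \mathbb{Z}/n\mathbb{Z}$ (all of whose terms are finite étale and unramified on $U_0$, by Geisser--Levine), the only non-formal point being the injectivity of $H^{d+2-a}(\mathcal{O}_v^h, \tilde{\mathcal{G}} \otimes \mathbb{Z}/n\mathbb{Z}) \rightarrow H^{d+2-a}(K_v^h, \tilde{G} \otimes \mathbb{Z}/n\mathbb{Z})$, resp. its analogue for $\mathcal{G}$. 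Where you diverge is in how this injectivity is established. You apply the inflation-with-section argument directly to the hypercohomology of the two-term complex; this is valid, but it requires the (routine, yet not entirely free) identification of the restriction map $H^r(\mathcal{O}_v^h, \mathcal{H}^{\bullet}) \rightarrow H^r(K_v^h, \mathcal{H}^{\bullet})$ with the inflation attached to $G_v \rightarrow g_v$ for a complex of unramified $g_v$-modules. The paper avoids any hypercohomological inflation formalism: it invokes the single-sheaf case only, namely injectivity \emph{together with compatible sections} for the terms $\tilde{\mathcal{T}_j} \otimes \mathbb{Z}/n\mathbb{Z}$, and concludes by a four-lemma-type chase on the map of long exact sequences of the distinguished triangle, the sections substituting for the surjectivity that the usual four lemma would demand. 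Your main route is slightly slicker once the formalism is granted; the paper's is more economical in foundations. One caution about your suggested alternative (a ``two-out-of-three'' chase on the full localization sequences): the groups $\mathcal{D}^r(U,-)$ are images of $H^r_c \rightarrow H^r(K,-)$ and do not sit in any long exact sequence, so a chase performed directly on the global three-term sequences for $\tilde{\mathcal{T}_1} \otimes \mathbb{Z}/n\mathbb{Z}$ and $\tilde{\mathcal{T}_2} \otimes \mathbb{Z}/n\mathbb{Z}$ does not close as cleanly as you claim; the chase must be carried out locally, at the level of $\mathcal{O}_v^h \rightarrow K_v^h$, which is precisely what the paper does.
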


\begin{proof}
\begin{itemize}
\item[(i)] En procédant exactement de la même manière que pour les modules finis, il suffit de montrer que le morphisme $H^{d+2-a}(\mathcal{O}_v^h,\tilde{\mathcal{G}}\otimes \mathbb{Z}/n\mathbb{Z}) \rightarrow H^{d+2-a}(K_v^h,\tilde{G}\otimes \mathbb{Z}/n\mathbb{Z})$ est injectif. On dispose d'un diagramme commutatif à lignes exactes:
\begin{multline*}
\xymatrix{
H^{d+2-a}(\mathcal{O}_v^h,\tilde{\mathcal{T}_2}\otimes \mathbb{Z}/n\mathbb{Z}) \ar[r] \ar@{^{(}->}[d] & H^{d+2-a}(\mathcal{O}_v^h,\tilde{\mathcal{T}_1}\otimes \mathbb{Z}/n\mathbb{Z}) \ar[r] \ar@{^{(}->}[d] & ... \\
 H^{d+2-a}(K_v^h,\tilde{T_2}\otimes \mathbb{Z}/n\mathbb{Z}) \ar[r] & H^{d+2-a}(K_v^h,\tilde{T_1}\otimes \mathbb{Z}/n\mathbb{Z}) \ar[r] &  ...
}\\
\xymatrix{
... \ar[r] &H^{d+2-a}(\mathcal{O}_v^h,\tilde{\mathcal{G}}\otimes \mathbb{Z}/n\mathbb{Z}) \ar[r] \ar[d] &H^{d+3-a}(\mathcal{O}_v^h,\tilde{\mathcal{T}_2}\otimes \mathbb{Z}/n\mathbb{Z}) \ar@{^{(}->}[d]\\
... \ar[r] & H^{d+2-a}(K_v^h,\tilde{G}\otimes \mathbb{Z}/n\mathbb{Z}) \ar[r] & H^{d+3-a}(K_v^h,\tilde{T_2}\otimes \mathbb{Z}/n\mathbb{Z}).
}
\end{multline*}
De plus, les deux flèches verticales de gauche admettent des sections qui font commuter le diagramme:\\
\centerline{\xymatrix{
H^{d+2-a}(\mathcal{O}_v^h,\tilde{\mathcal{T}_2}\otimes \mathbb{Z}/n\mathbb{Z}) \ar[r] & H^{d+2-a}(\mathcal{O}_v^h,\tilde{\mathcal{T}_1}\otimes \mathbb{Z}/n\mathbb{Z}) \\
 H^{d+2-a}(K_v^h,\tilde{T_2}\otimes \mathbb{Z}/n\mathbb{Z}) \ar[r]\ar[u] & H^{d+2-a}(K_v^h,\tilde{T_1}\otimes \mathbb{Z}/n\mathbb{Z})\ar[u]
}}
Une chasse au diagramme permet alors de conclure que $H^{d+2-a}(\mathcal{O}_v^h,\tilde{\mathcal{G}}\otimes \mathbb{Z}/n\mathbb{Z}) \rightarrow H^{d+2-a}(K_v^h,\tilde{G}\otimes \mathbb{Z}/n\mathbb{Z})$ est injectif.
\item[(ii)] La preuve est tout à fait analogue.
\end{itemize}
\end{proof}

\begin{lemma} \label{exacte gmt}
On rappelle que l'on a supposé (H \ref{40}).
\begin{itemize}
\item[(i)] On a une suite exacte:
$$\bigoplus_{v \in X^{(1)}} H^{d+1-a}(K_v,\tilde{G}_t)  \rightarrow H^{d+2-a}_c(U,\tilde{\mathcal{G}}_t) \rightarrow \mathcal{D}^{d+2-a}(U,\tilde{\mathcal{G}}_t) \rightarrow 0.$$
\item[(ii)] On a une suite exacte:
$$\bigoplus_{v \in X^{(1)}} H^{a-2}(K_v,G_t)  \rightarrow H^{a-1}_c(U,\mathcal{G}_t) \rightarrow \mathcal{D}^{a-1}(U,\mathcal{G}_t) \rightarrow 0.$$
\end{itemize}
\end{lemma}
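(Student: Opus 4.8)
Le plan est de se ramener au cas des coefficients finis, déjà traité dans le lemme \ref{exacte gm fini}, en exprimant $\tilde{G}_t$ et $G_t$ comme des limites inductives filtrées, puis de passer à la limite. Je ne détaillerais que (i), la preuve de (ii) étant obtenue en remplaçant $\tilde{G}$ par $G$ et les degrés $(d+1-a,d+2-a)$ par $(a-2,a-1)$.

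D'abord, je remarquerais que, puisque $\mathbb{Q}/\mathbb{Z}(d+1-a) = \varinjlim_n \mathbb{Z}/n\mathbb{Z}(d+1-a)$ (les morphismes de transition $\mathbb{Z}/n\mathbb{Z} \rightarrow \mathbb{Z}/nm\mathbb{Z}$ étant la multiplication par $m$), on dispose d'isomorphismes $\tilde{\mathcal{G}}_t \cong \varinjlim_n (\tilde{\mathcal{G}} \otimes^{\mathbf{L}} \mathbb{Z}/n\mathbb{Z})$ et $\tilde{G}_t \cong \varinjlim_n (\tilde{G} \otimes^{\mathbf{L}} \mathbb{Z}/n\mathbb{Z})$. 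Pour chaque $n$, le lemme \ref{exacte gm fini} fournit une suite exacte aux degrés exactement voulus, et comme la construction de la première flèche y est fonctorielle en le complexe de coefficients, ces suites forment un système inductif filtré indexé par $n$.

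Ensuite, je passerais à la limite inductive sur $n$. L'exactitude des limites inductives filtrées garantit que la suite limite reste exacte, et il ne resterait qu'à identifier chacun des termes. Pour cela, j'utiliserais le fait que les foncteurs concernés commutent aux limites inductives filtrées: la cohomologie galoisienne d'un corps commute aux colimites filtrées de modules discrets, d'où $\varinjlim_n \bigoplus_v H^{d+1-a}(K_v,\tilde{G} \otimes \mathbb{Z}/n\mathbb{Z}) \cong \bigoplus_v H^{d+1-a}(K_v,\tilde{G}_t)$ (la somme directe commutant aux colimites) et $\varinjlim_n H^{d+2-a}(K,\tilde{G} \otimes \mathbb{Z}/n\mathbb{Z}) \cong H^{d+2-a}(K,\tilde{G}_t)$; la cohomologie à support compact commute aussi aux colimites filtrées, car $H^r_c(U,-) = H^r(X,j_!-)$, le foncteur $j_!$ est exact et commute aux colimites, et la cohomologie étale du schéma $X$ commute aux limites inductives filtrées de faisceaux, d'où $\varinjlim_n H^{d+2-a}_c(U,\tilde{\mathcal{G}} \otimes \mathbb{Z}/n\mathbb{Z}) \cong H^{d+2-a}_c(U,\tilde{\mathcal{G}}_t)$. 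Enfin, puisque la limite est exacte, l'image de la flèche limite est la limite des images, ce qui donne $\varinjlim_n \mathcal{D}^{d+2-a}(U,\tilde{\mathcal{G}} \otimes \mathbb{Z}/n\mathbb{Z}) \cong \mathcal{D}^{d+2-a}(U,\tilde{\mathcal{G}}_t)$.

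La difficulté principale ne réside pas dans le calcul lui-même, qui est pour l'essentiel formel, mais plutôt dans la vérification soigneuse de la fonctorialité en $n$ de la suite du lemme \ref{exacte gm fini}: il faut s'assurer que les morphismes de transition induisent bien des morphismes de suites exactes compatibles, afin que le passage à la limite soit licite et qu'aucun défaut de naturalité ne vienne perturber l'identification des trois termes. Une fois ce point acquis, les trois identifications ci-dessus referment la preuve.
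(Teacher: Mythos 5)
Votre preuve est correcte et suit pour l'essentiel la même route que celle du texte: dans les deux cas, on passe à la limite inductive filtrée sur $n$ dans les suites exactes du lemme \ref{exacte gm fini} et on identifie les trois termes limites $\bigoplus_{v} H^{d+1-a}(K_v,\tilde{G}_t)$, $H^{d+2-a}_c(U,\tilde{\mathcal{G}}_t)$ et $\mathcal{D}^{d+2-a}(U,\tilde{\mathcal{G}}_t)$ (cette dernière identification passant, comme chez vous, par $\varinjlim_n H^{d+2-a}(K,\tilde{G}\otimes \mathbb{Z}/n\mathbb{Z}) \cong H^{d+2-a}(K,\tilde{G}_t)$). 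La seule différence, mineure, est que là où vous invoquez directement la commutation de l'(hyper)cohomologie aux colimites filtrées de complexes, le texte établit les isomorphismes $\varinjlim_n H^{d+1-a}(K_v,\tilde{G}\otimes \mathbb{Z}/n\mathbb{Z}) \cong H^{d+1-a}(K_v,\tilde{G}_t)$ et leurs analogues par un argument de lemme des cinq, en se ramenant aux isomorphismes correspondants pour les tores $\tilde{T_1}$ et $\tilde{T_2}$, où il s'agit de faisceaux et non de complexes.
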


\begin{proof}
\begin{itemize}
\item[(i)] On a un diagramme commutatif à lignes exactes:\\
\scriptsize
\xymatrix{
\varinjlim_n H^{d+1-a}(K_v,\tilde{T_2} \otimes \mathbb{Z}/n\mathbb{Z}) \ar[r]\ar[d]^{\cong} & \varinjlim_n H^{d+1-a}(K_v,\tilde{T_1} \otimes \mathbb{Z}/n\mathbb{Z}) \ar[r]\ar[d]^{\cong} & \varinjlim_n H^{d+1-a}(K_v,\tilde{G} \otimes \mathbb{Z}/n\mathbb{Z}) \ar[r]\ar[d] &... \\
H^{d+1-a}(K_v,\tilde{T_2}_t) \ar[r] &H^{d+1-a}(K_v,\tilde{T_1}_t) \ar[r] &H^{d+1-a}(K_v,\tilde{G}_t) \ar[r] &...
}\\
\xymatrix{
\hspace{130pt}...\ar[r] &\varinjlim_n H^{d+2-a}(K_v,\tilde{T_2} \otimes \mathbb{Z}/n\mathbb{Z}) \ar[r]\ar[d]^{\cong} & \varinjlim_n H^{d+2-a}(K_v,\tilde{T_1} \otimes \mathbb{Z}/n\mathbb{Z})\ar[d]^{\cong} \\
\hspace{130pt}...\ar[r]&H^{d+2-a}(K_v,\tilde{T_2}_t) \ar[r] &H^{d+2-a}(K_v,\tilde{T_1}_t).
}\\
\normalsize
Le lemme des cinq permet alors d'établir que $ \varinjlim_n H^{d+1-a}(K_v,\tilde{G} \otimes \mathbb{Z}/n\mathbb{Z}) \cong H^{d+1-a}(K_v,\tilde{G}_t) $. De la même manière, on prouve que $\varinjlim_n H^{d+2-a}_c(U,\tilde{\mathcal{G}}\otimes \mathbb{Z}/n\mathbb{Z}) \cong H^{d+2-a}_c(U,\tilde{\mathcal{G}}_t)$ et que $\varinjlim_n H^{d+2-a}(K,\tilde{G}\otimes \mathbb{Z}/n\mathbb{Z}) \cong H^{d+2-a}(K,\tilde{G}_t)$. Par conséquent, en passant à la limite inductive dans les suites exactes:
$$\bigoplus_{v \in X^{(1)}} H^{d+1-a}(K_v,\tilde{G}\otimes \mathbb{Z}/n\mathbb{Z})  \rightarrow H^{d+2-a}_c(U,\tilde{\mathcal{G}}\otimes \mathbb{Z}/n\mathbb{Z}) \rightarrow \mathcal{D}^{d+2-a}(U,\tilde{\mathcal{G}}\otimes \mathbb{Z}/n\mathbb{Z}) \rightarrow 0$$
on obtient la suite exacte:
$$\bigoplus_{v \in X^{(1)}} H^{d+1-a}(K_v,\tilde{G}_t)  \rightarrow H^{d+2-a}_c(U,\tilde{\mathcal{G}}_t) \rightarrow \mathcal{D}^{d+2-a}(U,\tilde{\mathcal{G}}_t) \rightarrow 0.$$
\item[(ii)] La preuve est tout à fait analogue.
\end{itemize}
\end{proof}

\begin{proposition}\label{exacte gm}
On rappelle que l'on a supposé (H \ref{40}).
\begin{itemize}
\item[(i)] Soit $l$ un nombre premier. Soit $U$ un ouvert de $U_1$. On a une suite exacte:
$$\bigoplus_{v \in X^{(1)}} \overline{H^{d+2-a}(K_v,\tilde{G})\{l\} } \rightarrow \overline{H^{d+3-a}_c(U,\tilde{\mathcal{G}})\{l\}} \rightarrow \overline{\mathcal{D}^{d+3-a}(U,\tilde{\mathcal{G}})\{l\}} \rightarrow 0.$$
\item[(ii)] Supposons $\hat{T_2} \rightarrow \hat{T_1}$ injectif. Soit $l$ un nombre premier. Soit $U$ un ouvert de $U_2$. On a une suite exacte:
$$\bigoplus_{v \in X^{(1)}} \overline{H^{a-1}(K_v,G)\{l\} } \rightarrow \overline{H^{a}_c(U,\mathcal{G})\{l\}} \rightarrow \overline{\mathcal{D}^{a}(U,\mathcal{G})\{l\}} \rightarrow 0.$$
\end{itemize}
\end{proposition}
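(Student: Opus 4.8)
The plan is to deduce the statement from the already-established exact sequences for the ``torsion'' versions $\tilde G_t$ and $G_t$ (Lemme \ref{exacte gmt}), which differ from the asserted ones only by replacing $\tilde G_t$ (resp. $G_t$) by $\tilde G$ (resp. $G$) and shifting every degree by one. Concretely, for (i) I would start from the exact sequence of Lemme \ref{exacte gmt}(i)
$$\bigoplus_{v \in X^{(1)}} H^{d+1-a}(K_v,\tilde G_t) \rightarrow H^{d+2-a}_c(U,\tilde{\mathcal G}_t) \rightarrow \mathcal D^{d+2-a}(U,\tilde{\mathcal G}_t) \rightarrow 0,$$
take $l$-primary parts (an exact operation on torsion groups, and every term here is torsion since $\tilde G_t$ has $\mathbb Q/\mathbb Z$-coefficients), and then pass to the quotients by maximal divisible subgroups.

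For this last step to preserve exactness I would use that $\overline{(-)}$ is always right exact and that the middle term $H^{d+2-a}_c(U,\tilde{\mathcal G}_t)$ is torsion de type cofini (same Kummer-sequence argument as dans la proposition \ref{nature tore}(iii), en utilisant que $H^{d+2-a}_c(U,{}_n\tilde{\mathcal G}_t)$ est fini), so that $\overline{H^{d+2-a}_c(U,\tilde{\mathcal G}_t)\{l\}}$ est fini; a cofinite-type middle term forces the divisible subgroups to surject, which is exactly what makes the barred sequence exact in the middle. This yields the exact sequence
$$\bigoplus_{v} \overline{H^{d+1-a}(K_v,\tilde G_t)\{l\}} \rightarrow \overline{H^{d+2-a}_c(U,\tilde{\mathcal G}_t)\{l\}} \rightarrow \overline{\mathcal D^{d+2-a}(U,\tilde{\mathcal G}_t)\{l\}} \rightarrow 0.$$

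It then remains to identify each barred $l$-primary term with its integral, degree-shifted counterpart. I would use the distinguished triangle $\tilde G \to \tilde G \otimes \mathbb Q \to \tilde G_t \to \tilde G[1]$ (et sa variante à support compact), whose connecting maps give $H^{d+1-a}(K_v,\tilde G_t) \to H^{d+2-a}(K_v,\tilde G)$, $H^{d+2-a}_c(U,\tilde{\mathcal G}_t) \to H^{d+3-a}_c(U,\tilde{\mathcal G})$ et, via le carré commutatif reliant $H_c$ aux flèches vers la cohomologie de $K$ (commutativités comme dans la proposition 4.3 de \cite{CTH}), une flèche induite $\mathcal D^{d+2-a}(U,\tilde{\mathcal G}_t) \to \mathcal D^{d+3-a}(U,\tilde{\mathcal G})$. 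The key elementary observation is that a homomorphism of torsion groups which is surjective with divisible kernel on $l$-primary parts induces an isomorphism after applying $\overline{(-)}\{l\}$. For each of these three maps the kernel and cokernel are subquotients of rational-coefficient groups $H^*(-,\tilde G \otimes \mathbb Q)$, which are uniquely divisibles (et nuls en degrés supérieurs par concentration de $\mathbb Q(d+1-a)_{\text{Zar}}$ jointe à un argument de restriction-corestriction vers une extension déployante); hence on $l$-primary parts these maps are surjective with divisible kernel. The field-level statement is precisely Lemme \ref{iso}(i). Since the connecting maps are natural, they intertwine the transition morphisms, and the previous exact sequence transports to the desired one.

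The main obstacle is the $\mathcal D$-term: one must check that the boundary $H^{d+2-a}_c(U,\tilde{\mathcal G}_t) \to H^{d+3-a}_c(U,\tilde{\mathcal G})$ is surjective on $l$-primary parts (équivalent à ce que $H^{d+3-a}_c(U,\tilde{\mathcal G}\otimes\mathbb Q)$ soit sans torsion), so that $\mathcal D^{d+3-a}(U,\tilde{\mathcal G})\{l\}$ correspond bien à $\mathcal D^{d+2-a}(U,\tilde{\mathcal G}_t)\{l\}$ sous l'isomorphisme de niveau corporel de Lemme \ref{iso}(i); c'est ici que servent les résultats de type cofini du corollaire \ref{tcof gm}(i) et du lemme \ref{nature gm}(i). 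La partie (ii) est entièrement parallèle: je reprendrais le même raisonnement à partir du lemme \ref{exacte gmt}(ii) et du triangle $G \to G\otimes\mathbb Q \to G_t \to G[1]$, en remplaçant le lemme \ref{iso}(i) par le lemme \ref{iso}(ii) et en invoquant l'hypothèse d'injectivité de $\hat T_2 \to \hat T_1$ (nécessaire à la fois pour le lemme \ref{iso}(ii) et pour les énoncés de type cofini du lemme \ref{nature gm}(iv)--(vi) et du corollaire \ref{tcof gm}(ii)).
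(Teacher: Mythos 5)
Your proposal is correct and follows essentially the same route as the paper's proof: both start from the exact sequences of the lemme \ref{exacte gmt}, pass to $l$-primary parts and to quotients by maximal divisible subgroups (exactness being preserved because the middle term is de torsion de type cofini), and then identify each barred term with its integral, degree-shifted counterpart via the triangle $\tilde{G} \rightarrow \tilde{G} \otimes \mathbb{Q} \rightarrow \tilde{G}_t \rightarrow \tilde{G}[1]$, the rational-coefficient groups being divisible and vanishing in the relevant degrees, with the parallel argument for (ii) under the injectivity of $\hat{T_2} \rightarrow \hat{T_1}$. The only (harmless) micro-divergence is at the $\mathcal{D}$-term, where the paper identifies $\overline{\mathcal{D}^{d+2-a}(U,\tilde{\mathcal{G}}_t)}\{l\}$ with $\overline{\mathcal{D}^{d+3-a}(U,\tilde{\mathcal{G}})}\{l\}$ via $\Sha^{d+2-a}(\tilde{G}_t) \cong \Sha^{d+3-a}(\tilde{G})$ and la proposition \ref{Sha gm} (whence $U \subseteq U_1$), whereas you obtain it directly from the surjectivity of $H^{d+2-a}_c(U,\tilde{\mathcal{G}}_t) \rightarrow H^{d+3-a}_c(U,\tilde{\mathcal{G}})$ combined with the field-level isomorphism of the lemme \ref{iso}, which is equally valid.
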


\begin{proof}
\begin{itemize}
\item[(i)] On dispose d'un diagramme commutatif où, d'après le lemme \ref{exacte gmt}, la première ligne est exacte:\\
\centerline{\xymatrix{
\bigoplus_{v \in X^{(1)}} \overline{H^{d+1-a}(K_v,\tilde{G}_t)}\{l\}  \ar[r] \ar[d] & \overline{H^{d+2-a}_c(U,\tilde{\mathcal{G}}_t)}\{l\} \ar[r] \ar[d] & \overline{\mathcal{D}^{d+2-a}(U,\tilde{\mathcal{G}}_t)}\{l\} \ar[r] \ar[d] & 0 
\\
\bigoplus_{v \in X^{(1)}} \overline{H^{d+2-a}(K_v,\tilde{G}) }\{l\} \ar[r]  & \overline{H^{d+3-a}_c(U,\tilde{\mathcal{G}})}\{l\} \ar[r]  & \overline{\mathcal{D}^{d+3-a}(U,\tilde{\mathcal{G}})}\{l\} \ar[r]  & 0
}}
Il suffit donc de montrer que les morphismes verticaux sont des isomorphismes. 
\begin{itemize}
\item[$\bullet$] On dispose de la suite exacte:
$$H^{d+1-a}(K_v,\tilde{G} \otimes \mathbb{Q}) \rightarrow H^{d+1-a}(K_v,\tilde{G}_t) \rightarrow H^{d+2-a}(K_v,\tilde{G}) \rightarrow H^{d+2-a}(K_v,\tilde{G} \otimes \mathbb{Q}).$$
Comme $H^{d+2-a}(K_v,\tilde{T_1} \otimes \mathbb{Q}) = H^{d+3-a}(K_v,\tilde{T_2} \otimes \mathbb{Q})=0$, on a $ H^{d+2-a}(K_v,\tilde{G} \otimes \mathbb{Q})=0$. Par conséquent, comme $H^{d+1-a}(K_v,\tilde{G} \otimes \mathbb{Q})$ est divisible, on obtient un isomorphisme $\overline{H^{d+1-a}(K_v,\tilde{G}_t)} \cong  \overline{H^{d+2-a}(K_v,\tilde{G}) }$.
\item[$\bullet$] On dispose de la suite exacte:
$$H^{d+2-a}_c(U,\tilde{\mathcal{G}} \otimes \mathbb{Q}) \rightarrow H^{d+2-a}_c(U,\tilde{\mathcal{G}}_t) \rightarrow H^{d+3-a}_c(U,\tilde{\mathcal{G}}) \rightarrow H^{d+3-a}_c(U,\tilde{\mathcal{G}} \otimes \mathbb{Q}).$$
Comme $H^{d+3-a}(U,\tilde{\mathcal{T}_1} \otimes \mathbb{Q}) = H^{d+4-a}(U,\tilde{\mathcal{T}_2}\otimes \mathbb{Q}) = 0$, on a $H^{d+3-a}(U,\tilde{\mathcal{G}} \otimes \mathbb{Q})=0$. De plus, $H^{d+2-a}(K_v,\tilde{G} \otimes \mathbb{Q}) = 0$ pour tout $v\in X^{(1)}$. Donc $H^{d+3-a}_c(U,\tilde{\mathcal{G}} \otimes \mathbb{Q})=0$. Par conséquent, comme $H^{d+2-a}_c(U,\tilde{\mathcal{G}} \otimes \mathbb{Q})$ est divisible, on obtient un isomorphisme $\overline{H^{d+2-a}_c(U,\tilde{\mathcal{G}}_t)} \cong  \overline{H^{d+3-a}_c(U,\tilde{\mathcal{G}})}$.
\item[$\bullet$] D'après le lemme \ref{iso}, on a un isomorphisme $\Sha^{d+2-a}(\tilde{G}_t) \cong \Sha^{d+3-a}(\tilde{G})$, d'où un isomorphisme $\overline{\mathcal{D}^{d+2-a}(U,\tilde{\mathcal{G}}_t)}\{l\} \cong \overline{\mathcal{D}^{d+3-a}(U,\tilde{\mathcal{G}})}\{l\}$ d'après \ref{Sha gm} puisque $U \subseteq U_1$.
\end{itemize}
\item[(ii)] En procédant exactement de la même manière, il s'agit de montrer que l'on a des isomorphismes $\overline{H^{a-2}(K_v,G_t)} \cong  \overline{H^{a-1}(K_v,G) }$, $\overline{H^{a-1}_c(U,\mathcal{G}_t)} \cong  \overline{H^{a}_c(U,\mathcal{G})}$ et $\overline{\mathcal{D}^{a-1}(U,\mathcal{G}_t)}\{l\} \cong \overline{\mathcal{D}^{a}(U,\mathcal{G})}\{l\}$. On les établit de la même manière que dans (i) en remarquant que, comme $H^{a-1}(K_v,G)$ et $H^{a}_c(U,\mathcal{G})$ sont de torsion, les morphismes $H^{a-1}(K_v,G) \rightarrow H^{a-1}(K_v,G \otimes \mathbb{Q})$ et $H^{a}_c(U,\mathcal{G}) \rightarrow H^{a}_c(U,\mathcal{G} \otimes \mathbb{Q})$ sont nuls.
\end{itemize}
\end{proof}

\begin{theorem} \label{PT gm} \textbf{(Dualité de Poitou-Tate pour les groupes de type multiplicatif)}\\
On rappelle que l'on a supposé (H \ref{40}), c'est-à-dire que $X$ est une courbe, et que $a \in \{0,1,...,d+1\}$. 
\begin{itemize}
\item[(i)] On fait l'une des deux hypothèses suivantes:
\begin{hypoth}\label{1}
\begin{minipage}[t]{11.9cm}
le morphisme $\hat{T_2} \rightarrow \hat{T_1}$ est injectif,
\end{minipage}
\end{hypoth}
\begin{hypoth}\label{2}
\begin{minipage}[t]{11.9cm}
le groupe $K_a^M(L)_{tors}$ est d'exposant fini pour tout corps $(d+1)$-local $L$ dont le corps résiduel est une extension finie de $k$.
\end{minipage}
\end{hypoth}
On a alors un accouplement parfait:
$$ \overline{\Sha^{a-1}(G)_{tors}} \times \overline{\Sha^{d+3-a}(\tilde{G})} \rightarrow \mathbb{Q}/\mathbb{Z}.$$
\item[(ii)] Sous l'hypothèse (H \ref{1}), on a un accouplement parfait:
$$ \overline{\Sha^{a}(G)} \times \Sha^{d+2-a}(\tilde{G}) \rightarrow \mathbb{Q}/\mathbb{Z}.$$
\end{itemize}
\end{theorem}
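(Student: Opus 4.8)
The plan is to transpose the proof of Theorem~\ref{PT tore} to the two–term complex $G$ and its dual $\tilde{G}$, the point being that the new difficulty (the passage through the cones) has already been absorbed into the local duality \ref{local gm 2}, the $\Sha$–identifications \ref{Sha gm} and the exact sequences \ref{exacte gm}. In each part I fix a prime $l$, pass to $l$–primary parts, and assemble a commutative ladder with exact rows: one row exhibits a group of locally trivial classes as a kernel, the other is the $\mathbb{Q}/\mathbb{Z}$–dual of one of the sequences \ref{exacte gm}, the middle vertical arrow is the global Artin–Verdier isomorphism \ref{AV gm}, and the two outer vertical arrows come from \ref{local gm 2}. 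A diagram chase then identifies the kernel with the dual of the image, and passing to the limit over the open sets $U\subseteq U_0$ and invoking \ref{Sha gm} yields the pairing prime by prime; summing over $l$ gives the statement.

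For part~(i) I would realise the $\tilde{G}$–side by the images $\mathcal{D}^{d+3-a}(U,\tilde{\mathcal{G}})$, via \ref{exacte gm}(i) and the equality $\mathcal{D}^{d+3-a}(U,\tilde{\mathcal{G}})\{l\}=\Sha^{d+3-a}(\tilde{G})\{l\}$ of \ref{Sha gm}(i), and the $G$–side by the kernels $D^{a-1}_{sh}(U,\mathcal{G})=\ker\bigl(H^{a-1}(U,\mathcal{G})\to\prod_{v}H^{a-1}(K_v,G)\bigr)$. The degrees match since \ref{AV gm} pairs $H^{a-1}(U,\mathcal{G})$ with $H^{d+3-a}_{c}(U,\tilde{\mathcal{G}})$, so the ladder reads
\[
0 \to \overline{D^{a-1}_{sh}(U,\mathcal{G})\{l\}} \to \overline{H^{a-1}(U,\mathcal{G})\{l\}} \to \textstyle\prod_{v}\overline{H^{a-1}(K_v,G)\{l\}}
\]
\[
0 \to (\overline{\mathcal{D}^{d+3-a}(U,\tilde{\mathcal{G}})\{l\}})^{D} \to (\overline{H^{d+3-a}_{c}(U,\tilde{\mathcal{G}})\{l\}})^{D} \to (\textstyle\bigoplus_{v}\overline{H^{d+2-a}(K_v,\tilde{G})\{l\}})^{D}.
\]
A useful simplification is that full perfectness of the local pairing is \emph{not} needed here: the middle arrow is an isomorphism and the right–hand arrow is merely injective, and a chase shows these two facts already force the left–hand arrow to be an isomorphism. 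This is precisely why part~(i) holds under the weaker alternative (H \ref{2}): under either (H \ref{1}) or (H \ref{2}) the local groups $H^{a-1}(K_v,G)_{tors}$ are of finite exponent by \ref{local gm}(i),(iii), so their $l$–parts have no divisible subgroup, and the injectivity of \ref{local gm 2}(ii) suffices.

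For part~(ii) I work under (H \ref{1}) and exchange the roles: the $G$–side is realised by the images $\mathcal{D}^{a}(U,\mathcal{G})$ through \ref{exacte gm}(ii) and \ref{Sha gm}(ii) (both requiring (H \ref{1})), and the $\tilde{G}$–side by the kernels $D^{d+2-a}_{sh}(U,\tilde{\mathcal{G}})$. The middle vertical map is now the form of \ref{AV gm} pairing $H^{a}_{c}(U,\mathcal{G})$ with $H^{d+2-a}(U,\tilde{\mathcal{G}})$, valid by the same proof once compact and ordinary supports are exchanged (the pairing \ref{AV gm fini} being symmetric in $\mathcal{G}$ and $\tilde{\mathcal{G}}$), while the right–hand vertical map is the genuinely perfect local duality \ref{local gm 2}(i). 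The chase and the limit give $\overline{\Sha^{d+2-a}(\tilde{G})}\{l\}\cong(\overline{\Sha^{a}(G)}\{l\})^{D}$.

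The step I expect to be the main obstacle is the bookkeeping of divisible subgroups needed to state the pairings with the correct overlines. On the $G$–side this is routine: the cofinite–type statements \ref{nature gm} and \ref{tcof gm} show that modding out the divisible subgroups of $\Sha^{a-1}(G)_{tors}$ and $\Sha^{a}(G)$ leaves finite groups, and in part~(i) both sides already carry an overline. The delicate point is to \emph{remove} the overline on the $\tilde{G}$–side of part~(ii), that is, to prove that $\Sha^{d+2-a}(\tilde{G})$ has trivial divisible subgroup. Under (H \ref{1}) one has $\tilde{G}\cong(\hat{T_1}/\hat{T_2})\otimes^{\mathbf{L}}\mathbb{Z}(d+1-a)$; the finite part of $\hat{T_1}/\hat{T_2}$ contributes only a group of finite exponent, so the difficulty is concentrated in the free part, and it remains to show that $\Sha^{d+2-a}(\mathbb{Z}(d+1-a))$ is finite. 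I expect this to follow from the finiteness of the groups ${}_{n}\Sha^{d+2-a}(\mathbb{Z}(d+1-a))$ (computed through \ref{PT fini} after a Kummer–type dévissage) together with a finite–exponent argument in the spirit of \ref{nature tore}(i). Once this vanishing is secured, the prime–by–prime isomorphisms assemble into the asserted perfect pairing $\overline{\Sha^{a}(G)}\times\Sha^{d+2-a}(\tilde{G})\to\mathbb{Q}/\mathbb{Z}$.
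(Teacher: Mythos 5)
Your proposal is correct and follows essentially the same route as the paper's proof: the same ladders pairing $D^{a-1}_{sh}(U,\mathcal{G})$ (resp. $D^{d+2-a}_{sh}(U,\tilde{\mathcal{G}})$) against $\mathcal{D}^{d+3-a}(U,\tilde{\mathcal{G}})$ (resp. $\mathcal{D}^{a}(U,\mathcal{G})$) via \ref{exacte gm}, the middle arrow given by le théorème \ref{AV gm} (in its swapped form for (ii), justified exactly as you say by the symmetry of \ref{AV gm fini}), the right arrow needing only the injectivity of \ref{local gm 2}(ii) together with the finite-exponent statements of \ref{local gm} under (H \ref{1}) or (H \ref{2}) in part (i) and the perfect duality \ref{local gm 2}(i) under (H \ref{1}) in part (ii), and the passage to the limit over $U$ through \ref{Sha gm}. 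The one place you overcomplicate is the removal of the overline in (ii): the paper simply notes that under (H \ref{1}) one has $\tilde{G}\cong Q\otimes^{\mathbf{L}}\mathbb{Z}(d+1-a)$ with $Q=\text{Coker}(\hat{T_2}\to\hat{T_1})$, and since Beilinson--Lichtenbaum gives $H^{d+2-a}(L,\mathbb{Z}(d+1-a))=H^{(d+1-a)+1}(L,\mathbb{Z}(d+1-a))=0$ over a splitting field, a restriction--corestriction argument makes $\Sha^{d+2-a}(\tilde{G})$ of finite exponent outright (so its divisible subgroup is trivial); in particular no finiteness of $\Sha^{d+2-a}(\mathbb{Z}(d+1-a))$ needs to be proved --- that group is already zero --- though your proposed dévissage would also reach the goal.
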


\begin{proof}
\begin{itemize}
\item[(i)] Fixons un nombre premier $l$. Pour $U$ un ouvert de $U_1$, posons $D^{a-1}_{sh}(U,\mathcal{G}) = \text{Ker}(H^{a-1}(U,\mathcal{G}) \rightarrow \prod_{v \in X^{(1)}} H^{a-1}(K_v,G))$. D'après \ref{exacte gm} et la proposition 4.3 de \cite{CTH}, on dispose d'un diagramme commutatif à lignes exactes:\\
\small
\centerline{\xymatrix{
0 \ar[r] & D^{a-1}_{sh}(U,\mathcal{G})\{l\} \ar[r]\ar[d] & H^{a-1}(U,\mathcal{G})\{l\} \ar[r]\ar[d]& \prod_{v \in X^{(1)}} H^{a-1}(K_v,G)\{l\}\ar[d]\\
0 \ar[r] & (\overline{\mathcal{D}^{d+3-a}(U,\tilde{\mathcal{G}})}\{l\})^D \ar[r]& (\overline{H^{d+3-a}_c(U,\tilde{\mathcal{G}})}\{l\})^D \ar[r]& (\bigoplus_{v \in X^{(1)}} \overline{H^{d+2-a}(K_v,\tilde{G})}\{l\})^D
}}
\normalsize
La proposition \ref{AV gm} impose que le morphisme vertical central est surjectif et que son noyau est divisible. De plus, les hypothèses (H \ref{1}) et (H \ref{2}) imposent que $H^{a-1}(K_v,G)\{l\}$ est d'exposant fini pour tout $v \in X^{(1)}$ d'après \ref{local gm}. On en déduit que le morphisme naturel $H^{a-1}(K_v,G)\{l\} \rightarrow H^{a-1}(K_v,G)^{\wedge}$ est injectif. La proposition \ref{local gm 2} montre alors que le morphisme $\prod_{v \in X^{(1)}} H^{a-1}(K_v,G)\{l\} \rightarrow (\bigoplus_{v \in X^{(1)}} \overline{H^{d+2-a}(K_v,\tilde{G})}\{l\})^D$ est injectif. Par conséquent le morphisme $D^{a-1}_{sh}(U,\mathcal{G})\{l\} \rightarrow (\overline{\mathcal{D}^{d+3-a}(U,\tilde{\mathcal{G}})}\{l\})^D$ est surjectif et son noyau est divisible. En passant à la limite inductive sur $U$ et en utilisant \ref{Sha gm}, on obtient un isomorphisme:
$$\Sha^{a-1}(G)\{l\}/D_l \cong \overline{\Sha^{d+3-a}(\tilde{G})\{l\}}^D,$$
où $D_l$ est contenu dans le sous-groupe divisible de $\Sha^{a-1}(G)\{l\}$. Or $\overline{\Sha^{d+3-a}(\tilde{G})\{l\}}^D$ n'a aucun élément divisible. Donc $D_l$ est le sous-groupe divisible de $\Sha^{a-1}(G)\{l\}$ et on a un isomorphisme:
$$\overline{\Sha^{a-1}(G)\{l\}} \cong \overline{\Sha^{d+3-a}(\tilde{G})\{l\}}^D.$$
\item[(ii)] La méthode est similaire. Fixons un nombre premier $l$. Pour $U$ un ouvert de $U_2$, posons $D^{d+2-a}_{sh}(U,\tilde{\mathcal{G}}) = \text{Ker}(H^{d+2-a}(U,\tilde{\mathcal{G}}) \rightarrow \prod_{v \in X^{(1)}} H^{d+2-a}(K_v,\tilde{G}))$. D'après \ref{exacte gm} et la proposition 4.3 de \cite{CTH}, on dispose d'un diagramme commutatif à lignes exactes:\\
\small
\centerline{\xymatrix{
0 \ar[r] & D^{d+2-a}_{sh}(U,\tilde{\mathcal{G}})\{l\} \ar[r]\ar[d] & H^{d+2-a}(U,\tilde{\mathcal{G}})\{l\} \ar[r]\ar[d]& \prod_{v \in X^{(1)}} H^{d+2-a}(K_v,\tilde{G}))\{l\}\ar[d]\\
0 \ar[r] & (\overline{\mathcal{D}^{a}(U,\mathcal{G})}\{l\})^D \ar[r]& (\overline{H^{a}_c(U,\mathcal{G})}\{l\})^D \ar[r]& (\bigoplus_{v \in X^{(1)}} \overline{H^{a-1}(K_v,G)}\{l\})^D
}}
\normalsize
La proposition \ref{AV gm} impose que le morphisme vertical central est surjectif et que son noyau est divisible. De plus, l'hypothèse (H \ref{1}) impose que le morphisme vertical de droite est un isomorphisme d'après \ref{local gm} et \ref{local gm 2}. Par conséquent le morphisme $D^{d+2-a}_{sh}(U,\tilde{\mathcal{G}})\{l\} \rightarrow (\overline{\mathcal{D}^{a}(U,\mathcal{G})}\{l\})^D$ est surjectif et son noyau est divisible. En passant à la limite inductive sur $U$ et en utilisant \ref{Sha gm}, on obtient un isomorphisme:
$$\Sha^{d+2-a}(\tilde{G})\{l\}/D_l \cong \overline{\Sha^{a}(G)\{l\}}^D,$$
où $D_l$ est contenu dans le sous-groupe divisible de $\Sha^{d+2-a}(\tilde{G})\{l\}$. Or $\overline{\Sha^{a}(G)\{l\}}^D$ n'a aucun élément divisible. Donc $D_l$ est le sous-groupe divisible de $\Sha^{d+2-a}(\tilde{G})\{l\}$ et on a un isomorphisme:
$$\overline{\Sha^{d+2-a}(\tilde{G})\{l\}} \cong \overline{\Sha^{a}(G)\{l\}}^D.$$
Comme $\hat{T_2} \rightarrow \hat{T_1}$ est injectif, si l'on note $Q$ son conoyau, $\tilde{G}$ est quasi-isomorphe à $Q \otimes \mathbb{Z}(d+1-a)$. Par conséquent, la conjecture de Beilinson-Lichtenbaum (avec un argument de restriction-corestriction) permet d'établir que le groupe $\Sha^{d+2-a}(\tilde{G})$ est d'exposant fini, ce qui achève la preuve. 
\end{itemize}
\end{proof}

\begin{corollary}\label{cor}
On rappelle que l'on a supposé (H \ref{40}), c'est-à-dire que $X$ est une courbe, et que $a \in \{0,1,...,d+1\}$.
\begin{itemize}
\item[(i)] On fait l'une des deux hypothèses suivantes:
\begin{hypoth}\label{1'}
\begin{minipage}[t]{11.9cm}
le morphisme $\check{T_1} \rightarrow \check{T_2}$ est injectif,
\end{minipage}
\end{hypoth}
\begin{hypoth}\label{2'}
\begin{minipage}[t]{11.9cm}
le groupe $K_{d+1-a}^M(L)_{tors}$ est d'exposant fini pour tout corps $(d+1)$-local $L$ dont le corps résiduel est une extension finie de $k$.
\end{minipage}
\end{hypoth}
On a alors un accouplement parfait:
$$ \overline{\Sha^{a+1}(G)} \times \overline{\Sha^{d+1-a}(\tilde{G})_{tors}} \rightarrow \mathbb{Q}/\mathbb{Z}.$$
\item[(ii)] Sous l'hypothèse (H \ref{1'}), on a un accouplement parfait:
$$ \Sha^{a}(G) \times \overline{\Sha^{d+2-a}(\tilde{G})} \rightarrow \mathbb{Q}/\mathbb{Z}.$$
\end{itemize}
\end{corollary}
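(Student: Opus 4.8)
The plan is to deduce both statements from Theorem \ref{PT gm} by applying it to the \emph{dual datum}, that is, by exchanging the roles of $G$ and $\tilde{G}$ and replacing the parameter $a$ by $a' = d+1-a$ (which again lies in $\{0,1,\dots,d+1\}$). Concretely, first I would set $\hat{S_1} = \check{T_2}$ and $\hat{S_2} = \check{T_1}$, which are again free $\text{Gal}(K^s/K)$-modules of finite type, with cocharacter modules $\check{S_1} = \hat{T_2}$ and $\check{S_2} = \hat{T_1}$; the morphism $\check{\rho} : \check{T_1} \rightarrow \check{T_2}$ induced by $\hat{\rho}$ then plays the role of the character morphism $\hat{S_2} \rightarrow \hat{S_1}$. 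Their integral models $\hat{\mathcal{S}_i} = \check{\mathcal{T}_{3-i}}$ exist over $U_0$ as the duals of the $\hat{\mathcal{T}_j}$, so the whole package of data required in Section \ref{tore} and at the beginning of the present section is available for the pair $(\hat{S_1}, \hat{S_2})$ with the shifted parameter $a'$.

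Next I would carry out the bookkeeping identifying the objects attached to this new datum. Writing $S_i = \check{S_i} \otimes^{\mathbf{L}} \mathbb{Z}(a')[1]$ and $\tilde{S_i} = \hat{S_i} \otimes^{\mathbf{L}} \mathbb{Z}(d+1-a')$, one finds $S_1 = \tilde{T_2}[1]$ and $S_2 = \tilde{T_1}[1]$, and, since $d+1-a' = a$, one gets $\tilde{S_1} = \check{T_2} \otimes^{\mathbf{L}} \mathbb{Z}(a) = T_2[-1]$ and $\tilde{S_2} = T_1[-1]$. Hence the cone $G' = [S_1 \rightarrow S_2]$ of the morphism induced by $\check{\rho}$ is exactly $\tilde{G}[1]$, while its dual $\tilde{G'} = [\tilde{S_2} \rightarrow \tilde{S_1}]$ is $G[-1]$ (this is the shape already noted in the case $a=d=1$). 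Moreover hypothesis (H \ref{1'}) (resp. (H \ref{2'})) is precisely hypothesis (H \ref{1}) (resp. (H \ref{2})) for the new datum with parameter $a'$, since $\hat{S_2} \rightarrow \hat{S_1}$ is $\check{T_1} \rightarrow \check{T_2}$ and $K^M_{a'} = K^M_{d+1-a}$.

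It then suffices to substitute in the two pairings produced by Theorem \ref{PT gm}. Using $\Sha^r(M[1]) = \Sha^{r+1}(M)$ and $\Sha^r(M[-1]) = \Sha^{r-1}(M)$, part (i) of the theorem applied to $G'$ produces a perfect pairing between $\overline{\Sha^{a'-1}(G')_{tors}} = \overline{\Sha^{d+1-a}(\tilde{G})_{tors}}$ and $\overline{\Sha^{d+3-a'}(\tilde{G'})} = \overline{\Sha^{a+1}(G)}$, which is assertion (i) under (H \ref{1'}) or (H \ref{2'}); likewise part (ii), valid under (H \ref{1}) for the new datum, i.e. under (H \ref{1'}), pairs $\overline{\Sha^{a'}(G')} = \overline{\Sha^{d+2-a}(\tilde{G})}$ with $\Sha^{d+2-a'}(\tilde{G'}) = \Sha^{a}(G)$, which is assertion (ii).

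The main point to check — and the only step that is not purely formal index-chasing — is that the cup-product pairing $G' \otimes^{\mathbf{L}} \tilde{G'} \rightarrow \mathbb{Z}(d+1)[2]$ constructed for the dual datum via Lemma \ref{acc gm} coincides, up to sign and the canonical shift identifications, with the original pairing $G \otimes^{\mathbf{L}} \tilde{G} \rightarrow \mathbb{Z}(d+1)[2]$. This reduces to the symmetry, under interchanging the two factors, of the tensor-product morphism $[\check{T_1} \rightarrow \check{T_2}] \otimes^{\mathbf{L}} [\hat{T_2} \rightarrow \hat{T_1}] \rightarrow \mathbb{Z}[1]$, which is visible from the explicit commutative square written down in the proof of Lemma \ref{acc gm}. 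Once this compatibility is established, the perfect pairings furnished by Theorem \ref{PT gm} for the dual datum are literally those appearing in the statement, so no further argument is needed.
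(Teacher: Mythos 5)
Your proposal is exactly the paper's proof: the paper sets $H = [\tilde{T_2} \rightarrow \tilde{T_1}][1] = \tilde{G}[1]$ and $\tilde{H} = [T_1 \rightarrow T_2][-1] = G[-1]$ and applies Theorem \ref{PT gm} to $H$ in place of $G$ with the shifted parameter, which is precisely your dual datum $(G', \tilde{G}')$ with $a' = d+1-a$, and your index bookkeeping and the translation of (H \ref{1'})/(H \ref{2'}) into (H \ref{1})/(H \ref{2}) all match. Your closing remark on the compatibility of the cup-product pairing for the dual datum with the original one of Lemma \ref{acc gm} is a sound extra precaution that the paper leaves implicit, but it does not change the argument.
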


\begin{proof}
Notons $H = [\tilde{T_2} \rightarrow \tilde{T_1}][1]$ et $\tilde{H} = [T_1 \rightarrow T_2][-1]$. En appliquant le théorème précédent à $H$ au lieu de $G$, on obtient:
\begin{itemize}
\item[$\bullet$] sous (H \ref{1'}) ou (H \ref{2'}), un accouplement parfait:
$$\overline{\Sha^{d-a}(H)_{tors}} \times \overline{\Sha^{a+2}(\tilde{H})} \rightarrow \mathbb{Q}/\mathbb{Z},$$
\item[$\bullet$] sous (H \ref{1'}), un accouplement parfait: $$\overline{\Sha^{d+1-a}(H)} \times \Sha^{a+1}(\tilde{H}) \rightarrow \mathbb{Q}/\mathbb{Z},$$
\end{itemize}
d'où le résultat.
\end{proof}

\begin{remarque}\label{rq gm}
\begin{itemize}
\item[(i)] Comme indiqué dans la remarque \ref{Milnor}, l'hypothèse (H \ref{2}) est toujours vérifiée pour $a=0$, et elle est aussi vérifiée pour $a=1$ lorsque $k_1$ est $p$-adique. Par contre, je ne sais pas si elle est vérifiée dans d'autres cas.
\item[(ii)] Soit $G_{tm}$ un groupe de type multiplicatif. Soient $T_1$ et $T_2$ deux tores s'insérant dans une suite exacte $0 \rightarrow G_{tm} \rightarrow T_1 \rightarrow T_2 \rightarrow 0$, de sorte que $G_{tm}$ s'identifie à $G[-1]$. On remarque alors que $\Sha^1(G_{tm})$ est de torsion d'exposant fini et que $\Sha^3(G_{tm})$ est de torsion. On obtient donc des accouplements parfaits:
$$\Sha^1(G_{tm}) \times \overline{\Sha^{d+2}(\tilde{G})} \rightarrow \mathbb{Q}/\mathbb{Z} \hspace{15pt} \text{et} \hspace{15pt} \overline{\Sha^2(G_{tm})} \times \Sha^{d+1}(\tilde{G}) \rightarrow \mathbb{Q}/\mathbb{Z},$$
et si l'hypothèse (H \ref{2}) est vérifiée  pour $a = d$, on a aussi un accouplement parfait:
$$ \overline{\Sha^{3}(G_{tm})} \times \overline{\Sha^{d}(\tilde{G})_{tors}} \rightarrow \mathbb{Q}/\mathbb{Z}.$$
\item[(iii)] Supposons que $a=1$ et que le morphisme $\check{T_1} \rightarrow \check{T_2}$ est injectif. On dispose alors de trois accouplements parfaits:
$$ \overline{\Sha^{0}(G)_{tors}} \times \overline{\Sha^{d+2}(\tilde{G})} \rightarrow \mathbb{Q}/\mathbb{Z}, \hspace{20pt} \Sha^{1}(G) \times \overline{\Sha^{d+1}(\tilde{G})} \rightarrow \mathbb{Q}/\mathbb{Z},$$
$$ \overline{\Sha^{2}(G)} \times \overline{\Sha^{d}(\tilde{G})_{tors}} \rightarrow \mathbb{Q}/\mathbb{Z}.$$
\item[(iv)] Si $T_2 = 0$, on retrouve la dualité parfaite $ \Sha^a(T_1) \times \overline{\Sha^{d+3-a}(\tilde{T_1})} \rightarrow \mathbb{Q}/\mathbb{Z}$.
\item[(v)] Si l'hypothèse (H \ref{2}) est vérifiée et $T_1=0$, on obtient une dualité:
$$ \overline{\Sha^{a-1}(T_2)_{tors}} \times \overline{\Sha^{d+4-a}(\tilde{T_2})} \rightarrow \mathbb{Q}/\mathbb{Z}.$$
Lorsque $k_1$ est $p$-adique et $a=1$, cela prouve en particulier que le groupe $\Sha^{d+3}(\tilde{T_2})$ est divisible. Si l'on admet que la torsion dans les groupes de K-théorie de Milnor des corps locaux supérieurs est d'exposant fini, en prenant $T_2 = \mathbb{Z}(a)[1]$ et à l'aide de la conjecture de Beilinson-Lichtenbaum et de l'isomorphisme de Nesterenko-Suslin-Totaro, on obtient une dualité parfaite:
$$ \overline{\text{Ker}\left( K^M_a(K) \rightarrow \prod_{v \in X^{(1)}} K^M_a(K_v)\right)_{tors}}  \times \overline{\Sha^{d+4-a}(\mathbb{Z}(d+1-a))} \rightarrow \mathbb{Q}/\mathbb{Z}.$$
\item[(vi)] Les points (iv) et (v) permettent de comprendre mieux les groupes de Tate-Shafarevich de $\mathbb{Z}(d)$ lorsque $k_1$ est $p$-adique: 
\begin{itemize}
\item[$\bullet$] le groupe $\Sha^{d+1}(\mathbb{Z}(d))$ est nul d'après la conjecture de Beilinson-Lichtenbaum.
\item[$\bullet$] les groupes $\Sha^{d+2}(\mathbb{Z}(d))$ et $\Sha^{d+3}(\mathbb{Z}(d))$ sont divisibles d'après (iv) et (v).
\item[$\bullet$] le groupe $\Sha^{r}(\mathbb{Z}(d))$ pour $r \geq d+4$ est nul par dimension cohomologique.
\end{itemize}
\end{itemize}
\end{remarque}

\begin{remarque}
Supposons que $k_1$ est de caractéristique $p>0$. Dans ce cas:
\begin{itemize}
\item[$\bullet$] le lemme \ref{acc gm} reste vrai.
\item[$\bullet$] la proposition \ref{AV gm fini} reste valable pour $n$ non divisible par $p$ et le théorème \ref{AV gm} reste vrai pour $l \neq p$.
\item[$\bullet$] dans la proposition \ref{local gm}, l'assertion (i) reste vraie; dans (ii) et (iii), il faut remplacer $K_a^M(L)_{tors}$ par $K_a^M(L)_{\text{non}-p}$.
\item[$\bullet$] la proposition \ref{local gm 2} fournit un accouplement parfait:
$$\varprojlim_{p \nmid n} H^{a-1}(K_v,G)/n \times H^{d+2-a}(K_v,\tilde{G})_{\text{non}-p} \rightarrow \mathbb{Q}/\mathbb{Z}$$
lorsque $\hat{T_2} \rightarrow \hat{T_1}$ est injectif, et une injection $$\varprojlim_{p \nmid n} H^{a-1}(K_v,G)/n \rightarrow (H^{d+2-a}(K_v,\tilde{G})_{\text{non}-p})^D$$
dans le cas général.
\item[$\bullet$] concernant la proposition \ref{nature gm} et le corollaire \ref{tcof gm}, les groupes $H^{d+3-a}(U,\tilde{\mathcal{G}})_{\text{non}-p}$, $H^{d+2-a}(K_v,\tilde{G})_{\text{non}-p}$, $H^{d+2-a}(K_v^h,\tilde{G})_{\text{non}-p}$, $H^{d+3-a}_c(U,\tilde{\mathcal{G}})_{\text{non}-p}$ et $\mathcal{D}^{d+3-a}(U,\tilde{\mathcal{G}})_{\text{non}-p}$ sont de torsion de type cofini, et si $\hat{T_2} \rightarrow \hat{T_1}$ est injectif, les groupes $H^a(U,\mathcal{G})_{\text{non}-p}$, $H^{a-1}(K_v,G)_{\text{non}-p}$, $H^{a-1}(K_v^h,G)_{\text{non}-p}$, $H^a_c(U,\mathcal{G})_{\text{non}-p}$ et $\mathcal{D}^a(U,\mathcal{G})_{\text{non}-p}$ sont de torsion de type cofini.
\item[$\bullet$] le lemme \ref{iso} reste vrai.
\item[$\bullet$] la proposition \ref{Sha gm} reste vraie pour $l \neq p$.
\item[$\bullet$] le lemme \ref{exacte gm fini} reste vraie pour $n$ non divisible par $p$.
\item[$\bullet$] concernant le lemme \ref{exacte gmt}, on a une suite exacte:
$$\bigoplus_{v \in X^{(1)}} H^{d+1-a}(K_v,\tilde{G}_t)_{\text{non}-p}  \rightarrow H^{d+2-a}_c(U,\tilde{\mathcal{G}}_t)_{\text{non}-p} \rightarrow \mathcal{D}^{d+2-a}(U,\tilde{\mathcal{G}}_t)_{\text{non}-p} \rightarrow 0.$$
\item[$\bullet$] la proposition \ref{exacte gm} reste vraie pour $l \neq p$.
\item[$\bullet$] le théorème \ref{PT gm} fournit un accouplement parfait:
$$ \overline{\Sha^{a-1}(G)_{\text{non}-p}} \times \overline{\Sha^{d+3-a}(\tilde{G})_{\text{non}-p}} \rightarrow \mathbb{Q}/\mathbb{Z}$$
sous l'hypothèse (H \ref{1}) ou sous l'hypothèse:
\begin{hypoth}\label{3}
\begin{minipage}[t]{11.9cm}
 le groupe $K_a^M(L)_{\text{non}-p}$ est d'exposant fini pour tout corps $(d+1)$-local $L$ dont le corps résiduel est une extension finie de $k$,
\end{minipage}
\end{hypoth}
et son corollaire fournit un accouplement parfait:
$$ \overline{\Sha^{a+1}(G)_{\text{non}-p}} \times \overline{\Sha^{d+1-a}(\tilde{G})_{\text{non}-p}} \rightarrow \mathbb{Q}/\mathbb{Z}$$
sous les hypothèses (H \ref{1'}) ou sous l'hypothèse:
\begin{hypoth}\label{3'}
\begin{minipage}[t]{11.9cm}
le groupe $K_{d+1-a}^M(L)_{\text{non}-p}$ est d'exposant fini pour tout corps $(d+1)$-local $L$ dont le corps résiduel est une extension finie de $k$.
\end{minipage}
\end{hypoth}
\item[$\bullet$] le théorème \ref{PT gm} fournit un accouplement parfait:
$$\overline{\Sha^a(G)_{\text{non}-p}} \times \Sha^{d+2-a}(\tilde{G})_{\text{non}-p} \rightarrow \mathbb{Q}/\mathbb{Z}$$
sous l'hypothèse (H \ref{1}), et son corollaire fournit un accouplement parfait:
$$\Sha^a(G)_{\text{non}-p} \times \overline{\Sha^{d+2-a}(\tilde{G})_{\text{non}-p}} \rightarrow \mathbb{Q}/\mathbb{Z}$$
sous l'hypothèse (H \ref{1'}).
\end{itemize}

\end{remarque}

\section{\scshape Finitude de certains groupes de Tate-Shafarevich}\label{finitude}

Dans toute cette section, nous supposons (H \ref{40}), c'est-à-dire que $X$ est une courbe. 

\subsection{Finitude du deuxième groupe de Tate-Shafarevich d'un tore}\label{1.1}

Soit $T$ un tore sur $K$, déployé par une extension finie $L$. Notons $\hat{T}$ (resp. $\check{T}$) son module des caractères (resp. cocaractères), et posons $\tilde{T} = \hat{T} \otimes^{\mathbf{L}} \mathbb{Z}(d)$. En remarquant que $T$ est quasi-isomorphe à $\check{T} \otimes^{\mathbf{L}} \mathbb{Z}(1)[1]$, le théorème \ref{PT tore} fournit des accouplements parfaits de groupes finis:
$$\Sha^1(T) \times \overline{\Sha^{d+2}( \tilde{T})} \rightarrow \mathbb{Q}/\mathbb{Z},$$
$$\overline{\Sha^2(T)} \times \Sha^{d+1}( \tilde{T}) \rightarrow \mathbb{Q}/\mathbb{Z}.$$
Il est alors naturel de se demander si les groupes $\Sha^{d+2}( \tilde{T})$ et $\Sha^2(T)$ sont finis de sorte que $\overline{\Sha^{d+2}( \tilde{T})}=\Sha^{d+2}( \tilde{T})$ et $\overline{\Sha^2(T)} = \Sha^2(T)$. Comme $\Sha^{d+2}( \tilde{T})$ et $\Sha^2(T)$ sont de torsion de type cofini, un argument de restriction-corestriction montre que les nullités de $\Sha^{d+2}(L,\mathbb{Z}(d))$ et $\Sha^2(L,\mathbb{G}_m)$ impliquent les finitudes de $\Sha^{d+2}( \tilde{T})$ et $\Sha^2(T)$. Il est donc intéressant d'étudier la nullité des groupes $\Sha^{d+2}(\mathbb{Z}(d))$ et $\Sha^2(\mathbb{G}_m)$, ce qui fait l'objet de cette section. Pour ce faire, commençons par rappeler le lemme \ref{nul dual}:

\begin{lemma}
On rappelle que l'on a supposé (H \ref{40}). Le groupe $\Sha^{d+2}(\mathbb{Z}(d))$ est nul si, et seulement si, le groupe $\Sha^2(\mathbb{G}_m)$ l'est aussi.
\end{lemma}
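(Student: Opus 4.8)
The plan is to reduce everything to the Poitou–Tate duality for finite modules (Theorem \ref{PT fini}) by approximating the motivic complex $\mathbb{Z}(d)$ and the sheaf $\mathbb{G}_m \cong \mathbb{Z}(1)[1]$ through their $n$-torsion, using Beilinson–Lichtenbaum to pass between motivic and étale coefficients. First I would, for each $n>0$, write the Kummer-type sequence attached to $0 \to \mathbb{Z}(d) \xrightarrow{\cdot n} \mathbb{Z}(d) \to \mathbb{Z}/n\mathbb{Z}(d) \to 0$ together with the Geisser–Levine isomorphism $\mathbb{Z}/n\mathbb{Z}(d) \cong \mu_n^{\otimes d}$. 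Over $K$ and over each $K_v$ the group $H^{d+1}(\,\cdot\,,\mathbb{Z}(d))$ vanishes, because $\mathbb{Z}(d)_{\mathrm{Zar}}$ is concentrated in degrees $\le d$ and these are fields; hence $H^{d+1}(\mu_n^{\otimes d}) \cong {}_nH^{d+2}(\mathbb{Z}(d))$, and comparing the kernels of the restriction maps to the completions yields a natural isomorphism $\Sha^{d+1}(\mu_n^{\otimes d}) \cong {}_n\Sha^{d+2}(\mathbb{Z}(d))$.

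Next I would feed $F = \mu_n^{\otimes d}$ into Theorem \ref{PT fini}. Since $F' = \underline{\mathrm{Hom}}(\mu_n^{\otimes d}, \mathbb{Q}/\mathbb{Z}(d+1))$ identifies with $\mu_n$, the case $r = d+1$ of that theorem gives a perfect pairing and hence $\Sha^{d+1}(\mu_n^{\otimes d}) \cong \Sha^2(\mu_n)^D$. An entirely analogous Kummer argument for $\mathbb{G}_m$ gives $\Sha^2(\mu_n) \cong {}_n\Sha^2(\mathbb{G}_m)$. Since $\Sha^{d+2}(\mathbb{Z}(d))$ is torsion, taking the colimit over $n$ and chaining these isomorphisms produces
$$\Sha^{d+2}(\mathbb{Z}(d)) \cong \varinjlim_n \big({}_n\Sha^2(\mathbb{G}_m)\big)^D \cong \Big(\varprojlim_n {}_n\Sha^2(\mathbb{G}_m)\Big)^D.$$
The inverse limit is the Tate module of $\Sha^2(\mathbb{G}_m)$; as $\Sha^2(\mathbb{G}_m)$ is torsion of cofinite type (by Proposition \ref{nature tore} and its consequences), its Tate module is Pontryagin dual to the maximal divisible subgroup, so the right-hand side is (non-canonically) $\Sha^2(\mathbb{G}_m)_{\mathrm{div}}$.

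Finally I would record that both groups are in fact divisible, which upgrades the previous identification into the desired equivalence. Indeed, Theorem \ref{PT tore} applied with $T = \mathbb{G}_m$ gives perfect finite pairings $\Sha^1(\mathbb{G}_m) \times \overline{\Sha^{d+2}(\mathbb{Z}(d))} \to \mathbb{Q}/\mathbb{Z}$ and $\overline{\Sha^2(\mathbb{G}_m)} \times \Sha^{d+1}(\mathbb{Z}(d)) \to \mathbb{Q}/\mathbb{Z}$; Hilbert 90 forces $\Sha^1(\mathbb{G}_m) = 0$, and Beilinson–Lichtenbaum forces $\Sha^{d+1}(\mathbb{Z}(d)) = 0$, so both $\overline{\Sha^{d+2}(\mathbb{Z}(d))}$ and $\overline{\Sha^2(\mathbb{G}_m)}$ vanish. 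Thus $\Sha^{d+2}(\mathbb{Z}(d)) \cong \Sha^2(\mathbb{G}_m)_{\mathrm{div}} = \Sha^2(\mathbb{G}_m)$, and in particular one vanishes precisely when the other does. The main obstacle is the bookkeeping in the limit step — matching the direct limit of the $n$-torsion of $\Sha^{d+2}(\mathbb{Z}(d))$ with the dual of the Tate module of $\Sha^2(\mathbb{G}_m)$, and identifying that dual with the divisible part — together with checking that the Beilinson–Lichtenbaum comparison is compatible with restriction to every completion $K_v$, so that the isomorphisms genuinely descend to the Tate–Shafarevich groups.
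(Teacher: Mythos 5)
Your proposal is correct and takes essentially the same route as the paper's own proof: the same Kummer/Beilinson--Lichtenbaum reductions $\Sha^{d+1}(\mu_n^{\otimes d}) \cong {_n}\Sha^{d+2}(\mathbb{Z}(d))$ and $\Sha^2(\mu_n) \cong {_n}\Sha^2(\mathbb{G}_m)$, the finite duality of the theorem \ref{PT fini} with $F' = \mu_n$, the passage to the limit giving $\Sha^{d+2}(\mathbb{Z}(d)) \cong (\varprojlim_n {_n}\Sha^2(\mathbb{G}_m))^D$, and the divisibility of both groups deduced from the theorem \ref{PT tore} (via $\Sha^1(\mathbb{G}_m)=0$ and $\Sha^{d+1}(\mathbb{Z}(d))=0$). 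Your explicit Tate-module/cofinite-type bookkeeping simply spells out what the paper leaves implicit in identifying $(\varprojlim_n {_n}\Sha^2(\mathbb{G}_m))^D$ with the divisible part of $\Sha^2(\mathbb{G}_m)$.
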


Par conséquent, à partir de maintenant, nous pouvons nous intéresser exclusivement au groupe $\Sha^2(\mathbb{G}_m)$ qui semble plus simple à manipuler. Un premier résultat dans ce sens est exprimé dans le théorème suivant:

\begin{theorem} \label{nul 1}
On rappelle que l'on a supposé (H \ref{40}), c'est-à-dire que $X$ est une courbe. Notons $\overline{X} = X \times_k k^s$ et supposons que $\overline{X} \cong \mathbb{P}^1_{\overline{k}}$ (c'est-à-dire que $X$ est la droite projective ou une conique dans $\mathbb{P}^2_k$). Alors $\Sha^2(\mathbb{G}_m)$ est nul.
\end{theorem}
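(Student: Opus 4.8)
L'idée est de calculer $\text{Br}(K) = H^2(K,\mathbb{G}_m)$ au moyen de la suite spectrale de Hochschild-Serre associée à l'extension $\overline{K} = k^s K$ de $K$. Comme $k$ est de caractéristique $0$, on a $k^s = \overline{k}$ et $\overline{X} \cong \mathbb{P}^1_{\overline{k}}$, de sorte que $\overline{K} = \overline{k}(t)$ est le corps des fonctions d'une courbe sur un corps algébriquement clos: le théorème de Tsen donne $\text{Br}(\overline{K}) = 0$, et le théorème $90$ de Hilbert donne $H^1(\overline{K},\mathbb{G}_m) = 0$. En notant $G = \text{Gal}(k^s/k)$, la suite spectrale $H^p(G, H^q(\overline{K},\mathbb{G}_m)) \Rightarrow H^{p+q}(K,\mathbb{G}_m)$ fournit alors, les lignes $q=1$ et $q=2$ étant nulles en bas degré, un isomorphisme $\text{Br}(K) \cong H^2(G,\overline{K}^*)$.

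Pour faire apparaître les résidus, j'exploiterais ensuite les deux suites exactes de $G$-modules $0 \to k^{s*} \to \overline{K}^* \to \text{Div}^0(\overline{X}) \to 0$ et $0 \to \text{Div}^0(\overline{X}) \to \text{Div}(\overline{X}) \to \mathbb{Z} \to 0$, où $\text{Pic}(\overline{X}) = \mathbb{Z}$ est muni de la $G$-action triviale. Comme $\text{Div}(\overline{X}) = \bigoplus_{v \in X^{(1)}} \mathbb{Z}[G/G_{k(v)}]$ est un module de permutation, le lemme de Shapiro donne $H^2(G,\text{Div}(\overline{X})) \cong \bigoplus_{v} H^1(k(v),\mathbb{Q}/\mathbb{Z})$. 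Un dévissage de type suite de Faddeev identifie alors le morphisme composé $\text{Br}(K) \to H^2(G,\text{Div}(\overline{X}))$ au morphisme des résidus $(\partial_v)_v$, et, grâce à la première suite et à $H^1(G,\mathbb{Z}) = 0$, son noyau au sous-groupe image de $\text{Br}(k) \to \text{Br}(K)$, c'est-à-dire à $\text{Br}(X)$ (ce dernier coïncidant avec $\text{Br}_1(X)$ puisque $\text{Br}(\overline{X}) = 0$, et $H^1(k,\text{Pic}\,\overline{X}) = H^1(k,\mathbb{Z}) = 0$).

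Soit maintenant $\alpha \in \Sha^2(\mathbb{G}_m)$. Le résidu en $v$ se factorisant par $\text{Br}(K_v)$, la trivialité locale de $\alpha$ entraîne $\partial_v\alpha = 0$ pour tout $v$, donc $\alpha \in \text{Br}(X)$: on écrit $\alpha = \beta|_K$ avec $\beta \in \text{Br}(k)$. De plus $\beta|_{k(v)}$ s'injecte dans $\text{Br}(K_v)$ (pureté pour l'anneau de valuation discrète complet $\mathcal{O}_v$, de corps résiduel $k(v)$) et son image y vaut $\alpha|_{K_v} = 0$, d'où $\beta|_{k(v)} = 0$ pour tout $v \in X^{(1)}$. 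Lorsque $X = \mathbb{P}^1_k$ (ou une conique déployée), il existe un point fermé $v_0$ de corps résiduel $k(v_0) = k$, et l'on obtient immédiatement $\beta = \beta|_{k(v_0)} = 0$, donc $\alpha = 0$: ceci règle le cas de la droite projective sans difficulté.

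Le cas d'une conique $C$ non déployée constitue le point délicat. Le noyau de $\text{Br}(k) \to \text{Br}(K)$ est alors $\{0,[C]\}$ (théorème d'Amitsur-Witt), de sorte que $\text{Br}(X) \cong \text{Br}(k)/\{0,[C]\}$ et qu'il s'agit d'établir que $\beta \in \{0,[C]\}$. Comme $C$ n'a pas de point rationnel, le théorème de Springer impose que tout point fermé de $C$ est de degré pair; en choisissant un point de degré $2$, de corps résiduel une extension quadratique $k'$ déployant $C$, un argument de restriction-corestriction montre que $2\beta = 0$. Le problème se reformule alors ainsi: montrer que $\bigcap_{v \in X^{(1)}} \text{Br}(k(v)/k) = \{0,[C]\}$, c'est-à-dire qu'une classe de $2$-torsion de $\text{Br}(k)$ déployée par tous les corps de décomposition quadratiques de $C$ appartient à $\{0,[C]\}$. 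C'est l'obstacle principal, et j'attaquerais cette affirmation en exploitant la structure récursive de $\text{Br}(k)$ pour un corps $d$-local (décomposition par résidus successifs le long des valuations, du type $\text{Br}(k) \cong \text{Br}(k_{d-1}) \oplus H^1(k_{d-1},\mathbb{Q}/\mathbb{Z})$ sur la partie première à la caractéristique résiduelle): en écrivant $[C] = \{a,b\}$ et en séparant des corps $k(\sqrt{d})$ ramifiés et non ramifiés parmi les « fentes » $a,b,ab$ du symbole, chacun tue une composante de la $2$-torsion de $\beta$, ce qui réduit l'intersection à $\{0,[C]\}$ et donne finalement $\alpha = \beta|_K = 0$.
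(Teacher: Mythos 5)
Vos trois premiers paragraphes sont corrects et recoupent la réduction faite dans l'article: l'identification $\Sha^2(\mathbb{G}_m) = \text{Ker}(\text{Br}(X) \rightarrow \prod_v \text{Br}(k(v)))$, le fait que $\text{Br}(X)$ est l'image de $\text{Br}(k)$ (Tsen plus $H^1(k,\text{Pic}\,\overline{X}) = H^1(k,\mathbb{Z}) = 0$), et le cas de $\mathbb{P}^1_k$ via un point rationnel. La lacune est dans votre dernier paragraphe, et elle est sérieuse. D'abord, votre « reformulation » est circulaire: d'après vos deux premiers paragraphes, on a précisément $\Sha^2(\mathbb{G}_m) \cong \bigl(\bigcap_{v \in X^{(1)}} \text{Br}(k(v)/k)\bigr)/\{0,[C]\}$, de sorte que l'affirmation $\bigcap_v \text{Br}(k(v)/k) = \{0,[C]\}$ \emph{est} le théorème, et non un lemme auxiliaire. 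Ensuite, l'attaque esquissée ne peut pas fonctionner telle quelle: pour $k$ un corps $d$-local avec $d \geq 2$, le groupe ${_2}\text{Br}(k)$ contient des classes d'indice $>2$ qui ne sont pas des symboles quaternioniens, donc « séparer les fentes $a$, $b$, $ab$ » de $[C]$ n'a aucune prise sur un $\beta$ de $2$-torsion général; de plus, la question de savoir quelles extensions quadratiques de $k$ apparaissent parmi les $k(v)$ est gouvernée par la forme norme de la conique, et rien ne garantit que le tri ramifié/non ramifié tue chaque « composante » de $\beta$ dans la décomposition $\text{Br}(k) \cong \text{Br}(k_{d-1}) \oplus H^1(k_{d-1},\mathbb{Q}/\mathbb{Z})$. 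Aucun calcul purement corps-théorique de cette intersection n'est proposé dans l'article, et il n'est pas clair qu'il en existe un.

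L'article contourne entièrement cette difficulté grâce à un ingrédient supplémentaire dont vous disposez: par le théorème \ref{PT tore} (voir la preuve du lemme \ref{nul dual}), $\overline{\Sha^2(\mathbb{G}_m)}$ est dual du groupe fini $\Sha^{d+1}(\mathbb{Z}(d))$, lequel est nul d'après la conjecture de Beilinson-Lichtenbaum; donc $\Sha^2(\mathbb{G}_m)$ est \emph{divisible}. Cela acquis, on fixe un point fermé quelconque $v_0 \in X^{(1)}$, on pose $n_0 = [k(v_0):k]$, on écrit $\alpha = n_0\alpha_{n_0}$ avec $\alpha_{n_0} \in \Sha^2(\mathbb{G}_m)$, et on relève $\alpha_{n_0}$ en $\tilde{\beta} \in \text{Br}(k)$; comme $\tilde{\beta}|_{k(v_0)} = 0$, un argument de restriction-corestriction donne $n_0\tilde{\beta} = 0$ dans $\text{Br}(k)$, d'où $\alpha = n_0\alpha_{n_0} = 0$ dans $\text{Br}(X)$. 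Ce mécanisme ne requiert ni Amitsur-Witt, ni Springer, ni aucune analyse des corps de déploiement de $C$, et il s'étend (comme le remarque l'article) au cas où $H^1(k,\text{Pic}\,\overline{X})$ est seulement d'exposant fini. Votre preuve se répare donc en substituant cet argument de divisibilité à tout votre dernier paragraphe.
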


La remarque suivante sera utile dans la preuve:

\begin{remarque}
Soit $x \in \Sha^2(\mathbb{G}_m)$. On voit alors que le résidu de $x$ dans $H^1(k(v),\mathbb{Q}/\mathbb{Z})$ est nul pour chaque $v \in X^{(1)}$. Par conséquent, $x \in \text{Br}(X)$. Comme pour chaque $v \in X^{(1)} $ le groupe $\text{Br}(\mathcal{O}_v)$ est isomorphe à $\text{Br}(k(v))$ et s'injecte dans $\text{Br}(K_v)$, on en déduit que $x$ est dans le noyau de $\text{Br}(X) \rightarrow \prod_{v\in X^{(1)}} \text{Br}(k(v))$. Réciproquement, on voit aisément que $\text{Ker}\left( \text{Br}(X) \rightarrow \prod_{v\in X^{(1)}} \text{Br}(k(v))\right) $ est contenu dans $\Sha^2(\mathbb{G}_m)$, d'où l'égalité: $$\text{Ker}\left( \text{Br}(X) \rightarrow \prod_{v\in X^{(1)}} \text{Br}(k(v))\right)  = \Sha^2(\mathbb{G}_m).$$
\end{remarque}

\begin{proof}[Démonstration du théorème \ref{nul 1}]
Notons $(\text{Pic} (X))^{\perp}$ le noyau de $\text{Br}(X) \rightarrow \prod_{v\in X^{(1)}} \text{Br}(k(v))$. On a un isomorphisme  $(\text{Pic} (X))^{\perp} \cong \Sha^2(\mathbb{G}_m)$. Soit $x \in (\text{Pic} (X))^{\perp}$. On sait que $(\text{Pic} (X))^{\perp}$ est divisible. On en déduit que pour chaque entier $n>0$ il existe $x_n \in (\text{Pic} (X))^{\perp}$ tel que $x = nx_n$. Par ailleurs, remarquons que $\text{Br} (\overline{X}) = 0$, puisque $\text{Br} (\overline{X})$ s'injecte dans $\text{Br} (\overline{k}(X))$ et $\text{Br} (\overline{k}(X))=0$ d'après le théorème de Tsen. On obtient donc une suite exacte:
$$ \text{Br} (k) \rightarrow \text{Br} (X) \rightarrow H^1(k,\text{Pic} (\overline{X})).$$
Or, comme $\overline{X} \cong \mathbb{P}^1_{\overline{k}}$, le groupe $H^1(k,\text{Pic} (\overline{X}))$ est nul. On en déduit que $x$ et $x_n$ pour chaque $n$ sont dans l'image de $\text{Br}(k)$. Notons $\tilde{x}$ (resp. $\tilde{x_n}$) un élément de $\text{Br}(k)$ d'image $x$ (resp. $x_n$) dans $\text{Br}(X)$. Fixons maintenant $v_0 \in X^{(1)}$, et notons $n_0 = [k(v_0):k]$. Comme $x_{n_0} \in (\text{Pic} (X))^{\perp}$, on déduit que l'image de $\tilde{x_{n_0}}$ par la composée:
$$\text{Br}(k) \rightarrow \text{Br}(X) \rightarrow \text{Br}(k(v)) \rightarrow \text{Br}(k)$$
est 0. Mais un argument de restriction-corestriction impose que cette image est aussi $n_0\tilde{x_{n_0}}$. On en déduit que $n_0\tilde{x_{n_0}} = 0$, et donc $x = n_0x_{n_0} = 0$. Par conséquent, $(\text{Pic} (X))^{\perp}$ est nul.
\end{proof}

\begin{remarque}
\begin{itemize}
\item[$\bullet$] En fait, dans la preuve précédente, on pourrait supposer que $H^1(k,\text{Pic}(\overline{X}))$ est d'exposant fini $e$ au lieu de $\overline{X} \cong \mathbb{P}^1_{\overline{k}}$. En effet, dans ce cas, on prend pour $\tilde{x}$ (resp. $\tilde{x_n}$) un élément de $\text{Br}(k)$ d'image $ex$ (resp. $ex_n$) dans $\text{Br}(X)$, et on montre exactement de la même manière que $ex = n_0(ex_{n_0})$ est nul. On en déduit que le groupe divisible $(\text{Pic} (X))^{\perp}$ est d'exposant $e$, donc nul.
\item[$\bullet$] Supposons que $k$ est $p$-adique. En notant $J$ la jacobienne de $X$, le groupe $H^1(k,\text{Pic}(\overline{X}))$ est d'exposant fini si, et seulement si, $H^1(k,J)$ est d'exposant fini. Or $H^1(k,J)$ est isomorphe au dual de $J(k)$ d'après le théorème de dualité pour les variétés abéliennes sur un corps $p$-adique (corollaire 3.4 de \cite{MilADT}), et $J(k)$ est trivial si, et seulement si, $J$ est triviale d'après le théorème de structure de Mattuck (\cite{Mat}). Par conséquent, $H^1(k,\text{Pic}(\overline{X}))$ est d'exposant fini si, et seulement si, il est nul. Je ne sais pas si cette équivalence reste vraie lorsque $k$ n'est pas $p$-adique.
\end{itemize}
\end{remarque}

Notons maintenant $\mathcal{O}_k$ l'anneau des entiers de $k$, $\pi$ une uniformisante de $\mathcal{O}_k$ et $\kappa$ le corps résiduel de $\mathcal{O}_k$. Pour obtenir la nullité de $\Sha^2(\mathbb{G}_m)$ dans des situations plus générales (théorème \ref{0courbes}(ii)(iii) ou corollaires \ref{nul 3} et \ref{nul 4}), nous allons procéder par récurrence sur l'entier $d \geq 0$. Pour ce faire, nous allons commencer par établir la propriété d'hérédité. Dans le cas où $k_0$ est un corps fini, l'initialisation sera donnée par le cas où $d=1$, c'est-à-dire le cas où $k$ est $p$-adique, et elle découlera aisément des articles \cite{Kat} et \cite{HS1}. Dans le cas où $k_0= \mathbb{C}((t))$, l'initialisation sera donnée par le cas où $d=0$, c'est-à-dire le cas où $k=\mathbb{C}((t))$, et elle découlera aisément de l'article \cite{Dou}. Nous allons donc établir l'hérédité sous l'hypothèse suivante sur le corps $k$: 
\begin{hypo}\label{41}
\begin{minipage}[t]{12.72cm}
$d>1$ si $k_0$ est fini et $d>0$ si $k_0 = \mathbb{C}((t))$, c'est-à-dire $k$ n'est ni un corps fini ni un corps $p$-adique ni $\mathbb{C}((t))$.
\end{minipage}
\end{hypo}
Énonçons maintenant l'hypothèse de récurrence:
\begin{hypo}\label{42}
\begin{minipage}[t]{12.72cm}
\begin{itemize}
\item[(i)] il existe un schéma intègre, projectif, lisse $\mathcal{X}$ de dimension 2 sur $\text{Spec} \; \mathcal{O}_k$ dont la fibre générique est $X$ et dont la fibre spéciale, que nous notons $X_0$, est intègre de point générique $\eta_0$,
\item[(ii)] il existe un entier naturel non nul $e$ vérifiant la propriété suivante: pour tout $w \in X_0^{(1)}$, il existe un ouvert affine $\mathcal{U}_w = \text{Spec} \; \mathcal{A}_w$ de $\mathcal{X}$ contenant $w$ et un point fermé $v_w$ de $U_w=\mathcal{U}_w \times_{\mathcal{O}_k} k$ tels que l'adhérence $\overline{\{v_w\}}$ de $v_w$ dans $\mathcal{U}_w$, munie de sa structure réduite, est régulière, contient $w$ et $\pi$ est de valuation au plus $e$ dans l'anneau de valuation discrète $\mathcal{O}_{\overline{\{v_w\}},w}$,
\item[(iii)] les groupes $\Sha^2(\kappa(\eta_0),\mathbb{Z})$ et $\Sha^3(\kappa(\eta_0),\mathbb{Z}(1))$ sont nuls.
\end{itemize}
\end{minipage}
\end{hypo}

Sous de telles hypothèses, on notera $U_{w,0}$ la fibre spéciale de $\mathcal{U}_w $ pour chaque $w \in X_0$.

\begin{lemma}
On rappelle que l'on a supposé (H \ref{40}). On suppose aussi (H \ref{41}) et (H \ref{42}). Soient $r \in \{0,1\}$ et $n \geq 1$. On dispose d'un diagramme commutatif:\\
\centerline{\xymatrix{
H^{r+1}(X,\mathbb{Z}/n\mathbb{Z}(r)) \ar[r] \ar[d] & H^{r+1}(K,\mathbb{Z}/n\mathbb{Z}(r)) \ar[d]\\
H^r(X_0,\mathbb{Z}/n\mathbb{Z}(r-1)) \ar@{^{(}->}[r] & H^r(\kappa (\eta_0), \mathbb{Z}/n\mathbb{Z}(r-1)),
}}
où le morphisme vertical de droite est le résidu en $\eta_0$.
\end{lemma}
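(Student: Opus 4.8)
Le plan est de ramener les deux flèches verticales à des applications résidus en coefficients étales $\mu_n^{\otimes\bullet}$ --- l'une globale, provenant de la pureté pour le diviseur $X_0 \subset \mathcal{X}$, l'autre locale, provenant de l'anneau de valuation discrète $\mathcal{O}_{\mathcal{X},\eta_0}$ --- puis d'établir la commutativité du carré par fonctorialité de la suite de localisation. Commençons par une réduction: comme $d>1$ lorsque $k_0$ est fini et $d>0$ lorsque $k_0 = \mathbb{C}((t))$ d'après (H \ref{41}), le corps résiduel $\kappa = k_{d-1}$ est de caractéristique $0$, de sorte que $n$ est inversible sur $\mathcal{X}$. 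La propriété (iv) (Geisser-Levine), dans sa version valable sur une base de Dedekind évoquée dans la remarque qui suit le théorème sur $\mathbb{Z}(i)$, fournit alors des quasi-isomorphismes $\mathbb{Z}/n\mathbb{Z}(r) \cong \mu_n^{\otimes r}$ sur $X$, $X_0$, $\text{Spec}\,K$, $\text{Spec}\,\kappa(\eta_0)$ et sur $\mathcal{X}$; je remplacerais donc partout $\mathbb{Z}/n\mathbb{Z}(\bullet)$ par $\mu_n^{\otimes\bullet}$. Enfin, $\mathcal{X}$ étant lisse sur $\mathcal{O}_k$, il est régulier, sa fibre spéciale $X_0$ en est un diviseur lisse (défini par $\pi$), intègre par (H \ref{42})(i), et $X = \mathcal{X}\setminus X_0$ en est l'ouvert complémentaire.

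La flèche verticale de gauche se construit alors comme le bord de la suite de localisation associée à l'immersion fermée $i : X_0 \hookrightarrow \mathcal{X}$ et à l'immersion ouverte $j : X \hookrightarrow \mathcal{X}$. La pureté cohomologique, sous la forme $R i^{!}\mu_n^{\otimes r} \cong \mu_n^{\otimes(r-1)}[-2]$ (licite puisque $X_0$ est un diviseur régulier), identifie $H^{q}_{X_0}(\mathcal{X},\mu_n^{\otimes r})$ à $H^{q-2}(X_0,\mu_n^{\otimes(r-1)})$, d'où le morphisme résidu $\partial : H^{r+1}(X,\mu_n^{\otimes r}) \to H^{r}(X_0,\mu_n^{\otimes(r-1)})$. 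Quant à la flèche verticale de droite, c'est exactement le résidu de l'anneau de valuation discrète $\mathcal{O}_{\mathcal{X},\eta_0}$, dont le corps des fractions est $K$ et le corps résiduel est $\kappa(\eta_0)$, c'est-à-dire le bord de la suite de localisation pour le trait $\text{Spec}\,\mathcal{O}_{\mathcal{X},\eta_0}$.

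La commutativité du carré proviendra de la fonctorialité de la suite de localisation et de l'isomorphisme de pureté vis-à-vis du morphisme plat de localisation $\text{Spec}\,\mathcal{O}_{\mathcal{X},\eta_0} \to \mathcal{X}$. En effet, la fibre générique de ce trait est $\text{Spec}\,K \to X$, induisant la restriction $H^{r+1}(X,-) \to H^{r+1}(K,-)$ (flèche du haut), et son point fermé est $\eta_0 = \text{Spec}\,\kappa(\eta_0) \to X_0$, induisant la restriction $H^{r}(X_0,-) \to H^{r}(\kappa(\eta_0),-)$ (flèche du bas); les deux résidus se correspondent alors par cette fonctorialité.

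Il restera à établir l'injectivité de la flèche du bas pour $r \in \{0,1\}$. Un élément du noyau de $H^{r}(X_0,\mu_n^{\otimes(r-1)}) \to H^{r}(\kappa(\eta_0),\mu_n^{\otimes(r-1)}) = \varinjlim_{V} H^{r}(V,\mu_n^{\otimes(r-1)})$ s'annule sur un ouvert $V$ de $X_0$, donc provient de $H^{r}_{Z}(X_0,\mu_n^{\otimes(r-1)})$ pour $Z = X_0\setminus V$ de codimension $\geq 1$. Comme $X_0$ est lisse et le faisceau $\mu_n^{\otimes(r-1)}$ lisse, la semi-pureté donne $H^{i}_{Z}(X_0,\mu_n^{\otimes(r-1)}) = 0$ pour $i < 2$, en particulier pour $i = r \leq 1$, d'où l'injectivité voulue. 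Le point le plus délicat sera la compatibilité des deux résidus du troisième paragraphe: il faudra prendre garde à ce que l'identification motivique via Geisser-Levine soit bien compatible avec les isomorphismes de pureté et les bords, la vérification relevant toutefois de la fonctorialité standard des triangles de localisation.
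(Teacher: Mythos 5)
Your proof is correct, but it takes a genuinely different route from the paper's. The paper disposes of $r=0$ by noting that $H^0(X_0,\mathbb{Z}/n\mathbb{Z}(-1)) \to H^0(\kappa(\eta_0),\mathbb{Z}/n\mathbb{Z}(-1))$ is an isomorphism, and for $r=1$ it invokes the Bloch--Ogus--Kato complex of the arithmetic surface (proposition 1.7 de \cite{Kat}): for $x \in H^2(X,\mu_n)$ the residues at all $v \in X^{(1)}$ vanish, so the complex property forces the second residues of $\delta_{\eta_0}(x)$ at all $w \in X_0^{(1)}$ to vanish, and the kernel of these second residues is identified (Gersten for the smooth curve $X_0$) with $H^1(X_0,\mathbb{Z}/n\mathbb{Z})$ --- which yields the factorisation and the injectivity of the bottom arrow in one stroke. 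You instead define the left vertical map intrinsically as the purity boundary of the localization triangle for $X_0 \subset \mathcal{X}$ (licit: your observation that $\kappa$ has characteristic $0$ under (H \ref{41}), so that $n$ is invertible on $\mathcal{X}$, is exactly the needed point, and the pair is regular, so absolute purity --- which the paper itself invokes in the ensuing corollary --- applies), you obtain commutativity from functoriality of the localization triangle under the pro-open localization $\mathrm{Spec}\,\mathcal{O}_{\mathcal{X},\eta_0} = \varprojlim_{U \ni \eta_0} U$, and you prove injectivity separately by semi-purity, uniformly in $r$. The compatibility of the purity isomorphisms with this pullback, which you rightly flag as the delicate step, is indeed standard (the cycle class of the divisor $X_0$ restricts to that of the closed point of the trait). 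Since the bottom arrow is injective, your purity boundary necessarily coincides with the map the paper obtains by factorisation, so the two proofs establish the same statement. What your route buys: it treats both values of $r$ at once, makes the left arrow canonical (it is the $\delta_{\eta_0}$ of the following lemma), and already produces the exactness of the localized sequence that the corollary later extracts from Gabber's purity. What the paper's route buys is brevity: citing Kato's complex packages precisely the purity compatibilities you must verify by hand.
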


\begin{proof}
Le cas $r=0$ est évident puisque $H^r(X_0,\mathbb{Z}/n\mathbb{Z}(r-1)) \rightarrow H^r(\kappa (\eta_0), \mathbb{Z}/n\mathbb{Z}(r-1))$ est un isomorphisme.\\
Concernant le cas $r=1$, il suffit de montrer que l'image de la composée $H^2(X,\mu_n) \rightarrow H^2(K,\mu_n) \rightarrow H^1(\kappa (\eta_0), \mathbb{Z}/n\mathbb{Z})$ est contenue dans $H^1(X_0,\mathbb{Z}/n\mathbb{Z})$. Soit donc $x \in H^2(X,\mu_n)$. Écrivons le complexe de Bloch-Ogus (proposition 1.7 de \cite{Kat}):
$$H^2(K,\mu_n) \rightarrow \bigoplus_{v \in X^{(1)}} H^1(k(v),\mathbb{Z}/n\mathbb{Z}) \oplus H^1(\kappa(\eta_0), \mathbb{Z}/n\mathbb{Z}) \rightarrow \bigoplus_{w \in X_0^{(1)}} H^0(\kappa(w),\mathbb{Z}/n\mathbb{Z}(-1)).$$
Comme $ x \in H^2(X,\mu_n)$, l'image de $x$ dans $\bigoplus_{v \in X^{(1)}} H^1(k(v),\mathbb{Z}/n\mathbb{Z}) $ est nulle. On en déduit que l'image de $x$ dans $H^1(\kappa(\eta_0), \mathbb{Z}/n\mathbb{Z})$ est contenue dans:
$$\text{Ker} \left(  H^1(\kappa(\eta_0), \mathbb{Z}/n\mathbb{Z}) \rightarrow \bigoplus_{w \in X_0^{(1)}} H^0(\kappa(w),\mathbb{Z}/n\mathbb{Z}(-1))\right) = H^1(X_0,\mathbb{Z}/n\mathbb{Z}) .$$
\end{proof}

\begin{remarque}
On rappelle que l'on a supposé (H \ref{40}) et on suppose aussi (H \ref{41}) et (H \ref{42}). Sous de telles hypothèses, on rappelle que $U_{w,0}$ désigne la fibre spéciale $ \mathcal{U}_w \times_{\mathcal{O}_k} \kappa$ de $\mathcal{U}_w$. On remarque alors qu'une preuve tout à fait identique à celle qui précède fournit un diagramme commutatif:\\
\centerline{\xymatrix{
H^{r+1}(U_w,\mathbb{Z}/n\mathbb{Z}(r)) \ar@{^{(}->}[r] \ar[d] & H^{r+1}(K,\mathbb{Z}/n\mathbb{Z}(r)) \ar[d]\\
H^r(U_{w,0},\mathbb{Z}/n\mathbb{Z}(r-1)) \ar@{^{(}->}[r] & H^r(\kappa (\eta_0), \mathbb{Z}/n\mathbb{Z}(r-1)).
}}
\end{remarque}

\begin{lemma}
On rappelle que l'on a supposé (H \ref{40}). On suppose aussi (H \ref{41}) et (H \ref{42}). Soient $r \in \{0,1\}$ et $n \geq 1$. Soient $w \in X_0^{(1)}$ et $e_w$ la valuation de $\pi$ dans $\mathcal{O}_{\overline{\{v_w\}},w}$. Le diagramme suivant:\\
\centerline{\xymatrix{
H^{r+1}(X,\mathbb{Z}/n\mathbb{Z}(r)) \ar[r]^{\text{Res}_{k(v_w)}}\ar[d]^{\delta_{\eta_0}} & H^{r+1}(k(v_w),\mathbb{Z}/n\mathbb{Z}(r))\ar[d]^{\delta_w}\\
H^r(X_0,\mathbb{Z}/n\mathbb{Z}(r-1)) \ar[r]^{e_w\cdot\text{Res}_{\kappa(w)}} & H^r(\kappa(w),\mathbb{Z}/n\mathbb{Z}(r-1)),
}}
dont les morphismes verticaux sont des résidus, est commutatif.
\end{lemma}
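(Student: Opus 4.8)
The plan is to reduce the statement to a purely local computation at the closed point $w$ and then to invoke the standard behaviour of residue maps under restriction, in which the ramification index appears as a multiplicative factor. Write $A = \mathcal{O}_{\mathcal{X},w}$: because $\mathcal{X}$ is smooth over $\mathcal{O}_k$, this is a regular local ring of dimension $2$, and the two prime divisors through $w$ are the special fibre $X_0$, with generic point $\eta_0$, and the horizontal curve $Z = \overline{\{v_w\}}$, with generic point $v_w$ and $\mathcal{O}_{Z,w} = B$. Since $X_0 = V(\pi)$ is reduced, $\pi$ is a uniformizer of the discrete valuation ring $\mathcal{O}_{\mathcal{X},\eta_0}$ (so $v_{\eta_0}(\pi)=1$), whereas by construction $v_B(\pi) = e_w$. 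Here $\mathrm{Res}_{k(v_w)}$ is the specialization (pullback) of $\alpha$ to the generic point of $Z$, the map $\mathrm{Res}_{\kappa(w)}$ is the specialization to $w$ of the class $\delta_{\eta_0}(\alpha)$, which by the preceding lemma is unramified on $X_0$ and hence is a genuine cohomology class of the scheme $X_0$, and $\delta_{\eta_0},\delta_w$ are Kato's residue maps. Thus the content of the lemma is the identity $\delta_w \circ \mathrm{Res}_{k(v_w)} = e_w \cdot \mathrm{Res}_{\kappa(w)} \circ \delta_{\eta_0}$ on $H^{r+1}(X,\mathbb{Z}/n\mathbb{Z}(r))$.

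First I would restrict the whole picture to the regular curve $Z$. The class $\mathrm{Res}_{k(v_w)}(\alpha)$ is a cohomology class on the open part $\mathrm{Spec}\, k(v_w) = Z \cap X$ of $Z$, and $\delta_w$ is exactly its residue at the closed point $w$ with respect to $v_B$; this interchange of pullback and residue is legitimate because $\alpha$ is defined on all of $X \supseteq Z \setminus \{w\}$, so $\mathrm{Res}_{k(v_w)}(\alpha)$ is unramified away from $w$ on $Z$. On the other side, $\delta_{\eta_0}(\alpha)$ is the residue of $\alpha$ along $X_0$, whose specialization at the corner $w = X_0 \cap Z$ records the ramification of $\alpha$ along $X_0$. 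The Cartier divisor $X_0 = V(\pi)$ pulls back to the curve $Z$ as $v_B(\pi)\,[w] = e_w\,[w]$, which is the geometric origin of the factor $e_w$. Accordingly, the plan is to compare the residue of $\mathrm{Res}_{k(v_w)}(\alpha)$ along a uniformizer $t$ of $B$ with the residue of $\alpha$ along $\pi$, using $\pi = u\,t^{e_w}$ with $u \in B^{\times}$.

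To carry out this comparison I would pass to $\mathrm{Spec}\,A$ (residues and specializations being local) and use that, for $r \in \{0,1\}$, the groups in play are $H^1(\,\cdot\,,\mathbb{Z}/n\mathbb{Z})$ and $H^2(\,\cdot\,,\mu_n)$, for which Kato's residues are computed by the tame symbol. For a discretely valued field $F$ with residue field $E$ and $\chi$ unramified, one has $\delta\big(\chi \cup (a)\big) = v(a)\,\overline{\chi}$ and $\delta(\gamma)=0$ for $\gamma$ coming from the valuation ring. Writing $\alpha$, near $\eta_0$, as $\chi \cup (\pi)$ modulo classes unramified along $X_0$ gives $\delta_{\eta_0}(\alpha) = v_{\eta_0}(\pi)\,\overline{\chi} = \overline{\chi}$, whence $\mathrm{Res}_{\kappa(w)}\delta_{\eta_0}(\alpha) = \overline{\chi}|_{\kappa(w)}$; restricting the same local model to $Z$ and computing $\delta_w$ with $v_B(\pi)=e_w$ yields $e_w\,\overline{\chi}|_{\kappa(w)}$. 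The factor $e_w$ thus arises exactly from additivity of the symbol in its first slot applied to $\pi = u\,t^{e_w}$. Both cases $r=0$ and $r=1$ are handled identically; alternatively the identity is an instance of the general compatibility of Kato's residue maps with restriction along an extension of discretely valued fields (see \cite{Kat}), the ramification index being the universal multiplicative correction.

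The hard part will be the bookkeeping of the factor $e_w$ together with the order in which specialization and residue are taken. Concretely, one must verify that the contribution of the part of $\alpha$ that is unramified along $X_0$ vanishes under $\delta_w \circ \mathrm{Res}_{k(v_w)}$ as well: such a class extends over $\eta_0$, and one has to check that its restriction to $k(v_w)$ is unramified at $w$ with respect to $v_B$, so that it contributes to neither side. Securing this vanishing — i.e. that the only ramification of $\mathrm{Res}_{k(v_w)}(\alpha)$ at $w$ comes from the $(\pi)$-component — is the crux, and it is precisely what lets one reduce the whole diagram to the tame-symbol computation above. Once it is in place, the relation $\pi = u\,t^{e_w}$ delivers the asserted commutativity with the factor $e_w$.
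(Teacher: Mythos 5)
Your strategy is the paper's: split $\alpha$ into a symbol part $\chi' \cup (\pi)$ plus a remainder unramified along $X_0$, show the remainder dies on both sides of the diagram, and extract the factor $e_w$ from the tame-symbol rule $\delta(\chi \cup (a)) = v(a)\,\overline{\chi}$ applied with $v_B(\pi) = e_w$. But the step you yourself call \emph{the crux} is left unexecuted, and as written there is a genuine gap just before it: a decomposition of $\alpha$ valid only \og près de $\eta_0$ \fg{} (i.e.\ over $\mathcal{O}_{\eta_0}$ or $K_{\eta_0}$) cannot be restricted to $k(v_w)$ at all, since $v_w$ is not a point of $\mathrm{Spec}\,\mathcal{O}_{\eta_0}$; the decomposition must be produced over a two-dimensional neighbourhood of $w$ so that both $\text{Res}_{k(v_w)}$ of the symbol part and of the remainder make sense. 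This is exactly what the paper's completion step supplies: $\mathcal{O}_{\overline{\{v_w\}},w}$ is $\pi$-adically complete (it is the valuation ring of the finite extension $k(v_w)/k$), so one may replace $\mathcal{A}_w$ by its $\pi$-adic completion, use rigidity for the henselian pair to lift $y_0 = \delta_{\eta_0}(x)$ to $y \in H^r(\mathcal{U}_w,\mathbb{Z}/n\mathbb{Z}(r-1))$, and set $z = y \cup (\pi)$, a class on all of $U_w$. Likewise, your check that the remainder is unramified at $w$ after restriction to $k(v_w)$ needs more than \og extends over $\eta_0$\fg{}, which is only codimension-one information along $X_0$: one must extend $x - z$ across the closed point $w$ itself, and this is Gabber's absolute purity (zero residue along $\eta_0$ implies $x - z$ comes from $H^{r+1}(\mathcal{U}_w,\mathbb{Z}/n\mathbb{Z}(r))$); then its restriction to $k(v_w)$ factors through $H^{r+1}(\overline{\{v_w\}},\mathbb{Z}/n\mathbb{Z}(r))$ and is killed by $\delta_w$, while $\delta_{\eta_0}$ kills it by construction.

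Your proposed fallback — deducing the identity from the general compatibility of residues with restriction along an extension of discretely valued fields, with $e_w$ as ramification index — does not apply as stated: $K \to k(v_w)$ is not an inclusion of valued fields but a specialization (evaluation at the closed point $v_w \in X^{(1)}$), so the rule $\delta' \circ \text{Res} = e \cdot \overline{\text{Res}} \circ \delta$ for valued-field extensions has no direct meaning here; this is precisely why the explicit decomposition-plus-purity argument is required. Once the two repairs above are in place, your final computation $\delta_w\bigl(\text{Res}_{k(v_w)}(y) \cup \text{Res}_{k(v_w)}(\pi)\bigr) = v_B(\pi)\,\text{Res}_{\kappa(w)}(y) = e_w\,\text{Res}_{\kappa(w)}(\delta_{\eta_0}(x))$ coincides with the paper's and the proof closes.
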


\begin{proof}
Le diagramme:\\
\centerline{\xymatrix{
H^{r+1}(X,\mathbb{Z}/n\mathbb{Z}(r)) \ar[r] \ar[d] & H^{r+1}(U_w,\mathbb{Z}/n\mathbb{Z}(r)) \ar[d] \\
H^r(X_0,\mathbb{Z}/n\mathbb{Z}(r-1)) \ar[r] & H^r(U_{w,0},\mathbb{Z}/n\mathbb{Z}(r-1)) 
}}
est évidemment commutatif. Il suffit donc de montrer la commutativité du diagramme:\\
\centerline{\xymatrix{
H^{r+1}(U_w,\mathbb{Z}/n\mathbb{Z}(r)) \ar[r]^{\text{Res}_{k(v_w)}}\ar[d]^{\delta_{\eta_0}} & H^{r+1}(k(v_w),\mathbb{Z}/n\mathbb{Z}(r))\ar[d]^{\delta_w}\\
H^r(U_{w,0},\mathbb{Z}/n\mathbb{Z}(r-1)) \ar[r]^{e_w\cdot\text{Res}_{\kappa(w)}} & H^r(\kappa(w),\mathbb{Z}/n\mathbb{Z}(r-1)).
}}
Notons $\hat{\mathcal{A}}_w$ le complété de $\mathcal{A}_w$ pour la topologie $\pi$-adique. On remarque que, comme $\mathcal{O}_{\overline{\{v\}},w}$ est complet pour la topologie $\pi$-adique, le morphisme $\text{Spec} \; \mathcal{O}_{\overline{\{v\}},w} \rightarrow \mathcal{U}_w$ s'étend en un morphisme $\text{Spec}  \; \mathcal{O}_{\overline{\{v\}},w} \rightarrow \text{Spec} \; \hat{\mathcal{A}}_w$. En tenant compte de la compatibilité des résidus avec la complétion et en remplaçant $v$ et $w$ par leurs images à travers le morphisme $\text{Spec}  \; \mathcal{O}_{\overline{\{v\}},w} \rightarrow \text{Spec} \; \hat{\mathcal{A}}_w$, on peut supposer que $\mathcal{A}_w$ est complet pour la topologie $\pi$-adique, et donc que le morphisme $H^r(\mathcal{U}_w,\mathbb{Z}/n\mathbb{Z}(r-1)) \rightarrow H^r(U_w,\mathbb{Z}/n\mathbb{Z}(r-1))$ est surjectif.\\
Soit $x \in H^{r+1}(U_w,\mathbb{Z}/n\mathbb{Z}(r))$. Notons $y_{0} = \delta_{\eta_0}(x) \in H^r(U_{w,0},\mathbb{Z}/n\mathbb{Z}(r-1))$. D'après ce qui précède, $y_0$ se relève en un élément $y \in H^r(\mathcal{U}_w,\mathbb{Z}/n\mathbb{Z}(r-1))$. En voyant $y$ dans $H^r(K,\mathbb{Z}/n\mathbb{Z}(r-1))$ et $\pi$ dans $H^1(K,\mu_n) = K^{\times}/{K^{\times}}^n$, on pose $z = y \cup \pi \in H^{r+1}(K,\mathbb{Z}/n\mathbb{Z}(r))$. Pour $v \in U_w^{(1)}$, on remarque que $\delta_v(z) = v(\pi) \text{Res}_{k(v)}(y)=0$ et donc $z \in H^{r+1}(U_w,\mathbb{Z}/n\mathbb{Z}(r))$. De plus, comme $\pi$ est une uniformisante de $\mathcal{O}_{\eta_0}$, on a $\delta_{\eta_0}(z) = y_0$ et donc, d'après le théorème de pureté cohomologique absolue de Gabber (théorème 3.1.1 de l'exposé XVI de \cite{Gab}), $x-z$ provient de $H^{r+1}(\mathcal{U}_w,\mathbb{Z}/n\mathbb{Z}(r))$. Comme le morphisme $H^{r+1}(\mathcal{U}_w,\mathbb{Z}/n\mathbb{Z}(r)) \rightarrow H^{r+1}(k(v_w),\mathbb{Z}/n\mathbb{Z}(r))$ se factorise par $H^{r+1}(\overline{\{v_w\}},\mathbb{Z}/n\mathbb{Z}(r))$, on déduit que $\delta_w(\text{Res}_{k(v_w)}(x-z)) = e_w\text{Res}_{\kappa(w)}(\delta_{\eta_0}(x-z)) = 0$. Il reste donc à montrer que $\delta_w(\text{Res}_{k(v_w)}(z)) = e_w\text{Res}_{\kappa(w)}(\delta_{\eta_0}(z))$, ce qui découle immédiatement des calculs: 
$$\delta_w(\text{Res}_{k(v_w)}(z)) = \delta_w(\text{Res}_{k(v_w)}(y) \cup \text{Res}_{k(v_w)}(\pi)) = e_w\text{Res}_{\kappa(w)}(y)= e_w\text{Res}_{\kappa(w)}(\delta_{\eta_0}(x)).$$
\end{proof}

\begin{corollary}
On rappelle que l'on a supposé (H \ref{40}). On suppose aussi (H \ref{41}) et (H \ref{42}). Soit $r \in \{0,1\}$. On a $e!\cdot\Sha^{r+2}(\mathbb{Z}(r)) \subseteq H^{r+2}(\mathcal{X}, \mathbb{Z}(r))$.
\end{corollary}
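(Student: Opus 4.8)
Le plan est d'interpréter l'appartenance à l'image de $H^{r+2}(\mathcal{X},\mathbb{Z}(r))$ comme une condition sur les résidus. Le groupe $H^{r+2}(K,\mathbb{Z}(r))$ est de torsion, car $H^{r+2}(K,\mathbb{Q}(r))$ est nul ($\mathbb{Q}(r)_{\text{Zar}}$ étant concentré en degrés $\leq r$). Le schéma $\mathcal{X}$ étant régulier de dimension $2$ et de corps des fonctions $K$, ses points de codimension $1$ sont les points fermés $v \in X^{(1)}$ (diviseurs horizontaux) et le point générique $\eta_0$ de la fibre spéciale $X_0$ (l'unique diviseur vertical, d'après (H \ref{42})(i)). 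Je commencerais par invoquer la résolution de Gersten, c'est-à-dire la suite de Bloch-Ogus pour $\mathcal{X}$ (reposant sur le théorème de pureté absolue de Gabber déjà utilisé ci-dessus) : une classe de torsion de $H^{r+2}(K,\mathbb{Z}(r))$ provient de $H^{r+2}(\mathcal{X},\mathbb{Z}(r))$ si et seulement si ses résidus en tous les points de codimension $1$ de $\mathcal{X}$ sont nuls. Pour $x \in \Sha^{r+2}(\mathbb{Z}(r))$, les résidus horizontaux $\partial_v(x)$ sont nuls puisque $x$ devient nul dans chaque $H^{r+2}(K_v,\mathbb{Z}(r))$. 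Tout se ramène donc à montrer que $e!\cdot\partial_{\eta_0}(x) = 0$.

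Pour contrôler le résidu vertical, je passerais aux coefficients finis. Comme $x$ est non ramifié sur la courbe lisse $X/k$, il provient de $H^{r+2}(X,\mathbb{Z}(r))$ (Bloch-Ogus pour $X$) ; en choisissant $n$ tel que $nx=0$ et en utilisant la surjection $H^{r+1}(X,\mathbb{Q}/\mathbb{Z}(r)) \twoheadrightarrow H^{r+2}(X,\mathbb{Z}(r))$, je relèverais $x$ en une classe $\bar{x} \in H^{r+1}(X,\mathbb{Z}/n\mathbb{Z}(r))$. Posant $\bar{\rho} = \delta_{\eta_0}(\bar{x})$, le premier lemme ci-dessus montre que $\bar{\rho} \in H^r(X_0,\mathbb{Z}/n\mathbb{Z}(r-1))$, c'est-à-dire que $\bar{\rho}$ est non ramifié le long de $X_0$, et l'on a $\partial_{\eta_0}(x) = \beta(\bar{\rho})$, où $\beta$ désigne le Bockstein. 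Il suffit donc d'établir que $e!\cdot\bar{\rho}$ se relève en une classe entière, autrement dit que $\beta(e!\cdot\bar{\rho}) = 0$.

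J'exploiterais alors le deuxième lemme. Pour chaque $w \in X_0^{(1)}$, soit $v_w \in X^{(1)}$ le point fermé fourni par (H \ref{42})(ii). Comme $x \in \Sha^{r+2}(\mathbb{Z}(r))$, la classe $x$ est déployée sur $K_{v_w}$, de sorte que sa valeur en $v_w$ dans $H^{r+2}(k(v_w),\mathbb{Z}(r))$ est nulle (la cohomologie non ramifiée s'injectant dans celle du corps local) ; par conséquent $\text{Res}_{k(v_w)}(\bar{x})$ est la réduction d'une classe entière et $\delta_w(\text{Res}_{k(v_w)}(\bar{x}))$ est de Bockstein nul. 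Or le deuxième lemme affirme que $\delta_w(\text{Res}_{k(v_w)}(\bar{x})) = e_w\cdot\text{Res}_{\kappa(w)}(\bar{\rho})$ avec $e_w \leq e$ ; en appliquant $\beta$, j'obtiendrais $e_w\cdot\text{Res}_{\kappa(w)}(\beta(\bar{\rho})) = 0$, d'où $\text{Res}_{\kappa(w)}(e!\cdot\beta(\bar{\rho})) = 0$ pour tout point fermé $w$ de $X_0$, puisque $e_w$ divise $e!$.

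Il resterait à conclure par un argument de coniveau sur $X_0$ : la classe $e!\cdot\beta(\bar{\rho}) \in H^{r+1}(\kappa(\eta_0),\mathbb{Z}(r-1))$ est non ramifiée sur la courbe propre intègre $X_0/\kappa$ et sa restriction à tout point fermé de $X_0$ est nulle ; il faut en déduire qu'elle est nulle (pour $r=0$ ceci est immédiat par injectivité de la restriction en $H^0$ ; pour $r=1$ c'est le point délicat). On aurait alors $e!\cdot\partial_{\eta_0}(x) = 0$, et, joint au premier paragraphe, tous les résidus de $e!\cdot x$ en codimension $1$ sur $\mathcal{X}$ seraient nuls, ce qui donnerait $e!\cdot x \in \text{Im}\big(H^{r+2}(\mathcal{X},\mathbb{Z}(r)) \to H^{r+2}(K,\mathbb{Z}(r))\big)$ et donc l'inclusion voulue. \textbf{Le principal obstacle} est précisément ce traitement du résidu vertical en $\eta_0$ : il faut articuler proprement les descriptions à coefficients $\mathbb{Z}(r)$ et $\mathbb{Z}/n\mathbb{Z}$ via le Bockstein, et surtout justifier l'implication finale \og{} classe non ramifiée sur $X_0$, triviale en tout point fermé, donc triviale \fg{} une fois les indices de ramification $e_w$ absorbés par le facteur $e!$.
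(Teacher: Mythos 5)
Votre plan suit la même ligne que la preuve du texte (réduction aux coefficients finis, utilisation des deux lemmes précédents, absorption des indices $e_w$ par $e!$ grâce à (H 42)(ii), pureté absolue de Gabber pour conclure), mais le \og{}point délicat\fg{} que vous laissez en suspens est précisément l'endroit où se joue l'énoncé, et ce n'est pas un simple détail à articuler : c'est exactement le rôle de l'hypothèse (H 42)(iii), que vous n'invoquez nulle part. Pour $r=1$, la classe $e!y \in H^1(X_0,\mathbb{Z}/n\mathbb{Z})$, dont toutes les restrictions aux $\kappa(w)$ sont nulles, s'annule parce que $H^1(\mathcal{O}_{X_0,w},\mathbb{Z}/n\mathbb{Z}) \rightarrow H^1(\kappa(w),\mathbb{Z}/n\mathbb{Z})$ est un isomorphisme et $H^1(\mathcal{O}_{X_0,w},\mathbb{Z}/n\mathbb{Z}) \rightarrow H^1(\kappa(\eta_0)_w,\mathbb{Z}/n\mathbb{Z})$ est injectif : l'image de $e!y$ dans $H^1(\kappa(\eta_0),\mathbb{Z}/n\mathbb{Z})$ appartient donc à $\Sha^1(\kappa(\eta_0),\mathbb{Z}/n\mathbb{Z}) = {_n}\Sha^2(\kappa(\eta_0),\mathbb{Z})$, groupe nul d'après (H 42)(iii), et comme $H^1(X_0,\mathbb{Z}/n\mathbb{Z})$ s'injecte dans $H^1(\kappa(\eta_0),\mathbb{Z}/n\mathbb{Z})$, on obtient $e!y=0$. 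L'implication \og{}non ramifiée sur $X_0$ et triviale en tout point fermé, donc triviale\fg{} n'a rien d'automatique (la section \ref{1.1} montre que de tels groupes de Tate-Shafarevich sont souvent non nuls) : c'est une hypothèse arithmétique sur $\kappa(\eta_0)$, mise dans (H \ref{42}) exactement pour ce passage, et sans elle votre preuve reste incomplète en son point décisif.

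Par ailleurs, votre détour par les coefficients entiers sur $X_0$ et sur $\mathcal{X}$ crée des difficultés évitables. Pour $r=0$, votre classe $\beta(\bar{\rho})$ vivrait dans $H^1(\kappa(\eta_0),\mathbb{Z}(-1))$, et le twist négatif n'a pas de sens motivique ; plus sérieusement, la \og{}résolution de Gersten pour $\mathcal{X}$\fg{} à coefficients $\mathbb{Z}(r)$ qu'invoque votre premier paragraphe n'est pas disponible telle quelle, Bloch-Ogus concernant les variétés lisses sur un corps et non la surface arithmétique $\mathcal{X}$. La preuve du texte reste au niveau fini : elle choisit d'emblée le relèvement canonique $x \in \Sha^{r+1}(\mathbb{Z}/n\mathbb{Z}(r))$ de la classe de $n$-torsion donnée (licite puisque $H^{r+1}(K,\mathbb{Z}(r)) = H^{r+1}(K_v,\mathbb{Z}(r)) = 0$ d'après Beilinson-Lichtenbaum ; cela rend en particulier immédiate la nullité de $\text{Res}_{k(v_w)}(\tilde{x})$, là où votre argument \og{}réduction d'une classe entière\fg{} peut d'ailleurs être renforcé en $\text{Res}_{k(v_w)}(\bar{x})=0$ pour la même raison), puis montre que $e!y$ lui-même est nul — et non seulement son Bockstein —, de sorte que $e!\tilde{x}$ est non ramifié en $\eta_0$ à coefficients $\mathbb{Z}/n\mathbb{Z}$ ; la pureté de Gabber fournit alors $e!\tilde{x} \in H^{r+1}(\mathcal{X},\mathbb{Z}/n\mathbb{Z}(r))$, et on passe aux coefficients entiers seulement à la fin par le Bockstein $H^{r+1}(\mathcal{X},\mathbb{Z}/n\mathbb{Z}(r)) \rightarrow H^{r+2}(\mathcal{X},\mathbb{Z}(r))$. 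Enfin, pour lire la conclusion comme l'inclusion de l'énoncé, il faut encore l'injectivité de $H^{r+2}(\mathcal{X},\mathbb{Z}(r)) \rightarrow H^{r+2}(K,\mathbb{Z}(r))$, que vous n'abordez pas et que le texte établit en fin de preuve (pureté appliquée à $H^1(-,\mathbb{Q}/\mathbb{Z})$ pour $r=0$, régularité et intégrité de $\mathcal{X}$ pour $r=1$).
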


\begin{proof}
Soit $x \in \Sha^{r+2}(\mathbb{Z}(r))$. Soit $n >0$ tel que $x$ est de $n$-torsion. Alors $x \in \Sha^{r+1}(\mathbb{Z}/n\mathbb{Z}(r))$. Pour $v \in X^{(1)}$, comme l'image de $x$ dans $H^{r+1}(K_v,\mathbb{Z}/n\mathbb{Z}(r))$ est nulle, l'image de $x$ dans $H^r(k(v),\mathbb{Z}/n\mathbb{Z}(r-1))$ l'est aussi, ce qui prouve que $x$ provient de $\tilde{x} \in H^{r+1}(X,\mathbb{Z}/n\mathbb{Z}(r))$. Étant donné que $H^{r+1}(\mathcal{O}_v,\mathbb{Z}/n\mathbb{Z}(r))$ s'injecte dans $H^{r+1}(K_v,\mathbb{Z}/n\mathbb{Z}(r))$ et que $H^{r+1}(\mathcal{O}_v,\mathbb{Z}/n\mathbb{Z}(r)) \rightarrow H^{r+1}(k(v),\mathbb{Z}/n\mathbb{Z}(r))$ est un isomorphisme, on déduit que  $$\tilde{x} \in \text{Ker} \left( H^{r+1}(X,\mathbb{Z}/n\mathbb{Z}(r)) \rightarrow \prod_{v \in X^{(1)}} H^{r+1}(k(v),\mathbb{Z}/n\mathbb{Z}(r)) \right).$$
Notons $y$ l'image de $\tilde{x}$ dans $H^r(X_0,\mathbb{Z}/n\mathbb{Z}(r-1))$. À l'aide du lemme précédent et de l'hypothèse (H \ref{42})(ii), on déduit que $e!y \in \text{Ker}(H^r(X_0,\mathbb{Z}/n\mathbb{Z}(r-1)) \rightarrow \prod_{w \in X^{(1)}_0} H^r(\kappa(w),\mathbb{Z}/n\mathbb{Z}(r-1)))$. Montrons que $e!y=0$:
\begin{itemize}
\item[$\bullet$] Si $r=0$, alors $H^0(X_0,\mathbb{Z}/n\mathbb{Z}(-1)) \rightarrow \prod_{w \in X^{(1)}_0} H^0(\kappa(w),\mathbb{Z}/n\mathbb{Z}(-1))$ est injectif et donc $e!y=0$.
\item[$\bullet$] Si $r=1$, comme $H^1(\mathcal{O}_{X_0,w},\mathbb{Z}/n\mathbb{Z}) \rightarrow H^1(\kappa(w),\mathbb{Z}/n\mathbb{Z})$ est un isomorphisme et $H^1(\mathcal{O}_{X_0,w},\mathbb{Z}/n\mathbb{Z}) \rightarrow H^1(\kappa(\eta_0)_w,\mathbb{Z}/n\mathbb{Z})$ est injectif, on déduit que $e!y \in \Sha^1(\kappa(\eta_0), \mathbb{Z}/n\mathbb{Z})$. Or $\Sha^1(\kappa(\eta_0), \mathbb{Z}/n\mathbb{Z})= {_n}\Sha^2(\kappa(\eta_0), \mathbb{Z})= 0$ d'après l'hypothèse (H \ref{42})(iii), et donc $e!y=0$. 
\end{itemize}
Par conséquent, $e!\tilde{x}  \in \text{Ker}\left( H^{r+1}(X,\mathbb{Z}/n\mathbb{Z}(r)) \rightarrow H^{r}(X_0,\mathbb{Z}/n\mathbb{Z}(r-1))\right)$. Le théorème de pureté cohomologique absolue de Gabber permet alors de conclure que $e!\tilde{x}$ provient de $H^{r+1}(\mathcal{X},\mathbb{Z}/n\mathbb{Z}(r))$, ce qui prouve que tout élément de $e!\cdot\Sha^{r+2}(\mathbb{Z}(r))$ provient de $H^{r+2}(\mathcal{X}, \mathbb{Z}(r))$. Reste donc à montrer que le morphisme $  H^{r+2}(\mathcal{X}, \mathbb{Z}(r))\rightarrow H^{r+2}(K,\mathbb{Z}(r))$ est injectif:
\begin{itemize}
\item[$\bullet$] Si $r=0$, le morphisme $H^2(\mathcal{X},\mathbb{Z}) \rightarrow H^2(K,\mathbb{Z})$ s'identifie au morphisme $H^1(\mathcal{X},\mathbb{Q}/\mathbb{Z}) \rightarrow H^1(K,\mathbb{Q}/\mathbb{Z})$. Ce dernier est la composée de  $H^1(\mathcal{X},\mathbb{Q}/\mathbb{Z}) \rightarrow H^1(X,\mathbb{Q}/\mathbb{Z})$ suivie de $H^1(X,\mathbb{Q}/\mathbb{Z}) \rightarrow H^1(K,\mathbb{Q}/\mathbb{Z})$, et ces deux morphismes sont injectifs d'après le théorème de pureté cohomologique absolue de Gabber. On en déduit l'injectivité de $H^2(\mathcal{X},\mathbb{Z}) \rightarrow H^2(K,\mathbb{Z})$ .
\item[$\bullet$] Si $r=1$, c'est évident puisque $\mathcal{X}$ est intègre régulier.
\end{itemize}
\end{proof}

\begin{theorem} \label{nul 2}
Rappelons que nous avons supposé (H \ref{40}), c'est-à-dire que $X$ est une courbe. En particulier, $k$ n'est ni un corps fini ni un corps $p$-adique ni $\mathbb{C}((t))$. Soit $r \in \{0,1\}$. On a $\Sha^{r+2}(\mathbb{Z}(r))=0$.
\end{theorem}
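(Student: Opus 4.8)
The plan is to deduce the vanishing from the preceding corollary together with the duality of Theorem~\ref{PT tore}, working under the standing hypotheses (H \ref{41}) and (H \ref{42}). First I would observe that $\Sha^{r+2}(\mathbb{Z}(r))$ is divisible. Taking $\hat{T} = \mathbb{Z}$ and $a = d+1-r$, so that $\tilde{T} = \mathbb{Z}(r)$, $T = \mathbb{Z}(d+1-r)[1]$ and $d+3-a = r+2$, Theorem~\ref{PT tore} provides a perfect pairing of finite groups $\Sha^{d+2-r}(\mathbb{Z}(d+1-r)) \times \overline{\Sha^{r+2}(\mathbb{Z}(r))} \to \mathbb{Q}/\mathbb{Z}$. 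Now $H^{d+2-r}(K,\mathbb{Z}(d+1-r)) = 0$ by Beilinson--Lichtenbaum (the degree is the weight plus one, and $\mathbb{Z}(d+1-r)_{\text{Zar}}$ is concentrated in degrees $\leq d+1-r$ over the point $\text{Spec}\,K$), whence $\Sha^{d+2-r}(\mathbb{Z}(d+1-r)) = 0$ and $\overline{\Sha^{r+2}(\mathbb{Z}(r))} = 0$. Since $\Sha^{r+2}(\mathbb{Z}(r))$ is of cofinite type torsion (remark following Proposition~\ref{Sha tore}), it coincides with its maximal divisible subgroup, so $\Sha^{r+2}(\mathbb{Z}(r)) = e!\cdot\Sha^{r+2}(\mathbb{Z}(r))$.

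By the preceding corollary, every class $x$ of $\Sha^{r+2}(\mathbb{Z}(r))$ then lifts to $H^{r+2}(\mathcal{X},\mathbb{Z}(r))$, the restriction to $K$ being injective; it therefore suffices to show that such a lift $\tilde{x}$ vanishes. As $x$ is torsion and the restriction to $K$ is injective, $\tilde{x}$ is torsion as well, and I would choose $n$ with $n\tilde{x} = 0$ and lift $\tilde{x}$ to $\tilde{x}_n \in H^{r+1}(\mathcal{X},\mathbb{Z}/n\mathbb{Z}(r)) = H^{r+1}(\mathcal{X},\mu_n^{\otimes r})$, the identification coming from Geisser--Levine since $n$ is invertible ($\text{Car}(k_1) = 0$). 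Because $\mathcal{O}_k$ is complete, hence henselian, and $\mathcal{X} \to \text{Spec}\,\mathcal{O}_k$ is proper, proper base change yields an isomorphism $H^{r+1}(\mathcal{X},\mu_n^{\otimes r}) \cong H^{r+1}(X_0,\mu_n^{\otimes r})$ onto the cohomology of the special fibre $X_0$, a curve over the $(d-1)$-local field $\kappa$. Thus $\tilde{x} = 0$ if and only if the image $\bar{x}_n \in H^{r+1}(X_0,\mu_n^{\otimes r})$ vanishes, and the problem is transported to the $(d-1)$-local situation where the inductive hypothesis (H \ref{42})(iii) is available.

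The decisive step is to show $\bar{x}_n = 0$. For each $w \in X_0^{(1)}$, (H \ref{42})(ii) furnishes a horizontal point $v_w \in X^{(1)}$ whose closure $\overline{\{v_w\}}$ is regular at $w$; restricting $\tilde{x}_n$ to this complete trait and using that $x$ is locally trivial at $v_w$ together with the injectivity of the unramified restrictions (as in Proposition~\ref{nr fini}) forces $\bar{x}_n$ to vanish at $\kappa(w)$, hence at the completion $\kappa(\eta_0)_w$. Likewise $\bar{x}_n$ is trivial at the generic point, so its image in $H^{r+1}(\kappa(\eta_0),\mu_n^{\otimes r})$ lies in $\Sha^{r+1}(\kappa(\eta_0),\mu_n^{\otimes r}) \cong {_n}\Sha^{r+2}(\kappa(\eta_0),\mathbb{Z}(r))$, which is zero by (H \ref{42})(iii). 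For $r = 0$ the restriction $H^{1}(X_0,\mathbb{Z}/n\mathbb{Z}) \to H^1(\kappa(\eta_0),\mathbb{Z}/n\mathbb{Z})$ is injective (the constant Zariski sheaf being flasque), so $\bar{x}_n = 0$ at once. The main obstacle lies in the case $r = 1$: here the kernel of $H^2(X_0,\mu_n) \to H^2(\kappa(\eta_0),\mu_n)$ is, through the coniveau spectral sequence, a $\text{Pic}(X_0)/n$-type contribution, which must be annihilated using the companion vanishing $\Sha^2(\kappa(\eta_0),\mathbb{Z}) = 0$ of (H \ref{42})(iii), exactly as that group served to kill the residue in the preceding corollary. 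Once $\bar{x}_n = 0$, we obtain $\tilde{x} = 0$ and hence $x = 0$, so $\Sha^{r+2}(\mathbb{Z}(r)) = 0$. I expect the coordination of these two vanishings and the taming of the coniveau kernel on $X_0$ to be the genuinely delicate point, the divisibility and proper base change steps being essentially formal.
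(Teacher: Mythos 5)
Your preliminary steps are sound and essentially coincide with the paper's: the divisibility of $\Sha^{r+2}(\mathbb{Z}(r))$ via Theorem \ref{PT tore} together with $H^{d+2-r}(K,\mathbb{Z}(d+1-r))=0$ (Beilinson--Lichtenbaum), the lift of the class to $H^{r+2}(\mathcal{X},\mathbb{Z}(r))$ furnished by the preceding corollary, the use of the horizontal curves $\overline{\{v_w\}}$ from (H \ref{42})(ii) to kill the class at each $\kappa(w)$, and the appeal to (H \ref{42})(iii) to conclude that the image at $\kappa(\eta_0)$ vanishes. For $r=0$ your detour through the special fibre also works (the Bockstein $H^1(\kappa(w),\mathbb{Z}/n\mathbb{Z}) \rightarrow {_n}H^2(\kappa(w),\mathbb{Z})$ is injective and $H^1(X_0,\mathbb{Z}/n\mathbb{Z}) \hookrightarrow H^1(\kappa(\eta_0),\mathbb{Z}/n\mathbb{Z})$ for $X_0$ integral and normal), though note that your ``$\tilde{x}=0$ if and only if $\bar{x}_n=0$'' is only an ``if'': the mod-$n$ lift is not unique, and this non-uniqueness is precisely where your $r=1$ argument breaks.

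The genuine gap is the endgame for $r=1$. Your plan requires $\bar{x}_n=0$ in $H^2(X_0,\mu_n)$, but the kernel of $H^2(X_0,\mu_n) \rightarrow H^2(\kappa(\eta_0),\mu_n)$ is $\text{Pic}(X_0)/n$, which is nonzero (the degree map alone shows this) and is invisible to every local condition at your disposal: by naturality of the Kummer sequence a Picard class restricts to zero in $H^2(\mathcal{O}_{X_0,w},\mu_n)$ for every $w$ and has trivial Brauer image, so the $\Sha$-type vanishings can at best place $\bar{x}_n$ in $\text{Pic}(X_0)/n$, never force $\bar{x}_n=0$. Moreover $\Sha^2(\kappa(\eta_0),\mathbb{Z})=0$ concerns locally trivial classes in $H^1(\kappa(\eta_0),\mathbb{Q}/\mathbb{Z})$ --- it killed the residue in the corollary exactly because residues in the case $r=1$ live in degree $1$ over the special fibre --- and has no purchase whatsoever on $\text{Pic}(X_0)/n$; the mechanism you propose for ``taming the coniveau kernel'' is therefore a non sequitur. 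The paper avoids the special fibre entirely at this point: from the vanishing at $\kappa(\eta_0)$ it deduces, via $\text{Br}(\mathcal{O}_{\mathcal{X},\eta_0}) \cong \text{Br}(\kappa(\eta_0))$ and the injection $\text{Br}(\mathcal{O}_{\mathcal{X},\eta_0}) \hookrightarrow \text{Br}(K_{\eta_0})$, that $x$ dies in $H^2(K_{\eta_0},\mu_n)$ as well as in all the $H^2(K_v,\mu_n)$, and then concludes $x=0$ by the patching local--global theorem of Harbater--Hartmann--Krashen (théorème 3.3.6 de \cite{HHK}) --- the decisive external input, explicitly flagged in the introduction, which your proposal omits. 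One could conceivably repair your route without patching by proving $\bar{x}_n \in \text{Pic}(X_0)/n$ (using injectivity of $\text{Br}(X_0) \rightarrow \text{Br}(\kappa(\eta_0))$) and the surjectivity of $\text{Pic}(\mathcal{X}) \rightarrow \text{Pic}(X_0)$ (deformation theory plus the Grothendieck existence theorem, using $H^2(X_0,\mathcal{O}_{X_0})=0$), then correcting $\tilde{x}_n$ by a global line bundle before applying proper base change; but none of this is in your text, and as written the key step fails.
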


\begin{proof}
Soit $x \in e!\cdot\Sha^{r+2}(\mathbb{Z}(r))$. D'après le corollaire précédent, on a $x \in H^{r+2}(\mathcal{X},\mathbb{Z}(r))$. De plus, pour chaque $v \in X^{(1)}$, l'image de $x$ dans $H^{r+2}(k(v),\mathbb{Z}(r))$ est nulle.\\
Soit $w \in X_0^{(1)}$. D'après l'hypothèse (H \ref{42})(ii), la restriction $H^{r+2}(\mathcal{X},\mathbb{Z}(r)) \rightarrow H^{r+2}(\kappa(w),\mathbb{Z}(r))$ se factorise sous la forme $H^{r+2}(\mathcal{X},\mathbb{Z}(r)) \rightarrow H^{r+2}(\overline{\{v_w\}},\mathbb{Z}(r)) \rightarrow H^{r+2}(\kappa(w),\mathbb{Z}(r))$. L'image de $x$ dans $H^{r+2}(k(v_w),\mathbb{Z}(r))$ est nulle. Comme $\overline{\{v_w\}}$ est régulier, la flèche $H^{r+2}(\overline{\{v_w\}},\mathbb{Z}(r)) \rightarrow H^{r+2}(k(v_w),\mathbb{Z}(r))$ est injective et donc l'image de $x$ dans $H^{r+2}(\overline{\{v_w\}},\mathbb{Z}(r))$ est nulle. Par conséquent, l'image de $x$ dans $H^{r+2}(k(w),\mathbb{Z}(r))$ est nulle. Cela impose que l'image de $x$ dans $H^{r+2}(\kappa(\eta_0),\mathbb{Z}(r))$ est en fait dans $\Sha^{r+2}(\kappa(\eta_0),\mathbb{Z}(r))$, qui est nul d'après l'hypothèse (H \ref{42})(iii). On en déduit que $$x \in \text{Ker}\left( H^{r+2}(\mathcal{X},\mathbb{Z}(r)) \rightarrow H^{r+2}(\kappa(\eta_0),\mathbb{Z}(r))\right) .$$
\begin{itemize}
\item[$\bullet$] Si $r=0$, comme $H^{2}(X_0,\mathbb{Z}) \rightarrow H^{2}(\kappa(\eta_0),\mathbb{Z})$ est injectif, $x $ est dans le noyau de $\text{Ker}\left( H^{2}(\mathcal{X},\mathbb{Z}) \rightarrow H^{2}(X_0,\mathbb{Z})\right)$. Ce dernier morphisme est injectif par pureté cohomologique absolue et donc $x=0$. Par conséquent, le groupe $\Sha^{2}(\mathbb{Z})$ est de $e!$-torsion et divisible, donc nul.
\item[$\bullet$] Si $r=1$, comme on a un isomorphisme $\text{Br}(\mathcal{O}_{\mathcal{X},\eta_0}) \cong \text{Br}(\kappa(\eta_0))$ et une injection $\text{Br}(\mathcal{O}_{\mathcal{X},\eta_0}) \rightarrow \text{Br}(K_{\eta_0})$, on déduit que: $$x \in \text{Ker}\left( \text{Br}(K) \rightarrow \prod_{v \in X^{(1)}} \text{Br}(K_v) \times \text{Br}(K_{\eta_0})\right) .$$
Soit $n \geq 1$ tel que $x$ est de $n$-torsion. Alors:
$$x \in \text{Ker}\left( H^2(K,\mu_n) \rightarrow \prod_{v \in X^{(1)}} H^2(K_v,\mu_n) \times H^2(K_{\eta_0},\mu_n)\right) .$$
D'après le théorème 3.3.6 de \cite{HHK}, cela impose que $x=0$. On en déduit que le groupe $\Sha^{3}(\mathbb{Z}(1))$ est de $e!$-torsion, donc nul.
\end{itemize}
\end{proof}

Le théorème précédent nous permet à présent de passer à la récurrence:

\begin{corollary} \textbf{(Cas où $k_1$ est $p$-adique)} \label{nul 3}\\
Rappelons que nous avons supposé (H \ref{40}), c'est-à-dire que $X$ est une courbe. Supposons que $d\geq 1$ et que le corps $k_1$ est $p$-adique. Pour $i \in \{1,2,3,...,d\}$, notons $\mathcal{O}_{k_i}$ l'anneau des entiers de $k_i$ et $\pi_i$ une uniformisante de $\mathcal{O}_{k_i}$. Supposons que, pour chaque $i \in \{1,2,3,...,d \}$, il existe un schéma intègre, projectif, lisse $\mathcal{X}_i$ de dimension 2 sur $\text{Spec} \; \mathcal{O}_{k_i}$ vérifiant les conditions suivantes:
\begin{itemize}
\item[$\bullet$] pour $1 \leq i \leq d$, la fibre générique $X_i$ et la fibre spéciale $X_{i,0}$ de $\mathcal{X}_i$ sont intègres.
\item[$\bullet$] la fibre générique $X_d$ de $\mathcal{X}_d$ est isomorphe à $X$.
\item[$\bullet$] pour $1\leq i \leq d-1$, la fibre générique $X_i$ de $\mathcal{X}_i$ est isomorphe à la fibre spéciale $X_{i+1,0}$ de $\mathcal{X}_{i+1}$.
\item[$\bullet$] la fibre spéciale $X_{1,0}$ de $\mathcal{X}_1$ est géométriquement intègre. 
\item[$\bullet$] il existe un entier naturel $e$ vérifiant la propriété suivante: pour $1 \leq i \leq d$, pour $w \in X_{i,0}^{(1)}$, il existe un ouvert affine $\mathcal{U}_w$ de $\mathcal{X}_i$ contenant $w$ et un point fermé $v_w$ de $U_w=\mathcal{U}_w \times_{\mathcal{O}_{k_i}} k_i$ tels que l'adhérence $\overline{\{v_w\}}$ de $v_w$ dans $\mathcal{U}_w$, munie de sa structure réduite, est régulière, contient $w$ et $\pi_i$ est de valuation au plus $e$ dans l'anneau de valuation discrète $\mathcal{O}_{\overline{\{v_w\}},w}$.
\end{itemize}
Alors $\Sha^{2}(\mathbb{Z}) = \Sha^{2}(\mathbb{G}_m)=0$.
\end{corollary}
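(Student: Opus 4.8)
The plan is to argue by induction on $d$, using Théorème \ref{nul 2} (the "heredity" step) to lift the vanishing from one level of the tower to the next. The data should first be read as a chain of curves: since the residue field of $k_i$ is $k_{i-1}$, the special fibre $X_{i,0}$ of $\mathcal{X}_i$ is a curve over $k_{i-1}$, and the compatibility hypotheses identify it with the generic fibre $X_{i-1}$ of $\mathcal{X}_{i-1}$. Thus one obtains curves $X_d\cong X, X_{d-1},\dots,X_1$, with $X_i$ over $k_i$, the bottom field $k_1$ being $p$-adic and $X_{1,0}$ geometrically integral over the finite field $k_0$. Before running the induction I would check that every $X_i$ is geometrically integral over $k_i$, so that the Tate-Shafarevich groups at each level make sense in the framework of the paper: as $\mathcal{X}_1$ is smooth and proper over the trait $\mathrm{Spec}\,\mathcal{O}_{k_1}$ with geometrically connected special fibre $X_{1,0}$, the (locally constant, hence constant) number of geometric connected components forces $X_1$ to be geometrically connected, and smoothness makes it geometrically normal, hence geometrically integral; applying this to $\mathcal{X}_2,\dots,\mathcal{X}_d$ via the identifications $X_i\cong X_{i+1,0}$ propagates geometric integrality all the way up to $X=X_d$.

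The base case is $d=1$, i.e.\ $k=k_1$ $p$-adic and $X=X_1$. Here Théorème \ref{nul 2} is not available, since its standing hypothesis (H \ref{41}) excludes the $p$-adic case; instead the vanishing $\Sha^2(\mathbb{Z})=\Sha^2(\mathbb{G}_m)=0$ for the function field of a smooth projective geometrically integral curve over a $p$-adic field follows from the Hasse principle for two-dimensional global fields of Kato \cite{Kat} together with \cite{HS1}, exactly as indicated before the statement.

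For the inductive step, assume $d\geq 2$. Then $k=k_d$ is neither finite, nor $p$-adic, nor $\mathbb{C}((t))$ (its $0$-local field $k_0$ is finite), so hypothesis (H \ref{41}) holds, and I would verify (H \ref{42}) for $X=X_d$ with the model $\mathcal{X}=\mathcal{X}_d$. Condition (i) is the integrality of $\mathcal{X}_d$ and of its fibres $X_d\cong X$ and $X_{d,0}$; condition (ii) is precisely the last bullet of the hypotheses specialised to $i=d$; and condition (iii) requires the vanishing of $\Sha^2(\kappa(\eta_0),\mathbb{Z})$ and $\Sha^3(\kappa(\eta_0),\mathbb{Z}(1))$, where $\eta_0$ is the generic point of $X_{d,0}\cong X_{d-1}$, so that $\kappa(\eta_0)=k_{d-1}(X_{d-1})$. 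This is exactly the conclusion of the present corollary applied at level $d-1$, to the curve $X_{d-1}$ over $k_{d-1}$ with the truncated tower $\mathcal{X}_1,\dots,\mathcal{X}_{d-1}$ (all of whose hypotheses are inherited by restriction, the $1$-local field attached to $k_{d-1}$ being still the $p$-adic field $k_1$), hence holds by the induction hypothesis.

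Once (H \ref{42}) is in place, Théorème \ref{nul 2} yields $\Sha^{r+2}(\mathbb{Z}(r))=0$ for $r\in\{0,1\}$ over $K=k_d(X_d)$. Using $\mathbb{Z}(0)=\mathbb{Z}$ and the quasi-isomorphism $\mathbb{Z}(1)[1]\cong\mathbb{G}_m$, which gives $\Sha^3(\mathbb{Z}(1))\cong\Sha^2(\mathbb{G}_m)$ locally and globally, this is precisely the asserted $\Sha^2(\mathbb{Z})=\Sha^2(\mathbb{G}_m)=0$. The argument is thus essentially bookkeeping given Théorème \ref{nul 2}; the one point demanding care, and the main potential obstacle, is to confirm that condition (iii) at level $d$ coincides \emph{verbatim} with the statement proved at level $d-1$ and that geometric integrality (and the other framework conditions) survive at every level of the tower.
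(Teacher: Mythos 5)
Your proposal follows essentially the same route as the paper: induction on $d$ with heredity supplied by le théorème \ref{nul 2} (condition (H \ref{42})(iii) at level $i$ being precisely the conclusion of the corollary at level $i-1$, via $\Sha^3(\mathbb{Z}(1))\cong\Sha^2(\mathbb{G}_m)$), and the base case $d=1$ over the $p$-adic field $k_1$ drawn from \cite{HS1} and \cite{Kat}. The only point you compress is the base-case vanishing of $\Sha^2(K_1,\mathbb{Z})$, which the paper obtains by reducing, via the duality du théorème \ref{PT fini}, to $\Sha^3(K_1,\mathbb{Q}/\mathbb{Z}(2))=0$ and then invoking la proposition 5.2 de \cite{Kat} (this is where the geometric integrality of $X_{1,0}$ is used) together with Brauer--Hasse--Noether, while your preliminary check that geometric integrality propagates up the tower makes explicit a point the paper leaves implicit.
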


\begin{proof}
Par récurrence, il suffit de montrer que, si $K_1$ est le corps des fonctions de $\mathcal{X}_1$, alors $\Sha^{2}(K_1,\mathbb{G}_m)=\Sha^{2}(K_1,\mathbb{Z})=0$. La nullité de $\Sha^{2}(K_1,\mathbb{G}_m)$ est prouvée dans la proposition 3.4 de \cite{HS1}. Il reste donc à vérifier que $\Sha^2(K_1,\mathbb{Z})$ est nul, ou, ce qui revient au même, vérifier que $\Sha^1(K_1,\mathbb{Z}/n\mathbb{Z})$ est nul pour tout $n>0$. Par dualité, cela équivaut à montrer que $\Sha^3(K_1,\mathbb{Z}/n\mathbb{Z}(2))$ est nul pour tout $n>0$, ou, ce qui revient au même, montrer que $\Sha^3(K_1,\mathbb{Q}/\mathbb{Z}(2))$ est nul. Mais la fibre spéciale de $\mathcal{X}_1$ étant géométriquement intègre, si l'on note $K_0$ son corps des fonctions, la proposition 5.2 de \cite{Kat} impose que le groupe $\Sha^3(K_1,\mathbb{Q}/\mathbb{Z}(2))$ est isomorphe au groupe  $\Sha^2(K_0,\mathbb{Q}/\mathbb{Z}(1))$, qui est nul d'après le théorème de Brauer-Hasse-Noether. Cela achève la preuve.
\end{proof}

\begin{corollary} \textbf{(Cas où $k_0$ est $\mathbb{C}((t))$)} \label{nul 4}\\
Rappelons que nous avons supposé (H \ref{40}), c'est-à-dire que $X$ est une courbe. Supposons que $d \geq 0$ et que $k_0 = \mathbb{C}((t))$. Pour $i \in \{1,2,3,...,d\}$, notons $\mathcal{O}_{k_i}$ l'anneau des entiers de $k_i$ et $\pi_i$ une uniformisante de $\mathcal{O}_{k_i}$. Supposons que, pour chaque $i \in \{1,2,...,d \}$, il existe un schéma intègre, projectif, lisse $\mathcal{X}_i$ de dimension 2 sur $\text{Spec} \; \mathcal{O}_{k_i}$ vérifiant les conditions suivantes:
\begin{itemize}
\item[$\bullet$] pour $1 \leq i \leq d$, la fibre générique $X_i$ et la fibre spéciale $X_{i,0}$ de $\mathcal{X}_i$ sont intègres.
\item[$\bullet$] la fibre générique $X_d$ de $\mathcal{X}_d$ est isomorphe à $X$.
\item[$\bullet$] pour $1\leq i \leq d-1$, la fibre générique $X_i$ de $\mathcal{X}_i$ est isomorphe à la fibre spéciale $X_{i+1,0}$ de $\mathcal{X}_{i+1}$.
\item[$\bullet$] la jacobienne de la fibre spéciale $X_{1,0}$ a très mauvaise réduction.
\item[$\bullet$] il existe un entier naturel $e$ vérifiant la propriété suivante: pour $1 \leq i \leq d$, pour $w \in X_{i,0}^{(1)}$, il existe un ouvert affine $\mathcal{U}_w$ de $\mathcal{X}_i$ contenant $w$ et un point fermé $v_w$ de $U_w=\mathcal{U}_w \times_{\mathcal{O}_{k_i}} k_i$ tels que l'adhérence $\overline{\{v_w\}}$ de $v_w$ dans $\mathcal{U}_w$, munie de sa structure réduite, est régulière, contient $w$ et $\pi_i$ est de valuation au plus $e$ dans l'anneau de valuation discrète $\mathcal{O}_{\overline{\{v_w\}},w}$.
\end{itemize}
Alors $\Sha^{2}(\mathbb{Z})=\Sha^{2}(\mathbb{G}_m)=0$.
\end{corollary}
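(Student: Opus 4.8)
The plan is to argue by descending induction along the tower $\mathcal{X}_d, \mathcal{X}_{d-1}, \dots, \mathcal{X}_1$, following the proof of \ref{nul 3} verbatim; only the base case changes. Writing $K_j$ for the function field of $X_j$, I claim that for each $j \in \{1, \dots, d\}$ the vanishing $\Sha^2(K_j, \mathbb{Z}) = \Sha^2(K_j, \mathbb{G}_m) = 0$ follows from the same vanishing over $K_{j-1}$, the role of the bottom field being played by the function field $K_0$ of the $\mathbb{C}((t))$-curve $X_{1,0}$. The inductive step is a direct application of Theorem \ref{nul 2} to $X_j$ over the $j$-local field $k_j$: hypothesis (H \ref{41}) holds because $j \geq 1$ and $k_0 = \mathbb{C}((t))$; hypotheses (H \ref{42})(i) and (H \ref{42})(ii) are supplied by the model $\mathcal{X}_j$ and the uniform bound $e$; and hypothesis (H \ref{42})(iii) — the vanishing of $\Sha^2(\kappa(\eta_0), \mathbb{Z})$ and $\Sha^3(\kappa(\eta_0), \mathbb{Z}(1))$ — is exactly the induction hypothesis over $\kappa(\eta_0) = K_{j-1}$, once one recalls that $\mathbb{Z}(1)[1] \cong \mathbb{G}_m$ yields $\Sha^3(\mathbb{Z}(1)) = \Sha^2(\mathbb{G}_m)$. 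Theorem \ref{nul 2} then returns $\Sha^2(K_j, \mathbb{Z}) = \Sha^3(K_j, \mathbb{Z}(1)) = 0$, i.e. the vanishing at level $j$.

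It remains to establish the base case, $\Sha^2(K_0, \mathbb{Z}) = \Sha^2(K_0, \mathbb{G}_m) = 0$ for the function field $K_0$ of the curve $X_{1,0}$ over $\mathbb{C}((t))$ (to which the whole framework of sections 2--5 applies with parameter $d = 0$). I would first treat $\Sha^2(\mathbb{G}_m)$. As in the remark following Theorem \ref{nul 1}, $\Sha^2(\mathbb{G}_m)$ is identified with $\text{Ker}(\text{Br}(X_{1,0}) \to \prod_{v} \text{Br}(\kappa(v)))$, a divisible group by \ref{PT tore}; by the finite-exponent variant of that remark it suffices to show that $H^1(\mathbb{C}((t)), \text{Pic}(\overline{X_{1,0}}))$ — whose finite exponent is equivalent to that of $H^1(\mathbb{C}((t)), J)$ for $J$ the jacobian of $X_{1,0}$ — is of finite exponent, since a divisible group of finite exponent is zero. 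This is precisely where the hypothesis that $J$ has very bad reduction enters: it guarantees, through the analysis of curves over $\mathbb{C}((t))$ in \cite{Dou}, that $H^1(\mathbb{C}((t)), J)$ has finite exponent, whence $\Sha^2(K_0, \mathbb{G}_m) = 0$.

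For the companion vanishing $\Sha^2(K_0, \mathbb{Z}) = 0$ the situation is cleaner than in the $p$-adic case: since $X_{1,0}$ is a curve over the $0$-local field $\mathbb{C}((t))$, Lemma \ref{nul dual} reads $\Sha^2(K_0, \mathbb{Z}(0)) = 0$ if and only if $\Sha^2(K_0, \mathbb{G}_m) = 0$, and as $\mathbb{Z}(0) = \mathbb{Z}$ the vanishing of $\Sha^2(K_0, \mathbb{Z})$ follows at once from the preceding paragraph, with no separate Kato-type input (in contrast to \ref{nul 3}). Since the inductive machinery is entirely furnished by the already-proven Theorem \ref{nul 2}, the one genuinely new ingredient — and the main obstacle — is the base case: turning the geometric ``very bad reduction'' hypothesis on $J$ into the finite exponent of $H^1(\mathbb{C}((t)), J)$, and hence into the vanishing of the divisible group $\Sha^2(\mathbb{G}_m)$. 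Verifying that ``very bad reduction'' is exactly the condition ensuring this finiteness, via \cite{Dou}, is the delicate point of the argument.
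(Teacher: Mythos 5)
Votre démonstration est correcte et suit essentiellement la même voie que celle du texte: récurrence le long de la tour $\mathcal{X}_d,\dots,\mathcal{X}_1$ en appliquant le théorème \ref{nul 2} (l'hypothèse (H \ref{42})(iii) au cran $j$ étant fournie par le cran $j-1$, via $\Sha^3(\mathbb{Z}(1))=\Sha^2(\mathbb{G}_m)$), puis cas initial sur le corps des fonctions $K_0$ de $X_{1,0}$ où la très mauvaise réduction de la jacobienne donne, d'après \cite{Dou}, la nullité de $\Sha^2(K_0,\mathbb{G}_m)$, et enfin le lemme \ref{nul dual} avec $d=0$ pour en déduire $\Sha^2(K_0,\mathbb{Z})=0$. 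Le texte est simplement plus laconique sur le cas initial (il cite \cite{Dou} directement), tandis que vous explicitez le mécanisme — divisibilité de $\Sha^2(\mathbb{G}_m)$ et exposant fini de $H^1(\mathbb{C}((t)),J)$, c'est-à-dire $H^1(\mathbb{C}((t)),J)=0$ vu la structure $(\mathbb{Q}/\mathbb{Z})^r$ donnée par Douai — ce qui est fidèle au même argument.
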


\begin{proof}
Par récurrence, il suffit de montrer que, si $K_0$ est le corps des fonctions de $X_{1,0}$, alors $\Sha^{2}(K_0,\mathbb{G}_m)=\Sha^{2}(K_0,\mathbb{Z})=0$. La nullité de $\Sha^{2}(K_0,\mathbb{G}_m)$ provient de l'article \cite{Dou} (on remarquera que ce résultat reste vrai même si l'errata \cite{DouErrata} montre que le théorème 1 de \cite{Dou} est faux). D'après le lemme \ref{nul dual}, cela entraîne aussi que $\Sha^2(K_0,\mathbb{Z})$ est nul, ce qui achève la preuve.
\end{proof}

Je ne sais pas à quel point la dernière hypothèse des corollaires précédents est contraignante, mais elle permet au moins de traiter les exemples qui suivent.

\begin{theorem} \textbf{(Courbes constantes)} \label{constante}\\
Rappelons que nous avons supposé (H \ref{40}), c'est-à-dire que $X$ est une courbe.
\begin{itemize}
\item[(i)] Supposons que $d \geq 1$, que $k_1$ est un corps $p$-adique et que $X$ est une courbe de la forme $\text{Proj}(k[x,y,z]/(P(x,y,z))) $ où $P \in \mathcal{O}_{k_1}[x,y,z]$ est un polynôme homogène. Supposons aussi que $\text{Proj}(k_0[x,y,z]/(\overline{P}(x,y,z))) $ est une courbe lisse et géométriquement intègre. Alors $\Sha^{2}(\mathbb{Z})=\Sha^2(\mathbb{G}_m)=0$.
\item[(ii)] Supposons que $d\geq 0$, que $k_0 = \mathbb{C}((t))$ et que $X$ est la courbe elliptique sur $k$ d'équation $y^2=x^3+Ax+B$ avec $A, B \in k_0$. Supposons de plus que la courbe elliptique sur $k_0$ d'équation $y^2=x^3+Ax+B$ admet une réduction modulo $t$ de type additif. Alors $\Sha^{2}(\mathbb{Z})=\Sha^2(\mathbb{G}_m)=0$.
\end{itemize}
\end{theorem}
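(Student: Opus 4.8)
The plan is to reduce each statement to the inductive criteria already established in Corollaries \ref{nul 3} and \ref{nul 4}: the entire content lies in exhibiting, in each case, a tower of models $\mathcal{X}_1, \dots, \mathcal{X}_d$ satisfying the hypotheses of those corollaries, after which both nullities $\Sha^2(\mathbb{G}_m) = \Sha^2(\mathbb{Z}) = 0$ are delivered automatically (compare Lemma \ref{nul dual}). So the proof is essentially the construction and verification of a chain of models, rather than any new cohomological argument.

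For (i), I would first form the arithmetic surface $\mathcal{X}_1 = \text{Proj}(\mathcal{O}_{k_1}[x,y,z]/(P))$ over $\text{Spec} \; \mathcal{O}_{k_1}$. Since $\overline{P} \neq 0$, the scheme $\mathcal{X}_1$ is flat over $\mathcal{O}_{k_1}$, and since its closed fibre $\text{Proj}(k_0[x,y,z]/(\overline{P}))$ is smooth by hypothesis, a properness argument (the image in $\text{Spec} \; \mathcal{O}_{k_1}$ of the non-smooth locus is closed and misses the closed point, hence is empty) shows that $\mathcal{X}_1 \rightarrow \text{Spec} \; \mathcal{O}_{k_1}$ is smooth; in particular $\mathcal{X}_1$ is a regular integral surface whose generic fibre $X_1$ is smooth and, by specialisation from the geometrically integral closed fibre, geometrically integral. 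For $2 \leq i \leq d$ I would then take the constant families $\mathcal{X}_i = X_{i-1} \times_{k_{i-1}} \mathcal{O}_{k_i}$, using $\mathcal{O}_{k_i} \cong k_{i-1}[[t_i]]$ by Cohen's theorem (valid since $k_{i-1}$ has characteristic $0$). An immediate computation gives $X_{i,0} \cong X_{i-1}$ and $X_i \cong X_1 \times_{k_1} k_i$, so that $X_d \cong X_1 \times_{k_1} k = X$, and all fibres remain geometrically integral; this establishes every condition of Corollary \ref{nul 3} except the last.

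For (ii) the construction is purely constant: writing $E_0$ for the elliptic curve $y^2 = x^3 + Ax + B$ over $k_0 = \mathbb{C}((t))$ (legitimate since $A, B \in k_0$), I would set $X_{1,0} = E_0$ and $\mathcal{X}_i = E_0 \times_{k_0} \mathcal{O}_{k_i}$ for all $i$, so that $X_d = E_0 \times_{k_0} k = X$. Each $\mathcal{X}_i$ is then a smooth projective integral surface over $\mathcal{O}_{k_i}$ with geometrically integral fibres, and the chain conditions hold exactly as above. To invoke Corollary \ref{nul 4} it remains to check the reduction hypothesis: the Jacobian of $X_{1,0} = E_0$ is $E_0$ itself, and the assumption that $E_0$ has additive reduction modulo $t$ is precisely the statement that this Jacobian has very bad reduction in the sense of \cite{Dou}. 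This translation of definitions is the one genuinely non-formal point of part (ii).

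The condition common to both corollaries — the existence of a uniform bound $e$ and, for each $w \in X_{i,0}^{(1)}$, a regular horizontal curve through $w$ along which $\pi_i$ has valuation at most $e$ — I would verify with $e = 1$. For a constant family $\mathcal{X}_i = Y \times_{k_{i-1}} k_{i-1}[[t_i]]$, the constant section through $w$ has closure $\text{Spec}(\kappa(w)[[t_i]])$, which is regular and in which $t_i$ has valuation $1$; for the arithmetic surface $\mathcal{X}_1$ of (i), the regularity of the two-dimensional local ring $\mathcal{O}_{\mathcal{X}_1,w}$ together with the smoothness of the closed fibre lets me pick a regular system of parameters $(\pi_1, u)$ and take the horizontal divisor $V(u)$, in which $\pi_1$ again has valuation $1$. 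I expect the main obstacle to be twofold: on the geometric side, the careful check of flatness, smoothness and geometric integrality of $\mathcal{X}_1$ in the mixed-characteristic case of (i); and on the arithmetic side, the identification of additive reduction with Douai's notion of very bad reduction required to feed $X_{1,0}$ into Corollary \ref{nul 4} in case (ii).
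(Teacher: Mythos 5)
Your proposal is correct and takes essentially the same route as the paper: both proofs reduce to Corollaries \ref{nul 3} and \ref{nul 4} via constant models (the paper writes them uniformly as $\mathcal{X}_i = \text{Proj}(\mathcal{O}_{k_i}[x,y,z]/(P(x,y,z)))$, which agree with your constant families $X_{i-1} \times_{k_{i-1}} \mathcal{O}_{k_i}$ once Cohen coefficient fields are chosen), verify the horizontal-curve hypothesis with $e=1$ by taking the closure of the point of the generic fibre having the same coordinates as $w$ — an integral scheme whose local ring is a discrete valuation ring with uniformizer $\pi_i$ — and, for (ii), conclude by the same observation that additive reduction of the elliptic curve is exactly the very bad reduction required by Corollary \ref{nul 4}. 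Your only deviation is cosmetic and in fact slightly cleaner: at the mixed-characteristic level $i=1$ of (i), where the paper's constant-lift of the coordinates $b,c$ implicitly needs a Teichmüller-type lift, you instead take a regular system of parameters $(\pi_1,u)$ at $w$ and the horizontal divisor $V(u)$, which yields $e=1$ just as well.
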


\begin{proof}
\begin{itemize}
\item[(i)] Pour $i \in \{1,2,...,d\}$, soit $\mathcal{X}_i = \text{Proj}(\mathcal{O}_{k_i}[x,y,z]/(P(x,y,z)))$. Pour chaque $i$, le schéma $\mathcal{X}_i$ étant dominant sur $\mathcal{O}_{k_i}$, il est plat. De plus, le critère jacobien permet de vérifier immédiatement que $X_{i,0}$ est lisse sur le corps résiduel de $k_i$. Par conséquent, pour chaque $i$, $\mathcal{X}_i$ est lisse sur $\mathcal{O}_{k_i}$.\\
Toutes les hypothèses du corollaire \ref{nul 3} sont évidemment vérifiées sauf peut-être la dernière. Fixons donc un certain $i \in \{1,2,...,d\}$ et soit $w$ un point fermé de la fibre spéciale $X_{i,0}$. Supposons sans perte de généralité que $w$ est dans l'ouvert $\text{Spec}(\mathcal{O}_{k_i}[x,y]/(P(x,y,1)))$ de $\mathcal{X}_i$, et choisissons $\mathcal{U}_w = \text{Spec}(\mathcal{O}_{k_i}[x,y]/(P(x,y,1)))$. Le point $w$ est alors le noyau d'un morphisme $\text{ev}_w: k_{i-1}[x,y]/(P(x,y,1)) \rightarrow k_{i-1}^s$. Notons $b$ et $c$ les images respectives de $x$ et $y$ dans $k_{i-1}^s$. Soient $\lambda$ une extension finie de $k_{i-1}$ contenant $b$ et $c$ et $l = \lambda((\pi_i))$. Le noyau $v$ du morphisme $k_i[x,y]/(P(x,y,1)) \rightarrow k_i^s$ qui envoie $x$ et $y$ sur $b$ et $c$ respectivement est un point fermé de la fibre générique $U_w$ de $\mathcal{U}_w$ qui contient $w$ dans son adhérence. On vérifie aisément que l'idéal premier définissant $w$ dans $\mathcal{U}_w$ est l'idéal engendré par l'idéal premier définissant $v$ dans $\mathcal{U}_w$ et par $\pi_i$. Par conséquent, l'anneau des fonctions de $\overline{\{v\}}$ est un anneau intègre ayant un unique idéal premier non nul, engendré par $\pi_i$: c'est donc un anneau de valuation discrète d'uniformisante $\pi_i$, ce qui prouve la dernière hypothèse du corollaire \ref{nul 3} avec $e=1$.
\item[(ii)] La démonstration est analogue à celle qui précède en remarquant qu'une courbe elliptique ayant réduction de type additif a très mauvaise réduction.
\end{itemize}
\end{proof}

\begin{example}
La partie (ii) précédente s'applique par exemple à la courbe elliptique $y^2=x^3+t$ sur $k = \mathbb{C}((t))((\pi_1))...((\pi_d))$ pour $d \geq 0$.
\end{example}

Nous fournissons maintenant des contre-exemples à $\Sha^2(\mathbb{G}_m)=0$.

\begin{example}\label{ex0}
Soit $p$ un nombre premier impair, de sorte que $1-p$ soit un carré dans $\mathbb{Q}_p$. Prenons $k= \mathbb{Q}_p((t))$ et considérons $X$ la courbe elliptique d'équation $y^2=x(1-x)(x-p)$. Nous allons montrer que, dans ce contexte, $\Sha^2(\mathbb{G}_m)\neq 0$.
\begin{itemize}
\item[$\bullet$] D'après l'appendice de \cite{CPS}, dans $K_{\eta_0}$, $1-x$ est de valuation nulle mais n'est pas un carré.
\item[$\bullet$] Soit $v \in X^{(1)}$. Soit $\pi$ une uniformisante de $\mathcal{O}_v$. Montrons que $1-x$ est un carré dans $K_v$. En suivant les idées de \cite{CPS}, plusieurs cas se présentent.
\begin{itemize}
\item[(1)] Supposons que $v(1-x) <0$. Alors $v(x)$, $v(1-x)$ et $v(x-p)$ sont égaux et pairs. Écrivons $x=u/\pi^{2n}$ avec $u \in \mathcal{O}_v^{\times}$ et $n>0$. L'équation de $X$ impose que $-u$ est un carré dans $\mathcal{O}_v$. Par conséquent, $1-x = \frac{\pi^{2n}-u}{\pi^{2n}}$ est un carré dans $K_v$.
\item[(2)] Supposons que $v(1-x)>0$. Alors $v(x) = v(x-p)=0$ et $v(1-x)$ est pair. En écrivant $1-x = u\pi^{2n}$, on remarque que $x = 1-u\pi^{2n}$ et $x-p = 1-p-u\pi^{2n}$ sont des carrés dans $K_v$. Il en est donc de même pour $1-x$.
\item[(3)] Supposons que $v(1-x)=0$. Si $v(x)>0$, alors $1-x$ est bien sûr un carré dans $K_v$. Si $v(x-p)>0$, alors $1-x = 1-p-(x-p)$ est aussi un carré. Il reste donc à étudier le cas $v(x)=v(x-p)=0$. Dans ce cas, les réductions de $x$ et $y$ modulo l'idéal maximal de $\mathcal{O}_v$ donnent lieu à des éléments $b$ et $c$ dans $k(v)$ tels que $b^2 = c(1-c)(c-p) \neq 0$. Pour montrer que $1-x$ est un carré dans $K_v$, il suffit de montrer que $1-c$ est un carré dans $k(v)$. Notons $w$ la valuation de $k(v)$, $B$ son anneau des entiers, $k_B$ son corps résiduel et $\pi_B$ une uniformisante de $B$. Plusieurs cas se présentent alors.
\begin{itemize}
\item[(a)] Supposons que $w(1-c) <0$. Alors $w(c)$, $w(1-c)$ et $w(c-p)$ sont égaux et pairs. Écrivons $c=u/\pi_B^{2n}$ avec $u \in B^{\times}$ et $n>0$. L'équation vérifiée par $b$ et $c$ impose que $-u$ est un carré dans $B$. Par conséquent, $1-c = \frac{\pi_B^{2n}-u}{\pi_B^{2n}}$ est un carré dans $k(v)$.
\item[(b)] Supposons que $w(1-c)>0$. Alors $w(c) = w(c-p)=0$ et $w(1-c)$ est pair. En écrivant $1-c = u\pi_B^{2n}$, on remarque que $c = 1-u\pi_B^{2n}$ et $c-p = 1-p-u\pi_B^{2n}$ sont des carrés dans $k(v)$. Il en est donc de même pour $1-c$.
\item[(c)] Supposons que $w(1-c)=0$. Si $w(c)>0$, alors $1-c$ est bien sûr un carré dans $k(v)$. Si $w(c-p)>0$, alors $1-c = 1-p-(c-p)$ est aussi un carré. Il reste donc à étudier le cas $w(c)=w(c-p)=0$. Dans ce cas, les réductions de $b$ et $c$ modulo l'idéal maximal de $B$ donnent lieu à des éléments $\beta$ et $\gamma$ dans $k_B$ tels que $\beta^2 = \gamma(1-\gamma)(\gamma-p) \neq 0$. Pour montrer que $1-c$ est un carré dans $k(v)$, il suffit de montrer que $1-\gamma$ est un carré dans le corps $p$-adique $k_B$. Mais cela est montré dans l'appendice de \cite{CPS}. 
\end{itemize}
\end{itemize}
Par conséquent, $1-x \in \Sha^1(\mathbb{Z}/2\mathbb{Z})$.
\item[$\bullet$] Soit $z = (1-x) \cup t \in H^2(K,\mathbb{Z}/2\mathbb{Z})$. Pour chaque $v \in X^{(1)}$, $1-x$ est un carré dans $K_v$ et donc $z \in \Sha^2(\mathbb{Z}/2\mathbb{Z})$. De plus, le résidu de $z$ en $\eta_0$ est $\text{Res}(1-x) \in \kappa(\eta_0)^{\times}/{\kappa(\eta_0)^{\times}}^2$. Ce résidu ne peut pas être nul puisque $1-x$ n'est pas un carré dans $K_{\eta_0}$. Par conséquent, $z \neq 0$ et ${_2}\Sha^2(\mathbb{G}_m) = \Sha^2(\mathbb{Z}/2\mathbb{Z})\neq 0$.
\end{itemize}
Plus généralement, si $k= \mathbb{Q}_p((t_1))...((t_{d-1}))$ avec $d>1$, la courbe elliptique $X$ d'équation $y^2=x(1-x)(x-\lambda)$ où $\lambda \in \{p,t_1,...,t_{d-2}\}$ vérifie $\Sha^2(\mathbb{G}_m)\neq 0$. Bien sûr, on peut remplacer $\mathbb{Q}_p$ par n'importe quel corps $p$-adique.
\end{example}

\begin{remarque}
Dans le cas où $k$ est un corps $p$-adique, on a $\Sha^2(\mathbb{G}_m)=0$ quelle que soit la courbe $X$, même si elle a mauvaise réduction (voir la proposition 3.4 de \cite{HS1}).
\end{remarque}

\begin{example}\label{ex}
\begin{itemize}
\item[$\bullet$] Pour $k=\mathbb{C}((t))$ et $X$ une courbe elliptique ayant bonne réduction (resp. mauvaise réduction de type multiplicatif), on a $\Sha^2(\mathbb{G}_m)\neq 0$: en effet, d'après \cite{Dou}, si $J$ désigne la jacobienne de $X$, le groupe $\Sha^2(\mathbb{G}_m)\cong H^1(k,\text{Pic}\; \overline{X})$ s'identifie au conoyau de $\mathbb{Z} \rightarrow H^1(k,J)$ et $H^1(k,J)$ est isomorphe à $(\mathbb{Q}/\mathbb{Z})^2$ (resp. à $\mathbb{Q}/\mathbb{Z}$). Ces résultats de \cite{Dou} restent vrais malgré l'errata \cite{DouErrata}.
\item[$\bullet$] Pour $k=\mathbb{C}((t_1))...((t_{d+1}))$ avec $d>0$, un raisonnement analogue à l'exemple précédent montre que la courbe elliptique $X$ d'équation $y^2=x(1-x)(x-\lambda)$ avec $\lambda \in \{ t_1,...,t_d\}$ vérifie $\Sha^2(\mathbb{G}_m)\neq 0$.
\end{itemize}
\end{example}

\begin{remarque}
Le groupe $\Sha^2(\mathbb{G}_m)$ est-il une puissance de $\mathbb{Q}/\mathbb{Z}$? Je ne sais pas répondre en toute généralité à cette question de Jean-Louis Colliot-Thélène, mais la réponse est affirmative lorsque $k$ est un corps $p$-adique ou $\mathbb{C}((t))$:
\begin{itemize}
\item[$\bullet$] lorsque $k$ est un corps $p$-adique, on a toujours $\Sha^2(\mathbb{G}_m)=0$ d'après \cite{HS1};
\item[$\bullet$] lorsque $k$ est le corps $\mathbb{C}((t))$, les résultats de \cite{Dou} montrent que $\Sha^2(\mathbb{G}_m)$ est le conoyau de $\mathbb{Z} \rightarrow H^1(k,J)$ et que $H^1(k,J)$ est isomorphe à $(\mathbb{Q}/\mathbb{Z})^r$ pour un certain $r \in \{ 0,1,...,2g\}$ où $g$ est le genre de $X$, ce qui impose que $\Sha^2(\mathbb{G}_m) \cong (\mathbb{Q}/\mathbb{Z})^r$.
\end{itemize}
\end{remarque}

\subsection{Finitude du $(d+3)$-ième groupe de Tate-Shafarevich du dual d'un tore}\label{1.2}

Soit $T$ un tore sur $K$, déployé par une extension finie $L$. Notons $\hat{T}$ (resp. $\check{T}$) son module des caractères (resp. cocaractères), et posons $\tilde{T} = \hat{T} \otimes^{\mathbf{L}} \mathbb{Z}(d)$.\\
En tenant compte du théorème \ref{0}(ii), il est intéressant d'étudier le groupe $\Sha^{d+3}(\tilde{T})$: c'est le but de cette section.\\

\begin{proposition} Soit $Y$ une courbe projective lisse géométriquement intègre sur une extension finie $l$ de $k$ de corps de fonctions $L$.
\begin{itemize}
\item[$(i)$] Le groupe $\Sha^{d+3}(\tilde{T})$ est de torsion de type cofini.
\item[$(ii)$] On a un isomorphisme $\Sha^{d+3}(\tilde{T}) \cong (\varprojlim_n \Sha^1({_n}T))^D$. En particulier, $\Sha^{d+3}(\mathbb{Z}(d)) \cong (\varprojlim_n \Sha^1(\mu_n))^D$, et ce groupe est nul pour $X = \mathbb{P}^1_k$.
\item[$(iii)$] Supposons que $k_1$ soit un corps $p$-adique. Alors $\Sha^{d+3}(\tilde{T})$ est divisible. Il est nul dès que $\varprojlim_n \Sha^1(L,\mu_n)=0$ ou dès que $H^{d+1}(l,H^2(\overline{l}(Y),\mathbb{Z}(d)))=0$. Cette deuxième condition est automatiquement satisfaite lorque $d=1$.
\item[$(iv)$] Supposons que $k_0 = \mathbb{C}((t))$. Alors $\Sha^{d+3}(\tilde{T})$ est fini dès que $\Sha^{2}(L,\mathbb{Z})=0$ ou dès que $H^{d+1}(l,H^2(\overline{l}(Y),\mathbb{Z}(d)))=0$. Cette deuxième condition est automatiquement satisfaite lorque $d=1$.
\end{itemize}
\end{proposition}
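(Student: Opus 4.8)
The plan is to reduce everything to the duality for finite modules (Theorem \ref{PT fini}) by means of the Bockstein triangles attached to $0 \to \tilde{T} \xrightarrow{n} \tilde{T} \to \hat{T}\otimes \mu_n^{\otimes d} \to 0$, using that $\tilde{T}\otimes^{\mathbf{L}}\mathbb{Z}/n\mathbb{Z} \cong \hat{T}\otimes\mu_n^{\otimes d} = {_n}T'$, where $T' = \underline{\text{Hom}}({_n}T,\mathbb{Q}/\mathbb{Z}(d+1))$. For $(i)$, I would first show $\Sha^{d+3}(\tilde T)$ is torsion: by restriction--corestriction to a splitting field $L$ of $T$, the group $H^{d+3}(K,\tilde T\otimes\mathbb{Q}) = H^{d+3}(K,\hat T\otimes\mathbb{Q}(d))$ is killed by $[L:K]$ and is divisible, hence zero, since $H^{d+3}(L,\mathbb{Q}(d)) \cong H^{d+3}_{\text{Zar}}(L,\mathbb{Q}(d)_{\text{Zar}}) = 0$ ($\mathbb{Q}(d)_{\text{Zar}}$ being concentrated in degrees $\le d$); thus $H^{d+3}(K,\tilde T)$, and a fortiori $\Sha^{d+3}(\tilde T)$, is torsion. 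The Bockstein then yields a surjection $\Sha^{d+2}(\hat T\otimes\mu_n^{\otimes d}) \twoheadrightarrow {_n}\Sha^{d+3}(\tilde T)$, whose source is the Tate--Shafarevich group of a finite module and so is finite by \ref{Sha fini} and \ref{PT fini}; hence ${_n}\Sha^{d+3}(\tilde T)$ is finite for every $n$, which gives $(i)$.

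For $(ii)$, I would use that $\Sha^{d+2}(\tilde T)$ is divisible --- exactly the input exploited in \ref{nul dual}, deduced from \ref{PT tore} by restriction--corestriction from the case $\tilde T = \mathbb{Z}(d)$ --- so that the previous surjection upgrades to an isomorphism $\Sha^{d+2}(\hat T\otimes\mu_n^{\otimes d}) \cong {_n}\Sha^{d+3}(\tilde T)$. Combined with the perfect pairing of finite groups $\Sha^1({_n}T)\times\Sha^{d+2}(\hat T\otimes\mu_n^{\otimes d}) \to \mathbb{Q}/\mathbb{Z}$ from \ref{PT fini}, this produces compatible isomorphisms $\Sha^1({_n}T) \cong ({_n}\Sha^{d+3}(\tilde T))^D$. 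Passing to $\varprojlim_n$, using that Pontryagin duality exchanges $\varprojlim$ and $\varinjlim$ and that $\varinjlim_n {_n}\Sha^{d+3}(\tilde T) = \Sha^{d+3}(\tilde T)$ (torsion, by $(i)$), gives $\varprojlim_n \Sha^1({_n}T) \cong \Sha^{d+3}(\tilde T)^D$; dualizing once more and invoking that $\Sha^{d+3}(\tilde T)$ is torsion of cofinite type yields $(ii)$. For $X = \mathbb{P}^1_k$ and $\tilde T = \mathbb{Z}(d)$ it remains to check $\Sha^1(\mu_n) = 0$: a class comes from $f\in K^\times$ with $n\mid v(f)$ for all $v$, so $\mathrm{div}(f) = nD$ with $\deg D = 0$; since $\mathrm{Pic}^0(\mathbb{P}^1) = 0$ the divisor $D$ is principal, $f$ differs from an $n$-th power by a constant $c\in k^\times$, and evaluating at a $k$-rational place forces $c\in k^{\times n}$, hence $f\in K^{\times n}$ (alternatively one may invoke $\Sha^2(\mathbb{G}_m) = 0$ from \ref{nul 1}).

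For $(iii)$, divisibility of $\Sha^{d+3}(\tilde T)$ is immediate from the multiplicative-type duality of \ref{rq gm}$(v)$ in the case $T_1 = 0$, $a = 1$: there $\overline{\Sha^{d+3}(\tilde T)}$ is dual to $\overline{\Sha^{0}(T)_{tors}}$, and $\Sha^0(T) = \ker(T(K)\to\prod_v T(K_v)) = 0$. For the nullity criteria, $(ii)$ reduces matters to $\varprojlim_n \Sha^1({_n}T)$: restriction--corestriction to $L$ shows this profinite group is annihilated by $[L:K]$ as soon as $\varprojlim_n \Sha^1(L,\mu_n) \cong (\varprojlim_n \Sha^1(L,\mu_n))^{\mathrm{rk}\,\check T}$ vanishes; being the Pontryagin dual of the divisible group $\Sha^{d+3}(\tilde T)$ it is moreover torsion-free, and a torsion-free profinite group killed by $[L:K]$ is zero, whence nullity. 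For $(iv)$ the same reduction applies, but without divisibility one only obtains that $\varprojlim_n \Sha^1({_n}T)$ is profinite and killed by $[L:K]$, hence finite, so $\Sha^{d+3}(\tilde T)$ is finite; here one rewrites the hypothesis using $\mu_n \cong \mathbb{Z}/n\mathbb{Z}$ over any field containing $\mathbb{C}$, so that $\varprojlim_n \Sha^1(L,\mu_n) \cong \varprojlim_n {_n}\Sha^2(L,\mathbb{Z})$, which vanishes when $\Sha^2(L,\mathbb{Z}) = 0$.

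The last criterion, $H^{d+1}(l,H^2(\overline{l}(Y),\mathbb{Z}(d))) = 0$, I would handle through the Hochschild--Serre spectral sequence $E_2^{p,q} = H^p(l,H^q(\overline{l}(Y),\mathbb{Z}(d))) \Rightarrow H^{p+q}(K,\mathbb{Z}(d))$. As $cd(l) = d+1$, only terms with $p\le d+1$ feed the abutment in degree $d+3$, and the top-weight piece is a subquotient of $E_2^{d+1,2} = H^{d+1}(l,H^2(\overline{l}(Y),\mathbb{Z}(d)))$; for $d = 1$ this vanishes automatically because $H^2(\overline{l}(Y),\mathbb{Z}(1)) = \mathrm{Pic}(\overline{l}(Y)) = 0$. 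The main obstacle is precisely to show that $\Sha^{d+3}(\tilde T)$ is controlled by this top-weight term alone, that is, that the lower contributions $E_2^{p,q}$ with $q\ge 3$ contribute no nonzero (or only finitely many) locally trivial classes --- a delicate point, since over the separably closed geometric field the motivic groups $H^q(\overline{l}(Y),\mathbb{Z}(d))$ need not vanish in the range $3\le q\le d$. Once this is settled, $H^{d+1}(l,H^2(\overline{l}(Y),\mathbb{Z}(d))) = 0$ forces (after reduction to $\tilde T = \mathbb{Z}(d)$ by restriction--corestriction) the finiteness of $\Sha^{d+3}(\tilde T)$, which combined with divisibility in case $(iii)$ gives its nullity there.
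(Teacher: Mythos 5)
Your reduction to the finite-module duality \ref{PT fini} is the right general strategy, and your parts (iii) (divisibility via la remarque \ref{rq gm}(v) avec $T_1=0$, $a=1$, et $\Sha^0(T)=0$) and the rank-$d{=}1$ vanishing of $E_2^{d+1,2}$ coincide with the paper. But the central mechanism of your (i)--(ii) --- the level-wise surjection, then isomorphism, $\Sha^{d+2}(\hat{T}\otimes\mu_n^{\otimes d})\to {_n}\Sha^{d+3}(\tilde{T})$ --- has a genuine gap on both sides. Surjectivity fails: a Bockstein lift $y$ of a locally trivial class only satisfies $y_v\in \mathrm{Im}\bigl(H^{d+2}(K_v,\tilde{T})/n\bigr)$ for each $v$, and this local kernel is nonzero in general (par \ref{local tore}(iii), $H^{d+2}(K_v,\tilde{T})$ est le dual du groupe profini $H^0(K_v,T)^{\wedge}$, donc n'est pas divisible), and nothing lets you correct $y$ by a \emph{global} class of $H^{d+2}(K,\tilde{T})/n$ realizing the whole tuple $(y_v)$. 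Injectivity fails too: the divisibility of $\Sha^{d+2}(\tilde{T})$ that you invoke is false for general $T$ --- restriction--corestriction does not transport divisibility from $\mathbb{Z}(d)$ sur $L$ to $\tilde{T}$ sur $K$, and by \ref{PT tore} the quotient $\overline{\Sha^{d+2}(\tilde{T})}$ is dual to $\Sha^1(T)$, which is nonzero in general (its nontriviality is the whole point of the duality theorems); divisibility holds for $\mathbb{Z}(d)$ and $\mathbb{G}_m$ precisely because $\Sha^1(\mathbb{G}_m)=0$ and $\Sha^{d+1}(\mathbb{Z}(d))=0$. The paper never makes a level-wise claim: it uses only the colimit identification $\Sha^{d+3}(\tilde{T})\cong \varinjlim_n \Sha^{d+2}(\hat{T}\otimes\mathbb{Z}/n\mathbb{Z}(d))$ (preuve de \ref{bouts tore} et lemme \ref{iso}), where the offending kernel disappears because $\varinjlim_n H^{d+2}(K,\tilde{T})/n = H^{d+2}(K,\tilde{T})\otimes\mathbb{Q}/\mathbb{Z}=0$, le groupe $H^{d+2}(K,\tilde{T})$ étant de torsion; dualizing termwise with \ref{PT fini} then gives (ii), and (i) comes from the compactly supported machinery of the multiplicative-type section (\ref{tcof gm}, \ref{iso}, \ref{Sha gm}), not from level-wise finiteness. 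Your passage to the limit in (ii) would be correct if you substituted this colimit identity for the level-wise isomorphisms, so the formula is salvageable, but as written the intermediate steps are wrong.

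The second gap is the one you flag yourself (``Once this is settled\dots''): you leave unresolved why $E_2^{d+1,2}$ alone controls the situation. The resolution is simpler than your worry suggests, because one does not isolate locally trivial classes at all: the paper shows that the \emph{entire} group $H^{d+3}(L,\mathbb{Z}(d))$ equals $H^{d+1}(l,H^2(\overline{l}(Y),\mathbb{Z}(d)))$. Over an affine open $\overline{U}\subseteq \overline{Y}$ one has $\mathrm{cd}(\overline{U})\le 1$, so $H^{s}(\overline{U},\mu_n^{\otimes d})=0$ pour $s\ge 2$; the Bockstein then shows $H^s(\overline{U},\mathbb{Z}(d))$ is \emph{uniquely divisible} for $s\ge 3$ --- your observation that these motivic groups need not vanish is correct but harmless, since uniquely divisible coefficients have no Galois cohomology in positive degrees --- and for $s\ge d+2$ they are moreover torsion (car $H^s(\overline{U},\mathbb{Q}(d))=0$), hence zero. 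So only $E_2^{d+1,2}$ survives in total degree $d+3$, the hypothesis kills $H^{d+3}(L,\mathbb{Z}(d))$ entier, donc $\Sha^{d+3}(L,\mathbb{Z}(d))$, and restriction--corestriction combined with divisibility (cas (iii)) or cofinite type from (i) (cas (iv)) concludes. Two smaller slips: in (iv), ``profinite and killed by $[L:K]$, hence finite'' is a non sequitur (pensez à $(\mathbb{Z}/2\mathbb{Z})^{\mathbb{N}}$) --- you must invoke (i) to get finiteness; and your parenthetical alternative for $X=\mathbb{P}^1_k$ does not work, since $\Sha^2(\mathbb{G}_m)=0$ controls $\Sha^2(\mu_n)$ via \ref{nul dual}, not $\Sha^1(\mu_n)$ --- your direct divisor argument is the correct one there.
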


\begin{proof}
\begin{itemize}
\item[(i)] Cela découle immédiatement du corollaire \ref{tcof gm}, du lemme \ref{iso} et de la proposition \ref{Sha gm}.
\item[(ii)] D'après la preuve de la proposition \ref{bouts tore}, on a un isomorphisme $\Sha^{d+3}(\tilde{T}) \cong \varinjlim_n \Sha^{d+2}(\hat{T} \otimes \mathbb{Z}/n\mathbb{Z}(d))$. Il suffit alors d'appliquer le théorème \ref{PT fini} pour établir $\Sha^{d+3}(\tilde{T}) \cong (\varprojlim_n \Sha^1({_n}T))^D$. Les autres affirmations en découlent aisément.
\item[(iii)] Le groupe $\Sha^{d+3}(\tilde{T})$ est divisible d'après la remarque \ref{rq gm}. Il est donc nul dès que l'une des conditions suivantes est vérifiéé:
\begin{itemize}
\item[$\bullet$] $\varprojlim_n \Sha^1(L,\mu_n)=0$ (d'après (ii)). 
\item[$\bullet$] $\Sha^{d+3}(L,\mathbb{Z}(d)) =0$ (par restriction-corestriction).
\end{itemize}
La deuxième condition est bien sûr satisfaite dès que $ H^{d+3}(L,\mathbb{Z}(d))=0$. Reste donc à montrer que $H^{d+1}(l,H^2(\overline{l}(Y),\mathbb{Z}(d))) \cong H^{d+3}(L,\mathbb{Z}(d))$.\\
Soit $U$ un ouvert affine de $Y$ et écrivons la suite spectrale:
$$H^r(l,H^s(\overline{U},\mathbb{Z}(d))) \Rightarrow H^{r+s}(U,\mathbb{Z}(d)).$$
On remarque que:
\begin{itemize}
\item[$\bullet$] comme $\text{scd}(l) = d+1$, on a $H^r(l,H^s(\overline{U},\mathbb{Z}(d)))=0$ dès que $r \geq d+2$. 
\item[$\bullet$] comme $\text{cd}(\overline{U}) \leq 1$, le groupe $H^s(\overline{U},\mathbb{Z}(d))$ est uniquement divisible pour $s > 2$, et donc $H^r(l,H^s(\overline{U},\mathbb{Z}(d)))=0$ dès que $s >2$ et $r>0$.
\item[$\bullet$] pour $s \geq d+2$, le groupe $H^{s}(\overline{U},\mathbb{Z}(d))$ est uniquement divisible, et, comme $H^s(\overline{U},\mathbb{Q}(d))=0$, on a une surjection $H^{s-1}(\overline{U},\mathbb{Q}/\mathbb{Z}(d)) \rightarrow H^s(\overline{U},\mathbb{Z}(d))$; on en déduit que le groupe $H^s(\overline{U},\mathbb{Z}(d))$ est de torsion, donc nul.
\end{itemize}
Par conséquent, la suite spectrale fournit un isomorphisme $H^{d+1}(l,H^2(\overline{U},\mathbb{Z}(d))) \cong H^{d+3}(U,\mathbb{Z}(d))$. En prenant la limite inductive sur les ouverts affines $U$ de $X$, on obtient un isomorphisme:
$$H^{d+1}(l,H^2(\overline{l}(X),\mathbb{Z}(d))) \cong H^{d+3}(K,\mathbb{Z}(d)).$$
Lorsque $d=1$, ces groupes sont nuls d'après me théorème de Hilbert 90.
\item[(iv)] Le cas où $H^{d+1}(l,H^2(\overline{l}(Y),\mathbb{Z}(d)))=0$ se démontre de la même manière que dans (iii). On se place donc dans le cas où $\Sha^{2}(L,\mathbb{Z})=0$. Comme $k_0 = \mathbb{C}((t))$, on a $\varprojlim_n \Sha^1(L,\mu_n) \cong \varprojlim_n \Sha^1(L,\mathbb{Z}/n\mathbb{Z})= \varprojlim_n {_n}\Sha^2(L,\mathbb{Z})=0$. Un argument de restriction-corestriction permet alors de conclure.
\end{itemize}
\end{proof}

\begin{remarque}
La nullité de $\Sha^2(L,\mathbb{Z})$ a été étudiée au paragraphe \ref{1.1}.
\end{remarque}

\section{\scshape Approximation faible pour les tores}

Soit $T$ un tore sur $K$. Notons $\hat{T}$ (resp. $\check{T}$) son module des caractères (resp. cocaractères), et posons $\tilde{T} = \hat{T} \otimes^{\mathbf{L}} \mathbb{Z}(d)$. Nous voulons ici étudier les propriétés d'approximation faible pour le tore $T$:

\begin{definition} \label{def}
On dit que $T$ vérifie \textbf{l'approximation faible} si $T(K)$ est dense dans $\prod_{v \in X^{(1)}} T(K_v)$, où le groupe $T(K_v)$ est muni de la topologie $v$-adique pour chaque $v$. On dit que $T$ vérifie \textbf{l'approximation faible faible} (resp. \textbf{l'approximation faible faible dénombrable}) s'il existe une partie finie (resp. dénombrable) $S_0$ de $X^{(1)}$ telle que, pour toute partie finie $S$ de $X^{(1)}$ n'intersectant pas $S_0$, le groupe $T(K)$ est dense dans $\prod_{v \in S} T(K_v)$. 
\end{definition}

Nous allons voir que ces propriétés se lisent dans la ``taille'' du groupe $\Sha^{d+2}_{\omega}(\tilde{T})$: plus le groupe $\Sha^{d+2}_{\omega}(\tilde{T})$ est ``gros'', plus on s'éloigne de la propriété d'approximation faible. Pour ce faire, rappelons que l'approximation faible est vérifiée pour les tores quasi-triviaux. En effet, ces derniers étant lisses et $K$-rationnels, cela découle du lemme d'approximation d'Artin-Whaples (théorème XII.1.2 de \cite{LanAlg}) et du théorème des fonctions implicites pour les valuations ultramétriques.\\

Fixons maintenant une partie finie $S$ de $X^{(1)}$.

\begin{lemma}
Soit $F$ un $\text{Gal}(K^s/K)$-module discret fini. Pour $r \in \{1,2,...,d+1\}$, on a une suite exacte:
$$H^r(K,F) \rightarrow \prod_{v \in S} H^r(K_v,F) \rightarrow \Sha^{d+2-r}_S(F')^D \rightarrow \Sha^{d+2-r}(F')^D \rightarrow 0,$$
où le morphisme $\prod_{v \in S} H^r(K_v,F) \rightarrow \Sha^{d+2-r}_S(F')^D$ est donné par $$(f_v) \mapsto (f' \mapsto \sum_{v \in S} (f_v,f'_v)_v).$$
\end{lemma}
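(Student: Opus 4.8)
The plan is to derive the four–term sequence from the Poitou–Tate machinery of Section~2, separating its ``formal'' right-hand half from its genuinely arithmetic left-hand half. Throughout write $m=d+2-r$, so that $m\in\{1,\dots,d+1\}$ as well, and recall that each $K_v$ ($v\in X^{(1)}$) is a $(d+1)$-local field, whence the local pairing $H^r(K_v,F)\times H^{m}(K_v,F')\to\mathbb{Q}/\mathbb{Z}$ is perfect by \ref{nr fini}. Denote by $\lambda\colon H^r(K,F)\to\mathbb{P}^r(F)$ the localisation into the restricted product, by $\alpha_S=\pi_S\circ\lambda\colon H^r(K,F)\to\prod_{v\in S}H^r(K_v,F)$ its composite with the projection $\pi_S$, by $\theta$ the second map of the sequence, and by $\rho\colon\Sha^{m}_S(F')^D\to\Sha^{m}(F')^D$ the last one, dual to the inclusion $\Sha^{m}(F')\hookrightarrow\Sha^{m}_S(F')$.

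First I would establish exactness at $\Sha^{m}_S(F')^D$ and at $\Sha^{m}(F')^D$, which is purely formal. By the very definition of the two Tate–Shafarevich groups, localisation at the places of $S$ gives a tautological exact sequence of finite groups
\[
0\to\Sha^{m}(F')\to\Sha^{m}_S(F')\xrightarrow{\ \ell\ }\prod_{v\in S}H^{m}(K_v,F'),
\]
since an element of $\Sha^{m}_S(F')$ dies outside $S$, and dies everywhere exactly when it lies in $\Sha^{m}(F')$. Applying $(-)^D$ (exact on finite groups) and transporting $\prod_{v\in S}H^{m}(K_v,F')^D$ onto $\prod_{v\in S}H^r(K_v,F)$ via the perfect local pairing identifies $\ell^D$ with $\theta$ and the inclusion with $\rho$; the resulting exactness of $\prod_{v\in S}H^r(K_v,F)\xrightarrow{\theta}\Sha^{m}_S(F')^D\xrightarrow{\rho}\Sha^{m}(F')^D\to 0$ yields at once the surjectivity of $\rho$ and the identity $\operatorname{Im}\theta=\ker\rho$.

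The heart of the matter is exactness at $\prod_{v\in S}H^r(K_v,F)$. Let $\beta\colon\mathbb{P}^r(F)\to H^{m}(K,F')^D$, $(f_v)_v\mapsto\bigl(f'\mapsto\sum_v(f_v,f'_v)_v\bigr)$, be the map of \ref{bouts suite fini}, for which $\ker\beta=\operatorname{Im}\lambda$. Restricting a functional on $H^{m}(K,F')$ to the subgroup $\Sha^{m}_S(F')$ fits $\beta$ and $\theta$ into a commutative square with $\pi_S$, because for $f'\in\Sha^{m}_S(F')$ one has $f'_v=0$ when $v\notin S$, so $\sum_v(f_v,f'_v)_v=\sum_{v\in S}(f_v,f'_v)_v$. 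Taking $(f_v)_{v\in S}=\alpha_S(x)$ and using $\beta\circ\lambda=0$ (which says $\sum_{v\in X^{(1)}}(\operatorname{loc}_v x,f'_v)_v=0$ for every global $f'$) gives $\theta\circ\alpha_S=0$. Conversely, given $(f_v)_{v\in S}\in\ker\theta$, extend it by $0$ outside $S$ to an element $\tilde f\in\mathbb{P}^r(F)$ (legitimate, as $0$ is unramified); commutativity then shows $\beta(\tilde f)\in\operatorname{Ann}(\Sha^{m}_S(F'))$. It remains to correct $\tilde f$ by a class supported outside $S$: I would prove that the restriction of $\beta$ to $\mathbb{P}^{r,S}(F)=\{g:\,g_v=0\ \forall v\in S\}$ has image exactly $\operatorname{Ann}(\Sha^{m}_S(F'))$, so one may subtract some $\tilde f'\in\mathbb{P}^{r,S}(F)$ with $\beta(\tilde f')=\beta(\tilde f)$; then $\tilde f-\tilde f'\in\ker\beta=\operatorname{Im}\lambda$ comes from $x\in H^r(K,F)$ with $\alpha_S(x)=\pi_S(\tilde f-\tilde f')=(f_v)_{v\in S}$, as desired.

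The main obstacle is this last surjectivity. Dualizing the perfect restricted-product pairing, $\beta|_{\mathbb{P}^{r,S}(F)}$ is identified with the dual of the localisation $\operatorname{res}^S\colon H^{m}(K,F')\to\mathbb{P}^{m,S}(F')$ away from $S$, whose kernel is exactly $\Sha^{m}_S(F')$; hence its image equals $\operatorname{Ann}(\Sha^{m}_S(F'))$ provided $\operatorname{res}^S$ has closed, that is discrete, image. I expect this topological point to be the delicate step, and I would settle it exactly as in the proof of \ref{PT fini suite}: choosing an open $U\subseteq U_0$ disjoint from the finite set $S$, any element of $\operatorname{Im}(\operatorname{res}^S)$ lying in the open subgroup $\prod_{v\in X\setminus(U\cup S)}H^{m}(K_v,F')\times\prod_{v\in U}H^{m}(\mathcal{O}_v,\mathcal{F}')$ comes from the finite group $H^{m}(U,\mathcal{F}')$, so that this intersection is finite and $\operatorname{Im}(\operatorname{res}^S)$ is discrete. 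This closes the argument; if the hypothesis $\operatorname{Car}(k_1)=0$ is dropped, one restricts throughout to the prime-to-$\operatorname{Car}(k_1)$ part.
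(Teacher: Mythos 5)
Your proposal is correct, and it coincides with the paper's proof on the right-hand portion of the sequence: the paper also obtains exactness at $\Sha^{d+2-r}_S(F')^D$ and at $\Sha^{d+2-r}(F')^D$ by dualizing the tautological sequence $0 \rightarrow \Sha^{d+2-r}(F') \rightarrow \Sha^{d+2-r}_S(F') \rightarrow \prod_{v \in S} H^{d+2-r}(K_v,F')$ and transporting via local duality. Where you diverge is at the key exactness at $\prod_{v \in S} H^r(K_v,F)$. The paper applies the proposition \ref{bouts suite fini} to the \emph{dual} module $F'$ (in degree $m=d+2-r$), then observes that a tuple $(f'_v)_{v \in S}$ killed by the map to $H^r(K,F)^D$, extended by zero, lies in the kernel of $\mathbb{P}^{m}(F') \rightarrow H^r(K,F)^D$ and hence comes from a global class, which automatically lies in $\Sha^{m}_S(F')$; this yields the exact sequence $\Sha^{m}_S(F') \rightarrow \prod_{v \in S} H^{m}(K_v,F') \rightarrow H^r(K,F)^D$ whose \emph{middle term is finite}, so a single dualization (using only $H^r(K,F)^{DD} \cong H^r(K,F)$ for the discrete torsion group $H^r(K,F)$) gives the desired exactness with no topological subtlety. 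You instead apply \ref{bouts suite fini} to $F$ itself and run the classical Poitou--Tate correction argument: extend a kernel element by zero, then modify it by an element of $\mathbb{P}^{r,S}(F)$, which forces you to identify $\operatorname{Im}\bigl(\beta|_{\mathbb{P}^{r,S}(F)}\bigr)$ with the annihilator of $\Sha^{m}_S(F') = \ker(\operatorname{res}^S)$, and hence to prove that $\operatorname{res}^S \colon H^{m}(K,F') \rightarrow \mathbb{P}^{m,S}(F')$ is strict. Your discreteness argument for this (intersecting the image with the open subgroup $\prod_{v \in X \setminus (U \cup S)} H^{m}(K_v,F') \times \prod_{v \in U} H^{m}(\mathcal{O}_v,\mathcal{F}')$ and invoking the finiteness of $H^{m}(U,\mathcal{F}')$ via lemme 2.2 de \cite{HS2}) is sound and is indeed the same device used in the proof of \ref{PT fini suite}. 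In short: both routes rest on \ref{bouts suite fini}, but the paper's switch to the $F'$-side makes the strictness issue evaporate because all groups in the sequence to be dualized become finite, whereas your argument is longer and carries a genuine topological step, in exchange for making completely explicit how the image of the ``off-$S$'' part of the restricted product fills out the annihilator of $\Sha^{m}_S(F')$.
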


\begin{proof}
D'après la proposition \ref{bouts suite fini}, on a une suite exacte:\\
\centerline{\xymatrix{H^{d+2-r}(K,F') \ar[r]& \mathbb{P}^{d+2-r}(F') \ar[r] & H^{r}(K,F)^D}.}
On en déduit la suite exacte:\\
\centerline{\xymatrix{\Sha^{d+2-r}_S(F') \ar[r]& \prod_{v\in S} H^{d+2-r}(K_v,F') \ar[r] & H^{r}(K,F)^D}.}
En dualisant, on obtient l'exactitude de:
$$H^r(K,F) \rightarrow \prod_{v \in S} H^r(K_v,F) \rightarrow \Sha^{d+2-r}_S(F')^D.$$
Pour achever la preuve, il suffit de dualiser la suite:
$$0 \rightarrow \Sha^{d+2-r}(F') \rightarrow \Sha^{d+2-r}_S(F') \rightarrow \bigoplus_{v \in S} H^{d+2-r}(K_v,F) .$$
\end{proof}

\begin{lemma}
Le groupe $\Sha^{d+2}_S(\mathbb{Z}(d))$ coïncide avec le groupe $\Sha^{d+2}(\mathbb{Z}(d))$ et est donc de torsion de type cofini divisible.
\end{lemma}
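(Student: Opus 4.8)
L'inclusion $\Sha^{d+2}(\mathbb{Z}(d)) \subseteq \Sha^{d+2}_S(\mathbb{Z}(d))$ étant évidente, tout l'enjeu est de prouver l'inclusion réciproque, c'est-à-dire qu'une classe de $H^{d+2}(K,\mathbb{Z}(d))$ qui meurt en chaque $v \notin S$ meurt en fait déjà en chaque $v \in S$. Le plan est de se ramener à un énoncé à coefficients finis. Je commencerais par remarquer que $H^{d+2}(K,\mathbb{Z}(d))$ est de torsion: en effet, $H^{d+1}(K,\mathbb{Q}(d)) = H^{d+2}(K,\mathbb{Q}(d)) = 0$ puisque $\mathbb{Q}(d)_{\mathrm{Zar}}$ est concentré en degrés $\leq d$, d'où $H^{d+2}(K,\mathbb{Z}(d)) \cong H^{d+1}(K,\mathbb{Q}/\mathbb{Z}(d))$. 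De la même manière, la conjecture de Beilinson-Lichtenbaum impose $H^{d+1}(K,\mathbb{Z}(d)) = 0$ (et de même sur chaque $K_v$), de sorte que le triangle de Kummer $\mathbb{Z}(d) \xrightarrow{n} \mathbb{Z}(d) \rightarrow \mathbb{Z}/n\mathbb{Z}(d)$ fournit des isomorphismes $H^{d+1}(K,\mu_n^{\otimes d}) \cong {}_nH^{d+2}(K,\mathbb{Z}(d))$ compatibles avec la localisation, exactement comme dans la preuve du lemme \ref{nul dual}. En se restreignant aux classes non ramifiées hors de $S$, ces identifications donnent ${}_n\Sha^{d+2}_S(\mathbb{Z}(d)) \cong \Sha^{d+1}_S(\mu_n^{\otimes d})$ et ${}_n\Sha^{d+2}(\mathbb{Z}(d)) \cong \Sha^{d+1}(\mu_n^{\otimes d})$ pour chaque $n > 0$. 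Comme $\Sha^{d+2}_S(\mathbb{Z}(d))$ et $\Sha^{d+2}(\mathbb{Z}(d))$ sont de torsion, il suffit de prouver que $\Sha^{d+1}_S(\mu_n^{\otimes d}) = \Sha^{d+1}(\mu_n^{\otimes d})$ pour tout $n$.

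Pour traiter le module fini $F = \mu_n$, dont le dual $F' = \underline{\mathrm{Hom}}(\mu_n,\mathbb{Q}/\mathbb{Z}(d+1))$ vaut $\mu_n^{\otimes d}$, j'appliquerais le lemme précédent avec $r = 1$, ce qui donne la suite exacte
$$H^1(K,\mu_n) \rightarrow \prod_{v \in S} H^1(K_v,\mu_n) \rightarrow \Sha^{d+1}_S(\mu_n^{\otimes d})^D \rightarrow \Sha^{d+1}(\mu_n^{\otimes d})^D \rightarrow 0.$$
Il reste alors à montrer que la première flèche est surjective: dans ce cas, $\Sha^{d+1}_S(\mu_n^{\otimes d})^D \rightarrow \Sha^{d+1}(\mu_n^{\otimes d})^D$ est un isomorphisme de groupes finis, ce qui, joint à l'inclusion $\Sha^{d+1}(\mu_n^{\otimes d}) \subseteq \Sha^{d+1}_S(\mu_n^{\otimes d})$, force l'égalité. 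Par la théorie de Kummer, cette flèche est le morphisme $K^{\times}/(K^{\times})^n \rightarrow \prod_{v \in S} K_v^{\times}/(K_v^{\times})^n$ induit par l'inclusion diagonale. J'invoquerais ici l'approximation faible pour le tore quasi-trivial $\mathbb{G}_m$, rappelée juste avant: $K^{\times}$ est dense dans $\prod_{v \in S} K_v^{\times}$. Comme $\mathrm{Car}(k_1) = 0$, chaque sous-groupe $(K_v^{\times})^n$ est ouvert d'indice fini, donc $\prod_{v \in S} K_v^{\times}/(K_v^{\times})^n$ est fini et discret, et l'image dense de $K^{\times}$ y est nécessairement surjective. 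On obtient ainsi la surjectivité voulue, puis $\Sha^{d+2}_S(\mathbb{Z}(d)) = \Sha^{d+2}(\mathbb{Z}(d))$.

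Enfin, la nature du groupe découle des résultats antérieurs appliqués à $T = \mathbb{G}_m$ (de sorte que $a = 1$ et $\tilde{T} = \mathbb{Z}(d)$): la remarque suivant la proposition \ref{Sha tore} montre que $\Sha^{d+2}(\mathbb{Z}(d))$ est de torsion de type cofini, tandis que l'accouplement parfait $\Sha^1(\mathbb{G}_m) \times \overline{\Sha^{d+2}(\mathbb{Z}(d))} \rightarrow \mathbb{Q}/\mathbb{Z}$ du théorème \ref{PT tore}, joint à $\Sha^1(\mathbb{G}_m) \subseteq H^1(K,\mathbb{G}_m) = 0$, montre que $\overline{\Sha^{d+2}(\mathbb{Z}(d))} = 0$, c'est-à-dire que le groupe est divisible. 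Le principal obstacle est l'étape de surjectivité du deuxième paragraphe: tout repose sur la transformation de la densité topologique fournie par Artin-Whaples en une véritable surjection sur les quotients finis, ce qui est précisément là qu'intervient la quasi-trivialité de $\mathbb{G}_m$ (par opposition à un tore quelconque).
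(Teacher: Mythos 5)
Votre démonstration est correcte et suit essentiellement la même démarche que celle de l'article : réduction aux coefficients finis via la conjecture de Beilinson-Lichtenbaum (${}_n\Sha^{d+2}_S(\mathbb{Z}(d)) \cong \Sha^{d+1}_S(\mu_n^{\otimes d})$), application du lemme précédent avec $r=1$ et $F=\mu_n$, puis surjectivité de $H^1(K,\mu_n) \rightarrow \prod_{v \in S} H^1(K_v,\mu_n)$ déduite de l'approximation faible pour $\mathbb{G}_m$. Vous explicitez en outre, à juste titre, deux points laissés implicites dans l'article : le passage de la densité à la surjectivité sur les quotients finis discrets $K_v^{\times}/(K_v^{\times})^n$, et la justification de la divisibilité via $\Sha^1(\mathbb{G}_m)=0$ et l'accouplement parfait du théorème \ref{PT tore}.
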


\begin{proof}
Soit $n>0$. D'après la proposition précédente, on a une suite exacte:
$$H^1(K,\mu_n) \rightarrow \prod_{v \in S} H^1(K_v,\mu_n) \rightarrow \Sha^{d+1}_S(\mu_n^{\otimes d})^D \rightarrow \Sha^{d+1}(\mu_n^{\otimes d})^D \rightarrow 0.$$
Or le morphisme $H^1(K,\mu_n) \rightarrow \prod_{v \in S} H^1(K_v,\mu_n)$ est surjectif puisque $\mathbb{G}_m$ vérifie l'approximation faible. Donc $\Sha^{d+1}_S(\mu_n^{\otimes d}) = \Sha^{d+1}(\mu_n^{\otimes d})$. Comme la conjecture de Beilinson-Lichtenbaum impose que $\Sha^{d+1}(\mu_n^{\otimes d}) = {_n}\Sha^{d+2}(\mathbb{Z}(d))$ et que $\Sha^{d+1}_S(\mu_n^{\otimes d}) = {_n}\Sha^{d+2}(\mathbb{Z}(d))$, cela prouve que $\Sha^{d+2}_S(\mathbb{Z}(d)) = \Sha^{d+2}(\mathbb{Z}(d))$.
\end{proof}

\begin{theorem} \label{AF tore}
Rappelons que $T$ est un tore et $\tilde{T} = \hat{T} \otimes^{\mathbf{L}} \mathbb{Z}(d)$.
\begin{itemize}
\item[(i)] On a la suite exacte: $$0 \rightarrow \overline{T(K)}_S \rightarrow \prod_{v \in S} T(K_v) \rightarrow (\Sha^{d+2}_S(\tilde{T}))^D \rightarrow (\Sha^{d+2}(\tilde{T}))^D \rightarrow 0,$$
où $\overline{T(K)}_S$ désigne l'adhérence de $T(K)$ dans $\prod_{v \in S} T(K_v)$.
\item[(ii)] On a la suite exacte:
$$0 \rightarrow \overline{T(K)} \rightarrow \prod_{v \in X^{(1)}} T(K_v) \rightarrow (\Sha^{d+2}_{\omega}(\tilde{T}))^D \rightarrow (\Sha^{d+2}(\tilde{T}))^D \rightarrow 0,$$
où $ \overline{T(K)}$ désigne l'adhérence de $T(K)$ dans $\prod_{v \in X^{(1)}} T(K_v)$.
\end{itemize}
\end{theorem}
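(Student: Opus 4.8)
Pour établir ce théorème, je traiterais d'abord le point (i), le point (ii) s'en déduisant par passage à la limite projective sur les parties finies $S$. L'idée directrice est de dualiser place par place et de se ramener aux modules finis ${_n}T$ via la cohomologie de Kummer. Le premier ingrédient est la dualité locale \ref{local tore}(iii) appliquée avec $a=1$: pour chaque $v\in X^{(1)}$ elle fournit un accouplement parfait entre le groupe profini $T(K_v)^{\wedge}=H^0(K_v,T)^{\wedge}$ et le groupe de torsion $H^{d+2}(K_v,\tilde{T})$; au niveau fini, la suite de Kummer $0\to T(K_v)/n\to H^1(K_v,{_n}T)\to {_n}H^1(K_v,T)\to 0$ combinée à la dualité locale pour les modules finis identifie $(T(K_v)/n)^D$ à ${_n}H^{d+2}(K_v,\tilde{T})$. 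Je dualiserais ensuite la suite de localisation
\[
0\to\Sha^{d+2}(\tilde{T})\to\Sha^{d+2}_S(\tilde{T})\xrightarrow{\mathrm{res}_S}\prod_{v\in S}H^{d+2}(K_v,\tilde{T}),
\]
tous ces groupes étant de torsion de type cofini d'après \ref{nature tore} et \ref{local tore}. En notant $Q=\mathrm{res}_S(\Sha^{d+2}_S(\tilde{T}))$, on obtient une suite exacte
\[
\prod_{v\in S}T(K_v)^{\wedge}\to(\Sha^{d+2}_S(\tilde{T}))^D\to(\Sha^{d+2}(\tilde{T}))^D\to 0,
\]
dont la dernière flèche (duale de l'inclusion $\Sha^{d+2}(\tilde{T})\hookrightarrow\Sha^{d+2}_S(\tilde{T})$) est surjective, dont l'image de la première flèche vaut $Q^D=\ker$, et dont le noyau de la première flèche est l'orthogonal $Q^{\perp}$ de $Q$ pour l'accouplement local total.

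Il resterait alors à identifier $Q^{\perp}$ à l'adhérence de $T(K)$. L'inclusion $\overline{T(K)}_S\subseteq Q^{\perp}$ est la partie facile: tout $\alpha\in\Sha^{d+2}_S(\tilde{T})$ étant trivial en dehors de $S$, la formule de réciprocité globale — c'est-à-dire la nullité du composé $H^0(K,T)\to\prod_v T(K_v)^{\wedge}\to H^{d+2}(K,\tilde{T})^D$, qui découle de la compatibilité des accouplements locaux avec l'accouplement global construit grâce à \ref{préliminaire} et \ref{AV tore} — montre que la diagonale de $T(K)$ s'annule contre $Q$; comme $Q^{\perp}$ est fermé, on en déduit cette inclusion.

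L'inclusion réciproque $Q^{\perp}\subseteq\overline{T(K)}_S$ est le cœur de la preuve. Je l'obtiendrais en appliquant le premier lemme ci-dessus avec $r=1$ et $F={_n}T$ (de sorte que $F'=\hat{T}\otimes\mu_n^{\otimes d}$), ce qui donne la suite exacte
\[
H^1(K,{_n}T)\to\prod_{v\in S}H^1(K_v,{_n}T)\to\Sha^{d+1}_S(\hat{T}\otimes\mu_n^{\otimes d})^D\to\Sha^{d+1}(\hat{T}\otimes\mu_n^{\otimes d})^D\to 0.
\]
Au moyen des suites de Kummer locales et globale et du Bockstein $\hat{T}\otimes\mu_n^{\otimes d}\to{_n}\tilde{T}$, qui relie $\Sha^{d+1}_S(\hat{T}\otimes\mu_n^{\otimes d})$ à ${_n}\Sha^{d+2}_S(\tilde{T})$ comme dans \ref{nul dual}, cette suite se récrit
\[
T(K)/n\to\prod_{v\in S}T(K_v)/n\to({_n}\Sha^{d+2}_S(\tilde{T}))^D\to({_n}\Sha^{d+2}(\tilde{T}))^D\to 0,
\]
puis, en passant à la limite projective sur $n$ (où $\varprojlim_n({_n}\Sha^{d+2}_S(\tilde{T}))^D=(\Sha^{d+2}_S(\tilde{T}))^D$ et $\varprojlim_n\prod_{v\in S}T(K_v)/n=\prod_{v\in S}T(K_v)^{\wedge}$), redonne la suite cherchée en identifiant son noyau à $\overline{T(K)}^{\wedge}_S$.

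La principale difficulté sera ce passage à la limite. Il faudra contrôler les termes correctifs ${_n}H^1(K,T)$ issus de Kummer, ce que permettront la finitude de l'exposant de $H^1(K,T)$ (\ref{nature tore}) et l'égalité $\Sha^{d+2}_S(\mathbb{Z}(d))=\Sha^{d+2}(\mathbb{Z}(d))$ du second lemme ci-dessus, traduction de l'approximation faible pour $\mathbb{G}_m$; il faudra aussi vérifier la condition de Mittag-Leffler (les flèches de transition étant surjectives) pour assurer l'exactitude du $\varprojlim_n$, puis redescendre des complétés $T(K_v)^{\wedge}$ aux groupes $T(K_v)$: comme le conoyau $\prod_{v\in S}T(K_v)/\overline{T(K)}_S$ est profini, donc fermé dans un groupe profini, il coïncide avec $\prod_{v\in S}T(K_v)^{\wedge}/\overline{T(K)}^{\wedge}_S$, ce qui autorise ce remplacement. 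Enfin, pour (ii), je prendrais la limite projective de (i) sur les parties finies $S$ ordonnées par inclusion: on a $\varinjlim_S\Sha^{d+2}_S(\tilde{T})=\Sha^{d+2}_{\omega}(\tilde{T})$, donc $\varprojlim_S(\Sha^{d+2}_S(\tilde{T}))^D=(\Sha^{d+2}_{\omega}(\tilde{T}))^D$ avec flèches de transition surjectives, tandis que $\varprojlim_S\prod_{v\in S}T(K_v)=\prod_{v\in X^{(1)}}T(K_v)$ et $\varprojlim_S\overline{T(K)}_S=\overline{T(K)}$, ce qui fournit la suite annoncée.
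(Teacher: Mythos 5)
Votre dérivation de la queue de la suite --- en dualisant $0\to\Sha^{d+2}(\tilde{T})\to\Sha^{d+2}_S(\tilde{T})\to\prod_{v\in S}H^{d+2}(K_v,\tilde{T})$ au moyen de la dualité locale \ref{local tore}(iii) pour obtenir $\prod_{v\in S}T(K_v)^{\wedge}\to(\Sha^{d+2}_S(\tilde{T}))^D\to(\Sha^{d+2}(\tilde{T}))^D\to 0$ --- est bien celle de l'article, et votre argument de réciprocité pour l'inclusion facile est essentiellement correct. En revanche, le cœur de la preuve, l'inclusion $Q^{\perp}\subseteq\overline{T(K)}_S$, comporte une lacune réelle. À niveau fini, la suite pour le module ${_n}T$ vous fournit, pour $(t_v)\in\prod_{v\in S}T(K_v)/n$ orthogonal à ${_n}\Sha^{d+2}_S(\tilde{T})$, un relèvement $x\in H^1(K,{_n}T)$; mais l'image de $x$ dans $H^1(K,T)$ est seulement triviale dans $H^1(K_v,T)$ pour $v\in S$, et rien ne force sa nullité: $x$ ne provient donc pas de $T(K)/n$ en général. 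L'obstruction vit dans $\text{Ker}(H^1(K,T)\to\prod_{v\in S}H^1(K_v,T))$, que ni la finitude de l'exposant de $H^1(K,T)$ ni une condition de Mittag-Leffler ne font disparaître; et si l'on corrige $x$ par un relèvement de Kummer de cette obstruction, on en modifie les composantes locales dans $T(K_v)/n$, de sorte que la suite récrite $T(K)/n\to\prod_{v\in S}T(K_v)/n\to({_n}\Sha^{d+2}_S(\tilde{T}))^D$ n'est pas exacte au centre. L'article contourne précisément ce point par un dévissage à la Sansuc: la suite étant exacte pour les tores quasi-triviaux (approximation faible, second lemme préparatoire et lemme de Shapiro), on peut remplacer $T$ par $T^m\times_K T_0$ et supposer donnée une suite $0\to F\to R\to T\to 0$ avec $F$ fini et $R$ quasi-trivial (lemme 1.10 de \cite{San}); alors $H^1(K,R)=0$ et $R$ vérifie l'approximation faible, et une chasse dans le diagramme comparant les lignes pour $R$, $T$, $F$ à la colonne duale $0\to\Sha^{d+2}_S(\tilde{R})^D\to\Sha^{d+2}_S(\tilde{T})^D\to\Sha^{d+1}_S(F')^D\to 0$ identifie le noyau avec $\overline{T(K)}_S$, la propriété de complexe s'obtenant en utilisant que $\Sha^{d+2}_S(\tilde{R})^D$ est sans torsion. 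C'est exactement le mécanisme qui supprime les termes correctifs que votre approche directe par Kummer sur ${_n}T$ ne contrôle pas.

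Deux points secondaires. D'une part, votre affirmation que $\left(\prod_{v\in S}T(K_v)\right)/\overline{T(K)}_S$ est profini est injustifiée: $T(K_v)$ n'est ni compact ni profini sur un corps $(d+1)$-local; l'article n'obtient --- et n'utilise --- que le fait que ce conoyau est d'exposant fini, via le diagramme le comparant à $\prod_{v\in S}H^1(K_v,F)$ (encore le dévissage), ce qui suffit pour identifier l'image de $\prod_{v\in S}T(K_v)$ dans $(\Sha^{d+2}_S(\tilde{T}))^D$ à celle de $\prod_{v\in S}T(K_v)^{\wedge}$. D'autre part, pour (ii), la limite projective sur $S$ (ensemble filtrant non dénombrable, où Mittag-Leffler ne s'applique de toute façon pas) n'est qu'exacte à gauche et ne fournit que les trois premiers termes; l'article rétablit l'exactitude en $(\Sha^{d+2}_{\omega}(\tilde{T}))^D$ en dualisant $0\to\Sha^{d+2}(\tilde{T})\to\Sha^{d+2}_{\omega}(\tilde{T})\to\bigoplus_{v}H^{d+2}(K_v,\tilde{T})$ puis en montrant que l'image de $\prod_{v}T(K_v)$ y est fermée, parce que $\left(\prod_{v}T(K_v)\right)/\overline{T(K)}$ s'identifie à un quotient du groupe compact $\prod_{v}H^1(K_v,F)$. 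Votre schéma de preuve devrait donc être réorganisé autour de la résolution $0\to F\to R\to T\to 0$ pour être complet.
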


\begin{proof}
\begin{itemize}
\item[(i)] Remarquons que, si $T$ est quasi-trivial:
\begin{itemize}
\item[$\bullet$] la flèche $\overline{T(K)}_S \rightarrow \prod_{v \in S} T(K_v)$ est surjective puisque $T$ vérifie l'approximation faible,
\item[$\bullet$] le morphisme $\prod_{v \in S} T(K_v) \rightarrow (\Sha^{d+2}_S(\tilde{T}))^D$ est nul d'après le lemme précédent et le lemme de Shapiro.
\item[$\bullet$] le morphisme $(\Sha^{d+2}_S(\tilde{T}))^D \rightarrow (\Sha^{d+2}(\tilde{T}))^D$ est un isomorphisme d'après le lemme précédent et le lemme de Shapiro.
\end{itemize}
On en déduit que la suite $$0 \rightarrow \overline{T(K)}_S \rightarrow \prod_{v \in S} T(K_v) \rightarrow (\Sha^{d+2}_S(\tilde{T}))^D \rightarrow (\Sha^{d+2}(\tilde{T}))^D \rightarrow 0$$
est exacte dès que $T$ est quasi-trivial. Par conséquent, pour prouver la proposition, nous pouvons remplacer $T$ par un tore de la forme $T^m \times_K T_0$, où $m>0$ et $T_0$ est un tore quasi-trivial.\\
D'après le lemme 1.10 de \cite{San}, on peut donc supposer qu'il existe une suite exacte:
$$0 \rightarrow F \rightarrow R \rightarrow T \rightarrow 0$$
où $F$ est un schéma en groupes fini commutatif sur $K$ et $R$ un tore quasi-trivial sur $K$. On en déduit une suite exacte de $\text{Gal}(K^s/K)$-modules:
$$0 \rightarrow \hat{T} \rightarrow \hat{R} \rightarrow \hat{F} \rightarrow 0.$$
En tensorisant par $\mathbb{Z}(d)$, on obtient une suite exacte de complexes:
$$0 \rightarrow \tilde{T} \rightarrow \tilde{R} \rightarrow F' \rightarrow 0.$$
D'après le lemme de Shapiro et la conjecture de Beilinson-Lichtenbaum, on remarque que $H^{d+1}(K,\tilde{R})=0$ et $H^{d+1}(K_v,\tilde{R})=0$ pour $v \in X^{(1)}$. On en déduit une suite exacte:
$$0 \rightarrow \Sha^{d+1}_S(F') \rightarrow \Sha^{d+2}_S(\tilde{T}) \rightarrow \Sha^{d+2}_S(\tilde{R}).$$
Montrons que le morphisme $ \Sha^{d+2}_S(\tilde{T}) \rightarrow \Sha^{d+2}_S(\tilde{R})$ est surjectif. Soit $n$ l'ordre de $F$. Soit $x \in \Sha^{d+2}_S(\tilde{R})$. Comme $\Sha^{d+2}_S(\tilde{R})$ est divisible et $H^{d+2}(K,F')$ est d'exposant fini, il existe $x' \in \Sha^{d+2}_S(\tilde{R})$ tel que $x = nx'$ et l'image de $x'$ dans $H^{d+2}(K,F')$ est nulle. Par conséquent, $x'$ provient d'un élément $y\in H^{d+2}(K,\tilde{T})$. Soit $z$ l'image de $y$ dans $\prod_{v \not\in S} H^{d+2}(K_v,\tilde{T})$. Son image dans $\prod_{v \not\in S} H^{d+2}(K_v,\tilde{R})$ est bien sûr nulle, et donc $z$ provient d'un élément de $\prod_{v \not\in S} H^{d+1}(K_v,F')$, qui est de $n$-torsion. Donc $nz=0$, et $ny\in \Sha^{d+2}_S(\tilde{T})$. Comme $ny$ s'envoie sur $x$ dans $\Sha^{d+2}_S(\tilde{R})$, on déduit que le morphisme $ \Sha^{d+2}_S(\tilde{T}) \rightarrow \Sha^{d+2}_S(\tilde{R})$ est surjectif.\\
Nous disposons donc d'une suite exacte:
$$0 \rightarrow \Sha^{d+1}_S(F') \rightarrow \Sha^{d+2}_S(\tilde{T}) \rightarrow \Sha^{d+2}_S(\tilde{R}) \rightarrow 0.$$
En dualisant, on obtient la suite exacte:
$$0 \rightarrow \Sha^{d+2}_S(\tilde{R})^D \rightarrow \Sha^{d+2}_S(\tilde{T})^D \rightarrow \Sha^{d+1}_S(F')^D \rightarrow 0.$$
On a donc un diagramme (D) commutatif à lignes exactes:\\
\centerline{\xymatrix{
&R(K)\ar[d] \ar[r] & T(K)\ar[d] \ar[r] & H^1(K,F)\ar[d] \ar[r] & 0\\
&\prod_{v \in S} R(K_v)\ar[d] \ar[r] & \prod_{v \in S} T(K_v)\ar[d] \ar[r] & \prod_{v \in S} H^1(K_v,F)\ar[d] \ar[r] & 0\\
0 \ar[r] & \Sha^{d+2}_S(\tilde{R})^D \ar[r] & \Sha^{d+2}_S(\tilde{T})^D \ar[r] & \Sha^{d+1}_S(F')^D \ar[r] & 0
}}
De plus, la colonne concernant le module fini $F$ est exacte. Comme $R$ vérifie l'approximation faible, une simple chasse au diagramme montre que $\text{Ker}(\prod_{v \in S} T(K_v) \rightarrow \Sha^{d+2}_S(\tilde{T}))$ est contenu dans l'adhérence de $T(K)$ dans $\prod_{v \in S} T(K_v)$. Montrons que $0 \rightarrow T(K) \rightarrow \prod_{v \in S} T(K_v) \rightarrow \Sha^{d+2}_S(\tilde{T})^D$ est un complexe.\\
Soient $x \in T(K)$, $(x_v)_{v\in S}$ son image dans $\prod_{v \in S} T(K_v)$ et $y$ son image dans $\Sha^{d+2}_S(\tilde{T})^D$. On remarque immédiatement que l'image de $y$ dans $\Sha^{d+1}_S(F')^D$ est nulle, et donc $y$ provient d'un élément $z \in \Sha^{d+2}_S(\tilde{R})^D$. Comme $F$ est d'ordre $n$, la famille $(nx_v)_{v \in S}$ provient d'un élément $(r_v)_{v \in S} \in \prod_{v \in S} R(K_v)$. Par injectivité de $\Sha^{d+2}_S(\tilde{R})^D \rightarrow \Sha^{d+2}_S(\tilde{T})^D$ et nullité de $\prod_{v \in S} R(K_v) \rightarrow \Sha^{d+2}_S(\tilde{R})^D$, on déduit que $nz=0$. Mais $\Sha^{d+2}_S(\tilde{R})$ étant de torsion de type cofini divisible, son dual est sans torsion. Donc $z=0$ et $y=0$. \\
Cela permet de conclure à l'exactitude de:
$$0 \rightarrow \overline{T(K)}_S \rightarrow \prod_{v \in S} T(K_v) \rightarrow (\Sha^{d+2}_S(\tilde{T}))^D.$$
Reste donc à montrer l'exactitude de:
$$\prod_{v \in S} T(K_v) \rightarrow (\Sha^{d+2}_S(\tilde{T}))^D \rightarrow (\Sha^{d+2}(\tilde{T}))^D \rightarrow 0.$$
Pour ce faire, remarquons que nous disposons d'une suite exacte:
$$0 \rightarrow \Sha^{d+2}(\tilde{T}) \rightarrow \Sha^{d+2}_S(\tilde{T}) \rightarrow \bigoplus_{v \in S} H^{d+2}(K_v,\tilde{T}),$$
d'où une suite exacte duale:
$$\prod_{v \in S} T(K_v)^{\wedge} \rightarrow \Sha^{d+2}_S(\tilde{T})^D \rightarrow \Sha^{d+2}(\tilde{T})^D \rightarrow 0.$$
Or une chasse au diagramme dans le diagramme commutatif à lignes exactes (D) montre que le conoyau de $\overline{T(K)}_S \rightarrow \prod_{v \in S} T(K_v)$ est de $n$-torsion. Par conséquent, l'image de $\prod_{v \in S} T(K_v) \rightarrow (\Sha^{d+2}_S(\tilde{T}))^D$ est aussi de $n$-torsion et elle coïncide avec l'image de $\prod_{v \in S} T(K_v)^{\wedge} \rightarrow \Sha^{d+2}_S(\tilde{T})^D$. On en déduit l'exactitude de
$$\prod_{v \in S} T(K_v) \rightarrow (\Sha^{d+2}_S(\tilde{T}))^D \rightarrow (\Sha^{d+2}(\tilde{T}))^D \rightarrow 0,$$
ce qui achève la preuve.
\item[(ii)] En passant à la limite projective sur S dans la suite exacte de (i), on obtient immédiatement l'exactitude de:
$$0 \rightarrow \overline{T(K)} \rightarrow \prod_{v \in X^{(1)}} T(K_v) \rightarrow \Sha^2_{\omega}(\tilde{T})^D.$$
Pour établir les derniers termes de la suite exacte, on procède comme dans (i). En effet, on vérifie exactement de la même manière que l'on peut supposer que $T$ s'insère dans une suite exacte:
$$0 \rightarrow F \rightarrow R \rightarrow T \rightarrow 0,$$
où $F$ est un groupe fini et $R$ est un tore quasi-trivial. On remarque alors que l'on dispose d'une suite exacte:
$$0 \rightarrow \Sha^{d+2}(\tilde{T}) \rightarrow \Sha^{d+2}_{\omega}(\tilde{T}) \rightarrow \bigoplus_{v \in X^{(1)}} H^{d+2}(K_v,\tilde{T}),$$
d'où une suite exacte duale:
$$\prod_{v \in X^{(1)}} T(K_v)^{\wedge} \rightarrow \Sha^{d+2}_{\omega}(\tilde{T})^D \rightarrow \Sha^{d+2}(\tilde{T})^D \rightarrow 0.$$
Or, comme $R$ vérifie l'approximation faible, une chasse au diagramme dans:\\
\centerline{\xymatrix{
R(K)\ar[d] \ar[r] & T(K)\ar[d] \ar[r] & H^1(K,F)\ar[d] \ar[r] & 0\\
\prod_{v \in X^{(1)}} R(K_v) \ar[r] & \prod_{v \in X^{(1)}} T(K_v) \ar[r] & \prod_{v \in X^{(1)}} H^1(K_v,F) \ar[r] & 0\\
}}
permet de conclure que le groupe topologique $\left( \prod_{v \in X^{(1)}} T(K_v)\right) /\overline{T(K)}$ s'identifie à $\left( \prod_{v \in X^{(1)}} H^1(K_v,F)\right) /\overline{H^1(K,F)}$. C'est donc un groupe compact et l'image de $\prod_{v \in X^{(1)}} T(K_v)$ dans $\Sha^2_{\omega}(\tilde{T})^D$ est fermée. On en déduit que cette image coïncide avec celle de $\prod_{v \in X^{(1)}} T(K_v)^{\wedge}$, ce qui impose l'exactitude de:
$$\prod_{v \in X^{(1)}} T(K_v) \rightarrow \Sha^{d+2}_{\omega}(\tilde{T})^D \rightarrow \Sha^{d+2}(\tilde{T})^D \rightarrow 0.$$
\end{itemize}
\end{proof}

\begin{remarque}
\begin{itemize}
\item[(i)] La preuve précédente montre aussi que $\left( \prod_{v \in S} T(K_v)\right) /\overline{T(K)}_S$ et $\left( \prod_{v \in X^{(1)}} T(K_v)\right) /\overline{T(K)}$ sont d'exposant fini, mais pas forcément finis.
\item[(ii)] Pour montrer que $0 \rightarrow \overline{T(K)}_S \rightarrow \prod_{v \in S} T(K_v) \rightarrow (\Sha^{d+2}_S(\tilde{T}))^D$ est un complexe, nous aurions aussi pu faire appel à un argument topologique comme dans le lemme 9.5 et la remarque 9.8 de \cite{CTH}.
\end{itemize}
\end{remarque}

Nous pouvons récrire la suite exacte (i) du théorème sous une forme plus agréable:

\begin{corollary}\label{AF corollaire 1}
Rappelons que $T$ est un tore et $\tilde{T} = \hat{T} \otimes^{\mathbf{L}} \mathbb{Z}(d)$. On a une suite exacte:
$$0 \rightarrow \overline{T(K)}_S \rightarrow \prod_{v \in S} T(K_v) \rightarrow \overline{\Sha^{d+2}_S(\tilde{T})}^D \rightarrow \Sha^{1}(T) \rightarrow 0.$$
\end{corollary}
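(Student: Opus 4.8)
Le plan est de réécrire la suite exacte à quatre termes du théorème \ref{AF tore}(i),
$$0 \rightarrow \overline{T(K)}_S \rightarrow \prod_{v \in S} T(K_v) \rightarrow (\Sha^{d+2}_S(\tilde{T}))^D \rightarrow (\Sha^{d+2}(\tilde{T}))^D \rightarrow 0,$$
en remplaçant ses deux derniers termes par $\overline{\Sha^{d+2}_S(\tilde{T})}^D$ et $\Sha^1(T)$. Je commencerais par rappeler que $B = \Sha^{d+2}_S(\tilde{T})$ et $C = \Sha^{d+2}(\tilde{T})$ sont de torsion de type cofini, avec $C \subseteq B$: cela résulte de la remarque suivant \ref{Sha tore} pour $C$, et pour $B$ du fait que $B/C$ s'injecte dans le groupe de type cofini $\bigoplus_{v \in S} H^{d+2}(K_v,\tilde{T})$ via la suite exacte apparaissant dans la preuve du théorème. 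Pour un tel groupe $A$, la suite $0 \to A_{div} \to A \to \overline{A} \to 0$ se dualise en $0 \to \overline{A}^D \to A^D \to (A_{div})^D \to 0$ (car $\mathbb{Q}/\mathbb{Z}$ est injectif), où $\overline{A}^D$ est \emph{fini} et coïncide avec le sous-groupe de torsion de $A^D$, puisque $(A_{div})^D$ est sans torsion.

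Ensuite, j'invoquerais la remarque qui précède: le quotient $(\prod_{v \in S} T(K_v))/\overline{T(K)}_S$ est d'exposant fini, disons d'exposant $n$. L'image de $\prod_{v \in S} T(K_v) \rightarrow (\Sha^{d+2}_S(\tilde{T}))^D = B^D$ est donc un sous-groupe d'exposant fini, contenu dans $B^D[n]$, lequel est fini et inclus dans le sous-groupe de torsion $\overline{B}^D$ de $B^D$. Le morphisme se factorise ainsi à travers $\overline{\Sha^{d+2}_S(\tilde{T})}^D = \overline{B}^D$ sans changer son image, d'où l'exactitude à gauche de la suite $0 \to \overline{T(K)}_S \to \prod_{v \in S} T(K_v) \to \overline{B}^D$.

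Il restera à identifier le conoyau. Le théorème \ref{AF tore}(i) affirme que l'image de $\prod_{v \in S} T(K_v)$ dans $B^D$ est le noyau de la surjection $\pi : B^D \to C^D$ duale de l'inclusion $C \hookrightarrow B$; ce noyau $\ker\pi = (B/C)^D$ étant déjà contenu dans $\overline{B}^D$, le conoyau cherché s'écrit $\overline{B}^D/(\overline{B}^D \cap \ker\pi) \cong \pi(\overline{B}^D)$. Le point principal, et sans doute le plus délicat, sera de montrer l'égalité $\pi(\overline{B}^D) = \overline{C}^D$. L'inclusion $\pi(\overline{B}^D) \subseteq (C^D)_{tors} = \overline{C}^D$ est claire; pour l'inclusion réciproque j'utiliserais un argument de relèvement de torsion: tout $c \in \overline{C}^D$ se relève, par surjectivité de $\pi$, en $b \in B^D$ avec $\pi(b) = c$, et comme $\ker\pi = (B/C)^D$ est de torsion (d'exposant fini) tandis que $c$ l'est aussi, l'élément $b$ est de torsion, donc $b \in \overline{B}^D$. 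Enfin, le théorème \ref{PT tore} appliqué avec $a = 1$ fournit l'accouplement parfait de groupes finis $\Sha^1(T) \times \overline{\Sha^{d+2}(\tilde{T})} \to \mathbb{Q}/\mathbb{Z}$, soit $\overline{C}^D \cong \Sha^1(T)$, ce qui conclut à l'exactitude de la suite annoncée.
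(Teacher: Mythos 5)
Your reconstruction is essentially the deduction the paper leaves implicit: its own proof of the corollary is a one-line reduction to Theorem \ref{AF tore}(i), the remark on finite exponent of $\bigl(\prod_{v \in S} T(K_v)\bigr)/\overline{T(K)}_S$, the cofinite-type statements, and the duality theorem \ref{PT tore}, which are exactly the ingredients you assemble. The skeleton is sound: the image of $\prod_{v\in S}T(K_v)$ in $B^D$ (with $B=\Sha^{d+2}_S(\tilde T)$, $C=\Sha^{d+2}(\tilde T)$) has finite exponent, hence lies in $(B^D)_{tors}\subseteq \overline{B}^D$ because $(B_{div})^D$ is torsion-free; $B^D[n]\cong (B/nB)^D$ is indeed finite; the identification of the image with $\ker\pi$ and the torsion-lifting argument for $\pi(\overline{B}^D)\supseteq \overline{C}^D$ are correct (the finite exponent of $\ker\pi$ being supplied by your first paragraph, since $\ker\pi$ equals that image).

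There is, however, one false auxiliary claim: for a torsion group $A$ of cofinite type in this paper's sense (${}_nA$ finite for all $n$), it is \emph{not} true in general that $\overline{A}$ is finite, nor that $\overline{A}^D=(A^D)_{tors}$. Take $A=\bigoplus_p \mathbb{Z}/p\mathbb{Z}$: it is of cofinite type, reduced and infinite, and $\overline{A}^D=\prod_p \mathbb{Z}/p\mathbb{Z}$ is neither finite nor torsion. Cofinite type gives finiteness of $\overline{A\{l\}}$ for each prime $l$ separately, but not across all primes. In your proof this matters at two places. For $C$ the conclusion is legitimate, but because $\overline{C}=\overline{\Sha^{d+2}(\tilde T)}$ is finite by Theorem \ref{PT tore} (perfect pairing of \emph{finite} groups), not by cofinite type alone. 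For $B$, the inclusion $\pi(\overline{B}^D)\subseteq (C^D)_{tors}$, which you call clear, rests as you set it up on the torsionness of $\overline{B}^D$, i.e.\ on the false claim. The repair is immediate: $C_{div}$ is a divisible subgroup of $B$, so $C_{div}\subseteq B_{div}$, and restriction of characters therefore carries $\overline{B}^D$ into $\overline{C}^D$ directly, with no torsion argument. (If you insist on $\overline{B}$ finite — which does hold — it cannot come from cofinite type alone; one must use the internal structure from the proof of Theorem \ref{AF tore}(i), namely that $B$ is an extension of the divisible group $\Sha^{d+2}_S(\tilde R)=\Sha^{d+2}(\tilde R)$ by the finite group $\Sha^{d+1}_S(F')$, or bootstrap finiteness of exponent from the exact sequence you have just established.) With this substitution your proof is complete and coincides with the paper's intended argument.
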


\begin{proof}
Le corollaire découle immédiatement du théorème et de la remarque précédents, du fait que les groupes $\Sha^{d+2}_S(\tilde{T})$ et $\Sha^{d+2}(\tilde{T})$ sont de torsion de type cofini, et du théorème de dualité \ref{0}.
\end{proof}

Le théorème \ref{AF tore} nous permet finalement d'étudier les propriétés d'approximation faible, d'approximation faible faible et d'approximation faible faible dénombrable pour le tore $T$:

\begin{corollary} \textbf{(Propriétés d'approximation faible des tores)} \label{AFF tore}\\
Rappelons que $T$ est un tore et $\tilde{T} = \hat{T} \otimes^{\mathbf{L}} \mathbb{Z}(d)$.
\begin{itemize}
\item[(i)] Le tore $T$ vérifie l'approximation faible si, et seulement si, $\Sha^{d+2}(\tilde{T}) = \Sha^{d+2}_{\omega}(\tilde{T})$.
\item[(ii)] Le tore $T$ vérifie l'approximation faible faible si, et seulement si, le groupe de torsion $\Sha^{d+2}_{\omega}(\tilde{T})$ est de type cofini. En particulier, lorsque $\Sha^{d+2}(L,\mathbb{Z}(d)) = 0$ pour une extension finie $L$ de $K$ déployant $T$, le tore $T$ vérifie l'approximation faible faible si, et seulement si, $\Sha^{d+2}_{\omega}(\tilde{T})$ est fini.
\item[(iii)] Le tore $T$ vérifie l'approximation faible faible dénombrable si, et seulement si, le groupe de torsion $\Sha^{d+2}_{\omega}(\tilde{T})$ est dénombrable.
\end{itemize}
\end{corollary}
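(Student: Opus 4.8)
The plan is to reduce every assertion to a statement about the single group $Q := \Sha^{d+2}_\omega(\tilde T)/\Sha^{d+2}(\tilde T)$ and then to read off the density of $T(K)$ in finite products from the vanishing of suitable pieces of $Q$. First I would record the dictionary furnished by Theorem \ref{AF tore}. For a finite $S \subseteq X^{(1)}$, sequence (i) of \ref{AF tore} exhibits $\overline{T(K)}_S$ as the kernel of $\prod_{v\in S} T(K_v) \to \Sha^{d+2}_S(\tilde T)^D$, whose image is $\ker\bigl(\Sha^{d+2}_S(\tilde T)^D \to \Sha^{d+2}(\tilde T)^D\bigr)=(Q_S)^D$, where $Q_S := \Sha^{d+2}_S(\tilde T)/\Sha^{d+2}(\tilde T)$ (Pontryagin duality is exact on discrete torsion groups, $\mathbb{Q}/\mathbb{Z}$ being injective). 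Hence $T(K)$ is dense in $\prod_{v\in S} T(K_v)$ if and only if $(Q_S)^D=0$, i.e. $\Sha^{d+2}_S(\tilde T) = \Sha^{d+2}(\tilde T)$. Applying the identical argument to sequence (ii) yields assertion (i) at once: weak approximation holds iff the map $\prod_v T(K_v)\to \Sha^{d+2}_\omega(\tilde T)^D$ is zero, i.e. iff $Q^D=\ker(\Sha^{d+2}_\omega(\tilde T)^D\to\Sha^{d+2}(\tilde T)^D)=0$, i.e. iff $\Sha^{d+2}(\tilde T)=\Sha^{d+2}_\omega(\tilde T)$.

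Next I would introduce the localization map $\lambda:\Sha^{d+2}_\omega(\tilde T)\to\bigoplus_{v\in X^{(1)}} H^{d+2}(K_v,\tilde T)$, well defined into the direct sum because classes in $\Sha^{d+2}_\omega$ are locally trivial at almost every $v$. Its kernel is exactly $\Sha^{d+2}(\tilde T)$, so $\lambda$ identifies $Q$ with its image $I:=\operatorname{im}(\lambda)$; and for finite $S$ it identifies $Q_S$ with $I\cap\bigoplus_{v\in S} H^{d+2}(K_v,\tilde T)$, since a class is locally trivial outside $S$ precisely when its localization is supported on $S$. Combining this with the first paragraph, weak weak approximation (resp. countable weak weak approximation) holds iff there is a finite (resp. countable) $S_0$ with $I\cap\bigoplus_{v\notin S_0} H^{d+2}(K_v,\tilde T)=0$, equivalently with the projection $I\to\bigoplus_{v\in S_0} H^{d+2}(K_v,\tilde T)$ injective.

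The crucial input, which I expect to be the main obstacle, is the finiteness of the exponent of $Q$: this is not formal, since an arbitrary cofinite-type subgroup of $\bigoplus_v H^{d+2}(K_v,\tilde T)$ may have infinite support (witness $\bigoplus_i \mathbb{Z}/p_i\mathbb{Z}$), so cofinite type alone cannot force weak weak approximation. Here I would invoke the remark following \ref{AF tore}: $\bigl(\prod_v T(K_v)\bigr)/\overline{T(K)}$ is of finite exponent, and by sequence (ii) it is isomorphic to $Q^D$; since a torsion group and its Pontryagin dual have the same exponent, $Q$ is of finite exponent, say $n$. Recalling from Proposition \ref{local tore} (with $a=1$) that each $H^{d+2}(K_v,\tilde T)$ is a torsion group of cofinite type, in particular countable and with finite $n$-torsion, the translation is then routine. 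For (ii): if weak weak approximation holds then $I\hookrightarrow\bigoplus_{v\in S_0} {}_nH^{d+2}(K_v,\tilde T)$ is finite, so $Q$ is finite, whence $\Sha^{d+2}_\omega(\tilde T)$ is of cofinite type, being an extension of the finite group $Q$ by the cofinite-type group $\Sha^{d+2}(\tilde T)$ (cofinite type as noted after Proposition \ref{Sha tore}); conversely, if $\Sha^{d+2}_\omega(\tilde T)$ is of cofinite type then so is $Q$ (long exact sequence of $n$-torsion), and a cofinite-type group of finite exponent is finite, so $I$ has finite support and one takes $S_0$ to be that support. For (iii) the same mechanism, now needing only countability of $\bigoplus_{v\in S_0} H^{d+2}(K_v,\tilde T)$ for countable $S_0$ and the countability of the support of a countable subgroup of a direct sum, gives that countable weak weak approximation holds iff $I$, hence $Q$, hence $\Sha^{d+2}_\omega(\tilde T)$, is countable.

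Finally, for the last clause of (ii), I would observe that when $\Sha^{d+2}(L,\mathbb{Z}(d))=0$ for a finite extension $L/K$ splitting $T$, a restriction–corestriction argument applies: over $L$ the module $\hat T$ is constant, so $\tilde T_L\cong\mathbb{Z}(d)^{\operatorname{rk}T}$ and $\Sha^{d+2}(L,\tilde T_L)=0$, forcing $\Sha^{d+2}(\tilde T)$ to be killed by $[L:K]$; being also of cofinite type, it is finite. Then $\Sha^{d+2}_\omega(\tilde T)$ is of cofinite type iff it is finite, and (ii) specializes to the stated criterion that $T$ satisfies weak weak approximation iff $\Sha^{d+2}_\omega(\tilde T)$ is finite.
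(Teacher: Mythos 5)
Your proof is correct, and while its skeleton --- reducing everything to Theorem \ref{AF tore} --- necessarily coincides with the paper's, your treatment of the delicate converse direction of (ii) is genuinely different. The paper argues prime by prime: it first shows $\Sha^{d+2}(\tilde T)_{\text{div}} = \Sha^{d+2}_{\omega}(\tilde T)_{\text{div}}$ by dualizing the sequence of \ref{AF tore}(ii), then, for each prime $l$ dividing the uniform exponent $n$ of the quotients $\bigl(\prod_{v \in S} T(K_v)\bigr)/\overline{T(K)}_S$, produces a finite set $S_l$ with $\Sha^{d+2}_{\omega}(\tilde T)\{l\} = \Sha^{d+2}_{S_l}(\tilde T)\{l\}$, and takes $S_0 = \bigcup_{l \mid n} S_l$. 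You instead dualize the finite-exponent remark once and for all to conclude that $Q = \Sha^{d+2}_{\omega}(\tilde T)/\Sha^{d+2}(\tilde T)$ itself has finite exponent (via $Q^D \cong \bigl(\prod_v T(K_v)\bigr)/\overline{T(K)}$ and the fact that a discrete torsion group has the same exponent as its dual, characters separating points), then show $Q$ is finite and take $S_0$ to be the support of $Q$ under the localization embedding. This is cleaner and gives a slightly sharper by-product (finiteness, not merely cofinite type, of $\Sha_{\omega}/\Sha$ under weak weak approximation); your preliminary observation that cofinite type alone cannot suffice (the $\bigoplus_i \mathbb{Z}/p_i\mathbb{Z}$ example) correctly isolates the finite-exponent input, which the paper exploits in the equivalent form ``only the primes dividing $n$ matter''. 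Two small points of care: (a) your parenthetical ``long exact sequence of $n$-torsion'' for the cofinite type of $Q$ tacitly needs $\Sha^{d+2}(\tilde T)/n$ to be finite --- true, since $\overline{\Sha^{d+2}(\tilde T)}$ is finite by Theorem \ref{PT tore}; alternatively, since $nQ = 0$ one has $n\Sha^{d+2}_{\omega}(\tilde T) \subseteq \Sha^{d+2}(\tilde T)$, so $Q$ is a quotient of $\Sha^{d+2}_{\omega}(\tilde T)/n$, which is finite for any torsion group of cofinite type, bypassing the snake lemma altogether; (b) the cofinite-type (hence countable, with finite $n$-torsion) property of $H^{d+2}(K_v,\tilde T)$ is not quite Proposition \ref{local tore}, but follows from the surjection $H^{d+1}(K_v,\tilde T \otimes \mathbb{Z}/n\mathbb{Z}) \twoheadrightarrow {_n}H^{d+2}(K_v,\tilde T)$ together with local duality for finite modules (cf. the proofs of \ref{bouts tore} and \ref{nature gm}). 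Parts (i) and (iii), the forward direction of (ii), and the ``en particulier'' clause (restriction--corestriction making $\Sha^{d+2}(\tilde T)$ finite) match the paper's arguments; in particular the paper's choice of $S_0$ in (iii), namely the set of places where some class of $\Sha^{d+2}_{\omega}(\tilde T)$ survives, is exactly your support of $I$.
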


\begin{proof}
\begin{itemize}
\item[(i)] Cette propriété découle immédiatement du théorème \ref{AF tore}(ii).
\item[(ii)] Supposons que $T$ vérifie l'approximation faible faible, et notons $S_0$ une partie finie de $X^{(1)}$ telle que, pour chaque partie finie $S$ de $X^{(1)}$ n'intersectant pas $S_0$, le groupe $T(K)$ est dense dans $\prod_{v \in S} T(K_v)$. D'après le théorème \ref{AF tore}(i), cela impose que, pour une telle partie $S$, on a $\Sha^{d+2}_S(\tilde{T}) = \Sha^{d+2}(\tilde{T})$, d'où une suite exacte:
$$0 \rightarrow \Sha^{d+2}(\tilde{T}) \rightarrow \Sha^{d+2}_{\omega}(\tilde{T}) \rightarrow \bigoplus_{v \in S_0} H^{d+2}(K_v,\tilde{T}).$$
On en déduit que $\Sha^{d+2}_{\omega}(\tilde{T})$ est de torsion de type cofini.\\
Réciproquement, supposons que le groupe de torsion $\Sha^{d+2}_{\omega}(\tilde{T})$ est de type cofini. Montrons d'abord que $\Sha^{d+2}(\tilde{T})_{\text{div}} = \Sha^{d+2}_{\omega}(\tilde{T})_{\text{div}}$. Comme $\left( \prod_{v \in X^{(1)}} T(K_v)\right) /\overline{T(K)}$ est d'exposant fini, le théorème \ref{AF tore}(ii) fournit une suite exacte:
$$ 0 \rightarrow A \rightarrow \Sha^{d+2}_{\omega}(\tilde{T})^D \rightarrow  \Sha^{d+2}(\tilde{T})^D \rightarrow 0,$$
où $A$ est un groupe abélien de torsion. On en déduit que $\Sha^{d+2}_{\omega}(\tilde{T})^D/(\Sha^{d+2}_{\omega}(\tilde{T})^D)_{\text{tors}} \cong \Sha^{d+2}(\tilde{T})^D/(\Sha^{d+2}(\tilde{T})^D)_{\text{tors}}$, et donc que $\Sha^{d+2}_{\omega}(\tilde{T})_{\text{div}} \cong \Sha^{d+2}(\tilde{T})_{\text{div}}$. Rappelons maintenant qu'il existe $n>0$ tel que le groupe $\left( \prod_{v \in S} T(K_v) \right)  / \overline{T(K)}_S$ est de $n$-torsion pour toute partie finie S de $X^{(1)}$ et fixons un nombre premier $l$ divisant $n$. Comme $\Sha^{d+2}_{\omega}(\tilde{T})$ est de type cofini et $\Sha^{d+2}(\tilde{T})_{\text{div}} = \Sha^{d+2}_{\omega}(\tilde{T})_{\text{div}}$, il existe une partie finie $S_l \subseteq X^{(1)}$ telle que $\Sha^{d+2}_{\omega}(\tilde{T})\{l\} = \Sha^{d+2}_{S_{l}}(\tilde{T})\{l\}$. On en déduit que, pour chaque partie finie $S$ de $X^{(1)}$ n'intersectant pas $S_l$, on a $\Sha^{d+2}(\tilde{T})\{l\} = \Sha^{d+2}_S(\tilde{T})\{l\}$. Grâce au théorème \ref{AF tore}(i), on en déduit que $$\left( \left( \prod_{v \in S} T(K_v) \right)  / \overline{T(K)}_S \right) \{l\} = 0.$$
Cela prouve que $T(K)$ est dense dans $\prod_{v \in S} T(K_v)$ pour chaque partie finie $S$ de $X^{(1)}$ n'intersectant pas $\bigcup_{l|n} S_l$, et donc $T$ vérifie l'approximation faible faible.
\item[(iii)] Supposons que $\Sha^{d+2}_{\omega}(\tilde{T})$ est dénombrable. Soit $S_0$ l'ensemble des places $v \in X^{(1)}$ telles qu'il existe un élément de $\Sha^{d+2}_{\omega}(\tilde{T})$ dont l'image dans $H^{d+2}(K_v,\tilde{T})$ est non nulle. On sait que $\Sha^{d+2}_{\omega}(\tilde{T}) = \bigcup_{S \subseteq X^{(1)} \text{ fini}} \Sha^{d+2}_S(\tilde{T})$. Comme $\Sha^{d+2}_{\omega}(\tilde{T})$ est dénombrable, il existe une suite $(S_i)_{i \geq 1}$ de parties finies de $X^{(1)}$ telle que $\Sha^{d+2}_{\omega}(\tilde{T}) = \bigcup_{i \geq 1} \Sha^{d+2}_{S_i}(\tilde{T})$. On en déduit que $\Sha^{d+2}_{\omega}(\tilde{T}) \subseteq \Sha^{d+2}_{\bigcup_{i \geq 1} S_i}(\tilde{T})$ et que $S_0 \subseteq \bigcup_{i \geq 1} S_i$. Par conséquent, $S_0$ est dénombrable. De plus, pour chaque partie finie $S$ de $X^{(1)}$ n'intersectant par $S_0$, on a  $\Sha^{d+2}_S(\tilde{T}) = \Sha^{d+2}(\tilde{T})$. Le théorème \ref{AF tore}(i) impose alors que $T(K)$ est dense dans $\prod_{v \in S} T(K_v)$, et donc $T$ vérifie l'approximation faible faible dénombrable.\\
Réciproquement, supposons que $T$ vérifie l'approximation faible faible dénombrable. Soit $S_0$ une partie dénombrable de $X^{(1)}$ telle que, pour chaque partie finie $S$ de $X^{(1)}$ n'intersectant pas $S_0$, le groupe $T(K)$ est dense dans $\prod_{v \in S} T(K_v)$.  D'après le théorème \ref{AF tore}(i), cela impose que, pour une telle partie $S$, on a $\Sha^{d+2}_S(\tilde{T}) = \Sha^{d+2}(\tilde{T})$, d'où une suite exacte:
$$0 \rightarrow \Sha^{d+2}(\tilde{T}) \rightarrow \Sha^{d+2}_{\omega}(\tilde{T}) \rightarrow \bigoplus_{v \in S_0} H^{d+2}(K_v,\tilde{T}).$$
On en déduit que $\Sha^{d+2}_{\omega}(\tilde{T})$ est dénombrable.
\end{itemize}
\end{proof}

\begin{remarque}
\begin{itemize}
\item[(i)] Il existe des tores qui ne vérifient pas l'approximation faible faible dénombrable. C'est par exemple le cas du tore de la proposition 3.5 de \cite{HS2} (cela découle de la preuve de cette proposition et de l'indénombrabilité de $\mathbb{Q}_p$). 
\item[(ii)] Je ne connais pas la réponse à la question suivante mais elle me semble intéressante: existe-t-il des tores vérifiant l'approximation faible faible dénombrable mais ne vérifiant pas l'approximation faible faible?
\end{itemize}
\end{remarque}

\section{\scshape Applications au principe local-global}

Dans toute cette partie, on suppose (H \ref{40}), c'est-à-dire que $X$ est une courbe.

\subsection{Torseurs sous un tore}\label{torseur tore}

Dans cette section, on suppose que $d>0$, c'est-à-dire que $k$ (qui est de caractéristique 0) n'est pas $\mathbb{C}((t))$. Soient $T$ un tore sur $K$ et $Y$ un espace principal homogène sous $T$ tel que $Y(\mathbb{A}_K) \neq \emptyset$. Comme dans la partie 5 de \cite{HS1}, on peut définir:
\begin{align*}
H^{d+2}_{\text{lc}}(Y,\mathbb{Q}/\mathbb{Z}(d+1)) &= \text{Ker}(H^{d+2}(Y,\mathbb{Q}/\mathbb{Z}(d+1))/\text{Im}(H^{d+2}(K,\mathbb{Q}/\mathbb{Z}(d+1)))\\ & \rightarrow \prod_{v \in X^{(1)}} H^{d+2}(Y_{K_v},\mathbb{Q}/\mathbb{Z}(d+1))/\text{Im}(H^{d+2}(K_v,\mathbb{Q}/\mathbb{Z}(d+1)))
\end{align*}
où $Y_{K_v}$ désigne $Y \times_K K_v$, et on peut construire un morphisme $$\rho_Y: H^{d+2}_{\text{lc}}(Y,\mathbb{Q}/\mathbb{Z}(d+1)) \rightarrow \mathbb{Q}/\mathbb{Z}.$$
On prouve alors exactement de la même manière que dans la partie 5 de \cite{HS1} le théorème:

\begin{theorem}\label{local-global}
On rappelle que l'on a supposé que $X$ est une courbe et que $d>0$. Si $\rho_Y$ est trivial, alors $Y(K) \neq \emptyset$.
\end{theorem}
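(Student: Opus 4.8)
The plan is to reduce the statement to the perfect duality of Theorem \ref{PT tore} and to identify $\rho_Y$ with the Poitou--Tate pairing. Since $Y$ is a torsor under $T$, having a $K$-point is equivalent to the vanishing of its class $[Y]\in H^1(K,T)$; moreover the hypothesis $Y(\mathbb{A}_K)\neq\emptyset$ means $Y(K_v)\neq\emptyset$ for every $v\in X^{(1)}$, i.e. $[Y]$ dies in each $H^1(K_v,T)$, so that $[Y]\in\Sha^1(T)$. Recalling that $T$ is quasi-isomorphic to $\check T\otimes^{\mathbf L}\mathbb{Z}(1)[1]$ and writing $\tilde T=\hat T\otimes^{\mathbf L}\mathbb{Z}(d)$, Theorem \ref{PT tore} provides a perfect pairing of finite groups $\Sha^1(T)\times\overline{\Sha^{d+2}(\tilde T)}\to\mathbb{Q}/\mathbb{Z}$. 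Hence it suffices to prove that $[Y]$ is orthogonal to every class of $\Sha^{d+2}(\tilde T)$; the vanishing of $\rho_Y$ will supply exactly this orthogonality.

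First I would construct, exactly as in Section 5 of \cite{HS1}, a natural homomorphism
$$\Theta\colon \Sha^{d+2}(\tilde T)\longrightarrow H^{d+2}_{\mathrm{lc}}(Y,\mathbb{Q}/\mathbb{Z}(d+1)).$$
Starting from the cup-product pairing $T\otimes^{\mathbf L}\tilde T\to\mathbb{Z}(d+1)[1]$ induced by $\check T\otimes\hat T\to\mathbb{Z}$ and $\mathbb{Z}(1)\otimes^{\mathbf L}\mathbb{Z}(d)\to\mathbb{Z}(d+1)$, one pulls a class $z\in H^{d+2}(K,\tilde T)$ back to $Y$ and contracts it against the canonical trivialization of the pullback torsor $\pi^*[Y]$, which vanishes in $H^1(Y,T)$ because $Y$ acquires a tautological point over itself; the indeterminacy of this trivialization is precisely the image of $H^{d+2}(K,\mathbb{Q}/\mathbb{Z}(d+1))$, so one obtains a well-defined class $\Theta(z)$ in $H^{d+2}(Y,\mathbb{Q}/\mathbb{Z}(d+1))/\operatorname{Im}H^{d+2}(K,\mathbb{Q}/\mathbb{Z}(d+1))$. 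When $z$ lies in $\Sha^{d+2}(\tilde T)$, the local triviality of $z$ together with a local point $P_v\in Y(K_v)$ forces the image of $\Theta(z)$ in each $H^{d+2}(Y_{K_v},\mathbb{Q}/\mathbb{Z}(d+1))/\operatorname{Im}H^{d+2}(K_v,\mathbb{Q}/\mathbb{Z}(d+1))$ to vanish, so that $\Theta(z)\in H^{d+2}_{\mathrm{lc}}(Y,\mathbb{Q}/\mathbb{Z}(d+1))$.

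The core of the argument is then the compatibility
$$\rho_Y(\Theta(z))=\langle[Y],z\rangle\qquad(z\in\Sha^{d+2}(\tilde T)),$$
where the right-hand side is the Poitou--Tate pairing of Theorem \ref{PT tore}. Both sides are computed through the fundamental trace isomorphism $H^{d+4}_c(U,\mathbb{Z}(d+1))\cong\mathbb{Q}/\mathbb{Z}$ of Lemma \ref{préliminaire} (here $x=1$): the value of $\rho_Y$ is a sum of local invariants obtained by evaluating $\Theta(z)$ at a choice of adelic point $(P_v)$, while the Poitou--Tate pairing is the sum of the local cup products $\sum_v (\,[Y]_v\cup z_v\,)$, and both sums represent the same global class, their independence of the choices resting on the reciprocity furnished by the vanishing of the sum of local invariants of a globally defined class. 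Establishing this identity is the main obstacle: it requires matching the cohomology-of-$Y$ description of $\rho_Y$ with the Galois-cohomological pairing, which I would do by spreading $Y$ out to a smooth model over a dense open $U\subseteq X$, choosing local points, and running a diagram chase that reduces---via the Kummer sequences and the passage to finite coefficients already used throughout Sections 3 and 4---to the finite-module compatibility underlying Theorems \ref{PT fini} and \ref{PT fini suite}.

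Granting this compatibility the conclusion is immediate: if $\rho_Y$ is trivial, then $\langle[Y],z\rangle=\rho_Y(\Theta(z))=0$ for every $z\in\Sha^{d+2}(\tilde T)$, hence for every class of $\overline{\Sha^{d+2}(\tilde T)}$; the perfectness of the pairing in Theorem \ref{PT tore} then gives $[Y]=0$ in $\Sha^1(T)$, and therefore $Y(K)\neq\emptyset$.
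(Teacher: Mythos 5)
Your proposal is correct and follows essentially the same route as the paper: both reduce to the perfect pairing $\Sha^1(T)\times\overline{\Sha^{d+2}(\tilde T)}\rightarrow\mathbb{Q}/\mathbb{Z}$ of Theorem \ref{PT tore}, construct a map $\Sha^{d+2}(\tilde T)\rightarrow H^{d+2}_{\text{lc}}(Y,\mathbb{Q}/\mathbb{Z}(d+1))$ (your $\Theta$ is the paper's $\tau$), and invoke the compatibility $\rho_Y(\tau(\alpha))=PT([Y],\alpha)$ up to sign, established as in the proposition 5.3 of \cite{HS1}. The only cosmetic difference is that you describe $\Theta$ via pullback to $Y$ and the tautological trivialization of the torsor, whereas the paper obtains it from the Hochschild--Serre edge map together with the identifications $H^1(\overline{Y},\mathbb{Q}/\mathbb{Z}(d+1))\cong\tilde T_t$ and $H^{d+1}(K,\tilde T_t)\cong H^{d+2}(K,\tilde T)$; these yield the same map.
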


\begin{proof} \textit{(Esquisse)}\\
Comme $K$ est de dimension cohomologique $d+2$, la suite spectrale $H^p(K,H^q(\overline{Y},\mathbb{Q}/\mathbb{Z}(d+1))) \Rightarrow H^{p+q}(Y,\mathbb{Q}/\mathbb{Z}(d+1))$ induit un morphisme $H^{d+1}(K,H^1(\overline{Y},\mathbb{Q}/\mathbb{Z}(d+1))) \rightarrow H^{d+2}(Y,\mathbb{Q}/\mathbb{Z}(d+1))/\text{Im}(H^{d+2}(K,\mathbb{Q}/\mathbb{Z}(d+1)))$. De plus, on montre exactement comme dans le lemme 5.2 de \cite{HS1} que $H^1(\overline{Y},\mathbb{Q}/\mathbb{Z}(d+1)) \cong \tilde{T}_t$. La nullité de $H^{d+1}(K,\hat{T} \otimes \mathbb{Q}(d)) $ et $ H^{d+2}(K,\hat{T} \otimes \mathbb{Q}(d))$ permet alors de déduire des isomorphismes $H^{d+1}(K,H^1(\overline{Y},\mathbb{Q}/\mathbb{Z}(d+1))) \cong H^{d+1}(K,\tilde{T}_t) \cong H^{d+2}(K,\tilde{T})$, d'où un morphisme $ H^{d+2}(K,\tilde{T}) \rightarrow H^{d+2}(Y,\mathbb{Q}/\mathbb{Z}(d+1))/\text{Im}(H^{d+2}(K,\mathbb{Q}/\mathbb{Z}(d+1)))$. En passant aux éléments localement triviaux, on obtient un morphisme $\tau: \Sha^{d+2}(\tilde{T}) \rightarrow H^{d+2}_{\text{lc}}(Y,\mathbb{Q}/\mathbb{Z}(d+1))$. \\
En procédant exactement comme dans la proposition 5.3 de \cite{HS1}, on peut montrer que, si $\alpha$ est un élément de $\Sha^{d+2}(\tilde{T})$, $[Y]$ désigne la classe de $Y$ dans $\Sha^1(T)$ et $PT(.,.): \Sha^1(T) \times \Sha^{d+2}(\tilde{T}) \rightarrow \mathbb{Q}/\mathbb{Z}$ désigne l'accouplement du théorème \ref{PT tore}, alors $\rho_Y(\tau(\alpha)) = PT([Y],\alpha)$ au signe près. Ainsi, grâce au théorème \ref{PT tore}, on déduit que si $\rho_Y$ est trivial, alors $[Y]=0$ et donc $Y(K) \neq \emptyset$.
\end{proof}

\subsection{Torseurs sous un groupe réductif}

\subsubsection{Cas $d=0$}

Dans cette section, on suppose que $k=\mathbb{C}((t))$ (et donc que $d=0$). Nous allons suivre de près les méthodes développées par Borovoi dans l'article \cite{Bor} afin de montrer que, pour chaque $K$-groupe linéaire réductif connexe $H$, il existe un accouplement non dégénéré $\text{BM}: \Sha^1(H) \times \overline{\Brusse(H)} \rightarrow \mathbb{Q}/\mathbb{Z}$. En fait, un tel accouplement a déjà été étudié dans la partie 10 de l'article \cite{CTH}: les méthodes utilisées dans ce dernier, fondées sur les revêtements spéciaux, permettent d'établir la non-dégénérescence à gauche mais pas la non-dégénérescence à droite.\\

Considérons $H$ un groupe réductif connexe sur $K$, et notons $H^{ss}$ son sous-groupe dérivé et $H^{sc}$ son revêtement universel, qui est semi-simple simplement connexe. Soit $T$ un tore maximal de $H$. Notons $T^{(sc)}$ l'image réciproque de $T$ par le morphisme composé $\rho: H^{sc} \rightarrow H^{ss} \rightarrow H$ et $G = [T^{(sc)} \rightarrow T]$ le cône de $T^{(sc)} \rightarrow T$. Pour $L$ une extension de $K$, on définit la cohomologie galoisienne abélienne de $H$ par $H^r_{\text{ab}}(L,H) = H^r(L,G)$. On rappelle que, dans la section 3 de \cite{Bor}, Borovoi a construit, pour chaque extension $L$ de $K$, des morphismes d'abélianisation:
\begin{align*}
\text{ab}^0_L:& H(L) \rightarrow H^0_{\text{ab}}(L,H)\\
\text{ab}^1_L:& H^1(L,H) \rightarrow H^1_{\text{ab}}(L,H).
\end{align*}
Dans la suite, nous nous intéressons au morphisme $\text{ab}^1_L$ avec $L=K$ et avec $L= K_v$ pour un certain $v \in X^{(1)}$. Commençons par montrer qu'il est injectif.

\begin{theorem} \textbf{(Cas particulier de la conjecture de Serre II)}\\
Rappelons que nous avons supposé que $X$ est une courbe sur $k=\mathbb{C}((t))$. Supposons de plus que $H$ soit semisimple simplement connexe.
\begin{itemize}
\item[(i)] Pour chaque $v \in X^{(1)}$, l'ensemble pointé $H^1(K_v,H)$ est trivial.
\item[(ii)] L'ensemble pointé $H^1(K,H)$ est trivial.
\end{itemize}
\end{theorem}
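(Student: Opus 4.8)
L'approche consiste à observer que $K$ et $K_v$ sont des corps parfaits de dimension cohomologique au plus $2$, de sorte que l'énoncé est un cas de la conjecture de Serre II, cas qui est connu pour les corps géométriques considérés ici. Le plan est donc de calculer d'abord les dimensions cohomologiques, puis d'identifier chacun des deux corps à un corps pour lequel la conjecture de Serre II est disponible. Le corps $\mathbb{C}$ étant de dimension cohomologique $0$ et $k=\mathbb{C}((t))$ étant complet pour une valuation discrète de corps résiduel $\mathbb{C}$, on a $\mathrm{cd}(k)=1$. Comme $K$ est de degré de transcendance $1$ sur $k$, on en déduit $\mathrm{cd}(K)\leq \mathrm{cd}(k)+1=2$. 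De même, pour $v\in X^{(1)}$, le corps résiduel $k(v)$ est une extension finie de $k$, donc $\mathrm{cd}(k(v))=1$, et puisque $K_v$ est complet pour une valuation discrète de corps résiduel $k(v)$, on a $\mathrm{cd}(K_v)=\mathrm{cd}(k(v))+1=2$. Tous ces corps étant de caractéristique $0$, ils sont parfaits.

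Pour l'assertion (i), je commencerais par expliciter la structure de $K_v$. Complet pour une valuation discrète d'égale caractéristique nulle et de corps résiduel $k(v)\cong \mathbb{C}((s))$, le corps $K_v$ est isomorphe au corps de séries de Laurent itérées $k(v)((\pi))\cong \mathbb{C}((s))((\pi))$. Or la conjecture de Serre II est établie pour de tels corps --- et plus généralement pour tout corps complet pour une valuation discrète dont le corps résiduel est de dimension cohomologique au plus $1$ --- par Colliot-Thélène, Gille et Parimala. On peut d'ailleurs en esquisser une preuve directe par la théorie de Bruhat--Tits: le corps résiduel $k(v)$ étant de dimension cohomologique au plus $1$, le théorème de Steinberg (conjecture de Serre I) assure la trivialité de $H^1(k(v),-)$ pour les groupes linéaires connexes, et une réduction convenable à la fibre spéciale d'un modèle parahorique permet d'en déduire la trivialité de $H^1(K_v,H)$.

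Pour l'assertion (ii), le corps $K=k(X)$ est le corps des fonctions d'une courbe projective lisse sur $k=\mathbb{C}((t))$: c'est exactement un corps géométrique de dimension $2$ au sens de Colliot-Thélène, Gille et Parimala, pour lequel ces auteurs démontrent la conjecture de Serre II. La trivialité de $H^1(K,H)$ en résulte immédiatement, sans qu'il soit nécessaire de faire appel à un principe local-global: contrairement au cas $p$-adique de \cite{HS1}, où le corps est de dimension cohomologique $3$ et où la conjecture de Serre II ne s'applique pas, la faible dimension cohomologique obtenue ici avec $k_0=\mathbb{C}((t))$ est précisément ce qui rend l'argument disponible.

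Le point délicat sera de disposer de la conjecture de Serre II sans aucune restriction sur le type du groupe semi-simple simplement connexe $H$, notamment pour les facteurs de type exceptionnel (en particulier $E_8$): c'est là que réside toute la difficulté, et il faudra s'assurer que les résultats cités couvrent bien l'ensemble des types sur les corps $\mathbb{C}((s))((\pi))$ et $\mathbb{C}((t))(X)$. Il conviendra par ailleurs de vérifier avec soin que les hypothèses de ces théorèmes sont satisfaites et, si l'on souhaite une preuve autonome de (i), de détailler la descente de Bruhat--Tits depuis le corps résiduel, en contrôlant le cas où $H$ n'a pas nécessairement de modèle à réduction quasi-déployée.
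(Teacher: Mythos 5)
Votre démarche est pour l'essentiel celle du texte: pour (i), vous identifiez $K_v \cong \mathbb{C}((s))((\pi))$ et invoquez la descente de Bruhat--Tits depuis le corps résiduel parfait de dimension cohomologique $1$, ce qui est exactement la référence du texte (théorème 4.7 de \cite{BT}); pour (ii), vous renvoyez au théorème de Colliot-Thélène--Gille--Parimala, comme le texte. Le calcul préliminaire des dimensions cohomologiques est correct mais n'est pas, en lui-même, le point décisif.

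La véritable lacune se trouve dans la justification de (ii). D'une part, l'affirmation selon laquelle $K$ serait \emph{exactement} un corps géométrique de dimension $2$ et que ces auteurs y démontrent la conjecture de Serre II en toute généralité est inexacte: $K=\mathbb{C}((t))(X)$ n'est pas un corps de fonctions de surface sur un corps algébriquement clos (le degré de transcendance de $\mathbb{C}((t))$ sur $\mathbb{C}$ est infini), et pour les authentiques corps de fonctions de surfaces complexes, l'article \cite{CGR} ne couvre pas tous les types --- le cas $E_8$ sur de tels corps n'a été réglé que plus tard (de Jong--He--Starr). Ce qui s'applique ici est la partie (v) du théorème 1.2 de \cite{CGR}, énoncée pour tout corps de caractéristique $0$ qui est $C_2$ et dont l'extension abélienne maximale est de dimension cohomologique $\leq 1$; sous ces hypothèses, tous les types, y compris $E_8$, sont couverts. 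C'est précisément cette vérification que fait le texte et que votre argument ne contient pas: $\mathbb{C}((t))$ est $C_1$ (théorème 10 de \cite{Lan}), donc $K$ est $C_2$ (théorème 6 de \cite{Lan}, complété par \cite{Nag}); et $\mathrm{cd}(K^{\mathrm{ab}}) \leq 1$, car toute extension finie de $\mathbb{C}((t))$ étant cyclique de la forme $\mathbb{C}((t^{1/n}))$, le composé de $K$ avec la clôture algébrique de $\mathbb{C}((t))$ est abélien sur $K$ et c'est un corps de fonctions d'une courbe sur un corps algébriquement clos, de dimension cohomologique $\leq 1$ par Tsen. Vous aviez bien identifié $E_8$ comme le point délicat, mais vous n'exhibez pas le mécanisme qui le débloque --- la petitesse cohomologique de $K^{\mathrm{ab}}$, propre aux corps du type $\mathbb{C}((t))(X)$ et absente pour $\mathbb{C}(x,y)$ ---, de sorte que la réduction à un résultat effectivement disponible en tous types reste à faire dans votre rédaction.
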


\begin{proof}
\begin{itemize}
\item[(i)] On pourra aller voir le théorème 4.7 de \cite{BT}.
\item[(ii)] D'après le théorème 10 de \cite{Lan}, le corps $\mathbb{C}((t))$ est $C_1$. En utilisant en plus le théorème 6 de \cite{Lan} complété par \cite{Nag}, le corps $K$ est un corps $C_2$ de caractéristique 0 et son extension abélienne maximale est de dimension cohomologique au plus 1. La partie (v) du théorème 1.2 de \cite{CGR} permet alors de conclure.
\end{itemize}
\end{proof}

\begin{corollary}\label{inj}
On rappelle que l'on a supposé que $X$ est une courbe sur $k=\mathbb{C}((t))$.
\begin{itemize}
\item[(i)] Pour chaque $v \in X^{(1)}$, le morphisme d'abélianisation $\text{ab}^1_{K_v}$ est injectif.
\item[(ii)] Le morphisme d'abélianisation $\text{ab}^1_{K}$ est injectif.
\end{itemize}
\end{corollary}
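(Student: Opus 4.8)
Le plan est de traiter simultanément les deux assertions, en posant $L = K_v$ pour $v \in X^{(1)}$ dans le cas (i) et $L = K$ dans le cas (ii): le seul ingrédient spécifique est en effet la trivialité de la cohomologie des groupes semi-simples simplement connexes fournie par le théorème précédent, qui vaut pour ces deux corps. D'abord, je rappellerais la description des fibres du morphisme d'abélianisation due à Borovoi (\cite{Bor}). Pour un cocycle $z \in Z^1(L,H)$, la torsion par l'image $\overline{z}$ de $z$ dans le groupe adjoint $H^{\text{ad}}$ fournit une forme intérieure ${}_zH$ de $H$, son revêtement universel ${}_zH^{sc}$, et une bijection de torsion $H^1(L,{}_zH) \xrightarrow{\sim} H^1(L,H)$ envoyant la classe neutre sur $[z]$ et compatible avec les morphismes d'abélianisation. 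La fibre de $\text{ab}^1_L$ passant par $[z]$ s'identifie alors à l'image de $H^1(L,{}_zH^{sc})$ dans $H^1(L,{}_zH) \cong H^1(L,H)$.

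Ensuite, j'observerais que ${}_zH^{sc}$, étant une forme intérieure de $H^{sc}$, est encore un $L$-groupe semi-simple simplement connexe. Le théorème précédent (cas particulier de la conjecture de Serre II), qui s'applique à tout $K$-groupe ou $K_v$-groupe semi-simple simplement connexe, impose donc que l'ensemble pointé $H^1(L,{}_zH^{sc})$ est trivial. Par conséquent, toute fibre non vide de $\text{ab}^1_L$ est réduite à un point, ce qui établit exactement l'injectivité de $\text{ab}^1_L$ et achève la preuve de (i) et (ii).

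Le point délicat sera de justifier proprement que c'est bien une torsion \emph{intérieure} qui intervient, c'est-à-dire une torsion par un cocycle à valeurs dans $H^{\text{ad}}$: c'est ce qui garantit que ${}_zH^{sc}$ demeure une forme intérieure de $H^{sc}$, de sorte que le théorème précédent, énoncé pour un groupe semi-simple simplement connexe arbitraire, s'applique à toutes les fibres et non seulement au noyau, c'est-à-dire à la fibre au-dessus de la classe neutre. C'est précisément ce passage par la torsion qui transforme l'énoncé ponctuel « le noyau de $\text{ab}^1_L$ est trivial » en l'injectivité globale recherchée.
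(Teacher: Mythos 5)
Your proposal is correct and is essentially the paper's own proof: the paper disposes of this corollary in one line, invoking \og un argument de torsion \fg{} analogue to Corollary 5.4.1 of \cite{Bor}, which is exactly the inner-twisting description of the fibers of $\text{ab}^1_L$ that you spell out (twisting bijection $H^1(L,{}_zH)\xrightarrow{\sim}H^1(L,H)$ compatible with abelianization, fibers identified with images of $H^1(L,{}_zH^{sc})$). Your closing remark --- that the twist is by a cocycle valued in $H^{\text{ad}}$, so that each ${}_zH^{sc}$ is again semisimple simply connected and the preceding theorem applies to every fiber rather than only to the kernel --- is precisely the content the paper leaves implicit in its citation of Borovoi.
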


\begin{proof}
La preuve découle du théorème précédent par un argument de torsion. Elle est tout à fait analogue à celle du corollaire 5.4.1 de \cite{Bor}.
\end{proof}

Quant à la surjectivité des morphismes d'abélianisation, elle découle des travaux de González-Avilés (\cite{GA}):

\begin{theorem} \label{surj}
On rappelle que l'on a supposé que $X$ est une courbe sur $k=\mathbb{C}((t))$.
\begin{itemize}
\item[(i)] Pour chaque $v \in X^{(1)}$, le morphisme d'abélianisation $\text{ab}^1_{K_v}$ est surjectif.
\item[(ii)] Le morphisme d'abélianisation $\text{ab}^1_{K}$ est surjectif.
\end{itemize}
\end{theorem}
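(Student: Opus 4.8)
Le plan est de déduire la surjectivité des morphismes d'abélianisation des résultats de González-Avilés (\cite{GA}), qui généralisent à des corps assez généraux les théorèmes d'abélianisation de Borovoi (\cite{Bor}). Rappelons que, dans la théorie de Borovoi, la surjectivité de $\text{ab}^1_L$ pour un $L$-groupe réductif connexe $H$ est gouvernée par une obstruction vivant dans la cohomologie non abélienne $H^2(L,H^{sc})$, et qu'un argument de torsion la ramène à l'annulation de $H^1(L,H_1^{sc})$ pour toute forme intérieure $H_1$ de $H$. Or toute forme intérieure d'un groupe semi-simple simplement connexe est encore semi-simple simplement connexe, de sorte que cette annulation est exactement le cas particulier de la conjecture de Serre II établi au théorème précédent. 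C'est ce lien que le plan exploite.

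Concrètement, je commencerais par vérifier que $K$ et chacun des $K_v$ relèvent du cadre de \cite{GA}: ce sont des corps de caractéristique 0, et leur dimension cohomologique vaut $d+2=2$ (pour $K$, c'est la valeur rappelée dans le cadre; pour $K_v$, le corps résiduel $k(v)$ est une extension finie de $k=\mathbb{C}((t))$, donc $0$-local de dimension cohomologique 1, ce qui force $\text{cd}(K_v)=2$). J'appliquerais ensuite le théorème de surjectivité de González-Avilés, dont l'hypothèse --- l'annulation de $H^1(L,H')$ pour tout $L$-groupe semi-simple simplement connexe $H'$ --- est précisément fournie, pour $L=K$ et pour $L=K_v$, par le théorème précédent. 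On en conclurait directement la surjectivité de $\text{ab}^1_K$ et de $\text{ab}^1_{K_v}$.

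Le point délicat sera de s'assurer que l'hypothèse de type Serre II vaut uniformément sur toutes les formes intérieures produites par l'argument de torsion sous-jacent à \cite{GA}, et pas seulement pour le revêtement $H^{sc}$ de départ. Ceci ne pose en fait pas de difficulté ici, puisque le théorème précédent affirme l'annulation $H^1(L,H')=*$ pour tout groupe semi-simple simplement connexe $H'$, sans aucune restriction sur $H'$: toutes les torsions successives restent donc couvertes. Une fois cette vérification faite, la conclusion est immédiate et ne requiert que la transcription des hypothèses de González-Avilés dans notre situation, exactement comme pour l'injectivité traitée au corollaire \ref{inj}.
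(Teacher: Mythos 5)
Votre réduction centrale est fausse, et elle ne correspond pas à l'hypothèse réellement utilisée dans \cite{GA}. Le théorème 5.5(i) de \cite{GA} ne suppose pas l'annulation de $H^1(L,H')$ pour les groupes semi-simples simplement connexes: il suppose que le corps est \emph{de type de Douai} (définition 5.2 de \cite{GA}), c'est-à-dire que toute classe de $H^2(L,\text{lien}(G))$, pour $G$ semi-simple simplement connexe, est neutre. Or cette condition de neutralité en degré 2 ne se ramène pas ``par torsion'' à l'annulation de $H^1$ des formes intérieures de $H^{sc}$: tordre par un 1-cocycle déplace le point base en degré 1 --- c'est exactement le mécanisme qui donne l'\emph{injectivité} au corollaire \ref{inj} --- mais ne neutralise pas une gerbe. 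Concrètement, pour $G$ semi-simple simplement connexe de centre $Z$, $H^2(L,\text{lien}(G))$ est (lorsqu'il est non vide) un espace principal homogène sous $H^2(L,Z)$, et les classes neutres correspondent aux images des bords $H^1(L,G'_{\text{ad}}) \rightarrow H^2(L,Z)$ associés aux formes \emph{adjointes} tordues: être de type de Douai est donc une propriété de surjectivité vers $H^2(L,Z)$, qui ne concerne pas les $H^1$ des formes simplement connexes. Pour le type $A$, où $Z=\mu_n$, elle dit précisément que toute classe de Brauer d'exposant divisant $n$ est représentée par une algèbre de degré $n$, autrement dit que l'indice coïncide avec l'exposant; l'annulation de $H^1(L,SL_1(A))$, elle, est automatique en dimension cohomologique $\leq 2$ (Merkurjev--Suslin) et ne dit rien sur cette question de période-indice.

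C'est là l'ingrédient arithmétique essentiel que votre proposition omet entièrement. La preuve consiste à vérifier que $K$ et les $K_v$ sont de type de Douai en appliquant le théorème 2.1(a) de \cite{CGR}, dont les hypothèses sont: caractéristique 0, dimension cohomologique 2, extension abélienne maximale de dimension cohomologique au plus 1, et coïncidence de l'exposant et de l'indice pour les algèbres simples centrales sur les extensions finies de $K$ (resp. $K_v$) --- ce dernier point étant un théorème profond (p. 350 de \cite{Par}, ou théorème 5.5 de \cite{HHK2}, par patching) qui ne découle pas de la conjecture de Serre II. Le théorème 5.5(i) de \cite{GA} donne alors la surjectivité, la caractéristique étant nulle. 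Le théorème précédent (Serre II pour $K$ et $K_v$) n'intervient que pour l'injectivité du corollaire \ref{inj}; votre phrase finale, qui calque la surjectivité sur ce corollaire, confond les deux mécanismes, et telle quelle votre preuve ne peut pas être réparée sans réintroduire l'entrée période-indice.
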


\begin{proof}
Rappelons que le corps $K$ (resp. $K_v$ pour $v \in X^{(1)}$) est de type de Douai (voir définition 5.2 de \cite{GA}) d'après le théorème 2.1(a) de \cite{CGR}, puisque $K$ (resp. $K_v$) est un corps de caractéristique 0 de dimension cohomologique 2 dont l'extension abélienne maximale est de dimension cohomologique au plus 1 et l'exposant et l'indice coïncident pour les algèbres simples centrales sur des extensions finies de $K$ (resp. $K_v$) (voir p. 350 de \cite{Par} ou le théorème 5.5 de \cite{HHK2}). Cela permet de déduire la surjectivité des morphismes d'abélianisation du théorème 5.5(i) de \cite{GA}, puisque nous sommes en caractéristique 0.
\end{proof}

Nous allons voir que les propriétés que nous venons d'établir concernant les morphismes d'abélianisation permettent d'étudier l'obstruction au principe local-global.\\
Soit $Y$ un espace principal homogène sous $H$ tel que, pour chaque $v \in X^{(1)}$, on a $Y(K_v) \neq \emptyset$. En considérant un modèle géométriquement intègre de $Y$ sur un ouvert non vide de $X$, on peut définir un accouplement de type Brauer-Manin:
$$[.,.]: Y(\mathbb{A}_K) \times \Brusse(Y) \rightarrow \mathbb{Q}/\mathbb{Z},$$
 et on remarque que, pour $\alpha \in \Brusse(Y)$, la valeur de $[(P_v),\alpha]$ ne dépend pas du choix du point adélique $(P_v) \in Y(\mathbb{A}_K)$. Comme on dispose d'un isomorphisme canonique $\Brusse(Y) \rightarrow \Brusse(H)$ (voir le lemme 5.2(iii) de \cite{BVH}), cela permet de définir un accouplement:
\begin{align*}
\text{BM}: \Sha^1(H) \times \Brusse(H) & \rightarrow \mathbb{Q}/\mathbb{Z}\\
([Y],\alpha) &\mapsto [(P_v),\alpha]
\end{align*}
où $(P_v) \in Y(\mathbb{A}_K)$ est un point adélique quelconque.\\
D'autre part, le noyau du morphisme $T^{(sc)} \rightarrow T$ étant fini, le corollaire \ref{cor} fournit un accouplement parfait de type Poitou-Tate:
$$\text{PT}: \Sha^1(G) \times \overline{\Sha^1(\tilde{G})} \rightarrow \mathbb{Q}/\mathbb{Z}.$$
De plus, nous disposons d'isomorphismes permettant de comparer les deux accouplements précédents:
\begin{itemize}
\item[$\bullet$] d'après le corollaire \ref{inj} et le théorème \ref{surj}, les morphismes d'abélianisation induisent une bijection $\text{ab}^1: \Sha^1(H) \rightarrow \Sha^1(G)$,
\item[$\bullet$] d'après le corollaire 2.20 et le théorème 4.8 de \cite{BVH}, on dispose d'un isomorphisme $B: \Brusse(H) \rightarrow \Sha^1(\tilde{G})$.
\end{itemize}
Le lemme suivant montre que les accouplements de Brauer-Manin et de Poitou-Tate sont compatibles:

\begin{lemma}
On rappelle que l'on a supposé que $X$ est une courbe sur $k=\mathbb{C}((t))$.
Le diagramme:\\
\centerline{\xymatrix{
\text{BM}: & \Sha^1(H) \ar[d]^{\text{ab}^1} \ar@{}[r]|{\times}   & \overline{\Brusse(H)}\ar[d]^{B} \ar[r] & \mathbb{Q}/\mathbb{Z}\ar@{=}[d]\\
\text{PT}: &\Sha^1(G) \ar@{}[r]|{\times}  & \overline{\Sha^1(\tilde{G})} \ar[r] & \mathbb{Q}/\mathbb{Z}.
}}
est commutatif au signe près.
\end{lemma}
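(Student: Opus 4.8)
The plan is to follow closely the method of Borovoi (section 5 de \cite{Bor}), transposed to the present setting exactly as the analogous compatibility is treated by Harari--Szamuely and in la partie 10 de \cite{CTH}. Both $\text{BM}$ and $\text{PT}$ are, by construction, sums over $v \in X^{(1)}$ of local invariants valued in $\mathbb{Q}/\mathbb{Z}$; the strategy is therefore to reduce the commutativity of the square to a purely local identity at each place $v$, and then to sum. Since $d = 0$ here, the relevant coefficient complex is $\mathbb{Z}(d+1) = \mathbb{Z}(1)$, quasi-isomorphic to $\mathbb{G}_m[-1]$, so that the local invariants are the usual invariants $\text{inv}_v \colon \text{Br}(K_v) \xrightarrow{\sim} \mathbb{Q}/\mathbb{Z}$ of the $1$-local fields $K_v$, and it is this coincidence that makes the two formalisms directly comparable.

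First I would write out both pairings explicitly. On the Poitou--Tate side, by le lemme \ref{acc gm} the pairing $\text{PT}$ is induced by the cup-product $\mathcal{G} \otimes^{\mathbf{L}} \tilde{\mathcal{G}} \to \mathbb{Z}(d+1)[2]$, followed by the global trace $H^{d+2x+2}_c(U,\mathbb{Z}(d+x)) \cong \mathbb{Q}/\mathbb{Z}$ of le lemme \ref{préliminaire}; concretely, for a class in $\Sha^1(G)$ one chooses a global lift with trivial localizations and pairs it, place by place via the local duality established in \ref{local gm 2}, against the local components of a class in $\Sha^1(\tilde{G})$. On the Brauer--Manin side, given $[Y] \in \Sha^1(H)$ and $\alpha \in \Brusse(H)$, one transports $\alpha$ to $\Brusse(Y)$ via the canonical isomorphism of \cite{BVH}, chooses an adelic point $(P_v) \in Y(\mathbb{A}_K)$ (which exists since each $Y(K_v)$ is non-empty), and obtains $\text{BM}([Y],\alpha) = \sum_v \text{inv}_v(\alpha(P_v))$, the sum being independent of $(P_v)$ precisely because $\alpha$ is locally constant.

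The heart of the argument is the local comparison: for each $v$, I would show that $\text{inv}_v(\alpha(P_v))$ equals the value of the local cup-product pairing on $\text{ab}^1_{K_v}([Y]_v)$ and $B(\alpha)_v$. This is the cocycle-level formula expressing the evaluation of an algebraic Brauer class at a point of a torsor in terms of the \emph{abelianized} class of that torsor; it is exactly here that the construction of $B$ (corollaire 2.20 et théorème 4.8 de \cite{BVH}) and the compatibility of the abelianization morphisms $\text{ab}^1$ with cup-products enter. A decisive preliminary point, which I would isolate first, is that $\text{BM}$ factors through abelianization at all: the evaluation $\alpha(P_v)$ depends on $P_v$ only through $\text{ab}^0_{K_v}(P_v)$, so that $\text{BM}$ is genuinely determined by the abelian data $(\text{ab}^1([Y]), B(\alpha))$ and may legitimately be placed alongside $\text{PT}$.

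The main obstacle is this local cocycle identity, because the bijection $\text{ab}^1$ and the isomorphism $B$ are produced by genuinely different mechanisms --- the former through the crossed module $[T^{(sc)} \to T]$, the latter through the description of $\Brusse$ in terms of $\tilde{G}$ in \cite{BVH} --- and one must verify that they are mutually adjoint for the two pairings. I would overcome this by reducing, via a twisting argument of the kind used in le corollaire \ref{inj}, to a statement about the abelian complexes $G$ and $\tilde{G}$ alone, where the pairing is literally the cup-product of le lemme \ref{acc gm}; one then traces the definition of $B$ to identify the evaluation map with this cup-product, and the functoriality of the local invariants established in \ref{local gm 2} guarantees that the place-by-place identities assemble correctly into the global one. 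The resulting equality holds up to the sign coming from the discrepancy between the cup-product convention of \ref{acc gm} and the evaluation convention of the Brauer--Manin pairing, which accounts for the phrase ``au signe près'' in the statement.
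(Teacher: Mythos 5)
Your reduction to a place-by-place identity cannot work as stated, and this is the central gap. Since $[Y] \in \Sha^1(H)$, every localization $[Y]_v$ is the trivial torsor, hence $\text{ab}^1_{K_v}([Y]_v) = 0$; likewise $B(\alpha) \in \Sha^1(\tilde{G})$ has vanishing image in every $H^1(K_v,\tilde{G})$. So the ``local cup-product pairing on $\text{ab}^1_{K_v}([Y]_v)$ and $B(\alpha)_v$'' that you place at the heart of the argument is identically zero at each $v$, whereas $\text{inv}_v(\alpha(P_v))$ is not: your proposed local identity is false as written. The underlying premise — that $\text{PT}$ is ``by construction a sum over $v \in X^{(1)}$ of local invariants'' — does not match the paper: the Poitou--Tate pairing here is produced globally, from the cup product $\mathcal{G} \otimes^{\mathbf{L}} \tilde{\mathcal{G}} \to \mathbb{Z}(d+1)[2]$ of lemme \ref{acc gm} paired through $H^1(U,\mathcal{G}) \times H^1_c(U,\tilde{\mathcal{G}}) \to H^3_c(U,\mathbb{G}_m) \cong \mathbb{Q}/\mathbb{Z}$ (théorème \ref{AV gm}, lemme \ref{préliminaire}), with the $\Sha$-duality extracted by a limit over opens $U \subseteq X$. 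One could try to set up a cochain-level recipe summing local pairings of local \emph{trivializations} against a global representative (with Weil reciprocity ensuring well-definedness), but you would then owe a proof that this recipe computes the Artin--Verdier pairing actually used — and that comparison, the hardest point, is absent from your sketch. Your preliminary claim that $\text{BM}$ factors through $\text{ab}^0_{K_v}$ is likewise asserted rather than proved, and is not needed in the paper's argument.

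The paper's proof never localizes at all. Following \cite{HS3} and \cite{BVH}, it spreads everything out over an open $U$, introduces the extended Picard complexes $\mathcal{KD}'(\mathcal{Y})$, $\mathcal{KD}'(\mathcal{H})$, the cone complexes $C(\mathcal{H})$, $C(\mathcal{T})$, and builds a ladder of compatible pairings $\text{Ext}^1_U(-,\mathbb{G}_m[1]) \times H^1_c(U,-) \to H^3_c(U,\mathbb{G}_m) \cong \mathbb{Q}/\mathbb{Z}$ linking $\mathcal{KD}'(\mathcal{Y})$ down to $\tilde{\mathcal{G}}$ and $\mathcal{G}$. Two inputs then close the argument: the identity $\text{BM}([Y],\alpha) = \mathcal{E}_Y \cup \alpha_U$, proved as in proposition 3.3 of \cite{HS3} — where the only local-global ingredient is Weil reciprocity, furnishing a map $\text{Coker}\bigl(\text{Br}(K) \to \bigoplus_{v \in X^{(1)}} \text{Br}(K_v)\bigr) \to \mathbb{Q}/\mathbb{Z}$ in place of injectivity of the localization map — and théorème 5.5 of \cite{BVH}, which identifies $\text{ab}^1([Y])$ with $-E_Y$ under $H^1(K,G) \cong \text{Ext}^1_K(KD'(Y),\mathbb{G}_m[1])$; this minus sign is precisely the ``au signe près''. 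You correctly invoke the \cite{BVH} ingredients defining $B$, but you supply no substitute for these two steps: in particular, the ``twisting argument of the kind used in corollaire \ref{inj}'' serves there to deduce injectivity of $\text{ab}^1$ from the vanishing of $H^1$ of simply connected groups, and has no mechanism for converting the evaluation of a Brauer class at an adelic point into a cup product — that conversion is exactly the content of the $\mathcal{E}_Y$/$E_Y$ machinery your proposal bypasses.
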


Avant de passer à la preuve du lemme précédent, introduisons les notations suivantes, calquées de l'article \cite{HS3}:

\begin{notation}
Lorsque $Y$ est une variété lisse géométriquement intègre sur $K$ et $\mathcal{Y}$ un modèle lisse géométriquement intègre de $Y$ sur un ouvert non vide $U_0$ de $X$:
\begin{itemize}
\item[$\bullet$] $\pi^Y: Y \rightarrow \text{Spec}\; K$ et $\pi^{\mathcal{Y}}:\mathcal{Y} \rightarrow U_0$ sont les morphismes structuraux,
\item[$\bullet$] $\overline{Y}$ désigne $Y \times_k k^s$,
\item[$\bullet$] $KD(Y)=[k^s(\overline{Y})^{\times} \rightarrow \text{Div}(\overline{Y})]=\tau_{\leq 1}\mathbb{R}\pi^Y_*\mathbb{G}_m[1]$,
\item[$\bullet$] $KD'(Y)=[k^s(\overline{Y})^{\times}/{k^s}^{\times} \rightarrow \text{Div}(\overline{Y})]=[\mathbb{G}_m \rightarrow \tau_{\leq 1}\mathbb{R}\pi^Y_*\mathbb{G}_m][1]$,
\item[$\bullet$] $\mathcal{KD}(\mathcal{Y})=\tau_{\leq 1}\mathbb{R}\pi^{\mathcal{Y}}_*\mathbb{G}_m[1]$,
\item[$\bullet$] $\mathcal{KD}'(\mathcal{Y})=[\mathbb{G}_m \rightarrow \tau_{\leq 1}\mathbb{R}\pi^{\mathcal{Y}}_*\mathbb{G}_m][1]$.
\end{itemize}
\end{notation}

\begin{proof}
Soient $[Y] \in \Sha^1(H)$ et $\alpha \in \Brusse(H)$. On notera aussi $\alpha$ l'image réciproque de $\alpha$ par l'isomorphisme $\Brusse(Y) \rightarrow \Brusse(H)$. Soit $U_0$ un ouvert non vide de $X$ tel qu'il existe:
\begin{itemize}
\item[$\bullet$] $\mathcal{Y}$ un modèle géométriquement intègre de $Y$ sur $U_0$,
\item[$\bullet$] $\mathcal{H}$ (resp. $\mathcal{H}^{(sc)}$) un groupe réductif sur un ouvert non vide de $U_0$ étendant $H$ (resp. $H^{sc}$),
\item[$\bullet$] un morphisme $\mathcal{H}^{(sc)} \rightarrow \mathcal{H}$ étendant $H^{sc} \rightarrow H$,
\item[$\bullet$] $\mathcal{T}$ (resp. $\mathcal{T}^{(sc)}$) un tore sur $U_0$ étendant $T$ (resp. $T^{(sc)}$),
\item[$\bullet$] un morphisme $\mathcal{T}^{(sc)} \rightarrow \mathcal{T}$ étendant $T^{(sc)} \rightarrow T$,
\item[$\bullet$] un morphisme $\mathcal{T}^{(sc)} \rightarrow \mathcal{H}^{(sc)}$ étendant $T^{(sc)} \hookrightarrow H^{sc}$,
\item[$\bullet$] un morphisme $\mathcal{T} \rightarrow \mathcal{H}$ étendant $T \hookrightarrow H$,
\item[$\bullet$] un morphisme $\mathcal{H} \times_{U_0} \mathcal{Y} \rightarrow \mathcal{Y}$ étendant l'action $H \times_K Y \rightarrow Y$ de $H$ sur $Y$,
\end{itemize}
et tel que le diagramme:\\
\centerline{\xymatrix{
\mathcal{T}^{(sc)} \ar[r]\ar[d] & \mathcal{T}\ar[d]\\
\mathcal{H}^{(sc)} \ar[r] & \mathcal{H}
}}
est commutatif. On adopte en outre les notations suivantes:
\begin{itemize}
\item[$\bullet$] $\mathcal{G} = [\mathcal{T}^{(sc)} \rightarrow \mathcal{T}]$,
\item[$\bullet$] $U$ désigne un ouvert de $U_0$ tel que $\alpha$ s'étend en un élément $\alpha_U \in H^1_c(U,\mathcal{KD}'(\mathcal{Y}))$ et tel que $\text{ab}^1([Y])$ s'étend en un élément de $H^1(U,\mathcal{G})$,
\item[$\bullet$] $\mathcal{E}_Y$ désigne la classe du morphisme naturel $\mathcal{KD}'(\mathcal{Y}) \rightarrow \mathbb{G}_m[2]$ dans le groupe $\text{Ext}^1_{U}(\mathcal{KD}'(\mathcal{Y})),\mathbb{G}_m[1])$, 
\item[$\bullet$] $C(\mathcal{H})=[\mathcal{KD}'(\mathcal{H}) \rightarrow \mathcal{KD}'(\mathcal{H}^{(sc)})][-1]$ et $C(\mathcal{T})=[\mathcal{KD}'(\mathcal{T}) \rightarrow \mathcal{KD}'(\mathcal{T}^{(sc)})][-1]$,
\item[$\bullet$]  $\text{Ext}^1_{U}(\mathcal{KD}'(\mathcal{Y} \oplus \mathcal{H}),\mathbb{G}_m[1]) = \text{Ext}^1_{U}(\mathcal{KD}'(\mathcal{Y}),\mathbb{G}_m[1]) \oplus \text{Ext}^1_{U}(\mathcal{KD}'(\mathcal{H}),\mathbb{G}_m[1])$ et $H^1_c(U,\mathcal{KD}'(\mathcal{Y} \oplus \mathcal{H})) = H^1_c(U,\mathcal{KD}'(\mathcal{Y})) \oplus H^1_c(U,\mathcal{KD}'( \mathcal{H}))$.
\end{itemize}
On dispose d'un diagramme commutatif d'accouplements:
\newpage

\centerline{\xymatrix{
\text{Ext}^1_{U}(\mathcal{KD}'(\mathcal{Y}),\mathbb{G}_m[1])  \ar@{}[r]|{\times}& H^1_c(U,\mathcal{KD}'(\mathcal{Y})) \ar[r]\ar[d] & H^3_c(U,\mathbb{G}_m) \ar[r]^{\cong}\ar@{=}[d] & \mathbb{Q}/\mathbb{Z}\\
\text{Ext}^1_{U}(\mathcal{KD}'(\mathcal{Y} \times_U \mathcal{H}),\mathbb{G}_m[1]) \ar[u] \ar[d] \ar@{}[r]|{\times}& H^1_c(U,\mathcal{KD}'(\mathcal{Y} \times_U \mathcal{H})) \ar[r] & H^3_c(U,\mathbb{G}_m) \ar[r]^{\cong}\ar@{=}[d] & \mathbb{Q}/\mathbb{Z}\\
\text{Ext}^1_{U}(\mathcal{KD}'(\mathcal{Y} \oplus \mathcal{H}),\mathbb{G}_m[1]) \ar@{}[r]|{\times}& H^1_c(U,\mathcal{KD}'(\mathcal{Y} \oplus \mathcal{H})) \ar[u]\ar[d]\ar[r] & H^3_c(U,\mathbb{G}_m) \ar[r]^{\cong}\ar@{=}[d] & \mathbb{Q}/\mathbb{Z}\\
\text{Ext}^1_{U}(\mathcal{KD}'(\mathcal{H}),\mathbb{G}_m[1])  \ar[u]\ar[d]\ar@{}[r]|{\times}& H^1_c(U,\mathcal{KD}'(\mathcal{H})) \ar[r] & H^3_c(U,\mathbb{G}_m) \ar[r]^{\cong}\ar@{=}[d] & \mathbb{Q}/\mathbb{Z}\\
\text{Ext}^1_{U}(C(\mathcal{H}),\mathbb{G}_m[1]) \ar@{}[r]|{\times}& H^1_c(U,C(\mathcal{H})) \ar[r]\ar[d]\ar[u] & H^3_c(U,\mathbb{G}_m) \ar[r]^{\cong}\ar@{=}[d] & \mathbb{Q}/\mathbb{Z}\\
\text{Ext}^1_{U}(C(\mathcal{T}),\mathbb{G}_m[1]) \ar[u]\ar[d] \ar@{}[r]|{\times}& H^1_c(U,C(\mathcal{T})) \ar[r] & H^3_c(U,\mathbb{G}_m) \ar[r]^{\cong}\ar@{=}[d] & \mathbb{Q}/\mathbb{Z}\\
\text{Ext}^1_{U}(\tilde{\mathcal{G}},\mathbb{G}_m[1])  \ar@{}[r]|{\times}& H^1_c(U,\tilde{\mathcal{G}}) \ar[u] \ar[r]\ar@{=}[d] & H^3_c(U,\mathbb{G}_m) \ar[r]^{\cong}\ar@{=}[d] & \mathbb{Q}/\mathbb{Z}\\
H^1(U,\mathcal{G})\ar[u]  \ar@{}[r]|{\times}& H^1_c(U,\tilde{\mathcal{G}}) \ar[r] & H^3_c(U,\mathbb{G}_m) \ar[r]^{\cong} & \mathbb{Q}/\mathbb{Z}
}}
où les morphismes verticaux sont induits (du haut vers le bas) par:
\begin{itemize}
\item[$\bullet$] le morphisme $\mathcal{KD}'(\mathcal{Y}) \rightarrow \mathcal{KD}'(\mathcal{Y} \times_U \mathcal{H})$ induit par le morphisme $\mathcal{Y} \times_U \mathcal{H} \rightarrow \mathcal{Y}$ qui étend l'action de $H$ sur $Y$,
\item[$\bullet$] le morphisme naturel $\mathcal{KD}'(\mathcal{Y}) \oplus \mathcal{KD}'(\mathcal{H}) \rightarrow  \mathcal{KD}'(\mathcal{Y} \times_U \mathcal{H})$ induit par les projections,
\item[$\bullet$] la projection $\mathcal{KD}'(\mathcal{Y}) \oplus \mathcal{KD}'(\mathcal{H}) \rightarrow \mathcal{KD}'(\mathcal{H})$,
\item[$\bullet$] le morphisme naturel $C(\mathcal{H}) \rightarrow \mathcal{KD}'(\mathcal{H})$,
\item[$\bullet$] le morphisme naturel $C(\mathcal{H}) \rightarrow C(\mathcal{T})$,
\item[$\bullet$] le morphisme $\tilde{\mathcal{G}} \rightarrow C(\mathcal{T})$ le morphisme induit par $\hat{\mathcal{T}} \cong [\mathbb{G}_m \rightarrow \pi^{\mathcal{T}}_*\mathbb{G}_m] \rightarrow \mathcal{KD}'(\mathcal{T})[-1]$ et par $\hat{\mathcal{T}^{(sc)}} \cong [\mathbb{G}_m \rightarrow \pi^{\mathcal{T}^{(sc)}}_*\mathbb{G}_m] \rightarrow \mathcal{KD}'(\mathcal{T}^{(sc)})[-1]$,
\item[$\bullet$] l'accouplement $\mathcal{G} \otimes^{\mathbf{L}} \tilde{\mathcal{G}} \rightarrow \mathbb{G}_m[1]$.
\end{itemize}
De manière analogue à la proposition 3.3 de \cite{HS3}, on peut montrer que:
$$\text{BM}([Y],(\alpha)) =\mathcal{E}_{Y} \cup \alpha_U.$$
De plus, l'article \cite{BVH} impose que l'isomorphisme $H^1(K,G) \rightarrow \text{Ext}^1_K(KD'(Y),\mathbb{G}_m[1])$ envoie $\text{ab}^1([Y])$ sur $-E_Y$, où $E_Y$ désigne l'image de $\mathcal{E}_Y$ dans $\text{Ext}^1_K(KD'(Y),\mathbb{G}_m[1])$ (théorème 5.5), et que les morphismes $\mathcal{KD}'(\mathcal{Y}) \oplus \mathcal{KD}'(\mathcal{H}) \rightarrow  \mathcal{KD}'(\mathcal{Y} \times_U \mathcal{H})$, $C(\mathcal{H}) \rightarrow \mathcal{KD}'(\mathcal{H})$ et $\tilde{\mathcal{G}} \rightarrow C(\mathcal{T})$ deviennent des isomorphismes sur la fibre générique (lemmes 5.2, 4.3 et 4.2). Donc, quitte à diminuer $U$, on en déduit que:
$$\text{PT}(\text{ab}^1([Y]), B(\alpha))=-\mathcal{E}_{Y} \cup \alpha_U = -\text{BM}([Y],(\alpha)).$$
\end{proof}

\begin{remarque}
Pour établir la proposition 3.3 de \cite{HS3}, on a besoin de donner une autre construction du morphisme $\text{BM}([Y],.)$ à l'aide du lemme du serpent (lemme 3.1). Dans notre situation, dans la preuve précédente, pour montrer que $\text{BM}([Y],(\alpha)) =\mathcal{E}_{Y} \cup \alpha_U$, il convient de remarquer que la même construction marche même si le morphisme $\text{Br}(K) \rightarrow \bigoplus_{v\in X^{(1)}} \text{Br}(K_v)$ n'est pas injectif et son conoyau n'est pas isomorphe à $\mathbb{Q}/\mathbb{Z}$: en fait, la loi de réciprocité de Weil fournit un morphisme $\text{Coker}( \text{Br}(K) \rightarrow \bigoplus_{v\in X^{(1)}} \text{Br}(K_v)) \rightarrow \mathbb{Q}/\mathbb{Z}$, et cela nous suffit.
\end{remarque}

\begin{theorem} \textbf{(Obstruction au principe local-global)} \label{BM}\\
On rappelle que l'on a supposé que $X$ est une courbe sur $k=\mathbb{C}((t))$. L'accouplement $\text{BM}: \Sha^1(H) \times \overline{\Brusse(H)} \rightarrow \mathbb{Q}/\mathbb{Z}$ induit une bijection $\Sha^1(H) \cong  \overline{\Brusse(H)}^D$.
\end{theorem}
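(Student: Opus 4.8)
Le plan est d'exploiter le lemme de compatibilité précédent afin de ramener entièrement l'énoncé à la dualité de Poitou-Tate déjà établie au corollaire \ref{cor}. Je commencerais par remarquer que, puisque le noyau du morphisme $T^{(sc)} \rightarrow T$ est fini, le morphisme $\check{T^{(sc)}} \rightarrow \check{T}$ est injectif, de sorte que l'hypothèse (H \ref{1'}) est vérifiée pour $G = [T^{(sc)} \rightarrow T]$. En appliquant le corollaire \ref{cor}(ii) avec $d=0$ et $a=1$, on obtient l'accouplement parfait de type Poitou-Tate
$$\text{PT}: \Sha^1(G) \times \overline{\Sha^1(\tilde{G})} \rightarrow \mathbb{Q}/\mathbb{Z},$$
lequel induit donc un isomorphisme $\Sha^1(G) \xrightarrow{\sim} \overline{\Sha^1(\tilde{G})}^D$.

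Ensuite, je rassemblerais les trois ingrédients de comparaison déjà disponibles. D'une part, d'après le corollaire \ref{inj} et le théorème \ref{surj}, l'abélianisation fournit une bijection d'ensembles pointés $\text{ab}^1: \Sha^1(H) \xrightarrow{\sim} \Sha^1(G)$. D'autre part, d'après \cite{BVH}, on dispose d'un isomorphisme de groupes $B: \overline{\Brusse(H)} \xrightarrow{\sim} \overline{\Sha^1(\tilde{G})}$. Enfin, le lemme précédent affirme la commutativité, au signe près, du carré reliant $\text{BM}$ et $\text{PT}$ à travers $\text{ab}^1$ et $B$ : concrètement, pour tout $[Y] \in \Sha^1(H)$ et tout $\alpha \in \overline{\Brusse(H)}$, on a l'égalité $\text{BM}([Y],\alpha) = -\text{PT}(\text{ab}^1([Y]), B(\alpha))$.

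La conclusion sera alors formelle. Le morphisme $\Sha^1(H) \rightarrow \overline{\Brusse(H)}^D$ induit par $\text{BM}$, qui à $[Y]$ associe la forme linéaire $\text{BM}([Y],-)$, se factorise grâce à l'égalité précédente sous la forme
$$\Sha^1(H) \xrightarrow{\text{ab}^1} \Sha^1(G) \xrightarrow{\text{PT}} \overline{\Sha^1(\tilde{G})}^D \xrightarrow{-B^D} \overline{\Brusse(H)}^D,$$
où $B^D$ désigne la transposée de l'isomorphisme $B$. La première flèche est une bijection par le corollaire \ref{inj} et le théorème \ref{surj}, la deuxième est l'isomorphisme issu de la perfection de $\text{PT}$, et la troisième est un isomorphisme de groupes puisque $B$ en est un, le signe restant sans incidence sur la bijectivité. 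Le composé est donc bien une bijection, ce qui est exactement l'énoncé cherché.

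Le point à surveiller, plutôt qu'un véritable obstacle, est que $\Sha^1(H)$ n'est a priori qu'un ensemble pointé et non un groupe, $\text{ab}^1$ n'étant pas fourni comme morphisme de groupes. Il conviendra donc d'énoncer la conclusion comme une bijection d'ensembles pointés, la structure de groupe n'intervenant que sur les termes $\Sha^1(G)$, $\overline{\Sha^1(\tilde{G})}^D$ et $\overline{\Brusse(H)}^D$ (elle se transporte d'ailleurs automatiquement à $\Sha^1(H)$ via $\text{ab}^1$). Tout le contenu mathématique ayant été absorbé en amont --- par le corollaire \ref{cor} pour la dualité, par les résultats de type Serre II et $C_2$ garantissant la bijectivité de $\text{ab}^1$, et par la comparaison des accouplements du lemme précédent --- il ne restera effectivement qu'à composer ces trois bijections.
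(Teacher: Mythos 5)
Votre preuve est correcte et reprend essentiellement l'argument de l'article, dont la démonstration consiste précisément à combiner le lemme de compatibilité précédent, le corollaire \ref{cor} (appliqué avec $d=0$, $a=1$ et l'hypothèse d'injectivité de $\check{T^{(sc)}} \rightarrow \check{T}$ issue de la finitude du noyau de $T^{(sc)} \rightarrow T$), la bijectivité de $\text{ab}^1$ (corollaire \ref{inj} et théorème \ref{surj}) et l'isomorphisme $B$ de \cite{BVH}. Votre précision finale, selon laquelle $\Sha^1(H)$ n'est a priori qu'un ensemble pointé et la conclusion doit donc s'énoncer comme une bijection, est pertinente et conforme à l'énoncé du théorème.
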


\begin{proof}
Cela découle du lemme précédent, du corollaire \ref{cor}, et du fait que $\text{ab}^1$ est une bijection et que $B$ est un isomorphisme.
\end{proof}

\begin{remarque}
Comme la cohomologie d'un groupe unipotent sur un corps de caractéristique 0 est triviale, en quotientant par le radical unipotent, on montre que le théorème précédent reste valable pour un groupe algébrique linéaire connexe quelconque.
\end{remarque}

\begin{corollary}
On rappelle que l'on a supposé que $X$ est une courbe sur $k=\mathbb{C}((t))$. Soit $H$ un groupe linéaire connexe quelconque sur $K$. La seule obstruction au principe local-global pour les $K$-espaces homogènes sous $H$ est l'obstruction de Brauer-Manin associée à $\Brusse(H)$.
\end{corollary}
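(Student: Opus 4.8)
Le plan est de déduire ce corollaire du théorème \ref{BM}, qui fournit déjà l'énoncé dans le cas des torseurs. Je commencerais par me ramener au cas où $H$ est réductif connexe: comme on travaille en caractéristique $0$, le radical unipotent de $H$ a une cohomologie galoisienne triviale, de sorte qu'en passant au quotient par ce radical on ne modifie ni $\Sha^1(H)$ ni $\Brusse(H)$; la remarque précédant le corollaire montre alors que l'accouplement $\text{BM}: \Sha^1(H) \times \overline{\Brusse(H)} \rightarrow \mathbb{Q}/\mathbb{Z}$ induit une bijection $\Sha^1(H) \cong \overline{\Brusse(H)}^D$ pour tout groupe linéaire connexe $H$. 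On se donne donc un $K$-espace homogène $Y$ sous $H$ avec $Y(K_v) \neq \emptyset$ pour tout $v \in X^{(1)}$ et dont l'obstruction de Brauer-Manin relative à $\Brusse(H)$ s'annule, et l'on veut produire un point rationnel.

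Dans le cas où $Y$ est un espace principal homogène sous $H$, la conclusion serait immédiate. En effet, l'existence de points locaux en toute place revient à dire que la classe $[Y] \in H^1(K,H)$ appartient à $\Sha^1(H)$ (un torseur sous $H$ est trivial sur $K_v$ si et seulement s'il possède un $K_v$-point), tandis que l'annulation de l'obstruction de Brauer-Manin signifie précisément que $\text{BM}([Y],\alpha) = 0$ pour tout $\alpha \in \Brusse(H)$, c'est-à-dire que l'image de $[Y]$ dans $\overline{\Brusse(H)}^D$ est nulle. L'injectivité de la bijection du théorème \ref{BM}, jointe au fait qu'elle envoie la classe neutre sur $0$, forcerait alors $[Y]$ à être la classe neutre, d'où $Y(K) \neq \emptyset$.

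Pour un espace homogène quelconque, je procéderais par la réduction classique au cas des torseurs, dans l'esprit de \cite{Bor}: après le choix d'un point géométrique de $Y$ et de son stabilisateur, on relie la résolubilité de $Y$ à celle d'un espace principal homogène sous un groupe connexe, ce qui ramène au cas précédent. La situation est ici particulièrement favorable, car on a établi plus haut la nullité de $H^1(K,H^{sc})$ et de $H^1(K_v,H^{sc})$ (cas particulier de la conjecture de Serre II), ce qui trivialise les torseurs auxiliaires sous la partie simplement connexe intervenant dans le découpage du stabilisateur. Le point délicat, et donc l'obstacle principal, serait le contrôle du stabilisateur: il faut vérifier que l'obstruction de Brauer-Manin de $Y$ se lit bien sur $\Brusse(H)$, via un isomorphisme $\Brusse(Y) \cong \Brusse(H)$ du type de celui utilisé pour les torseurs, et que la réduction préserve l'annulation de l'accouplement, ce qui repose de façon essentielle sur l'abélianisation de Borovoi et le lemme de comparaison déjà exploités dans la preuve du théorème \ref{BM}.
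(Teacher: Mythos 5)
Votre argument central est correct et coïncide avec celui de l'article, qui d'ailleurs ne rédige aucune preuve pour ce corollaire: la réduction au cas réductif par passage au quotient par le radical unipotent est exactement le contenu de la remarque qui suit le théorème \ref{BM}, et, pour un espace principal homogène $Y$ ayant des points partout localement, l'annulation de l'obstruction de Brauer--Manin signifie que l'image de $[Y] \in \Sha^1(H)$ dans $\overline{\Brusse(H)}^D$ est nulle, de sorte que la bijection du théorème \ref{BM} force $[Y]$ à être la classe neutre et $Y(K) \neq \emptyset$. C'est bien ce que confirme la remarque placée après le corollaire: seule la non-dégénérescence à gauche de l'accouplement $\text{BM}$ est utilisée.

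Votre troisième paragraphe, en revanche, attaque un énoncé plus fort que celui que vise l'article. Malgré la formulation \og espaces homogènes \fg{}, le corollaire porte sur les espaces \emph{principaux} homogènes: c'est ce qu'indiquent le résumé et l'introduction (\og principe local-global pour les $K$-torseurs sous un groupe linéaire connexe \fg{}), la mention \og associée à $\Brusse(H)$ \fg{} --- l'identification $\Brusse(Y) \cong \Brusse(H)$ (lemme 5.2(iii) de \cite{BVH}) n'est disponible que pour les torseurs ---, et la remarque finale elle-même, qui serait inexacte pour des espaces homogènes généraux puisque la non-dégénérescence à gauche seule ne fournit aucune réduction des stabilisateurs. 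Pour des stabilisateurs non triviaux, la réduction à la Borovoi que vous esquissez exige des hypothèses supplémentaires (connexité ou abélianité des stabilisateurs) et une comparaison des groupes de Brauer que ni vous ni l'article n'effectuez; vous qualifiez d'ailleurs vous-même ce point d'\og obstacle principal \fg{} sans le résoudre. Votre preuve est donc complète pour l'énoncé tel que l'article l'entend, à condition de retirer ce troisième paragraphe ou de le présenter explicitement comme une extension non démontrée, et non comme une étape de la preuve.
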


\begin{remarque}
Ce corollaire découle uniquement de la non-dégénérescence à gauche de l'accouplement $\text{BM}$: les résultats de l'article \cite{CTH} étaient donc déjà suffisants pour l'établir.
\end{remarque}

\subsubsection{Cas $d=1$}

Dans cette section, on suppose que $k= \mathbb{C}((t_1))((t_2))$ (le cas où $k$ est $p$-adique a été traité dans la partie 6 de \cite{HS1}). Soit $H$ un groupe réductif sur $K$ tel que $H^{sc}$ est quasi-déployé. On suppose de plus que:
\begin{itemize}
\item[$\bullet$] le groupe $\Sha^2(\mathbb{Z})$ est nul,
\item[$\bullet$] le groupe $H$ est déployé sur une extension finie galoisienne $L$ de $K$ (ie $H$ possède un tore maximal qui devient déployé sur $L$) telle que $\Sha^2(L,\mathbb{G}_m)=0$.
\end{itemize}

\begin{remarque}
L'hypothèse $\Sha^2(L,\mathbb{G}_m)=0$ implique que $\Sha^2(\mathbb{G}_m)=0$, mais la réciproque est fausse. En effet:
\begin{itemize}
\item[$\bullet$] si $\Sha^2(L,\mathbb{G}_m)=0$, un argument de restriction-corestriction montre que $\Sha^2(\mathbb{G}_m)$ est d'exposant fini; comme il est divisible, il est nul;
\item[$\bullet$] pour voir que la réciproque est fausse, il suffit de choisir $K=\mathbb{C}((t_1))((t_2))(x)$ et $L$ la clôture galoisienne d'une extension finie $L'$ de $K$ telle que $\Sha^2(L',\mathbb{G}_m)\neq 0$ (cela est possible grâce à l'exemple \ref{ex}).
\end{itemize}
De même, si $\Sha^2(L,\mathbb{Z})=0$, alors $\Sha^2(\mathbb{Z})=0$, mais la réciproque est fausse.
\end{remarque}

Soit $E$ un espace principal homogène sous $H$ tel que $E(\mathbb{A}_K) \neq \emptyset$. Comme dans la partie \ref{torseur tore}, on peut construire un morphisme $\rho_E: H^3_{\text{lc}}(E,\mathbb{Q}/\mathbb{Z}(2)) \rightarrow \mathbb{Q}/\mathbb{Z}$. Exactement de la même manière que dans la partie 6 de \cite{HS1}, on peut montrer le théorème suivant:

\begin{theorem}
On rappelle que l'on a supposé que $X$ est une courbe sur $k= \mathbb{C}((t_1))((t_2))$. Si $H^{sc}$ n'a pas de facteur $E_8$ et si $\rho_E$ est le morphisme trivial, alors $E(K) \neq \emptyset$. Si $H^{sc}$ est de type $E_8$ et si $\rho_E$ est le morphisme trivial, alors $E$ possède un zéro-cycle de degré 1.
\end{theorem}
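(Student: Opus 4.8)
The plan is to transcribe, step by step, the argument of Section 6 of \cite{HS1}, replacing the arithmetic inputs available over $p$-adic function fields by their analogues over $K$, which are furnished exactly by the two standing hypotheses $\Sha^2(\mathbb{Z})=0$ and $\Sha^2(L,\mathbb{G}_m)=0$. Write $G=[T^{(sc)}\to T]$ for the complex computing the abelianized cohomology of $H$, so that its dual $\tilde{G}=[\tilde{T}\to\tilde{T^{(sc)}}]$ is the object of Section \ref{tore}, and note that since $T^{(sc)}\to T$ has finite kernel the hypothesis (H\,\ref{1'}) holds, so Corollary \ref{cor}(ii) yields a pairing $\mathrm{PT}\colon \Sha^{1}(G)\times\overline{\Sha^{2}(\tilde{G})}\to\mathbb{Q}/\mathbb{Z}$. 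First I would reproduce for the reductive torsor $E$ the construction carried out in the proof of Theorem \ref{local-global}: using the Hochschild--Serre spectral sequence for $\overline{E}$ together with the identification of the relevant piece of $H^{1}(\overline{E},\mathbb{Q}/\mathbb{Z}(2))$ with the torsion complex attached to $G$ (the analogue of Lemma 5.2 of \cite{HS1}), one gets a homomorphism $\tau\colon \Sha^{2}(\tilde{G})\to H^{3}_{\mathrm{lc}}(E,\mathbb{Q}/\mathbb{Z}(2))$. Because $E(K_v)\neq\emptyset$ for every $v$, the Borovoi class $\mathrm{ab}^{1}([E])$ lies in $\Sha^{1}(G)$, and the key computation (the analogue of Proposition 5.3 of \cite{HS1}) is the identity $\rho_E(\tau(\alpha))=\pm\,\mathrm{PT}(\mathrm{ab}^{1}([E]),\alpha)$ for all $\alpha\in\Sha^{2}(\tilde{G})$.

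Granting this identity, the triviality of $\rho_E$ shows that $\mathrm{ab}^{1}([E])$ annihilates $\overline{\Sha^{2}(\tilde{G})}$; since $\Sha^{2}(L,\mathbb{G}_m)=0$ guarantees the finiteness making the pairing $\mathrm{PT}$ of Corollary \ref{cor} perfect, we conclude $\mathrm{ab}^{1}([E])=0$. The next step is the descent from the abelianization, exactly as in the case $d=0$ treated above: by Borovoi's theory a class of $H^{1}(K,H)$ which is everywhere locally trivial and has trivial abelianization is represented, after an inner twist, by an everywhere locally trivial class of $H^{1}(K,{}^{c}H^{sc})$ of the simply connected group, the hypothesis that $H^{sc}$ is quasi-split keeping these twists under control. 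We are thereby reduced to the Hasse principle $\Sha^{1}(K,H^{sc})=\{*\}$ for the simply connected group.

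For this last point I would follow the type-by-type analysis of \cite{HS1}, based on the Rost invariant $R\colon H^{1}(K,{}^{c}H^{sc})\to H^{3}(K,\mathbb{Q}/\mathbb{Z}(2))$ and its lower-degree companions. The decisive new-context input is the vanishing $\Sha^{3}(K,\mathbb{Q}/\mathbb{Z}(2))=0$, which here follows from the hypothesis $\Sha^{2}(\mathbb{Z})=0$: the finite-coefficients duality of Theorem \ref{PT fini} (with $d=1$) identifies $\Sha^{3}(\mu_n^{\otimes 2})$ with $({}_n\Sha^{2}(\mathbb{Z}))^{D}$, whence $\Sha^{3}(\mathbb{Q}/\mathbb{Z}(2))\cong(\varprojlim_n {}_n\Sha^{2}(\mathbb{Z}))^{D}=0$. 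Combined with $\Sha^{2}(\mathbb{G}_m)=0$ (a consequence of $\Sha^{2}(L,\mathbb{G}_m)=0$ by restriction--corestriction and divisibility), which governs the classical types through central simple algebras and quadratic or hermitian forms, this kills the invariants of $[E]$ and gives $[E]=0$, i.e. $E(K)\neq\emptyset$, whenever $H^{sc}$ has no factor of type $E_8$.

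The hard part will be the faithful transcription of this type-by-type Hasse principle and, above all, the $E_8$ case. For a factor of type $E_8$ the kernel of the Rost invariant is controlled only away from the torsion primes $2,3,5$, so the vanishing of $\Sha^{3}(K,\mathbb{Q}/\mathbb{Z}(2))$ shows merely that $[E]$ becomes trivial over suitable finite extensions of $K$ whose degrees are prime to $2$, to $3$ and to $5$ respectively; a restriction--corestriction argument over these extensions, whose degrees have greatest common divisor $1$, then produces a zero-cycle of degree $1$ on $E$ rather than a rational point, exactly as in \cite{HS1}. The remaining work, which I expect to be the most delicate, is to verify that each arithmetic fact invoked in \cite{HS1} for the individual Dynkin types admits an exact analogue over $K$; all of these ultimately reduce to the two standing vanishing hypotheses $\Sha^{2}(\mathbb{Z})=0$ and $\Sha^{2}(L,\mathbb{G}_m)=0$, which is why they are imposed at the outset.
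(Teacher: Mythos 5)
Your overall scheme diverges from the paper's proof at the crucial point, and the divergence creates a genuine gap. The paper does \emph{not} run the Borovoi abelianization argument for $d=1$ (it does so only for $d=0$, where $\mathrm{cd}(K)=2$ makes $\mathrm{ab}^1$ a bijection); instead it transcribes Section 6 of \cite{HS1}: choose a $z$-extension $1 \rightarrow Q \rightarrow H_z \rightarrow H \rightarrow 1$ with $\hat{Q}$ free over $\mathbb{Z}[\text{Gal}(L/K)]$ (\cite{MS}), prove $\Sha^1(H_z) \cong \Sha^1(H)$ and the triviality of the kernel of $\Sha^1(H_z) \rightarrow \Sha^1(H_z/H^{sc})$ --- this is exactly where $\Sha^2(L,\mathbb{G}_m)=0$ is used, to kill $\Sha^2(Q)$ and $\Sha^3(\tilde{Q})$ via Shapiro --- and then apply the torus theorem \ref{local-global} to the image torsor under the coradical torus, so that the Rost-invariant inputs are only ever applied to the \emph{original quasi-split} $H^{sc}$. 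Your descent step fails precisely here: after your ``inner twist'', the group ${}^{c}H^{sc}$ is in general no longer quasi-split, and the theorems you need (théorème 5.3 de \cite{CPS} for the non-$E_8$ types; \cite{Che1}, \cite{Che2}, \cite{Sem} for the zero-cycle statement in type $E_8$) are proved only for quasi-split groups; over a field of cohomological dimension $3$ the triviality of the Rost kernel for arbitrary inner forms is exactly the open part of Serre's conjecture II, so ``quasi-splitness keeping the twists under control'' has no justification. Even without twisting there is a problem: since $H^{sc} \rightarrow H$ is not injective, the description of the fibre of $\mathrm{ab}^1$ is delicate, and a lift $\xi \in H^1(K,H^{sc})$ of $[E]$ is \emph{not} locally trivial --- locally it only lies in the image of a coboundary from a torus of local points --- so you cannot simply quote a Hasse principle $\Sha^1(K,H^{sc})=1$; one must modify $\xi$ by global points of the coradical torus, which is the content of the arguments of \cite{HS1} (lemme 6.4, proposition 6.5) that your proposal skips entirely.

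A second gap: your key identity $\rho_E(\tau(\alpha)) = \pm\,\mathrm{PT}(\mathrm{ab}^1([E]),\alpha)$ at the level of the reductive group is nowhere available for $d=1$. The paper proves such a compatibility only for $d=0$, and there it rests on the Borovoi--van Hamel machinery ($\Brusse(H) \cong \Sha^1(\tilde{G})$ and the complexes $\mathcal{KD}'$ of \cite{BVH}), which is specific to weight-one coefficients (Brauer groups); for $H^{3}_{\text{lc}}(E,\mathbb{Q}/\mathbb{Z}(2))$ no analogue is established, and the paper deliberately proves the compatibility between $\rho$ and the Poitou--Tate pairing only for \emph{torus} torsors (théorème \ref{local-global}), reducing the reductive case to it through the $z$-extension. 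Two further symptoms that your route is off track: the perfection of the pairing in corollaire \ref{cor}(ii) requires only (H \ref{1'}), not $\Sha^2(L,\mathbb{G}_m)=0$, so in your scheme that standing hypothesis plays no genuine role, whereas in the actual proof it is essential; and your derivation of $\Sha^3(\mathbb{Q}/\mathbb{Z}(2))=0$ from $\Sha^2(\mathbb{Z})=0$ is correct (it is lemme \ref{nul dual} in the form $\Sha^4(\mathbb{Z}(2))=0 \Leftrightarrow \Sha^2(\mathbb{Z})=0$) but it is the only fully sound arithmetic input in the proposal; the type-by-type ``central simple algebras and quadratic forms'' discussion is not how either \cite{HS1} or the paper argues, the uniform tool being the Rost invariant for the fixed quasi-split form.
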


\begin{remarque}
\begin{itemize}
\item[(i)] La preuve fait appel à l'invariant de Rost. En particulier, on utilise les deux résultats suivants:
\begin{itemize}
\item[$\bullet$] pour $H'$ un groupe semi-simple simplement connexe absolument presque simple quasi-déployé sur un corps $K'$ de dimension cohomologique au plus 3, si $H'$ n'est pas de type $E_8$, le noyau de l'invariant de Rost $H^1(K',H') \rightarrow H^3(K',\mathbb{Q}/\mathbb{Z}(2))$ est trivial (théorème 5.3 de \cite{CPS});
\item[$\bullet$] pour $H'$ un groupe semi-simple simplement connexe absolument presque simple quasi-déployé de type $E_8$ sur un corps $K'$ de dimension cohomologique au plus 4, tout torseur sous $H'$ représentant une classe du noyau de l'invariant de Rost $H^1(K',H') \rightarrow H^3(K',\mathbb{Q}/\mathbb{Z}(2))$ admet un zéro-cycle de degré 1 (\cite{Che1}, \cite{Che2}, \cite{Sem}).
\end{itemize}
\item[(ii)] La nullité de $\Sha^2(\mathbb{Z})$ est utilisée pour établir un résultat analogue à la proposition 6.2 de \cite{HS1}, ou plus précisément pour montrer l'injectivité de $H^3(K,\mathbb{Q}/\mathbb{Z}(2)) \rightarrow \prod_{v \in X^{(1)}} H^3(K_v,\mathbb{Q}/\mathbb{Z}(2))$: le noyau de $H^3(K,\mathbb{Q}/\mathbb{Z}(2)) \rightarrow \prod_{v \in X^{(1)}} H^3(K_v,\mathbb{Q}/\mathbb{Z}(2))$ est isomorphe à $\Sha^4(\mathbb{Z}(2))$, qui est nul si, et seulement si, $\Sha^2(\mathbb{Z})$ l'est d'après le lemme \ref{nul dual}.
\item[(iii)] La nullité du $\Sha^2(L,\mathbb{G}_m)$ permet de calculer la cohomologie des tores quasi-triviaux. Plus précisément, dans la partie 6 de \cite{HS1}, on a besoin de considérer une $z$-extension (suite exacte (37)) faisant intervenir un tore quasi-trivial $Q$. Il se trouve qu'avec le choix que nous avons fait du corps $L$, on peut supposer que le module des caractères de $Q$ est un $\mathbb{Z}[\text{Gal}(L/K)]$-module libre (proposition 3.1 de \cite{MS}). Comme $\Sha^2(L,\mathbb{G}_m)$ est nul, cela permet d'établir avec le lemme de Shapiro que $\Sha^2(Q)$ et $\Sha^3(\tilde{Q})$ sont nuls.
\end{itemize}
\end{remarque}

\subsubsection{Cas $d>1$}

Dans cette section, on suppose que $d>1$. Soit $H$ un groupe réductif sur $K$ tel que $H^{sc}$ est quasi-déployé. Soit $L$ une extension finie galoisienne de $K$ telle que $H$ contient un tore maximal déployé sur $L$. On suppose que $\Sha^4(\mathbb{Z}(2))=0$ et que $\Sha^2(L,\mathbb{G}_m)=0$. Soit $E$ un espace principal homogène sous $H$ tel que $E(\mathbb{A}_K) \neq \emptyset$. Comme dans la partie \ref{torseur tore}, on peut construire un morphisme $\rho_E: H^{d+2}_{\text{lc}}(E,\mathbb{Q}/\mathbb{Z}(d+1)) \rightarrow \mathbb{Q}/\mathbb{Z}$. On peut alors montrer le théorème suivant:

\begin{theorem}\label{zero}
On rappelle que l'on a supposé que $X$ est une courbe et que $d>1$. 
\begin{itemize}
\item[(i)] Si $\rho_E$ est le morphisme trivial et si $H^{sc}$ ne contient que des facteurs de type $A_n$ avec $n \leq 5$, $B_n$ avec $n \leq 6$, $C_n$ avec $n \leq 5$, $D_n$ avec $n \leq 6$, $^{1}D_7$, $E_6$, $E_7$, $F_4$, $G_2$, alors $E(K) \neq \emptyset$. 
\item[(ii)] Si $d=2$, $\rho_E$ est le morphisme trivial et $H^{sc}$ est de type $E_8$, alors $E$ possède un zéro-cycle de degré 1.
\end{itemize}
\end{theorem}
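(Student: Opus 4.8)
Le plan consiste à reprendre, presque mot pour mot, la stratégie de la partie 6 de \cite{HS1}, en remplaçant la dualité en dimension cohomologique $3$ par la dualité de Poitou-Tate pour les complexes de deux tores établie dans la section 4. Je commencerais par fixer un tore maximal $T$ de $H$, noter $T^{(sc)}$ son image réciproque dans $H^{sc}$, et former le complexe $G = [T^{(sc)} \rightarrow T]$ placé en degrés $-1$ et $0$, ainsi que $\tilde{G} = [\tilde{T} \rightarrow \tilde{T^{(sc)}}]$. Comme le morphisme $T^{(sc)} \rightarrow T$ est à noyau fini, le morphisme induit sur les cocaractères $\check{T^{(sc)}} \rightarrow \check{T}$ est injectif: l'hypothèse (H \ref{1'}) est donc satisfaite et le corollaire \ref{cor}(ii) avec $a=1$ fournit un accouplement parfait $\Sha^1(G) \times \overline{\Sha^{d+1}(\tilde{G})} \rightarrow \mathbb{Q}/\mathbb{Z}$. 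L'abélianisation de Borovoi donne un morphisme $\text{ab}^1_K: H^1(K,H) \rightarrow H^1_{\text{ab}}(K,H) = H^1(K,G)$, et comme $E$ possède un point dans chaque $K_v$, la classe $[E]$ appartient à $\Sha^1(H)$ et $\text{ab}^1_K([E]) \in \Sha^1(G)$.

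La deuxième étape reproduit la proposition 5.3 de \cite{HS1} (voir aussi l'esquisse du théorème \ref{local-global} ci-dessus): à l'aide de la suite spectrale de Hochschild-Serre pour $\overline{E} \rightarrow E$, de la nullité des groupes $H^{d+1}(K,\hat{T} \otimes \mathbb{Q}(d))$ et $H^{d+2}(K,\hat{T} \otimes \mathbb{Q}(d))$, et de l'identification de $H^1(\overline{E},\mathbb{Q}/\mathbb{Z}(d+1))$ avec une version de torsion de $\tilde{G}$, je construirais un morphisme $\tau: \Sha^{d+1}(\tilde{G}) \rightarrow H^{d+2}_{\text{lc}}(E,\mathbb{Q}/\mathbb{Z}(d+1))$ et je vérifierais que $\rho_E(\tau(\alpha)) = \pm \text{PT}(\text{ab}^1_K([E]),\alpha)$ pour tout $\alpha$, où $\text{PT}$ désigne l'accouplement du corollaire \ref{cor}. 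Par conséquent, la trivialité de $\rho_E$ impose que $\text{ab}^1_K([E])$ est orthogonal à tout $\overline{\Sha^{d+1}(\tilde{G})}$; la perfection de l'accouplement entraîne alors $\text{ab}^1_K([E]) = 0$.

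Il reste à descendre de la classe abélianisée à la classe $[E]$ elle-même, et c'est ici qu'interviennent les hypothèses et la restriction sur les types. En tordant $H$ par un cocycle représentant $E$, on se ramène, comme dans \cite{HS1}, à montrer qu'une classe de $\Sha^1(K,{}_{\xi}H^{sc})$ (où ${}_{\xi}H^{sc}$ est une forme intérieure du groupe quasi-déployé $H^{sc}$) est triviale. Son image par l'invariant de Rost vit dans $H^3(K,\mathbb{Q}/\mathbb{Z}(2))$ et est partout localement triviale; l'hypothèse $\Sha^4(\mathbb{Z}(2))=0$, équivalente d'après (la généralisation du) lemme \ref{nul dual} à l'injectivité de $H^3(K,\mathbb{Q}/\mathbb{Z}(2)) \rightarrow \prod_v H^3(K_v,\mathbb{Q}/\mathbb{Z}(2))$, montre que l'invariant de Rost de cette classe est nul. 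L'hypothèse $\Sha^2(L,\mathbb{G}_m)=0$ sert, exactement comme dans la remarque suivant le théorème du cas $d=1$, à calculer la cohomologie des tores quasi-triviaux intervenant dans une $z$-extension. Pour les types de la liste (i), les résultats cités sur le noyau de l'invariant de Rost et les principes de Hasse pour ces groupes classiques et exceptionnels permettent de conclure qu'une telle classe, localement triviale et d'invariant de Rost nul, est triviale, d'où $[E]=0$ et donc $E(K) \neq \emptyset$. La difficulté principale est précisément cette descente: sur un corps de dimension cohomologique $d+2 \geq 4$, l'invariant de Rost de degré $3$ n'est plus un invariant complet, ce qui force la restriction sur les types admis et, pour un facteur de type $E_8$ lorsque $d=2$ (dimension cohomologique $4$), ne permet de conclure qu'à l'existence d'un zéro-cycle de degré $1$ plutôt qu'à celle d'un point rationnel, via les résultats de \cite{Che1}, \cite{Che2} et \cite{Sem}.
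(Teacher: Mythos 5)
Votre stratégie générale (invariant de Rost plus dualité arithmétique, avec $\Sha^4(\mathbb{Z}(2))=0$ qui fournit l'injectivité de $H^3(K,\mathbb{Q}/\mathbb{Z}(2)) \rightarrow \prod_{v} H^3(K_v,\mathbb{Q}/\mathbb{Z}(2))$ et $\Sha^2(L,\mathbb{G}_m)=0$ qui contrôle les tores quasi-triviaux) est la bonne, mais deux étapes centrales de votre proposition ne fonctionnent pas telles quelles. D'abord, vous appariez $\text{ab}^1_K([E])$ avec $\Sha^{d+1}(\tilde{G})$ et affirmez la compatibilité $\rho_E(\tau(\alpha)) = \pm \text{PT}(\text{ab}^1_K([E]),\alpha)$. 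Cette compatibilité n'est établie dans l'article que pour $d=0$, où elle repose sur les identifications de Borovoi--van Hamel (complexes $\mathcal{KD}'$, l'isomorphisme $B: \Brusse(H) \rightarrow \Sha^1(\tilde{G})$), machinerie propre au poids 1; aucun analogue en poids $d+1$ n'est disponible, et la preuve du texte est construite précisément pour s'en passer: elle prend d'abord une $z$-extension $1 \rightarrow Q \rightarrow H_z \rightarrow H \rightarrow 1$, montre que $\Sha^1(H_z) \rightarrow \Sha^1(H)$ est bijectif et que le noyau de $\Sha^1(H_z) \rightarrow \Sha^1(H_z/H^{sc})$ est trivial (c'est là, et seulement là, qu'interviennent les théorèmes sur le noyau de l'invariant de Rost, appliqués au groupe quasi-déployé $H^{sc}$ lui-même), puis transfère la trivialité de $\rho_E$ au torseur $Y$ sous le tore $H_z/H^{sc}$ via le diagramme commutatif reliant $\Sha^{d+2}((H_z/H^{sc})^{\sim})$, $\Sha^{d+1}(\tilde{G_z})$ et $\Sha^{d+1}(\tilde{G})$ aux groupes $H^{d+2}_{\text{lc}}$, le point clé étant que $\Sha^{d+1}(\tilde{G}) \rightarrow \Sha^{d+1}(\tilde{G_z})$ est un isomorphisme parce que $\Sha^{d+2}(\tilde{Q}) = \Sha^{d+2}(L,\mathbb{Z}(d))^m = 0$ (lemme \ref{nul dual} joint à $\Sha^2(L,\mathbb{G}_m)=0$). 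Seule la compatibilité torique du théorème \ref{local-global} (l'analogue de la proposition 5.3 de \cite{HS1}) est utilisée, jamais une compatibilité au niveau du complexe $G$ et de $\text{ab}^1$; votre étape 3 constitue donc un trou substantiel, non un simple détail de vérification.

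Ensuite, votre descente de $\text{ab}^1_K([E])=0$ vers $[E]=0$ comporte deux lacunes. La suite exacte d'ensembles pointés $H^1(K,H^{sc}) \rightarrow H^1(K,H) \rightarrow H^1_{\text{ab}}(K,H)$ fournit un relèvement $\eta \in H^1(K,H^{sc})$ de $[E]$, mais la trivialité locale de $[E]$ ne place $\eta_v$ que dans $\text{Ker}(H^1(K_v,H^{sc}) \rightarrow H^1(K_v,H))$, c'est-à-dire dans l'image d'un cobord: vous n'obtenez pas gratuitement une classe de $\Sha^1(K,{}_{\xi}H^{sc})$, et contrôler ces composantes locales demande exactement l'analyse que la $z$-extension sert à organiser. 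Plus grave, les théorèmes de noyau de Rost que vous citez (\cite{GS} pour la liste de (i), \cite{Che1}, \cite{Che2} et \cite{Sem} pour $E_8$) concernent des groupes quasi-déployés; une forme intérieure tordue ${}_{\xi}H^{sc}$ n'est en général pas quasi-déployée et ces résultats ne s'y appliquent pas. La preuve de l'article évite entièrement les formes tordues: l'énoncé analogue à la proposition 6.2 de \cite{HS1} ne porte que sur le noyau de $H^1(K,H^{sc}) \rightarrow \prod_{v} H^1(K_v,H^{sc})$ pour le groupe quasi-déployé $H^{sc}$ lui-même, et c'est la combinaison $z$-extension puis théorème \ref{local-global} pour le tore quotient qui permet de conclure $[E_z]=0$, donc $[E]=0$, sans jamais tordre $H^{sc}$.
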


La preuve est très similaire à celle du théorème 6.1 de \cite{HS1} mais présente quelques différences que nous signalons dans la suite.

\begin{proof} \textit{(Esquisse)}
\begin{itemize}
\item[$\bullet$] Comme le noyau de l'invariant de Rost d'un groupe semi-simple simplement connexe absolument presque simple quasi-déployé de type $A_n$ avec $n \leq 5$, $B_n$ avec $n \leq 6$, $C_n$ avec $n \leq 5$, $D_n$ avec $n \leq 6$, $^{1}D_7$, $E_6$, $E_7$, $F_4$ ou $G_2$ est trivial (théorèmes 0.1 et 0.5 de \cite{GS}) et comme tout torseur dans le noyau de l'invariant de Rost d'un groupe semi-simple simplement connexe absolument presque simple quasi-déployé de type $E_8$ sur un corps de dimension cohomologique au plus 4 a un zéro-cycle de degré 1 (\cite{Che1}, \cite{Che2}, \cite{Sem}), on montre la propriété suivante exactement de la même manière que la proposition 6.2 de \cite{HS1}: sous les hypothèses de (i), le noyau de $H^1(K,H^{sc}) \rightarrow \prod_{v \in X^{(1)}} H^1(K_v,H^{sc}))$ est trivial; sous les hypothèses de (ii), tout torseur sous $H^{sc}$ représentant un élément de $\text{Ker}(H^1(K,H^{sc}) \rightarrow \prod_{v \in X^{(1)}} H^1(K_v,H^{sc}))$ possède un zéro-cycle de degré 1. Dans la preuve de ces résultats, l'injectivité de $H^3(K,\mathbb{Q}/\mathbb{Z}(2)) \rightarrow \prod_{v \in X^{(1)}} H^3(K_v,\mathbb{Q}/\mathbb{Z}(2))$ découle de la nullité de $\Sha^4(\mathbb{Z}(2))$.
\item[$\bullet$] On considère une $z$-extension de $H$:
$$1 \rightarrow Q \rightarrow H_z \rightarrow H \rightarrow 1.$$
C'est une extension centrale de $K$-groupes réductifs, $Q$ est un tore quasi-trivial dont le module des caractères est un module libre sur l'anneau $\mathbb{Z}[\text{Gal}(L/K)]$, et le sous-groupe dérivé $H_z^{ss}$ de $H_z$ est $H^{sc}$. Comme $H_z$ est réductif, on dispose aussi d'une suite exacte $1 \rightarrow H^{sc} \rightarrow H_z \rightarrow H_z/H^{sc} \rightarrow 1$ où $H_z/H^{sc}$ est un tore. On prouve alors de la même manière que le lemme 6.4 et la proposition 6.5 de \cite{HS1} le résultat suivant: le morphisme naturel d'ensembles pointés $\Sha^1(H_z)  \rightarrow \Sha^1(H)$ est un isomorphisme, et le noyau du morphisme naturel d'ensembles pointés $\Sha^1(H_z) \rightarrow \Sha^1(H_z/H^{sc})$ est trivial. Pour ce faire, il est nécessaire de montrer que $\Sha^2(Q)$ est nul: cela découle immédiatement du lemme de Shapiro, du fait que $\hat{Q}$ est un $\mathbb{Z}[\text{Gal}(L/K)]$-module libre et de la nullité de $\Sha^2(L,\mathbb{G}_m)$. On notera $E_z$ un torseur sous $H_z$ représentant l'image réciproque de $E$ par l'isomorphisme $\Sha^1(H_z)  \rightarrow \Sha^1(H)$, et $Y$ un torseur représentant l'image de $E_z$ par $\Sha^1(H_z) \rightarrow \Sha^1(H_z/H^{sc})$.
\item[$\bullet$] On note $T$ (resp. $T_z$) un tore maximal de $H$ (resp. $H_z$). On note $T^{(sc)}$ (resp. $T^{(sc)}_z$) l'image réciproque de $T$ (resp. $T_z$) dans $H^{sc}$ (resp. $H_z^{sc}$). On note finalement $G = [T^{(sc)} \rightarrow T]$ et $G_z = [T^{(sc)}_z \rightarrow T_z]$. Le lemme des cinq et le lemme 6.7 de \cite{San} fournissent un isomorphisme $H^1(\overline{E},\mathbb{Q}/\mathbb{Z}(1)) \cong H^1(\overline{H},\mathbb{Q}/\mathbb{Z}(1))$ et donc un isomorphisme $H^1(\overline{E},\mathbb{Q}/\mathbb{Z}(1)) \cong \text{\underline{Hom}}_K(\check{T}/\check{T^{(sc)}},\mathbb{Q}/\mathbb{Z})$ d'après la proposition 6.7 de \cite{CT}. Comme $\check{T^{(sc)}} \rightarrow \check{T}$ est injectif, on obtient ainsi un isomorphisme:
\begin{align*}
H^1(\overline{E},\mathbb{Q}/\mathbb{Z}(d+1)) &\cong \text{\underline{Hom}}_K(\check{T}/\check{T^{(sc)}},\mathbb{Q}/\mathbb{Z}(d)) \\ &\cong H^0\mathbb{R}\text{\underline{Hom}}_K([\check{T^{(sc)}} \rightarrow \check{T}],\mathbb{Q}/\mathbb{Z}(d)).
\end{align*}
On a alors un morphisme naturel $H^0(\tilde{G}_t[-1]) \rightarrow H^1(\overline{E},\mathbb{Q}/\mathbb{Z}(d+1))$ induit par l'accouplement $[\hat{T} \rightarrow \hat{T^{(sc)}}] \otimes^{\mathbf{L}} [\check{T^{(sc)}} \rightarrow \check{T}] \rightarrow \mathbb{Z}[1]$ que nous avons construit dans la preuve du lemme \ref{acc gm}. Comme $\check{T^{(sc)}} \rightarrow \check{T}$ est injectif et le conoyau de $\hat{T} \rightarrow \hat{T^{(sc)}}$ est fini, on a $H^0(\tilde{G}_t[-1]) = \tilde{G}_t [-1] $, et on a une suite exacte:
\begin{align*}
H^{d+1}(K, [\hat{T} \rightarrow \hat{T^{(sc)}}] \otimes^{\mathbf{L}} \mathbb{Q}(d)[-1]) &\rightarrow H^{d+1}(K, \tilde{G}_t[-1]) \rightarrow H^{d+2}(K, \tilde{G}[-1]) \\&\rightarrow H^{d+2}(K, [\hat{T} \rightarrow \hat{T^{(sc)}}] \otimes^{\mathbf{L}} \mathbb{Q}(d)[-1]) .
\end{align*}
Montrons que $H^{d+1}(K, [\hat{T} \rightarrow \hat{T^{(sc)}}] \otimes^{\mathbf{L}} \mathbb{Q}(d)[-1])$ est nul. Pour ce faire, on dispose de la suite exacte $H^{d+1}(K, \text{Ker}(\hat{T} \rightarrow \hat{T^{(sc)}}) \otimes^{\mathbf{L}} \mathbb{Q}(d))\rightarrow H^{d+1}(K, [\hat{T} \rightarrow \hat{T^{(sc)}}] \otimes^{\mathbf{L}} \mathbb{Q}(d)[-1]) \rightarrow H^{d+1}(K, \text{Coker}(\hat{T} \rightarrow \hat{T^{(sc)}}) \otimes^{\mathbf{L}} \mathbb{Q}(d)[-1])$. Le troisième terme est nul car $\text{Coker}(\hat{T} \rightarrow \hat{T^{(sc)}})$ est fini. Quant au premier, il est divisible et, le groupe $H^{d+1}(K',\mathbb{Q}(d))$ étant nul pour chaque corps $K'$, un argument de restriction-corestriction montre qu'il est d'exposant fini. Il est donc nul, et à fortiori le groupe $H^{d+1}(K, [\hat{T} \rightarrow \hat{T^{(sc)}}] \otimes^{\mathbf{L}} \mathbb{Q}(d)[-1])$ l'est aussi.\\
On montre de même que $H^{d+2}(K, [\hat{T} \rightarrow \hat{T^{(sc)}}] \otimes^{\mathbf{L}} \mathbb{Q}(d)[-1])=0$, et on obtient donc un isomorphisme: 
$$H^{d+1}(K, \tilde{G}_t[-1]) \cong H^{d+2}(K, \tilde{G}[-1]),$$
qui permet de construire par composition un morphisme: 
$$H^{d+2}(K, \tilde{G}[-1]) \rightarrow H^{d+1}(K,H^1(\overline{E},\mathbb{Q}/\mathbb{Z}(d+1))).$$
En composant avec le morphisme $H^{d+1}(K,H^1(\overline{E},\mathbb{Q}/\mathbb{Z}(d+1))) \rightarrow H^{d+2}(E,\mathbb{Q}/\mathbb{Z}(d+1))/ H^{d+2}(K,\mathbb{Q}/\mathbb{Z}(d+1))$, on obtient un morphisme:
$$H^{d+1}(K, \tilde{G}) \rightarrow H^{d+2}(E,\mathbb{Q}/\mathbb{Z}(d+1))/ H^{d+2}(K,\mathbb{Q}/\mathbb{Z}(d+1)).$$
En passant aux éléments localement triviaux, cela induit un morphisme:
$$\Sha^{d+1}(\tilde{G}) \rightarrow H^{d+2}_{\text{lc}}(E,\mathbb{Q}/\mathbb{Z}(d+1)).$$
De même, on a des morphismes:
$$\Sha^{d+1}(\tilde{G_z}) \rightarrow H^{d+2}_{\text{lc}}(E_z,\mathbb{Q}/\mathbb{Z}(d+1)),$$
$$\Sha^{d+2}((H_z/H^{sc})^{\sim}) \rightarrow H^{d+2}_{\text{lc}}(Y,\mathbb{Q}/\mathbb{Z}(d+1)).$$
En exploitant le diagramme commutatif:\\
\centerline{\xymatrix{
\Sha^{d+2}((H_z/H^{sc})^{\sim}) \ar[r] \ar[d] & \Sha^{d+1}(\tilde{G_z})\ar[d] & \Sha^{d+1}(\tilde{G})\ar[d]\ar[l]\\
H^{d+2}_{\text{lc}}(Y,\mathbb{Q}/\mathbb{Z}(d+1))\ar[r] &  H^{d+2}_{\text{lc}}(E_z,\mathbb{Q}/\mathbb{Z}(d+1)) & H^{d+2}_{\text{lc}}(E,\mathbb{Q}/\mathbb{Z}(d+1))\ar[l]
}}
et en utilisant le théorème \ref{local-global}, on voit qu'il suffit de montrer que le morphisme $\Sha^{d+1}(\tilde{G}) \rightarrow \Sha^{d+1}(\tilde{G_z})$ est un isomorphisme.\\
Pour ce faire, on écrit le triangle distingué $\tilde{G} \rightarrow \tilde{G_z} \rightarrow \tilde{Q}[1] \rightarrow \tilde{G}[1]$. Comme $Q$ est quasi-trivial, le lemme de Shapiro et la conjecture de Beilinson-Lichtenbaum imposent que $H^{d+1}(K,\tilde{Q})=H^{d+1}(K_v,\tilde{Q})=0$. De plus, en utilisant toujours le lemme de Shapiro et le fait que $\hat{Q}$ est un $\mathbb{Z}[\text{Gal}(L/K)]$-module libre, on a $\Sha^{d+2}(\tilde{Q}) = \Sha^{d+2}(L,\mathbb{Z}(d))^m$ pour un certain $m$, qui est nul d'après le lemme \ref{nul dual} puisque $\Sha^2(L,\mathbb{G}_m)=0$. Par conséquent, le morphisme $\Sha^{d+1}(\tilde{G}) \rightarrow \Sha^{d+1}(\tilde{G_z})$ est bien un isomorphisme, ce qui achève la preuve.
\end{itemize}
\end{proof}

\begin{remarque}
Dans le cas $d=2$, on n'a pas besoin de supposer que $\Sha^4(\mathbb{Z}(2))=0$. En effet, comme $\Sha^2(L,\mathbb{G}_m)=0$, un argument de restriction-corestriction montre que $\Sha^2(\mathbb{G}_m)=0$, et donc, en vertu du lemme \ref{nul dual}, $\Sha^4(\mathbb{Z}(2))$ est automatiquement nul. En particulier, dans ce cas, il suffit de supposer que le corps $L$ vérifie les hypothèses du corollaire \ref{nul 3} ou du corollaire \ref{nul 4}.
\end{remarque}

\nocite*


\begin{thebibliography}{CTGP04}

\bibitem[Blo86]{Blo}
Spencer Bloch.
\newblock Algebraic cycles and higher {$K$}-theory.
\newblock {\em Adv. in Math.}, 61(3):267--304, 1986.

\bibitem[Bor98]{Bor}
Mikhail Borovoi.
\newblock Abelian {G}alois cohomology of reductive groups.
\newblock {\em Mem. Amer. Math. Soc.}, 132(626):viii+50, 1998.

\bibitem[BvH09]{BVH}
Mikhail Borovoi and Joost van Hamel.
\newblock Extended {P}icard complexes and linear algebraic groups.
\newblock {\em J. Reine Angew. Math.}, 627:53--82, 2009.

\bibitem[BT87]{BT}
François Bruhat et Jacques Tits.
\newblock Groupes alg\'ebriques sur un corps local. {C}hapitre {III}.
  {C}ompl\'ements et applications \`a la cohomologie galoisienne.
\newblock {\em J. Fac. Sci. Univ. Tokyo Sect. IA Math.}, 34(3):671--698, 1987.

\bibitem[Che94]{Che1}
Vladimir Chernousov.
\newblock A remark on the {$({\rm mod}\, 5)$}-invariant of {S}erre for groups
  of type {$E_8$}.
\newblock {\em Mat. Zametki}, 56(1):116--121, 157, 1994.

\bibitem[Che10]{Che2}
Vladimir Chernousov.
\newblock On the kernel of the {R}ost invariant for {$E_8$} modulo {$3$}.
\newblock In {\em Quadratic forms, linear algebraic groups, and cohomology},
  volume~18 of {\em Dev. Math.}, pages 199--214. Springer, New York, 2010.

\bibitem[CT08]{CT}
Jean-Louis Colliot-Th{\'e}l{\`e}ne.
\newblock R\'esolutions flasques des groupes lin\'eaires connexes.
\newblock {\em J. Reine Angew. Math.}, 618:77--133, 2008.

\bibitem[CTGP04]{CGR}
Jean-Louis Colliot-Th{\'e}l{\`e}ne, Philippe Gille, and Raman Parimala.
\newblock Arithmetic of linear algebraic groups over 2-dimensional geometric
  fields.
\newblock {\em Duke Math. J.}, 121(2):285--341, 2004.

\bibitem[CTH14]{CTH}
Jean-Louis Colliot-Th\'el\`ene et David Harari.
\newblock Dualité et principe local-global pour les tores sur une courbe
  au-dessus de $\mathbb{C}((t))$.
\newblock 2014.
\newblock Prépublication.

\bibitem[CTPS12]{CPS}
Jean-Louis Colliot-Th{\'e}l{\`e}ne, Raman Parimala, and Venapally Suresh.
\newblock Patching and local-global principles for homogeneous spaces over
  function fields of {$p$}-adic curves.
\newblock {\em Comment. Math. Helv.}, 87(4):1011--1033, 2012.

\bibitem[Del77]{SG45}
Pierre Deligne.
\newblock {\em Cohomologie \'etale}.
\newblock Lecture Notes in Mathematics, Vol. 569. Springer-Verlag, Berlin,
  1977.
\newblock S{\'e}minaire de G{\'e}om{\'e}trie Alg{\'e}brique du Bois-Marie SGA
  4${\; 1/2}$, Avec la collaboration de Jean-François Boutot, Alexander
  Grothendieck, Luc Illusie et Jean-Louis Verdier.

\bibitem[Dem11]{Dem}
Cyril Demarche.
\newblock Suites de {P}oitou-{T}ate pour les complexes de tores \`a deux
  termes.
\newblock {\em Int. Math. Res. Not. IMRN}, (1):135--174, 2011.

\bibitem[DT83]{Dou}
Jean-Claude Douai et Chedly Touibi.
\newblock Courbes d\'efinies sur les corps de s\'eries formelles et loi de
  r\'eciprocit\'e.
\newblock {\em Acta Arith.}, 42(1):101--106, 1982/83.

\bibitem[DT85]{DouErrata}
Jean-Claude Douai et Chedly Touibi.
\newblock Courbes d\'efinies sur les corps de s\'eries formelles et loi de
  r\'eciprocit\'e, Errata.
\newblock {\em Acta Arith.}, 46:197, 1985.

\bibitem[Fu11]{Fu}
Lei Fu.
\newblock {\em Etale cohomology theory}, volume~13 of {\em Nankai Tracts in
  Mathematics}.
\newblock World Scientific Publishing Co. Pte. Ltd., Hackensack, NJ, 2011.

\bibitem[Gar01]{GS}
Ryan~Skip Garibaldi.
\newblock The {R}ost invariant has trivial kernel for quasi-split groups of low
  rank.
\newblock {\em Comment. Math. Helv.}, 76(4):684--711, 2001.

\bibitem[Gei04]{GeiDed}
Thomas Geisser.
\newblock Motivic cohomology over {D}edekind rings.
\newblock {\em Math. Z.}, 248(4):773--794, 2004.

\bibitem[Gei05]{Gei}
Thomas Geisser.
\newblock Motivic cohomology, {$K$}-theory and topological cyclic homology.
\newblock In {\em Handbook of {$K$}-theory. {V}ol. 1, 2}, pages 193--234.
  Springer, Berlin, 2005.

\bibitem[GL00]{GL2}
Thomas Geisser and Marc Levine.
\newblock The {$K$}-theory of fields in characteristic {$p$}.
\newblock {\em Invent. Math.}, 139(3):459--493, 2000.

\bibitem[GL01]{GL}
Thomas Geisser and Marc Levine.
\newblock The {B}loch-{K}ato conjecture and a theorem of {S}uslin-{V}oevodsky.
\newblock {\em J. Reine Angew. Math.}, 530:55--103, 2001.

\bibitem[GP08]{GP}
Philippe Gille and Arturo Pianzola.
\newblock Isotriviality and \'etale cohomology of {L}aurent polynomial rings.
\newblock {\em J. Pure Appl. Algebra}, 212(4):780--800, 2008.

\bibitem[GA12]{GA}
Cristian~D. Gonz{\'a}lez-Avil{\'e}s.
\newblock Quasi-abelian crossed modules and nonabelian cohomology.
\newblock {\em J. Algebra}, 369:235--255, 2012.

\bibitem[Gro73]{SGA4}
Alexander Grothendieck.
\newblock {\em Th\'eorie des topos et cohomologie \'etale des sch\'emas}.
\newblock Lecture Notes in Mathematics, Vol. 269, 270, 305. Springer-Verlag,
  Berlin, 1972-1973.
\newblock S{\'e}minaire de G{\'e}om{\'e}trie Alg{\'e}brique du Bois-Marie
  1963--1964 (SGA 4), Dirig{\'e} par Michael Artin, Alexander Grothendieck et
  Jean-Louis Verdier. Avec la collaboration de Pierre Deligne et Bernard
  Saint-Donat.

\bibitem[Har94]{Har}
David Harari.
\newblock Méthode des fibrations et obstruction de {M}anin.
\newblock {\em Duke Math. J.}, 75:221--260, 1994.

\bibitem[HSSz13]{HS2}
{D}avid {H}arari, {C}laus {S}cheiderer, and {T}am{\'a}s {S}zamuely.
\newblock {W}eak approximation for tori over {$p$}-adic function fields.
\newblock 2013.
\newblock A para\^itre dans \textit{{I}nternational {M}athematics {R}esearch
  {N}otices}.

\bibitem[HSz05]{HS4}
David Harari and Tam{\'a}s Szamuely.
\newblock Arithmetic duality theorems for 1-motives.
\newblock {\em J. Reine Angew. Math.}, 578:93--128, 2005.

\bibitem[HSz08]{HS3}
David Harari and Tam{\'a}s Szamuely.
\newblock Local-global principles for 1-motives.
\newblock {\em Duke Math. J.}, 143(3):531--557, 2008.

\bibitem[HSz13]{HS1}
{D}avid {H}arari and {T}am{\'a}s {S}zamuely.
\newblock {L}ocal-global principles for tori over {$p$}-adic function fields.
\newblock 2013.
\newblock A para\^itre dans \textit{Journal of Algebraic Geometry}.

\bibitem[HHK09]{HHK2}
David Harbater, Julia Hartmann, and Daniel Krashen.
\newblock Applications of patching to quadratic forms and central simple
  algebras.
\newblock {\em Invent. Math.}, 178(2):231--263, 2009.

\bibitem[HHK14]{HHK}
David Harbater, Julia Hartmann, and Daniel Krashen.
\newblock Local-global principles for {G}alois cohomology.
\newblock {\em Comment. Math. Helv.}, 89(1):215--253, 2014.

\bibitem[HR79]{HR}
Edwin Hewitt and Kenneth~A. Ross.
\newblock {\em Abstract harmonic analysis. {V}ol. {I}}, volume 115 of {\em
  Grundlehren der Mathematischen Wissenschaften [Fundamental Principles of
  Mathematical Sciences]}.
\newblock Springer-Verlag, Berlin, second edition, 1979.

\bibitem[ILO14]{Gab}
Luc Illusie, Yves Laszlo et Fabrice Orgogozo.
\newblock Travaux de {G}abber sur l'uniformisation locale et la cohomologie
  \'etale des sch\'emas quasi-excellents.
\newblock \url{http://www.math.polytechnique.fr/~orgogozo/travaux_de_Gabber/},
  2014.
\newblock A para\^itre dans \textit{Astérisque}.

\bibitem[Kah12]{Kah}
Bruno Kahn.
\newblock Classes de cycles motiviques \'etales.
\newblock {\em Algebra \& Number Theory}, 6(7):1369--1407, 2012.

\bibitem[Kat86]{Kat}
Kazuya Kato.
\newblock A {H}asse principle for two-dimensional global fields.
\newblock {\em J. Reine Angew. Math.}, 366:142--183, 1986.
\newblock With an appendix by Jean-Louis Colliot-Th{\'e}l{\`e}ne.

\bibitem[Lan52]{Lan}
Serge Lang.
\newblock On quasi algebraic closure.
\newblock {\em Ann. of Math. (2)}, 55:373--390, 1952.

\bibitem[Lan02]{LanAlg}
Serge Lang.
\newblock {\em Algebra}, volume 211 of {\em Graduate Texts in Mathematics}.
\newblock Springer-Verlag, New York, third edition, 2002.

\bibitem[Mat55]{Mat}
Arthur Mattuck.
\newblock Abelian varieties over {$p$}-adic ground fields.
\newblock {\em Ann. of Math. (2)}, 62:92--119, 1955.

\bibitem[Mil80]{MilEC}
James~S. Milne.
\newblock {\em \'{E}tale cohomology}, volume~33 of {\em Princeton Mathematical
  Series}.
\newblock Princeton University Press, Princeton, N.J., 1980.

\bibitem[Mil06]{MilADT}
James~S. Milne.
\newblock {\em Arithmetic duality theorems}.
\newblock BookSurge, LLC, Charleston, SC, second edition, 2006.

\bibitem[MS82]{MS}
James~S. Milne and Kuang-yen Shih.
\newblock Conjugates of {S}himura varieties.
\newblock In {\em Hodge {C}ycles, {M}otives and {S}himura {V}arieties}, volume
  900 of {\em Lecture {N}otes in {M}ath.}, pages 280--356. Springer-Verlag,
  1982.

\bibitem[Nag57]{Nag}
Masayoshi Nagata.
\newblock Note on a paper of {L}ang concerning quasi algebraic closure.
\newblock {\em Mem. Coll. Sci. Univ. Kyoto. Ser. A. Math.}, 30:237--241, 1957.

\bibitem[NS89]{NS}
Yu~P. Nesterenko and Andrei~A. Suslin.
\newblock Homology of the general linear group over a local ring, and
  {M}ilnor's {$K$}-theory.
\newblock {\em Izv. Akad. Nauk SSSR Ser. Mat.}, 53(1):121--146, 1989.

\bibitem[NSW08]{CNF}
J{\"u}rgen Neukirch, Alexander Schmidt, and Kay Wingberg.
\newblock {\em Cohomology of number fields}, volume 323 of {\em Grundlehren der
  Mathematischen Wissenschaften [Fundamental Principles of Mathematical
  Sciences]}.
\newblock Springer-Verlag, Berlin, second edition, 2008.

\bibitem[Par10]{Par}
Raman Parimala.
\newblock Arithmetic of linear algebraic groups over two-dimensional fields.
\newblock In {\em Proceedings of the {I}nternational {C}ongress of
  {M}athematicians. {V}olume {I}}, pages 339--361. Hindustan Book Agency, New
  Delhi, 2010.

\bibitem[Rio13]{Rio}
Jo{\"e}l Riou.
\newblock La conjecture de {B}loch-{K}ato (d'après {M}. {R}ost {\&} {V}.
  {V}oevodsky).
\newblock {\em S{\'e}minaire {B}ourbaki}, exposé
  nº 1073, 2013.

\bibitem[San81]{San}
Jean-Jacques Sansuc.
\newblock Groupe de {B}rauer et arithm\'etique des groupes alg\'ebriques
  lin\'eaires sur un corps de nombres.
\newblock {\em J. Reine Angew. Math.}, 327:12--80, 1981.

\bibitem[SvH03]{SVH}
Claus Scheiderer and Joost van Hamel.
\newblock Cohomology of tori over {$p$}-adic curves.
\newblock {\em Math. Ann.}, 326(1):155--183, 2003.

\bibitem[Sem08]{Sem}
Nikita Semenov.
\newblock Motivic construction of cohomological invariants.
\newblock 2008.
\newblock Prepublication.

\bibitem[SJ06]{SJ}
Andrei~A. Suslin and Seva Joukhovitski.
\newblock Norm varieties.
\newblock {\em J. Pure Appl. Algebra}, 206(1-2):245--276, 2006.

\bibitem[SV00]{SV}
Andrei~A. Suslin and Vladimir Voevodsky.
\newblock Bloch-{K}ato conjecture and motivic cohomology with finite
  coefficients.
\newblock In {\em The arithmetic and geometry of algebraic cycles ({B}anff,
  {AB}, 1998)}, volume 548 of {\em NATO Sci. Ser. C Math. Phys. Sci.}, pages
  117--189. Kluwer Acad. Publ., Dordrecht, 2000.

\bibitem[Tam94]{Tam}
G{\"u}nter Tamme.
\newblock {\em Introduction to \'etale cohomology}.
\newblock Universitext. Springer-Verlag, Berlin, 1994.
\newblock Translated from the German by Manfred Kolster.

\bibitem[Tot92]{Tot}
Burt Totaro.
\newblock Milnor {$K$}-theory is the simplest part of algebraic {$K$}-theory.
\newblock {\em $K$-Theory}, 6(2):177--189, 1992.

\bibitem[Voe11]{Voe}
Vladimir Voevodsky.
\newblock On motivic cohomology with {$\bold Z/l$}-coefficients.
\newblock {\em Ann. of Math. (2)}, 174(1):401--438, 2011.

\bibitem[Wei94]{Wei}
Charles~A. Weibel.
\newblock {\em An introduction to homological algebra}, volume~38 of {\em
  Cambridge Studies in Advanced Mathematics}.
\newblock Cambridge University Press, Cambridge, 1994.

\end{thebibliography}
\end{document}